\documentclass[11pt,reqno]{book}

%\linespread{1.3} %interlinea: 1 e 1/2
%\setlength{\parindent}{0pt}

\usepackage[english]{babel}

\usepackage{amsmath}
\usepackage{amsfonts}
\usepackage{amssymb}
\usepackage{multicol}
\usepackage{amsthm}
\usepackage{graphicx}
\usepackage{epstopdf}
\usepackage{enumitem}
\usepackage{geometry}
\usepackage{tikz}
\usepackage[all]{xy}
\usepackage{appendix}

\usepackage{makeidx}

\usepackage{amscd}
\usepackage{stmaryrd}
\usepackage{mathrsfs} 
\usepackage{tocvsec2}
\usepackage[nottoc]{tocbibind}
\usepackage{textcomp}

\usepackage{titlesec}
%\usepackage{showidx}
%\usepackage{url}
%\usepackage{chngcntr}

%\counterwithout{footnote}{chapter}

\usepackage[hyperindex]{hyperref}

\usepackage{bookmark}

\newcommand{\commentout}[1]{}

\usepackage{suffix}

\newcommand\chapterauthor[1]{\authortoc{#1}\printchapterauthor{#1}}
\WithSuffix\newcommand\chapterauthor*[1]{\printchapterauthor{#1}}

\makeatletter
\newcommand{\printchapterauthor}[1]{%
  {\parindent0pt\vspace*{-25pt}%
  \linespread{1.1}\large\scshape#1%
  \par\nobreak\vspace*{35pt}}
  \@afterheading%
}
\newcommand{\authortoc}[1]{%
  \addtocontents{toc}{\vskip-10pt}%
  \addtocontents{toc}{%
    \protect\contentsline{chapter}%
    {\hskip1.3em\mdseries\scshape\protect\scriptsize#1}{}{}}
  \addtocontents{toc}{\vskip5pt}%
}
\makeatother
%\bookmarksetup{
%  numbered,
%  open
%}
%\renewcommand*{\thefootnote}{\fnsymbol{footnote}}

\setcounter{secnumdepth}{3}
%\setcounter{MaxMatrixCols}{10}
%TCIDATA{OutputFilter=LATEX.DLL}
%TCIDATA{Version=5.50.0.2890}
%TCIDATA{<META NAME="SaveForMode'' CONTENT="1">}
%TCIDATA{BibliographyScheme=Manual}
%TCIDATA{LastRevised=Friday, March 31, 2006 23:22:49}
%TCIDATA{<META NAME="GraphicsSave'' CONTENT="32">}
%TCIDATA{Language=American English}

%\renewcommand{\baselinestretch}{1.2}
%\topmargin=-24mm \textheight=247mm \textwidth=150mm
%\oddsidemargin=-2mm
\newtheorem{theorem}{Theorem}[section]
\newtheorem{apptheorem}{Theorem}[chapter]
\newtheorem{proposition}[theorem]{Proposition}
\newtheorem*{proposition*}{Proposition}
\newtheorem{appproposition}[apptheorem]{Proposition}
\newtheorem{corollary}[theorem]{Corollary}
\newtheorem*{corollary*}{Corollary}

\newtheorem{lemma}[theorem]{Lemma}

\newtheorem{problem}[theorem]{Problem}
\newtheorem{note}[theorem]{Note}
\newtheorem*{theorem*}{Theorem}
\newenvironment{hint}[1][Hint]{\noindent\textit{#1.} }{\ \rule{0.5em}{0.5em}}

\theoremstyle{definition}

\newtheorem{example}[theorem]{Example}
\newtheorem{appexample}[apptheorem]{Example}
\newtheorem{remark}[theorem]{Remark}
\newtheorem{appremark}[apptheorem]{Remark}
\newtheorem{conjecture}[theorem]{Conjecture}
\newtheorem{openbroblem}[theorem]{Open Problem}
\newtheorem{exercise}[theorem]{Exercise}
\newtheorem{appexercise}[apptheorem]{Exercise}
\newtheorem{definition}[theorem]{Definition}
\newtheorem{appdefinition}[apptheorem]{Definition}

\newcommand{\Hom}{\operatorname{\mathbb{H}om}}

\newcommand{\abs}[1]{\lvert#1\rvert}
\newcommand{\norm}[1]{\lVert#1\rVert}
\newcommand{\R}{{\mathbb R}}

\newcommand{\N}{{\mathbb N}}
\newcommand{\ve}{{\varepsilon}}

%\numberwithin{equation}{section}

%\renewcommand{\Im}{\operatorname{Im}}
%\renewcommand{\Re}{\operatorname{Re}}

%\newcommand{\Z}{\mathbb{Z}}
%\newcommand{\N}{\mathbb{N}}
%\newcommand{\U}{\mathcal U}
%\newcommand{\R}{\mathcal{R}^{\mathcal{U}}}

%\newcommand{\p}{\varphi}
%\newcommand{\e}{\varepsilon}

%\newcommand{\conv}{\operatorname{conv}}
%\newcommand{\plane}{\operatorname{plane}}

%%%%%%%%%%Taka's abbreviations
%\newcommand{\IC}{{\mathbb C}}
%\newcommand{\IK}{{\mathbb K}}
%\newcommand{\IM}{{\mathbb M}}
%\newcommand{\IN}{{\mathbb N}}
%\newcommand{\cU}{{\mathcal U}}

\makeindex

\begin{document}

\pagenumbering{roman}
\setcounter{page}{1}
%\author{Valerio Capraro}
%\address{University of Neuchatel, Switzerland}
%\thanks{Partially supported by Swiss SNF Sinergia project CRSI22-130435}
%\email{valerio.capraro@unine.ch, valerio.capraro@virgilio.it}

%\keywords{Connes' embedding conjecture, Kirchberg theorem, hyperlinear groups, Brown's invariant}

%\subjclass[2000]{Primary 52A01; Secondary 46L36}

\date{}

\begin{titlepage}
\thispagestyle{empty}
\title{Introduction to Sofic and Hyperlinear groups\\ and Connes' Embedding Conjecture}
\author{VALERIO CAPRARO\\ Centrum voor Wiskunde en Informatica\\ Amsterdam, The Netherlands
        \and MARTINO LUPINI\\ Department of Mathematics\\ York University\\ Toronto, Canada\\\\\\
\and With an appendix by\\\\ VLADIMIR PESTOV\\ Departamento de Matem\'atica\\ Universidade Federal de Santa Catarina\\ Florian\'opolis-SC, Brasil}

\end{titlepage}

\maketitle

\setcounter{page}{2}
\thispagestyle{empty}
\frontmatter

\setcounter{page}{3}
\chapter*{Preface\label{Section: preface}}

Analogy is one of the most effective techniques of human reasoning: When we
face new problems we compare them with simpler and already known ones, in
the attempt to use what we know about the latter ones to solve the former
ones. This strategy is particularly common in Mathematics, which offers
several examples of abstract and seemingly intractable objects: Subsets of
the plane can be enormously complicated but, as soon as they can be
approximated by rectangles, then they can be measured; Uniformly finite
metric spaces can be difficult to describe and understand but, as soon as
they can be approximated by Hilbert spaces, then they can be proved to
satisfy the coarse Novikov's and Baum-Connes's conjectures.

These notes deal with two particular instances of such a strategy: Sofic and
hyperlinear groups are in fact the countable discrete groups that can be
approximated in a suitable sense by finite symmetric groups and groups of
unitary matrices. These notions, introduced by Gromov and R\u{a}dulescu, respectively, at
the end of the 1990s, turned out to be very deep and fruitful, and
stimulated in the last 15 years an impressive amount of research touching
several seemingly distant areas of mathematics including geometric group
theory, operator algebras, dynamical systems, graph theory, and more
recently even quantum information theory. Several long-standing conjectures
that are still open for arbitrary groups were settled in the case of sofic
or hyperlinear groups. These achievements aroused the interest of an
increasing number of researchers into some fundamental questions about the
nature of these approximation properties. Many of such problems are to this
day still open such as, outstandingly: Is there any countable discrete group
that is not sofic or hyperlinear? A similar pattern can be found in the
study of \textup{II}$_{1}$ factors. In this case the famous conjecture due to Connes
(commonly known as Connes' embedding conjecture) that any \textup{II}$_{1}$ factor
can be approximated in a suitable sense by matrix algebras inspired several
breakthroughs in the understanding of \textup{II}$_{1}$ factors, and stands out today
as one of the major open problems in the field.

The aim of this monograph is to present in a uniform and accessible way some
cornerstone results in the study of sofic and hyperlinear groups and Connes'
embedding conjecture. These notions, as well as the proofs of many results,
are here presented in the framework of model theory for metric structures.
We believe that this point of view, even though rarely explicitly adopted
in the literature, can contribute to a better understanding of the ideas
therein, as well as provide additional tools to attack many remaining open
problems. The presentation is nonetheless self-contained and accessible to
any student or researcher with a graduate-level mathematical background. In
particular no specific knowledge of logic or model theory is required.

Chapter 1 presents the conjectures and open problems that will serve as
common thread and motivation for the rest of the survey: Connes' embedding
conjecture, Gottschalk's conjecture, and Kaplansky's conjecture. Chapter 2
introduces sofic and hyperlinear groups, as well as the general notion of
metric approximation property; outlines the proofs of Kaplansky's direct finiteness
conjecture and the algebraic eigenvalues conjecture for sofic groups; and
develops the theory of entropy for sofic group actions, yielding a proof of
Gottschalk's surjunctivity conjecture in the sofic case. Chapter 3 discusses
the relationship between hyperlinear groups and the Connes' embedding
conjecture; establishes several equivalent reformulations of the Connes'
embedding conjecture due to Haagerup-Winsl\o w and Kirchberg; describes the purely algebraic approach initiated by R\u adulescu and carried over by Klep-Schweighofer and Juschenko-Popovich; and finally
outlines the theory of Brown's invariants for \textup{II}$_{1}$ factors satisfying
the Connes' embedding conjecture. An appendix by V. Pestov provides a pedagogically new introduction to
the concepts of ultrafilters, ultralimits, and ultraproducts for those
mathematicians who are not familiar with them, and aiming to make
these concepts appear very natural.

The choice of topics is unavoidably not exhaustive. A more detailed
introduction to the basic results about sofic and hyperlinear groups can be
found in \cite{pestov} and \cite{Kwiatkowska-Pestov}. The surveys \cite%
{Oz,Oz3,Oz4} contain several other equivalent reformulations of the Connes'
embedding conjecture in purely algebraic or C*-algebraic terms.

This survey originated from a short intensive course that the authors gave
at the Universidade Federal de Santa Catarina in 2013 in occasion of the
``Workshop on sofic and hyperlinear groups and the Connes' embedding
conjecture'' supported by CAPES (Brazil) through the program ``Science
without borders", PVE project 085/2012. We would like to gratefully thank
CAPES for its support, as well as the organizers of the workshop Daniel Gon\c{c}alves and Vladimir Pestov for their kind hospitality, and for their
constant and passionate encouragement.

Moreover we are grateful to Hiroshi Ando, Goulnara Arzhantseva, Samuel Coskey, Ilijas Farah, Tobias Fritz, Benjamin Hayes, Liviu P\u aunescu, Vladimir Pestov, David Sherman, Alain Valette, and five anonymous referees for several useful comments and suggestions.

\begin{multicols}{2}
\begin{center}
Valerio Capraro\\ Department of Mathematics\\ University of Southampton\\ United Kingdom
\end{center}
\columnbreak
\begin{center}
Martino Lupini\\ Department of Mathematics\\ York University\\ Toronto, Canada
\end{center}
\end{multicols}

%the course given by the authors at the
%University of Florianopolis, Brazil, in June 2013, aim to collect a
%reasonable amount of different topics around sofic and hyperlinear groups
%and the related Connes' embedding conjecture, in order to serve to both
%researcher and students from different areas for a self-contained and
%reasonably complete introduction to these problems. In particular, to help
%the student with a gradual and active learning, we developed a system of
%exercises that might help the newer to familiarize with the different
%techniques that are used in this field. Intersections with previously
%published survey papers, most notably by Ozawa \cite{Oz}, \cite{Oz4} and
%Pestov \cite{pestov}, are intentionally minimized. We refer to Ozawa's
%papers for an introduction to Connes' embedding conjecture from a more $C^*$%
%-algebraic point of view of from a more purely algebraic point of view,
%respectively, and to Pestov's survey for a more detailed introduction to
%basic results about sofic and hyperlinear groups.

\tableofcontents

\mainmatter

\setcounter{page}{1}
\pagenumbering{arabic}

\chapter{Introduction}
\chapterauthor{Valerio Capraro and Martino Lupini}

\section{Von Neumann algebras and \texorpdfstring{\textup{II}$_{1}$}{II1}
factors\label{Section: vN algebras and II1 factors}}

Denote by $B(H)$ the algebra of bounded linear operators on the Hilbert
space $H$. Recall that $B(H)$ is naturally endowed with an involution $%
x\mapsto x^{\ast }$ associating with an operator $x$ its \textit{adjoint }$%
x^{\ast }$. The \textit{operator norm}%
\index{norm!operator} $\left\Vert x\right\Vert $ of an element of $B(H)$ is
defined by%
\begin{equation*}
\left\Vert x\right\Vert =\sup \left\{ \left\Vert x\xi \right\Vert :\xi \in H%
\text{, }\left\Vert \xi \right\Vert \leq 1\right\} \text{.}
\end{equation*}%
Endowed with this norm, $B(H)$ is a Banach algebra with involution satisfying
the identity%
\begin{equation}
\left\Vert x^{\ast }x\right\Vert =\left\Vert x\right\Vert ^{2} 
\tag{C*-identity}
\end{equation}%
i.e.\ a \emph{C*-algebra}%
\index{C*-algebra}.

The \textit{weak operator topology}%
\index{topology!weak operator} on $B(H)$ is the weakest topology making the
map%
\begin{equation*}
x\mapsto \left\langle x\xi ,\eta \right\rangle
\end{equation*}%
continuous for every $\xi ,\eta \in H$, where $\left\langle \cdot ,\cdot
\right\rangle $ denotes the scalar product of $H$. The \textit{strong
operator topology}%
\index{topology!strong operator} on $B(H)$ is instead the weakest topology
making the maps%
\begin{equation*}
x\mapsto \left\Vert x\xi \right\Vert
\end{equation*}%
continuous for every $\xi \in H$. As the names suggest the strong operator
topology is stronger than the weak operator topology.\ It is a consequence
of the Hahn-Banach theorem that, conversely, a convex subset of $B(H)$
closed in the strong operator topology is also closed in the weak operator
topology (see Theorem 5.1.2 of \cite{Ka-Ri1}).

A (concrete) \textit{von Neumann algebra}%
\index{von Neumann algebra}\index{von Neumann algebra} is a unital *-subalgebra (i.e.\ closed with
respect to taking adjoints) of $B(H)$ that is closed in the weak (or,
equivalently, strong) operator topology. It is easy to see that if $X$ is a
subset of $B(H)$, then the intersection of all von Neumann algebras $%
M\subset B(H)$ containing $X$ is again a von Neumann algebra, called the von
Neumann algebra generated by $X$. Theorem \ref{Theorem: double commutant} is
a cornerstone result of von Neumann, known as von Neumann double commutant
theorem, asserting that the von Neumann algebra generated by a subset $X$ of 
$B(H)$ can be characterized in a purely algebraic way. The \textit{commutant}%
\index{commutant} $X^{\prime }$\index{commutant} of a subset $X$ of $B(H)$ is the set of $%
y\in B(H)$ commuting with every element of $X$. The \textit{double commutant}%
\index{commutant!duble} $X^{\prime \prime }$ \index{commutant!double} of $X$ is just the commutant
of $X^{\prime }$.

\begin{theorem}
\label{Theorem: double commutant}
The von Neumann algebra generated by a subset $X$ of $B(H)$ containing the unit and closed with respect to taking
adjoints coincides with the double commutant $X^{\prime \prime }$ of $X$.
\end{theorem}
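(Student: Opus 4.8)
The plan is to prove the two inclusions $M \subseteq X''$ and $X'' \subseteq M$ separately, where $M$ denotes the von Neumann algebra generated by $X$. The first inclusion is the routine one. Since the commutant of any set is the intersection of the weakly closed sets $\{y : yx = xy\}$ over $x \in X$, the set $X'$ is weakly closed, and hence so is $X''$; moreover, because $X$ is self-adjoint, a short computation shows $X'$ is a *-subalgebra, and therefore so is $X''$. As $X''$ clearly contains $X$ and the unit, it is a von Neumann algebra containing $X$, so it must contain the smallest such algebra, namely $M$. This gives $M \subseteq X''$ with no difficulty.

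The substance is the reverse inclusion. First I would observe that $M' = X'$: the inclusion $M' \subseteq X'$ is immediate from $X \subseteq M$, while any $y \in X'$ commutes with the *-algebra generated by $X$ and hence, by separate weak continuity of multiplication, with its weak closure $M$. Consequently $X'' = M''$, and it suffices to show $M'' \subseteq M$. Fix $T \in M''$. The key idea is a cyclic-subspace argument: given $\xi \in H$, set $K = \overline{M\xi}$ and let $P$ be the orthogonal projection onto $K$. Because $M$ is a self-adjoint algebra, both $K$ and $K^{\perp}$ are invariant under $M$, which forces $P \in M'$. Since $T \in M'' = (M')'$ commutes with $P$, and since $\xi \in K$ (as the unit lies in $M$), we get $T\xi = TP\xi = PT\xi \in K = \overline{M\xi}$. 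Thus for every $\ve > 0$ there is $A \in M$ with $\norm{T\xi - A\xi} < \ve$; that is, $T$ can be approximated by elements of $M$ on a single vector.

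To upgrade this to a basic strong-operator neighborhood of $T$, determined by finitely many vectors $\xi_1, \dots, \xi_n$ and $\ve > 0$, I would use the standard amplification trick: pass to $H^{\oplus n}$ and let $M$ act diagonally as $\mathrm{diag}(A, \dots, A)$. One checks that the commutant of this amplified algebra consists of $n \times n$ operator matrices with entries in $M'$, and that its double commutant consists precisely of the diagonal operators $\mathrm{diag}(S, \dots, S)$ with $S \in M''$. Applying the single-vector approximation in $H^{\oplus n}$ to the vector $(\xi_1, \dots, \xi_n)$ and to the amplification of $T$ yields an $A \in M$ with $\sum_i \norm{(T - A)\xi_i}^2 < \ve^2$. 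Hence $T$ lies in the strong-operator closure of $M$; but $M$ is strongly closed, so $T \in M$, completing the proof that $M'' \subseteq M$ and therefore $X'' = M$. I expect the main obstacle to be the passage from one vector to finitely many: the single-vector statement is the real crux of the argument, and without the amplification device it does not obviously promote to the full strong topology, which is precisely what identifies the generated von Neumann algebra.
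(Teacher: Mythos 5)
Your proof is correct and complete: the easy inclusion $M \subseteq X''$, the identification $M' = X'$, the cyclic-subspace argument giving the single-vector approximation, and the amplification to $H^{\oplus n}$ (with the correct computation that the bicommutant of the diagonal algebra is $\{\mathrm{diag}(S,\ldots,S) : S \in M''\}$) together constitute the classical von Neumann bicommutant argument, and every step you invoke (separate weak continuity of multiplication, $P \in M'$ for the projection onto $\overline{M\xi}$, strong closedness of $M$) is justified. Note that the paper itself offers no proof of this theorem --- it is stated as a cornerstone result and cited to the literature --- so there is nothing to compare against; your argument is the standard textbook proof and would serve as a correct self-contained substitute.
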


A \textit{faithful normal} \emph{trace}%
\index{trace} on a von Neumann algebra $M$ is a linear functional $\tau $ on 
$M$ such that:

\begin{itemize}
\item $\tau (x^{\ast }x)\geq 0$ for every $x\in M$ ($\tau $ is \textit{%
positive});

\item $\tau (x^{\ast }x)=0$ implies $x=0$ ($\tau $ is \textit{faithful});

\item $\tau (xy)=\tau (yx)$ for every $x,y\in M$ ($\tau $ is \textit{tracial}%
);

\item $\tau (1)=1$ ($\tau $ is \textit{unital})

\item $\tau $ is continuous on the unit ball of $M$ with respect to the weak
operator topology ($\tau $ is \textit{normal}).
\end{itemize}

A (finite) von Neumann algebra endowed with a distinguished trace will be
called a \textit{tracial von Neumann algebra}%
\index{von Neumann algebra!tracial}. A tracial von Neumann algebra is always 
\textit{finite}\index{von Neumann algebra!finite} as in \cite[Definition 6.3.1]{Kadison-RingroseII}.
Conversely any finite von Neumann algebra faithfully represented on a
separable Hilbert space has a faithful normal trace by \cite[Theorem 8.2.8]%
{Kadison-RingroseII}.

The \textit{center}%
\index{center} $Z(M)$ of a von Neumann algebra $M\subset B(H)$ is the
subalgebra of $M$ consisting of the operators in $M$ commuting with any
other element of $M$. A von Neumann algebra $M$ is called a \textit{factor}%
\index{factor} if its center is as small as possible, i.e.\ it contains only
the scalar multiples of the identity. A finite factor has a unique faithful
normal trace (see \cite[Theorem 8.2.8]{Kadison-RingroseII}). Moreover any
faithful unital tracial positive linear functional on a finite factor is
automatically normal, and hence coincides with its unique faithful normal
trace.

\begin{example}
\label{Exercise: fd finite factors}If $H_{n}$ is a Hilbert space of finite
dimension $n$, then $B(H_{n})$ is a finite factor denoted by $M_{n}(\mathbb{C%
})$ isomorphic to the algebra of $n\times n$ matrices with complex
coefficients. The unique trace\index{trace} on $M_{n}(\mathbb{C})$ is the usual
normalized trace of matrices.
\end{example}

It is a consequence of the type classification of finite factors (see \cite[%
Section 6.5]{Kadison-RingroseII}) that the ones described in Example \ref%
{Exercise: fd finite factors} are the unique examples of finite factors that
are finite dimensional as vector spaces.

\begin{definition}
\label{defin:II1factors}A \textup{II}$_{1}$\emph{\ factor}%
\index{factor!type II$_1$} is an infinite-dimensional finite factor.
\end{definition}

Theorem \ref{Theorem: projections II1 factor} is a cornerstone result of
Murray and von Neumann (see \cite[Theorem XIII]{Mu-vN4}), offering a
characterization of \textup{II}$_{1}$ factors within the class of finite factors.

\begin{theorem}
\label{Theorem: projections II1 factor} If $M$ is a \textup{II}$_{1}$ factor, then $M$
contains, for every natural number $n$, a unital copy of $M_{n}(\mathbb{C})$%
, i.e.\ there is a trace preserving *-homomorphism from $M_{n}(\mathbb{C})$
to $M$.
\end{theorem}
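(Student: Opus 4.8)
The plan is to produce the *-homomorphism by exhibiting a system of matrix units inside $M$. Recall that an $n\times n$ system of matrix units is a family $\{e_{ij}\}_{i,j=1}^{n}$ of elements of $M$ satisfying $e_{ij}^{\ast}=e_{ji}$, $e_{ij}e_{kl}=\delta_{jk}e_{il}$, and $\sum_{i=1}^{n}e_{ii}=1$; given such a family, the linear map sending the standard matrix unit $E_{ij}\in M_{n}(\mathbb{C})$ to $e_{ij}$ is a unital *-homomorphism, and it is trace preserving precisely when $\tau(e_{ii})=1/n$ for each $i$, since the normalized trace of $M_{n}(\mathbb{C})$ assigns mass $1/n$ to each diagonal matrix unit. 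So the whole problem reduces to constructing such a family with the correct traces.

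The first reduction is to mutually orthogonal projections. Suppose I can find projections $p_{1},\dots,p_{n}\in M$ that are pairwise orthogonal, sum to $1$, and are all Murray--von Neumann equivalent with $\tau(p_{i})=1/n$. Choosing partial isometries $v_{i}$ with $v_{i}^{\ast}v_{i}=p_{1}$ and $v_{i}v_{i}^{\ast}=p_{i}$ (and $v_{1}=p_{1}$), I would set $e_{ij}=v_{i}v_{j}^{\ast}$. A direct computation using $v_{j}^{\ast}v_{k}=\delta_{jk}p_{1}$, which follows from the orthogonality of the $p_{i}$, shows that the relations above all hold and that $\tau(e_{ii})=\tau(p_{i})=1/n$. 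Thus everything comes down to producing $n$ equivalent orthogonal projections of trace $1/n$ summing to the identity.

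For this I would lean on the comparison theory of projections in a factor: any two projections in $M$ are comparable, and in a finite factor $p\sim q$ holds if and only if $\tau(p)=\tau(q)$, while $p\precsim q$ holds if and only if $\tau(p)\le\tau(q)$. Granting that $\tau$ takes every value in $[0,1]$ on the projections of $M$, I would first pick a projection $q_{1}$ with $\tau(q_{1})=1/n$, then a subprojection $q_{2}\le 1-q_{1}$ with $\tau(q_{2})=1/n$ (possible since $\tau(1-q_{1})=(n-1)/n$ and the trace is surjective on projections dominated by $1-q_{1}$), and continue inductively to obtain pairwise orthogonal $q_{1},\dots,q_{n}$ each of trace $1/n$. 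Their sum is a subprojection of $1$ of trace $1$, hence equals $1$ by faithfulness of $\tau$; and since they share the trace $1/n$ they are mutually equivalent. Setting $p_{i}=q_{i}$ completes the construction.

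The main obstacle is justifying that the trace attains every value in $[0,1]$ on projections, the ``continuous dimension'' phenomenon that distinguishes \textup{II}$_{1}$ factors. The key point is that $M$, being infinite-dimensional, has no minimal projections: if $p$ were minimal then $pMp=\mathbb{C}p$, which together with the factor condition would force $M$ to be a finite type I factor, i.e.\ some $M_{k}(\mathbb{C})$, contradicting infinite-dimensionality. Consequently every nonzero projection properly dominates a nonzero subprojection, and a halving argument together with a monotone limit of projections (available because $M$ is weakly closed, and controlled via normality of $\tau$) lets one realize any prescribed trace value in $[0,1]$. This surjectivity, rather than the matrix-unit bookkeeping, is where the real content of the theorem lies.
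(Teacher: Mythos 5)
Your proof is correct, but there is no argument in the paper to compare it with: the paper states this theorem without proof, attributing it to Murray and von Neumann. Your route---reducing the theorem to a system of matrix units $\{e_{ij}\}$, reducing that to the existence of $n$ pairwise orthogonal, mutually equivalent projections of trace $\frac{1}{n}$ summing to $1$, and producing these from the comparison theory of projections together with surjectivity of the trace on projections---is the standard argument, and it dovetails with the surrounding text: the surjectivity you establish in your last paragraph is precisely the characterization the paper records immediately after the theorem statement (a finite factor is \textup{II}$_{1}$ if and only if its trace attains on projections all values between $0$ and $1$). Two points are worth tightening. First, trace preservation also requires $\tau(e_{ij})=0$ for $i\neq j$; this is automatic by traciality, since $\tau(e_{ij})=\tau(e_{ii}e_{ij})=\tau(e_{ij}e_{ii})=\delta_{ji}\tau(e_{ii})$, but it should be said, because the claim that the homomorphism is trace preserving ``precisely when $\tau(e_{ii})=1/n$'' quietly uses it. Second, the inductive step choosing $q_{k+1}\leq 1-(q_{1}+\cdots+q_{k})$ of trace $\frac{1}{n}$ needs the relative form of surjectivity: every projection $q$ dominates subprojections of every trace in $[0,\tau(q)]$. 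Your halving-plus-monotone-limit argument delivers exactly this if you run it below a fixed projection $q$ (equivalently, one may note that the corner $qMq$ is again a \textup{II}$_{1}$ factor, with normalized trace $\tau(q)^{-1}\tau$), so the lemma should be stated in that relative form from the start; with that adjustment the induction closes and the proof is complete.
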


It follows from weak continuity of the trace and the type classification of
finite factors that if $M$ is a finite factor, then the following statements
are equivalent:

\begin{enumerate}
\item $M$ is a \textup{II}$_{1}$ factor;

\item the trace\index{trace} $\tau $ of $M$ attains on projections all the real values
between $0$ and $1$.
\end{enumerate}

%\textup{II}$_{1}$ factors can be regarded as a sort of noncommutative version of atomless probability spaces. 
The unique trace $\tau $ on a finite factor $M$ allows to define the
Hilbert-Schmidt norm%
\index{norm!Hilbert-Schmidt} $\left\Vert \cdot \right\Vert _{2}$ on $M$, by $%
||x||_{2}=\tau (x^{\ast }x)^{%
\frac{1}{2}}$. The Hilbert-Schmidt norm is continuous with respect to the
operator norm $\left\Vert \cdot \right\Vert $ inherited from $B(H)$. A
finite factor is called \textit{separable}%
\index{factor!separable} if it is separable with respect to the topology
induced by the Hilbert-Schmidt norm.

Let us now describe one of the most important constructions of \textup{II}$_{1}$
factors. If $\Gamma $ is a countable discrete group then the \textit{complex
group algebra}
\index{group algebra!complex} $\mathbb{C}\Gamma $ is the complex algebra of
formal finite linear combinations%
\begin{equation*}
\lambda _{1}\gamma _{1}+\cdots +\lambda _{k}\gamma _{k}
\end{equation*}%
of elements of $\gamma $ with coefficients from $\mathbb{C}$. Any element of 
$\mathbb{C}\Gamma $ can be written as%
\begin{equation*}
\sum_{\gamma }a_{\gamma }\gamma ,
\end{equation*}%
where $(a)_{\gamma \in \Gamma }$ is a family of complex numbers all but
finitely many of which are zero. Sum and multiplication of elements of $%
\mathbb{C}\Gamma $ are defined by%
\begin{equation*}
\left( \sum_{\gamma }a_{\gamma }\gamma \right) +\left( \sum_{\gamma
}b_{\gamma }\gamma \right) =\sum_{\gamma }\left( a_{\gamma }+b_{\gamma
}\right) \gamma 
\end{equation*}%
and%
\begin{equation*}
\left( \sum_{\gamma }a_{\gamma }\gamma \right) \left( \sum_{\gamma
}b_{\gamma }\gamma \right) =\sum_{\gamma }\left( \sum_{\rho \rho ^{\prime
}=\gamma }a_{\rho }b_{\rho ^{\prime }}\right) \gamma 
\text{.}
\end{equation*}%
Consider now the Hilbert space $\ell ^{2}(\Gamma )$ of square-summable
complex-valued functions on $\Gamma $. Each $\gamma \in \Gamma $ defines a
unitary operator $\lambda _{\gamma }$ on $\ell ^{2}(\Gamma )$ by: 
\begin{equation*}
\lambda _{\gamma }(f)(x)=f(\gamma ^{-1}x)\text{.}
\end{equation*}%
The function $\gamma \rightarrow \lambda _{\gamma }$ extends by linearity to
an embedding of $\mathbb{C}\Gamma $ into the algebra $B\left( \ell
^{2}(\Gamma )\right) $ of bounded linear operators on $\ell ^{2}(\Gamma )$.

\begin{definition}
\label{Definition: group vN algebra}The weak closure of $\mathbb{C}\Gamma $
(identified with a subalgebra of $B\left( \ell ^{2}(\Gamma )\right) $) is a
von Neumann algebra denoted by $L\Gamma $ and called the \emph{group von
Neumann algebra}%
\index{group von Neumann algebra} of\textit{\ }$\Gamma $.
\end{definition}

The group von Neumann algebra\index{group von Neumann algebra} $L\Gamma $ is canonically endowed with the
trace $\tau $ obtained by extending by continuity the condition:%
\begin{equation*}
\tau \left( \sum_{\gamma }a_{\gamma }\gamma \right) =a_{1_{\Gamma }},
\end{equation*}%
for every element $\sum_{\gamma }a_{\gamma }\gamma $ of $\mathbb{C}\Gamma $.

\begin{exercise}
\label{exer:groupfactor} Assume that $\Gamma $ is an ICC%
\index{group!ICC} group, i.e.\ every nontrivial conjugacy class of $\Gamma $
is infinite. Show that $L\Gamma $ is a \textup{II}$_{1}$ factor.
\end{exercise}

\begin{exercise}
\label{ex:ICCgroups} Show that the free group $\mathbb{F}_n$ on $n\geq2$
generators and the group $S_{\infty }^{%
\text{fin}}$ of finitely supported permutations of a countable set are ICC.
\end{exercise}

It is a milestone result of Murray and von Neumann from \cite{Mu-vN4} (see
also Theorem 6.7.8 of \cite{Kadison-RingroseII}) that the group factors
associated with the free group $\mathbb{F}_{2}$ 
\index{group!free}and, respectively, the group $S_{\infty }^{%
\text{fin}}$ are nonisomorphic. It is currently a major open problem in the
theory of \textup{II}$_{1}$ factors to determine whether free groups over different
number of generators have isomorphic associated factors.

Connes' embedding conjecture%
\index{conjecture!Connes' embedding} asserts that any separable \textup{II}$_{1}$
factor can be approximated by finite dimensional factors, i.e.\ matrix
algebras. More precisely:

\begin{conjecture}[Connes, 1976]
\label{Conjecture: Connes}If $M$ is a separable \textup{II}$_{1}$ factor with trace $%
\tau _{M}$ then for every $\varepsilon >0$ and every finite subset $F$ of $M$
there is a function $\Phi $ from $M$ to a matrix algebra $M_{n}(\mathbb{C})$
that on $F$ approximately preserves the operations and the trace. This means that $\Phi (1)=1$ and for every $x,y\in F$:
\end{conjecture}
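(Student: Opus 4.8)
The plan is to recast the approximate, asymptotically multiplicative and trace-preserving maps demanded in the statement as a single honest embedding, and then try to construct that embedding. Concretely, a sequence of functions $\Phi_n$ into matrix algebras $M_{k(n)}(\mathbb{C})$ that respect the operations and the trace on larger and larger finite sets $F_n$, with error tending to $0$, assembles along a free ultrafilter $\omega$ into a genuine trace-preserving $*$-homomorphism from $M$ into the tracial ultraproduct $\prod_n M_{k(n)}(\mathbb{C})/\omega$, which in turn embeds into the ultrapower $R^{\omega}$ of the hyperfinite \textup{II}$_1$ factor $R$. So the whole statement is equivalent to: every separable \textup{II}$_1$ factor $M$ admits matricial microstates, i.e.\ finite-dimensional approximations of any finite tuple of its elements that are close in Hilbert--Schmidt norm while nearly respecting products and $\tau_M$. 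The task thus reduces to producing such microstates for an arbitrary $M$.

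First I would try to manufacture the microstates from finite-dimensional subalgebras. By Theorem \ref{Theorem: projections II1 factor} every \textup{II}$_1$ factor already contains unital copies of each $M_n(\mathbb{C})$, so one natural scheme is to build an increasing chain of finite-dimensional subalgebras whose union is Hilbert--Schmidt dense in $M$ and to take $\Phi$ to be the trace-preserving conditional expectation onto a large enough member of the chain. This succeeds handily for the key example $L\mathbb{F}_n$: free groups are residually finite, hence sofic, hence hyperlinear, so every finite subset of $\mathbb{F}_n$ is approximately represented by permutation (and thus unitary) matrices, and feeding these into the left regular representation yields exactly the required approximate matricial models for $L\mathbb{F}_n$. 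The same mechanism works whenever $\Gamma$ is hyperlinear, which is why the hyperlinear-groups chapter is the right warm-up for the general conjecture.

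The hard part is precisely where this scheme collapses. An arbitrary separable \textup{II}$_1$ factor need not contain any Hilbert--Schmidt-dense increasing chain of finite-dimensional subalgebras; that approximation property characterizes $R$ alone, not every factor. Allowing approximate \emph{maps} rather than genuine subalgebras is a strictly weaker demand, but supplying such maps uniformly for a general $M$ requires a mechanism that no presently known technique provides, and it is exactly this gap that keeps the assertion a \emph{conjecture} rather than a theorem. In the interest of honesty I must record that the strategy above cannot in fact be completed: although Connes proposed this in 1976 and it remained open for over four decades, it was ultimately refuted in 2020 as a consequence of the quantum-complexity equality $\mathrm{MIP}^{*}=\mathrm{RE}$ of Ji--Natarajan--Vidick--Wright--Yuen, so no proof strategy, mine included, can reach the final step.
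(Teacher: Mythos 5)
You have correctly recognized that this statement is a \emph{conjecture}, not a theorem: the paper offers no proof of it anywhere, and none could be expected, so there is nothing in the source to compare your argument against step by step. What you can be graded on is your reformulation, and it matches the paper's own treatment exactly: your reduction to the existence of matricial microstates is the unnamed theorem in the section on Voiculescu's free entropy, and your repackaging of the approximate maps into a trace-preserving embedding $M\hookrightarrow\mathcal{R}^{\mathcal{U}}$ is precisely the chain of equivalences the paper establishes later (local representability in $\mathcal{R}$, embeddability into an ultrapower, equality of universal theories). Your diagnosis of why the naive strategy fails is also sound: a Hilbert--Schmidt-dense increasing chain of matrix subalgebras is the definition of hyperfiniteness, hence by Murray--von Neumann it is available only for $\mathcal{R}$ itself, while the hyperlinear-group case you work out (via soficity of residually finite groups) is exactly R\u{a}dulescu's theorem as presented in the paper. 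Your closing remark that the conjecture was refuted as a consequence of $\mathrm{MIP}^{*}=\mathrm{RE}$ is factually accurate but postdates this monograph, which treats the conjecture as open; within the paper's frame of reference the correct terminal state of your proposal is ``open problem,'' not ``refuted,'' and the refutation is in any case consistent with your conclusion that no proof strategy can close the gap.
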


\begin{itemize}
\item $\left\Vert \Phi (x+y)-\left( \Phi (x)+\Phi (y)\right) \right\Vert
_{2}<\varepsilon $;

\item $\left\Vert \Phi (xy)-\Phi (x)\Phi (y)\right\Vert _{2}<\varepsilon $;

\item $\left\vert \tau _{M}(x)-\tau _{M_{n}(\mathbb{C})}\left( \Phi
(x)\right) \right\vert <\varepsilon $.
\end{itemize}

\section{Voiculescu's free entropy}

Connes' embedding\index{conjecture!Connes' embedding} conjecture is related to many other problems in pure and
applied mathematics and can be stated in several different equivalent ways.
The simplest and most practical way is probably through Voiculescu's free
entropy.

Consider a random variable which outcomes the set $\{1,\ldots ,n\}$ with
probabilities $p_{1},\ldots ,p_{n}$. Observe that its Shannon's entropy, $%
-\sum p_{i}\log (p_{i})$, can also be constructed through the following
procedure:

\begin{enumerate}
\item Call \emph{microstate}\index{microstate} any function $f$ from $\left\{ 1,2,\ldots ,N\right\} $ to $\left\{
1,2,\ldots ,n\right\}$. A microstate $f$ \emph{$\varepsilon $-approximates} the
discrete distribution $p_{1},\ldots p_{n}$ if for every $i\in \left\{
1,2,\ldots ,n\right\} $%
\begin{equation*}
\left\vert 
\frac{\left\vert f^{-1}(i)\right\vert }{N}-p_{i}\right\vert <\varepsilon 
\text{.}
\end{equation*}%
Denote the number of such microstates by $\Gamma (p_{1},\ldots
,p_{n},N,\varepsilon )$.

\item Take the limit of 
\begin{equation*}
N^{-1}\log |\Gamma (p_{1},\ldots ,p_{n},N,\varepsilon )|,
\end{equation*}%
as $N\rightarrow \infty $

\item Finally take the limit as $\varepsilon$ goes to zero.
\end{enumerate}

One can show that the result is just the opposite of the Shannon entropy,
that is, $\sum_{i\in n}p_{i}\log p_{i}$. Over the early 1990s, Voiculescu, in
part motivated by the isomorphism problem of whether the group von Neumann
algebras associated to different free groups are isomorphic or not, realized
that this construction could be imitated in the noncommutative world of II$%
_{1}$ factors.

\begin{enumerate}
\item Microstates are self-adjoint matrices, instead of functions between
finite sets. Formally, let $\varepsilon ,R>0$, $m,k\in \mathbb{N}$, and $%
X_{1},\ldots ,X_{n}$ be free random variables\footnote{Freeness is the analogue of independence in the noncommutative framework of $II_1$-factors. For a formal definition we refer the reader to \cite{Vo85}.} on a \textup{II}$_{1}$-factor $M$. We
denote $\Gamma _{R}(X_{1},\ldots ,X_{n};m,k,\varepsilon )$ the set of $%
(A_{1},\ldots ,A_{n})\in (M_{k}(\mathbb{C})_{sa})^{n}$ such that $\left\Vert
A_{m}\right\Vert \leq R$ and 
\begin{equation*}
|tr(A_{i_{1}}\cdots A_{i_{m}})-\tau (X_{i_{1}}\cdots X_{i_{m}})|<\varepsilon
\end{equation*}%
for every $1\leq m\leq p$ and $\left( i_{1},\ldots ,i_{m}\right) \in \left\{
1,2,\ldots ,n\right\} ^{m}$.

\item The discrete measure is replaced by the Lebesgue measure $\lambda $ on 
$(M_{k}(\mathbb{C})_{sa})^{n}$.

%Formally, let $k$ be a positive integer and $(M_{k}(\mathbb{C})_{sa})^{n}$
%be the set of $n$-tuples of self-adjoint $k\times k$ complex matrices. Let $%
%\lambda $ be the Lebesgue measure on $(M_{k}(\mathbb{C})_{sa})^{n}$
%corresponding to the Euclidean norm 
%\begin{equation*}
%||(A_{1},\ldots ,A_{n})||_{HS}^{2}=Tr(A_{1}^{2}+\cdots +A_{n}^{2})
%\end{equation*}%
%where $Tr$ is the non-normalized trace on $M_{k}(\mathbb{C})$.

\item The limits are replaced by suitable limsups, sups, and infs.
\end{enumerate}

Specifically, set 
\begin{equation*}
\chi _{R}(X_{1},\ldots ,X_{n};m,k,\varepsilon ):=\log \lambda (\Gamma
_{R}(X_{1},\ldots X_{n};m,k,\varepsilon )),
\end{equation*}%
\begin{equation*}
\chi _{R}(X_{1},\ldots X_{n};m,\varepsilon ):=\limsup_{k\rightarrow \infty
}(k^{-2}\chi _{R}(X_{1},\ldots X_{n};m,k,\varepsilon )+2^{-1}n\log (k)),
\end{equation*}%
\begin{equation*}
\chi _{R}(X_{1},\ldots X_{n}):=\inf \{\chi _{R}(X_{1},\ldots
X_{n};m,\varepsilon ):m\in \mathbb{N},\varepsilon >0\},
\end{equation*}%
Finally, define \textbf{entropy }of the variables $X_{1},\ldots ,X_{n}$ the
quantity 
\begin{equation*}
\chi (X_{1},\ldots X_{n}):=\sup \{\chi _{R}(X_{1},\ldots X_{n}):R>0\}\text{.}
\end{equation*}%
The factor $k^{-2}$ instead of $k^{-1}$ comes from the normalization. The
addend $2^{-1}n\log (k)$ is necessary, since otherwise $\chi
_{R}(X_{1},\ldots X_{n};m,\varepsilon )$ would always be equal to $-\infty $.

It is not clear why this construction should give entropy different from $%
-\infty $. Indeed Voiculescu himself proved that it is $-\infty $ when $%
X_{1},\ldots ,X_{n}$ are linearly dependent (see \cite[Proposition 3.6]{Vo2}%
). A necessary condition to have $\chi (X_{1},\ldots ,X_{n})>-\infty $ is
that $\Gamma _{R}(X_{1},\ldots ,X_{n},m,k,\varepsilon )$ is not empty for
some $k$, i.e. the finite subset $X=\{X_{1},\ldots ,X_{n}\}$ of $M_{sa}$ has
microstates. This requirement turns to be equivalent to the fact that $M$
satisfies Connes' embedding conjecture\index{conjecture!Connes' embedding}.

\begin{theorem*}
\label{microstates} Let $M$ be a II$_1$ factor. The following conditions are
equivalent

\begin{enumerate}
\item Every finite subsets $X\subseteq M_{sa}$ has microstates.

\item $M$ verifies Connes' embedding conjecture.
\end{enumerate}
\end{theorem*}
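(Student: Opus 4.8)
The natural bridge between the two conditions is a trace-preserving embedding of $M$ into a tracial ultraproduct $\prod_{\omega}M_{k_{j}}(\mathbb{C})$ of matrix algebras. Condition $(1)$ is essentially the assertion that each self-adjoint tuple of $M$ can be realized inside such an ultraproduct, while $(2)$ is the assertion that the approximating maps $\Phi$ exist; the plan is to pass through this common reformulation. I would prove $(1)\Rightarrow(2)$ by a direct and elementary construction of a single $\Phi$ out of one sufficiently accurate microstate, and $(2)\Rightarrow(1)$ by assembling the maps $\Phi$ into an embedding and then reading microstates off the defining matrix sequences.

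\textbf{Proof of $(1)\Rightarrow(2)$.} Fix a finite $F\subseteq M$ and $\varepsilon>0$. Using that the $*$-algebra generated by finitely many self-adjoints is $\norm{\cdot}_{2}$-dense on bounded sets, I would first reduce to the case in which $F$ consists of the words $w(X_{1},\ldots,X_{n})$ of length at most $d$ in self-adjoint generators $X_{i}$ with $\norm{X_{i}}\le R_{0}$. Let $V\subseteq M$ be the finite-dimensional span of all words of length at most $2d$ and fix a basis of $V$ made of such words, giving fixed relations $w(X)=\sum_{b}c^{b}_{w}\,b(X)$. The key point is that matching moments forces the substitution $w\mapsto w(A)$ to respect these relations approximately: for a self-adjoint tuple $A$ whose moments agree with those of $X$ up to order $4d$ within $\delta$, the quantity $\norm{w(A)-\sum_{b}c^{b}_{w}b(A)}_{2}^{2}$ is a fixed linear combination of traces of words of length at most $4d$ in the $A_{i}$, hence within a $d$-dependent multiple of $\delta$ of the same combination for $X$, which vanishes. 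I would therefore fix the basis data, choose $\delta$ accordingly, invoke $(1)$ for $\{X_{1},\ldots,X_{n}\}$ at order $4d$ to obtain a microstate $(A_{1},\ldots,A_{n})\in(M_{k}(\mathbb{C})_{sa})^{n}$, and set $\Phi(1)=I$ and $\Phi|_{V}$ linear with $\Phi(b)=b(A)$ on basis words. Additivity is then exact; approximate multiplicativity and the trace estimate follow from the displayed bound, the operator-norm bound on the $A_{i}$ (built into the microstate) controlling the products $x(A)y(A)$.

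\textbf{Proof of $(2)\Rightarrow(1)$.} Applying $(2)$ with $\varepsilon=1/j$ and an increasing sequence $F_{j}$ of finite sets with dense union, I would obtain maps $\Phi_{j}\colon M\to M_{k_{j}}(\mathbb{C})$ whose defect in additivity, multiplicativity and the trace tends to $0$. Fixing a free ultrafilter $\omega$, the induced map $M\to\prod_{\omega}M_{k_{j}}(\mathbb{C})$, $x\mapsto[(\Phi_{j}(x))_{j}]$, is then a trace-preserving $*$-homomorphism into the tracial ultraproduct (self-adjointness of the image being arranged by symmetrizing $\Phi_{j}$, once approximate preservation of the adjoint is included among the operations it respects). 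Now take $X_{1},\ldots,X_{n}\in M_{sa}$ and $m,\varepsilon$. Their images are self-adjoint of norm at most $\norm{X_{i}}$, so they admit representing sequences $(A_{i}^{(j)})_{j}$ of self-adjoint matrices with $\norm{A_{i}^{(j)}}\le R$; and for every word $w$ of length at most $m$, trace preservation gives $\lim_{\omega}tr\bigl(w(A^{(j)})\bigr)=\tau(w(X))$. Hence for $\omega$-many, in particular for at least one, index $j$ the tuple $(A_{1}^{(j)},\ldots,A_{n}^{(j)})$ lies in $\Gamma_{R}(X_{1},\ldots,X_{n};m,k_{j},\varepsilon)$, which is condition $(1)$.

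\textbf{Main obstacle.} The delicate point is the operator-norm control in $(2)\Rightarrow(1)$. The map supplied by Connes' conjecture is constrained only in $\norm{\cdot}_{2}$ on a finite set and carries no bound on $\norm{\Phi(X_{i})}$, so a naive attempt to extract microstates directly from a single $\Phi$ by symmetrizing and truncating runs into a circularity: controlling the truncation error needs control of high even moments $tr(\Phi(X_{i})^{2p})$, which in turn would require the very operator-norm bound one is trying to establish in order to compare $\Phi(X_{i})^{2p}$ with $\Phi(X_{i}^{2p})$. Routing through the ultraproduct removes this difficulty, since there the image of $X_{i}$ is an honest self-adjoint element of norm at most $\norm{X_{i}}$ and hence, by the standard truncation of representing sequences in a tracial ultraproduct, has self-adjoint matrix representatives of uniformly bounded norm. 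I expect the verification that the assembled map is genuinely multiplicative and trace-preserving on the ultraproduct, together with this norm-truncation fact, to be where the real work lies.
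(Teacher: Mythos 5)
Your direction $(1)\Rightarrow(2)$ is essentially sound: defining $\Phi$ linearly on a basis of words, using moments up to order $4d$ to control the linear relations in $\norm{\cdot}_2$, and exploiting the operator-norm bound $R$ built into the definition of a microstate to control products is a complete strategy. (Note that the paper states this theorem without proof, so your proposal can only be judged on its own terms.) The genuine gap is in $(2)\Rightarrow(1)$, and it sits exactly at the point you flag and then wave away. To form an element of the tracial ultraproduct $\prod_{\mathcal{U}}M_{k_j}(\mathbb{C})$ with representative sequence $(\Phi_j(x))_j$ you must first know that $\sup_j\norm{\Phi_j(x)}<\infty$ in \emph{operator} norm, because the ultraproduct is by definition a quotient of $\ell^\infty$; the truncation fact you cite applies only to elements that already live in the ultraproduct, so it cannot create that bound. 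And Connes' conjecture as stated in the paper supplies neither this bound nor any relation between $\Phi_j(x^*)$ and $\Phi_j(x)^*$: if $\Phi$ satisfies the three listed conditions on $F$, then so does $S\Phi(\cdot)S^{-1}$ for any invertible matrix $S$ (with $\varepsilon$ worsened by the factor $\norm{S}\,\norm{S^{-1}}$, while all traces of products are unchanged), and such a conjugation makes $\norm{\Phi(x)}$ and $\norm{\Phi(x^*)-\Phi(x)^*}_2$ arbitrarily large. So your parenthetical assumption that approximate $*$-preservation is ``included among the operations'' is a strengthening of the paper's definition, not a consequence of it; without it not even $\norm{\Phi_j(x)}_2$ is controlled, since $\Phi_j(x)^*$ never appears in the axioms. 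In short, routing through the ultraproduct does not remove the circularity you yourself identified: the ultraproduct map cannot be written down until the norm bound is already in hand.

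The missing ingredient is the standard passage through unitaries and stability. After rescaling assume $\norm{X_i}\leq 1$ and write $X_i=\frac{1}{2}(u_i+u_i^*)$, where $u_i=X_i+i(1-X_i^2)^{1/2}$ is unitary. Apply condition $(2)$, read so as to include the $*$-operation, to the finite set of all words $w$ of length at most $m$ in the $u_i^{\pm 1}$. Each $\Phi(w)$ is then approximately unitary, since $\Phi(w)\Phi(w)^*\approx\Phi(w)\Phi(w^*)\approx\Phi(ww^*)=1$ in $\norm{\cdot}_2$, so by stability of the relation defining unitaries via polar decomposition --- the very fact the paper records in its discussion of microstates for hyperlinear groups --- $\Phi(w)$ lies at small $\norm{\cdot}_2$-distance from a genuine unitary $U_w$, and $\norm{U_w}=1$. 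This is what manufactures the operator-norm bound that the approximate morphisms by themselves do not carry. One then verifies $tr\left(U_{w_1}\cdots U_{w_p}\right)\approx\tau\left(w_1\cdots w_p\right)$ by a Cauchy--Schwarz zigzag in which every error term is a small $2$-norm multiplied by a factor of $2$-norm at most $1+\delta$ (alternately replace $U_w$ by $\Phi(w)$, and contract $\Phi(w)\Phi(w')$ into $\Phi(ww')$), and the tuple $A_i=\frac{1}{2}\left(U_{u_i}+U_{u_i}^*\right)$ is a genuine microstate for $(X_1,\ldots,X_n)$; no ultraproduct is needed at all. Until you incorporate this (or an equivalent) mechanism for producing operator-norm bounds, your proof of $(2)\Rightarrow(1)$ does not close.
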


\section{History of Connes' embedding conjecture}

Connes' embedding conjecture\index{conjecture!Connes' embedding} had its origin in Connes' sentence:
\textquotedblleft \emph{We now construct an approximate imbedding of $N$ in $%
R$. Apparently such an imbedding ought to exist for all \textup{II}$_{1}$ factors
because it does for the regular representation of free groups. However the
construction below relies on condition 6}\textquotedblright\ (see \cite[page
105]{Co76}). This seemingly innocent observation received in the first
fifteen years after having been formulated relatively little attention. The
situation changed drastically in the early 1990s, when two fundamental
papers appeared on the scene: the aforementioned \cite{Vo2} where the Connes
embedding conjecture is used to define free entropy, and \cite{Ki} where
Eberhard Kirchberg obtained several unexpected reformulations of the Connes
embedding conjecture very far from its original statement, such as the fact
that the maximal and minimal tensor products of the group C*-algebra of the
free group on infinitely many generators coincide:%
\begin{equation}
C^{\ast }(F)\otimes _{\mathrm{min}}C^{\ast }(F)=C^{\ast }(F)\otimes _{%
\mathrm{max}}C^{\ast }(F)\text{\label{eq:kirchberg}}.
\end{equation}%
What is most striking in this equivalence is that Connes' embedding\index{conjecture!Connes' embedding}
conjecture concerns the class of all separable \textup{II}$_{1}$ factors, while (%
\eqref{eq:kirchberg}) is a statement about a single C*-algebra. Even more
surprising is the fact that the equivalence of these statements can be
proved a topological way as shown in \cite{Ha-Wi2}.

Following the aforementioned papers by Voiculescu and Kirchberg, a series of papers from
different authors proving various equivalent reformulations of Connes'
embedding conjecture appeared, such as \cite{Br},\cite{Oz}, \cite{Co-Dy}, \cite{Ha}, \cite{FKPT} contributing to arouse the interest around this conjecture. In
particular Florin R\u{a}dulescu showed in \cite{Ra2} that Connes' embedding
conjecture is equivalent to some noncommutative analogue of Hilbert's 17th
problem. This in turn inspired work of Klep and Schweighofer, who proved in 
\cite{Kl-Sc} a purely algebraic reformulation of Connes' embedding
conjecture (see also the more recent work \cite{Ju-Po}). In \cite{Ca-Ra} it
is observed that Connes' embedding conjecture could theoretically be
checked by an algorithm if a certain problem of embedding Hilbert spaces
with some additional structure into the Hilbert space $L^{2}(M,\tau )$
associated to a \textup{II}$_{1}$ factor has a positive solution. Another
computability-theoretical reformulation of the Connes' embedding conjecture
has more recently been proved in \cite{Goldbring-Hart}. Other very recent
discoveries include the fact that Connes' embedding conjecture is related to
Tsirelson's problem, a major open problem in Quantum Information Theory (see 
\cite{Fr}, \cite{Ju-Na-Pa-PeGa-Sc-We}, and \cite{Oz3}) and that it is connected to the ``minimal'' and ``commuting'' tensor products \cite{KPTT} of some group operator systems \cite{Ka11}, \cite{Fa-Pa}, \cite{FKP}.

\section{Hyperlinear and sofic groups\label{Subsection: hyperlinear and
sofic groups}}

In \cite{Radulescu} R\u{a}dulescu considered the particular case of Connes'
embedding conjecture for \textup{II}$_{1}$ factors arising as group factors $%
L\Gamma $ of countable discrete ICC groups. A countable discrete ICC group $%
\Gamma $ is called \textit{hyperlinear}%
\index{group!hyperlinear} if $L\Gamma $ verifies Connes' embedding%
\index{conjecture!Connes' embedding} conjecture. As we will see in Section %
\ref{Section: definition hyperlinear groups} and Section \ref{Section: logic
invariant length groups} the notion of hyperlinear group admits several
equivalent characterizations, as well as a natural generalization to the
class of all countable groups. The notion of hyperlinear group turned out to
be tightly connected with the notion of \emph{sofic groups}%
\index{group!sofic}. Sofic groups are a class of countable discrete groups
introduced by Gromov in \cite{Gr} (even though the name \textquotedblleft
sofic\textquotedblright\ was coined by Weiss in \cite{Weiss-sofic}). A group
is sofic if, loosely speaking, it can be locally approximated by finite
permutation groups $S_{n}$ up to an error measured in terms of the Hamming
metric $S_{n}$ (see Section \ref{Section: definition sofic groups}). Elek
and Szab\'{o} in \cite{Elek-Szabo-hyperlinearity} showed that every sofic%
\index{group!sofic} group is hyperlinear%
\index{group!hyperlinear}. Gromov's motivation to introduce the notion of
sofic groups came from an open problem in symbolic dynamics known as
Gottschalk's surjunctivity conjecture: Suppose that $\Gamma $ is a countable
group and $A$ is a finite set. Denote by $A^{\Gamma }$ the set of $\Gamma $%
-sequences of elements of $A$. The product topology on $A^{\Gamma }$ with
respect to the discrete topology on $A$ is compact and metrizable. The 
\textit{Bernoulli shift}%
\index{Bernoulli shift} of $\Gamma $ with alphabet $A$ is the left action of 
$\Gamma $ on $A^{\Gamma }$ defined by%
\begin{equation*}
\rho \cdot \left( a_{\gamma }\right) _{\gamma \in \Gamma }=\left( a_{\rho
^{-1}\gamma }\right) _{\gamma \in \Gamma }.
\end{equation*}%
A continuous function $f:A^{\Gamma }\rightarrow A^{\Gamma }$ is \textit{%
equivariant}%
\index{equivariant function} if it preserves the Bernoulli action, i.e.\ $%
f\left( \rho \cdot x\right) =\rho \cdot f(x)$, for every $x\in A^{\Gamma }$
and $\rho \in \Gamma $. Conjecture \ref{Conjecture: Gottschalk} was proposed
by Gottschalk in \cite{Gott-notions} and it is usually referred to as
Gottschalk's surjunctivity conjecture%
\index{conjecture!Gottschalk's surjunctivity}.

\begin{conjecture}
\label{Conjecture: Gottschalk}Suppose that $\Gamma $ is a discrete group and 
$A$ is a finite set. If $f:A^{\Gamma }\rightarrow A^{\Gamma }$ is a
continuous injective equivariant function, then $f$ is surjective.
\end{conjecture}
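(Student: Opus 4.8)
The plan is to attack the conjecture by the finite-approximation method, with one caveat stated up front: the statement for an arbitrary discrete $\Gamma$ is genuinely open, so what follows is the route that succeeds for the largest class of groups currently within reach, together with the precise point at which it stalls in general.

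First I would strip away the topology using the Curtis--Hedlund--Lyndon theorem, which says that a continuous equivariant map $f\colon A^{\Gamma}\to A^{\Gamma}$ is a cellular automaton: there is a finite \emph{memory set} $S\subseteq\Gamma$ and a \emph{local rule} $\mu\colon A^{S}\to A$ with $f(x)(\gamma)=\mu\big((x(\gamma s))_{s\in S}\big)$ for every $\gamma$. This compresses the infinite-dimensional datum $f$ into a map on finitely many symbols, which is what makes any counting argument conceivable, and it recasts injectivity as the assertion that distinct configurations must eventually disagree after one application of $\mu$ read through finite windows.

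Next I would set up the counting dichotomy. Were $f$ injective but \emph{not} surjective, compactness would furnish a finite pattern $p$ on a window $W\subseteq\Gamma$ lying outside the image, and equivariance would force $p$ to be forbidden at every translate. The heuristic is an entropy count: injectivity ought to keep the number of admissible output patterns at least as large as the number of input patterns, whereas a uniformly forbidden pattern strictly shrinks the output count, yielding a contradiction. The obstruction to running this literally is that $A^{\Gamma}$ is infinite, so ``number of patterns'' is not a genuine cardinality; one must first replace $\Gamma$ by finite models on which $\mu$ induces an honest self-map of some finite $A^{V}$, where the pigeonhole principle applies and an injective self-map of a finite set is automatically surjective.

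The finite models I would use are \emph{sofic approximations}: maps $\sigma_{i}\colon\Gamma\to\operatorname{Sym}(n_{i})$ that are almost multiplicative and almost free on ever larger finite subsets of $\Gamma$ in the normalized Hamming metric, exactly the approximation property developed later in these notes. On such a model the memory set $S$ sits faithfully around almost every vertex, so $\mu$ globalizes to a map $f_{\sigma_{i}}\colon A^{n_{i}}\to A^{n_{i}}$; global injectivity forces $f_{\sigma_{i}}$ to be injective on a set of configurations of full exponential size, while the recurrent forbidden pattern $p$ deletes a fixed exponential fraction from the image, and for large $n_{i}$ these two estimates collide. This is precisely the Gromov--Weiss argument, and it establishes the conjecture for every \emph{sofic} group. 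The main obstacle, and the reason the conjecture as worded remains open, is that this reduction consumes a sofic approximation of $\Gamma$, and it is not known whether every countable discrete group is sofic; no proof of surjunctivity is presently available that does not factor through some such finite-approximation property. Thus the combinatorial reduction and the counting estimate are routine, but the existence of the finite models on which the count is performed is exactly the missing, and open, input.
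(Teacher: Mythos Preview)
You correctly recognize that the statement is a \emph{conjecture}, not a theorem: the paper does not prove it in full generality and explicitly flags it as open. What the paper does prove is the sofic case, and there your outline and the paper's argument diverge in method.

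You sketch the Gromov--Weiss route: invoke Curtis--Hedlund--Lyndon to get a local rule with finite memory, pull a forbidden finite pattern from any missed configuration, push the local rule to finite sofic models $A^{n_i}$, and derive a pigeonhole contradiction between the injectivity count and the exponential loss from the forbidden pattern. The paper instead takes the Kerr--Li entropy route (Section on \emph{Entropy}): it defines topological entropy $h_{\Sigma}(\Gamma,X)$ for actions of a sofic group relative to a sofic approximation sequence $\Sigma$, computes $h_{\Sigma}(\Gamma,A^{\Gamma})=\log|A|$ for the full shift, and then shows via a tiling-and-counting estimate that any \emph{proper} closed invariant subspace $Y\subsetneq A^{\Gamma}$ satisfies $h_{\Sigma}(\Gamma,Y)<\log|A|$. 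Since entropy is a conjugacy invariant, an injective non-surjective equivariant $f$ would conjugate the full shift onto a proper subshift of strictly smaller entropy, which is impossible.

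The two arguments are cousins---both ultimately count patterns on finite sofic models---but the packaging differs. Your approach is more direct and self-contained for this single application; the paper's approach sets up the general machinery of sofic entropy, which costs more in overhead but yields a reusable invariant. Both stall at exactly the point you identify: the need for a sofic approximation of $\Gamma$.
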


It is currently an open problem to determine whether Gottschalk's
surjunctivity conjecture holds for all countable discrete groups. Gromov
proved in \cite{Gr} using graph-theoretical methods that sofic%
\index{group!sofic} groups satisfy Gottschalk's surjunctivity conjecture.
Another proof was obtained Kerr and Li in \cite{Kerr-Li-Variational} as an
application of the theory of entropy for actions of sofic groups developed
by Bowen, Kerr, and Li (see \cite{Bowen-measure-entropy}, \cite%
{Kerr-Li-Variational}, and \cite{KerrLiDynamical}). The countable discrete
groups satisfying Gottschalk's surjunctivity conjecture are sometimes called 
\textit{surjunctive}%
\index{group!surjunctive}. An example of a \emph{monoid }that does not
satisfy the natural generalization of surjunctivity for monoids has been
provided in \cite{ceccherini-silberstein_surjunctive_2014}. More information
about Gottschalk's surjunctivity conjecture and surjunctive groups can be
found in the monograph \cite{Ceccherini-Coornaert}.

Since Gromov's proof of Gottschalk's conjecture for sofic%
\index{group!sofic} groups, many other open problems have been settled for
sofic or hyperlinear groups such as the Kervaire-Laudenbach conjecture (see
Section \ref{Section: Kervaire-Laudenbach for hyperlinear}), Kaplansky's
direct finiteness conjecture (see Section \ref{Section: direct finiteness}
and Section \ref{Section: rank rings and finiteness conjecture}), and the
algebraic eigenvalues conjecture (see Section \ref{Section: algebraic
eigenvalues}). This showed how deep and fruitful the notion of hyperlinear%
\index{group!hyperlinear} and sofic%
\index{group!sofic} groups are, and contributed to bring considerable
attention to the following question which is strikingly still open:

\begin{openbroblem}
Is there any countable discrete group $\Gamma $ that is not sofic or
hyperlinear?
\end{openbroblem}

\section{Kaplansky's direct finiteness conjecture\label{Section: direct
finiteness}}

Suppose that $\Gamma $ is a countable discrete group, and consider the
complex group algebra $\mathbb{C}\Gamma $ defined in Section \ref{Section:
vN algebras and II1 factors}.\index{group algebra!complex} Kaplanksi showed in \cite%
{Kaplansky-fields-rings} that $\mathbb{C}\Gamma $ is a directly finite ring.
This means that if $a,b\in \mathbb{C}\Gamma $ are such that $ab=1$ then $%
ba=1 $. In \cite{Burger-Valette} Burger and Valette gave a short proof of
Kaplansky's result by means of the group von Neumann algebra\index{group von Neumann algebra} construction
introduced in Section \ref{Section: vN algebras and II1 factors}. Indeed,
one can regard $\mathbb{C}\Gamma $ as a subalgebra of the group von Neumann
algebra $L\Gamma $ and prove that $L\Gamma $ is directly finite using
analytic methods. This is the content of Theorem \ref{Theorem: direct finite
vN algebra}.%

\begin{theorem}
\label{Theorem: direct finite vN algebra}If $M$ is a von Neumann algebra
endowed with a faithful finite trace $\tau $, then $M$ is a directly finite
algebra.
\end{theorem}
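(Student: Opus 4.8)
The plan is to unwind the definition of direct finiteness: given $a,b\in M$ with $ab=1$, I must show that $ba=1$. The element to get control of is $e=1-ba$, and the first observation is that $ba$ is an idempotent, since $(ba)(ba)=b(ab)a=b\cdot 1\cdot a=ba$; consequently $e=1-ba$ is an idempotent as well, as $e^2=1-2ba+(ba)(ba)=1-2ba+ba=e$. Thus the whole statement reduces to proving that this particular idempotent $e$ vanishes.

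To show $e=0$ I would exploit the trace twice. First, traciality gives $\tau(ba)=\tau(ab)=\tau(1)$, so that $\tau(e)=\tau(1)-\tau(ba)=0$. The temptation is now to invoke faithfulness directly, but faithfulness only asserts that $\tau(x^{\ast}x)=0$ forces $x=0$, and $e$ is an idempotent rather than a positive element, so $\tau(e)=0$ does not by itself yield $e=0$. The device that bridges this gap is the \emph{range projection}: let $p\in M$ be the projection onto the closure of the range of $e$. It lies in $M$ because range projections of elements of a von Neumann algebra belong to the algebra, and here the range of the bounded idempotent $e$ is already closed. By the defining property of the range (left support) projection one has $pe=e$, while the fact that $e$ acts as the identity on its own range gives $ep=p$. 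Traciality then converts the trace of the idempotent into the trace of an honest projection: $\tau(e)=\tau(pe)=\tau(ep)=\tau(p)$.

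Combining the two computations yields $\tau(p)=0$ for the projection $p$. Now faithfulness applies cleanly: since $p=p^{\ast}p$, the relation $\tau(p^{\ast}p)=0$ forces $p=0$, whence $e=pe=0$ and therefore $ba=1$, as desired. I expect the only genuinely delicate point to be the middle step, namely producing the range projection inside $M$ and verifying $pe=e$ and $ep=p$; everything else is a formal manipulation of the trace axioms. This is precisely where the hypotheses enter in full strength: weak closure of $M$ guarantees that the range projection lies in $M$, traciality turns $\tau(e)$ into $\tau(p)$, and faithfulness finishes the argument, which is what allows a purely algebraic statement about one-sided inverses to be settled by these analytic means.
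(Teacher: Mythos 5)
Your proof is correct, and it takes a genuinely different route from the paper's at the decisive step. Both arguments follow the same outer scheme: from $ab=1$ one extracts an idempotent of trace zero (you take $e=1-ba$; the paper notes $ba$ is an idempotent of trace $1$ and reduces to the equivalent statement that a trace-zero idempotent vanishes), and both then convert that idempotent into a genuine projection of the \emph{same} trace so that faithfulness can be applied to $p=p^{\ast}p$. The difference is how the projection is produced. The paper uses Kaplansky's algebraic trick: it sets $z=1+(e^{\ast}-e)^{\ast}(e^{\ast}-e)$, observes that $z$ is invertible and commutes with $e$, and checks that $f=ee^{\ast}z^{-1}$ is a projection with $\tau(f)=\tau(e)$. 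You instead take the range projection $p$ of $e$ (the range is closed since it equals $\ker(1-e)$), verify the two identities $pe=e$ and $ep=p$, and conclude $\tau(e)=\tau(pe)=\tau(ep)=\tau(p)$ by traciality. Your identities are correct, and the fact you lean on --- that the range projection of an element of a von Neumann algebra lies in the algebra --- is exactly where weak closure (the double commutant theorem) enters; this makes your argument spatially transparent but tied to the von Neumann hypothesis. The paper's construction, by contrast, is purely $\ast$-algebraic (it only needs invertibility of $1+x^{\ast}x$), so the same lemma holds verbatim in any unital C*-algebra with a faithful trace, which is a strictly broader setting than the theorem as stated requires. In short: you trade a small amount of generality for a cleaner conceptual picture; both proofs are complete.
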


We now prove this theorem. Let $M$ be a von Neumann algebra and $\tau $ be a faithful
normalized trace on $M$. If $x,y\in M$ are such that $xy=1$ then $yx\in M$
is an idempotent element such that%
\begin{equation*}
\tau (yx)=\tau (xy)=\tau (1)=1.
\end{equation*}%
It is thus enough to prove that if $e\in M$ is an idempotent element such
that $\tau (e)=1$ then $e=1$. This is equivalent to the assertion that if $%
e\in M$ is an idempotent element such that $\tau (e)=0$ then $e=0$. The
latter statement is proved in Lemma \ref{Lemma: trace idempotent vN} (see 
\cite[Lemma 2.1]{Burger-Valette}).

\begin{lemma}
\label{Lemma: trace idempotent vN}If $M$ is a von Neumann algebra endowed
with a faithful finite trace $\tau $ and $e\in M$ is an idempotent such that 
$\tau (e)=0$, then $e=0$.
\end{lemma}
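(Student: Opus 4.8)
The plan is to reduce the non-self-adjoint idempotent $e$ to a self-adjoint one, where the conclusion is immediate. If $e$ were a projection we would have $\tau(e)=\tau(e^{\ast}e)\geq 0$, so $\tau(e)=0$ would force $e=0$ by faithfulness at once. The whole difficulty is that an idempotent ($e^2=e$) need not be self-adjoint, so the first move is to replace $e$ by the orthogonal projection onto its range and show that this replacement preserves the trace.

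First I would introduce $p$, the orthogonal projection of $H$ onto the subspace $\operatorname{ran}(e)=\ker(1-e)$; note that the range of an idempotent is automatically closed, being the kernel of the bounded operator $1-e$. I would then check that $p\in M$. Since $\operatorname{ran}(e)=\overline{\operatorname{ran}(ee^{\ast})}$, the projection $p$ is the left support projection of $e$, i.e.\ the spectral projection of the self-adjoint element $ee^{\ast}\in M$ for the set $(0,\infty)$; hence $p$ lies in $M$. Next I would record the two intertwining relations
\begin{equation*}
pe=e,\qquad ep=p,
\end{equation*}
both of which follow from the identity $\operatorname{ran}(e)=\{\xi:e\xi=\xi\}$: every vector $e\xi$ lies in $\operatorname{ran}(p)$ and is thus fixed by $p$, giving $pe=e$, while every vector $p\eta$ lies in $\operatorname{ran}(e)$ and is thus fixed by $e$, giving $ep=p$.

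The key step is then the trace identity $\tau(e)=\tau(p)$, obtained purely from traciality:
\begin{equation*}
\tau(e)=\tau(pe)=\tau(ep)=\tau(p),
\end{equation*}
where the outer equalities use $pe=e$ and $ep=p$ and the middle one is $\tau(xy)=\tau(yx)$ applied to $x=p$, $y=e$. Since $p$ is a projection we have $p^{\ast}p=p$, so the hypothesis gives $\tau(p^{\ast}p)=\tau(p)=\tau(e)=0$, and faithfulness of $\tau$ forces $p=0$. Finally $e=pe=0$, as desired.

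The main obstacle here is conceptual rather than computational: one must recognize that the correct object to compare $e$ against is its range projection $p$, and that the comparison costs nothing at the level of the trace. Everything hinges on the identity $\tau(e)=\tau(p)$ and on the two relations $pe=e$, $ep=p$ supporting it; the single technical point requiring care is verifying that $p$ genuinely belongs to $M$, since otherwise traciality would not even be applicable. (An alternative route would be to exhibit an invertible $s\in M$ conjugating $e$ to $p$ and invoke $\tau(ses^{-1})=\tau(e)$, but the range-projection computation above is the most economical.) Once these ingredients are in place the argument collapses to a two-line calculation and the self-adjoint case takes over.
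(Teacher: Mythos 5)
Your proof is correct, and it takes a genuinely different route from the paper's. The paper follows the Burger--Valette/Kaplansky algebraic trick: it sets $z=1+\left(e^{\ast}-e\right)^{\ast}\left(e^{\ast}-e\right)$, notes that $z$ is invertible and commutes with $e$, and checks that $f=ee^{\ast}z^{-1}$ is a projection with $\tau(f)=\tau(e)$; this construction is purely algebraic and would work verbatim in any unital C*-algebra equipped with a faithful tracial state. You instead take $p$ to be the range projection of $e$, justify $p\in M$ by identifying it with the spectral projection $\chi_{(0,\infty)}(ee^{\ast})$ (or, equivalently, by a double-commutant argument), and derive $\tau(e)=\tau(pe)=\tau(ep)=\tau(p)$ from the relations $pe=e$, $ep=p$ together with traciality. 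Your construction leans on the von Neumann algebra structure (existence of spectral projections), which is exactly the hypothesis at hand, and it makes the trace comparison transparent; the paper's construction avoids spectral theory at the cost of an unexplained algebraic formula. It is worth noting that the two proofs produce the same object: the projection $ee^{\ast}z^{-1}$ of the paper \emph{is} the range projection of $e$, so the arguments differ in how the projection is exhibited and how the trace equality is verified, not in what is ultimately compared. All the steps you need --- closedness of $\operatorname{ran}(e)=\ker(1-e)$, the identification $\operatorname{ran}(e)=\{\xi: e\xi=\xi\}$, membership $p\in M$, and the two intertwining relations --- are stated and justified correctly.
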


\begin{proof}
The conclusion is obvious if $e$ is a self-adjoint idempotent element (i.e.
a projection). In fact in this case%
\begin{equation*}
\tau (e)=\tau \left( e^{\ast }e\right) =0
\end{equation*}%
implies $e=0$ by faithfulness of $\tau $. In order to establish the general
case it is enough to show that if $e\in M$ is idempotent, then there is a
self-adjoint invertible element $z$ of $M$ such that $f=ee^{\ast }z^{-1}$ is
a projection and $\tau (e)=\tau (f)$. Define%
\begin{equation*}
z=1+\left( e^{\ast }-e\right) ^{\ast }\left( e^{\ast }-e\right) .
\end{equation*}%
Observe that $z$ is an invertible element (see \cite[II.3.1.4]{Blackadar})
commuting with $e$. It is not difficult to check that $f=ee^{\ast }z^{-1}$
has the required properties.
\end{proof}

The construction of the complex group algebra of $\Gamma $ introduced in
Section \ref{Section: vN algebras and II1 factors} can be generalized
replacing $\mathbb{C}$ with an arbitrary field $K$. One thus obtains the
group $K$-algebra \index{group algebra} $K\Gamma $ of $\Gamma $ consisting of formal finite linear
combinations%
\begin{equation*}
k_{1}\gamma _{1}+\ldots +k_{n}\gamma _{n}
\end{equation*}%
of elements of $\Gamma $ with coefficients in $K$. As before a typical
element $a$ of $K\Gamma $ can be denoted by%
\begin{equation*}
\sum_{\gamma }a_{\gamma }\gamma
\end{equation*}%
where the coefficients $a_{\gamma }\in K$ are zero for all but finitely many 
$\gamma \in \Gamma $. The operations on $K\Gamma $ are defined by%
\begin{equation*}
\left( \sum_{\gamma }a_{\gamma }\gamma \right) +\left( \sum_{\gamma
}b_{\gamma }\gamma \right) =\sum_{\gamma }\left( a_{\gamma }+b_{\gamma
}\right) \gamma
\end{equation*}%
and%
\begin{equation*}
\left( \sum_{\gamma }a_{\gamma }\gamma \right) \left( \sum_{\gamma
}b_{\gamma }\gamma \right) =\sum_{\gamma }\left( \sum_{\rho \rho ^{\prime
}=\gamma }a_{\rho }b_{\rho ^{\prime }}\right) \gamma 
\text{.}
\end{equation*}%
Conjecture \ref{Conjecture: direct finiteness} is due to Kaplansky and
usually referred to as direct finiteness conjecture%
\index{conjecture!direct finiteness}.

\begin{conjecture}
\label{Conjecture: direct finiteness}The algebra $K\Gamma $ is directly
finite, i.e.\ any one-side invertible element of $K\Gamma $ is invertible.
\end{conjecture}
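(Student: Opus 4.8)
The statement as written---direct finiteness of $K\Gamma$ for an \emph{arbitrary} field $K$ and arbitrary countable group $\Gamma$---is in fact still open, so what I would really aim at is to establish the two regimes in which a proof is available and to isolate exactly where the general argument breaks down. In every case the first reduction is the same: if $a,b\in K\Gamma$ satisfy $ab=1$, then only finitely many group elements and finitely many field elements occur among the coefficients of $a$, $b$, and $ab$. I may therefore replace $K$ by the subfield $K_{0}$ generated by these finitely many coefficients and $\Gamma$ by the (finitely generated) subgroup $\Gamma_{0}$ they support, reducing the problem to direct finiteness of $K_{0}\Gamma_{0}$ with $K_{0}$ finitely generated.

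For the characteristic-zero case I would transfer the problem to the already-settled complex case. A finitely generated field of characteristic $0$ embeds into $\mathbb{C}$, so fixing such an embedding produces an injective algebra homomorphism $K_{0}\Gamma_{0}\hookrightarrow\mathbb{C}\Gamma_{0}\hookrightarrow\mathbb{C}\Gamma$. Since $\mathbb{C}\Gamma$ sits inside the group von Neumann algebra $L\Gamma$, which carries the faithful normalized trace $\tau$, Theorem \ref{Theorem: direct finite vN algebra} shows that $L\Gamma$---and hence the subalgebra $\mathbb{C}\Gamma$ and its subalgebra $K_{0}\Gamma_{0}$---is directly finite. Tracking the relation $ab=1$ through these inclusions and pulling $ba=1$ back along the injections yields $ba=1$ in $K\Gamma$.

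For an arbitrary field but with $\Gamma$ \emph{sofic}, I would replace the analytic trace by a rank function built from a sofic approximation. Such an approximation supplies, for each $n$, an almost-multiplicative and almost-free map $\sigma_{n}\colon\Gamma\to S_{k_{n}}$; sending each group element to its permutation matrix extends linearly to an almost-homomorphism $K\Gamma\to M_{k_{n}}(K)$. The normalized matrix ranks $\frac{1}{k_{n}}\operatorname{rank}(\cdot)$ then converge along an ultrafilter to a \emph{faithful} rank function $\mathrm{rk}$ on $K\Gamma$, satisfying $\mathrm{rk}(1)=1$ and $\mathrm{rk}(xy)\le\min\{\mathrm{rk}(x),\mathrm{rk}(y)\}$, with faithfulness coming precisely from almost-freeness of the $\sigma_{n}$. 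Given $ab=1$, the element $e=1-ba$ is idempotent and satisfies $ae=0$; since $\mathrm{rk}(a)=1$, the Sylvester law of nullity forces $\mathrm{rk}(a)+\mathrm{rk}(e)\le\mathrm{rk}(ae)+1=1$, whence $\mathrm{rk}(e)=0$ and faithfulness gives $e=0$, i.e.\ $ba=1$. The only structural input is that each $M_{k_{n}}(K)$ is itself directly finite, a property the limiting rank function transports to $K\Gamma$.

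The genuine obstacle is the remaining regime: a field of positive characteristic together with a group not known to be sofic. Here neither method applies---there is no embedding into an operator algebra carrying a $\mathbb{C}$-valued trace, and without a sofic approximation there is no evident source for a faithful rank function. Producing such a rank function for arbitrary pairs $(K,\Gamma)$, or else exhibiting a group algebra that fails to be directly finite, is exactly the content that keeps this conjecture open.
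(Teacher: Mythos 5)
Your proposal is correct, and it rightly treats the statement as a conjecture rather than a theorem, establishing it precisely in the two regimes the paper itself handles: characteristic zero via the faithful trace on the group von Neumann algebra (Theorem \ref{Theorem: direct finite vN algebra}), and sofic groups via rank functions built from sofic approximations (Section \ref{Section: rank rings and finiteness conjecture}). The two routes differ in two places worth noting. First, your endgame in the sofic case pulls the rank function back to $K\Gamma$ and invokes the Sylvester law of nullity $\mathrm{rk}(xy)\geq\mathrm{rk}(x)+\mathrm{rk}(y)-1$, a property of normalized matrix rank that does survive the ultralimit but is not among the paper's rank-function axioms; the paper instead observes that being a \emph{finite rank ring}, i.e.\ $N(xy-1)=N(yx-1)$, is an axiomatizable property, so by \L o\'{s}' theorem the ultraproduct $\prod\nolimits_{\mathcal{U}}M_{n}(K)$ is a finite rank ring, hence directly finite, and then simply notes that a subring of a directly finite ring is directly finite. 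Both endgames are valid; yours is more quantitative, the paper's stays entirely within the listed axioms plus model theory. Second, the field reduction: you handle arbitrary characteristic-zero fields by embedding a finitely generated subfield into $\mathbb{C}$ and quoting the von Neumann algebra argument, whereas the paper reduces an \emph{arbitrary} field to finite fields by embedding any field into an ultraproduct of finite fields, a reduction that makes the sofic result (and the implication from Gottschalk's conjecture, which the paper also records) cover all characteristics simultaneously. Your identification of the genuinely open regime---positive characteristic together with a group not known to be sofic---is accurate.
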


Kaplansky's direct finiteness conjecture was established for residually amenable groups
in \cite{AOMP} and then for sofic\index{group!sofic} groups in \cite{Elek-Szabo-finiteness}.
Section \ref{Section: rank rings and finiteness conjecture} contains a proof
of Conjecture \ref{Conjecture: direct finiteness} for sofic groups involving
the notion of rank ring and rank ultraproduct of rank rings. It is a well
know fact (see Observation 2.1 in \cite{Kwiatkowska-Pestov}) that any field
embeds as a subfield of an ultraproduct of finite fields (see Section 2 of
for the definition of ultraproduct of fields). It is not difficult to deduce
from this observation that a group $\Gamma $ satisfies Kaplansky's direct
finiteness conjecture as soon as $K\Gamma $ is directly finite for every
finite field $K$ (this fact was pointed out to us by Vladimir Pestov). This
allows one to deduce that Gottschalk's conjecture\index{conjecture!Gottschalk's surjunctivity} is stronger than Kaplansky's
direct finiteness conjecture\index{conjecture!direct finiteness}. In fact suppose that $\Gamma $ is a group
satisfying Gottschalk's conjecture and $K$ is a field that, without loss of
generality, we can assume to be finite. Consider the Bernoulli action of $%
\Gamma $ with alphabet $K$. We will denote an element $(a)_{\gamma \in
\Gamma }$ of $K^{\Gamma }$ by $\sum_{\gamma }a_{\gamma }\gamma $, and regard
the group algebra $K\Gamma $ as a subset of $K^{\Gamma }$. Defining%
\begin{equation*}
\left( \sum_{\gamma }a_{\gamma }\gamma \right) \cdot \left( \sum_{\gamma
}b_{\gamma }\gamma \right) =\sum_{\gamma }\left( \sum_{\rho \rho ^{\prime
}=\gamma }a_{\rho }b_{\rho ^{\prime }}\right) \gamma 
\end{equation*}%
for $\sum_{\gamma }a_{\gamma }\gamma \in K^{\Gamma }$ and $\sum_{\gamma
}b_{\gamma }\gamma \in K\Gamma $ one obtains a right action of $K\Gamma $ on 
$K^{\Gamma }$ that extends the multiplication operation in $K\Gamma $ and
commutes with the left action of $\Gamma $ on $K\Gamma $. Suppose that $a$, $%
b\in K\Gamma $ are such that $ab=1_{\Gamma }$. Define the continuous
equivariant map $f:K^{\Gamma }\rightarrow K^{\Gamma }$ by $f(x)=x\cdot a$.
It follows from the fact that $b$ is a right inverse of $a$ that $f$ is
injective. Since $\Gamma $ is assumed to satisfy Gottschalk's conjecture, $f$
must be surjective. In particular there is $x_{0}\in K^{\Gamma }$ such that 
\begin{equation*}
x_{0}\cdot a=f(x_{0})=1_{\Gamma }%
\text{.}
\end{equation*}%
It follows that $a$ has a left inverse. A standard calculation allows one to
conclude that $a$ is invertible with inverse $b$.

\section{Kaplansky's group ring conjectures}

Conjecture \ref{Conjecture: direct finiteness} is only one of several
conjectures concerning the group algebra\index{group algebra} $K\Gamma $ for a countable discrete 
\emph{torsion-free} group $\Gamma $ attributed to Kaplansky. Here is the complete list:

\begin{itemize}
\item Zero divisors conjecture%
\index{conjecture!zero divisors}: $K\Gamma $ has no zero divisors;

\item Nilpotent elements conjecture%
\index{conjecture!nilpotent elements}: $K\Gamma 
$ has no nilpotent elements;

\item Idempotent elements conjecture%
\index{conjecture!idempotent elements}: the only idempotent elements of $%
K\Gamma $ are $0$ and $1$;

\item Units conjecture%
\index{conjecture!units}: the only units of $K\Gamma $ are $k\gamma $ for $%
k\in K\left\backslash \left\{ 0\right\} \right. $ and $\gamma \in \Gamma $;

\item Trace of idempotents conjecture%
\index{conjecture!trace of idempotents}: if $b$ is an idempotent element of $%
K\Gamma $ then the coefficient $b_{1_{\Gamma }}$ corresponding to the
identity $1_{\Gamma }$ of $\Gamma $ belongs to the prime field $K_{0}$ of $K$
(which is the minimum subfield of $K$).
\end{itemize}

All these conjectures are to this day still open, apart from the last one
which has been established by Zalesskii in \cite{Zalesskii}. We will present now a proof of the particular case
of Zalesskii's result when $K$ is a finite field of characteristic $p$. Recall that a \textit{trace }on $K\Gamma 
$ is a $K$-linear map $\tau :K\Gamma \rightarrow K$ such that $\tau
(ab)=\tau (ba)$ for every $a,b\in K\Gamma $. If $n$ is a nonnegative integer
and $a\in K\Gamma $, define $\tau _{n}(a)$ to be the sum of the coefficients
of $a$ corresponding to elements of $\Gamma $ of order $p^{n}$. In
particular $\tau _{0}(a)$ is the coefficient of $a$ corresponding t the
identity element $1_{\Gamma }$ of $\Gamma $.

\begin{exercise}
\label{Exercise: trace}Verify that $\tau _{n}$ is a trace on $K\Gamma $.
\end{exercise}

\begin{exercise}
\label{Exercise: Frobenius}Show that if $K$ is a finite field of
characteristic $p$ and $\tau $ is any trace on $K\Gamma $, then%
\begin{equation}
\tau \left( (a+b)^{p}\right) =\tau (a^{p})+\tau (b^{p})\label{Equation: Frobenius},
\end{equation}%
for every $a,b\in K\Gamma $. Thus by induction%
\begin{equation*}
\tau (a^{p})=\sum_{\gamma }a_{\gamma }^{p}\tau (\gamma ^{p})%
\text{.}
\end{equation*}
\end{exercise}

The identity \eqref{Equation: Frobenius} can be referred to as
\textquotedblleft Frobenius under trace"%
\index{Frobenius under trace} in analogy with the corresponding identity for
elements of a field of characteristic $p$.\textit{\ }Suppose now that $e\in
K\Gamma $ is an idempotent element. We want to show that $\tau (e)$ belongs
to the prime field $K_{0}$ of $K$. To this purpose it is enough to show that 
$\tau (e)^{p}=\tau (e)$. For $n\geq 1$ we have, by Exercise \ref{Exercise:
trace} and Exercise \ref{Exercise: Frobenius}, denoting by $\left\vert
\gamma \right\vert $ the order of an element $\gamma $ of $\Gamma $:%
\begin{eqnarray*}
\tau _{n}(e) &=&\tau _{n}(e^{p}) \\
&=&\sum_{\gamma }e_{\gamma }^{p}\tau _{n}(\gamma ^{p}) \\
&=&\sum_{\left\vert \gamma \right\vert =p^{n+1}}e_{\gamma }^{p} \\
&=&\left( \sum_{\left\vert \gamma \right\vert =p^{n+1}}e_{\gamma }\right)
^{p} \\
&=&\tau _{n+1}(e)^{p}.
\end{eqnarray*}%
On the other hand:%
\begin{eqnarray*}
\tau _{0}(e) &=&\tau _{0}(e^{p}) \\
&=&\sum_{\gamma }e_{\gamma }^{p}\tau _{0}(\gamma ^{p}) \\
&=&\sum_{\left\vert \gamma \right\vert =1}e_{\gamma }^{p}+\sum_{\left\vert
\gamma \right\vert =p}e_{\gamma }^{p} \\
&=&\tau _{0}(e)^{p}+\tau _{1}(e)^{p}.
\end{eqnarray*}%
From these identities it is easy to prove by induction that%
\begin{equation*}
\tau _{0}(e)=\tau _{0}(e)^{p}+\tau _{n}(e)^{p^{n}},
\end{equation*}%
for every $n\in \mathbb{N}$. Since $e$ has finite support, there is $n\in 
\mathbb{N}$ such that $\tau _{n}(e)=0$. This implies that $\tau _{0}(e)=\tau
_{0}(e)^{p}$ and hence $\tau _{0}(e)\in K_{0}$.

The proof of the general case of Zalesskii's theorem can be inferred from
the particular case presented here. The details can be found in \cite%
{Burger-Valette}.

\chapter{Sofic and hyperlinear groups}

\chapterauthor{Martino Lupini}

\section{Definition of sofic groups\label{Section: definition sofic groups}}

A\textit{\ length function%
\index{length function}} $\ell $ on a group $G$ is a function $\ell
:G\rightarrow \left[ 0,1\right] $ such that for every $x,y\in G$:

\begin{itemize}
\item $\ell (xy)\leq \ell (x)+\ell (y)$;

\item $\ell (x^{-1})=\ell (x)$;

\item $\ell (x)=0$ if and only if $x$ is the identity $1_{G}$ of $G$.
\end{itemize}

A length function is called \textit{invariant%
\index{length function!invariant} }if it is moreover invariant by
conjugation. This means that for every $x,y\in G$%
\begin{equation*}
\ell (xyx^{-1})=\ell (x)
\end{equation*}%
or equivalently%
\begin{equation*}
\ell (xy)=\ell (yx)%
\text{.}
\end{equation*}%
A group endowed with an invariant length function is called an\textit{\
invariant length group%
\index{group!invariant length}}. If $G$ is an invariant length group with
invariant length function $\ell $, then the function 
\begin{equation*}
d:G\times G\rightarrow \left[ 0,1\right]
\end{equation*}%
defined by $d(x,y)=\ell (xy^{-1})$ is a bi-invariant metric on $G$. This
means that $d$ is a metric on $G$ such that left and right translations in $%
G $ are isometries with respect to $d$. Conversely any bi-invariant metric $%
d $ on $G$ gives rise to an invariant length function $\ell $ on $G$ by%
\begin{equation*}
\ell (x)=d(x,1_{G})%
\text{.}
\end{equation*}%
This shows that there is a bijective correspondence between invariant length
functions and bi-invariant metrics on a group $G$. If $\Gamma $ is any
group, then the function

\begin{equation*}
\ell _{0}(x)=%
\begin{cases}
0 & \text{if }x=1_{G}\text{,} \\ 
1 & \text{otherwise;}%
\end{cases}%
\end{equation*}%
is an invariant length function on $\Gamma $, called the trivial invariant
length function. In the following any discrete group will be regarded as an
invariant length group endowed with the trivial invariant length function.

Consider for $n\in 
%TCIMACRO{\U{2115} }%
%BeginExpansion
\mathbb{N}
%EndExpansion
$ the group $S_{n}$ of permutations over the set $n=\left\{ 0,1,\ldots
,n-1\right\} $. The \textit{Hamming invariant length function}%
\index{length function!Hamming} $\ell $ on $S_{n}$ is defined by%
\begin{equation*}
\ell _{S_{n}}(\sigma )=%
\frac{1}{n}\left\vert \left\{ i\in n:\sigma (i)\neq i\right. \right\} \text{.%
}
\end{equation*}

\begin{exercise}
Verify that $\ell _{S_{n}}$ is in fact an invariant length function on $%
S_{n} $.
\end{exercise}

The bi-invariant metric on $S_{n}$ associated with the invariant length
function $\ell _{S_{n}}$ will be denoted by $d_{S_{n}}$.

\begin{definition}
\label{Definition: sofic} A countable discrete group $\Gamma $ is \emph{sofic%
}%
\index{group!sofic} if for every $\varepsilon >0$ and every finite subset $F$
of $\Gamma \left\backslash \left\{ 1_{\Gamma }\right\} \right. $ there is a
natural number $n$ and a function $\Phi :\Gamma \rightarrow S_{n}$ such that 
$\Phi \left( 1_{\Gamma }\right) =1_{S_{n}}$ and for every $g,h\in
F\left\backslash \left\{ 1_{\Gamma }\right\} \right. $:

\begin{itemize}
\item $d_{S_{n}}\left( \Phi (gh),\Phi (g)\Phi (h)\right) <\varepsilon $;

\item $\ell _{S_{n}}\left( \Phi (g)\right) >r(g)$ where $r(g)$ is a positive
constant depending only on $g$.
\end{itemize}
\end{definition}

This \textit{local approximation property }can be reformulated in terms of
embedding into a (length) ultraproduct of the permutation groups%
\index{ultraproduct!of permutations groups}. The product%
\begin{equation*}
\prod_{n\in \mathbb{N}}S_{n}
\end{equation*}%
is a group with respect to the coordinatewise multiplication. Fix a free
ultrafilter $\mathcal{U}$ over $\mathbb{N}$ (see \ref{app:ultrafilters} for
an introduction to ultrafilters). Define the function 
\begin{equation*}
\ell _{\mathcal{U}}:\prod_{n\in 
%TCIMACRO{\U{2115} }%
%BeginExpansion
\mathbb{N}
%EndExpansion
}S_{n}\rightarrow \left[ 0,1\right]
\end{equation*}%
by%
\begin{equation*}
\ell _{\mathcal{U}}\left( (\sigma _{n})_{n\in \mathbb{N}}\right)
=\lim_{n\rightarrow \mathcal{U}}\ell _{S_{n}}(\sigma _{n})%
\text{.}
\end{equation*}%
It is not hard to check%
\begin{equation*}
N_{\mathcal{U}}=\left\{ x\in \prod_{n\in 
%TCIMACRO{\U{2115} }%
%BeginExpansion
\mathbb{N}
%EndExpansion
}S_{n}:\ell _{\mathcal{U}}(x)=0\right\}
\end{equation*}%
is a normal subgroup of $\prod_{n}S_{n}$. The quotient of $\prod_{n}S_{n}$
by $N_{\mathcal{U}}$ is denoted by 
\begin{equation*}
\prod\nolimits_{\mathcal{U}}S_{n}
\end{equation*}%
and called the \emph{ultraproduct} relative to the free ultrafilter $%
\mathcal{U}$ of the sequence $\left( S_{n}\right) _{n\in 
%TCIMACRO{\U{2115} }%
%BeginExpansion
\mathbb{N}
%EndExpansion
}$ of permutation groups regarded as invariant length groups.

\begin{exercise}
\label{Exercise: ultralimit length}Show that if $(\sigma _{n})_{n\in 
%TCIMACRO{\U{2115} }%
%BeginExpansion
\mathbb{N}
%EndExpansion
},\left( \tau _{n}\right) _{n\in 
%TCIMACRO{\U{2115} }%
%BeginExpansion
\mathbb{N}
%EndExpansion
}\in \prod_{n}S_{n}$ belong to the same coset of $N_{\mathcal{U}}$ then $%
\ell _{\mathcal{U}}\left( (\sigma _{n})_{n\in 
%TCIMACRO{\U{2115} }%
%BeginExpansion
\mathbb{N}
%EndExpansion
}\right) =\ell _{\mathcal{U}}\left( \left( \tau _{n}\right) _{n\in 
%TCIMACRO{\U{2115} }%
%BeginExpansion
\mathbb{N}
%EndExpansion
}\right) $.
\end{exercise}

Exercise \ref{Exercise: ultralimit length} shows that the function $\ell _{%
\mathcal{U}}$ passes to the quotient inducing a canonical invariant length
function on $\prod\nolimits_{\mathcal{U}}S_{n}$ still denoted by $\ell _{%
\mathcal{U}}$. If $x\in \prod\nolimits_{\mathcal{U}}S_{n}$ belongs to the
coset of $N_{\mathcal{U}}$ associated with $(\sigma _{n})_{n\in 
%TCIMACRO{\U{2115} }%
%BeginExpansion
\mathbb{N}
%EndExpansion
}\in \prod_{n}S_{n}$, then $(\sigma _{n})_{n\in 
%TCIMACRO{\U{2115} }%
%BeginExpansion
\mathbb{N}
%EndExpansion
}$ is called a \textit{representative sequence%
\index{sequence!representative}} for the element $x$. It is not difficult to
reformulate the notion of sofic%
\index{group!sofic} group in term of existence of an embedding into $%
\prod\nolimits_{\mathcal{U}}S_{n}$.

\begin{exercise}
\label{Exercise: sofic equivalence 1}Suppose that $\Gamma $ is a countable
discrete group. Show that the following statements are equivalent:

\begin{enumerate}
\item $\Gamma $ is sofic%
\index{group!sofic};

\item there is an injective group *-homomorphism $\Phi :\Gamma \rightarrow
\prod\nolimits_{\mathcal{U}}S_{n}$ for every free ultrafilter $\mathcal{U}$
over $N$;

\item there is an injective group *-homomorphism $\Phi :\Gamma \rightarrow
\prod\nolimits_{\mathcal{U}}S_{n}$ for some free ultrafilter $\mathcal{U}$
over $N$.
\end{enumerate}
\end{exercise}

\begin{hint}
For $1.\Rightarrow 2.$ observe that the hypothesis implies that there is a
sequence $\left( \Phi _{n}\right) _{n\in 
%TCIMACRO{\U{2115} }%
%BeginExpansion
\mathbb{N}
%EndExpansion
}$ of maps from $\Gamma $ to $S_{n}$ such that $\Phi _{n}\left( 1_{\Gamma
}\right) =1_{S_{n}}$ and for every $g,h\in \Gamma \left\backslash \left\{
1_{\Gamma }\right\} \right. $%
\begin{equation*}
\lim_{n\rightarrow +\infty }d_{S_{n}}\left( \Phi _{n}\left( gh\right) ,\Phi
_{n}(h)\Phi _{n}\left( g\right) \right) =0
\end{equation*}%
and%
\begin{equation*}
\liminf_{n\rightarrow +\infty }\ell _{S_{n}}\left( \Phi _{n}\left( g\right)
\right) \geq r\left( g\right) >0%
\text{.}
\end{equation*}%
Define $\Phi :\Gamma \rightarrow \prod\nolimits_{\mathcal{U}}S_{n}$ sending $%
g$ to the element of $\prod\nolimits_{\mathcal{U}}S_{n}$ having $\left( \Phi
_{n}\left( g\right) \right) _{n\in 
%TCIMACRO{\U{2115} }%
%BeginExpansion
\mathbb{N}
%EndExpansion
}$ as representative sequence. For $3.\Rightarrow 1.$ observe that if $\Phi
:\Gamma \rightarrow \prod\nolimits_{\mathcal{U}}S_{n}$ is an injective
*-homomorphism and for every $g\in G$%
\begin{equation*}
\left( \Phi _{n}\left( g\right) \right) _{n\in 
%TCIMACRO{\U{2115} }%
%BeginExpansion
\mathbb{N}
%EndExpansion
}
\end{equation*}%
is a representative sequence of $\Phi \left( g\right) $ then the maps $\Psi
_{n}=\Phi _{n}\left( 1_{\Gamma }\right) ^{-1}\Phi _{n}\left( g\right) $
satisfy the following properties: $\Psi _{n}\left( 1_{\Gamma }\right)
=1_{S_{n}}$ and for every $g,h\in \Gamma \left\backslash \left\{ 1_{\Gamma
}\right\} \right. $%
\begin{equation*}
\lim_{n\rightarrow \mathcal{U}}d_{S_{n}}\left( \Psi _{n}\left( gh\right)
,\Psi _{n}\left( g\right) \Psi _{n}(h)\right) =0
\end{equation*}%
and 
\begin{equation*}
\lim_{n\rightarrow \mathcal{U}}\ell _{S_{n}}\left( \Psi _{n}\left( g\right)
\right) \geq \ell _{\mathcal{U}}\left( \Phi \left( g\right) \right) \text{.}
\end{equation*}%
If $F\subset \Gamma $ is finite and $\varepsilon >0$ then the maps $\Psi
_{n} $ for $n$ large enough witness the condition of soficity%
\index{group!sofic} of $\Gamma $ relative to $F$ and $\varepsilon >0$.
\end{hint}

In view of the fact that a (countable, discrete) group is sofic%
\index{group!sofic} if and only embeds in $\prod\nolimits_{\mathcal{U}}S_{n}$
for some free ultrafilter $\mathcal{U}$, the (length) ultraproducts of the
sequence of permutations are called \emph{universal sofic groups}%
\index{group!universal sofic} (see \cite{Elek-Szabo-hyperlinearity}, \cite%
{Kwiatkowska-Pestov}, \cite{Thomas}, \cite{Lupini}, \cite{Paunescu}).

Universal sofic%
\index{group!universal sofic} groups are not separable. This fact follows
from a well known argument. Two functions $f,g:%
%TCIMACRO{\U{2115} }%
%BeginExpansion
\mathbb{N}
%EndExpansion
\rightarrow 
%TCIMACRO{\U{2115} }%
%BeginExpansion
\mathbb{N}
%EndExpansion
$ are \textit{eventually distinct} if they coincide only on finitely many $n$%
's.

\begin{exercise}
\label{Exercise: almost disjoint family}Suppose that $h:%
%TCIMACRO{\U{2115} }%
%BeginExpansion
\mathbb{N}
%EndExpansion
\rightarrow 
%TCIMACRO{\U{2115} }%
%BeginExpansion
\mathbb{N}
%EndExpansion
$ is an unbounded function. Show that there is a family $\mathcal{F}$ of
size continuum of pairwise eventually distinct functions from $%
%TCIMACRO{\U{2115} }%
%BeginExpansion
\mathbb{N}
%EndExpansion
$ to $%
%TCIMACRO{\U{2115} }%
%BeginExpansion
\mathbb{N}
%EndExpansion
$ such that $f\left( n\right) \leq h\left( n\right) $ for every $f\in 
\mathcal{F}$.
\end{exercise}

\begin{hint}
For every subset $A$ of $%
%TCIMACRO{\U{2115} }%
%BeginExpansion
\mathbb{N}
%EndExpansion
$ define $f_{A}:%
%TCIMACRO{\U{2115} }%
%BeginExpansion
\mathbb{N}
%EndExpansion
\rightarrow 
%TCIMACRO{\U{2115} }%
%BeginExpansion
\mathbb{N}
%EndExpansion
$ by%
\begin{equation*}
f_{A}\left( n\right) =\sum_{k<n}\chi _{A}\left( n\right) 2^{k}
\end{equation*}%
where $\chi _{A}$ denotes the characteristic function of $A$. Observe that%
\begin{equation*}
\mathcal{F}_{0}=\left\{ f_{A}:A\subset 
%TCIMACRO{\U{2115} }%
%BeginExpansion
\mathbb{N}
%EndExpansion
\right\}
\end{equation*}%
is a family of size continuum of pairwise eventually distinct functions such
that $f\left( n\right) \leq 2^{n}$ for every $n\in 
%TCIMACRO{\U{2115} }%
%BeginExpansion
\mathbb{N}
%EndExpansion
$.
\end{hint}

\begin{exercise}
\label{Exercise: nonseparable universal sofic}Show that for any free
ultrafilter $\mathcal{U}$ there is a subset $X$ of $\prod\nolimits_{\mathcal{%
U}}S_{n}$ of size continuum such that every two distinct elements of $X$
have distance one. Conclude that any dense subset of $\prod\nolimits_{%
\mathcal{U}}S_{n}$ has the cardinality of the continuum.
\end{exercise}

\begin{hint}
By Exercise \ref{Exercise: almost disjoint family} there is a family $%
\mathcal{F}$ of size continuum of pairwise eventually distinct functions
such that $f\left( n\right) \leq 
\sqrt{n}$ for every $n\in 
%TCIMACRO{\U{2115} }%
%BeginExpansion
\mathbb{N}
%EndExpansion
$ and $f\in \mathcal{F}$. For every $f\in \mathcal{F}$ consider the element $%
x_{f}$ of $\prod\nolimits_{\mathcal{U}}S_{n}$ having 
\begin{equation*}
\left( \sigma _{n}^{f\left( n\right) }\right) _{n\in 
%TCIMACRO{\U{2115} }%
%BeginExpansion
\mathbb{N}
%EndExpansion
}
\end{equation*}%
as representative sequence, where $\sigma _{n}$ is any cyclic permutation of 
$n$. Show that 
\begin{equation*}
X:=\left\{ x_{f}:f\in \mathcal{F}\right\}
\end{equation*}
has the required property.
\end{hint}

An amplification argument of Elek and Szab\'{o} (see \cite%
{Elek-Szabo-hyperlinearity}) shows that the condition of soficity%
\index{group!sofic} is equivalent to the an apparently stronger
approximation property, which is discussed in the following Exercise \ref%
{Exercise: amplification sofic}.

\begin{definition}
\label{Definition: approximate morphism}Suppose that $G,H$ are invariant
length groups, $F$ is a subset of $G$, and $\varepsilon $ is a positive real
number. A function $\Phi :G\rightarrow H$ is called an $\left( F,\varepsilon
\right) $-\emph{approximate morphism}%
\index{approximate morphism!of invariant length groups} if $\Phi \left(
1_{G}\right) =1_{H}$ and for every $g,h\in F$:

\begin{itemize}
\item $\left\vert \ell _{H}\left( \Phi (gh)\Phi (h)^{-1}\Phi (g)\right)
\right\vert <\varepsilon $;

\item $\left\vert \ell _{H}\left( \Phi (g)\right) -\ell _{G}(g)\right\vert
<\varepsilon $.
\end{itemize}
\end{definition}

\begin{exercise}
\label{Exercise: amplification sofic}Prove that a countable discrete group $%
\Gamma $ is sofic%
\index{group!sofic} if and only if for every positive real number $%
\varepsilon $ and every finite subset $F$ of $\Gamma $ there is a natural
number $n$ and an $\left( F,\varepsilon \right) $-approximate morphism%
\index{approximate morphism!of invariant length groups} from $\Gamma $ to $%
S_{n}$, where $\Gamma $ is regarded as an invariant length group with
respect to the trivial invariant length function.
\end{exercise}

\begin{hint}
If $n,k\in \mathbb{N}$ and $\sigma \in S_{n}$ consider the permutation $%
\sigma ^{\otimes k}$ of the set $\left[ n\right] ^{k}$ of $k$-sequences of
elements of $n$ defined by%
\begin{equation*}
\sigma ^{\otimes k}\left( i_{1},\ldots ,i_{k}\right) =\left( \sigma
(i_{1}),\ldots ,\sigma (i_{k})\right) .
\end{equation*}%
Identifying the group of permutations of $\overset{k%
\text{ times}}{\overbrace{n\times \cdots \times n}}$ with $S_{n^{k}}$, the
function%
\begin{equation*}
\sigma \mapsto \sigma ^{\otimes k}
\end{equation*}%
defines a group *-homomorphism from $S_{n}$ to $S_{n^{k}}$ such that%
\begin{equation*}
1-\ell _{S_{n^{k}}}\left( \sigma ^{\otimes k}\right) =\left( 1-\ell
_{S_{n}}\left( \sigma \right) \right) ^{k}\text{.}
\end{equation*}
\end{hint}

Using Exercise \ref{Exercise: amplification sofic} one can express the
notion of soficity in terms of length-preserving embedding into
ultraproducts of permutations groups.

\begin{exercise}
\label{Exercise: sofic equivalence 2}Suppose that $\Gamma $ is a countable
discrete group%
\index{group!sofic} regarded as an invariant length group endowed with the
trivial invariant length. Show that the following statements are equivalent:

\begin{itemize}
\item $\Gamma $ is sofic;

\item there is a length-preserving *-homomorphism $\Phi :\Gamma \rightarrow
\prod\nolimits_{\mathcal{U}}S_{n}$ for every free ultrafilter $\mathcal{U}$
over $N$;

\item there is an length-preserving *-homomorphism $\Phi :\Gamma \rightarrow
\prod\nolimits_{\mathcal{U}}S_{n}$ for some free ultrafilter $\mathcal{U}$
over $N$.
\end{itemize}
\end{exercise}

\begin{hint}
Follow the same steps as in the proof of Exercise \ref{Exercise: sofic
equivalence 2}, replacing the condition given in the definition of sofic
group with the equivalent condition expressed in Exercise \ref{Exercise:
amplification sofic}.
\end{hint}

In a number of cases it is useful to consider approximate morphisms
satisfying convenient additional properties. An example of such morphisms
with additional properties is considered in the following exercise,
originally proved in \cite[Lemma 2.1]{Elek-Szabo-onSofic}.

\begin{exercise}
\label{Exercise: nice sofic approximation}Suppose that $\Gamma $ is a
countable discrete sofic group, $F$ is a finite symmetric subset of $\Gamma $
containing the identity element, and $\varepsilon $ is a positive real
number. Show that one can find an $\left( F,\varepsilon \right) $%
-approximate morphism $g\mapsto \sigma _{g}$ from $\Gamma $ to $S_{n}$ for
some $n\in \mathbb{N}$ such that, for every $g\in F$, $\sigma _{g}$ has no
fixed points and $\sigma _{g^{-1}}=\sigma _{g}^{-1}$.
\end{exercise}

\begin{hint}
Fix $\eta >0$ small enough and set $%
\widehat{F}=F\cdot F$. By Exercise \ref{Exercise: amplification sofic} one
can consider an $(\widehat{F},\eta )$-approximate morphism $g\mapsto \tau
_{g}$ from $\Gamma $ to $S_{n}$ for some $n\in \mathbb{N}$. For any
nonidentity element $g$ of $\widehat{F}$ consider the largest subset $A_{g}$
of $n$ such that $\tau _{g^{-1}\tau g}$ is the identity on $A_{g}$ and $\tau
_{g}$ has no fixed points on $A_{g}$. Verify that $\tau _{g}$ defines a
bijection between $A_{g}$ and $A_{g^{-1}}$. Define an approximate morphism $%
g\mapsto \sigma _{g}$ from $\Gamma $ to $S_{2n}$ in the following way.
Identify $S_{2n}$ with the group of premutations of $n\times 2$. For any
nonidentity element $g$ of $\widehat{F}$ fix a bijection $\phi _{g}$ from $%
A_{g}\backslash A_{g^{-1}}\times 2$ to $A_{g^{-1}}\backslash A_{g}\times 2$
and a fixed-point free involution $\psi _{g}$ of $\left( n\backslash
(A_{g}\cup A_{g^{-1}})\right) \times 2$ such that $\phi _{g^{-1}}=\phi
_{g}^{-1}$ and $\psi _{g^{-1}}=\psi _{g}^{-1}$. Define then%
\begin{equation*}
\tau _{g}\left( i,j\right) =\left\{ 
\begin{array}{ll}
\left( \sigma _{g}\left( i\right) ,j\right) & \text{if }i\in A_{g}\text{,}
\\ 
\phi _{g}\left( i,j\right) & \text{if }i\in A_{g}\backslash A_{g^{-1}}\text{,%
} \\ 
\phi _{g^{-1}}\left( i,j\right) & \text{if }i\in A_{g^{-1}}\backslash A_{g}%
\text{,} \\ 
\psi _{g}\left( i,j\right) & \text{otherwise.}%
\end{array}%
\right.
\end{equation*}%
Verify that for $\eta $ small enough the assignment $g\mapsto \tau _{g}$ is
an $\left( F,\varepsilon \right) $-approximate morphism with the required
extra properties.
\end{hint}

\section{Definition of hyperlinear groups\label{Section: definition
hyperlinear groups}}

Recall that $M_{n}(\mathbb{C})$ denotes the tracial von Neumann algebra of $%
n\times n$ matrices over the complex numbers. The \textit{normalized} trace $%
\tau $ of $M_{n}(\mathbb{C})$ is defined by%
\begin{equation*}
\tau \left( (a_{ij})\right) =\frac{1}{n}\sum_{i=1}^{n}a_{ii}\text{.}
\end{equation*}%
The \textit{Hilbert-Schmidt norm} $\left\Vert x\right\Vert _{2}$ on $M_{n}(%
\mathbb{C})$ is defined by%
\begin{equation*}
\left\Vert x\right\Vert _{2}=\tau (x^{\ast }x)^{\frac{1}{2}}\text{.}
\end{equation*}%
An element $x$ of $M_{n}(\mathbb{C})$ is unitary if $x^{\ast }x=xx^{\ast }=1$%
. The set $U_{n}$ of unitary elements of $M_{n}(\mathbb{C})$ is a group with
respect to multiplication. The \textit{Hilbert-Schmidt invariant length
function%
\index{length function!Hilbert-Schmidt}} on $U_{n}$ is defined by%
\begin{equation*}
\ell _{U_{n}}(u)=%
\frac{1}{2}\left\Vert u-1\right\Vert _{2}\text{.}
\end{equation*}

\begin{exercise}
Show that $\ell _{U_{n}}$ is an invariant length function on $U_{n}$ such
that $\ell _{U_{n}}(u)^{2}=\frac{1}{2}\left( 1-\mathrm{Re}\left( \tau
(u)\right) \right) $.
\end{exercise}

Hyperlinear%
\index{group!hyperlinear} groups are defined exactly as sofic groups, where
the permutation groups with the Hamming invariant length function are
replaced by the unitary groups endowed with the Hilbert-Schmidt invariant
length function.

\begin{definition}
\label{Definition: hyperlinear} A countable discrete group $\Gamma $ is 
\emph{hyperlinear}%
\index{group!hyperlinear} if for every $\varepsilon >0$ and every finite
subset $F$ of $\Gamma \left\backslash \left\{ 1_{\Gamma }\right\} \right. $
there is a natural number $n$ and a function $\Phi :\Gamma \rightarrow U_{n}$
such that $\Phi \left( 1_{\Gamma }\right) =1_{U_{n}}$ and for every $g,h\in
F $:

\begin{itemize}
\item $d_{U_{n}}\left( \Phi (gh),\Phi (g)\Phi (h)\right) <\varepsilon $;

\item $\ell _{U_{n}}\left( \Phi (g)\right) >r(g)$ where $r(g)$ is some
positive constant depending only on $g$.
\end{itemize}
\end{definition}

As before this notion can be equivalently reformulated in terms of embedding
into ultraproducts. If $\mathcal{U}$ is a free ultrafilter over $\mathbb{N}$
the ultraproduct $\prod\nolimits_{\mathcal{U}}U_{n}$ of the unitary groups
regarded as invariant length groups%
\index{ultraproduct!of unitary groups} is the quotient of $\prod_{n}U_{n}$
with respect to the normal subgroup%
\begin{equation*}
N_{\mathcal{U}}=\left\{ \left( u_{n}\right) _{n\in 
%TCIMACRO{\U{2115} }%
%BeginExpansion
\mathbb{N}
%EndExpansion
}:\lim_{n\rightarrow \mathcal{U}}\ell _{U_{n}}\left( u_{n}\right) =0\right\}
\end{equation*}%
endowed with the invariant length function%
\begin{equation*}
\ell _{\mathcal{U}}\left( \left( u_{n}\right) N_{\mathcal{U}}\right)
=\lim_{n\rightarrow \mathcal{U}}\ell (u_{n}) 
\text{.}
\end{equation*}

\begin{exercise}
\label{Exercise: hyperlinear equivalence 1}Suppose that $\Gamma $ is a
countable discrete group%
\index{group!hyperlinear}. Show that the following statements are equivalent:

\begin{itemize}
\item $\Gamma $ is hyperlinear%
\index{group!hyperlinear};

\item there is an injective *-homomorphism $\Phi :\Gamma \rightarrow
\prod\nolimits_{\mathcal{U}}U_{n}$ for every free ultrafilter $\mathcal{U}$
over $N$;

\item there is an injective *-homomorphism $\Phi :\Gamma \rightarrow
\prod\nolimits_{\mathcal{U}}U_{n}$ for some free ultrafilter $\mathcal{U}$
over $N$.
\end{itemize}
\end{exercise}

As in the case of sofic groups, (length) ultraproducts of sequences of
unitary groups can be referred to as \emph{universal hyperlinear groups}%
\index{group!universal hyperlinear} in view of Exercise \ref{Exercise:
hyperlinear equivalence 1}. Exercise \ref{Exercise: hyperlinear equivalence
3} in Section \ref{Section: hyperlinear and radulescu} shows that the same
conclusion of Exercise \ref{Exercise: hyperlinear equivalence 1} holds for
(length) ultrapowers of the unitary group of the hyperfinite \textup{II}$%
_{1} $ factor (see Definition \ref{Definition: hyperfinite}).

If $\sigma $ is a permutation over $n$ denote by $P_{\sigma }$ the
permutation matrix associated with $\sigma $, acting as $\sigma $ on the
canonical basis of $\mathbb{C}^{n}$. Observe that $P_{\sigma }$ is a unitary
matrix and the function%
\begin{equation*}
\sigma \mapsto P_{\sigma }
\end{equation*}%
is a *-homomorphism from $S_{n}$ to $U_{n}$. Moreover 
\begin{equation*}
\tau \left( P_{\sigma }\right) =1-\ell _{S_{n}}(\sigma )
\end{equation*}%
and hence%
\begin{equation}
\ell _{U_{n}}\left( P_{\sigma }\right) ^{2}=%
\frac{1}{2}\ell _{S_{n}}(\sigma )\text{\label{Equation: relation Hamming and
Hilbert length functions}.}
\end{equation}%
It is not difficult to deduce from this that any sofic group is hyperlinear%
\index{group!hyperlinear}. This is the content of Exercise \ref{Exercise:
sofic implies hyperlinear}.%
\index{group!sofic}

\begin{exercise}
\label{Exercise: sofic implies hyperlinear}Fix a free ultrafilter $\mathcal{U%
}$ over $N $. Show that the function%
\begin{equation*}
(\sigma _{n})_{n\in \mathbb{N}}\mapsto \left( P_{\sigma _{n}}\right) _{n\in 
\mathbb{N}}
\end{equation*}%
from $\prod_{n}S_{n}$ to $\prod_{n}U_{n}$ induces an algebraic embedding of $%
\prod\nolimits_{\mathcal{U}}S_{n}$ into $\prod\nolimits_{\mathcal{U}}U_{n}$.
\end{exercise}

Proposition is an immediate consequence of Exercise \ref{Exercise: sofic
implies hyperlinear}, together with \ref{Exercise: sofic equivalence 1}, and %
\ref{Exercise: hyperlinear equivalence 1}.

\begin{proposition}
\label{Proposition: sofic implies hyperlinear}Every countable discrete sofic
group is hyperlinear.
\end{proposition}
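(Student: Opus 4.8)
The plan is to produce an injective group *-homomorphism from $\Gamma$ into a universal hyperlinear group by composing two maps that have already been constructed, and then to invoke the ultraproduct characterization of hyperlinearity. First I would fix a free ultrafilter $\mathcal{U}$ over $\mathbb{N}$ and apply Exercise~\ref{Exercise: sofic equivalence 1}: since $\Gamma$ is sofic, this yields an injective group *-homomorphism $\Phi : \Gamma \rightarrow \prod\nolimits_{\mathcal{U}} S_n$. This turns the ``local'' approximation data defining soficity into a single honest embedding into the length ultraproduct of the permutation groups.

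Next I would recall from Exercise~\ref{Exercise: sofic implies hyperlinear} that the coordinatewise assignment $(\sigma_n)_{n\in\mathbb{N}} \mapsto (P_{\sigma_n})_{n\in\mathbb{N}}$ induces an algebraic embedding $\iota : \prod\nolimits_{\mathcal{U}} S_n \rightarrow \prod\nolimits_{\mathcal{U}} U_n$. The substantive content packaged into that exercise is identity~\eqref{Equation: relation Hamming and Hilbert length functions}, namely $\ell_{U_n}(P_\sigma)^2 = \tfrac{1}{2}\,\ell_{S_n}(\sigma)$: because $\ell_{U_n}(P_{\sigma_n})$ vanishes in the ultralimit exactly when $\ell_{S_n}(\sigma_n)$ does, the permutation-matrix map carries the length-zero normal subgroup defining $\prod\nolimits_{\mathcal{U}} S_n$ into the length-zero normal subgroup defining $\prod\nolimits_{\mathcal{U}} U_n$, so it descends to the quotients; and since a nontrivial class has strictly positive Hamming length in the ultralimit, its image has strictly positive Hilbert--Schmidt length, whence $\iota$ is injective.

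The composition $\iota \circ \Phi : \Gamma \rightarrow \prod\nolimits_{\mathcal{U}} U_n$ is then a composite of injective group *-homomorphisms, hence itself an injective group *-homomorphism into the universal hyperlinear group. Applying the criterion of Exercise~\ref{Exercise: hyperlinear equivalence 1} (in its ``for some free ultrafilter'' direction) I conclude that $\Gamma$ is hyperlinear, which is exactly the assertion of the Proposition. I do not expect any genuine obstacle here: all the analytic work—verifying that the permutation-matrix map is length-compatible enough both to pass to the ultraproduct quotient and to preserve injectivity—is already discharged in the cited exercises, so the remaining argument is essentially a diagram chase assembling three previously established facts.
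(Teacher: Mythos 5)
Your proposal is correct and follows exactly the paper's own route: the paper derives the Proposition as an immediate consequence of Exercise~\ref{Exercise: sofic implies hyperlinear} together with Exercises~\ref{Exercise: sofic equivalence 1} and~\ref{Exercise: hyperlinear equivalence 1}, which is precisely your composition $\iota \circ \Phi$ followed by the ultraproduct characterization of hyperlinearity. Your added justification via the length relation $\ell_{U_n}(P_\sigma)^2 = \tfrac{1}{2}\,\ell_{S_n}(\sigma)$ is exactly the content the paper packages into the cited exercise.
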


It is currently and open problem to determine whether the notions of sofic%
\index{group!sofic} and hyperlinear%
\index{group!hyperlinear} group are actually distinct, since no example of a
nonsofic groups is known.

\begin{exercise}
\label{Exercise: nonseparable universal hyperlinear}Show that for any free
ultrafilter $\mathcal{U}$ there is a subset $X$ of $\prod\nolimits_{\mathcal{%
U}}U_{n}$ of size continuum such that every pair of distinct elements of $X$
has distance $%
\frac{1}{\sqrt{2}}$.
\end{exercise}

\begin{hint}
Proceed as in Exercise \ref{Exercise: nonseparable universal sofic}, where
for every $n\in 
%TCIMACRO{\U{2115} }%
%BeginExpansion
\mathbb{N}
%EndExpansion
$ the cyclic permutation $\sigma _{n}$ is replaced with the unitary
permutation matrix $P_{\sigma _{n}}$ associated with $\sigma _{n}$. Recall
the relation between the Hamming length function and the Hilbert-Schmidt
length functions given by Equation \ref{Equation: relation Hamming and
Hilbert length functions}.
\end{hint}

It follows from \ref{Exercise: nonseparable universal hyperlinear} that a
dense subset of $\prod\nolimits_{\mathcal{U}}U_{n}$ has the cardinality of
the continuum. In particular the universal hyperlinear%
\index{group!universal hyperlinear} groups are not separable.

An amplification argument from \cite{Radulescu} due to R\u{a}dulescu
(predating the analogous argument for permutation groups of Elek and Szab%
\'{o}) shows that hyperlinearity%
\index{group!hyperlinear} is equivalent to an apparently stronger
approximation property. In order to present this argument we need to recall
some facts about tensor product of matrix algebras.

If $M_{n}(\mathbb{C})$ and $M_{m}(\mathbb{C})$ are the algebras of,
respectively, $n\times n$ and $m\times m$ matrices with complex
coefficients, then their tensor product (see Appendix \ref{suse:tensor
product}) can be canonically identified with the algebra $M_{nm}(\mathbb{C})$
of $nm\times nm$ matrices. Concretely if $A=(a_{ij})\in M_{n}(\mathbb{C})$
and $B=\left( b_{ij}\right) \in M_{m}(\mathbb{C})$ then $A\otimes B$ is
identified with the block matrix%
\begin{equation*}
\begin{pmatrix}
a_{11}B & a_{12}B & \ldots & a_{1n}B \\ 
a_{21}B & a_{22}B & \ldots & a_{2n}B \\ 
\vdots & \vdots & \ddots & \vdots \\ 
a_{n1}B & \ldots & \ldots & a_{nn}B%
\end{pmatrix}%
\text{.}
\end{equation*}%
It is easily verified that%
\begin{equation*}
\tau \left( A\otimes B\right) =\tau (A)\tau \left( B\right)
\end{equation*}%
and $A\otimes B$ is unitary if both $A$ and $B$ are.

\begin{exercise}
\label{Exercise: trace reduction}Suppose that $u\in U_{n}$. If $u$ is
different from the identity, then the absolute value of the trace of%
\begin{equation*}
\begin{pmatrix}
u & 0 \\ 
0 & 1_{U_{n}}%
\end{pmatrix}%
\in U_{2n}
\end{equation*}%
is strictly smaller than $1$.
\end{exercise}

\begin{exercise}
\label{Exercise: amplification hyperlinear}If $u\in U_{n}$ then define
recursively $u^{\otimes 1}=u\in U_{n}$ and $u^{\otimes \left( k+1\right)
}=u^{\otimes k}\otimes u\in U_{n^{k+1}}$. Show that the function%
\begin{equation*}
u\mapsto u^{\otimes k}
\end{equation*}%
is a group *-homomorphism from $U_{n}$ to $U_{n^{k}}$ such that%
\begin{equation*}
\tau (u)=\tau (u)^{k}\text{.}
\end{equation*}
\end{exercise}

We can now state and prove the promised equivalent characterization of
hyperlinear%
\index{group!hyperlinear} groups.

\begin{proposition}
\label{Proposition: amplification hyperlinear}A countable discrete group $%
\Gamma $ is hyperlinear%
\index{group!hyperlinear} if and only for every finite subset $F$ of $\Gamma 
$ and every positive real number $\varepsilon $ there is a natural number $n$
and an $\left( F,\varepsilon \right) $-approximate morphism%
\index{approximate morphism!of invariant length groups} from $\Gamma $ to $%
U_{n}$, where $\Gamma $ is regarded as invariant length groups with respect
to the trivial length function.
\end{proposition}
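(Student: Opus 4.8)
The plan is to prove the two implications separately. The reverse implication is immediate, while the forward one is the substance and requires R\u{a}dulescu's trace-reduction and tensor-amplification devices (Exercises \ref{Exercise: trace reduction} and \ref{Exercise: amplification hyperlinear}).

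For the easy direction, assume that $(F,\varepsilon)$-approximate morphisms into the groups $U_n$ exist for all finite $F$ and all $\varepsilon>0$. Given the data $\varepsilon>0$ and $F\subseteq\Gamma\setminus\{1_\Gamma\}$ of Definition \ref{Definition: hyperlinear}, I would apply the hypothesis to $F\cup\{1_\Gamma\}$ with tolerance $\varepsilon':=\min\{\varepsilon,\tfrac12\}$ to get $\Phi\colon\Gamma\to U_n$ with $\Phi(1_\Gamma)=1_{U_n}$. The first clause of Definition \ref{Definition: approximate morphism} bounds the failure of multiplicativity and, after the routine rewriting using bi-invariance of $\ell_{U_n}$, yields $d_{U_n}(\Phi(gh),\Phi(g)\Phi(h))<\varepsilon$; the second clause gives $\ell_{U_n}(\Phi(g))>1-\varepsilon'\ge\tfrac12$, a positive constant depending only on $g$. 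Hence $\Phi$ witnesses hyperlinearity, so this direction is pure bookkeeping.

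For the forward direction, fix a finite $F\subseteq\Gamma$ and $\varepsilon>0$ and put $F_0=F\setminus\{1_\Gamma\}$. For each $g\in F_0$ let $r(g)>0$ be the constant of Definition \ref{Definition: hyperlinear}; since it depends only on $g$ and not on the quality of the approximation, I may fix $\rho:=\max_{g\in F_0}\sqrt{1-r(g)^2}<1$ once and for all. Starting from a hyperlinear approximation $\Phi\colon\Gamma\to U_m$ with $\Phi(1_\Gamma)=1$, multiplicativity defect at most $\delta$, and $\ell_{U_m}(\Phi(g))>r(g)$, I would apply two homomorphisms in turn. Trace reduction: replace $\Phi$ by $\Phi'(g)=\mathrm{diag}(\Phi(g),1_m)\in U_{2m}$, so that $\tau(\Phi'(g))=\tfrac12(\tau(\Phi(g))+1)$; combining $|\tau(\Phi(g))|\le1$ with the bound $\mathrm{Re}\,\tau(\Phi(g))<1-2r(g)^2$ that the identity $\ell_{U_m}(u)^2=\tfrac12(1-\mathrm{Re}\,\tau(u))$ extracts from the length hypothesis, a one-line estimate gives the uniform bound $|\tau(\Phi'(g))|\le\sqrt{1-r(g)^2}\le\rho<1$ (the quantitative form of Exercise \ref{Exercise: trace reduction}). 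Amplification: set $\Phi''(g)=\Phi'(g)^{\otimes k}\in U_{(2m)^k}$; by Exercise \ref{Exercise: amplification hyperlinear} the map $u\mapsto u^{\otimes k}$ is a $\ast$-homomorphism, so $\tau(\Phi''(g))=\tau(\Phi'(g))^k$ and thus $|\tau(\Phi''(g))|\le\rho^k$, which drives $\ell_{U_{(2m)^k}}(\Phi''(g))$ to the common value $\tfrac{1}{\sqrt2}$ as $k\to\infty$.

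The crux, and the only place I expect genuine difficulty, is the interplay of the two parameters. Since both $u\mapsto\mathrm{diag}(u,1_m)$ and $u\mapsto u^{\otimes k}$ are homomorphisms, the multiplicativity defect of $\Phi''$ is exactly the $k$-th tensor power $w^{\otimes k}$ of the defect $w$ of $\Phi'$, with $\ell_{U_{2m}}(w)\le\delta$ (a block-diagonal embedding does not increase it) and $\tau(w^{\otimes k})=\tau(w)^k$. Amplification is therefore double-edged: it homogenizes the lengths of genuine images by sending $\tau\to0$, but it also exponentiates the defect trace, and $\tau(w)^k$ may drift away from $1$ even when $\tau(w)$ is near $1$ unless $\delta$ is small relative to $1/k$. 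The resolution is to choose the parameters in the right order: first fix $k$ so large that $\rho^k$ forces $|\ell_{U_{(2m)^k}}(\Phi''(g))-\tfrac{1}{\sqrt2}|<\varepsilon$ for all $g\in F_0$ (legitimate precisely because $\rho$ was fixed independently of $\delta$), and only then choose $\delta=\delta(k,\varepsilon)$ small enough that $\ell(w)<\delta$ forces $\ell(w^{\otimes k})<\varepsilon$ by continuity of $z\mapsto z^k$ at $z=1$; finally invoke hyperlinearity with tolerance $\delta$ to produce the initial $\Phi$. I would stress that the trace-reduction step is essential and not cosmetic: without first pushing $|\tau|$ strictly inside the unit disc, $\tau^k$ need not tend to $0$ (for instance when $\tau(\Phi(g))$ lies near a root of unity on the circle), and the homogenization would collapse. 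The limiting length is $\tfrac{1}{\sqrt2}$ rather than $1$; this is exactly the value the trivial length of $\Gamma$ takes in $U_n$ under the canonical embedding $S_n\hookrightarrow U_n$ of \eqref{Equation: relation Hamming and Hilbert length functions}, and it is against this normalization that $\Phi''$ is the sought approximate morphism.
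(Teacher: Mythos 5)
Your proof is correct and follows exactly the paper's route: trace reduction via $u\mapsto\mathrm{diag}(u,1)$ (Exercise \ref{Exercise: trace reduction}) followed by tensor amplification $u\mapsto u^{\otimes k}$ (Exercise \ref{Exercise: amplification hyperlinear}), with the parameters chosen in the order $k$ first, then $\delta$ --- precisely the quantitative bookkeeping the paper compresses into ``it is now easy to show.'' Your closing remark is also well taken: with the paper's normalization $\ell_{U_n}(u)=\frac{1}{2}\left\Vert u-1\right\Vert_{2}$ the amplified lengths converge to $\frac{1}{\sqrt{2}}$ rather than to $1$, so the statement is literally correct only after rescaling the Hilbert--Schmidt length (exactly the normalization under which $S_{n}\hookrightarrow U_{n}$ becomes length-preserving), a point the paper's proof passes over silently.
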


\begin{proof}
Suppose that $\Gamma $ is hyperlinear. If $F$ is a finite subset of $\Gamma $
and $\varepsilon $ is a positive real number, consider the map $\Phi :\Gamma
\rightarrow U_{n}$ obtained from $F$ and $\varepsilon $ as in the definition
of hyperlinear group. By Exercise \ref{Exercise: trace reduction} after
replacing $\Phi $ with the map%
\begin{eqnarray*}
\Gamma  &\rightarrow &U_{2n} \\
\gamma  &\mapsto &%
\begin{pmatrix}
\Phi (\gamma ) & 0 \\ 
0 & 1_{U_{n}}%
\end{pmatrix}%
\end{eqnarray*}%
we can assume that%
\begin{equation*}
\left\vert \tau \left( \Phi (\gamma )\right) \right\vert <1
\end{equation*}%
for every $\gamma \in F$. It is now easy to show using Exercise \ref%
{Exercise: amplification hyperlinear} that the map%
\begin{eqnarray*}
\Gamma  &\rightarrow &U_{n^{k}} \\
\gamma  &\mapsto &\Phi (\gamma )^{\otimes k}
\end{eqnarray*}%
for $k\in 
%TCIMACRO{\U{2115} }%
%BeginExpansion
\mathbb{N}
%EndExpansion
$ large enough satisfies the requirements of the statement. The converse
implication is obvious.
\end{proof}

As before we can deduce a characterization of hyperlinear%
\index{group!hyperlinear} groups in terms of length-preserving embeddings
into ultraproducts of unitary groups.

\begin{exercise}
\label{Exercise: hyperlinear equivalence 2}Suppose that $\Gamma $ is a
countable discrete group%
\index{group!hyperlinear} regarded as an invariant length group endowed with
the trivial invariant length function. Show that the following statements
are equivalent:

\begin{itemize}
\item $\Gamma $ is hyperlinear%
\index{group!hyperlinear};

\item there is a length preserving *-homomorphism $\Phi :\Gamma \rightarrow
\prod\nolimits_{\mathcal{U}}U_{n}$ for every free ultrafilter $\mathcal{U}$
over $N$;

\item there is a length preserving *-homomorphism $\Phi :\Gamma \rightarrow
\prod\nolimits_{\mathcal{U}}U_{n}$ for some free ultrafilter $\mathcal{U}$
over $N$.
\end{itemize}
\end{exercise}

We conclude this section with yet another reformulation of the definition of
hyperlinear group in terms of what are usually called \emph{microstates}%
\index{microstate}. Suppose that $\Gamma $ is a finitely generated group
with finite generating set $S=\left\{ g_{1},\ldots ,g_{m}\right\} $. If $%
n\in \mathbb{N}$ and $\varepsilon >0$ then an $\left( n,\varepsilon \right) $%
-microstate for $\Gamma $ is a map $\Phi :\Gamma \rightarrow M_{k}\left( 
\mathbb{C}\right) $ for some $k\in \mathbb{N}$ such that for every word $w$
in $x_{i}$ and $x_{i}^{-1}$ for $i\leq m$ of length at most $n$ one has that%
\begin{equation*}
\left\vert \tau \left( w\left( \Phi \left( g_{1}\right) ,\ldots ,\Phi \left(
g_{m}\right) \right) \right) \right\vert <\varepsilon \quad 
\text{whenever }w\left( g_{1},\ldots ,g_{m}\right) \neq 1_{\Gamma }\text{,}
\end{equation*}
and%
\begin{equation*}
\left\vert \tau \left( w\left( \Phi \left( g_{1}\right) ,\ldots ,\Phi \left(
g_{m}\right) \right) \right) -1\right\vert <\varepsilon \text{ whenever }%
w\left( g_{1},\ldots ,g_{n}\right) =1_{\Gamma }\text{.}
\end{equation*}%
A countable discrete groups is hyperlinear if and only if it admits $\left(
n,\varepsilon \right) $-microstates for every $n\in \mathbb{N}$ and $%
\varepsilon >0$. In order to verify that such a definition is equivalent to
the previous ones---and, in particular, does not depend on the choice of the
generating set $S$---let us consider the \emph{relation }$R\left( x\right) $
defined by%
\begin{equation*}
\max \left\{ \left\Vert xx^{\ast }-1\right\Vert _{2},\left\Vert x^{\ast
}x-1\right\Vert _{2}\right\} \text{.}
\end{equation*}%
It is obvious that an element $a$ in $M_{n}\left( \mathbb{C}\right) $ is a
unitary if and only if $R(a)=0$. Moreover such a relation has the following
property, usually called \emph{stability}%
\index{stability}: for every $\varepsilon >0$ there is $\delta >0$ such that
for every $n\in \mathbb{N}$ and every element $a$ of $M_{n}\left( \mathbb{C}%
\right) $ if $R(a)<\delta $ then there is a unitary $u\in M_{n}\left( 
\mathbb{C}\right) $ such that $\left\Vert a-u\right\Vert _{2}<\varepsilon $.
In other words any approximate solution to the equation $R\left( x\right) =0$
is close to an exact solution. Such a property, which holds even if one
replaces $M_{n}\left( \mathbb{C}\right) $ with an arbitrary tracial von
Neumann algebra, can be established by means of the \emph{polar decomposition%
}%
\index{polar decomposition}\emph{\ }of an element inside a von Neumann
algebra; see \cite[Section III.1.1.2]{Blackadar}.

\begin{exercise}
\label{Exercise:microstates}Using stability of the relation defining unitary
elements, verify that the microstates formulation of hyperlinearity is
equivalent to the original definition.
\end{exercise}

\section{Classes of sofic and hyperlinear groups\label{Section: classes of
sofic and hyperlinear groups}}

A classical theorem of Cayley (see \cite{Cayley}) asserts that any finite
group is isomorphic to a group of permutations (with no fixed points) on a
finite set. To see this just let the group act on itself by left
translation. This observation implies in particular that finite groups%
\index{group!finite} are sofic%
\index{group!sofic}. This argument can be generalized to prove that amenable
groups are sofic. Recall that a countable discrete group $\Gamma $ is \emph{%
amenable}%
\index{group!amenable} if for every finite subset $F$ of $\Gamma $ and for
every $\varepsilon >0$ there is an $\left( H,\varepsilon \right) $-invariant
finite subset $K$ of $\Gamma $, i.e.\ such that%
\begin{equation*}
\left\vert hK\triangle K\right\vert <\varepsilon \left\vert K\right\vert
\end{equation*}%
for every $h\in H$. Amenable groups were introduced in 1929 in \cite%
{vonNeumann} by von Neumann in relation with his investigations upon the
Banach-Tarski paradox. Since then they have been the subject of an intensive
study from many different perspectives with recent applications even in game
theory \cite{Ca-Mo}, \cite{Ca-Sc}. More information about this topic can be
found in the monographs \cite{Greenleaf} and \cite{Paterson}.

\begin{proposition}
\label{prop:amenable are sofic} Amenable%
\index{group!amenable} groups are sofic.
\end{proposition}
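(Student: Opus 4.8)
The plan is to build the maps $\Phi:\Gamma\to S_n$ directly from F\o lner sets, using the left translation action of $\Gamma$ on a large almost-invariant finite set. Given a finite $F\subseteq\Gamma\setminus\{1_\Gamma\}$ and $\varepsilon>0$, I first enlarge $F$ to the finite set $E=F\cup\{gh:g,h\in F\}$ and invoke amenability to produce a finite $K\subseteq\Gamma$ that is $(E,\delta)$-invariant, i.e.\ $|sK\triangle K|<\delta|K|$ for every $s\in E$, where $\delta>0$ is a small parameter to be fixed at the end. Set $n=|K|$ and fix a bijection identifying $K$ with $n=\{0,1,\dots,n-1\}$, so that permutations of $K$ are elements of $S_n$.

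Next, for each $g\in\Gamma$ I turn the partial map $x\mapsto gx$ into an honest permutation. On the domain $K\cap g^{-1}K$ the assignment $x\mapsto gx$ is an injection into $K$ with image $gK\cap K$, because left multiplication by $g$ is a bijection of $\Gamma$. Since the domain and the image have the same cardinality, their complements in $K$ do as well, so I may extend this partial injection arbitrarily to a permutation $\sigma_g$ of $K$; for $g=1_\Gamma$ I choose $\sigma_{1_\Gamma}=\mathrm{id}$, so that $\Phi(1_\Gamma)=1_{S_n}$. Define $\Phi(g)=\sigma_g$.

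I then have to verify the two conditions in the definition of sofic group. For the length bound, note that if $g\neq 1_\Gamma$ and $x$ lies in the domain $K\cap g^{-1}K$, then $\sigma_g(x)=gx\neq x$; hence $\sigma_g$ fixes no point of this domain, and by the F\o lner estimate $|K\cap g^{-1}K|>(1-\delta)n$, giving $\ell_{S_n}(\sigma_g)>1-\delta$. For the almost-multiplicativity, I compare $\sigma_g\sigma_h$ with $\sigma_{gh}$ on the set $K\cap h^{-1}K\cap(gh)^{-1}K$: for such $x$ one has $\sigma_h(x)=hx$, then $\sigma_g(hx)=ghx$ (since $hx\in K\cap g^{-1}K$), and also $\sigma_{gh}(x)=ghx$, so the two permutations agree there.

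The one genuine estimate---and the step I expect to require the most care---is bounding the set where this agreement fails. The complement in $K$ of $K\cap h^{-1}K\cap(gh)^{-1}K$ is contained in $(K\setminus h^{-1}K)\cup(K\setminus(gh)^{-1}K)$, and each of these has size $|K\setminus s^{-1}K|=|sK\setminus K|\leq|sK\triangle K|<\delta n$ for $s\in\{h,gh\}\subseteq E$. Hence $d_{S_n}(\sigma_g\sigma_h,\sigma_{gh})<2\delta$. Fixing $\delta=\min\{\varepsilon/2,\,1/2\}$ at the outset makes $d_{S_n}(\sigma_g\sigma_h,\sigma_{gh})<\varepsilon$ and $\ell_{S_n}(\sigma_g)>1-\delta\geq 1/2=:r(g)$ hold simultaneously, and since $F$ and $\varepsilon$ were arbitrary, $\Gamma$ is sofic. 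The only points needing attention are the bookkeeping that both $h$ and $gh$ lie in the set $E$ with respect to which $K$ was chosen almost invariant, and the elementary identity $|K\setminus s^{-1}K|=|sK\setminus K|$.
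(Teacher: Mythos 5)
Your proof is correct and follows essentially the same route as the paper's: build the permutations from the left-translation partial action on a F\o lner set $K$, extend arbitrarily, and estimate fixed points and multiplicativity defects by almost-invariance. The only (immaterial) difference is bookkeeping: you enlarge $F$ to $E=F\cup F\cdot F$ before choosing $K$, whereas the paper takes $K$ to be $(F\cup F^{-1},\varepsilon)$-invariant and absorbs products via the resulting $(F,2\varepsilon)$-approximate morphism.
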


\begin{proof}
Suppose that $\Gamma $ is an amenable countable discrete group, $F$ is a
finite subset of $\Gamma $, and $\varepsilon $ is a positive real number.
Fix a finite $\left( F\cup F^{-1},\varepsilon \right) $-invariant subset $K$
of $\Gamma $. If $\gamma \in F$ then define%
\begin{equation*}
\sigma _{\gamma }(x)=\gamma x
\end{equation*}%
for $x\in \gamma ^{-1}K\cap K$ and extended $\sigma _{\gamma }$ arbitrarily
to a permutation of $K$. Observe that this defines an $\left( F,2\varepsilon
\right) $-approximate morphism\index{approximate morphism!of invariant length groups} from $G$ to the group $S_{K}$ of permutations
of $K$ endowed with the Hamming length (which is isomorphic as invariant
length group to $S_{n}$ where $n$ is the cardinality of $K$). This concludes
the proof that $\Gamma $ is sofic\index{group!sofic}.
\end{proof}

If $\mathcal{C}$ is a class of (countable, discrete) groups, then a group $%
\Gamma $ is \textit{locally embeddable%
\index{group!locally embeddable}} into elements of $\mathcal{C}$ if for
every finite subset $F$ of $\Gamma $ there is a function $\Phi $ from $%
\Gamma $ to a group $T\in \mathcal{C}$ such that $\Phi $ is nontrivial and
preserves the operation on $F$, i.e.\ for every $g,h\in F$:%
\begin{equation*}
\Phi (gh)=\Phi (g)\Phi (h)%
\text{,}
\end{equation*}%
and%
\begin{equation*}
\Phi (g)\neq 1,\qquad \text{for all $g\neq 1$.}
\end{equation*}%
The group $\Gamma $ is \textit{residually} in $\mathcal{C}$ if moreover $%
\Phi $ is required to be a surjective *-homomorphism. Recall also that $%
\Gamma $ is \textit{locally} in $\mathcal{C}$ if every finitely generated
subgroup of $\Gamma $ belongs to $\mathcal{C}$. It is clear that if $\Gamma $
is either locally in $\mathcal{C}$ or residually in $\mathcal{C}$, then in
particular $\Gamma $ is locally embeddable into elements of $\mathcal{C}$.

It is clear from the very definition that soficity%
\index{group!sofic} and hyperlinearity%
\index{group!hyperlinear} are property concerning only \textit{finite subsets%
} of the group. This is made precise in Exercise \ref{Exercise: locally
sofic}.

\begin{exercise}
\label{Exercise: locally sofic}Show that a group that is locally embeddable
into sofic groups is sofic. The same is true replacing sofic with hyperlinear%
\index{group!hyperlinear}.
\end{exercise}

In particular Exercise \ref{Exercise: locally sofic} shows that locally
sofic and residually sofic groups are sofic.

Groups that are locally embeddable into finite groups were introduced and
studied in \cite{Gordon-Vershik} under the name of LEF groups%
\index{group!LEF}. Since finite groups are sofic%
\index{group!sofic}, Exercise \ref{Exercise: locally sofic} implies that LEF
groups are sofic. In particular residually finite groups are sofic%
\index{group!residually finite}. More generally groups that are locally
embeddable into amenable groups%
\index{group!LEA} (LEA)---also called \emph{initially subamenable}%
\index{group!initially subamenable}---are sofic. It is a standard result in
group theory that free groups are residually finite (see \cite[Example 1.3]%
{Kwiatkowska-Pestov}). Therefore the previous discussion implies that free
groups%
\index{group!free} are sofic 
\index{group!free}. Examples of hyperlinear and sofic%
\index{group!hyperlinear} groups that are not LEA have been recently
constructed by Andreas Thom \cite{Th} and Yves de Cornulier \cite{Co11},
respectively. We will present these examples in Section \ref{Section:
examples}.

It should be now mentioned that it is to this day not know if there is any
group which is not sofic, nor if there is any group which is not hyperlinear.

\begin{openbroblem}
Are the classes of sofic and hyperlinear%
\index{group!hyperlinear} groups proper subclasses of the class of all
countable discrete groups?
\end{openbroblem}

It is not even known if every \emph{one-relator group}%
\index{group!one-relator}, i.e.\ a finitely presented group with only one
relation,%
\index{group!one-relator} is necessarily sofic or hyperlinear. This is an
open problem suggested by Nate Brown. Similarly it is not known whether all 
\emph{Gromov hyperbolic}%
\index{group!hyperbolic} groups are sofic. In fact it is not known whether
there is a Gromov hyperbolic groups that is not residually finite%
\index{group!residually finite}. An example of a \emph{monoid }that does not
satisfy the natural generalization of soficity for monoids is provided in 
\cite{ceccherini-silberstein_sofic_2014}.

\section{Closure properties of the classes of sofic and hyperlinear groups 
\label{Section: closure properties}}

The classes of sofic%
\index{group!sofic} and hyperlinear%
\index{group!hyperlinear} groups have nice closure properties.

\begin{proposition}
\label{prop:closure properties}The class of sofic groups is closed with
respect to the following operations:

\begin{enumerate}
\item Subgroups;

\item Direct limits%
\index{limi!direct};

\item Direct products%
\index{product!direct};

\item Inverse limits%
\index{limit!inverse};

\item Extensions by amenable groups%
\index{extension};

\item Free products%
\index{product!free};

\item Free products amalgamated over an amenable group%
\index{product!free (amalgamated)};

\item HNN extension over an amenable group%
\index{extension!HNN};

\item Graph product%
\index{product!fraph}.
\end{enumerate}
\end{proposition}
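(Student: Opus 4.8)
The plan is to split the nine operations into those that follow formally from the two characterizations of soficity already in hand—local embeddability (Exercise~\ref{Exercise: locally sofic}) and length-preserving embedding into $\prod_{\mathcal{U}}S_n$ (Exercise~\ref{Exercise: sofic equivalence 2})—and the handful that require a genuine combinatorial construction. For subgroups the embedding $\Gamma\hookrightarrow\prod_{\mathcal{U}}S_n$ restricts to any subgroup, so (1) is immediate. For direct limits (2), every finite subset of $\varinjlim\Gamma_i$ lies in the image of a single $\Gamma_i$ by directedness, so $\Gamma$ is locally embeddable into sofic groups and Exercise~\ref{Exercise: locally sofic} applies. For inverse limits (4) I would use that the projections $\pi_i\colon\varprojlim\Gamma_j\to\Gamma_i$ are honest homomorphisms that separate points: given a finite $F$, an index $i$ with $\pi_i$ injective on $F$ exhibits $\Gamma$ as locally embeddable into the $\Gamma_i$, and Exercise~\ref{Exercise: locally sofic} finishes. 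For finite direct products (3) I would combine approximate morphisms $\Phi_1\colon\Gamma_1\to S_n$, $\Phi_2\colon\Gamma_2\to S_m$ into the product action $\Phi_1\times\Phi_2$ on $n\times m$; since $(i,j)$ is fixed by $\sigma\times\tau$ exactly when both coordinates are, one has $1-\ell_{S_{nm}}(\sigma\times\tau)=(1-\ell_{S_n}(\sigma))(1-\ell_{S_m}(\tau))$, giving the required length and homomorphism estimates. Countable direct sums then reduce to the finite case via (2).

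The substantive content lies in (5)--(9), all of which rest on the amplification and fixed-point-free normal form of approximate morphisms (Exercise~\ref{Exercise: amplification sofic} and Exercise~\ref{Exercise: nice sofic approximation}). For an extension $1\to N\to\Gamma\to Q\to 1$ with $N$ sofic and $Q$ amenable (5), I would fix a set-theoretic section $s\colon Q\to\Gamma$, an $(F,\varepsilon)$-approximate morphism $\psi$ of $N$ into some $S_m$, and an $(F\cup F^{-1},\varepsilon)$-invariant F\o lner set $K\subseteq Q$ as in Proposition~\ref{prop:amenable are sofic}, and then build a permutation of $K\times m$ that translates the $K$-coordinate by the $Q$-image of a group element while acting on the $m$-coordinate through $\psi$ applied to the $N$-valued cocycle determined by $s$ and the multiplication in $\Gamma$. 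Controlling the two error terms—the boundary ratio $|\partial K|/|K|$ and the cocycle approximation—yields an $(F,\varepsilon')$-approximate morphism into $S_{|K|m}$.

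The remaining cases (6)--(9) I would unify through a single probabilistic matching argument carried out inside the universal sofic group. Given the honest homomorphisms $\Phi_1,\Phi_2\colon\Gamma_i\to\prod_{\mathcal{U}}S_n$ furnished by Exercise~\ref{Exercise: sofic equivalence 2}, with the factor sizes equalized by amplification and the generators modeled fixed-point-freely, I would conjugate $\Phi_2$ by a diagonally chosen sequence $\pi=(\pi_n)\in\prod_n S_n$ so that every nontrivial reduced alternating word maps to an element of length $1$. The existence of such $\pi$ rests on the estimate that, for fixed-point-free models, the expected normalized number of fixed points of a fixed reduced word in $S_n$ tends to $0$ (equivalently its expected Hamming length tends to $1$), so a uniformly random conjugation succeeds for all words handled at a given finite stage; this produces an embedding of the free product (6). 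For amalgamation over an amenable $A$ (7) and HNN extensions over amenable $A$ (8), I would first use the F\o lner construction of step (5) applied to the $A$-action to build factor models that \emph{agree along} $A$, and only then randomize across the complementary cosets, so that the matching respects the amalgamation; the graph product (9) follows from the same scheme, applying the product construction of step (3) across adjacent generators and the random matching of step (6) across non-adjacent ones.

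The main obstacle will be (7)--(9): the difficulty is to make the F\o lner almost-invariance that tames the amenable edge or amalgam subgroup simultaneously compatible with the expected-fixed-point estimate that tames reduced words, on one and the same finite permutation model. I would therefore concentrate the effort on isolating the random-matching lemma precisely enough—tracking how the fixed-point estimate degrades under the $A$-equivariance constraint—so that free products, amalgamated free products, HNN extensions, and graph products all emerge as instances of a single statement rather than being proved separately from scratch.
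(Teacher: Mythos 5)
Items (1)--(5) of your proposal are correct and essentially the paper's own arguments: locality for subgroups and direct limits, the product action on $n\times m$ (the paper's $\sigma_0\otimes\sigma_1$) for direct products, and the F\o lner-set-plus-cocycle permutation of $K\times m$ for extensions by amenable groups, which is exactly the paper's map $\tau_g(i,h)$ on $n\times A$. Your treatment of inverse limits (separating a finite set by a single projection, using directedness) differs harmlessly from the paper's one-line observation that an inverse limit is a subgroup of the direct product. For free products (6) you take a genuinely different route: the paper gives a deterministic construction, embedding the free product into permutations of $n\times n\times V$, where $V$ is a finite group on generators $v_{ij}$ whose Cayley graph has no cycles of length $\leq N$ (such $V$ exists because free groups are residually finite), and this girth condition forces nontrivial reduced words to act without fixed points; you instead propose conjugating one factor by a uniformly random permutation and invoking an expected-fixed-point estimate for reduced alternating words. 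That route is viable --- it is essentially Nica's asymptotic freeness of random permutations, and is how several later authors argue --- but the estimate you cite is the entire content of the argument and you assert it rather than prove it; for words of arbitrary bounded length it needs a genuine (if standard) counting argument, whereas the paper's girth trick needs nothing beyond residual finiteness of free groups.

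The genuine gap is in (7)--(9). Your plan to ``use the F\o lner construction of step (5) applied to the $A$-action to build factor models that agree along $A$, and only then randomize'' begs precisely the hard question: the sofic approximations of the amenable amalgam $A$ obtained by restricting the models of the two factors are a priori completely unrelated finite $A$-systems, and nothing in the extension argument of (5) lets you replace one by the other. Making them agree is, in substance, the theorem of Elek--Szab\'o and P\u{a}unescu that any two sofic representations of a countable amenable group are conjugate up to vanishing error; its proof requires the Ornstein--Weiss quasi-tiling machinery and is a research result in its own right, not a formal consequence of (5), and the randomization of (6) does not survive the constraint of $A$-equivariance without it. The paper does not prove (7) and (9) either: it cites Collins--Dykema, Elek--Szab\'o and P\u{a}unescu for amalgamated free products, and Ciobanu--Holt--Rees for graph products. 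But it does dispose of (8) by a clean algebraic reduction that you miss: the HNN extension $\widehat{\Gamma}$ is an extension by $\mathbb{Z}$ of the subgroup generated by the conjugates $\Gamma_i=t^{-i}\Gamma t^{i}$, and that subgroup is an increasing union of iterated free products amalgamated over copies of the amenable subgroup, so (8) follows from (2), (5) and (7) with no new permutation construction at all. To make your write-up sound, you should either prove the conjugacy theorem for sofic representations of amenable groups or, as the paper does, quote it explicitly; your current sketch silently assumes it.
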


It is clear from the definition that soficity is a local property. Therefore
a group is sofic if and only if all its (finitely generated) subgroups are
sofic. In particular subgroups of sofic groups are sofic. It immediately
follows that the direct limit 
\index{limit!direct}of sofic groups is sofic.

To see that the direct product of two sofic groups is sofic, suppose that $%
\Gamma _{0},\Gamma _{1}$ are sofic groups and $F_{0},F_{1}$ are finite
subsets of $\Gamma _{0}$ and $\Gamma _{1}$ respectively. If $\sigma _{0}$
and $\sigma _{1}$ are elements of $S_{n}$ and $S_{m}$ respectively, then
define $\sigma _{0}\otimes \sigma _{1}\in S_{nm}$ by%
\begin{equation*}
\left( \sigma _{0}\otimes \sigma _{1}\right) \left( im+j\right) =\sigma
_{0}(i)m+\sigma _{1}\left( j\right)
\end{equation*}%
for $i\in n$ and $j\in m$. If $\Phi _{0}:\Gamma _{0}\rightarrow S_{n}$ is an 
$\left( F_{0},\varepsilon \right) $-approximate morphism%
\index{approximate morphism!of invariant length groups} and $\Phi
_{1}:\Gamma _{1}\rightarrow S_{m}$ is an $\left( F_{1},\varepsilon \right) $%
-approximate morphism, then the map 
\begin{equation*}
\Phi _{0}\otimes \Phi _{1}:\Gamma _{0}\times \Gamma _{1}\rightarrow S_{nm}
\end{equation*}%
defined by%
\begin{equation*}
\left( \gamma _{0},\gamma _{1}\right) \mapsto \Phi _{0}\left( \gamma
_{0}\right) \otimes \Phi _{1}\left( \gamma _{1}\right)
\end{equation*}%
is an $\left( F_{0}\times F_{1},2\varepsilon \right) $-approximate morphism%
\index{approximate morphism!of invariant length groups}. This observation is
sufficient to conclude that a direct product%
\index{product!direct} of two sofic groups is sofic. The result easily
generalizes to arbitrary direct products in view of the local nature of
soficity. Since the inverse limits%
\index{limit!inverse} is a subgroup of the direct product, it follows from
what we have observed so far that the inverse limit of sofic groups is sofic.

We now prove the result---due to Elek and Szab\'{o} \cite{Elek-Szabo-onSofic}%
---that the extension 
\index{extension}of a sofic group by an amenable group is sofic.\ Suppose
that $\Gamma $ is a group, and $N$ is a normal sofic subgroup of $\Gamma $
such that the quotient $\Gamma /N$ is amenable. We want to show that $\Gamma 
$ is sofic. Fix $\varepsilon >0$ and a finite subset $F$ of $\Gamma $.
Denote by $g\mapsto 
\overline{g}$ the canonical quotient map from $\Gamma $ to $\Gamma /N$ and
let $r$ be a (set-theoretic) inverse for the quotient map. Observe that $%
r\left( \overline{g}\right) ^{-1}g\in N$ for every $g\in G$. F\o lner's
reformulation of amenability yields a finite subset $\overline{A}$ of $%
\Gamma \backslash N$ such that%
\begin{equation*}
\left\vert \overline{A}\overline{g}\backslash \overline{A}\right\vert \leq
\varepsilon \left\vert \overline{A}\right\vert
\end{equation*}%
for every $g\in F$. Let $A$ be the image of $\overline{A}$ under $r$ and $%
\widehat{F}$ be $N\cap (A\cdot F\cdot A^{-1})$. Since $N$ is sofic there is
a $(\widehat{F},\varepsilon )$-approximate morphism $g\mapsto \sigma _{g}$
from $N$ to $S_{n}$ for some $n\in \mathbb{N}$. Let $k$ be $n|A|=n\left\vert 
\overline{A}\right\vert $ and identify $S_{k}$ with the group of
permutations of $n\times A$. Define the map $g\mapsto \tau _{g}$ from $%
\Gamma $ to $S_{k}$ by setting%
\begin{equation*}
\tau _{g}\left( i,h\right) =\left\{ 
\begin{array}{cc}
\left( \sigma _{\overline{r\left( gh\right) }^{-1}gh}\left( i\right) ,r(%
\overline{gh})\right) & \text{if }\overline{gh}\in \overline{A} \\ 
\left( i,h\right) & \text{otherwise.}%
\end{array}%
\right. \text{.}
\end{equation*}%
This is well defined since, as we observed above, $r\left( \overline{gh}%
\right) ^{-1}gh\in N$.

\begin{exercise}
Show that the map $g\mapsto \tau _{g}$ is an $\left( F,3\varepsilon \right) $%
-approximate morphism from $\Gamma $ to $S_{k}$.
\end{exercise}

\begin{hint}
Suppose that $g\in F\backslash \left\{ 1_{\Gamma }\right\} $. To show that $%
\tau _{g}$ has at most $\varepsilon n\left\vert A\right\vert $ fixed points,
distinguish the cases when $g\notin N$ and $g\in N$. In the first case
observe that $\left( i,h\right) $ is a fixed point only if $\overline{gh}%
\notin \overline{A}$, and use the fact that $\left\vert \overline{A}%
\overline{g}\backslash \overline{A}\right\vert \leq \varepsilon \left\vert 
\overline{A}\right\vert $. In the second case observe that $\left(
i,h\right) $ is a fixed point only if $i$ is a fixed point for $\sigma
_{h^{-1}gh}$, and use the fact that $h^{-1}gh\in \widehat{F}$ and $\gamma
\mapsto \sigma _{\gamma }$ is a $(\widehat{F},\varepsilon )$-approximate
morphism. To conclude fix $g,g^{\prime }\in F$ and observe that for all but $%
\varepsilon \left\vert A\right\vert $ values of $h\in A$, $\overline{gh}$
and $\overline{g^{\prime }gh}$ both belong to $\overline{A}$. For such
values of $h$ show that for all but $\varepsilon n$ values of $i\in n$ one
has that $\tau _{g^{\prime }}\tau _{g}\left( i,h\right) =\tau _{g^{\prime
}g}\left( i,h\right) $.
\end{hint}

It is a little more involved to observe that the free product%
\index{product!free} of sofic groups is sofic, which is a result of Elek and
Szab\'{o} from \cite{Elek-Szabo-onSofic}. Suppose that $\Gamma _{0},\Gamma
_{1}$ are sofic groups, $F_{0},F_{1}$ are finite subsets of $\Gamma _{0}$
and $\Gamma _{1}$, and $\varepsilon >0$. We identify canonically $\Gamma _{0}
$ and $\Gamma _{1}$ with subgroups of the free product $\Gamma =\Gamma
_{0}\ast \Gamma _{1}$. Every element $\gamma $ of $\Gamma $ has a unique
shortest decomposition%
\begin{equation*}
\gamma =g_{1}h_{1}\cdots g_{n}h_{n}
\end{equation*}%
with $g_{i}\in \Gamma _{0}$ and $h_{i}\in \Gamma _{1}$. Fix $N\in \mathbb{N}$
and set $F$ to be the set of elements of $\gamma $ of $\Gamma $ such that
the unique shortest decomposition of $\gamma $ has length at most $N$ and
its elements belong to $F_{0}\cup F_{1}$. By Exercise \ref{Exercise: nice
sofic approximation} one can find $n\in \mathbb{N}$, an $\left(
F_{0},\varepsilon \right) $-approximate morphism $g\mapsto \sigma _{g}^{(0)}$
from $\Gamma _{0}$ to $S_{n}$ and an $\left( F_{1},\varepsilon \right) $%
-approximate morphism $g\mapsto \sigma _{g}^{(1)}$ from $\Gamma _{1}$ to $%
S_{n}$ such that $\left( \sigma _{g}^{(i)}\right) ^{-1}=\sigma
_{g^{-1}}^{\left( i\right) }$ and $\sigma _{g}^{\left( i\right) }$ has no
fixed points for $g\in F^{\left( i\right) }\backslash \left\{ 1_{\Gamma
_{i}}\right\} $ and $i=0,1$. It follows from the already recalled fact that
free groups are residually finite that there is a finite group $V$ generated
by elements $v_{ij}$ for $i,j\in \left[ n\right] $ such that the generators
satisfy no nontrivial relation expressed by words of length at most $N$.
Equivalently the Cayley graph of $V$ with respect to the generators $v_{ij}$
has no cycle of length smaller than or equal to $N$. Consider then $%
k=n^{2}\left\vert V\right\vert $ and identify $S_{k}$ with the permutation
group of $n\times n\times V$. Define for $g\in \Gamma _{0}$ the permutation $%
\tau _{g}$ of $n\times n\times V$ by%
\begin{equation*}
\tau _{g}\left( i,j,w\right) =\left( \sigma _{g}^{\left( 0\right) }\left(
i\right) ,j,w\right) 
\text{.}
\end{equation*}%
Similarly define for $h\in \Gamma _{1}$ the permutation $\tau _{h}$ of $%
\left[ n\right] \times \left[ n\right] \times V$ by%
\begin{equation*}
\tau _{h}\left( i,j,w\right) =\left( i,\sigma _{h}^{\left( 1\right) }\left(
j\right) ,w\cdot v_{ij}^{-1}\cdot v_{i,\sigma _{h}^{\left( 1\right) }\left(
j\right) }\right) \text{.}
\end{equation*}%
Finally is $\gamma $ is an element of $F$ with shortest decomposition $%
g_{1}h_{1}\cdots g_{n}h_{n}$ for $n\leq N$ set%
\begin{equation*}
\tau _{\gamma }=\tau _{g_{1}}\circ \tau _{h_{1}}\circ \cdots \circ \tau
_{g_{n}}\circ \tau _{h_{n}}\text{.}
\end{equation*}

\begin{exercise}
Show that the map $\gamma \mapsto \tau _{\gamma }$ defied above is an $%
\left( F,\varepsilon \right) $-approximate morphism from $\Gamma $ to $S_{k}$%
.
\end{exercise}

\begin{hint}
The fact that the generators $v_{ij}$ of $V$ satisfy no nontrivial relations
expressed by words of length at most $N$ implies that $\tau _{\gamma }$ has
no fixed points whenever $\gamma $ is a nonidentity element of $F$. Suppose
now that $\gamma ,\gamma ^{\prime }\in F\backslash \left\{ 1\right\} $ and
consider the corresponding unique shortest decompositions $\gamma
=g_{1}h_{1}\cdots g_{n}h_{n}$ and $\gamma ^{\prime }=g_{1}^{\prime
}h_{1}^{\prime }\cdots g_{n}^{\prime }h_{n}^{\prime }$. In order to show
that $\tau _{\gamma }\circ \tau _{\gamma ^{\prime }}$ and $\tau _{\gamma
\gamma ^{\prime }}$ differ on at most $\varepsilon k$ points of $\left[ n%
\right] \times \left[ n\right] \times V$ consider different cases depending
whether both, none, or exactly one between $h_{n}$ and $g_{1}^{\prime }$ is
equal to the identity.
\end{hint}

The result about the free product of sofic groups has been later generalized
by Collins and Dykema, who showed that the free product of sofic groups
amalgamated 
\index{product!free (amalgamated)}over a monotileable amenable group is
sofic \cite{Collins-Dykema}. The monotileability assumption has been later
removed by Elek and Szab\'{o} \cite[Theorem 1]{Elek-Szabo-representations}
and, independently, by Paunescu \cite{Pa2}.

Suppose that $\Gamma $ is a group with presentation $\left\langle
S|R\right\rangle $, $H$ is a subgroup of $\Gamma $, and $\alpha
:H\rightarrow \Gamma $ is an injective homomorphism. Let $t$ be a symbol not
in $\Gamma $. The corresponding \emph{HNN extension}%
\index{extension!HNN} $%
\widehat{\Gamma }$ is the group generated by $S\cup \left\{ t\right\} $
subject to the relations $R$ and $t^{-1}ht=\alpha (h)$ for $h\in H$. We
claim that $\widehat{\Gamma }$ is sofic whenever $\Gamma $ is sofic an $H$
is amenable. Define $\Gamma _{i}=t^{-i}\Gamma t^{i}$ for $i\in \mathbb{Z}$,
and let $S$ be the subgroup of $\widehat{\Gamma }$ generated by $\Gamma _{i}$
for $i\in \mathbb{Z}$. Then $\widehat{\Gamma }$ is an extension of $S$ by $%
\mathbb{Z}$. In particular to show that $\widehat{\Gamma }$ is sofic it
suffices to show that $S$ is sofic. To show that $S$ is sofic it is enough
to show that for every $n\in \mathbb{N}$ the group $S_{n}=\left\langle
\Gamma _{i}:i\in \left[ -n,n\right] \cap \mathbb{Z}\right\rangle $ is sofic.
This can be shown by induction on $n$ observing that $S_{0}=\Gamma $, and $%
S_{n+1}$ can be obtained as a free product of $S_{n}$ and isomorphic copies
of $\Gamma $ amalgamated over amenable subgroups isomorphic to $H$.

A different generalization of the result about free products consists in
considering the graph products%
\index{product!graph} of sofic groups. Suppose that $\left( V,E\right) $ is
a simple undirected graph and, for every $v\in V$, $\Gamma _{v}$ is a group.
The corresponding graph product is the quotient of the free products of the $%
\Gamma _{v}$'s by the normal subgroup generated by the relators $\left[
g_{v},g_{w}\right] $ for $g_{v}\in \Gamma _{v}$ and $g_{w}\in \Gamma _{w}$
such that $v$ and $w$ are connected by an edge. Clearly free products
correspond to graph products where there are no edges. Theorem 1.1 of \cite%
{Ciobanu-Holt-Rees} refines the argument for free products to show that in
fact an arbitrary graph product of sofic groups is sofic.

\section{Further examples of sofic group\label{Section: examples}}

In this section we want to list some interesting examples of sofic and
hyperlinear groups. It follows from the discussion in Section \ref{Section:
classes of sofic and hyperlinear groups} that all residually finite groups
are sofic. In particular free groups are sofic. To obtain an example of a
sofic group that is not residually finite one can consider, for $n,m\geq 2$
distinct, the Baumslag-Solitar group%
\index{group!Baumslag-Solitar} $BS(m,n)$ generated by two elements $a,b$
satisfying the relation $a^{-1}b^{m}a=b^{n}$. These groups were introduced
in \cite{baumslag_two-generator_1962} to provide examples of simple finitely
presented groups that are not Hopfian%
\index{group!Hopfian}, i.e.\ admit a surjective but not injective
endomorphism. It is known that one hand such groups are not residually finite%
\index{group!residually finite} \cite[Theorem C]{meskin_nonresidually_1972}.
On the other hand they are residually solvable \cite[Corollary 2]%
{kropholler_baumslag-solitar_1990} and, in particular, sofic.

An example of a sofic 
\index{group!sofic}(and in fact LEF%
\index{group!LEF}) finitely generated group that is not residually amenable%
\index{group!residually amenable} is provided in \cite[Theorem 3]%
{Elek-Szabo-onSofic}, adapting a construction from \cite{Gordon-Vershik}.
Recall that a function $\phi :\Gamma \rightarrow \mathbb{C}$ has positive
type%
\index{function!positive type} if whenever $\lambda _{i}\in \mathbb{C}$ and $%
g_{i}\in \Gamma $ are for $i\leq n$ then%
\begin{equation*}
\sum_{i,j\leq n}\lambda _{i}%
\overline{\lambda }_{j}\phi \left( g_{i}^{-1}g_{j}\right) \geq 0\text{.}
\end{equation*}%
A group $\Gamma $ has Kazhdan's property%
\index{group!property (T)} (T) if any sequence of positive type functions on 
$G$ that converges pointwise to the function $\mathbf{1}$ constantly equal
to $1$ in fact converges to $\mathbf{1}$ uniformly. Such a property,
originally introduced by Kazhdan in \cite{kazdan_connection_1967}, has
played a key role in the latest developments of geometric group theory; see 
\cite{Be-Ha-Va}. Property (T) 
\index{group!property (T)}can be regarded as a strong antithesis to
amenability, since the only amenable groups with property (T) are the finite
groups. Since property (T) is preserved by quotients, it follows that an
amenable quotient of a property (T) group must be finite. We also recall
here that a group with property (T) is finitely generated%
\index{group!finitely generated} \cite[Theorem 1.3.1]{Be-Ha-Va}.

Consider an infinite hyperbolic%
\index{group!hyperbolic} residually finite%
\index{group!residually finite} property (T)%
\index{group!property (T)} group $K$. (Examples of such groups are provided
in \cite{gromov_hyperbolic_1987}.) Let $P$ be the group of finitely
supported permutations of $K$, and $Q$ be the group generated by $P$ and the
left translations by elements of $K$. Then $P$ is a normal subgroup of $Q$
and $Q$ is a semidirect product of $P$ and $K$. Consistently we identify $K$
with a subgroup of $Q$. Let $S$ be a finite set of generators for $K$ and $%
T_{S}$ be the set of transpositions of the form $\left( 1,s\right) $ for $%
s\in S$.

\begin{exercise}
Show that $S\cup T_{S}$ is a set of generators for $Q$.
\end{exercise}

\begin{hint}
Observe that if $s\in S$ and $g\in K$ then the transposition $\left(
g,gs\right) $ can be written as $g^{-1}\left( 1,s\right) g$. Recall that the
group of permutations on $n$ symbols $x_{1},\ldots ,x_{n}$ is generated by
the transpositions $\left( x_{i},x_{i+1}\right) $ for $i\leq n-1$.
\end{hint}

We now observe that $Q$ is not residually amenable. Recall that every
hyperbolic group contains an nontorsion element. Let $t\in K$ such an
element, and let $a\in P$ be the transposition $\left( t,1\right) $. Observe
that $t^{n}at^{-n}=\left( t^{n+1},t\right) \neq a$ for every $n\in \mathbb{N}
$. Suppose by contradiction that there is a homomorphism $\phi :Q\rightarrow
M$ into an amenable group $M$ such that $\phi (a)\neq 1_{M}$ and $\phi
\left( t\right) \neq 1_{M}$. Let $A$ be the (simple) subgroup of $P $ of
even permutations. Since $A$ is simple, $a\in A$ , and $\phi (a)\neq 1_{M}$, 
$\phi $ must be injective on $A$. As observed before the fact that $K $ has
property (T) implies that the image of $K$ under $\phi $ is finite. Let $n$
be the rank of $\phi \left[ K\right] $ and observe that $\phi \left(
t^{n}\right) =1$. Therefore $\phi \left( t^{-n}at^{-n}\right) =\phi (a)$
while $t^{-n}at^{-n}\neq a$. This contradicts the injectivity of $\phi $ on $%
A$.

We now show that $Q$ is sofic and, in fact LEF%
\index{group!LEF}. Denote by $B_{n}(K)$ the $n$-ball around $1_{K}$ in the
Cayley graph of $K$ associated with the generating set $S$. Define $F_{n}$
to be the set of elements of $Q$ of the form $k\sigma $ where $k\in B_{n}(K)$
and $\sigma \in P$ is supported on $B_{n}(K)$. We want to define an
injective map from $F_{2n}$ to a finite group that preserves the operation
on $F_{n}$. Since $K$ is residually finite it has a finite-index normal
subgroup $N_{n}$ such that $N_{n}\cap B_{2n}(K)=\left\{ 1_{K}\right\} $.
Define $H_{n}$ to be the (finite) group of permutations on $K/N_{n}$. Denote
by $\pi _{n}$ the quotient map from $K$ onto $K/H_{n}$ and define $\psi
_{n}:F_{2n}\rightarrow H_{n}$ by setting%
\begin{equation*}
\psi _{n}\left( k\sigma \right) =\psi _{n}\left( k\right) \psi _{n}\left(
\sigma \right) 
\text{.}
\end{equation*}%
Here $\psi _{n}\left( k\right) $ is the left translation by $\pi \left(
k\right) $, while $\psi _{n}\left( \sigma \right) $ is defined by $\psi
_{n}\left( \sigma \right) \left( \pi _{n}(h)\right) =\pi \left( \sigma
(h)\right) $ if $h\in B_{2n}(K)$ and acts as the identity otherwise. Clearly 
$\psi _{n}$ is injective on $F_{2n}$.

\begin{exercise}
Show that $\psi _{n}\left( xy\right) =\psi _{n}\left( x\right) \psi
_{n}\left( y\right) $ for $x,y\in F_{n}$.
\end{exercise}

\begin{hint}
Want to show that $\psi _{n}\left( xy\right) \left( \pi _{n}(h)\right) =\psi
_{n}\left( x\right) \psi _{n}\left( y\right) \left( \pi _{n}(h)\right) $ for
every $h\in K$. Distinguish the cases when $h\in B_{2n}(K)$ and $h\notin
B_{2n}(K)$.
\end{hint}

Another example of a not residually finite sofic group which is moreover
finitely presented (and in fact one-relator%
\index{group!one-relator}) was provided by Jon Bannon in \cite%
{bannon_non-residually_2011}; see also \cite{bannon_note_2010} for other
similar examples. Consider the group $\Gamma $ generated by two elements $%
a,b $ subject to the relation $a=\left[ a,a^{b}\right] $ where $%
a^{b}=bab^{-1}$ and $\left[ a,a^{b}\right] $ is the commutator of $a$ and $%
a^{b}$. Such a group was introduced by Baumslag in \cite%
{baumslag_non-cyclic_1969} as an example of a non-cyclic one-relator group
all of whose finite quotients are cyclic. In particular $\Gamma $ is not
residually finite and, in fact, not residually solvable since $a$ belongs to
all the derived subgroups of $\Gamma $. We now show that it is sofic.

Set $x=a^{-1}$ and $y=bab^{-1}$. Then the relator $a^{-1}\left[ a,a^{b}%
\right] $ becomes $x^{-2}y^{-1}xy$. Observe that the group $H=\left\langle
x,y|y^{-1}xy=x^{2}\right\rangle $ is the Baumslag-Solitar%
\index{group!Baumslag-Solitar} group $B\left( 1,2\right) $ and, in
particular, sofic. Moreover $\Gamma $ is the HNN extension%
\index{extension!HNN} of $H$ with respect to the isomorphism $x^{n}\mapsto
y^{n}$ between the subgroups $\left\langle x\right\rangle $ and $%
\left\langle y\right\rangle $ of $H$. It follows that $\Gamma $ is sofic
since HNN extensions 
\index{extension!HNN}of sofic groups over amenable groups are sofic.

We would now like to present examples of hyperlinear and sofic groups that
are not LEA. Recall that a group is locally embeddable into amenable groups
(LEA)%
\index{group!LEA} if, briefly, every finite portion of its multiplication
table can be realized as a portion of the multiplication table of an
amenable group. The first example of a hyperlinear 
\index{group!hyperlinear}not LEA group was constructed by Thom in \cite{Th},
adapting a construction due to de Cornulier \cite{de_cornulier_finitely_2007}
and Abels \cite{abels_example_1979}.

If $n,m\in \mathbb{N}$ denote by $M_{n,m}\left( \mathbb{Z}\left[ 
\frac{1}{p}\right] \right) $ the space of $n\times m$ matrices over $\mathbb{%
Z}\left[ \frac{1}{p}\right] $ and by $SL_{n}\left( \mathbb{Z}\left[ \frac{1}{%
p}\right] \right) $ 
\index{group!special linear}the group of $n\times n$ matrices of determinant 
$1$. Let $\Gamma $ be the group of matrices in $SL_{8}\left( \mathbb{Z}\left[
\frac{1}{p}\right] \right) $ of the form%
\begin{equation*}
\begin{bmatrix}
1 & a_{12} & a_{13} & a_{14} \\ 
0 & a_{22} & a_{23} & a_{24} \\ 
0 & 0 & a_{33} & a_{34} \\ 
0 & 0 & 0 & 1%
\end{bmatrix}%
\end{equation*}%
where the diagonal blocks are square matrices of rank $1,3,3,1$ and $%
a_{22},a_{33}\in SL_{3}\left( \mathbb{Z}\left[ \frac{1}{p}\right] \right) $.
The center $C$ of $\Gamma $ consists of the matrices of the form%
\begin{equation*}
\begin{bmatrix}
1 & 0 & 0 & a \\ 
0 & I & 0 & 0 \\ 
0 & 0 & I & 0 \\ 
0 & 0 & 0 & 1%
\end{bmatrix}%
\end{equation*}%
where $I$ is the $3\times 3$ identity matrix and $a\in K$. Thus $C$ is
isomorphic to the additive group of $\mathbb{Z}\left[ \frac{1}{p}\right] $.
In particular $C$ contains a subgroup $Z$ isomorphic to $\mathbb{Z}$.

It is shown in \cite[Proposition 2.7]{de_cornulier_finitely_2007} that $%
\Gamma $ is a%
\index{lattice} lattice in a locally compact group with%
\index{group!property (T)} property (T). Since property (T) passes to
lattices \cite[Theorem 1.7.1]{Be-Ha-Va}, it follows that $\Gamma $ has
property (T) and, in particular, is finitely generated. It is moreover shown
in \cite[Section 3]{de_cornulier_finitely_2007} that $\Gamma $ and (hence) $%
\Gamma /Z$ are finitely presented. We present here the argument to show that 
$\Gamma /Z$ is not%
\index{group!Hopfian} Hopfian, i.e.\ it has a surjective endomorphism with
nontrivial kernel.

Denote by $\mathbb{Z}\left[ 
\frac{1}{p}\right] ^{\times }$ the multiplicative group of invertible
elements of $\mathbb{Z}\left[ \frac{1}{p}\right] $. Then $\mathbb{Z}\left[ 
\frac{1}{p}\right] ^{\times }$ identified with the group of matrices of the
form%
\begin{equation*}
\begin{bmatrix}
a & 0 & 0 & 0 \\ 
0 & 1 & 0 & 0 \\ 
0 & 0 & 1 & 0 \\ 
0 & 0 & 0 & 1%
\end{bmatrix}%
\end{equation*}%
naturally acts on $\Gamma $ by conjugation. Considering $p\in \mathbb{Z}%
\left[ \frac{1}{p}\right] ^{\times }$ gives an automorphism $\beta $ of $%
\Gamma $ that maps the center $Z$ of $\Gamma $ to its property subgroup $%
Z^{p} $. Thus $\beta $ induces a surjective endomorphism $\overline{\beta }$
of $\Gamma /Z$ whose kernel is the (nontrivial) subgroup $C/Z$ of $\Gamma /Z$%
; see \cite[Lemma 2.3]{de_cornulier_finitely_2007}. As a consequence $\Gamma
/Z$ is not Hopfian.

As observed in \cite[3.1]{Th} a finitely presented LEA group that has
property (T)\ is necessarily Hopfian. In fact a finitely presented LEA group
is residually amenable by \cite[Proposition 7.3.8]{Ceccherini-Coornaert}. As
recalled before an amenable quotient of a property (T) group is necessarily
finite. Therefore a property (T)\ residually amenable group is residually
finite. Finally a residually finite finitely generated group is easily seen
to be Hopfian. It follows from this argument that the group $\Gamma /Z$,
being finitely presented, property (T), and not Hopfian, is \emph{not }LEA.

To conclude one needs to observe that $\Gamma /Z$ is hyperlinear.
Considering the reduction modulo $q$ where $q$ is an arbitrary positive
integer prime with $p$ shows that the group $\Gamma $ is residually finite
and, in particular, hyperlinear. The argument is concluded by showing that
hyperlinearity is preserved by taking quotients by central subgroups; see 
\cite[Remark 3.4]{Th}.

It is not known whether the group $\Gamma /Z$ above is sofic. We conclude
this sequence of examples with an example due to de Cornulier of a finitely
presented sofic 
\index{group!sofic}group that is not LEA \cite{cornulier_sofic_2011}. Let $%
\Gamma $ be the group of matrices%
\begin{equation*}
\begin{bmatrix}
a & b & u_{02} & u_{03} & u_{04} \\ 
c & d & u_{12} & u_{13} & u_{14} \\ 
0 & 0 & p^{n_{2}} & u_{23} & u_{24} \\ 
0 & 0 & 0 & p^{n_{3}} & u_{34} \\ 
0 & 0 & 0 & 0 & 1%
\end{bmatrix}%
\end{equation*}%
with

\begin{itemize}
\item $%
\begin{bmatrix}
a & b \\ 
c & d%
\end{bmatrix}%
\in SL_{2}\left( \mathbb{Z}\right) $,

\item $u_{ij}\in \mathbb{Z}\left[ 
\frac{1}{p}\right] $, and

\item $n_{2},n_{3}\in \mathbb{Z}$.
\end{itemize}

Let $M$ be the normal subgroup of $\Gamma $ consisting of matrices of the
form%
\begin{equation*}
\begin{bmatrix}
1 & 0 & 0 & 0 & m_{1} \\ 
0 & 1 & 0 & 0 & m_{2} \\ 
0 & 0 & 1 & 0 & 0 \\ 
0 & 0 & 0 & 1 & 0 \\ 
0 & 0 & 0 & 0 & 1%
\end{bmatrix}%
\end{equation*}%
with $m_{1},m_{2}\in \mathbb{Z}\left[ \frac{1}{p}\right] $. The normal
subgroup $M_{\mathbb{Z}}$ is defined similarly with $m_{1},m_{2}\in \mathbb{Z%
}$. Then one can consider the quotient $\Gamma /M_{\mathbb{Z}}$. It is shown
in \cite{cornulier_sofic_2011} that such a group is sofic and finitely
presented but not LEA.

In order to prove soficity, it is enough to show that $\Gamma $ can be
obtained as an extension 
\index{extension}of a sofic group by an amenable group. Consider the normal
subgroup $\Upsilon $ of elements of $\Gamma $ for which $n_{2}=n_{3}=0$.
Observe that $\Gamma /\Upsilon $ is isomorphic to $\mathbb{Z}^{2}$.
Therefore it remains to show that $\Upsilon /M_{\mathbb{Z}}$ is sofic.

Fix $m\in \mathbb{N}$ and consider the subgroup $\Upsilon _{m}$ of elements
of $\Upsilon $ for which

\begin{itemize}
\item $u_{02},u_{12},u_{23},u_{34}\in p^{-m}\mathbb{Z}$,

\item $u_{03},u_{13},u_{24}\in p^{-2m}\mathbb{Z}$, and

\item $u_{04},u_{14}\in p^{-3m}\mathbb{Z}$.
\end{itemize}

It is clear that $\bigcup_{m}\Upsilon _{m}=\Upsilon $. Therefore by the
local nature of soficity we are left with the problem of showing that $%
\Upsilon _{m}/M_{\mathbb{Z}}$ is sofic for every $m\in \mathbb{N}$. Such a
group is in fact residually finite and hence sofic. Consider the subgroup $%
\Lambda $ of elements of $\Gamma $ for which the block%
\begin{equation*}
\begin{bmatrix}
a & 0 \\ 
0 & b%
\end{bmatrix}%
\in SL_{2}\left( \mathbb{Z}\right)
\end{equation*}%
is the identity matrix. Observe that $\left( \Upsilon _{m}\cap \Lambda
\right) /M_{\mathbb{Z}}$ is a normal subgroup of $\Upsilon _{m}/M_{\mathbb{Z}%
}$, and moreover $\Upsilon _{m}/M_{\mathbb{Z}}$ is isomorphic to the
semidirect product $\left( \Upsilon _{m}\cap \Lambda \right) /M_{\mathbb{Z}%
}\rtimes SL_{2}\left( \mathbb{Z}\right) $. Now $\Upsilon _{m}\cap \Lambda $
is 
\index{group!solvable}solvable and finitely generated, and hence residually
finite 
\index{group!residually finite}\cite[2.1]{gruenberg_residual_1957}. The
proof is concluded by observing that a semidirect product of a finitely
generated residually finite group by a residually finite group is residually
finite.

Observe that the quotient map $\Gamma \mapsto \Gamma /M_{\mathbb{Z}}$
restricted to the subgroup (isomorphic to $SL_{2}\left( \mathbb{Z}\right) $)
of elements%
\begin{equation*}
\begin{bmatrix}
a & b & 0 & 0 & 0 \\ 
c & d & 0 & 0 & 0 \\ 
0 & 0 & 1 & 0 & 0 \\ 
0 & 0 & 0 & 1 & 0 \\ 
0 & 0 & 0 & 0 & 1%
\end{bmatrix}%
\end{equation*}%
is injective. Thus $\Gamma /M_{\mathbb{Z}}$ contains a copy of $SL_{2}\left( 
\mathbb{Z}\right) $ and, hence, of the free group on $2$ generators $F_{2}$.
(Recall that the matrices%
\begin{equation*}
A=%
\begin{bmatrix}
1 & 2 \\ 
0 & 1%
\end{bmatrix}%
\quad 
\text{and}\quad B=%
\begin{bmatrix}
1 & 0 \\ 
2 & 1%
\end{bmatrix}%
\end{equation*}%
generate a copy of $F_{2}$ inside $SL_{2}\left( \mathbb{Z}\right) $; see 
\cite[page 3]{Kwiatkowska-Pestov}.)

By \cite[Corollary 7.1.20]{Ceccherini-Coornaert} in order to show that $%
\Gamma /M_{\mathbb{Z}}$ is not LEA it is enough to show that it is an
isolated point in the space of marked groups. Fix $n\in \mathbb{N}$. An $n$%
-marked group%
\index{group!marked} is a group $\Gamma $ endowed with a distinguished
generating $n$-tuple $\left( \gamma _{1},\ldots ,\gamma _{n}\right) $. Let $%
\mathcal{G}_{n}$ be the space of $n$-marked groups. Given an $n$-marked
group one can consider the kernel of the epimorphism $F_{n}\rightarrow
\Gamma $ mapping the canonical free generators of $F_{n}$ to the $\gamma
_{i} $'s. Conversely any normal subgroup $N$ of $F_{n}$ gives rise to the $n$%
-marked group $F_{n}/N$ where the distinguished $n$-tuple of generators is
the image of the free generators of $F_{n}$. This argument shows that one
can identify the space $\mathcal{G}_{n}$ of $n$-marked group with the space
of the normal subgroups of $F_{n}$. Identifying in turn a normal subgroup of 
$F_{n}$ with its characteristic function yields an inclusion of $\mathcal{G}%
_{n}$ into $2^{F_{n}}$ as a closed subspace. This defines a compact
metrizable zero-dimensional topology on $\mathcal{G}_{n}$. Corollary 7.1.20
in \cite{Ceccherini-Coornaert} shows that a group is LEA if and only if it
is a limit of amenable groups in the space of marked groups. Therefore since 
$\Gamma /M_{\mathbb{Z}}$ is not LEA, it is enough to show that $\Gamma /M_{%
\mathbb{Z}}$ is \emph{isolated%
\index{group!isolated}}, i.e.\ it is an isolated point in the space of
marked groups.

It is shown in \cite[Lemma 1]{de_cornulier_isolated_2007} that being
isolated is indeed a well defined property of a group, independent of the
marking. Proposition 2 in \cite{de_cornulier_isolated_2007} provides the
following characterization of being isolated: a group is isolated if and
only if it is finitely presented%
\index{group!finitely presented} and moreover finitely discriminable%
\index{group!finitely discriminable}, i.e.\ it contains a finite subsets
that meets every nontrivial normal subgroup in a nonidentity element. The
proof that $\Gamma /M_{\mathbb{Z}}$ satisfies these conditions is presented
in \cite[Section 3]{cornulier_sofic_2011} and \cite[5.4]%
{de_cornulier_isolated_2007}.

Other examples of finitely presented sofic groups that are not LEA are provided in \cite{kar_non-LEA_2014}.

\section{Logic for invariant length groups\label{Section: logic invariant
length groups}}

The logic for metric structures is a generalization of the usual first order
logic. It is a natural framework to study algebraic structures endowed with
a nontrivial metric and their elementary properties (i.e.\ properties
preserved by ultrapowers or equivalently expressible by formulae). In the
sequel we introduce a particular instance of logic for metric structures to
describe and study groups endowed with an invariant length functions.

A \textit{term%
\index{term!for invariant length groups}} $t(x_{1},\ldots ,x_{n})$ in the
language of invariant length groups in the variables $x_{1},\ldots ,x_{n}$
is a \textit{word}\textbf{\ }in the indeterminates $x_{1},\ldots ,x_{n}$,
i.e.\ an expression of the form%
\begin{equation*}
x_{i_{1}}^{n_{1}}\ldots x_{i_{l}}^{n_{l}}
\end{equation*}%
for $l\in \mathbb{N}$ and $n_{i}\in \mathbb{Z}$ for $i=1,2,\ldots ,l$. For
example%
\begin{equation*}
xyx^{-1}y^{-1}
\end{equation*}%
is a term in the variables $x,y$. The empty word will be denoted by $1$. If $%
G$ is an invariant length group, $g_{1},\ldots ,g_{m}$ are elements of $G$,
and $t( x_{1},\ldots ,x_{n},y_{1},\ldots ,y_{m})$ is a term in the variables 
$x_{1},\ldots ,x_{n},y_{1},\ldots ,y_{m}$, then one can consider the term $%
t(x_{1},\ldots ,x_{n},g_{1},\ldots ,g_{m})$ with \textit{parameters} from $G$%
, which is obtained from $t\left( x_{1},\ldots ,x_{n},y_{1},\ldots
,y_{m}\right) $ replacing formally $y_{i}$ with $g_{i}$ for $i=1,2,\ldots ,m$%
. The evaluation $t^{G}(x_{1},\ldots ,x_{n})$ in a given invariant length
group $G$ of a term $t\left( x_{1},\ldots ,x_{n}\right) $ in the variables $%
x_{1},\ldots ,x_{n}$ (possibly with parameters from $G$) is the function
from $G^{n}$ to $G$ defined by%
\begin{equation*}
\left( g_{1},\ldots ,g_{n}\right) \mapsto t\left( g_{1},\ldots ,g_{n}\right)
\end{equation*}%
where $t\left( g_{1},\ldots ,g_{n}\right) $ is the element of $G$ obtained
replacing in $t(x_{1},\ldots ,x_{n})$ every occurrence of $x_{i} $ with $%
g_{i}$ for $i=1,2,\ldots ,n$. For example the evaluation in a invariant
length group $G$ of the term $xyx^{-1}y^{-1}$ is the function from $G^{2}$
to $G$ that assigns to every pair $\left( g,h\right) $ of elements of $G$
their commutator $ghg^{-1}h^{-1}$. The evaluation of the empty word is the
function on $G$ constantly equal to $1_{G}$.

A \textit{basic formula 
\index{basic formula!for invariant length groups}} $\varphi $ in the
variables $x_{1},\ldots ,x_{n}$ is an expression of the form%
\begin{equation*}
\ell \left( t(x_{1},\ldots ,x_{n})\right)
\end{equation*}%
where $t(x_{1},\ldots ,x_{n})$ is a term in the variables $x_{1},\ldots
,x_{n}$. The evaluation $\varphi ^{G}(x_{1},\ldots ,x_{n})$ of $\varphi
(x_{1},\ldots ,x_{n})$ in an invariant length group $G$ is the function from 
$G^{n}$ to $\left[ 0,1\right] $ defined by%
\begin{equation*}
\left( g_{1},\ldots ,g_{n}\right) \mapsto \ell _{G}\left( t^{G}\left(
g_{1},\ldots ,g_{n}\right) \right)
\end{equation*}%
where $\ell _{G}$ is the invariant length of $G$. For example%
\begin{equation*}
\ell (xyx^{-1}y^{-1})
\end{equation*}%
is a basic formula%
\index{basic formula!for invariant length groups} whose interpretation in an
invariant length group $G$ is the function assigning to a pair of elements
of $G$ the length of their commutator. This basic formula can be thought as
measuring how much $x$ and $y$ commute. The evaluation at $\left( g,h\right) 
$ of its interpretation in an invariant length group $G$ will be $0$ if and
only if $g$ and $h$ commute.

Finally a \textit{formula%
\index{formula!for invariant length groups}} $\varphi $ is any expression
that can be obtained starting from basic formulae, composing with continuous
functions from $\left[ 0,1\right] ^{n}$ to $\left[ 0,1\right] $, and taking
infima and suprema over some variables. In this framework continuous
functions have the role of \textit{logical connectives}, while infima and
suprema should be regarded as \textit{quantifiers}. With these conventions
the terminology from the usual first order logic is used in this setting. A
formula is \textit{quantifier-free%
\index{formula!quantifier-free}} if it does not contain any quantifier. A
variable $x$ in a formula $\varphi $ is \textit{bound} if it is within the
scope of a quantifier over $x$, and \textit{free} otherwise. a formula
without free variables is called a \textit{sentence%
\index{sentence!for invariant length groups}}. The interpretation%
\index{formula!interpretation} of a formula in a length group $G$ is defined
in the obvious way by recursion on its complexity. For example%
\begin{equation*}
\sup_{x}\sup_{y}\ell (xyx^{-1}y^{-1})
\end{equation*}%
is a sentence%
\index{sentence!for invariant length groups}, with bound variables $x$ and $%
y $. Its evaluation in an invariant length group $G$ is the real number%
\begin{equation*}
\sup_{x\in G}\sup_{y\in G}\ell _{G}(xyx^{-1}y^{-1}).
\end{equation*}%
This sentence%
\index{sentence!for invariant length groups} can be thought as measuring how
much the group $G$ is abelian. Its interpretation in $G$ is zero if and only
if $G$ is abelian. This example enlightens the fact that the possible truth
values of a sentence%
\index{sentence!for invariant length groups} (i.e.\ values of its
evaluations in an invariant length group) are all real numbers between $0$
and $1$. Moreover $0$ can be thought as \textquotedblleft
true\textquotedblright\ while strictly positive real numbers of different
degrees as \textquotedblleft false\textquotedblright\ . In this spirit we
say that a sentence%
\index{sentence!for invariant length groups} $\varphi $ holds in $G$ if and
only if its interpretation in $G\ $is zero. Using this terminology we can
assert for example that an invariant length group $G$ is abelian if and only
if the formula%
\begin{equation*}
\sup_{x}\sup_{y}\ell (xyx^{-1}y^{-1})
\end{equation*}%
holds in $G$. Observe that if $\varphi $ is a sentence%
\index{sentence!for invariant length groups}, then $1-\varphi $ is a
sentence such that $\varphi $ holds in $G$ if and only if the interpretation
of $1-\varphi $ in $G$ is $1$. Thus $1-\varphi $ can be though as a sort of 
\textit{negation}%
\index{negation} of the sentence $\varphi $. Another example of sentence%
\index{sentence!for invariant length groups} is%
\begin{equation*}
\sup_{x}\min \left\{ \left\vert \ell (x)-1\right\vert ,\left\vert \ell
(x)\right\vert \right\} .
\end{equation*}%
Such sentence holds in an invariant length group $G$ if and only if the
invariant length function in $G$ attains values in $\left\{ 0,1\right\} $,
i.e.\ it is the trivial invariant length function on $G$. It is worth noting
at this point that for any sentence $\varphi $ as defined in the logic for
invariant length groups there is a corresponding formula $\varphi _{0}$ in
the usual (discrete) first order logic in the language of groups such that
the evaluation of $\varphi _{0}$ in a discrete group $G$ coincides with the
evaluation of $\varphi $ in $G$ regarded as an invariant metric group
endowed with the trivial invariant length function. For example the (metric)
formula expressing that a group is abelian corresponds to the (discrete)
formula%
\begin{equation*}
\forall x\forall y\,(xy=yx) 
\text{.}
\end{equation*}

Sentences%
\index{sentence!for invariant length groups} in the language of invariant
length groups allow one to determine which properties of an invariant length
group are elementary. A property concerning length groups is \textit{%
elementary%
\index{elementary property}} if there is a set $\Phi $ of sentences such
that an invariant length group has the given property if and only if $G$
satisfies all the sentences in $\Phi $. For example the property of being
abelian is elementary, since an invariant length group is abelian if and
only if it satisfies the sentence $\sup_{x,y}\ell (xyx^{-1}y^{-1})$. Two
invariant length groups $G$ and $G^{\prime }$ are \textit{elementarily
equivalent%
\index{elementary equivalence} }if they have the same elementary properties.
This amounts to say that any sentence has the same evaluation in $G$ and $%
G^{\prime }$. A class $\mathcal{C}$ of invariant length groups will is 
\textit{axiomatizable 
\index{axiomatizable class!of invariant length groups}} if the property of
belonging to $\mathcal{C}$ is elementary. The previous example of sentence
shows that the class of abelian length groups is axiomatizable by a single
sentence. Elementary properties and classes are closely related with the
notion of ultraproduct of invariant length groups.

Suppose that $(G_{n})_{n\in \mathbb{N}}$ is a sequence of invariant length
groups and $\mathcal{U}$ is a free ultrafilter over $\mathbb{N}$. The 
\textit{ultraproduct%
\index{ultraproduct!of invariant length groups}} $\prod\nolimits_{\mathcal{U}%
}G_{n}$ of the sequence $(G_{n})_{n\in \mathbb{N}}$ with respect to the
ultrafilter $\mathcal{U}$ is by definition quotient of the product $%
\prod_{n}G_{n}$ by the normal subgroup%
\begin{equation*}
N_{\mathcal{U}}=\left\{ \left( g_{n}\right) _{n\in 
%TCIMACRO{\U{2115} }%
%BeginExpansion
\mathbb{N}
%EndExpansion
}:\lim_{n\rightarrow \mathcal{U}}\ell _{G_{n}}\left( g_{n}\right) =0\right\}
\end{equation*}%
endowed with the invariant length function%
\begin{equation*}
\ell _{\mathcal{U}}\left( \left( g_{n}\right) N_{\mathcal{U}}\right)
=\lim_{n\rightarrow \mathcal{U}}\ell _{G_{n}}\left( g_{n}\right) 
\text{.}
\end{equation*}%
As before a sequence $\left( g_{n}\right) _{n\in 
%TCIMACRO{\U{2115} }%
%BeginExpansion
\mathbb{N}
%EndExpansion
}$ in $\prod_{n}G_{n}$ is called \textit{representative sequence} 
\index{sequence!representative} for the corresponding element in $%
\prod\nolimits_{\mathcal{U}}G_{n}$. Observe that ultraproducts of the
sequence $\left( S_{n}\right) _{n\in \mathbb{N}}$ of permutation groups
endowed with the Hamming invariant length function as defined in Section \ref%
{Section: definition sofic groups} and ultraproducts of the sequence $\left(
U_{n}\right) _{n\in \mathbb{N}}$ of unitary groups endowed with the
Hilbert-Schmidt invariant length function as defined in Section \ref%
{Section: definition hyperlinear groups} are particular cases of this
definition. When the sequence $(G_{n})_{n\in \mathbb{N}}$ is constantly
equal to a fixed invariant length group $G$ the ultraproduct $%
\prod\nolimits_{\mathcal{U}}G_{n}$ is called \textit{ultrapower%
\index{ultrapower!of invariant length groups}} of $G$ and denoted by $G^{%
\mathcal{U}}$. Observe that \textit{diagonal embedding} of $G$ into $G^{%
\mathcal{U}}$ assigning to $g\in G$ the element of $G^{\mathcal{U}}$ having
the sequence constantly equal to $g$ as representative sequence is a length
preserving group homomorphism. This allows one to identify $G$ with a
subgroup of $G^{\mathcal{U}}$.

The ultraproduct construction behaves well with respect to interpretation of
formulae. This is the content of a theorem proven in the setting of the
usual first order logic by \L o\'s in \cite{Los}. Its generalization to the
logic for metric structures can be found in \cite{BBHU} (Theorem 5.4). We
state here the particular instance of \L o\'s' theorem in the context of
invariant length groups.

\begin{theorem}[\L o\'{s}]
\label{Theorem: Los for invariant length groups}%
\index{\L o\'{s}' theorem!for invariant length groups}Suppose that $\varphi
(x_{1},\ldots ,x_{k})$ is a formula with free variables $x_{1},\ldots ,x_{k}$%
, $(G_{n})_{n\in \mathbb{N}}$ is a sequence of invariant length groups, and $%
\mathcal{U}$ is a free ultrafilter over $%
%TCIMACRO{\U{2115} }%
%BeginExpansion
\mathbb{N}
%EndExpansion
$. If $g^{(1)},\ldots ,g^{(k)}$ are elements of $\prod_{n}G_{n}$ then%
\begin{equation*}
\varphi ^{\prod\nolimits_{\mathcal{U}}G_{n}}\left( g^{(1)},\ldots
,g^{(k)}\right) =\lim_{n\rightarrow \mathcal{U}}\varphi ^{G_{n}}\left(
g_{n}^{(1)},\ldots ,g_{n}^{(k)}\right)
\end{equation*}%
where $g^{(i)}$ is any representative of the sequence $\left(
g_{n}^{(i)}\right) $, for $i=1,2,\ldots ,k$. In particular if $\varphi $ is
a sentence%
\index{sentence!for invariant length groups} then%
\begin{equation*}
\varphi ^{\prod\nolimits_{\mathcal{U}}G_{n}}=\lim_{n\rightarrow \mathcal{U}%
}\varphi ^{G_{n}}%
\text{.}
\end{equation*}
\end{theorem}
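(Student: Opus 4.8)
The plan is to proceed by induction on the complexity of the formula $\varphi$, following the recursive definition of formulas in the language of invariant length groups: first terms and basic formulas, then closure under continuous connectives, and finally closure under the quantifiers $\sup$ and $\inf$. Throughout, recall that since $\mathcal{U}$ is an ultrafilter and $[0,1]$ is compact, every bounded sequence of reals has a well-defined ultralimit along $\mathcal{U}$, so all the limits appearing below exist.

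First I would establish the claim for terms. Since the group operations on $\prod_n G_n$ are defined coordinatewise and the ultraproduct is the quotient by $N_{\mathcal{U}}$, the evaluation of a term $t(x_1,\ldots,x_k)$ at elements $g^{(1)},\ldots,g^{(k)}$ of $\prod_{\mathcal{U}}G_n$ with representatives $(g_n^{(i)})_n$ is represented by the sequence $(t^{G_n}(g_n^{(1)},\ldots,g_n^{(k)}))_n$. Writing $\bar g_n=(g_n^{(1)},\ldots,g_n^{(k)})$, a basic formula $\varphi=\ell(t)$ then satisfies, directly from the defining formula of $\ell_{\mathcal{U}}$,
\[
\varphi^{\prod_{\mathcal{U}}G_n}(g^{(1)},\ldots,g^{(k)})=\ell_{\mathcal{U}}\bigl((t^{G_n}(\bar g_n))_n\bigr)=\lim_{n\to\mathcal{U}}\ell_{G_n}(t^{G_n}(\bar g_n))=\lim_{n\to\mathcal{U}}\varphi^{G_n}(\bar g_n),
\]
which settles the base case.

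Next I would handle the connectives. If $\varphi=f(\psi_1,\ldots,\psi_m)$ with $f\colon[0,1]^m\to[0,1]$ continuous and each $\psi_j$ already satisfying the inductive hypothesis, then the claim reduces to the fact that ultralimits in the compact space $[0,1]$ commute with continuous functions: setting $a_n^{(j)}:=\psi_j^{G_n}(\bar g_n)$, the inductive hypothesis gives $\lim_{n\to\mathcal{U}}a_n^{(j)}=\psi_j^{\prod_{\mathcal{U}}G_n}(\bar g)$, and continuity of $f$ yields $\lim_{n\to\mathcal{U}}f(a_n^{(1)},\ldots,a_n^{(m)})=f(\lim_{n\to\mathcal{U}}a_n^{(1)},\ldots,\lim_{n\to\mathcal{U}}a_n^{(m)})$, which is precisely the desired identity.

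The only delicate step is the quantifier case, say $\varphi(\bar x)=\sup_y\psi(\bar x,y)$ (the $\inf$ case being dual). Write $S$ for the left-hand side $\varphi^{\prod_{\mathcal{U}}G_n}(\bar g)$ and $L=\lim_{n\to\mathcal{U}}\sup_{y\in G_n}\psi^{G_n}(\bar g_n,y)$ for the right-hand side. The inequality $S\le L$ is immediate: for any $z\in\prod_{\mathcal{U}}G_n$ with representative $(z_n)_n$, the inductive hypothesis gives $\psi^{\prod_{\mathcal{U}}G_n}(\bar g,z)=\lim_{n\to\mathcal{U}}\psi^{G_n}(\bar g_n,z_n)\le L$, and one takes the supremum over $z$. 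The reverse inequality is where the main work lies, and it hinges on producing a witness inside the ultraproduct. Fixing $\delta>0$, for each $n$ choose $z_n\in G_n$ with $\psi^{G_n}(\bar g_n,z_n)>\sup_{y}\psi^{G_n}(\bar g_n,y)-\delta$; the element $z\in\prod_{\mathcal{U}}G_n$ represented by $(z_n)_n$ then satisfies $\psi^{\prod_{\mathcal{U}}G_n}(\bar g,z)=\lim_{n\to\mathcal{U}}\psi^{G_n}(\bar g_n,z_n)\ge L-\delta$ by the inductive hypothesis, whence $S\ge L-\delta$, and letting $\delta\to 0$ gives $S\ge L$. The essential point making this direction go through is that \emph{every} coordinatewise choice of approximate witnesses assembles into a genuine element of $\prod_{\mathcal{U}}G_n$, so the ultraproduct automatically contains enough elements to realize the suprema computed fiberwise; crucially, no supremum need be attained in any single $G_n$. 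The statement for sentences follows by specializing to the case $k=0$.
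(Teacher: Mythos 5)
Your proof is correct and follows exactly the route the paper indicates: the paper itself only remarks that Theorem \ref{Theorem: Los for invariant length groups} ``can be proved by induction on the complexity of the formula $\varphi$,'' and your argument is precisely that induction carried out in full, with the base case for basic formulae, the continuous-connective step via commutation of ultralimits with continuous functions on $[0,1]$, and the quantifier step via coordinatewise approximate witnesses. No gaps; your write-up simply supplies the details the paper leaves to the reader.
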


Theorem \ref{Theorem: Los for invariant length groups} can be proved by
induction on the complexity of the formula $\varphi $. The particular
instance of Theorem \ref{Theorem: Los for invariant length groups} when the
sequence $(G_{n})_{n\in 
%TCIMACRO{\U{2115} }%
%BeginExpansion
\mathbb{N}
%EndExpansion
}$ is constantly equal to an invariant metric group $G$ shows that $G$ and
any ultrapower $G^{\mathcal{U}}$ of $G$ are elementarily equivalent.
Moreover the diagonal embedding of $G$ into $G^{\mathcal{U}}$ is an \textit{%
elementary embedding}, i.e.\ it preserves the value of formulae possibly
with parameters from $G$.

A particularly useful property of ultraproducts is usually referred to as 
\textit{countable saturation}. Roughly speaking countable saturation of a
group $H$ asserts that whenever one can find elements of $H$ approximately
satisfying any finite subset of a given countably infinite set of
conditions, then in fact one can find an element of $H$ exactly satisfying
simultaneously all the conditions. In order to precisely define this
property, and prove it for ultraproducts, we need to introduce some
model-theoretic terminology, in the particular case of invariant length
groups.

\begin{definition}
\label{Definition: consistent and realized} Suppose that $H$ is a group
endowed with an invariant length function. A countable set of formulae $%
\mathcal{X}$ in the free variables $x_{1},\ldots ,x_{n}$ and possibly with
parameters from $H$ is:

\begin{itemize}
\item \emph{approximately finitely satisfiable}%
\index{set of formulae!approximately finitely satisfiable} in $H$ or \emph{%
consistent} with $H$%
\index{set of formulae!consistent} if for every $\varepsilon >0$ and every
finite collection of formulae $\varphi _{1}\left( x_{1},\ldots ,x_{n}\right)
,\ldots ,\varphi _{m}(x_{1},\ldots ,x_{n})$ from $\mathcal{X}$ there are $%
a_{1},\ldots ,a_{n}\in H$ such that%
\begin{equation*}
\varphi _{i}^{H}( a_{1},\ldots ,a_{n})<\varepsilon
\end{equation*}%
for every $i=1,2,\ldots ,n$;

\item \emph{realized}%
\index{set of formulae!realized} in $H$ if there are $a_{1},\ldots ,a_{n}\in
H$ such that 
\begin{equation*}
\varphi ^{H}( a_{1},\ldots ,a_{n})=0
\end{equation*}%
for every formula $\varphi (x_{1},\ldots ,x_{n})$ in $\mathcal{X}$. In this
case the $n$-tuple $(a)$ is called a realization of $\mathcal{X}$ in $H$.
\end{itemize}
\end{definition}

In model-theoretic jargon, a set of formulae $\mathcal{X}$ as above is
called a \textit{type} over $A$%
\index{type}, where $A$ is the (countable) set of parameters of the formulae
in $\mathcal{X}$. Clearly any set of formulae which is realized in $H$ is
also approximately finitely satisfiable $H$. The converse to this assertion
is in general far from being true. For example suppose that $H$ is the
direct sum $G^{\oplus 
%TCIMACRO{\U{2115} }%
%BeginExpansion
\mathbb{N}
%EndExpansion
}$ of countably many copy of a countable group $G$ with trivial center
endowed with the trivial length function. Consider the set $\mathcal{X}$ of
formulae 
\begin{equation*}
\max \left\{ \left\vert 1-\ell (x)\right\vert ,\ell \left(
xax^{-1}a^{-1}\right) \right\}
\end{equation*}%
in the free variable $x$ where $a$ ranges over the elements of $G^{\oplus 
%TCIMACRO{\U{2115} }%
%BeginExpansion
\mathbb{N}
%EndExpansion
}$. Being the center of $G^{\oplus 
%TCIMACRO{\U{2115} }%
%BeginExpansion
\mathbb{N}
%EndExpansion
}$ trivial, the set of formulae $\mathcal{X}$ is not realized in $G^{\oplus 
%TCIMACRO{\U{2115} }%
%BeginExpansion
\mathbb{N}
%EndExpansion
}$. Nonetheless the fact that one can find in $H$ a nonidentity element
commuting with any given \textit{finite }subset of $G^{\oplus 
%TCIMACRO{\U{2115} }%
%BeginExpansion
\mathbb{N}
%EndExpansion
}$ shows that $\mathcal{X}$ is approximately finitely satisfiable in $H$.

\begin{definition}
\label{Definition: countably saturated} An invariant length group is \emph{%
countably saturated}%
\index{countably saturated!invariant length group} if any countable set of
formulae $\mathcal{X}$ with parameters from $H$ which is approximately
finitely satisfied in $H$ is realized in $H$.
\end{definition}

Thus in a countably saturated invariant length group a countable set of
formulae is approximately finitely satisfiable if and only if it is
realized. Moreover it can be easily shown by recursion on the complexity
that in the evaluation of a formula in a countably saturated structure
infima and suprema can be replaced by minima and maxima respectively. More
generally one can define $\kappa $-saturation for an arbitrary cardinal $%
\kappa $ replacing countable types with types with less than $\kappa $
parameters (it is not difficult to check that countable saturation is the
same as $\aleph _{1}$-saturation). An invariant length group $G$ is \textit{%
saturated}\textbf{\ }if it is $\kappa $-saturated where $\kappa $ is the 
\textit{density character} of $G$, i.e.\ the minimum cardinality of a dense
subset of $G$. The notions of saturation and countable saturation here
introduced are the particular instances in the case of invariant length
groups of the general model-theoretic notions of saturation and countable
saturation. Being countably saturated is one of the fundamental features of
ultraproducts. The proof of this fact in this context can be easily deduced
from \L o\'s' theorem and is therefore left as an exercise.

\begin{exercise}
\label{Exercise: ultraproducts are countably saturated}Suppose that $%
(G_{n})_{n\in 
%TCIMACRO{\U{2115} }%
%BeginExpansion
\mathbb{N}
%EndExpansion
}$ is a sequence of invariant length groups, and $\mathcal{U}$ is a free
ultrafilter over $%
%TCIMACRO{\U{2115} }%
%BeginExpansion
\mathbb{N}
%EndExpansion
$. Show that the ultraproduct $\prod\nolimits_{\mathcal{U}}G_{n}$ is
countably saturated.
\end{exercise}

Saturated structures have been intensively studied in model theory and have
some remarkable properties. In the case of usual first order logic it is a
consequence of the so called Chang-Makkai's theorem (see \cite[Theorem 5.3.6]%
{Chang-Keisler}) that the automorphism group of a saturated structure of
cardinality $\kappa $ has $2^{\kappa }$ elements. The generalization to this
result to the framework of logic for metric structures is an unpublished
result of Ilijas Farah, Bradd Hart, and David Sherman. Proposition is the
particular instance of such result in the case of invariant length groups of
density character $\aleph _{1}$.

\begin{proposition}
\label{Proposition: saturated automorphism}Suppose that $G$ is an invariant
length group. If $G$ is countably saturated and has a dense subset of
cardinality $\aleph _{1}$ then the group of automorphisms of $G$ that
preserve the length has cardinality $2^{\aleph _{1}}$.
\end{proposition}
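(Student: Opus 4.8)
The plan is to prove matching bounds: the group $\Aut$ of length-preserving automorphisms of $G$ has cardinality both at most and at least $2^{\aleph_1}$. I would isolate the upper bound as an elementary metric-space count and devote the real work to the lower bound, which is a metric adaptation of the Chang--Makkai argument (Chang--Keisler, Theorem 5.3.6) driven by countable saturation.

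\emph{Upper bound.} A length-preserving automorphism $\phi$ is an isometry of $\left(G,d\right)$ for the bi-invariant metric $d(x,y)=\ell(xy^{-1})$, hence uniformly continuous; two isometries agreeing on a dense set agree everywhere, so the restriction map $\phi\mapsto\phi|_{D}$ to a fixed dense subset $D$ with $\lvert D\rvert=\aleph_1$ is injective on $\Aut$. Since $\left(G,d\right)$ is a metric space with a dense subset of size $\aleph_1$, every point is the limit of a sequence from $D$, so $\lvert G\rvert\le\aleph_1^{\aleph_0}=2^{\aleph_0}$. Consequently the number of functions $D\to G$, and a fortiori $\lvert\Aut\rvert$, is at most $\left(2^{\aleph_0}\right)^{\aleph_1}=2^{\aleph_0\cdot\aleph_1}=2^{\aleph_1}$.

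\emph{Lower bound.} I would build a tree $\left(f_{s}\right)_{s\in 2^{<\omega_1}}$ of partial maps of $G$ that are length-preserving and \emph{elementary} (preserving the value of every formula in the relevant parameters), coherent along the tree order, each with countable domain and range. The engine is countable saturation: given an elementary $f$ with countable domain $\langle A\rangle$ and a new point $b$, the image type $\operatorname{tp}^{f}(b)$ over the countable set $f(A)$ is approximately finitely satisfiable in $G$ (because $f$ is elementary), hence realized in $G$ by Definition~\ref{Definition: countably saturated}; this lets me extend the domain by one point, and symmetrically the range. Enumerating $D$ alternately into domain and range so that along each branch the union $f_{x}=\bigcup_{s\subset x}f_{s}$ has dense domain and dense range, and recalling that $\left(G,d\right)$ is a topological group so that continuity upgrades the partial homomorphism property, each branch $x\in 2^{\omega_1}$ yields a surjective isometry, i.e.\ an element of $\Aut$. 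At cofinally many stages I would make the construction bifurcate: choosing a point $b$ together with two realizations $c_0\neq c_1$ of $\operatorname{tp}^{f}(b)$ with $d(c_0,c_1)\ge\varepsilon$, and setting $f_{s0}(b)=c_0$, $f_{s1}(b)=c_1$. Distinct branches $x\neq y$ first split at some level, where the corresponding automorphisms already disagree, so the $2^{\aleph_1}$ branches give $2^{\aleph_1}$ distinct automorphisms.

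\emph{Main obstacle.} The crux is guaranteeing that branching is available cofinally often, that is, that over a countable elementary submodel some realized type admits two realizations at distance bounded away from $0$. This is precisely where the density character $\aleph_1$ is used: were every type over every countable set realized within arbitrarily small diameter, $G$ would be separable, contradicting non-separability. I would make this precise by realizing, via countable saturation, the two-variable type asserting $\operatorname{tp}^{f}(x)$, $\operatorname{tp}^{f}(y)$, and $d(x,y)\ge\varepsilon$; establishing its approximate finite satisfiability from non-separability is the delicate point, as is the standard metric subtlety that one works with approximate type-matching and shrinks the errors geometrically along each branch so that the limiting map is an \emph{exact} length-preserving automorphism.
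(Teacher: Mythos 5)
Your architecture is the right one---indeed it is essentially the only one available: the paper itself gives no proof of this proposition, deferring to an unpublished metric Chang--Makkai theorem of Farah, Hart, and Sherman, and your tree of partial elementary maps is exactly how such a result is proved. The upper bound is fine, and the back-and-forth engine is sound: if $f$ is elementary with countable domain and $b\in G$, the pushforward of $\mathrm{tp}(b/\mathrm{dom}\,f)$ is approximately finitely satisfiable (the relevant $\inf$-formulas evaluate to $0$ at the parameters, hence at their images), so countable saturation realizes it exactly; no geometric error-shrinking is needed. Two smaller omissions at the limit stage: extending a branch map from its dense domain to all of $G$ requires $G$ to be complete, which is true but must be deduced from countable saturation; and to get that the extension is multiplicative you should close domains and ranges under the group operations during the construction (an elementary map defined on a subgroup is automatically a homomorphism, since $\ell(x_3x_2^{-1}x_1^{-1})$ is a formula, and multiplication is uniformly continuous for a bi-invariant metric, so a dense subgroup suffices).

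The genuine gap is the one you flagged yourself: the branching lemma is asserted, not proved, and it is the entire mathematical content of the proposition---everything else is bookkeeping. The implication you appeal to (``were every type over every countable set realized within arbitrarily small diameter, $G$ would be separable'') is not evident, and I see no direct proof of it; what one actually proves is the statement you need, using \emph{both} hypotheses at once. Concretely: since $G$ is countably saturated and non-separable (this is the intended reading of the hypothesis---the paper's own gloss is ``density character $\aleph_1$''; under the literal reading a compact group such as the circle, which is countably saturated and has only two length-preserving automorphisms, would be a counterexample), fix $\varepsilon>0$ and an infinite $\varepsilon$-separated set $X\subseteq G$. Given the countable range $R$ of the current partial map, consider the $2$-type over $R$ consisting of the conditions $\left\vert \phi(y)-\phi(z)\right\vert =0$, where $\phi$ ranges over a countable uniformly dense family of formulas with parameters in $R$, together with the condition $\max\left\{ \varepsilon -\ell(yz^{-1}),0\right\} =0$. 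This is approximately finitely satisfiable by pigeonhole: finitely many formulas $\phi_1,\ldots,\phi_k$ map $X$ into $[0,1]^{k}$, which is covered by finitely many boxes of side less than $\delta$, so some box contains the images of two distinct (hence $\varepsilon$-separated) points of $X$. Countable saturation then realizes the type exactly, yielding $c_0\neq c_1$ with the same type over $R$ and $d(c_0,c_1)\geq\varepsilon$; pulling $\mathrm{tp}(c_0/R)$ back through $f^{-1}$ and realizing it gives the point at which the two extensions split. With this lemma supplied, your construction goes through and proves the proposition; without it, the proposal is a correct skeleton with its load-bearing step missing.
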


Proposition \ref{Proposition: external automorphisms} is an immediate
consequence of Proposition \ref{Proposition: saturated automorphism} and
Exercise \ref{Exercise: ultraproducts are countably saturated}. Recall that
the Continuum Hypothesis asserts that the cardinality of the power set of $%
%TCIMACRO{\U{2115} }%
%BeginExpansion
\mathbb{N}
%EndExpansion
$ coincides with the first uncountable cardinal $\aleph _{1}$. A cornerstone
result in set theory asserts that the Continuum Hypothesis is independent
from the usual axioms of set theory, i.e.\ can not be neither proved nor
disproved (see \cite{Godel}, \cite{Cohen1}, \cite{Cohen2}).

\begin{proposition}
\label{Proposition: external automorphisms}Suppose that $(G_{n})_{n\in 
%TCIMACRO{\U{2115} }%
%BeginExpansion
\mathbb{N}
%EndExpansion
}$ is a sequence of separable invariant length groups. If the Continuum
Hypothesis holds then $\prod\nolimits_{\mathcal{U}}G_{n}$ has $2^{\aleph
_{1}}$ length preserving automorphisms for any free ultrafilter $\mathcal{U}$
over $%
%TCIMACRO{\U{2115} }%
%BeginExpansion
\mathbb{N}
%EndExpansion
$. In particular $\prod\nolimits_{\mathcal{U}}G_{n}$ has outer
length-preserving automorphisms.
\end{proposition}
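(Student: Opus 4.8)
The plan is to deduce this directly from Proposition~\ref{Proposition: saturated automorphism} together with Exercise~\ref{Exercise: ultraproducts are countably saturated}, so the real work is to verify that the ultraproduct $\prod\nolimits_{\mathcal{U}}G_{n}$ meets the two hypotheses of the cardinality statement, namely countable saturation and the existence of a dense subset of cardinality exactly $\aleph_{1}$. Countable saturation is immediate from Exercise~\ref{Exercise: ultraproducts are countably saturated}, which holds for any sequence of invariant length groups. For the density character, I would argue as follows. Each $G_{n}$ is separable, so it has a countable dense subset $D_{n}$; hence the product $\prod_{n}G_{n}$ has a dense subset of size at most $\aleph_{0}^{\aleph_{0}}=2^{\aleph_{0}}$, obtained by taking sequences drawn from the $D_{n}$. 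Passing to the quotient $\prod\nolimits_{\mathcal{U}}G_{n}$ can only decrease the density character, so the ultraproduct has a dense subset of cardinality at most $2^{\aleph_{0}}$. Under the Continuum Hypothesis $2^{\aleph_{0}}=\aleph_{1}$, so the density character is at most $\aleph_{1}$; and since the ultraproduct is nonseparable whenever the $G_{n}$ are genuinely infinite (as in the motivating examples of permutation and unitary groups, where Exercises~\ref{Exercise: nonseparable universal sofic} and~\ref{Exercise: nonseparable universal hyperlinear} exhibit a subset of size continuum with pairwise distance bounded below), the density character is exactly $\aleph_{1}$.

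With both hypotheses in place, Proposition~\ref{Proposition: saturated automorphism} applies verbatim and yields that the group of length-preserving automorphisms of $\prod\nolimits_{\mathcal{U}}G_{n}$ has cardinality $2^{\aleph_{1}}$. This proves the first assertion.

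For the second assertion---the existence of outer automorphisms---I would observe that the inner automorphisms form a quotient of the group itself, so there are at most $\aleph_{1}$ of them (the density character bounds the cardinality of the group up to the identification by the length, but more simply the inner automorphism group is a continuous image of the separable-by-CH structure and has cardinality at most $2^{\aleph_{0}}=\aleph_{1}$). Since $2^{\aleph_{1}}>\aleph_{1}$ by Cantor's theorem, the full length-preserving automorphism group strictly exceeds the inner ones, so at least one---indeed $2^{\aleph_{1}}$ many---automorphisms must be outer.

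The main obstacle I anticipate is pinning down the density-character computation cleanly, specifically the passage from the countably many $D_{n}$ to a dense set in the \emph{quotient} of size at most $2^{\aleph_{0}}$: one must check that the image of a dense subset of $\prod_{n}G_{n}$ under the quotient map remains dense with respect to $\ell_{\mathcal{U}}$, which follows because $\ell_{\mathcal{U}}$ is defined as an ultralimit of the $\ell_{G_{n}}$ and ultralimits respect approximation, but it deserves an explicit line. The other subtlety is ensuring the ultraproduct is genuinely nonseparable so that the density character is exactly $\aleph_{1}$ rather than merely bounded by it; this is where the separability hypothesis on the $G_{n}$ combined with the nonseparability phenomenon for infinite sequences (the continuum-sized $1$-separated family) does the work, and I would cite the relevant exercises rather than reprove it.
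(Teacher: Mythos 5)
Your proposal is correct and takes essentially the same route as the paper: countable saturation of $\prod\nolimits_{\mathcal{U}}G_{n}$ via Exercise \ref{Exercise: ultraproducts are countably saturated}, a dense subset of cardinality $\aleph_{1}$ under the Continuum Hypothesis, and a direct application of Proposition \ref{Proposition: saturated automorphism}. The paper's proof is a three-line version of yours, and your extra material (the explicit density computation from countable dense subsets of the $G_{n}$, and the bound of at most $\aleph_{1}<2^{\aleph_{1}}$ inner automorphisms to get the ``outer'' conclusion) only makes explicit what the paper leaves implicit.
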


\begin{proof}
Assuming the Continuum Hypothesis $\prod\nolimits_{\mathcal{U}}G_{n}$ has a
dense subset of cardinality $\aleph _{1}$. Moreover by Exercise \ref%
{Exercise: ultraproducts are countably saturated} $\prod\nolimits_{\mathcal{U%
}}G_{n}$ is countably saturated. It follows from Proposition \ref%
{Proposition: saturated automorphism} that $\prod\nolimits_{\mathcal{U}%
}G_{n} $ has $2^{\aleph _{1}}$ length-preserving automorphisms.
\end{proof}

Corollary \ref{Corollary: external automorphism universal} is the particular
instance of Proposition \ref{Proposition: external automorphisms} when the
sequence $(G_{n})_{n\in 
%TCIMACRO{\U{2115} }%
%BeginExpansion
\mathbb{N}
%EndExpansion
}$ is the sequence of permutation groups or the sequence of unitary groups,
one can obtain

\begin{corollary}
\label{Corollary: external automorphism universal}Assume that the Continuum
Hypothesis holds. For any free ultrafilter $\mathcal{U}$ over $%
%TCIMACRO{\U{2115} }%
%BeginExpansion
\mathbb{N}
%EndExpansion
$ the universal sofic group%
\index{group!universal sofic} $\prod\nolimits_{\mathcal{U}}S_{n}$ has $%
2^{\aleph _{1}}$ automorphisms. As a consequence $\prod\nolimits_{\mathcal{U}%
}S_{n}$ has outer automorphisms. The same is true for the universal
hyperlinear%
\index{group!universal hyperlinear} group $\prod\nolimits_{\mathcal{U}}U_{n}$%
.
\end{corollary}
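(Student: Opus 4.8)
The plan is to obtain the statement as the direct specialization of Proposition \ref{Proposition: external automorphisms} announced just above it, so the only real task is to verify its hypotheses and then to take care of the bookkeeping with cardinals. First I would check that both sequences in question consist of \emph{separable} invariant length groups. Each permutation group $S_n$ with the Hamming invariant length function is finite, hence trivially separable; each unitary group $U_n$ with the Hilbert-Schmidt invariant length function is a compact metrizable group, hence separable as well. Consequently Proposition \ref{Proposition: external automorphisms} applies to $(S_n)_{n\in\mathbb{N}}$ and to $(U_n)_{n\in\mathbb{N}}$, and under the Continuum Hypothesis it yields $2^{\aleph_1}$ length-preserving automorphisms of $\prod\nolimits_{\mathcal{U}}S_n$ and of $\prod\nolimits_{\mathcal{U}}U_n$ for every free ultrafilter $\mathcal{U}$.

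Next I would upgrade this count from length-preserving automorphisms to arbitrary group automorphisms. The group $\prod\nolimits_{\mathcal{U}}S_n$ is a quotient of $\prod_n S_n$, whose cardinality is $\prod_n n!=2^{\aleph_0}$; similarly $\prod\nolimits_{\mathcal{U}}U_n$ is a quotient of $\prod_n U_n$, of cardinality $(2^{\aleph_0})^{\aleph_0}=2^{\aleph_0}$. By Exercise \ref{Exercise: nonseparable universal sofic} and Exercise \ref{Exercise: nonseparable universal hyperlinear} each of these ultraproducts contains a subset of size continuum, so under the Continuum Hypothesis both have cardinality exactly $\aleph_1$. A group of cardinality $\aleph_1$ admits at most $\aleph_1^{\aleph_1}=2^{\aleph_1}$ self-maps, hence at most $2^{\aleph_1}$ automorphisms; combined with the lower bound of the previous paragraph this shows that each ultraproduct has exactly $2^{\aleph_1}$ automorphisms, whether or not they are required to preserve the length.

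Finally, to produce outer automorphisms I would compare this with the number of inner ones. The inner automorphism group is a quotient of the ambient group, so it has cardinality at most $\aleph_1$, which is strictly smaller than $2^{\aleph_1}$. Thus all but at most $\aleph_1$ of the $2^{\aleph_1}$ automorphisms are outer, which gives the desired conclusion for both $\prod\nolimits_{\mathcal{U}}S_n$ and $\prod\nolimits_{\mathcal{U}}U_n$. I do not expect a serious obstacle here: essentially all of the mathematical content is already contained in Proposition \ref{Proposition: external automorphisms}, and the only points requiring care are the verification of separability and the elementary cardinal arithmetic bounding both the total number of self-maps and the number of inner automorphisms by $\aleph_1$.
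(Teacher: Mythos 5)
Your proposal is correct and follows essentially the same route as the paper, which obtains the corollary precisely as the specialization of Proposition \ref{Proposition: external automorphisms} to the sequences $(S_n)_{n\in\mathbb{N}}$ and $(U_n)_{n\in\mathbb{N}}$. The additional points you spell out — separability of the $S_n$ and $U_n$, the cardinality bound $\aleph_1^{\aleph_1}=2^{\aleph_1}$ upgrading the count from length-preserving to all automorphisms, and the fact that the inner automorphisms number at most $\aleph_1<2^{\aleph_1}$ — are exactly the routine verifications the paper leaves implicit.
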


We do not know if there exist models of set theory where some universal
sofic or hyperlinear groups%
\index{group!universal sofic}%
\index{group!universal hyperlinear} have only inner automorphisms. It is
conceivable that this could be proved using ideas and methods from \cite%
{Shelah-differenceIII}. For example in \cite{Lucke-Thomas} Lucke and Thomas
show using a result from \cite{Shelah-differenceIII} that there is a model
of set theory where some ultraproduct of the permutation groups \textit{%
regarded as discrete groups} has only inner automorphisms.

Corollary \ref{Corollary: external automorphism universal} and the
discussion that follows address a question of P\u aunescu from \cite%
{Paunescu}: Theorem 4.1 and Theorem 4.2 of \cite{Paunescu} show that all
automorphisms of the universal sofic groups preserve the length function and
the conjugacy classes. P\u aunescu then asks if it is possible that all
automorphisms of the universal sofic%
\index{group!universal sofic} groups are in fact inner. Corollary \ref%
{Corollary: external automorphism universal} in particular implies that such
assertion does not follow from the usual axioms of set theory.

An application of Theorem 2 from \cite{Dye} (known as Dye's theorem on
automorphisms of unitary groups of factors) allows one to prove the analogue
of Theorem 4.1 and Theorem 4.2 from \cite{Paunescu} in the case of universal
hyperlinear groups. More precisely all automorphisms of the universal
hyperlinear groups preserve the length function, while the normal subgroup
of automorphisms that preserves the conjugacy classes has index $2$ inside
the group of all automorphisms. The not difficult details can be found in 
\cite{Lupini-class}.

Finally let us point out another consequence of countable saturation of
ultraproducts. Suppose that $\Gamma $ is a discrete group of size $\aleph
_{1}$ such that every countable (or equivalently finitely generated)
subgroup of $\Gamma $ is sofic. It is not difficult to infer from Exercise %
\ref{Exercise: sofic equivalence 2} and countable saturation of $%
\prod\nolimits_{\mathcal{U}}S_{n}$ that for any ultrafilter $\mathcal{U}$
over $\mathbb{N}$ there is a length-preserving homomorphism from $\Gamma $
(endowed with the trivial length function) to $\prod\nolimits_{\mathcal{U}%
}S_{n}$. For example this implies that $\prod\nolimits_{\mathcal{U}}S_{n}$
contains a free group on uncountably many generators. Analogue facts hold
for hyperlinear groups and ultraproducts $\prod\nolimits_{\mathcal{U}}U_{n}$.

\section{Model theoretic characterization of sofic and hyperlinear groups 
\label{Secton: model theoretic characterization sofic hyperlinear}}

In this section we shall show that the classes of sofic%
\index{group!sofic} and hyperlinear%
\index{group!hyperlinear} groups are axiomatizable in the logic for
invariant metric groups. Equivalently the properties of being sofic or
hyperlinear are elementary. Recall that the quantifiers in the logic for
invariant metric groups are $\inf $ and $\sup $. More precisely $\sup $ can
be regarded as the \textit{universal quantifier}, analogue to $\forall $ in
usual first order logic, while $\inf$ can be seen as the \textit{existential
quantifier}, which is denoted by $\exists $ in the usual first order logic.
A formula is therefore called \textit{universal%
\index{formula!universal}} if it only contains universal quantifiers, and no
existential quantifiers. The notion of \textit{existential sentence%
\index{formula!existential} }is defined in the same way. A \textit{%
quantifier-free formula%
\index{formula!quantifier-free} }is just a formula without any quantifier.
Say that two formulae $\varphi $ and $\varphi ^{\prime }$ are \textit{%
equivalent }if they have the same interpretation in any invariant length
group. It can be easily proved by induction on the complexity that any
universal sentence is equivalent to a formula of the form%
\begin{equation*}
\sup_{x_{1}}\ldots \sup_{x_{n}}\psi (x_{1},\ldots ,x_{n})
\end{equation*}%
where $\psi (x_{1},\ldots ,x_{n})$ is quantifier-free. An analogous fact
holds for existential sentences. It is easy to infer from this that if $%
\varphi $ is a universal sentence, then $1-\varphi $ is equivalent to an
existential sentence, and vice versa. Exercise \ref{Exercise: convergent
formulae} together with \L o\'s' theorem on ultraproducts shows that
universal and existential formulae have the same values in any ultraproduct
of the symmetric groups regarded as invariant length groups.

\begin{exercise}
\label{Exercise: convergent formulae}If $\varphi $ is a universal sentence
then the sequence $\left( \varphi ^{S_{n}}\right) _{n\in 
%TCIMACRO{\U{2115} }%
%BeginExpansion
\mathbb{N}
%EndExpansion
}$ of its evaluation in the symmetric groups converges. Infer that the same
is true for existential formulae. Deduce that the same holds for existential
formulae.
\end{exercise}

\begin{proof}
Fix $n\in 
%TCIMACRO{\U{2115} }%
%BeginExpansion
\mathbb{N}
%EndExpansion
$ and $\varepsilon >0$. If $N>n$ write%
\begin{equation*}
N=kn+r
\end{equation*}%
for $k\in 
%TCIMACRO{\U{2115} }%
%BeginExpansion
\mathbb{N}
%EndExpansion
$ and $r\in n$. Define the map $\iota :S_{n}\rightarrow S_{N}$ by%
\begin{equation*}
\iota (\sigma )\left( ik+j\right) =%
\begin{cases}
in+\sigma \left( j\right)  & 
\text{if }i\in k\text{ and }j\in n \\ 
in+j & \text{otherwise.}%
\end{cases}%
\end{equation*}%
Observe that $\iota $ is a group homomorphism that preserves the Hamming
length function up to $\frac{1}{k}$. This means that%
\begin{equation*}
\left\vert \ell _{S_{n}}(\sigma )-\ell _{S_{N}}\left( \iota (\sigma )\right)
\right\vert \leq \frac{1}{k}\text{.}
\end{equation*}%
Deduce that if $\varphi $ is a universal sentence\index{sentence!for invariant length groups} and $\varepsilon >0$ then
\begin{equation*}
\left\vert \varphi ^{S_{N}}-\varphi ^{S_{n}}\right\vert <\varepsilon \text{.}
\end{equation*}
for $N\in \mathbb{N}$ large enough.
\end{proof}

Theorem 5.6 of \cite{Lupini} asserts that the same conclusion of Exercise %
\ref{Exercise: convergent formulae} holds for formulae with alternation of
at most two quantifiers. It is currently an open problem if the same holds
for all formulae. A positive answer to this question would imply that the
universal sofic%
\index{group!universal sofic} groups, i.e.\ the length ultraproducts of the
permutation groups, are pairwise isomorphic as invariant length groups if
the Continuum Hypothesis holds. Theorem 1.1 of \cite{Thomas} asserts that if
instead the Continuum Hypothesis fails there are $2^{2^{\aleph _{0}}}$
ultraproducts of the permutation groups that are pairwise nonisomorphic as
discrete groups.

We can now show that the property of being sofic%
\index{group!sofic} is axiomatizable%
\index{axiomatizable class}. Suppose that $\Gamma $ is a sofic group. By
Exercise \ref{Exercise: sofic equivalence 2} there is a length preserving
embedding of $\Gamma $ into $\prod\nolimits_{\mathcal{U}}S_{n}$ for any free
ultrafilter $\mathcal{U}$, where $\Gamma $ is endowed with the trivial
length function. It is easily inferred from this that 
\begin{equation*}
\varphi ^{\Gamma }\leq \varphi ^{\prod\nolimits_{\mathcal{U}%
}S_{n}}=\lim_{n\rightarrow +\infty }\varphi ^{S_{n}}
\end{equation*}%
for any universal sentence $\varphi $ in the language of invariant length
groups. If $\varphi $ is instead an existential sentence then%
\begin{equation*}
\varphi ^{\Gamma }\geq \lim_{n\rightarrow +\infty }\varphi ^{S_{n}}%
\text{.}
\end{equation*}%
In particular if $\varphi $ is an existential sentence that holds in $\Gamma 
$, then $\lim_{n\rightarrow +\infty }\varphi ^{S_{n}}$. Exercise \ref%
{Exercise: sofic elementary} shows that this condition is sufficient for a
group to be sofic.%
\index{group!sofic}

\begin{exercise}
\label{Exercise: sofic elementary}Suppose that $\Gamma $ is a (countable
discrete) group with the property that for any existential sentence $\varphi 
$ that holds in $\Gamma $ the sequence $\left( \varphi ^{S_{n}}\right)
_{n\in 
%TCIMACRO{\U{2115} }%
%BeginExpansion
\mathbb{N}
%EndExpansion
}$ of the evaluation of $\varphi $ in the symmetric groups is vanishing.
Show that $\Gamma $ is sofic.
\end{exercise}

\begin{hint}
Fix $\varepsilon >0$ and a finite subset $F=\left\{ g_{1},\ldots
,g_{n}\right\} $ of $\Gamma $. Write an existential sentence $\varphi $
witnessing the existence of elements $g_{1},\ldots ,g_{n}$ with the
multiplication rules given by $\Gamma $. Infer from the fact that $\varphi $
holds in $\Gamma $ that $\varphi $ approximately holds in $S_{n}$ for $n$
large enough. Use this to construct an $\left( F,\varepsilon \right) $%
-approximate morphism%
\index{approximate morphism!of invariant length groups} $\Phi :\Gamma
\rightarrow S_{n}$.
\end{hint}

It is easy to deduce from Exercise \ref{Exercise: sofic elementary} the
following characterization of sofic%
\index{group!sofic} groups, showing in particular that soficity is an
elementary property.

\begin{proposition}
If $\Gamma $ is a group, the following statements are equivalent

\begin{enumerate}
\item $\Gamma $ is sofic;

\item if $\varphi $ is an existential sentence that holds in $\Gamma $, then 
$\lim_{n\rightarrow +\infty }\varphi ^{S_{n}}=0$;

\item if $\varphi $ is a universal sentence such that $\lim_{n\rightarrow
+\infty }\varphi ^{S_{n}}=0$, then $\varphi ^{\Gamma }=0$.
\end{enumerate}
\end{proposition}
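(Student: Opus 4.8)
The plan is to treat $1\Rightarrow 2$ and $2\Rightarrow 1$ as essentially already in hand, and to devote the real work to the equivalence $2\Leftrightarrow 3$. For $1\Rightarrow 2$ I would invoke the discussion immediately preceding the statement: a sofic $\Gamma$ embeds length-preservingly into $\prod\nolimits_{\mathcal{U}}S_n$, whence $\varphi^{\Gamma}\geq\varphi^{\prod\nolimits_{\mathcal{U}}S_n}=\lim_{n\to+\infty}\varphi^{S_n}$ for every existential $\varphi$ (the ordinary limit exists by Exercise \ref{Exercise: convergent formulae}, and equals the ultralimit by \L o\'s' theorem). Since each $\varphi^{S_n}$ lies in $[0,1]$ and is therefore nonnegative, $\varphi^{\Gamma}=0$ forces $\lim_{n\to+\infty}\varphi^{S_n}=0$. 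The implication $2\Rightarrow 1$ is precisely the content of Exercise \ref{Exercise: sofic elementary}, so nothing new is needed there.

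For $2\Leftrightarrow 3$ I would first recast both statements as uniform inequalities:
\begin{equation*}
(2')\quad \lim_{n\to+\infty}\varphi^{S_n}\leq\varphi^{\Gamma}\ \text{for every existential }\varphi,\qquad (3')\quad \varphi^{\Gamma}\leq\lim_{n\to+\infty}\varphi^{S_n}\ \text{for every universal }\varphi.
\end{equation*}
I claim $2\Leftrightarrow 2'$ and $3\Leftrightarrow 3'$, the nontrivial directions coming from a truncation trick. Given an existential $\varphi$ with value $d=\varphi^{\Gamma}$, the sentence $\psi=\max\{\varphi-d,0\}$---the composition of $\varphi$ with the nondecreasing continuous connective $t\mapsto\max\{t-d,0\}$ from $[0,1]$ to $[0,1]$---is again existential, since a nondecreasing continuous map commutes with infima and can hence be pushed inside the $\inf$ quantifiers of the prenex form $\inf_{\bar x}\theta(\bar x)$. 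Because $\psi^{\Gamma}=0$, statement $2$ yields $\lim_n\psi^{S_n}=0$, and from $\psi^{S_n}\geq\varphi^{S_n}-d$ one reads off $\lim_n\varphi^{S_n}\leq d$, which is $(2')$. The same truncation at $c=\lim_n\varphi^{S_n}$, this time pushed inside the $\sup$ quantifiers, turns statement $3$ into $(3')$; the reverse implications $2'\Rightarrow 2$ and $3'\Rightarrow 3$ are immediate from nonnegativity of the $\varphi^{S_n}$.

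It then remains to prove $2'\Leftrightarrow 3'$, which is pure duality. Given a universal $\varphi$, the sentence $1-\varphi$ is equivalent to some existential $\chi$, so $\chi^{G}=1-\varphi^{G}$ for every invariant length group $G$; applying $(2')$ to $\chi$ and subtracting from $1$ yields $\varphi^{\Gamma}\leq\lim_n\varphi^{S_n}$, i.e.\ $(3')$, and the symmetric argument, using that $1-\chi$ is equivalent to a universal sentence, gives $(3')\Rightarrow(2')$. Combining $1\Leftrightarrow 2$ with $2\Leftrightarrow 3$ completes the equivalence. I expect the only genuinely delicate point to be the truncation step: one must verify that composing a prenex universal (resp.\ existential) sentence with a nondecreasing continuous connective again lands in the universal (resp.\ existential) fragment, which rests on the fact that monotone continuous self-maps of $[0,1]$ commute with suprema and infima taken over the group. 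Everything else is routine bookkeeping with the established length-preserving embedding, \L o\'s' theorem, and the convergence of $\left(\varphi^{S_n}\right)_{n}$ from Exercise \ref{Exercise: convergent formulae}.
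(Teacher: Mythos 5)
Your argument is correct, and for the two implications the paper actually writes out it coincides with the text: $1\Rightarrow 2$ via the length-preserving embedding into $\prod\nolimits_{\mathcal{U}}S_{n}$, \L o\'s' theorem, and Exercise \ref{Exercise: convergent formulae}, and $2\Rightarrow 1$ via Exercise \ref{Exercise: sofic elementary}. Where you add genuine content is the equivalence with statement (3), which the paper dismisses as ``easy to deduce'' without giving any argument. Your truncation step is exactly the missing ingredient, and it is needed: applying the duality $\varphi \mapsto 1-\varphi $ directly to the ``holds'' formulations fails, since negation exchanges the truth values $0$ and $1$, so from an existential $\varphi $ with $\varphi ^{\Gamma }=0$ one only learns that the universal sentence $1-\varphi $ takes the value $1$ in $\Gamma $, about which statement (3) says nothing. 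Recasting (2) and (3) as the uniform inequalities $(2')$ and $(3')$ by composing with the monotone connective $t\mapsto \max \left\{ t-c,0\right\} $ --- which passes through the quantifiers and hence preserves the existential and universal fragments --- is what makes the duality usable, and both facts you rely on (prenex forms for universal/existential sentences, and the equivalence of $1-\varphi $ with a sentence of the dual type) are asserted earlier in the same section, so your proof rests only on tools the paper has already put in place.
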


All the results of this section carry over to the case of hyperlinear%
\index{group!hyperlinear} group, when one replaces the permutation groups
with the unitary groups (see \cite{Lupini}). The analogue of Exercise \ref%
{Exercise: convergent formulae} for the unitary groups can be proved in a
similar way, considering for $u\in U_{n}$ and $N=kn+r$ the unitary matrix%
\begin{equation*}
\begin{pmatrix}
u\otimes I_{k} & 0 \\ 
0 & 1_{U_{r}}%
\end{pmatrix}%
\in U_{N}%
\text{.}
\end{equation*}

\section{The Kervaire-Laudenbach conjecture\label{Section:
Kervaire-Laudenbach for hyperlinear}}

Let $\Gamma$ be a countable discrete group and $\gamma _{1},\ldots ,\gamma
_{l}\in \Gamma $. Denote $w(x)$ the monomial%
\begin{equation*}
w(x)=x^{n_{1}}\gamma _{1}\ldots x^{n_{l}}\gamma _{l},
\end{equation*}%
where $n_{i}\in \mathbb{Z}$ for $i=1,2,\ldots ,l$. Consider the following
problem: Determine if the equation%
\begin{equation*}
w(x)=1
\end{equation*}%
has a solution in some group extending $\Gamma $. The answer in general is
\textquotedblleft no". Consider for example the equation%
\begin{equation*}
xax^{-1}b^{-1}=1
\end{equation*}%
If $a$ and $b$ have different orders then clearly this equation has no
solution in any group extending $\Gamma $. Assuming that the sum $%
\sum_{i=1}^{l}n_{i}$ of the exponents of $x$ in $w(x)$ is nonzero is a way
to rule out this obstruction. A conjecture attributed to Kervaire and
Laudenbach%
\index{conjecture!Kervaire-Laudenbach} asserts that this is enough to
guarantee the existence of a solution of the equation $w(x)=1$ in some group
extending $\Gamma $.

We will show in this section that the Kervaire-Laudenbach conjecture holds
for hyperlinear%
\index{group!hyperlinear} groups (see Definition \ref{Definition:
hyperlinear}). This result, first observed by Pestov in \cite{pestov}, will
be a direct consequence of the following theorem by Gerstenhaber and Rothaus
(see \cite{Gerstenhaber-Rothaus}).%
\index{Gerstenhaber-Rothaus theorem}

\begin{theorem}
\label{Theoren: Gerstenhaber, Rothaus} Let $U_{n}$ denote the group of
unitary matrices of rank $n$. If $a_{1},\ldots ,,a_{k} $ are elements of $%
U_{n}$ then the equation%
\begin{equation*}
x^{s_{1}}a_{1}\cdots x^{s_{k}}a_{k} = 1
\end{equation*}%
has a solution in $U_{n}$ as long as $\sum_{i=1}^{k}s_{i}\neq 0$.
\end{theorem}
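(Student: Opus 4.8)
The plan is to treat this as a statement about the mapping degree of a smooth self-map of a compact manifold. The group $U_n$ of unitary $n\times n$ matrices is a compact connected Lie group, hence a closed connected orientable smooth manifold of dimension $n^2$ (being a Lie group it is parallelizable, hence orientable). The word
\begin{equation*}
w(x)=x^{s_1}a_1\cdots x^{s_k}a_k
\end{equation*}
defines a smooth map $w:U_n\to U_n$, and since a continuous map of nonzero degree between closed connected oriented manifolds of the same dimension is necessarily surjective, it suffices to show that $\deg(w)\neq 0$: surjectivity then produces an $x\in U_n$ with $w(x)=1$.

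First I would reduce $w$ to a power map up to homotopy. Since $U_n$ is path-connected, each constant $a_i$ can be joined to the identity by a continuous path $\gamma_i$ with $\gamma_i(0)=a_i$ and $\gamma_i(1)=1$. The assignment $H(x,t)=x^{s_1}\gamma_1(t)\cdots x^{s_k}\gamma_k(t)$ is then a homotopy from $w$ to the power map $p_s:x\mapsto x^s$, where $s=\sum_{i=1}^{k}s_i$. As the degree is a homotopy invariant, $\deg(w)=\deg(p_s)$, so the whole problem is reduced to computing the degree of the $s$-th power map.

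Next I would compute $\deg(p_s)=s^n$ using the rational cohomology ring $H^*(U_n;\mathbb{Q})=\Lambda(x_1,x_3,\ldots,x_{2n-1})$, an exterior algebra on primitive generators in odd degrees summing to $1+3+\cdots+(2n-1)=n^2$. The crucial point is that the power map scales each primitive class by $s$: writing $p_s$ as the $s$-fold group multiplication and using that $m^*(\xi)=\xi\otimes 1+1\otimes\xi$ for a primitive $\xi$, restriction to the diagonal gives $p_s^*(\xi)=s\,\xi$. Since $p_s^*$ is a ring homomorphism and the fundamental cohomology class of $U_n$ is the product $x_1x_3\cdots x_{2n-1}$ of the $n$ primitive generators, we obtain $p_s^*(x_1x_3\cdots x_{2n-1})=s^n\,x_1x_3\cdots x_{2n-1}$, that is, $\deg(p_s)=s^n$.

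Finally, because $\sum_{i=1}^{k}s_i=s\neq 0$ we get $\deg(w)=s^n\neq 0$, so $w$ is surjective and in particular $1_{U_n}$ lies in its image, yielding the desired solution. The main obstacle is the degree computation of the third step: one must justify carefully that the odd-degree generators are indeed primitive and genuinely generate the top cohomology as a product, and that the Hopf-algebra identity for primitive elements forces the scaling by $s$ under the power map. By comparison the homotopy reduction and the principle that a nonzero degree forces surjectivity are routine.
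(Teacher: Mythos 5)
Your proposal is correct, and its skeleton is identical to the paper's: reduce $w$ by a homotopy (contracting the constants $a_i$ to the identity along paths in the connected group $U_n$) to the power map $x\mapsto x^s$ with $s=\sum_i s_i$, invoke homotopy invariance of the degree, and conclude from $\deg\neq 0$ that the map is surjective. The only genuine divergence is in the key computation $\deg(p_s)=s^n$. The paper argues geometrically: a generic unitary has exactly $s^n$ $s$-th roots, and the degree can be computed locally by counting preimages; this is stated as a sketch and leaves implicit the verification that these preimages are regular points counted with consistent signs. You instead compute $p_s^*$ on $H^*(U_n;\mathbb{Q})=\Lambda(x_1,x_3,\ldots,x_{2n-1})$, using that the generators are primitive so that $p_s^*(\xi)=s\,\xi$, whence $p_s^*$ multiplies the top class $x_1x_3\cdots x_{2n-1}$ by $s^n$. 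Your route is more algebraic and arguably more airtight: primitivity handles the orientation/sign issues automatically, and negative exponents $s$ are covered uniformly via the antipode (the inverse map acting as $-1$ on primitives), whereas the root-counting picture is cleanest for $s>0$. What the paper's version buys is elementary geometric transparency and no need for the structure theory of $H^*(U_n;\mathbb{Q})$ as a Hopf algebra, which is exactly the ingredient you correctly flag as the main thing requiring careful justification in your write-up.
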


\begin{proof}
Let $w(x,a_{1},\ldots ,a_{k}):=x^{s_{1}}\cdots x^{s_{k}}a_{k}$ and consider the map $f:U_{n}\rightarrow U_{n}$ defined by
\begin{equation*}
b\mapsto w(b,a_{1},\ldots ,a_{n}). 
\end{equation*}%
We just need to prove that $f$ is onto. Recall that $U_{n}$ is a compact
manifold of dimension $n^{2}$. Thus the homology group $H_{n^{2}}\left(
U_{n}\right) $ is an infinite cyclic group. (A standard reference for
homology theory is \cite[Chapter 23]{Fulton}.) Being continuous (and in fact
smooth) $f$ induces a homomorphism
\begin{equation*}
f_{\ast }:H_{n^{2}}\left( U_{n}\right) \rightarrow H_{n^{2}}\left(
U_{n}\right) \text{.}
\end{equation*}%
If $e$ is a generator of $H_{n^{2}}\left( U_{n}\right) $ then%
\begin{equation*}
f_{\ast }(e)=de
\end{equation*}%
for some $d\in \mathbb{Z}$ called the \textit{degree} of $f$. In order to
show that $f$ is onto, it is enough to show that its degree is nonzero. We
claim that $d=s^{n}$, where $s=\sum_{i=1}^{n}s_{i}$. Being $U_{n}$ connected,
the map $f$ is homotopy equivalent to the map%
\begin{equation*}
f_{s}:U_{n}\rightarrow U_{n}
\end{equation*}%
defined by%
\begin{equation*}
b\mapsto b^{s}\text{.}
\end{equation*}%
By homotopy invariance of the degree of a map, $f$ and $f_{s}$ have the same
degree. Therefore we just have to show that $f_{s}$ has degree $s^{n}$. This
follows from the facts that the generic element of $U_{n}$ has $s^{n}$ $s$%
-roots of unity, and that the degree of a map can be computed locally.
\end{proof}

Theorem \ref{Theoren: Gerstenhaber, Rothaus} is in fact a particular case of 
\cite[Theorem 2]{Gerstenhaber-Rothaus}, where arbitrary compact Lie groups
and system of equations with possibly several variables are considered.

Let us now discuss how one can infer from Theorem \ref{Theoren:
Gerstenhaber, Rothaus} that the Kervaire-Laudenbach conjecture%
\index{conjecture!Kervaire-Laudenbach} holds for hyperlinear%
\index{group!hyperlinear} group. Consider a word%
\begin{equation*}
w(x,y_{1},\ldots ,y_{k})\equiv x^{s_{1}}y_{1}x^{s_{2}}y_{2}\cdots
x^{s_{k}}y_{k},
\end{equation*}%
where $\sum_{i=1}^{k}s_{i}\neq 0$. By Theorem \ref{Theoren: Gerstenhaber,
Rothaus}, formula%
\begin{equation*}
\sup_{y_{1},\ldots ,y_{k}}\inf_{x}\ell \left( w(x,y_{1},\ldots ,y_{k})\right)
\end{equation*}%
evaluates to $0$ in any unitary group $U_{n}$. By \L o\'s' theorem on
ultraproducts, the same formula evaluates to $0$ in any ultraproduct $%
\prod\nolimits_{\mathcal{U}}U_{n}$ of the unitary groups. Thus if $%
a_{1},\ldots ,a_{k}$ are elements of $\prod\nolimits_{\mathcal{U}}U_{k}$%
\begin{equation*}
\inf_{x\in \prod\nolimits_{\mathcal{U}}U_{n}}\ell \left( w(x,a_{1},\ldots
,a_{k})\right) =0%
\text{.}
\end{equation*}%
By countable saturation of $\prod\nolimits_{\mathcal{U}}U_{n}$ one can find $%
b\in \prod\nolimits_{\mathcal{U}}U_{n}$ where the infimum is achieved. This
means that%
\begin{equation*}
\ell \left( w(b,a_{1},\ldots ,a_{k})\right) =0
\end{equation*}%
and hence $b$ is a solution of the equation%
\begin{equation*}
x^{s_{1}}a_{1}\cdots x^{s_{k}}a_{k}\text{.}
\end{equation*}%
This shows that any ultraproduct $\prod\nolimits_{\mathcal{U}}U_{n}$
satisfies the Kervaire-Laudenbach conjecture. Since hyperlinear%
\index{group!hyperlinear} groups are subgroups of $\prod\nolimits_{\mathcal{U%
}}U_{n}$, they will satisfy the Kervaire-Laudenbach conjecture as well.

\section{Other metric approximations and Higman's group\label{Other metric
approximations}}

The notions of sofic%
\index{group!sofic} and hyperlinear%
\index{group!hyperlinear} groups as defined in Section \ref{Section:
definition sofic groups} and Section \ref{Section: definition hyperlinear
groups} respectively admit natural generalizations where one considers
approximate morphisms%
\index{approximate morphism!of invariant length groups} into different
classes of invariant length groups. Suppose that $\mathcal{C}$ is a class of
groups endowed with an invariant length function.

\begin{definition}
\label{Definition: C-approximation}

A countable discrete group $\Gamma $ has the $\mathcal{C}$\textit{%
-approximation property%
\index{approximation property}} if for every finite subset $F$ of $\Gamma
\left\backslash \left\{ 1_{\Gamma }\right\} \right. $ and every $\varepsilon
>0$ there is an $\left( F,\varepsilon \right) $-approximate morphism (as
defined in Definition \ref{Definition: approximate morphism}) from $\Gamma $
endowed with the trivial length function to a group $T$ in $\mathcal{C}$ ,
i.e.\ a function $\Phi :\Gamma \rightarrow T$ such that $\Phi \left(
1_{\Gamma }\right) =1_{T}$ and for every $g,h\in F$:

\begin{itemize}
\item $\ell _{T}\left( \Phi (gh)\Phi (h)^{-1}\Phi (g)^{-1}\right)
<\varepsilon $;

\item $\ell _{T}\left( \Phi (g)\right) >1-\varepsilon $.
\end{itemize}
\end{definition}

The following characterization of groups locally embeddable into some class
of invariant length groups can be proved with the same arguments as Section %
\ref{Section: logic invariant length groups}, where the notions of universal
and existential sentence are introduced.

\begin{proposition}
\label{Proposition: metric approximations} The following statements about a
countable discrete group $\Gamma $ (endowed with the trivial length
function) are equivalent:

\begin{enumerate}
\item $\Gamma $ has the $\mathcal{C}$-approximation property;

\item There is a length-preserving group homomorphism from $\Gamma $ to an
ultraproduct $\prod\nolimits_{\mathcal{U}}G_{n}$ of a sequence $%
(G_{n})_{n\in 
%TCIMACRO{\U{2115} }%
%BeginExpansion
\mathbb{N}
%EndExpansion
}$ of invariant metric groups from the class $\mathcal{C}$;

\item For every existential sentence $\varphi $ in the language of invariant
length groups 
\begin{equation*}
\varphi ^{\Gamma }\geq \inf \left\{ \varphi ^{G}:G\in \mathcal{C}\right\}
\end{equation*}%
where $\varphi ^{G}$ denotes the evaluation of $\varphi $ in the invariant
length group $G$;

\item For every universal sentence $\varphi $ in the language of invariant
length groups 
\begin{equation*}
\varphi ^{\Gamma }\leq \sup \left\{ \varphi ^{G}:G\in \mathcal{C}\right\} 
\text{.}
\end{equation*}
\end{enumerate}
\end{proposition}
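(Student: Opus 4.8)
The plan is to establish the cycle $(1) \Rightarrow (2) \Rightarrow (3) \Rightarrow (1)$ together with the equivalence $(3) \Leftrightarrow (4)$, following closely the arguments already used for sofic and hyperlinear groups but replacing the single limit $\lim_{n} \varphi^{S_n}$ by the infimum (respectively supremum) of $\varphi^G$ over $G \in \mathcal{C}$. The only genuinely new work is the final implication $(3) \Rightarrow (1)$; the rest is standard manipulation of \L o\'s' theorem and of the ultraproduct construction.

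For $(1) \Rightarrow (2)$ I would fix an increasing exhaustion $F_1 \subseteq F_2 \subseteq \cdots$ of $\Gamma \setminus \{1_\Gamma\}$ with $\bigcup_n F_n = \Gamma \setminus \{1_\Gamma\}$ and a sequence $\varepsilon_n \to 0$. The $\mathcal{C}$-approximation property (Definition \ref{Definition: C-approximation}) furnishes, for each $n$, a group $G_n \in \mathcal{C}$ and an $(F_n, \varepsilon_n)$-approximate morphism $\Phi_n \colon \Gamma \to G_n$. Sending $g \in \Gamma$ to the class of $(\Phi_n(g))_{n}$ then defines a map $\Phi \colon \Gamma \to \prod_{\mathcal{U}} G_n$; the two defining inequalities of an approximate morphism give $\lim_{n \to \mathcal{U}} \ell_{G_n}(\Phi_n(gh)\Phi_n(h)^{-1}\Phi_n(g)^{-1}) = 0$ and $\lim_{n \to \mathcal{U}} \ell_{G_n}(\Phi_n(g)) = 1$ for all $g, h$, so that $\Phi$ is a length-preserving homomorphism, recalling that $\Gamma$ carries the trivial length. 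Reading the same computation backwards, after choosing representatives and normalizing by $\Phi_n(1_\Gamma)^{-1}$, yields the converse $(2) \Rightarrow (1)$ for good measure.

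For $(2) \Rightarrow (3)$ I would invoke \L o\'s' theorem. Writing an existential sentence as $\varphi = \inf_{x_1} \cdots \inf_{x_m} \psi$ with $\psi$ quantifier-free, and using that $\Phi$ identifies $\Gamma$ length-preservingly with a subgroup of $\prod_{\mathcal{U}} G_n$ on which $\psi$ takes the same values as on the ambient group, the infimum over the larger group can only be smaller, so $\varphi^{\prod_{\mathcal{U}} G_n} \leq \varphi^\Gamma$. Theorem \ref{Theorem: Los for invariant length groups} gives $\varphi^{\prod_{\mathcal{U}} G_n} = \lim_{n \to \mathcal{U}} \varphi^{G_n} \geq \inf\{\varphi^G : G \in \mathcal{C}\}$, and combining the two yields $\varphi^\Gamma \geq \inf\{\varphi^G : G \in \mathcal{C}\}$. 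The equivalence $(3) \Leftrightarrow (4)$ I would deduce from the negation duality recorded earlier: if $\varphi$ is universal then $1 - \varphi$ is equivalent to an existential sentence, and $\varphi^\Gamma \leq \sup\{\varphi^G : G \in \mathcal{C}\}$ says exactly that $(1-\varphi)^\Gamma \geq \inf\{(1-\varphi)^G : G \in \mathcal{C}\}$; since $\varphi \mapsto 1-\varphi$ interchanges universal and existential sentences, (3) and (4) are equivalent.

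The main step, and where I expect the only real obstacle, is $(3) \Rightarrow (1)$, mirroring Exercise \ref{Exercise: sofic elementary}. Given a finite $F \subseteq \Gamma$ and $\varepsilon > 0$, I would set $\widehat{F} = \{h_1, \ldots, h_N\}$ with $h_1 = 1_\Gamma$ equal to $F \cup F \cdot F$, introduce variables $x_1, \ldots, x_N$, and let $\psi$ be the maximum of the quantities $\ell(x_a x_b x_c^{-1})$ over all triples with $h_a h_b = h_c$, together with $\ell(x_1)$ and $\lvert 1 - \ell(x_a)\rvert$ for $h_a \neq 1_\Gamma$; then put $\varphi = \inf_{x_1, \ldots, x_N} \psi$. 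Evaluating at $x_a = h_a$ in $\Gamma$ (which has trivial length and realizes the multiplication table exactly) shows $\varphi^\Gamma = 0$, so (3) forces $\inf\{\varphi^G : G \in \mathcal{C}\} = 0$. Hence for a suitably small $\delta = \delta(\varepsilon)$ there is $G \in \mathcal{C}$ and elements $a_1, \ldots, a_N \in G$ with $\psi^G(a_1, \ldots, a_N) < \delta$; setting $\Phi(h_a) = a_1^{-1} a_a$ on $\widehat{F}$ (and extending arbitrarily) normalizes $\Phi(1_\Gamma) = 1_G$, and the triangle inequality together with conjugation-invariance of $\ell$ converts the bounds on the terms of $\psi$ into the two required inequalities of an $(F, \varepsilon)$-approximate morphism (Definition \ref{Definition: approximate morphism}). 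The delicate point is precisely this last bookkeeping: the extra factors $a_1^{-1}$ and the passage from $\ell(x_a x_b x_c^{-1})$ to $\ell(\Phi(gh)\Phi(h)^{-1}\Phi(g)^{-1})$ must be absorbed into $\varepsilon$ by choosing $\delta$ proportionally small, exactly as in the sofic case.
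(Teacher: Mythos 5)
Your proof is correct and follows essentially the route the paper itself prescribes: the paper leaves this proposition as an adaptation of the arguments given for sofic groups (length-preserving embedding into an ultraproduct built from approximate morphisms, the inequalities via \L o\'{s}' theorem, the multiplication-table existential sentence for the converse, and the $1-\varphi$ duality between universal and existential sentences), and your write-up is precisely that adaptation, including the correct bookkeeping with the normalizing factor $a_1^{-1}$ and the choice $\delta$ proportional to $\varepsilon$.
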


This characterization in particular shows that the $\mathcal{C}$%
-approximation property is elementary in the sense of Section \ref{Section:
logic invariant length groups}

Observe that when $\mathcal{C}$ is a class of groups endowed with the
trivial length function, the $\mathcal{C}$-approximation property coincides
with the notion of local embeddability into elements of $\mathcal{C}$ as
defined in Section \ref{Section: classes of sofic and hyperlinear groups}.
It is immediate from the definition that a group is sofic%
\index{group!sofic} if and only if it has the $\mathcal{C}$-approximation
property where $\mathcal{C}$ is the class of symmetric groups endowed with
the Hamming invariant length function. Analogously hyperlinearity%
\index{group!hyperlinear} can be seen as the $\mathcal{C}$-approximation
property where $\mathcal{C}$ is the class of unitary groups endowed with the
Hilbert-Schmidt invariant length functions. \textit{Weakly sofic groups%
\index{group!weakly sofic}} as defined in \cite{Glebski-Rivera} are exactly $%
\mathcal{C}$-approximable groups where $\mathcal{C}$ is the class of all
finite groups endowed with an invariant length function. More recently
Arzhantseva and P\u{a}unescu introduced in \cite{Arzhantseva-Paunescu} the
class of\textit{\ linear sofic groups%
\index{group!linear sofic}}, which can be regarded as $\mathcal{C}$%
-approximable groups where $\mathcal{C}$ is the class of general linear
groups endowed with the invariant length function%
\begin{equation*}
\ell (x)=N(x-1)
\end{equation*}%
where $N$ is the usual normalized rank of matrices.

Giving a complete account of the notion of local metric approximation in
group theory (not to mention other areas of mathematics) would be too long
and beyond the scope of this survey. It should be nonetheless mentioned that
it can be found \textit{in nuce} in the work of Malcev. More recently it has
been considered by Gromov in the paper \cite{Gr} that lead to the
introduction of sofic groups. The notion of $\mathcal{C}$-approximation is
defined implicitly by Pestov in \cite[Remark 8.6]{pestov} and explicitly by
Thom in \cite[Definition 1.6]{Thom-Higman}, as well as by Arzhantseva and
Cherix in \cite{Ar-Ch}.

A more general notion of metric approximation called \textit{asymptotic
approximation} has been more recently defined by Arzhantseva for \textit{%
finitely generated groups} in \cite[Definition 9]{Ar}: A finitely generated
group $\Gamma $ is asymptotically approximated by a class of invariant
length groups $\mathcal{C}$ if there is a sequence $\left( X_{n}\right)
_{n\in \mathbb{N}}$ of finite generating subsets of $\Gamma $ such that for
every $n\in \mathbb{N}$ there is a $\left( B_{X_{n}}(n),%
\frac{1}{n}\right) $-approximate morphism as in Definition \ref{Definition:
approximate morphism} from $\Gamma $ endowed with the trivial invariant
length function to an element of $\mathcal{C}$, where $B_{X_{n}}(n)$ denotes
the set of elements of $\Gamma $ that have length at most $n$ according to
the word length associated with the generating set $X_{n}$. In the
particular case when the finite generating set $X_{n}$ does not depend on $n$
one obtains the notion of $\mathcal{C}$-approximation as above. It is showed
in \cite[Theorem 11]{Ar} that several important classes of finitely
generated groups, including all hyperbolic groups and one-relator groups,
are asymptotically residually finite, i.e.\ have the asymptotic $\mathcal{C}$%
-approximation property where $\mathcal{C}$ is the class of residually
finite groups endowed with the trivial length function. In particular these
groups are asymptotically sofic. (It is currently not known if these groups
are in fact sofic.)

To this day no countable discrete group is known to \textit{not }have the $%
\mathcal{C}$-approximation property as in Definition \ref{Definition:
C-approximation} when $\mathcal{C}$ is any of the classes of invariant
length groups mentioned above. In an attempt to find an example of a
countable discrete group failing to have the $\mathcal{C}$-approximation
property with respect to some natural large class of invariant length
groups, Thom introduced in \cite{Thom-Higman} the class $\mathcal{F}_{c}$ of
finite commutator-contractive invariant length groups, i.e.\ finite groups
endowed with an invariant length function $\ell $ satisfying%
\begin{equation*}
\ell (xyx^{-1}y^{-1})\leq 4\ell (x)\ell (y)\text{%
\index{length function!commutator contractive}.}
\end{equation*}%
Examples of such groups, besides finite groups endowed with the trivial
length function, are finite subgroups of the unitary group of a C*-algebra $%
A $ endowed with the length function 
\begin{equation*}
\ell (x)=%
\frac{1}{2}\left\Vert 1-x\right\Vert \text{.}
\end{equation*}%
Corollary 3.3 of \cite{Thom-Higman} shows that groups with the $\mathcal{F}%
_{c}$-approximation property form a proper subclass of the class of all
countable discrete groups. More precisely Higman's group $H$ introduced by
Higman in \cite{Higman} does not have the $\mathcal{F}_{c}$-approximation
property. This is one of the few currently know examples of a group failing
to have the $\mathcal{C}$-approximation property for some broad class $%
\mathcal{C}$ of groups endowed with a (nontrivial) invariant length function.

Higman's group%
\index{group!Higman's} $H$ is the group with generators $h_{i}$ for $i\in 
%TCIMACRO{\U{2124} }%
%BeginExpansion
\mathbb{Z}
%EndExpansion
\left/ 4%
%TCIMACRO{\U{2124} }%
%BeginExpansion
\mathbb{Z}
%EndExpansion
\right. $ subject to the cyclic relations%
\begin{equation*}
h_{i+1}h_{i}h_{i+1}^{-1}=h_{i}^{2},
\end{equation*}%
for $i\in 
%TCIMACRO{\U{2124} }%
%BeginExpansion
\mathbb{Z}
%EndExpansion
\left/ 4%
%TCIMACRO{\U{2124} }%
%BeginExpansion
\mathbb{Z}
%EndExpansion
\right. $ where the sum $i+1$ is calculated modulo $4$. It was first
considered by Higman in \cite{Higman}, who showed that $H$ is an infinite
group with no nontrivial finite quotients, thus providing the first example
of a finitely presented group with this property. Higman's proof in fact
also shows that $H$ is not locally embeddable into finite groups. Theorem
3.2 and Corollary 3.3 from \cite{Thom-Higman} strengthen Higman's result,
showing that $H$ does not even have the $\mathcal{F}_{c}$-approximation
property.

We will now prove, following Higman's original argument from \cite{Higman},
that Higman's group $H$ is nontrivial and in fact infinite. Consider for $%
i\in 
%TCIMACRO{\U{2124} }%
%BeginExpansion
\mathbb{Z}
%EndExpansion
\left/ 4%
%TCIMACRO{\U{2124} }%
%BeginExpansion
\mathbb{Z}
%EndExpansion
\right. $ the group $H_{i}$ generated by $h_{i}$ and $h_{i+1}$ subjected to
the relation 
\begin{equation*}
h_{i+1}h_{i}h_{i+1}^{-1}=h_{i}^{2},
\end{equation*}%
where again $i+1$ is calculated modulo $4$. It is not hard to see that every
element of $H_{i}$ can be expressed uniquely as $h_{i}^{n}h_{i+1}^{m}$ for $%
n,m\in 
%TCIMACRO{\U{2124} }%
%BeginExpansion
\mathbb{Z}
%EndExpansion
$. In particular $h_{i}$ and $h_{i+1}$ respectively generate disjoint cyclic
free groups in $H_{i}$. Define $H_{01}$ to be the free product of $H_{0}$
and $H_{1}$ amalgamated over the common free cyclic subgroup generated by $%
h_{1}$, and $H_{23}$ to be the free product of $H_{2}$ and $H_{3}$
amalgamated over the common free cyclic subgroup generated by $h_{3}$. By
Corollary 8.11 from \cite{Neumann} $\left\{ h_{0},h_{2}\right\} $ generates
a free subgroup of both $H_{01}$ and $H_{23}$. Define $H$ to be the free
product of $H_{01}$ and $H_{23}$ amalgamated over the subgroup generated by $%
\left\{ h_{0},h_{2}\right\} $. Again by Corollary 8.11 from \cite{Neumann} $%
\left\{ h_{1},h_{3}\right\} $ generates a free subgroup of $H$. In
particular $H$ is an infinite group.

As mentioned before, Higman showed that the group $H$ is not locally
embeddable into finite groups. Equivalently the system $\mathcal{R}%
_{H}(x_{0},x_{1},x_{2},x_{3})$ consisting of the relations%
\begin{equation*}
x_{i+1}x_{i}x_{i+1}^{-1}=x_{i}^{2},
\end{equation*}%
for $i\in 
%TCIMACRO{\U{2124} }%
%BeginExpansion
\mathbb{Z}
%EndExpansion
\left/ 4%
%TCIMACRO{\U{2124} }%
%BeginExpansion
\mathbb{Z}
%EndExpansion
\right. $ has no nontrivial finite models. This means that if $F$ is any
finite group and $a_{i}\in F$ for $i\in 
%TCIMACRO{\U{2124} }%
%BeginExpansion
\mathbb{Z}
%EndExpansion
\left/ 4%
%TCIMACRO{\U{2124} }%
%BeginExpansion
\mathbb{Z}
%EndExpansion
\right. $ satisfy the system $\mathcal{R}_{H}$, i.e.\ satisfy%
\begin{equation*}
a_{i+1}a_{i}a_{i+1}^{-1}=a_{i}^{2},
\end{equation*}%
for $i\in 
%TCIMACRO{\U{2124} }%
%BeginExpansion
\mathbb{Z}
%EndExpansion
\left/ 4%
%TCIMACRO{\U{2124} }%
%BeginExpansion
\mathbb{Z}
%EndExpansion
\right. $, then%
\begin{equation*}
a_{0}=a_{1}=a_{2}=a_{3}=1_{F}%
\text{.}
\end{equation*}%
In fact suppose by contradiction that one of the $a_{i}$'s is nontrivial.
Define $p$ to be the smallest prime number dividing the order of one of the $%
a_{i}$'s. Without loss of generality we can assume that $p$ divides the
order of $a_{0}$. From the equation 
\begin{equation*}
a_{1}a_{0}a_{1}^{-1}=a_{0}^{2},
\end{equation*}%
one can obtain by induction%
\begin{equation*}
a_{1}^{n}a_{0}a_{1}^{-n}=a_{0}^{2^{n}}.
\end{equation*}%
In particular if $n$ is the order of $a_{1}$ we have%
\begin{equation*}
a_{0}^{2^{n}-1}=1_{F},
\end{equation*}%
which shows that $2^{n}-1$ is a multiple of the order of $a_{0}$ and hence
of $p$. In particular $p$ is an odd prime and%
\begin{equation*}
2^{n}\equiv 1\left( \mathrm{mod}p\right) \text{.}
\end{equation*}%
Thus by Fermat's little theorem $p-1$ divides $n$, contradicting the
assumption that $p$ is the smallest prime dividing the order of one of the $%
a_{i}$'s.

A similar argument from \cite{Tao} shows that the system $\mathcal{R}_{H}$
has no nontrivial solution in $GL_{n}(\mathbb{C})$ for any $n\in 
%TCIMACRO{\U{2115} }%
%BeginExpansion
\mathbb{N}
%EndExpansion
$. In fact suppose that $a_{i}\in GL_{n}(\mathbb{C})$ for $i\in 
%TCIMACRO{\U{2124} }%
%BeginExpansion
\mathbb{Z}
%EndExpansion
\left/ 4%
%TCIMACRO{\U{2124} }%
%BeginExpansion
\mathbb{Z}
%EndExpansion
\right. $ satisfy the system $\mathcal{R}_{H}$. The relation%
\begin{equation*}
a_{i+1}a_{i}a_{i+1}^{-1}=a_{i}^{2}
\end{equation*}%
implies that $a_{i}$ and $a_{i}^{2}$ are conjugate and, in particular, have
the same eigenvalues. This implies that the eigenvalues of $a_{i}$ are roots
of unity. Considering the Jordan canonical form of $a_{i}$ shows that the
absolute value of the entries of $a_{i}^{n}$ grows at most polynomially in $%
n $. Now the relation%
\begin{equation*}
a_{i+1}^{n}a_{i}a_{i+1}^{-n}=a_{i}^{2^{n}}
\end{equation*}%
shows that also the absolute value of the entries of $a_{i}^{2^{n}}$ grows
at most polynomially in $n$. It follows that $a_{i}$ is diagonalizable and,
having roots of unity as eigenvalues, a finite order element of $GL_{n}(%
\mathbb{C})$. The same argument as before now shows that the $a_{i}$'s are
equal to the unity of $GL_{n}(\mathbb{C})$.

Andreas Thom showed in \cite[Corollary 3.3]{Thom-Higman} that $H$ does not
have the $\mathcal{F}_{c}$-approximation property where $\mathcal{F}_{c}$ is
the class of finite groups endowed with a commutator-contractive invariant
length function. This is a strengthening of Higman's result that $H$ is not
locally embeddable into finite groups, since the trivial invariant length
function on a group is commutator-contractive. The following proposition is
the main techinical result involved in the proof; see \cite[Theorem 3.2]%
{Thom-Higman}.

\begin{proposition}
\label{Proposition: Thom Higman} Suppose that $G$ is a finite commutator
contractive invariant length group and $\varepsilon $ is a positive real
number smaller than $\frac{1}{176}$. If $a_{i}$ for $i\in 
%TCIMACRO{\U{2124} }%
%BeginExpansion
\mathbb{Z}
%EndExpansion
\left/ 4%
%TCIMACRO{\U{2124} }%
%BeginExpansion
\mathbb{Z}
%EndExpansion
\right. $ are elements of $G$ satisfying the system $\mathcal{R}_{H}$ up to $%
\varepsilon ,$ then%
\begin{equation*}
\ell (a_{i})<4\varepsilon
\end{equation*}%
for every $i\in 
%TCIMACRO{\U{2124} }%
%BeginExpansion
\mathbb{Z}
%EndExpansion
\left/ 4%
%TCIMACRO{\U{2124} }%
%BeginExpansion
\mathbb{Z}
%EndExpansion
\right. $.
\end{proposition}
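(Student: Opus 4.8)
The plan is to reduce the whole statement to a single scalar inequality among the four lengths $\ell_i := \ell(a_i)$, $i \in \mathbb{Z}/4\mathbb{Z}$, and then to analyse the resulting cyclic system. First I would unpack the hypothesis: that the $a_i$ satisfy $\mathcal{R}_H$ up to $\varepsilon$ means $\ell(a_{i+1}a_i a_{i+1}^{-1}a_i^{-2}) < \varepsilon$ for every $i$. Writing $c_i = a_{i+1}a_i a_{i+1}^{-1}a_i^{-2}$, so that $\ell(c_i) < \varepsilon$, a one-line computation gives $a_{i+1}a_ia_{i+1}^{-1} = c_i a_i^2$ and hence $[a_{i+1},a_i] = c_i a_i$; thus $a_i$ and the commutator $[a_{i+1},a_i]$ differ by the short element $c_i$, so that $|\ell(a_i) - \ell([a_{i+1},a_i])| \leq \ell(c_i) < \varepsilon$. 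Feeding this into the commutator-contractivity hypothesis $\ell([a_{i+1},a_i]) \leq 4\ell(a_{i+1})\ell(a_i)$ yields the key inequality
\begin{equation*}
\ell_i \leq 4\,\ell_{i+1}\ell_i + \varepsilon \qquad (i \in \mathbb{Z}/4\mathbb{Z}).
\end{equation*}

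Next comes the dichotomy. Putting $m = \max_i \ell_i$ and letting $i_0$ be a maximiser, the inequality at $i_0$ gives $m \leq 4m^2 + \varepsilon$ (since $\ell_{i_0+1}\leq m$). Solving the quadratic $4m^2-m+\varepsilon\geq 0$ shows that either $m \leq \tfrac{1-\sqrt{1-16\varepsilon}}{8} \leq 2\varepsilon$ or $m \geq \tfrac{1+\sqrt{1-16\varepsilon}}{8}$, which for small $\varepsilon$ lies near $1/4$. In the first (\emph{small}) alternative I am essentially done: every $\ell_{i+1}\leq 2\varepsilon$, so $1-4\ell_{i+1}\geq 1-8\varepsilon > 0$, and the $i$-th inequality rearranges to $\ell_i \leq \varepsilon/(1-8\varepsilon)$, which is comfortably below $4\varepsilon$ as soon as $\varepsilon < 1/176$. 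So the entire difficulty is concentrated in ruling out the \emph{large} alternative.

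This exclusion is the step I expect to be the main obstacle, and it \emph{cannot} be achieved from the four first-order inequalities alone: a single Higman relation $b a b^{-1} = a^2$ does admit ``large'' approximate solutions—for instance $a$ a cube root of unity and $b$ an element inverting it (realised exactly in $S_3$ with a suitable invariant length), where $\ell(a)$ is of order $\sqrt{3}/2$ rather than small, yet $\ell_i \leq 4\ell_{i+1}\ell_i+\varepsilon$ holds trivially because $4\ell_{i+1}\geq 1$. What destroys the large branch is the interaction of all four relations around the cycle $\mathbb{Z}/4\mathbb{Z}$, which quantifies Higman's theorem that the full system has no nontrivial finite solution. The plan here is to extract higher-order consequences of the relations—for example the iterated-doubling estimates $a_{i+1}^{k}a_i a_{i+1}^{-k} \approx a_i^{2^k}$ together with $\ell([a_{i+1}^{k},a_i]) \leq 4k\,\ell_{i+1}\ell_i$—and to combine the four of them cyclically, so that the standing assumption $\ell_i \gtrsim 1/4$ for all $i$ forces a contradiction. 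Carefully optimising the accumulated error terms in this cyclic estimate is precisely where the explicit threshold $\varepsilon < 1/176$ is consumed.

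Finally I would assemble the two cases. Once the large alternative is excluded, the maximum must satisfy $m \leq 2\varepsilon$, so every $\ell_{i+1}\leq 2\varepsilon$ and the rearranged inequality $\ell_i \leq \varepsilon/(1-8\varepsilon) < 4\varepsilon$ holds for each $i \in \mathbb{Z}/4\mathbb{Z}$, which is the desired conclusion.
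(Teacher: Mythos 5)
Your opening moves are correct and are the natural first steps: writing $[a_{i+1},a_i] = \left( a_{i+1}a_ia_{i+1}^{-1}a_i^{-2}\right) a_i$ gives $\lvert \ell(a_i)-\ell([a_{i+1},a_i])\rvert < \varepsilon$, commutator contractivity then yields $\ell_i \leq 4\ell_{i+1}\ell_i+\varepsilon$, and your quadratic dichotomy together with the treatment of the small branch is sound arithmetic. (For the record, the paper itself offers no proof of this proposition: it states it and refers to Thom's Theorem 3.2, so your attempt must be measured against what a complete proof requires rather than against an in-paper argument.) The problem is that the large branch, which you yourself identify as ``the main obstacle,'' is left as a plan rather than a proof, and that branch is the entire content of the theorem: everything you actually carry out is routine, while the exclusion of the alternative $\ell_i \gtrsim 1/4$ is exactly where Thom's work lies.

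Moreover, the plan you sketch cannot be completed as stated, because it never uses the finiteness of $G$, and the proposition is false without that hypothesis. Take $G$ to be Higman's group $H$ itself, endowed with the trivial invariant length function: the paper notes that the trivial length function is commutator contractive, and proves that $H$ is infinite with each generator $h_i$ nontrivial. The elements $a_i=h_i$ satisfy $\mathcal{R}_H$ exactly, hence up to every $\varepsilon>0$, and yet $\ell(h_i)=1$. In this model every inequality your plan could manufacture---length-function axioms, bi-invariance, commutator contractivity $\ell([x,y])\leq 4\ell(x)\ell(y)$, iterated-doubling estimates $\ell\left( a_{i+1}^{k}a_ia_{i+1}^{-k}a_i^{-2^k}\right) \leq (2^k-1)\varepsilon$, and any cyclic combination of these---holds trivially, since every nontrivial element has length exactly $1$ and every right-hand side of the form $4\ell(x)\ell(y)+(\text{error})$ is at least $4$. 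So no optimisation of accumulated error terms in such inequalities can produce the desired contradiction; a correct argument must exploit the fact that elements of a finite group have finite order, which is how Higman's original prime argument runs and what Thom's theorem refines metrically. A practical symptom of the same defect: your iterated estimates carry errors of size $(2^k-1)\varepsilon$, which blow up exponentially long before $k$ could reach the order of $a_{i+1}$, so even granting finiteness the naive use of these estimates does not close the argument.
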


In view of Proposition \ref{Proposition: metric approximations} in order to
conclude that Higman's group $H$ does not have the $\mathcal{F}_{c}$%
-approximation property, it is enough to show that there is a universal
sentence $\varphi $ such that $\varphi ^{G}=0$ for every invariant length
group $G$ but $\varphi ^{\Gamma }=1$. Write%
\begin{equation*}
\psi (x_{0},x_{1},x_{2},x_{3})\equiv \max_{i\in \mathbb{Z}\left/ 4\mathbb{Z}%
\right. }\ell (x_{i}) -4\max_{i\in \mathbb{Z}\left/ 4\mathbb{Z}\right. }\ell
(x_{i+1}x_{i}x_{i+1}^{-1}x_{i}^{-2})
\end{equation*}%
where $i+1$ is calculated modulo $4$, and define $\varphi $ to be the
universal sentence%
\begin{equation*}
\sup_{x_0,x_1,x_2,x_3}f(\psi )
\end{equation*}%
where $f$ is the function%
\begin{equation*}
x\mapsto \min \left\{ \max \left\{ x,0\right\} ,1\right\} \text{.}
\end{equation*}%
Observe that by Proposition \ref{Proposition: Thom Higman} the
interpretation of $\varphi $ in any commutator-contractive invariant length
group is $0$, while the interpretation of $\varphi $ in Higman's group
endowed with the trivial length function is $1$.

\section{Rank rings and Kaplansky's direct finiteness conjecture}

\label{Section: rank rings and finiteness conjecture}

Recall that Kaplansky's direct finiteness conjecture for a countable
discrete group $\Gamma $ asserts that if $K$ is a field, then the group
algebra $K\Gamma $ is directly finite. This means that if $a,b$ are elements
of $K\Gamma $ such that $ab=1$ then also $ba=1$. This conjecture has been
confirmed when $\Gamma $ is a sofic%
\index{group!sofic} group by Elek and Szab\'{o} in \cite%
{Elek-Szabo-hyperlinearity}. An alternative proof of this result has been
obtained in \cite[Corollary 1.4]{ceccherini-silberstein_injective_2007}
using the theory of cellular automata; see also \cite[Section 8.15]%
{Ceccherini-Coornaert}. The proof of this result can be naturally
presented within the framework of rank rings.

\begin{definition}
\label{Definiton: rank function}

Suppose that $R$ is a ring. A function $N:R\rightarrow \left[ 0,1\right] $
is a \emph{rank function}%
\index{rank function} if:

\begin{itemize}
\item $N(1)=1$;

\item $N(x)=0$ iff $x=0$;

\item $N(xy)\leq \min \left\{ N(x),N(y)\right\} $;

\item $N(x+y)\leq N(x)+N(y)$.
\end{itemize}
\end{definition}

If $N$ is a rank function on $R$ then%
\begin{equation*}
d(x,y)=N(x-y)
\end{equation*}%
defines a metric that makes the function $x\mapsto x+a$ isometric and the
functions $x\mapsto xa$ and $x\mapsto ax$ contractive for every $a\in R$. A
ring endowed with a rank function is called a \textit{rank ring%
\index{rank ring}}.

In the context of rank rings, a \textit{term} 
\index{term!for rank rings} in the variables $x_{1},\ldots ,x_{n}$ is just a
polynomial in the indeterminates $x_{1},\ldots ,x_{n}$. A \emph{basic formula%
} 
\index{basic formula!for rank rings} is an expression of the form%
\begin{equation*}
N\left( p(x_{1},\ldots ,x_{n})\right)
\end{equation*}%
where $p(x_{1},\ldots ,x_{n})$ is a term in the variables $x_{1},\ldots
,x_{n}$. . \textit{formulae%
\index{formula!for rank rings}}, \textit{sentences 
\index{sentence!for rank rings}} and their \textit{interpretation 
\index{formula!interpretation}} in a rank ring can then be defined starting
from terms analogously as in the case of invariant length groups. Also the
notion of elementary property, and axiomatizable class, saturated and
countably saturated structure carry over without change.%
\index{axiomatizable class!of rank rings}%
\index{elementary property!of rank rings}%
\index{countably saturated!rank ring}

Suppose that $\left( R_{n}\right) _{n\in 
%TCIMACRO{\U{2115} }%
%BeginExpansion
\mathbb{N}
%EndExpansion
}$ is a sequence of rank rings and $\mathcal{U}$ is a free ultrafilter over $%
\mathbb{N}$. The ultraproduct $\prod\nolimits_{\mathcal{U}}R_{n}$%
\index{ultraproduct!of rank rings} is the quotient of the product ring $%
\prod_{n}R_{n}$ by the ideal%
\begin{equation*}
I_{\mathcal{U}}=\left\{ (x_{n})\in \prod\nolimits_{n}R_{n}\left\vert
\lim_{n\rightarrow \mathcal{U}}N_{n}(x_{n})=0\right. \right\} 
\text{.}
\end{equation*}%
The function%
\begin{equation*}
N_{\mathcal{U}}(x_{n})=\lim_{n\rightarrow \mathcal{U}}N_{n}(x_{n})
\end{equation*}%
induces a rank function in the quotient, making $\prod\nolimits_{\mathcal{U}%
}R_{n}$ a rank ring. Then all the $R_{n}$ coincide with the same rank ring $%
R $ the corresponding ultraproduct will be called \emph{ultrapower} of $R$%
\index{ultrapower!of rank rings}. The notion of \emph{representative sequence%
}%
\index{sequence!representative} of an element of an ultraproduct of rank
rings is defined analogously as in the case of length groups. \L o\'s'
theorem%
\index{\L o\'s' theorem!for rank rings} and countable saturation of
ultraproducts can be proved in this context in a way analogous to the case
of invariant length groups. In particular:

\begin{theorem}[\L o\'{s}]
Suppose that $\varphi (x_{1},\ldots ,x_{k})$ is a formula for rank rings
with free variables $x_{1},\ldots ,x_{k}$, $\left( R_{n}\right) _{n\in 
\mathbb{N}}$ is a sequence of rank rings, and $\mathcal{U}$ is a free
ultrafilter over $%
%TCIMACRO{\U{2115} }%
%BeginExpansion
\mathbb{N}
%EndExpansion
$. If $a^{(1)},\ldots ,a^{(k)}$ are elements of $\prod_{n}R_{n}$ then%
\begin{equation*}
\varphi ^{\prod\nolimits_{\mathcal{U}}R_{n}}\left( a^{(1)},\ldots
,a^{(k)}\right) =\lim_{n\rightarrow \mathcal{U}}\varphi ^{R_{n}}\left(
a_{n}^{(1)},\ldots ,a_{n}^{(k)}\right)
\end{equation*}%
where $a^{(i)}$ is any representative sequence of $\left( a_{n}^{(i)}\right)
_{n\in \mathbb{N}}$ for $i=1,2,\ldots ,k$. In particular if $\varphi $ is a
sentence%
\index{sentence!for rank rings} then%
\begin{equation*}
\varphi ^{\prod\nolimits_{\mathcal{U}}R_{n}}=\lim_{n\rightarrow \mathcal{U}%
}\varphi ^{R_{n}}%
\text{.}
\end{equation*}
\end{theorem}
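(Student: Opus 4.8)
The plan is to argue by induction on the complexity of the formula $\varphi$, exactly as in the case of invariant length groups (Theorem \ref{Theorem: Los for invariant length groups}), which the text flags as the model for this proof. Writing $a$ for the tuple $(a^{(1)},\ldots,a^{(k)})$ and $a_n$ for $(a_n^{(1)},\ldots,a_n^{(k)})$, I will use throughout that, since $\mathcal{U}$ is an ultrafilter and $[0,1]$ is compact, the ultralimit $\lim_{n\to\mathcal{U}} t_n$ of any sequence in $[0,1]$ exists and is unique, and that ultralimits commute with continuous functions, i.e.\ $\lim_{n\to\mathcal{U}} g(t_n^{(1)},\ldots,t_n^{(m)}) = g(\lim_{n\to\mathcal{U}} t_n^{(1)},\ldots,\lim_{n\to\mathcal{U}} t_n^{(m)})$ for continuous $g\colon[0,1]^m\to[0,1]$; I will also use that for a free ultrafilter the ultralimit of a convergent sequence is its ordinary limit, so that $\lim_{n\to\mathcal{U}}\tfrac1n=0$.

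For the base case let $\varphi$ be a basic formula $N(p(x_1,\ldots,x_k))$ with $p$ a term. Since the ring operations on $\prod_{\mathcal{U}} R_n$ are computed coordinatewise on representative sequences, the element $p^{\prod_{\mathcal{U}} R_n}(a)$ admits $(p^{R_n}(a_n))_{n\in\mathbb{N}}$ as a representative sequence, so by the definition of the rank function $N_{\mathcal{U}}$ on the ultraproduct its evaluation is $\lim_{n\to\mathcal{U}} N_n(p^{R_n}(a_n)) = \lim_{n\to\mathcal{U}} \varphi^{R_n}(a_n)$, as required. The connective step is immediate: if $\varphi = g(\varphi_1,\ldots,\varphi_m)$ with $g$ continuous, then by the inductive hypothesis each $\varphi_j^{\prod_{\mathcal{U}} R_n}(a) = \lim_{n\to\mathcal{U}} \varphi_j^{R_n}(a_n)$, and commuting $g$ past the ultralimit closes the case.

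The real work, and the step I expect to be the main obstacle, is the quantifier case, say $\varphi(x_1,\ldots,x_k) = \inf_y \psi(x_1,\ldots,x_k,y)$ (the $\sup$ case being dual), which I would handle by proving two inequalities. For $\geq$, given any $b\in\prod_{\mathcal{U}} R_n$ with representative $(b_n)$, the inductive hypothesis applied to $\psi$ gives $\psi^{\prod_{\mathcal{U}} R_n}(a,b) = \lim_{n\to\mathcal{U}} \psi^{R_n}(a_n,b_n) \geq \lim_{n\to\mathcal{U}} \inf_{c\in R_n} \psi^{R_n}(a_n,c) = \lim_{n\to\mathcal{U}} \varphi^{R_n}(a_n)$, since the ultralimit preserves $\geq$; taking the infimum over all $b$ yields $\varphi^{\prod_{\mathcal{U}} R_n}(a) \geq \lim_{n\to\mathcal{U}} \varphi^{R_n}(a_n)$. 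For the reverse inequality the key is a diagonal choice exploiting that the infima over the $R_n$ need not be attained: for each $n$ pick $c_n\in R_n$ with $\psi^{R_n}(a_n,c_n) < \inf_{c\in R_n} \psi^{R_n}(a_n,c) + \tfrac1n$, let $b$ be the element of the ultraproduct represented by $(c_n)$, and apply the inductive hypothesis to get $\varphi^{\prod_{\mathcal{U}} R_n}(a) \leq \psi^{\prod_{\mathcal{U}} R_n}(a,b) = \lim_{n\to\mathcal{U}} \psi^{R_n}(a_n,c_n) \leq \lim_{n\to\mathcal{U}}\bigl(\inf_{c\in R_n}\psi^{R_n}(a_n,c) + \tfrac1n\bigr) = \lim_{n\to\mathcal{U}} \varphi^{R_n}(a_n)$, the error $\tfrac1n$ being annihilated in the ultralimit. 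Combining the two directions gives equality and completes the induction; the sentence case follows by specializing to $k=0$. Notably, no structure of the rank rings beyond the definitions is needed here, since the entire argument rests only on the coordinatewise computation of terms, the definition of $N_{\mathcal{U}}$, and the elementary properties of ultralimits recalled above.
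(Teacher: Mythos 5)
Your proof is correct and follows exactly the route the paper indicates: the paper does not write out a proof but states that \L o\'s' theorem for rank rings ``can be proved in this context in a way analogous to the case of invariant length groups,'' i.e.\ by induction on the complexity of the formula, which is precisely your argument. Your three cases (basic formulas via the coordinatewise computation of terms and the definition of $N_{\mathcal{U}}$, continuous connectives via ultralimits commuting with continuous functions, and quantifiers via the two inequalities with the $\tfrac1n$-almost-optimal witnesses) are the standard details being alluded to, and all of them check out.
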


A rank ring $R$ such that for every $x,y\in R$%
\begin{equation*}
N(xy-1)=N\left( yx-1\right)
\end{equation*}%
is called a \textit{finite rank ring} 
\index{rank ring!finite}. Clearly any finite rank ring is a directly finite
ring. Moreover the class of finite rank rings is axiomatizable%
\index{axiomatizable class} by the formula%
\begin{equation*}
\sup_{x}\sup_{y}\left\vert N(xy-1)-N\left( yx-1\right) \right\vert 
\text{.}
\end{equation*}%
It follows that an ultraproduct of finite rank rings is a finite rank ring
and in particular a directly finite ring. Exercise \ref{Exercise: rank and
length} shows that there is a tight connection between rank functions on
rings and length functions on groups.

\begin{exercise}
\label{Exercise: rank and length}Suppose that $N$ is a rank function on a
ring $R$. Define%
\begin{equation*}
\ell (x)=N(x-1),
\end{equation*}%
for $x\in R$. Show that $\ell $ is a length function on the multiplicative
group $R^{\times }$ of invertible elements of $R$, which is invariant if and
only if $R$ is a finite rank ring.
\end{exercise}

A natural example of finite rank rings is given by rings of matrices over an
arbitrary field $K$. Denote by $M_{n}(K)$ the ring of $n\times n$ matrices
with coefficients in $K$ Suppose that $\rho $ is the usual matrix rank on $%
M_{n}(K)$, i.e.\ $\rho (x)$ for $x\in M_{n}(K)$ is the dimension of the
range of $x$ regarded as a linear operator on a $K$-vector space of
dimension $n$. Define%
\begin{equation*}
N(x)=\frac{1}{n}\rho (x)
\end{equation*}%
for every $x\in M_{n}(K)$.

\begin{exercise}
\label{Exercise: matrix rings}Prove that $N$ is a rank function on $M_{n}(K)$
as in Definition \ref{Definiton: rank function}. Show that moreover $%
M_{n}(K) $ endowed with the rank $N$ is a finite rank ring.
\end{exercise}

Another natural example of finite rank rings comes from von Neumann algebra
theory: If $\tau $ is a faithful normalized trace on a von Neumann algebra $%
M $, then%
\begin{equation*}
N_{\tau }(x)=\tau \left( s(x)\right),
\end{equation*}%
where $s(x)$ is the support projection of $x$, is a rank function on $M$.
Moreover $M$ endowed with the rank function $N_{\tau }$ is a finite rank
ring.

Fix a free ultrafilter $\mathcal{U}$ over $%
%TCIMACRO{\U{2115} }%
%BeginExpansion
\mathbb{N}
%EndExpansion
$. In this section the symbol $\prod\nolimits_{\mathcal{U}}M_{n}(K)$ will
denote the ultraproduct with respect to $\mathcal{U}$ of the sequence of
matrix rings $M_{n}(K)$ regarded as rank rings. By Exercise \ref{Exercise:
matrix rings} and \L o\'{s}' theorem on ultraproducts $\prod\nolimits_{%
\mathcal{U}}M_{n}(K)$ is a finite rank ring (and in particular a directly
finite ring).

The rest of this section is dedicated to the proof from \cite%
{Elek-Szabo-finiteness} that sofic groups satisfy Kaplansky's direct
finiteness%
\index{conjecture!direct finiteness} conjecture. The idea is that soficity
of $\Gamma $ together with the algebraic embedding of $S_{n}$ into $M_{n}(K)$
obtained by sending $\sigma $ to the associated permutation matrix $%
P_{\sigma }$ allows one to construct an injective *-homomorphism from the
group algebra $K\Gamma $ to the ultraproduct $\prod\nolimits_{\mathcal{U}%
}M_{n}(K)$. In order to do this one needs some relations between the rank of
a linear combination of permutation matrices and the lengths of the
associated permutations. Explicit upper and lower bounds of the former in
term of the latter ones are established in Exercise \ref{Exercise: rank
distance upper bound} and Exercise \ref{Exercise: rank lower bound}
respectively.

In the following for $\sigma \in S_{n}$ denote by $P_{\sigma }\in M_{n}(K)$
the associated permutation matrix as in Section \ref{Section: definition
hyperlinear groups}.

\begin{exercise}
\label{Exercise: rank distance upper bound}Show that%
\begin{equation*}
N\left( P_{\sigma }-I\right) \leq \ell _{S_{n}}(\sigma )
\end{equation*}%
where $\ell _{S_{n}}$ is the Hamming invariant length function.
\end{exercise}

\begin{hint}
Define $c\left( \sigma \right) $ the number of cycles of $\sigma $
(including fixed points). Show that%
\begin{equation*}
N\left( P_{\sigma }-I\right) =1-%
\frac{c\left( \sigma \right) }{n}
\end{equation*}%
by induction on the number of cycles.
\end{hint}

\begin{exercise}
\label{Exercise: rank lower bound}Suppose that $\sigma _{1},\ldots ,\sigma
_{k}\in S_{n}$ and $\lambda _{1},\ldots ,\lambda _{k}\in K\left\backslash
\left\{ 0\right\} \right. $. Define%
\begin{equation*}
\varepsilon =\min_{1\leq i\leq k}\left( 1-\ell (\sigma _{i})\right) \text{.}
\end{equation*}%
Prove that%
\begin{equation*}
N\left( \sum_{i=1}^{k}\lambda _{i}P_{\sigma _{i}}\right) \geq \frac{%
1-\varepsilon k}{k^{2}}\text{.}
\end{equation*}
\end{exercise}

\begin{hint}
Denote by $\left\{ e_{i}\left\vert \,i\in n\right. \right\} $ the canonical
basis of $K^{n}$. Recall that $S_{n}$ is assumed to act on the set $%
n=\left\{ 0,1,\ldots ,n-1\right\} $. Define $D$ to be a maximal subset of $n$
such that for $s,t\in X$ and $1\leq i,j\leq k$ such that either $s\neq t$ or 
$i\neq j$ one has%
\begin{equation*}
\sigma _{i}\left( s\right) \neq \sigma _{j}\left( t\right) \text{.}
\end{equation*}%
Observe that if $x\in span\left\{ e_{i}\left\vert \,i\in D\right. \right\} $
then%
\begin{equation*}
\sum_{i=1}^{k}\lambda _{i}P_{\sigma _{i}}(x)\neq 0\text{.}
\end{equation*}%
Infer that%
\begin{equation*}
N\left( \sum_{i=1}^{k}\lambda _{i}P_{\sigma _{i}}\right) \geq \frac{%
\left\vert D\right\vert }{n}\text{.}
\end{equation*}%
By maximality of $X$ for every $s\in n$ there are $t\in D$ and $1\leq
i,j\leq k$ such that%
\begin{equation*}
s=\sigma _{i}^{-1}\sigma _{j}\left( t\right)
\end{equation*}%
When $i,,j$ vary between $1$ and $k$ and $t$ varies in $D$ the expression%
\begin{equation*}
\sigma _{i}^{-1}\sigma _{j}\left( t\right)
\end{equation*}%
attains at most $\varepsilon nk+k^{2}\left\vert D\right\vert $ values. Infer
that%
\begin{equation*}
\frac{\left\vert D\right\vert }{n}\geq \frac{1-\varepsilon k}{k^{2}}.
\end{equation*}
\end{hint}

Exercise \ref{Exercise: rank distance upper bound} and Exercise \ref%
{Exercise: rank lower bound} allow one to define a nontrivial morphism from
the group algebra $K\left( \prod\nolimits_{\mathcal{U}}S_{n}\right) $ to $%
\prod\nolimits_{\mathcal{U}}M_{n}(K)$.\ This is the content of Exercise \ref%
{Exercise: embedding matrices}.

\begin{exercise}
\label{Exercise: embedding matrices}Define, using Exercise \ref{Exercise:
rank distance upper bound}, a ring morphism $\Psi :K\left( \prod\nolimits_{%
\mathcal{U}}S_{n}\right) \rightarrow \prod\nolimits_{\mathcal{U}}M_{n}(K)$.
Prove using Exercise \ref{Exercise: rank lower bound} that if $x_{1},\ldots
,x_{k}\in \prod\nolimits_{\mathcal{U}}S_{n}$ are such that $\ell _{\mathcal{U%
}}(x_{i})=1$ for $i=1,2,\ldots ,k$ and $\lambda _{1},\ldots ,\lambda _{k}\in
K\left\backslash \left\{ 0\right\} \right. $ then%
\begin{equation*}
N\left( \lambda _{1}x_{1}+\cdots +\lambda _{k}x_{k}\right) \geq \frac{1}{k}%
\text{.}
\end{equation*}
\end{exercise}

One can now easily prove that a sofic group%
\index{group!sofic} $\Gamma $ satisfies Kaplansky's finiteness conjecture.
In fact if $\Gamma $ is sofic then $\Gamma $ embeds into $\prod\nolimits_{%
\mathcal{U}}S_{n}$ in such a way that the length of any element in the range
of $\Gamma \left\backslash \left\{ 1_{\Gamma }\right\} \right. $ is $1$.
This induces a ring morphism from $K\Gamma $ into $K\left( \prod\nolimits_{%
\mathcal{U}}S_{n}\right) $. By composing it with the ring morphism described
in Exercise \ref{Exercise: embedding matrices}, we obtain a ring morphism
from $K\Gamma $ into $\prod\nolimits_{\mathcal{U}}M_{n}(K)$ that is one to
one by the second statement in Exercise \ref{Exercise: embedding matrices}.
This shows that $K\Gamma $ is isomorphic to a subring of the directly finite
ring $\prod\nolimits_{\mathcal{U}}M_{n}(K)$. In particular $K\Gamma $ is
itself directly finite.

\section{Logic for tracial von Neumann algebras}

\label{Section: logic von neumann algebras}

In the context of tracial von\ Neumann algebras a \textit{term%
\index{term!for tracial von Neumann algebras} }$p(x_{1},\ldots ,x_{n})$ in
the variables $x_{1},\ldots ,x_{n}$ is a \textit{noncommutative *-polynomial}
in $x_{1},\ldots ,x_{n}$, i.e.\ a polynomial in the noncommuting variables $%
x_{1},\ldots ,x_{n}$ and $x_{1}^{\ast },\ldots ,x_{n}^{\ast }$. A basic
formula%
\index{basic formula!for tracial von Neumann algebras} is an expression of
the form%
\begin{equation*}
\tau \left( p(x_{1},\ldots ,x_{n})\right),
\end{equation*}%
where $p(x_{1},\ldots ,x_{n})$ is a noncommutative *-polynomial. General
formulae%
\index{formula!for tracial von Neumann algebras} can be obtained from basic
formulae composing with continuous functions or taking infima and suprema
over norm bounded subsets of the von Neumann algebra or of the field of
scalars. More formally if $\varphi _{1},\ldots ,\varphi _{m}$ are formulae
and $f:\mathbb{C}^{m}\rightarrow \mathbb{C}$ is a continuous function then%
\begin{equation*}
f\left( \varphi _{1},\ldots ,\varphi _{m}\right)
\end{equation*}%
is a formula. Analogously if $\varphi (x_{1},\ldots ,x_{n},y)$ is a formula
then%
\begin{equation*}
\inf_{\left\Vert y\right\Vert \leq 1}\mathrm{Re}\left( \varphi \left(
x_{1},\ldots ,x_{n},y\right) \right)
\end{equation*}%
and%
\begin{equation*}
\inf_{\left\vert \lambda \right\vert \leq 1}\mathrm{Re}\left( \varphi \left(
x_{1},\ldots ,x_{n},\lambda \right) \right)
\end{equation*}%
are formulae. Similarly one can replace $\inf $ with $\sup $. The
interpretation of a formula in a tracial von Neumann algebra is defined in
the obvious way by recursion on the complexity. For example%
\begin{equation*}
\left( \tau (x^{\ast }x)\right) ^{%
\frac{1}{2}}
\end{equation*}%
is a formula usually abbreviated by $\left\Vert x\right\Vert _{2}$ whose
interpretation in a tracial von\ Neumann algebra $(M)$ is the $2$-norm on $M$
associated with the trace $\tau $. Analogously%
\begin{equation*}
\sup_{\left\Vert x\right\Vert \leq 1}\sup_{\left\Vert y\right\Vert \leq
1}\left\Vert x-y\right\Vert _{2}
\end{equation*}%
is a sentence%
\index{sentence!for tracial von Neumann algebras} (i.e.\ a formula without
free variables) that holds in a tracial von\ Neumann algebra $M$ iff $M$ is
abelian, while%
\begin{equation*}
\sup_{\left\Vert x\right\Vert \leq 1}\inf_{\left\vert \lambda \right\vert
\leq 1}\left\Vert x-\lambda \right\Vert _{2}
\end{equation*}%
is a sentence%
\index{sentence!for tracial von Neumann algebras} which holds in $(M)$ iff $%
M $ is one-dimensional (i.e.\ isomorphic to $\mathbb{C}$).

The notion of elementary property%
\index{elementary property!of tracial von Neumann algebras} and
axiomatizable class%
\index{axiomatizable class!of tracial von Neumann algebras} of tracial von
Neumann algebras are defined as in the case of length groups or rank rings.
In particular the previous examples shows that the property of being abelian
and the property of being one-dimensional are elementary. Exercise \ref%
{Exercise: factor} and Exercise \ref{Exercise: II1factor} show that the
property of being a factor and, respectively, the property of being a 
\textup{II}$_{1}$ factor are elementary.

Recall that the \textit{center%
\index{center} }$Z(M)$ of a von Neumann algebra $M$ is\ the set of elements
that commute with any other element of $M$. This is a weakly closed
subalgebra of $M$ and hence it is itself a von Neumann algebra. The von
Neumann algebra $M$ is called a \textit{factor }if its center contains only
the scalar multiples of the identity.

The \emph{unitary group}%
\index{unitary group} $U(M)$ of $M$ is the multiplicative group of unitary
elements of $M$, i.e.\ elements $u$ satisfying $uu^{\ast }=u^{\ast }u=1$.
Recall that it can be seen using the Borel functional calculus \cite[I.4.3]%
{Blackadar} that the set of linear combinations of projections of a von
Neumann algebra is dense in the $\sigma $-weak topology.

\begin{exercise}
\label{Exercise: factor}Suppose that $M$ is a von Neumann algebra endowed
with a faithful trace $\tau $. Show that $M$ is a factor if and only if for
every $x\in M$%
\begin{equation}
\left\Vert x-\tau (x)\right\Vert _{2}\leq \sup_{y\in M_{1}}\left\Vert
xy-yx\right\Vert _{2}%
\text{.\label{eq:factor}}
\end{equation}%
Conclude that the property of being a factor is elementary%
\index{elementary property}.
\end{exercise}

\begin{hint}
If $M$ is not a factor then a nontrivial projection $p$ in $Z(M)$ violates
Equation \eqref{eq:factor}. If $M$ is a factor and $x$ is an element of the
unit ball of $M$ consider the strong closure of the convex hull of the orbit%
\begin{equation*}
\left\{ uxu^{\ast }:u\in U(M)\right\}
\end{equation*}%
of $x$ under the action of $U(M)$ on $M$ by conjugation. Observe that such
set endowed with the Hilbert-Schmidt norm is isometrically isomorphic to a
closed subset of the Hilbert space $L^{2}\left( M,\tau \right) $ obtained
from $\tau $ via the GNS construction \cite[II.6.4]{Blackadar}. It therefore
has a unique element of minimal Hilbert-Schmidt norm $x_{0}$, which by
uniqueness must commute with every element of $U(M)$. Since the convex hull
of $U(M)$ is dense in the unit ball of $M$ by the Russo-Dye theorem \cite[%
II.3.2.17]{Blackadar}, $x_{0}$ belongs to the center of $M$. Moreover by
normality of the trace $x_{0}$ must coincide with $\tau (x)$. Thus $\tau (x)$
can be approximated in the Hilbert-Schmidt norm by convex combinations of
elements of the form $uxu^{\ast }$, with $u\in U(M)$. Equation %
\eqref{eq:factor} easily follows from this fact.
\end{hint}

\begin{exercise}
\label{Exercise: II1factor}Fix an irrational number $\alpha \in \left(
0,1\right) $. Using the type classification of factors prove that a factor $%
M $ is \textup{II}$_{1}$ if and only there is a projection of trace $\alpha $%
. Deduce that the property of being a \textup{II}$_{1}$ factor is elementary%
\index{elementary property}.
\end{exercise}

\begin{hint}
Recall that by Theorem \ref{Theorem: projections II1 factor} the trace in a 
\textup{II}$_{1}$ factor attains on projections all the values between $0$
and $1$.
\end{hint}

Let us now define the ultraproduct $\prod\nolimits_{\mathcal{U}}M_{n}$ of a
sequence $(M_n)$ of tracial von Neumann algebras%
\index{ultraproduct!of tracial von Neumann algebras} with respect to a free
ultrafilter $\mathcal{U}$ over $%
%TCIMACRO{\U{2115} }%
%BeginExpansion
\mathbb{N}
%EndExpansion
$. Define $\ell ^{\infty }(M)$ the C*-algebra of sequences%
\begin{equation*}
(a_{n})_{n\in 
%TCIMACRO{\U{2115} }%
%BeginExpansion
\mathbb{N}
%EndExpansion
}\in \prod\nolimits_{n}M_{n}
\end{equation*}%
such that%
\begin{equation*}
\sup_{n}\left\Vert a_{n}\right\Vert <+\infty
\end{equation*}%
endowed with the norm%
\begin{equation*}
\left\Vert (a_{n})\right\Vert =\sup_{n}\left\Vert a_{n}\right\Vert 
\text{.}
\end{equation*}%
The ultraproduct $\prod\nolimits_{\mathcal{U}}M_{n}$ is the quotient of $%
\ell ^{\infty }(M)$ with respect to the norm closed ideal $I_{\mathcal{U}}$
of sequences $(a)$ such that%
\begin{equation*}
\lim_{n\rightarrow \mathcal{U}}\tau \left( a_{n}^{\ast }a_{n}\right) =0
\end{equation*}%
endowed with the faithful trace%
\begin{equation*}
\tau \left( (a_{n})+I_{\mathcal{U}}\right) =\lim_{n\rightarrow \mathcal{U}%
}\tau (a_{n})\text{.}
\end{equation*}%
Being the quotient of a C*-algebra by a norm closed ideal, $\prod\nolimits_{%
\mathcal{U}}M_{n}$ is a C*-algebra. Exercise \ref{Exercise: complete ball}
shows that in fact $\prod\nolimits_{\mathcal{U}}M_{n}$ is always a von
Neumann algebra. When the sequence $(M_n)$ is constantly equal to a tracial
von Neumann algebra $M$, the ultraproduct $\prod\nolimits_{\mathcal{U}}M_{n}$
is called \emph{ultrapower}%
\index{ultrapower!of tracial von Neumann algebras} of $M$ and denoted by $M^{%
\mathcal{U}}$. The notion of \emph{representative sequence}%
\index{sequence!representative} of an element of an ultraproduct of tracial
von Neumann algebras is defined analogously as in the case of invariant
length groups.

The notions of approximately finitely satisfiable (or consistent) 
\index{set of formulae!approximately finitely satisfiable}%
\index{set of formulae!consistent} and realized set of formulae%
\index{set of formulae!realized} and countably saturated structure%
\index{countably saturated!tracial von Neumann algebra} introduced in
Definition \ref{Definition: consistent and realized} and \ref{Definition:
countably saturated} for invariant length groups admit obvious
generalization to the setting of tracial von Neumann algebra. An adaptation
of the proof of Exercise \ref{Exercise: ultraproducts are countably
saturated} shows that an ultraproduct of a sequence of tracial von Neumann
algebras is countably saturated. If moreover the elements of the sequence
are separable, and the Continuum Hypothesis is assumed, then ultraproducts
are in fact saturated (cf. the discussion after Exercise \ref{Exercise:
ultraproducts are countably saturated}).

\begin{exercise}
\label{Exercise: complete ball}Show that the unit ball of $\prod\nolimits_{%
\mathcal{U}}M_{n}$ is complete with respect to the 2-norm. Infer that $%
\prod\nolimits_{\mathcal{U}}M_{n}$ is a von Neumann algebra.
\end{exercise}

\begin{hint}
Recall that an ultraproduct of a sequence of tracial von Neumann algebra is
countably saturated. Suppose that $(x_{n})_{n\in 
%TCIMACRO{\U{2115} }%
%BeginExpansion
\mathbb{N}
%EndExpansion
}$ is a sequence in the unit ball of $\prod\nolimits_{\mathcal{U}}M_{n}$
which is Cauchy with respect to the 2-norm. Define for every $n\in 
%TCIMACRO{\U{2115} }%
%BeginExpansion
\mathbb{N}
%EndExpansion
$%
\begin{equation*}
\varepsilon _{n}=\sup \left\{ \left\Vert x_{i}-x_{j}\right\Vert _{2}:i,j\geq
n\right\} 
\text{.}
\end{equation*}%
Observe that $\left( \varepsilon _{n}\right) _{n\in 
%TCIMACRO{\U{2115} }%
%BeginExpansion
\mathbb{N}
%EndExpansion
}$ is a vanishing sequence. Consider for every $n\in 
%TCIMACRO{\U{2115} }%
%BeginExpansion
\mathbb{N}
%EndExpansion
$ the formula $\varphi _{n}\left( y\right) $%
\begin{equation*}
\inf_{\left\Vert y\right\Vert \leq 1}\max \left\{ \left\Vert
x_{n}-y\right\Vert _{2}-\varepsilon _{n},0\right\} \text{.}
\end{equation*}%
Argue that the set $\mathcal{X}$ of formulae containing $\varphi _{n}\left(
y\right) $ for every $n\in 
%TCIMACRO{\U{2115} }%
%BeginExpansion
\mathbb{N}
%EndExpansion
$ is approximately finitely satisfiable and hence realized in $%
\prod\nolimits_{\mathcal{U}}M_{n}$. A realization $x$ of $\mathcal{X}$ is
such that%
\begin{equation*}
\left\Vert x_{n}-x\right\Vert _{2}\leq \varepsilon _{n}
\end{equation*}%
for every $n\in 
%TCIMACRO{\U{2115} }%
%BeginExpansion
\mathbb{N}
%EndExpansion
$ and hence the sequence $\left( x_{n}\right) _{n\in 
%TCIMACRO{\U{2115} }%
%BeginExpansion
\mathbb{N}
%EndExpansion
}$ converges to $x$. In order to conclude that $\prod\nolimits_{\mathcal{U}%
}M_{n}$ is a von Neumann algebra it is enough to observe that, by
completeness of its unit ball with respect to the 2-norm and Kaplansky's
density theorem (see \cite[I.9.1.3]{Blackadar}), $\prod\nolimits_{\mathcal{U}%
}M_{n}$ coincides with the von Neumann algebra generated by the GNS
representation of $\prod\nolimits_{\mathcal{U}}M_{n}$ associated with the
canonical trace $\tau $ of $\prod\nolimits_{\mathcal{U}}M_{n}$.
\end{hint}

\L o\'s' theorem on ultraproducts also holds in this context without change.%
\index{\L o\'s' theorem!for tracial von Neumann algebras}

\begin{theorem}[\L o\'{s}' for tracial von Neumann algebras]
\label{Theorem: Los for tracial vN algebras}Suppose that $\varphi \left(
x_{1},\ldots ,x_{k}\right) $ is a formula with free variables $x_{1},\ldots
,x_{k}$, $(M_{n})_{n\in \mathbb{N}}$ is a sequence of tracial von Neumann
algebras, and $\mathcal{U}$ is a free ultrafilter over $%
%TCIMACRO{\U{2115} }%
%BeginExpansion
\mathbb{N}
%EndExpansion
$. If $a^{(1)},\ldots ,a^{(k)}$ are elements of $\prod_{n}M_{n}$ then%
\begin{equation*}
\varphi ^{\prod\nolimits_{\mathcal{U}}M_{n}}\left( a^{(1)},\ldots
,a^{(k)}\right) =\lim_{n\rightarrow \mathcal{U}}\varphi ^{M_{n}}\left(
a_{n}^{(1)},\ldots ,a_{n}^{(k)}\right) ,
\end{equation*}%
where $a^{(i)}$ is any representative sequence of $\left( a_{n}^{(i)}\right)
_{n\in \mathbb{N}}$ for $i=1,2,\ldots ,k$. In particular if $\varphi $ is a
sentence%
\index{sentence!for tracial von Neumann algebras} then%
\begin{equation*}
\varphi ^{\prod\nolimits_{\mathcal{U}}M_{n}}=\lim_{n\rightarrow \mathcal{U}%
}\varphi ^{M_{n}}%
\text{.}
\end{equation*}
\end{theorem}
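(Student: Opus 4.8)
The plan is to argue by induction on the complexity of $\varphi$, exactly parallel to the proof of Łoś' theorem for invariant length groups (Theorem~\ref{Theorem: Los for invariant length groups}). First I would dispose of the base case. If $p(x_1,\ldots,x_k)$ is a noncommutative $*$-polynomial and $\varphi=\tau(p)$, then, since addition and multiplication in $\prod\nolimits_{\mathcal{U}}M_n$ are computed representative-wise modulo the ideal $I_{\mathcal{U}}$, a representative sequence for $p^{\prod\nolimits_{\mathcal{U}}M_n}(a^{(1)},\ldots,a^{(k)})$ is $(p^{M_n}(a_n^{(1)},\ldots,a_n^{(k)}))_{n}$; applying the definition of the trace on the ultraproduct as $\tau((b_n)+I_{\mathcal{U}})=\lim_{n\to\mathcal{U}}\tau(b_n)$ gives the desired identity. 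Boundedness of the entries, so that the sequence lies in $\ell^\infty$, follows from submultiplicativity of the operator norm together with the assumed bounds on the $a^{(i)}$.

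Next I would treat the connectives. If $\varphi=f(\varphi_1,\ldots,\varphi_m)$ with $f\colon\mathbb{C}^m\to\mathbb{C}$ continuous and each $\varphi_j$ already satisfies the conclusion, then the values $\varphi_j^{M_n}(a_n^{(1)},\ldots,a_n^{(k)})$ converge along $\mathcal{U}$ to $\varphi_j^{\prod\nolimits_{\mathcal{U}}M_n}(a^{(1)},\ldots,a^{(k)})$, and continuity of $f$ lets me pass the ultralimit through $f$. This step is purely formal.

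The real content, and the step I expect to be the main obstacle, is the quantifier case, say $\varphi(\vec x)=\inf_{\|y\|\le 1}\mathrm{Re}(\psi(\vec x,y))$ (the cases of $\sup$ and of quantification over the scalar ball $|\lambda|\le 1$ are identical). Write $\vec a=(a^{(1)},\ldots,a^{(k)})$ and let $\vec a_n$ denote the entrywise tuple. For the inequality $\varphi^{\prod\nolimits_{\mathcal{U}}M_n}(\vec a)\ge\lim_{n\to\mathcal{U}}\varphi^{M_n}(\vec a_n)$, take any $y$ in the unit ball of $\prod\nolimits_{\mathcal{U}}M_n$ with representative $(y_n)$ chosen so that $\|y_n\|\le 1$ for every $n$; the induction hypothesis gives $\mathrm{Re}(\psi^{\prod\nolimits_{\mathcal{U}}M_n}(\vec a,y))=\lim_{n\to\mathcal{U}}\mathrm{Re}(\psi^{M_n}(\vec a_n,y_n))\ge\lim_{n\to\mathcal{U}}\varphi^{M_n}(\vec a_n)$, and taking the infimum over $y$ yields the claim. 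For the reverse inequality I would, for each $n$, select $y_n$ in the unit ball of $M_n$ with $\mathrm{Re}(\psi^{M_n}(\vec a_n,y_n))<\varphi^{M_n}(\vec a_n)+\tfrac1n$; the class $y=(y_n)+I_{\mathcal{U}}$ then lies in the unit ball, and the induction hypothesis gives $\varphi^{\prod\nolimits_{\mathcal{U}}M_n}(\vec a)\le\mathrm{Re}(\psi^{\prod\nolimits_{\mathcal{U}}M_n}(\vec a,y))=\lim_{n\to\mathcal{U}}\mathrm{Re}(\psi^{M_n}(\vec a_n,y_n))\le\lim_{n\to\mathcal{U}}\varphi^{M_n}(\vec a_n)$.

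Both halves of the quantifier step hinge on a single technical point, which is the crux of the argument: every element of the operator-norm unit ball of $\prod\nolimits_{\mathcal{U}}M_n$ admits a representative sequence $(y_n)$ with $\|y_n\|\le 1$ for $\mathcal{U}$-almost every $n$. I would establish this by truncation in the continuous functional calculus: starting from any bounded representative $(z_n)$ with $\lim_{n\to\mathcal{U}}\|z_n\|\le 1$, replacing each $z_n$ by its cut-down to spectral norm $\le 1$ changes it by an amount that is small in $\|\cdot\|_2$ precisely where $\|z_n\|$ is close to $1$, and hence leaves the class in $\prod\nolimits_{\mathcal{U}}M_n$ unchanged. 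Once this representability lemma is in hand the two selection arguments above go through verbatim, and the statement for a sentence $\varphi$ follows by specializing to $k=0$.
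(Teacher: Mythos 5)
Your proof is correct and takes essentially the same approach as the paper: the paper establishes this theorem only by remarking that the induction on formula complexity behind \L o\'s' theorem for invariant length groups (and for general metric structures, Theorem 5.4 of \cite{BBHU}) ``holds in this context without change,'' and that induction --- base case by representative-wise evaluation of $*$-polynomials and the definition of the ultraproduct trace, connectives by continuity, quantifiers by the two inequalities you give --- is exactly what you carry out. The unit-ball lifting lemma on which your quantifier step hinges is the standard fact that a surjective $*$-homomorphism of C*-algebras maps the closed unit ball onto the closed unit ball, and your functional-calculus truncation (applied to an arbitrary bounded representative, using that functional calculus commutes with the quotient map) is its usual proof.
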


In particular \L o\'s' theorem implies that a tracial von Neumann algebra $M$
is elementarily equivalent to any ultrapower $M^{\mathcal{U}}$ of $M$. This
means that if $\varphi $ is any sentence%
\index{sentence!for tracial von Neumann algebras}, then $\varphi $ has the
same evaluation in $M$ and in $M^{\mathcal{U}}$. It follows that any two
ultrapowers of $M$ are elementarily equivalent. If moreover $M$ is separable
and the Continuum Hypothesis is assumed, then any two ultrapowers of $M$,
being saturated and elementarily equivalent, are in fact isomorphic by a
standard result in model theory (see Corollary 4.14 in \cite{FHS2}).
Conversely if the Continuum Hypothesis fails then, assuming that $M$ is a II$%
_{1}$ factor, by Theorem 4.8 in \cite{FHS1} there exist nonisomorphic
ultrapowers of $M$ (and in fact $2^{2^{\aleph _{0}}}$ many by Proposition
8.2 of \cite{Fa-Sh}).

Theorem \ref{Theorem: Los for tracial vN algebras} together with the fact
that the property of being a \textup{II}$_{1}$ factor is elementary (see
Exercise \ref{Exercise: II1factor}) shows that an ultraproduct of \textup{II}%
$_{1}$ factors is again a \textup{II}$_{1}$ factor. Analogously an
ultraproduct of factors is a factor and an ultraproduct of abelian tracial
von Neumann algebras is abelian.

%Recall that the \emph{unitary group}%
%\index{unitary group} $U(M)$ of a tracial von Neumann algebra is the set of
%elements $x$ of $M$ such that $xx^{\ast }=x^{\ast }x=1$.

The trace on the von Neumann algebra naturally induces an invariant length
function on $U(M)$, as shown in the following exercise.

\begin{exercise}
\label{Exercise: length unitary group}%
\index{length function!unitary group}Suppose that $M$ is a tracial von
Neumann algebra. Show that the function $\ell :U(M)\rightarrow \left[ 0,1%
\right] $ defined by%
\begin{equation*}
\ell (u)=%
\frac{1}{2}\left\Vert u-1\right\Vert _{2}
\end{equation*}%
is an invariant length function on $U(M)$.
\end{exercise}

In particular if $M$ is the algebra $M_{n}(\mathbb{C})$ of $n\times n$
complex matrices, then $U(M)$ coincides with the group $U_{n}$ of $n\times n$
unitary matrices, and the induced length function on $U_{n}$ coincides with
the one considered in Section \ref{Section: definition hyperlinear groups}.

Recall that in Section \ref{Section: definition hyperlinear groups} we
introduced the following relation%
\begin{equation*}
\max \left\{ \left\Vert x^{\ast }x-1\right\Vert _{2},\left\Vert xx^{\ast
}-1\right\Vert _{2}\right\} \text{.}
\end{equation*}%
This is a formula $\varphi ^{u}$ in the logic for tracial von Neumann
algebras. We have also mentioned that the polar decomposition shows that
such a formula is \emph{stable}, i.e.\ every approximate solution of $%
\varphi ^{u}\left( x\right) =0$ is close to an exact solution (and the
estimate is uniform over all tracial von Neumann algebras). In model
theoretic jargon this means that the zero-set of the interpretation $\varphi
^{u}$ in a given von Neumann algebra $M$---i.e.\ the unitary group $U(M)$%
---is a \emph{definable set%
\index{definable set}} as in \cite[Definition 9.16]{BBHU}; see also \cite[%
Proposition 9.19]{BBHU}. In particular it can be inferred that the unitary
group of an ultraproduct of tracial von Neumann algebras is the ultraproduct
of corresponding unitary groups. This is the content of Exercise \ref%
{Exercise: unitary group ultraproduct}.

\begin{exercise}
\label{Exercise: unitary group ultraproduct}Suppose that $(M_n)_{n\in 
\mathbb{N}}$ is a sequence of tracial von Neumann algebras, and $\mathcal{U}$
is a free ultrafilter on $%
%TCIMACRO{\U{2115} }%
%BeginExpansion
\mathbb{N}
%EndExpansion
$. Show that any element of the unitary group of the ultraproduct $%
\prod\nolimits_{\mathcal{U}}M_{n}$ admits a representative sequence of
unitary elements. Conclude that $U\left( \prod\nolimits_{\mathcal{U}%
}M_{n}\right) $ is isomorphic as invariant length group to the ultraproduct $%
\prod\nolimits_{\mathcal{U}}U(M_n)$ of the sequence of unitary groups of the 
$M_{n}$'s endowed with the invariant length described in Exercise \ref%
{Exercise: length unitary group}.
\end{exercise}

In particular the unitary group of $\prod\nolimits_{\mathcal{U}}M_{n}(%
\mathbb{C})$ can be identified with the group $\prod\nolimits_{\mathcal{U}%
}U_{n}$ introduced in Section \ref{Section: definition hyperlinear groups}.

\begin{exercise}
\label{Exercise: nonseparable ultraproduct II1 factors}The unitary group of $%
\prod\nolimits_{\mathcal{U}}M_{n}(\mathbb{C})$ contains a subset $X$ of size
continuum such that $\left\Vert u-v\right\Vert _{2}=%
\sqrt{2}$ for every pair of distinct elements $u,v$ of $X$. Deduce that the
same conclusion holds for the unitary group of any ultraproduct $%
\prod\nolimits_{\mathcal{U}}M_{n}$ of a sequence of \textup{II}$_{1}$
factors.
\end{exercise}

\begin{hint}
The first statement follows directly from Exercise \ref{Exercise: unitary
group ultraproduct} and Exercise \ref{Exercise: nonseparable universal
hyperlinear}. For the second statement observe that by Theorem \ref{Theorem:
projections II1 factor}, if $\left( M_{n}\right) _{n\in 
%TCIMACRO{\U{2115} }%
%BeginExpansion
\mathbb{N}
%EndExpansion
}$ is a sequence of \textup{II}$_{1}$ factors, and $\mathcal{U}$ is a free
ultrafilter on $%
%TCIMACRO{\U{2115} }%
%BeginExpansion
\mathbb{N}
%EndExpansion
$, then $\prod\nolimits_{\mathcal{U}}M_{n}\left( \mathbb{C}\right) $ embeds
into $\prod\nolimits_{\mathcal{U}}M_{n}$.
\end{hint}

\section{The algebraic eigenvalues conjecture \label{Section: algebraic
eigenvalues}}

Suppose that $\Gamma $ is a (countable, discrete) group. Considering the
particular case of the group algebra construction for the field $\mathbb{C}$
of complex numbers as in Section \ref{Section: vN algebras and II1 factors}
one obtains the complex group algebra 
\index{group algebra!complex} $\mathbb{C}\Gamma $ of formal finite linear
combinations%
\begin{equation*}
\lambda _{1}\gamma _{1}+\cdots +\lambda _{k}\gamma _{k}
\end{equation*}%
where $\lambda _{i}\in \mathbb{C}$ and $\gamma _{i}\in \Gamma $. The group
ring $\mathbb{Z}\Gamma $ is the subring of $\mathbb{C}\Gamma $ of finite
linear combinations%
\begin{equation*}
n_{1}\gamma _{1}+\cdots +n_{k}\gamma _{k}
\end{equation*}%
where $n_{i}\in \mathbb{Z}$ and $\gamma _{i}\in \Gamma $. The natural action
of $\mathbb{C}\Gamma $ on the Hilbert space $\ell ^{2}\Gamma $ defines an
inclusion of $\mathbb{C}\Gamma $ into $B\left( \ell ^{2}\Gamma \right) $. A
conjecture due to Dodziuk, Linnell, Mathai, Schick, and Yates known as 
\textit{algebraic eigenvalues conjectures%
\index{conjecture!algebraic eigenvalues}} (see \cite%
{DodziukLinnellMathaiSchickYates}) asserts that elements $x$ of $\mathbb{Z}%
\Gamma $ regarded as linear operators on $\ell ^{2}\Gamma $ have algebraic
integers as eigenvalues. Recall that a complex number is called an \textit{%
algebraic integer%
\index{algebraic integer}} if it is the root of a monic polynomial with
integer coefficients. The algebraic eigenvalues conjecture has been settled
for sofic%
\index{group!sofic} groups by Andreas Thom in \cite{Thom-diophantine}. The
proof involves the notion of ultraproduct of tracial von Neumann algebras
and can be naturally presented within the framework of logic for metric
structures.

The complex group algebra%
\index{group algebra!complex} $\mathbb{C}\Gamma $ can be endowed with a
linear involutive map $x\mapsto x^{\ast }$ such that%
\begin{equation*}
(\lambda \gamma )^{\ast }=%
\overline{\lambda }\gamma ^{-1}\text{.}
\end{equation*}%
Recall that the trace $\tau $ on $\mathbb{C}\Gamma $ is defined by%
\begin{equation*}
\tau \left( \sum_{\gamma }\lambda _{\gamma }\gamma \right) =\lambda
_{1_{\Gamma }}\text{.}
\end{equation*}%
The weak closure $L\Gamma $ of $\mathbb{C}\Gamma $ in $B\left( \ell
^{2}\Gamma \right) $ is a von Neumann algebra containing $\mathbb{C}\Gamma $
as a *-subalgebra. The trace of $\mathbb{C}\Gamma $ admits a unique
extension to a faithful normalized trace $\tau $ on $L\Gamma $. Moreover $%
\mathbb{C}\Gamma $ is dense in the unit ball of $L\Gamma $ with respect to
the $2$-norm of $L\Gamma $ defined by $\left\Vert x\right\Vert _{2}=\tau
(x^{\ast }x)^{\frac{1}{2}}$.

In the rest of the section the matrix algebra $M_{n}(\mathbb{C})$ of $%
n\times n$ matrices with complex coefficients is regarded as a tracial von
Neumann algebra endowed with the (unique) canonical normalized trace $\tau
_{n}$. If $\mathcal{U}$ is an ultrafilter over $\mathbb{N}$ then $%
\prod\nolimits_{\mathcal{U}}M_{n}(\mathbb{C})$ denotes the ultraproduct of $%
M_{n}(\mathbb{C})$ as tracial von Neumann algebras. (Note that this is
different from the ultraproduct of $M_{n}(\mathbb{C})$ as rank rings.)
Denote by $\prod\nolimits_{\mathcal{U}}M_{n}(\mathbb{Z})$ the closed
self-adjoint subalgebra of $\prod\nolimits_{\mathcal{U}}M_{n}(\mathbb{C})$
consisting of elements admitting representative sequences%
\index{sequence!representative} of matrices with integer coefficients.
Recall that $U_{n}$ denotes the group of unitary elements of $M_{n}(\mathbb{C%
})$. If $\sigma $ is a permutation over $n$, then the associated permutation
matrix $P_{\sigma }$ is a unitary element of $M_{n}(\mathbb{C})$ such that $%
\tau \left( P_{\sigma }\right) =1-\ell (\sigma )$. This fact can be used to
solve Exercise \ref{Exercise: complex algebra sofic}. The argument is
analogous to the proof that sofic groups are hyperlinear (see Exercise \ref%
{Exercise: sofic implies hyperlinear}).

\begin{exercise}
\label{Exercise: complex algebra sofic}Suppose that $\Gamma $ is a sofic%
\index{group!sofic} group and $U$ is a nonprincipal ultrafilter over $N$.
Show that there is a trace preserving *-homomorphism of *-algebras from $%
\mathbb{C}\Gamma $ to $\prod\nolimits_{\mathcal{U}}M_{n}(\mathbb{C})$
sending $\mathbb{Z}\Gamma $ into $\prod\nolimits_{\mathcal{U}}M_{n}(\mathbb{Z%
})$.
\end{exercise}

\begin{hint}
Use, as at the end of Section \ref{Section: definition hyperlinear groups},
the embedding of $S_{n}$ into $M_{n}\left( \mathbb{C}\right) $ sending $%
\sigma $ to its associated permutation matrix $P_{\sigma }$, recalling that $%
\tau (P_{\sigma })=1-\ell (\sigma )$.
\end{hint}

Since the *-homomorphism from $\mathbb{C}\Gamma $ to $\prod\nolimits_{%
\mathcal{U}}M_{n}(\mathbb{C})$ obtained in Exercise \ref{Exercise: complex
algebra sofic} is trace-preserving, it extends to an embedding of $L\Gamma $
into $\prod\nolimits_{\mathcal{U}}M_{n}(\mathbb{C})$. This can be seen using
the GNS construction associated with $\tau $ (see \cite[Section II.6.4]%
{Blackadar}), or observing that by Kaplansky's density theorem \cite[I.9.1.3]%
{Blackadar} the operator norm unit ball of $\mathbb{C}\Gamma $ is dense in
the norm unit ball of $L\Gamma $ with respect to the $\sigma $-strong
topology (which coincides with the topology induced by the Hilbert-Schmidt
norm associated with $\tau $).

Suppose now that $M$ is a von Neumann algebra, and $x$ is an operator in $M$%
. Considering the spectral projection $p\in M$ on $\mathrm{Ker}\left(
x-\lambda I\right) $ shows that a complex number $\lambda $ is an eigenvalue
of $x$ if and only if there is a nonzero projection $p\in M$ such that $%
\left( x-\lambda \right) p=0$. It follows that if $\Psi :M\rightarrow N$ is
an embedding of von Neumann algebra, then $x$ and $\Psi (x)$ have the same
eigenvalues. This observation together with Exercise \ref{Exercise: complex
algebra sofic} and its following observation allow one to conclude that in
order to establish the algebraic eigenvalues conjecture for sofic groups it
is enough to show that the elements of $\prod\nolimits_{\mathcal{U}}M_{n}(%
\mathbb{Z})$ have algebraic eigenvalues. This is proved in the rest of this
section using \L o\'s' theorem on ultraproducts together with a
characterization of algebraic integers due to Thom.

Suppose in the following that $\lambda \in \mathbb{C}$ is not an algebraic
integer. Theorem \ref{Theorem: algebraic integer} is a consequence of
Theorem 3.5 from \cite{Thom-diophantine}.

\begin{theorem}
\label{Theorem: algebraic integer}Suppose that $\varepsilon $ is a positive
real number, and that $N$ is a natural number. There is a positive constant $%
M( \lambda ,N,\varepsilon ) $ depending only on $\lambda $, $N$, and $%
\varepsilon $ with the following property: For every monic polynomial $p $
with integer coefficients such that all the zeros of $p$ have absolute value
at most $N$ the proportion of zeros of $p$ at distance less than $%
\frac{1}{M( \lambda ,N,\varepsilon ) }$ from $\lambda $ is at most $%
\varepsilon $.
\end{theorem}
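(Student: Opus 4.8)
The plan is to avoid any case split between algebraic and transcendental $\lambda$ by exploiting that the roots of $p$ are algebraic integers whose conjugates are again roots of $p$. Since $p$ is monic with integer coefficients, Gauss's lemma lets me factor $p=\prod_{\ell} q_{\ell}^{e_{\ell}}$ into powers of distinct monic irreducible integer polynomials $q_{\ell}$; every root of each $q_{\ell}$ is a root of $p$ and hence has absolute value at most $N$. Writing $s_{\ell}=\deg q_{\ell}$ and letting $t_{\ell}$ be the number of roots of $q_{\ell}$ within distance $r$ of $\lambda$, the proportion of roots of $p$ (counted with multiplicity) near $\lambda$ is
\[
\frac{\sum_{\ell} e_{\ell} t_{\ell}}{\sum_{\ell} e_{\ell} s_{\ell}}\le \max_{\ell} \frac{t_{\ell}}{s_{\ell}},
\]
so it suffices to bound $t_{\ell}/s_{\ell}$ for each irreducible factor separately.

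The key step is a discriminant bound for factors of large degree. Fix an irreducible $q=q_{\ell}$ with distinct roots $\beta_{1},\dots,\beta_{s}$, all of absolute value $\le N$, and suppose $\beta_{1},\dots,\beta_{t}$ lie within $r$ of $\lambda$, so they are pairwise within $2r$. Since $q$ is irreducible over $\mathbb{Q}$ it is separable, so its discriminant $\prod_{i<j}(\beta_{i}-\beta_{j})^{2}$ is a nonzero integer and therefore has absolute value at least $1$. Bounding the $\binom{t}{2}$ ``close'' factors by $2r$ and the remaining factors by $2N$ gives
\[
1\le (2r)^{t(t-1)}(2N)^{s(s-1)},
\]
whence, for $2r<1$, $t(t-1)\le s(s-1)\,\theta$ with $\theta=\log(2N)/\log(1/2r)$, and so $t/s\le \sqrt{\theta}+1/\sqrt{s}$. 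Choosing $S_{0}\ge 4/\varepsilon^{2}$ and then $r$ small enough that $\theta\le \varepsilon^{2}/4$ makes $t_{\ell}/s_{\ell}\le\varepsilon$ for every factor with $s_{\ell}> S_{0}$.

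The factors of small degree are where the hypothesis on $\lambda$ enters, and this is the step to get right. A root of a factor $q_{\ell}$ with $s_{\ell}\le S_{0}$ is an algebraic integer of degree at most $S_{0}$ all of whose conjugates (the roots of $q_{\ell}$) have absolute value $\le N$; there are only finitely many such algebraic integers, because their minimal polynomials are monic with integer coefficients bounded in terms of $S_{0}$ and $N$. Since $\lambda$ is not an algebraic integer it differs from every element of this finite set, so the minimum distance $\delta_{0}=\delta_{0}(\lambda,N,S_{0})$ from $\lambda$ to the set is strictly positive; taking $r<\delta_{0}$ forces $t_{\ell}=0$ for every factor with $s_{\ell}\le S_{0}$.

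Combining the two regimes, I would set $S_{0}=\lceil 4/\varepsilon^{2}\rceil$ and $M(\lambda,N,\varepsilon)=1/r_{*}$ where $r_{*}=\min\{\tfrac{1}{2},\delta_{0},\tfrac{1}{2}(2N)^{-4/\varepsilon^{2}}\}$; then every $t_{\ell}/s_{\ell}\le\varepsilon$, and hence the proportion of roots of $p$ within $1/M$ of $\lambda$ is at most $\varepsilon$. The main obstacle is precisely the low-degree factors: the discriminant estimate is vacuous when $s_{\ell}=1$ (and weak for small $s_{\ell}$), which is unavoidable since the conclusion genuinely fails for algebraic-integer $\lambda$ such as $\lambda=0$, and it is the finiteness-plus-positive-distance argument that isolates and uses the non-integrality of $\lambda$. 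An alternative route for \emph{algebraic} $\lambda$---taking the field norm $N_{\mathbb{Q}(\lambda)/\mathbb{Q}}(p(\lambda))$ and using that $a^{d}p(\lambda)$ is an algebraic integer, where $a\ge 2$ is the leading coefficient of the primitive minimal polynomial of $\lambda$, to obtain $|p(\lambda)|\ge C_{\lambda}^{-d}$ and combining with $|p(\lambda)|\le(|\lambda|+N)^{d-k}r^{k}$---gives the same conclusion, but it does not cover the transcendental case, which is why I prefer the uniform discriminant argument.
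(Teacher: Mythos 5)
Your proof is correct. A comparison with ``the paper's proof'' is largely moot here: the paper does not prove this theorem at all, but merely states that it is a consequence of Theorem 3.5 of \cite{Thom-diophantine}, so your argument supplies a complete, self-contained proof where the text offers only a citation. The mechanism you use --- Gauss's lemma to reduce to monic irreducible integer factors, the weighted-mediant inequality to reduce to a per-factor bound on $t_\ell/s_\ell$, integrality and nonvanishing of the discriminant to handle factors of degree $s_\ell > S_0 \approx 4/\varepsilon^2$, and Kronecker-type finiteness of the set of algebraic integers of degree at most $S_0$ with all conjugates of absolute value at most $N$ (combined with the standing hypothesis that $\lambda$ is not an algebraic integer) to force $t_\ell = 0$ for the low-degree factors --- is elementary and treats algebraic non-integer and transcendental $\lambda$ uniformly, which is exactly what the statement requires; your closing observation that non-integrality of $\lambda$ enters only in the low-degree step, and must do so (take $\lambda=0$, $p=x^n$), is the right sanity check. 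Two cosmetic points you should make explicit. First, the estimates $(2N)^{s(s-1)-t(t-1)} \le (2N)^{s(s-1)}$ and $\theta = \log(2N)/\log(1/(2r)) \ge 0$ implicitly assume $N \ge 1$; this is harmless, since replacing $N$ by $\max\{N,1\}$ only weakens the hypothesis on $p$, but it should be said. Second, since the theorem counts zeros at distance \emph{strictly} less than $1/M$ from $\lambda$, the choice $r_* \le \delta_0$ (rather than $r_* < \delta_0$) already yields $t_\ell = 0$ for every factor of degree at most $S_0$, so your definition of $r_*$ as a minimum involving $\delta_0$ itself is fine as written.
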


Corollary \ref{Corollary: eigenvalues matrices} is a direct consequence of
Theorem \ref{Theorem: algebraic integer}.

\begin{corollary}
\label{Corollary: eigenvalues matrices}Suppose that $\varepsilon $ is a
positive real number, and that $N$ is a natural number. Denote by $M(
\lambda ,N,\varepsilon )$ the positive constant given by Theorem \ref%
{Theorem: algebraic integer}. For every finite rank matrix with integer
coefficients $A$ of operator norm at most $N$ there is a complex matrix $B$
of the same size of operator norm at most $M\left( \lambda ,N,\varepsilon
\right) $ such that%
\begin{equation*}
\left\Vert B\left( \lambda -A\right) -I\right\Vert _{2}\leq \varepsilon 
\text{.}
\end{equation*}
\end{corollary}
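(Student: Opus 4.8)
The plan is to realize the approximate inverse $B$ by inverting $\lambda - A$ on the part of the spectrum that is safely away from $\lambda$, and to use Theorem \ref{Theorem: algebraic integer} to guarantee that the remaining part is negligible in the normalized Hilbert--Schmidt norm. First I would record that both the operator norm and the $2$-norm are invariant under unitary conjugation, so after applying Schur's unitary triangularization I may assume that $A$ is upper triangular with its eigenvalues $\mu_1,\dots,\mu_n$ (the roots of the characteristic polynomial $p(z)=\det(zI-A)$) on the diagonal. Since $A$ has integer entries, $p$ is monic with integer coefficients, and $\|A\|\le N$ forces $|\mu_i|\le N$ for all $i$; moreover, since $\lambda$ is not an algebraic integer it is not a root of $p$, so $\lambda I-A$ is genuinely invertible. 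Applying Theorem \ref{Theorem: algebraic integer} with $\varepsilon$ replaced by a suitable $\varepsilon'\sim\varepsilon^{2}$ yields a constant $M=M(\lambda,N,\varepsilon')$ such that the number of indices $i$ with $|\lambda-\mu_i|<1/M$ is at most $\varepsilon' n$.

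Next I would split the eigenvalues into a \emph{good} set $G=\{i:|\lambda-\mu_i|\ge 1/M\}$ and a \emph{bad} set $\mathcal{B}=\{i:|\lambda-\mu_i|<1/M\}$, with $|\mathcal{B}|\le \varepsilon' n$, and try to define $B$ as the inverse of $\lambda-A$ on the generalized eigenspace $V_G=\bigoplus_{i\in G}E_{\mu_i}$, extended by zero on $V_{\mathcal{B}}$. With this choice $B(\lambda-A)$ is the spectral projection onto $V_G$, so $B(\lambda-A)-I$ is supported on the bad subspace $V_{\mathcal{B}}$, whose dimension is at most $\varepsilon' n$; a matrix supported on a subspace of dimension $\le \varepsilon' n$ has normalized Hilbert--Schmidt norm at most $\sqrt{\varepsilon'}$ times its operator norm. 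Thus the scheme reduces the estimate $\|B(\lambda-A)-I\|_2\le\varepsilon$ to a bound on the operator norms of $B$ and of the error term, together with the choice $\varepsilon'\sim\varepsilon^{2}$.

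The hard part is exactly this operator-norm control. Because $A$ is far from normal, neither the restricted inverse of $\lambda-A$ on $V_G$ nor the spectral projection onto $V_G$ is controlled by the \emph{locations} of the eigenvalues alone: a single large Jordan block whose eigenvalue lies at distance in $[1/M,1)$ from $\lambda$ already makes the exact inverse have norm $\sim M^{\dim}$, and conversely many small Jordan blocks produce many small singular values of $\lambda-A$ with no eigenvalue near $\lambda$ at all, so a crude singular-value cut-off at level $1/M$ cannot work by itself. The quantitative input of Theorem \ref{Theorem: algebraic integer} is what resolves this: the reason so few eigenvalues can cluster near $\lambda$ is the Diophantine lower bound on $|p(\lambda)|=\prod_i|\lambda-\mu_i|$ coming from $\lambda$ not being an algebraic integer, and, fed through the identity $\prod_i s_i(\lambda-A)=|p(\lambda)|$ relating the singular values $s_i(\lambda-A)$ to the eigenvalue distances, the same bound is what lets one truncate the resolvent expansion of $(\lambda-A)^{-1}$ on $V_G$ at a degree depending only on $\lambda,N,\varepsilon$ while keeping $\|B\|$ below $M(\lambda,N,\varepsilon)$ and pushing the error onto a subspace of relative dimension $\le\varepsilon'$.

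Finally I would assemble the pieces: fix the truncation degree and the cut-off so that $\|B\|\le M(\lambda,N,\varepsilon)$, verify that the resulting error is supported, up to a negligible tail, on the bad subspace of relative dimension $\le\varepsilon^{2}$, and conclude $\|B(\lambda-A)-I\|_2\le\varepsilon$; conjugating back by the Schur unitary (which changes neither norm) produces the desired $B$ for the original integer matrix $A$. I expect the balancing of truncation error against operator norm---that is, turning the \emph{averaged} control on eigenvalue distances provided by Theorem \ref{Theorem: algebraic integer} into a genuine operator-norm bound in the presence of non-normal nilpotent parts---to be the main obstacle, and the place where the effective constant $M(\lambda,N,\varepsilon)$ is indispensable.
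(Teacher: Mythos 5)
Your first two paragraphs reproduce, almost verbatim, the paper's own proof: the paper takes $P$ to be the spectral projection of $A$ onto the eigenvalues at distance at least $1/M(\lambda,N,\varepsilon)$ from $\lambda$, observes via Theorem \ref{Theorem: algebraic integer} that $\tau(P)>1-\varepsilon$, defines $B=P(\lambda-A)^{-1}$, and then simply \emph{asserts} that $\left\Vert B\right\Vert \leq M(\lambda,N,\varepsilon)$ and that $\left\Vert B(\lambda-A)-I\right\Vert _{2}=\left\Vert I-P\right\Vert _{2}=\tau(I-P)\leq \varepsilon$; that is the entire argument. (Your bookkeeping is in fact more careful than the paper's: for a projection $Q$ one has $\left\Vert Q\right\Vert _{2}=\tau(Q)^{1/2}$, not $\tau(Q)$, which is why your substitution $\varepsilon'\sim\varepsilon^{2}$ is the right move.) The difference is that where the paper asserts, you stop: you flag the operator-norm control of the oblique spectral projection and of the restricted inverse as the main obstacle and never establish it. Judged as a proof, this is a genuine gap --- the two unproven estimates \emph{are} the conclusion of the corollary --- even though your diagnosis of why they are nontrivial is correct: for non-normal $A$ neither quantity is governed by the locations of the eigenvalues alone (your Jordan-block example is exactly the point), and the paper's assertion is immediate only when $A$ is normal, in which case $P$ is an orthogonal projection commuting with $A$ and $\left\Vert P(\lambda-A)^{-1}\right\Vert$ is the reciprocal of the distance from $\lambda$ to the nearest retained eigenvalue.

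The repair you sketch in your third paragraph also does not close the gap as described. You propose to feed a ``Diophantine lower bound on $|p_{A}(\lambda)|=\prod_{i}|\lambda-\mu_{i}|$'' through the identity $\prod_{i}s_{i}(\lambda-A)=|p_{A}(\lambda)|$, but Theorem \ref{Theorem: algebraic integer} supplies no such lower bound: it controls only the \emph{proportion} of roots within $1/M$ of $\lambda$, and the up-to-$\varepsilon n$ roots it permits near $\lambda$ may be arbitrarily close, so no estimate of the form $|p_{A}(\lambda)|\geq c^{n}$ follows from the stated theorem. Note also what a singular-value cut-off would actually give: writing $\lambda-A=u\left\vert \lambda-A\right\vert$, setting $q=\chi_{[\delta,\infty)}\left( \left\vert \lambda-A\right\vert \right)$ and $B=q\left\vert \lambda-A\right\vert ^{-1}u^{\ast}$, one gets $\left\Vert B\right\Vert \leq 1/\delta$ and $B(\lambda-A)=q$ with $q$ an honest orthogonal projection, so that $\left\Vert B(\lambda-A)-I\right\Vert _{2}=\tau(I-q)^{1/2}$; the whole problem then becomes bounding the proportion of \emph{singular values} of $\lambda-A$ below $\delta$, and bridging from the eigenvalue statement of Theorem \ref{Theorem: algebraic integer} to singular values of a non-normal integer matrix is precisely the step that neither your resolvent-truncation sketch nor, for that matter, the paper's own two-line proof supplies.
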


\begin{proof}
If $A$ is a finite rank matrix with integer coefficients of norm at most $N$%
, then the minimal polynomial $p_{A}$ of $A$ is a monic polynomial with
integer coefficients whose zeros have all absolute value at most $N$. Since $%
\lambda $ is not an algebraic integers, $\lambda $ is not an eigenvalue of $%
p_{A}$ and hence $\lambda -A$ is invertible. Moreover by the choice of $%
M( \lambda ,N,\varepsilon ) $ the proportion of zeros of $p$ at
distance at least $\frac{1}{M( \lambda ,N,\varepsilon ) }$ from $%
\lambda $ is at least $1-\varepsilon $. This means that if $p$ is the
projection on the eigenspace corresponding to these eigenvalues, then $\tau
\left( p\right) >1-\varepsilon $. Define $B=p\left( \lambda -A\right) ^{-1}$%
, and observe that $B$ has operator norm at most $M\left( \lambda ,N,\varepsilon
\right) $ and 
\[
\left\Vert B\left( \lambda -A\right) -1\right\Vert _{2}=\left\Vert
1-p\right\Vert _{2}=\tau \left( 1-p\right) \leq \varepsilon \text{.}
\]
\end{proof}

Define for every $M>0$ the formula $\varphi _{M}(x)$ in the language of
tracial von Neumann algebras by%
\begin{equation*}
\inf_{\left\Vert y\right\Vert \leq 1}\left\Vert My\left( \lambda -x\right)
-1\right\Vert _{2}\text{.}
\end{equation*}%
By Corollary \ref{Corollary: eigenvalues matrices} if $a$ is any element of $%
M_{n}(\mathbb{Z})$ of operator norm at most $N$, then%
\begin{equation*}
\varphi _{M(\lambda ,N,\varepsilon )}(a)\leq \varepsilon
\end{equation*}%
where $M(\lambda ,N,\varepsilon )$ is the constant given by Theorem \ref%
{Theorem: algebraic integer}. By \L o\'{s}' theorem on ultraproducts the
same is true for any element $a$ of $\prod\nolimits_{\mathcal{U}}M_{n}(%
\mathbb{Z})$ of operator norm at most $N$. It follows that for every $a\in
\prod\nolimits_{\mathcal{U}}M_{n}(\mathbb{Z})$ and every positive real
number $\varepsilon $ there is $b\in \prod\nolimits_{\mathcal{U}}M_{n}(%
\mathbb{C})$ such that $\left\Vert b\right\Vert \leq M(\lambda ,\left\Vert
a\right\Vert ,\varepsilon )$ and%
\begin{equation*}
\left\Vert b\left( a-\lambda \right) -1\right\Vert _{2}\leq \varepsilon 
\text{.}
\end{equation*}%
Therefore if $p$ is a projection such that $\left( a-\lambda \right) p=0$,
then%
\begin{eqnarray*}
\left\Vert p\right\Vert _{2} &\leq &\left\Vert \left( b\left( a-\lambda
\right) -1\right) p\right\Vert _{2}+\left\Vert b\left( a-\lambda \right)
p\right\Vert \\
&\leq &\left\Vert b\left( a-\lambda \right) -1\right\Vert _{2} \\
&\leq &\varepsilon \text{.}
\end{eqnarray*}%
Being this true for every positive real number $\varepsilon $, $p=0$. This
shows that $\lambda $ is not an eigenvalue of $a$ for any element $a$ of $%
\prod\nolimits_{\mathcal{U}}M_{n}(\mathbb{Z})$, concluding the proof that
the elements of $\prod\nolimits_{\mathcal{U}}M_{n}\left( 
%TCIMACRO{\U{2124} }%
%BeginExpansion
\mathbb{Z}
%EndExpansion
\right) $ have algebraic eigenvalues.

\section{Entropy}

\label{Section: entropy}

\subsection{Entropy of a single transformation}

Suppose that $X$ is a zero-dimensional compact metrizable space, i.e.\ a
compact space with a countable clopen basis. Denote by $T$ a transformation
of $X$, i.e.\ a homeomorphism $T:X\rightarrow X$. A (necessarily finite)
clopen partition $\mathcal{P}$ of $X$ is \textit{generating%
\index{generating clopen partition} }for $T$ if for every pair of distinct
points $x,y$ of $X$ there are $C\in \mathcal{P}$ and $n\in \mathbb{N}$ such
that $T^{n}x\in C$ and $T^{n}y\notin C$. Suppose that $T$ has a generating
clopen partition $\mathcal{P}$ (observe that in general this might not
exist). Consider $\mathcal{P}$ as a coloring of $X$ and $\mathcal{P}^{n}$ as
a coloring of $X^{n}$. Denote for $n\geq 1$ by $H_{n}\left( \mathcal{P}%
,T\right) $ the number of possible colors of partial orbits of the form%
\begin{equation*}
\left( x,Tx,T^{2}x,\ldots ,T^{n}x\right) \in X^{n}%
\text{.}
\end{equation*}%
Equivalently $H_{n}\left( \mathcal{P},T\right) $ is the cardinality of the
clopen partition of $X$ consisting of sets%
\begin{equation*}
C_{0}\cap T^{-1}C_{1}\cap \ldots \cap T^{-n}C_{n},
\end{equation*}%
where $C_{i}\in \mathcal{P}$ for $i=0,1,2,\ldots ,n$.

A sequence $(a_n)_{n\in \mathbb{N}}$ of real numbers is called
submultiplicative%
\index{sequence!submultiplicative} if $a_{n+m}\leq a_{n}\cdot a_{m}$ for
every $n,m\in \mathbb{N}$.

\begin{exercise}
\label{Exercise: submultiplicative}Show that the sequence%
\begin{equation*}
\left( H_{n}\left( \mathcal{P},T\right) \right) _{n\in \mathbb{N}}
\end{equation*}%
is submultiplicative.
\end{exercise}

Fekete's lemma from \cite{Fekete} 
\index{Fekete's lemma}asserts that if $(a_n)_{n\in \mathbb{N}}$ is a
submultiplicative sequence then the sequence $\left( 
\frac{1}{n}\mathrm{log}(a_n)\right) _{n\in \mathbb{N}}$ converges to $%
\inf_{n\in \mathbb{N}}\frac{1}{n}\mathrm{log}(a_n)$.

\begin{exercise}
\label{Exercise: Fekete}Prove Fekete's lemma.
\end{exercise}

It follows from Fekete's lemma and Exercise \ref{Exercise: submultiplicative}
that the sequence%
\begin{equation*}
\left( \frac{1}{n}\mathrm{log}\left( H_{n+m}\left( \mathcal{P},T\right)
\right) \right) _{n\in \mathbb{N}}
\end{equation*}%
has a limit $h\left( T\right) $ that is by definition the \textit{entropy%
\index{entropy!single transformation}} of the transformation $T$. Despite
being defined in terms of the partition $\mathcal{P}$, the entropy $h\left(
T\right) $ of $T$ does not in fact depend on the choice of the clopen
partition of $X$ generating for $T$.

\begin{exercise}
Show that the entropy of $T$ does not depend from the choice of $P$.
\end{exercise}

Observe that $h\left( T\right) $ is at most $\mathrm{log}\left\vert \mathcal{%
P}\right\vert $ for any clopen partition $\mathcal{P}$ of $X$ generating for 
$T$. Recall that transformations $T$ and $T^{\prime }$ of compact Hausdorff
spaces $X$ and $X^{\prime }$ are \textit{topologically conjugate} if there
is a homeomorphism $f:X\rightarrow X^{\prime }$ such that $T^{\prime }\circ
f=f\circ T$ for every $x\in X$. It is not difficult to verify that entropy
is a topological conjugacy invariant, that is, topologically conjugate
transformations have the same entropy.

\subsection{Entropy of an integer Bernoulli shift}

Suppose that $A$ is a finite alphabet of cardinality $k$ and $X$ is the
space $A^{\mathbb{Z}}$. Consider the Bernoulli shift%
\index{Bernoulli shift!integer} $T$ on $X$ defined by%
\begin{equation*}
T(a_{i})_{i\in \mathbb{Z}}=\left( a_{i-1}\right) _{i\in \mathbb{Z}}%
\text{.}
\end{equation*}%
For $a\in A$ consider the clopen set%
\begin{equation*}
X_{a}=\left\{ (a_{i})_{i\in \mathbb{Z}}:a_{0}=a\right\}
\end{equation*}%
and observe that $\mathcal{P}=\left\{ X_{a}\left\vert \,a\in A\right.
\right\} $ is a clopen partition of $X$ generating for $T$. It is not hard
to verify that%
\begin{equation*}
H_{n}\left( \mathcal{P},T\right) =k^{n},
\end{equation*}%
for every $n\in \mathbb{N}$, and hence%
\begin{equation*}
h\left( T\right) =\mathrm{log}(k)\text{.}
\end{equation*}

Gottschalk's conjecture%
\index{conjecture!Gottschalk's surjunctivity} for $\mathbb{Z}$ asserts that
if $f:X\rightarrow X$ is a continuous injective function such that $T\circ
f=f\circ T$ then $f$ is surjective. In view of the conjugation invariance of
entropy, in order to establish Gottschalk's conjecture for $\mathbb{Z}$ it
is enough to show that if $Y$ is any proper closed $T$-invariant subspace of 
$X$, then the entropy $h\left( T|_{Y}\right) $ of the restriction of $T$ to $%
Y$ is strictly smaller then the entropy $h\left( T\right) $ of $X$. Suppose
that $Y$ is a proper closed $T$-invariant subspace of $X$. Observe that 
\begin{equation*}
\mathcal{Q}=\left\{ X_{a}\cap Y:a\in A\right\}
\end{equation*}%
is a generating partition for $T|_{Y}$. It is easy to see that%
\begin{equation*}
H_{n}\left( \mathcal{Q},T|_{Y}\right)
\end{equation*}%
is the number of $A$-words of length $n$ that appear in elements of $Y$.
Since $Y$ is a proper closed $T$-invariant subspace of $X$ there is some $%
n\in \mathbb{N}$ such that%
\begin{equation*}
H_{n}\left( \mathcal{Q},T|_{Y}\right) <k^{n}
\end{equation*}%
and hence%
\begin{equation*}
h\left( T|_{Y},Y\right) =\inf_{n}%
\frac{1}{n}\mathrm{\mathrm{log}}\left( H_{n}\left( \mathcal{Q},T|_{Y}\right)
\right) <\mathrm{log}(k)=h\left( T,X\right) \text{.}
\end{equation*}%
This concludes the proof of Gottschalk's conjecture for $\mathbb{Z}$.

\subsection{Tilings on amenable groups}

Suppose in the following that $\Gamma $ is a (countable, discrete) group. If 
$E,F$ are subsets of $\Gamma $, then define

\begin{itemize}
\item $F^{-E}=\bigcap_{\gamma \in E}F\gamma ^{-1}=\left\{ x\in \Gamma
\left\vert xE\subset F\right. \right\} $;

\item $F^{+E}=\bigcup_{\gamma \in E}F\gamma =\left\{ x\in \Gamma \left\vert
xE\cap F\neq \varnothing \right. \right\} $;

\item $\partial _{E}F=F^{+E}\left\backslash F^{-E}\right. $.
\end{itemize}

The subset $F$ of $\Gamma $ is $\left( E,\delta \right) $-invariant for some
positive real number $\delta $ if 
\begin{equation*}
\frac{\left\vert \partial _{E}F\right\vert }{\left\vert F\right\vert }%
<\delta \text{.}
\end{equation*}%
The group $\Gamma $ is \textit{amenable%
\index{group!amenable}} if and only if for every finite subset $E$ of $%
\Gamma $ and every positive real number $\delta $ there is a finite $\left(
E,\delta \right) $-invariant subset $F$ of $\Gamma $. This is equivalent to
the existence of a F\o lner sequence, i.e.\ a sequence $\left( F_{n}\right)
_{n\in \mathbb{N}}$ of finite subsets of $\Gamma $ such that 
\begin{equation*}
\lim_{n}%
\frac{\left\vert \partial _{E}F_{n}\right\vert }{\left\vert F_{n}\right\vert 
}=0
\end{equation*}%
for every finite subset $E$ of $\Gamma $.

Suppose in the following that $\Gamma $ is an amenable group. A function $%
\phi $ from the set $\left[ \Gamma \right] ^{<\aleph _{0}}$ of finite
subsets of $\Gamma $ to the set $\mathbb{R}_{+}$ of positive real numbers is
called:

\begin{itemize}
\item \textit{subadditive }if $\phi (a\cup b)\leq \phi (a)+\phi \left(
b\right) $ and $\phi \left( \varnothing \right) =0$;

\item \textit{right invariant }if $\phi (a)=\phi (a\gamma)$ for every $%
\gamma \in \Gamma $.
\end{itemize}

Lindenstrauss and Weiss proved in \cite{Lindenstrauss-Weiss} using the
theory of quasi-tilings for amenable groups introduced by Ornstein and Weiss
in \cite{Ornstein-Weiss} that if $\phi :\left[ \Gamma \right] ^{<\aleph
_{0}}\rightarrow \mathbb{R}_{+}$ is a right-invariant subadditive function,
then the function $E\mapsto \frac{\left\vert \phi (E)\right\vert }{%
\left\vert E\right\vert }$ has a F\o lner limit $\ell (\phi )$. This means
that for every $\varepsilon >0$ there is a finite subset $E$ of $\Gamma $
and a positive real number $\delta $ such that for every $\left( E,\delta
\right) $-invariant finite subset $F$ of $\Gamma $%
\begin{equation*}
\left\vert \frac{\phi (F)}{\left\vert F\right\vert }-\ell (\phi )\right\vert
<\varepsilon \text{.}
\end{equation*}%
If $E$ and $E^{\prime }$ are subsets of $\Gamma $, then an $\left(
E,E^{\prime }\right) $-tiling%
\index{tiling} is a subset $T$ of $\Gamma $ such that the family $\left\{
\gamma E\left\vert \gamma \in T\right. \right\} $ is made of pairwise
disjoint sets, while the family $\left\{ \gamma E^{\prime }\left\vert \gamma
\in T\right. \right\} $ is a cover of $\Gamma $.

\begin{exercise}
\label{Exercise:tilings} If $1\in E$ and $E^{\prime }=EE^{-1}$, then there
is an $\left( E,E^{\prime }\right) $-tiling of $\Gamma $. Moreover if $T$ is
any $\left( E,E^{\prime }\right) $-tiling, then for every finite subset $F$
of $\Gamma $%
\begin{equation*}
\frac{\left\vert T\cap F^{-E}\right\vert }{\left\vert F\right\vert }\geq 
\frac{1}{\left\vert E^{\prime }\right\vert }-\frac{\left\vert \partial
_{E^{\prime }}(F)\right\vert }{\left\vert F\right\vert }\text{.}
\end{equation*}
\end{exercise}

\begin{hint}
To show existence consider any maximal set $T$ with the property that the
family $\left\{ T\gamma \left\vert \gamma \in E\right. \right\} $ contains
pairwise disjoint elements. For the second statement, observe that%
\begin{equation*}
\left\{ \gamma E^{\prime }\left\vert \gamma \in T\cap F^{+E^{\prime
}}\right. \right\}
\end{equation*}%
covers $F$, while%
\begin{equation*}
\left( T\cap F^{+E^{\prime }}\right) \left\backslash \left( T\cap
F^{-E}\right) \right. \subset F^{+E^{\prime }}\left\backslash F^{-E^{\prime
}}\right. =\partial _{E^{\prime }}(F)\text{.}
\end{equation*}%
Thus%
\begin{eqnarray*}
\frac{\left\vert T\cap F^{-E}\right\vert }{\left\vert F\right\vert } &\geq &%
\frac{\left\vert T\cap F^{+E^{\prime }}\right\vert -\left\vert \partial
_{E^{\prime }}(F)\right\vert }{\left\vert F\right\vert } \\
&\geq &\frac{1}{\left\vert E^{\prime }\right\vert }-\frac{\left\vert
\partial _{E^{\prime }}(F)\right\vert }{\left\vert F\right\vert }\text{.}
\end{eqnarray*}
\end{hint}

\subsection{Entropy of actions of amenable groups}

Suppose that $\Gamma \curvearrowright X$ is an action of an amenable group $%
\Gamma $ on a zero-dimensional compact space $X$ admitting a generating
clopen partition%
\index{generating clopen partition}. This means that $\mathcal{P}$ is a
clopen partition of $X$ such that for every $x,y\in X$ distinct there are $%
\gamma \in \Gamma $ and $C\in \mathcal{P}$ such that $\gamma \cdot x\in C$
and $\gamma \cdot y\notin C$. Regard $\mathcal{P}$ as a coloring of $X$ and,
for any finite subset $F$ of $\Gamma $, $\mathcal{P}^{F}$ as a coloring of $%
X^{F}$. Denote by $H_{F}\left( \mathcal{P},\Gamma ,X\right) $ the number of
possible colors of partial orbits%
\begin{equation*}
\left( \gamma \cdot x\right) _{\gamma \in F}\in X^{F}%
\text{.}
\end{equation*}%
Equivalently $H_{F}\left( \mathcal{P},\Gamma ,X\right) $ is the cardinality
of the clopen partition of $X$ consisting of sets of the form%
\begin{equation*}
\bigcap_{\gamma \in F}\gamma ^{-1}C_{\gamma }
\end{equation*}%
for $C_{\gamma }\in \mathcal{P}$.

\begin{exercise}
\label{Exercise: amenable subadditive}The function $F\mapsto \log\left(
H_{F}\left( \mathcal{P},\Gamma ,X\right) \right) $ is right-invariant and
subadditive.
\end{exercise}

It follows from Exercise \ref{Exercise: amenable subadditive} and the
Lindenstrauss-Weiss theorem on right-invariant subadditive functions that
there is a positive real number $h\left( \Gamma ,X\right) $, called \textbf{%
entropy} of the action $\Gamma \curvearrowright X$%
\index{entropy!amenable group action}, such that for every $\varepsilon >0$
there is a finite subset $E$ of $\Gamma $ and a positive real number $\delta 
$ such that if $F$ is an $\left( E,\delta \right) $-invariant subset of $%
\Gamma $ then%
\begin{equation*}
\left\vert h\left( \Gamma ,X\right) -%
\frac{1}{\left\vert F\right\vert }\mathrm{log}\left( H_{F}\left( \mathcal{P}%
,\Gamma ,X\right) \right) \right\vert <\varepsilon \text{.}
\end{equation*}%
As for the case of integer actions, it can be verified that the entropy $%
h\left( \Gamma ,X\right) $ does not depend on the chosen generating clopen
partition. Recall that two actions $\Gamma \curvearrowright X$ and $\Gamma
\curvearrowright X^{\prime }$ are topologically conjugate if there is a
homeomorphism $f:X\rightarrow Y$ such that for every $\gamma \in \Gamma $%
\begin{equation*}
f(\gamma x)=\gamma f(x)\text{.}
\end{equation*}%
It is easy to verify that topologically conjugate actions have the same
entropy.

\subsection{Entropy of Bernoulli shifts of an amenable group}

Suppose that $\Gamma $ is an amenable group, $A$ is a finite alphabet of
cardinality $k$, and $A^{\Gamma }$ is the space of $\Gamma $-sequences of
elements of $A$. The\textit{\ Bernoulli action%
\index{Bernoulli shift!amenable}} of $\Gamma $ on $X$ is defined by%
\begin{equation*}
\gamma \left( a_{h}\right) _{h\in \Gamma }=\left( a_{\gamma ^{-1}h}\right)
_{h\in \Gamma }%
\text{.}
\end{equation*}%
The clopen partition $\mathcal{P}$ of $X$ containing for every $a\in A$ the
set%
\begin{equation*}
X_{a}=\left\{ \left( a_{h}\right) _{h\in \Gamma }\left\vert a_{1}=a\right.
\right\}
\end{equation*}%
is generating for the Bernoulli action. It is not hard to verify that for
every finite subset $F$ of $\Gamma $%
\begin{equation*}
H_{F}\left( \mathcal{P},\Gamma ,A^{\Gamma }\right) =k^{\left\vert
F\right\vert }
\end{equation*}%
and hence%
\begin{equation*}
h\left( \Gamma ,A^{\Gamma }\right) =\mathrm{log}(k)\text{.}
\end{equation*}

Gottschalk's conjecture%
\index{conjecture!Gottschalk's surjunctivity} for the group $\Gamma $
asserts that if $f:A^{\Gamma }\rightarrow A^{\Gamma }$ is a continuous
injective function such that $f\left( \gamma x\right) =\gamma f(x)$ for
every $\gamma \in \Gamma $ and $x\in A^{\Gamma }$ then $f$ is surjective. As
in the case of integers, in order to establish Gottschalk's conjecture it is
enough to show that if $Y$ is any proper closed invariant subspace of $X$
then the entropy $h\left( \Gamma ,Y\right) $ of the Bernoulli action of $%
\Gamma $ on $Y$ is strictly smaller than $\mathrm{log}(k)$. The theory of
tilings is useful to show that a proper Bernoulli subshift has strictly
smaller entropy.

Suppose that $Y$ is a proper Bernoulli subshift. If $F\subset \Gamma $ is
finite, define $Y_{F}$ to be the set of restrictions of elements of $Y$ to $%
F $. Observe that $H_{F}\left( \Gamma ,Y\right) =\left\vert Y_{F}\right\vert 
$. Moreover since $Y$ is a proper subshift of $X$ there is a finite subset $%
E $ of $\Gamma $ such that $1\in E$ and $Y_{E}$ is a proper subset of $A^{E}$%
. Define $E^{\prime }=EE^{-1}$.

\begin{exercise}
\label{Exerise: entropy tiling} Show that for any finite subset $F$ of $%
\Gamma $ 
\begin{equation*}
\frac{1}{\left\vert F\right\vert }\mathrm{log}\left\vert Y_{F}\right\vert
\leq \mathrm{log}k-\left( \frac{1}{\left\vert E^{\prime }\right\vert }-\frac{%
\left\vert \partial _{E^{\prime }}(F)\right\vert }{\left\vert F\right\vert }%
\right) \mathrm{log}\left( \frac{k^{\left\vert E\right\vert }}{k^{\left\vert
E\right\vert }-1}\right) .
\end{equation*}
\end{exercise}

\begin{hint}
Pick an $\left( E,E^{\prime }\right) $-tiling $T$. If $F$ is a finite subset
of $\Gamma $, then define%
\begin{equation*}
T^{-}=T\cap F^{-E}
\end{equation*}%
and%
\begin{equation*}
F^{\ast }=F\backslash \bigcup_{\gamma \in T^{-}}\gamma E\text{.}
\end{equation*}%
Observe that%
\begin{equation}
Y_{F}\subset A^{F^{\ast }}\times \prod_{g\in T^{-}}Y_{gE} .
\label{eq:tiling}
\end{equation}%
Deduce from Equation \eqref{eq:tiling} and from Exercise \ref%
{Exercise:tilings} that the conclusion holds.
\end{hint}

It follows from Exercise \ref{Exerise: entropy tiling} that%
\begin{equation*}
h\left( \Gamma ,Y\right) \leq \mathrm{log}(k)-\frac{1}{\left\vert E^{\prime
}\right\vert }\mathrm{\mathrm{log}}\left( \frac{k^{\left\vert E\right\vert }%
}{k^{\left\vert E\right\vert }-1}\right) <\mathrm{log}(k)\text{.}
\end{equation*}

\subsection{Entropy of actions of sofic groups}

Suppose that $\Gamma \curvearrowright X$ is an action of a sofic%
\index{group!sofic} group on a zero-dimensional space $X$. As before we will
assume that there is a clopen partition $\mathcal{P}$ that is generating for
the action%
\index{generating clopen partition}, and we will regard $\mathcal{P}$ as a
coloring of $X$ and $\mathcal{P}^{n}$ as a coloring of $X^{n}$. If $F$ is a
finite subset of $\Gamma $ and $\sigma $ is a function from $\Gamma $ to $%
S_{n}$, define%
\begin{equation*}
H_{F,\delta }\left( \sigma ,\mathcal{P},\Gamma ,X\right)
\end{equation*}%
to be the number of colors $\left( C_{i}\right) _{i\in n}$ in $\mathcal{P}%
^{n}$ such that there is a sequence $\left( x_{i}\right) _{i\in n}\in X^{n}$
such that for every $\gamma \in F$ and for a proportion of at least $\left(
1-\delta \right) $ indexes $i\in n$, $\gamma ^{-1}x_{i}$ has color $%
C_{\sigma _{\gamma }^{-1}(i)}$. Suppose that $\Sigma $ is a\emph{\ sofic
approximation sequence}\textit{%
\index{sequence!sofic approximation} }of $\Gamma $, i.e.\ a sequence $%
(\sigma _{n})_{n\in 
%TCIMACRO{\U{2115} }%
%BeginExpansion
\mathbb{N}
%EndExpansion
}$ of maps $\sigma _{n}:\Gamma \rightarrow S_{n}$ such that for every $%
\gamma ,\gamma ^{\prime }\in \Gamma $%
\begin{equation*}
\lim_{n\rightarrow +\infty }d\left( \sigma _{n}\left( \gamma \gamma ^{\prime
}\right) ,\sigma _{n}(\gamma )\sigma _{n}\left( \gamma ^{\prime }\right)
\right) =0
\end{equation*}%
and 
\begin{equation*}
\lim_{n\rightarrow +\infty }d\left( \sigma _{n}(\gamma ),1\right) =1,
\end{equation*}%
for all $\gamma \neq 1_{\Gamma }$. Define $h_{\Sigma ,F,\delta }\left( 
\mathcal{P},\Gamma ,X\right) $ to be%
\begin{equation*}
\limsup_{n\rightarrow +\infty }%
\frac{1}{n}\mathrm{log}\left( H_{F,\delta }\left( \sigma _{n},\mathcal{P}%
,\Gamma ,X\right) \right) \text{.}
\end{equation*}%
The \emph{entropy} $h_{\Sigma }\left( \Gamma ,X\right) $ 
\index{entropy!sofic group action}of the action $\Gamma \curvearrowright X$
relative to the sofic approximation sequence $\Sigma $ is the infimum of $%
h_{\Sigma ,F,\delta }\left( \mathcal{P},\Gamma ,X\right) $ when $F$ varies
among all finite subsets of $\Gamma $ and $\delta $ varies among all
positive real numbers. Observe that, as before, $h_{\Sigma }\left( \Gamma
,X\right) $ is at most $\mathrm{log}\left\vert \mathcal{P}\right\vert $, it
does not depend on the generating finite clopen partition chosen, and it is
invariant by topological conjugation. It is shown in Section 5 of \cite%
{KerrLiDynamical} using the so called Rokhlin lemma for sofic approximations
of countable amenable groups (see Section 4 of \cite{KerrLiDynamical}) that
the sofic entropy associated with any sofic approximation sequence of an
amenable group coincide with the classical notion of entropy for actions of
amenable groups. Nonetheless the entropy of an action of a nonamenable sofic
group can in general depend on the choice of the sofic approximation
sequence.

\subsection{Bernoulli actions of sofic groups}

Suppose that $\Gamma $ is a sofic%
\index{group!sofic} group, $A$ is a finite alphabet, and $\Gamma
\curvearrowright A^{\Gamma }$ is the Bernoulli action%
\index{Bernoulli shift!sofic} of $\Gamma $ with alphabet $A$ of cardinality $%
k$. The entropy $h_{\Sigma }\left( \Gamma ,A^{\Gamma }\right) $ of the
Bernoulli action with respect to any sofic approximation sequence $\Sigma $
is $\mathrm{log}(k)$. In fact consider as before the clopen partition $%
\mathcal{P}$ of $A^{\Gamma }$ consisting of the sets 
\begin{equation*}
X_{a}=\left\{ \left( a_{\gamma }\right) _{\gamma \in \Gamma }\left\vert
a_{1_{\Gamma }}=a\right. \right\}
\end{equation*}%
for $a\in A$.

In the following we will say that $\sigma :\Gamma \rightarrow S_{n}$ is a 
\emph{good enough sofic approximation} if 
\begin{equation*}
d\left( \sigma _{\gamma \eta },\sigma _{\gamma }\sigma _{\eta }\right)
<\varepsilon
\end{equation*}%
for every $\eta $,$\gamma \in F$, where $F$ is a large enough finite subset
of $\Gamma $ and $\varepsilon $ is a small enough positive real number. It
is not difficult to see that if $\sigma $ is a good enough sofic
approximation then 
\begin{equation*}
H_{F,\delta }\left( \mathcal{P},\Gamma ,A^{\Gamma }\right) =k^{n}%
\text{.}
\end{equation*}%
It follows that%
\begin{equation*}
h_{\Sigma }\left( \Gamma ,A^{\Gamma }\right) =\mathrm{log}(k)\text{.}
\end{equation*}%
As in the case of amenable groups, Gottschalk's conjecture%
\index{conjecture!Gottschalk's surjunctivity} for sofic%
\index{group!sofic} groups can be proved by showing that a proper subshift
of the Bernoulli shift has entropy strictly smaller than $\mathrm{\mathrm{log%
}}(k)$. Suppose thus that $Y$ is a proper closed invariant subspace of $%
A^{\Gamma }$. It is easy to see that if some element of $A$ does not appear
as a digit in any element of $Y$ then $h_{\Sigma }\left( \Gamma ,Y\right)
\leq \mathrm{log}\left( k-1\right) <\mathrm{log}(k)$. Thus without loss of
generality we can assume that all elements of $A$ appear as digits in some
element of $Y$. Since $Y$ is a proper closed subset of $X$ there is a finite
subset $F$ of $\Gamma $ such that the set $Y_{F}$ of restrictions of
elements of $Y$ to $F$ is a proper subset of $A^{F}$. We will prove that, if 
$N$ is the cardinality of $F$, then%
\begin{equation*}
\inf_{\delta >0}h_{\Sigma ,F,\delta }\left( \mathcal{P},\Gamma ,Y\right)
\leq \mathrm{log}(k)-%
\frac{1}{N^{2}}\mathrm{log}\left( \frac{k^{N}}{k^{N}-1}\right) \text{.}
\end{equation*}%
Fix an element $\left( b_{\gamma }\right) _{\gamma \in F}$ of $%
A^{F}\left\backslash Y_{F}\right. $, a function $\sigma :\Gamma \rightarrow
S_{n}$ for some $n\in 
%TCIMACRO{\U{2115} }%
%BeginExpansion
\mathbb{N}
%EndExpansion
$, and $\eta ,\delta >0$ such that $\delta \left\vert F\right\vert <\eta <%
\frac{1}{2\left\vert F\right\vert ^{2}+1}$

\begin{lemma}
\label{Lemma: bad colors}Suppose that $\left( c_{i}\right) _{i\in n}\in
A^{n} $. If there is $\left( x_{i}\right) _{i\in n}\in Y^{n}$ such that for
every $\gamma \in F$%
\begin{equation*}
\gamma ^{-1}x_{i}\in X_{c_{\sigma _{\gamma }^{-1}(i)}}
\end{equation*}%
for a proportion of $i\in n$ larger than $\left( 1-\delta \right) $ then%
\begin{equation*}
\frac{1}{n}\left\vert \bigcap_{\gamma \in F}\sigma _{\gamma }\left[ \left\{
i\in n:c_{i}=b_{\gamma }\right\} \right] \right\vert <\delta \left\vert
F\right\vert \text{.}
\end{equation*}
\end{lemma}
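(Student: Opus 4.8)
The plan is to translate both the hypothesis and the conclusion into statements purely about the restrictions $x_i|_F$ of the points $x_i$ to $F$, and then to exploit the fact that the forbidden pattern $(b_\gamma)_{\gamma\in F}$ does not occur in any element of $Y$. First I would unwind what the coloring condition means. Since the color of a point $z\in A^{\Gamma}$ under $\mathcal{P}$ is its value $z_{1_\Gamma}$ at the identity, and since the Bernoulli action gives $(\gamma^{-1}z)_{1_\Gamma}=z_\gamma$, the hypothesis $\gamma^{-1}x_i\in X_{c_{\sigma_\gamma^{-1}(i)}}$ is exactly the equation $(x_i)_\gamma=c_{\sigma_\gamma^{-1}(i)}$. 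For each $\gamma\in F$ set
\[
B_\gamma=\bigl\{\,i\in n:(x_i)_\gamma\neq c_{\sigma_\gamma^{-1}(i)}\,\bigr\},
\]
the set of indices on which this equation fails; the hypothesis says precisely that the good proportion exceeds $1-\delta$, so $|B_\gamma|<\delta n$ for every $\gamma\in F$.

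The key step is a contradiction argument controlling the intersection $J=\bigcap_{\gamma\in F}\sigma_\gamma[\{i:c_i=b_\gamma\}]$. If $j\in J$, then for every $\gamma\in F$ we have $\sigma_\gamma^{-1}(j)\in\{i:c_i=b_\gamma\}$, that is, $c_{\sigma_\gamma^{-1}(j)}=b_\gamma$. Suppose now, toward a contradiction, that such a $j$ also lies outside every $B_\gamma$, i.e.\ that $j$ is a good index for all $\gamma\in F$. Then for each $\gamma\in F$ one may combine goodness with membership in $J$ to obtain
\[
(x_j)_\gamma=c_{\sigma_\gamma^{-1}(j)}=b_\gamma .
\]
Hence the restriction of $x_j$ to $F$ equals the pattern $(b_\gamma)_{\gamma\in F}$. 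But $x_j\in Y$, so this restriction lies in $Y_F$, contradicting the choice $(b_\gamma)_{\gamma\in F}\in A^F\setminus Y_F$. Therefore no index of $J$ can be good simultaneously for all $\gamma\in F$.

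It follows that $J\subseteq\bigcup_{\gamma\in F}B_\gamma$, and hence
\[
\frac{1}{n}|J|\le\frac{1}{n}\sum_{\gamma\in F}|B_\gamma|<\frac{1}{n}\cdot|F|\cdot\delta n=\delta|F|,
\]
which is exactly the claimed bound. The only point requiring care is the bookkeeping of the permutation $\sigma_\gamma$ and its inverse---making sure that $j\in\sigma_\gamma[\,\cdot\,]$ unwinds to the condition $c_{\sigma_\gamma^{-1}(j)}=b_\gamma$ that lines up with the index appearing in the coloring hypothesis---together with the correct reading of the action convention $(\gamma^{-1}z)_{1_\Gamma}=z_\gamma$. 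Once these are pinned down the argument is a short counting estimate, and it is worth noting that it uses neither $\eta$ nor the precise numerical bound on $\delta$, which are presumably needed only in the surrounding entropy computation.
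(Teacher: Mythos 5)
Your proof is correct and follows essentially the same route as the paper: both hinge on the observation that an index in the intersection $\bigcap_{\gamma\in F}\sigma_\gamma[\{i:c_i=b_\gamma\}]$ which is simultaneously good for every $\gamma\in F$ would force $x_j|_F=(b_\gamma)_{\gamma\in F}$, contradicting $(b_\gamma)_{\gamma\in F}\notin Y_F$. The paper phrases the counting as a contradiction (a set of good-for-all indices of proportion $>1-\delta|F|$ must meet the intersection if the latter has proportion $\geq\delta|F|$), whereas you use the equivalent direct union bound $J\subseteq\bigcup_{\gamma\in F}B_\gamma$; this is only a stylistic difference.
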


\begin{proof}
Define%
\begin{equation*}
B=\bigcap_{\gamma \in F}\sigma _{\gamma }\left[ \left\{ j\in
n:c_{j}=b_{\gamma }\right\} \right] 
\end{equation*}%
and suppose by contradiction that%
\begin{equation*}
\frac{\left\vert B\right\vert }{n}\geq \delta \left\vert F\right\vert \text{.%
}
\end{equation*}%
Observe that there is a subset $C$ of $n$ such that%
\begin{equation*}
\frac{1}{n}\left\vert C\right\vert >1-\delta \left\vert F\right\vert 
\end{equation*}%
and for every $i\in C$ and $\gamma \in F$ 
\begin{equation*}
\gamma ^{-1}x_{i}\in X_{c_{\sigma _{\gamma }^{-1}(i)}}\text{.}
\end{equation*}%
It follows that there is $i\in C\cap B$. Define $y=x_{i}$ and observe that
for every $\gamma \in F$%
\begin{equation*}
y\in \bigcap_{\gamma \in F}\gamma X_{\sigma _{\gamma }^{-1}(i)}
\end{equation*}%
where%
\begin{equation*}
c_{\sigma _{\gamma }^{-1}(i)}=b_{\gamma }\text{.}
\end{equation*}%
This contradicts the fact that $\left( b_{\gamma }\right) _{\gamma \in
F}\notin Y_{F}$.
\end{proof}

Denote by $Z$ the set of $i\in n$ such that for every distinct $\gamma
,\gamma ^{\prime }\in F$ one has%
\begin{equation*}
\sigma _{\gamma }(i)\neq \sigma _{\gamma }(i)\text{.}
\end{equation*}%
Assuming that $\sigma $ is a good enough sofic approximation we have%
\begin{equation*}
\frac{1}{n}\left\vert Z\right\vert >1-\eta
\end{equation*}%
For every $i\in Z$ consider the set%
\begin{equation*}
V_{i}=\left\{ \sigma _{\gamma }^{-1}(i):\gamma \in F\right\}
\end{equation*}%
and observe that $\left\vert V_{i}\right\vert =F$. Take a maximal subset $%
Z^{\prime }$ of $Z$ subject to the condition that $V_{i}$ and $V_{j}$ are
disjoint for distinct $i$ and $j$ in $Z^{\prime }$. Then by maximality%
\begin{equation*}
Z\subset \bigcup_{\gamma ,\gamma ^{\prime }\in F}\sigma _{\gamma
}^{-1}\sigma _{\gamma ^{\prime }}\left[ Z^{\prime }\right]
\end{equation*}%
and hence 
\begin{equation*}
\frac{1}{n}\left\vert Z^{\prime }\right\vert \geq \frac{\left\vert
Z\right\vert }{n\left\vert F\right\vert ^{2}}\geq \frac{1-\eta }{\left\vert
F\right\vert ^{2}}
\end{equation*}%
Denote by $S$ the set of choices of colors $c=\left( c_{i}\right) _{i\in
n}\in A^{n}$ for which there is some $Z^{\prime \prime }\subset Z^{\prime }$
such that%
\begin{equation*}
\frac{1}{n}\left\vert Z^{\prime \prime }\right\vert >\eta
\end{equation*}%
and for every $\gamma \in F$ and $i\in Z^{\prime \prime }$ one has that $%
c_{\sigma _{\gamma }^{-1}(i)}=b_{\gamma }$. For any such $c$ one has that%
\begin{equation*}
\bigcap_{\gamma \in F}\sigma _{\gamma }^{-1}\left( \left\{ j\in
n:c_{j}=b_{\gamma }\right\} \right) \supset Z^{\prime \prime }
\end{equation*}%
and hence%
\begin{equation*}
\frac{1}{n}\left\vert \bigcap_{\gamma \in F}\sigma _{\gamma }^{-1}\left(
\left\{ j\in n:c_{j}=b_{\gamma }\right\} \right) \right\vert \geq \frac{1}{n}%
\left\vert Z^{\prime \prime }\right\vert >\eta \text{.}
\end{equation*}

By Lemma \ref{Lemma: bad colors} this shows that when $\sigma $ is a good
enough sofic approximation

\begin{equation*}
H\left( Y,F,\delta ,\sigma \right) \leq \left\vert A^{n}\left\backslash
S\right. \right\vert \text{.}
\end{equation*}%
Observe that if $c=\left( c_{i}\right) _{i\in n}\in A^{n}\left\backslash
S\right. $ then for every $Z^{\prime \prime }\subset Z^{\prime }$ such that $%
\left\vert Z^{\prime \prime }\right\vert >\eta n$ there is $i\in Z^{\prime
\prime }$ such that the sequences $\left( b_{\gamma }\right) _{\gamma \in F}$
and $\left( c_{\sigma _{\gamma }^{-1}\left( i\right) }\right) _{\gamma \in
F} $ are distinct. This implies that there is $W\subset Z^{\prime }$ such
that $\left\vert W\right\vert =\left\vert Z^{\prime }\right\vert
-\left\lfloor \eta n\right\rfloor $ and for every $i\in W$ the sequence $%
\left( \sigma _{\gamma }^{-1}(i)\right) _{\gamma \in F}$ differs from the
sequence $\left( b_{\gamma }\right) _{\gamma \in F}$. Therefore the number
of elements of $A^{n}\left\backslash S\right. $ is bounded from above by

\begin{equation}
\binom{\left\vert Z^{\prime }\right\vert }{\left\vert Z^{\prime }\right\vert
-\left\lfloor \eta n\right\rfloor }\left( \left\vert A\right\vert
^{\left\vert F\right\vert }-1\right) ^{\left\vert Z^{\prime }\right\vert
-\left\lfloor \eta n\right\rfloor }\left\vert A\right\vert ^{n-\left(
\left\vert Z^{\prime }\right\vert -\left\lfloor \eta n\right\rfloor \right)
\left\vert F\right\vert }\text{\label{Equation: entropy 1}.}
\end{equation}%
Define the function $\xi \left( t\right) =-t\mathrm{log}\left( t\right) $
for $t\in \left[ 0,1\right] $, and observe that $\xi $ is a concave
function. By Stirling's approximation formula (see \cite{Romik} for a very
short proof) the expression \ref{Equation: entropy 1} is in turn bounded
from above by

\begin{equation}
C\mathrm{exp}\left( \left\vert Z^{\prime }\right\vert \xi \left( 1-\frac{%
\eta n}{\left\vert Z^{\prime }\right\vert }\right) +\left\vert Z^{\prime
}\right\vert \xi \left( \frac{\eta n}{\left\vert Z^{\prime }\right\vert }%
\right) \right) \left\vert A\right\vert ^{n}\left( \frac{\left\vert
A\right\vert ^{\left\vert F\right\vert }}{\left\vert A\right\vert
^{\left\vert F\right\vert }-1}\right) ^{-\left( \left\vert Z^{\prime
}\right\vert -\eta n\right) }\text{\label{Equation: entropy 2}}
\end{equation}%
for some constant $C$ not depending on $\left\vert Z^{\prime }\right\vert $
or $n$. From the fact that $\xi $ is a concave function and 
\begin{equation*}
\frac{1}{n}\left\vert Z^{\prime }\right\vert \geq \frac{\left( 1-\eta
\right) }{\left\vert F\right\vert ^{2}}>2\eta
\end{equation*}%
we obtain the estimate%
\begin{equation*}
\xi \left( 1-\frac{\eta n}{\left\vert Z^{\prime }\right\vert }\right) +\xi
\left( \frac{\eta n}{\left\vert Z^{\prime }\right\vert }\right) \leq \xi
\left( 1-\frac{\eta \left\vert F\right\vert ^{2}n}{1-\eta }\right) +\xi
\left( \frac{\eta \left\vert F\right\vert ^{2}n}{1-\eta }n\right) \text{.}
\end{equation*}%
It follows that the quantity \eqref{Equation: entropy 2} is smaller than or
equal to%
\begin{equation*}
C\mathrm{exp}\left( n\xi \left( 1-\frac{\eta \left\vert F\right\vert ^{2}}{%
1-\eta }\right) +n\xi \left( \frac{\eta \left\vert F\right\vert ^{2}}{1-\eta 
}\right) \right) \left\vert A\right\vert ^{n}\left( \frac{\left\vert
A\right\vert ^{\left\vert F\right\vert }}{\left\vert A\right\vert
^{\left\vert F\right\vert }-1}\right) ^{-\left( \frac{1-\eta }{\left\vert
F\right\vert ^{2}}-\eta \right) n}\text{.}
\end{equation*}%
Thus%
\begin{eqnarray*}
&&\frac{1}{n}\mathrm{log}\left( H_{F,\delta }\left( \sigma ,\mathcal{P}%
,\Gamma ,Y\right) \right) \\
&\leq &\xi \left( 1-\frac{\eta \left\vert F\right\vert ^{2}}{1-\eta }\right)
+\xi \left( \frac{\eta \left\vert F\right\vert ^{2}}{1-\eta }\right) +%
\mathrm{log}\left\vert A\right\vert -\left( \frac{1-\eta }{\left\vert
F\right\vert ^{2}}-\eta \right) \mathrm{log}\left( \frac{\left\vert
A\right\vert ^{\left\vert F\right\vert }-1}{\left\vert A\right\vert
^{\left\vert F\right\vert }}\right) +o(1)\text{.}
\end{eqnarray*}%
Since this is true for every good enough sofic approximation $\sigma $, if $%
\Sigma $ is any sofic approximation sequence then%
\begin{align*}
& h_{\Sigma ,\delta ,F}\left( \mathcal{P},\Gamma ,Y\right) \\
& =\limsup_{n\rightarrow +\infty }\frac{1}{n}\mathrm{log}\left( H_{F,\delta
}\left( \sigma _{n},\mathcal{P},\Gamma ,Y\right) \right) \\
& \leq \xi \left( 1-\frac{\eta \left\vert F\right\vert ^{2}}{1-\eta }\right)
+\xi \left( \frac{\eta \left\vert F\right\vert ^{2}}{1-\eta }\right) +%
\mathrm{log}\left\vert A\right\vert -\left( \frac{1-\eta }{\left\vert
F\right\vert ^{2}}-\eta \right) \mathrm{log}\left( \frac{\left\vert
A\right\vert ^{\left\vert F\right\vert }-1}{\left\vert A\right\vert
^{\left\vert F\right\vert }}\right) \text{.}
\end{align*}%
Being this true for every $\delta ,\eta <0$ such that $\delta \left\vert
F\right\vert <\eta <\frac{1}{2\left\vert F\right\vert ^{2}+1}$, it follows
that%
\begin{equation*}
\inf_{\delta >0}h_{\Sigma ,\delta ,F}\left( \mathcal{P},\Gamma ,Y\right)
\leq \mathrm{log}\left\vert A\right\vert -\frac{1}{\left\vert F\right\vert
^{2}}\mathrm{log}\left( \frac{\left\vert A\right\vert ^{\left\vert
F\right\vert }-1}{\left\vert A\right\vert ^{\left\vert F\right\vert }}\right)
\end{equation*}%
as desired.

\chapter{Connes' embedding conjecture}
\chapterauthor{Valerio Capraro}

\section{The hyperfinite \texorpdfstring{\textup{II}$_{1}$}{II1}
factor\label{Section: hyperfinite}}

Recall that a von Neumann algebra, as defined in Section \ref{Section: vN
algebras and II1 factors}, is a weakly closed *-subalgebra of the algebra $%
B\left( H\right) $ of bounded linear operators on a Hilbert space $H$. A
factor is just a von Neumann algebra whose center consists only of scalar
multiples of the identity. An infinite-dimensional factor endowed with a
(necessarily unique and weakly continuous) faithful normalized trace is a II$%
_{1}$ factor.

\begin{definition}
Suppose that $M$ and $N$ are two factors. If $F$ is a subset of $M$ and $%
\varepsilon $ is a positive real number, then an $\left( F,\varepsilon
\right) $\emph{-approximate morphism}%
\index{approximate morphism!of factors} from $M$ to $N$ is a function $\Phi
:M\rightarrow N$ such that $\Phi \left( 1\right) =1$ and for every $x,y\in F$%
:
\end{definition}

\begin{itemize}
\item $\left\Vert \Phi (x+y)-\left( \Phi (x)+\Phi (y)\right) \right\Vert
_{2}<\varepsilon $;

\item $\left\Vert \Phi (xy)-\Phi (x)\Phi (y)\right\Vert
_{2}<\varepsilon $;

\item $\left\vert \tau _{M}(x)-\tau _{N}\left( \Phi (x)\right) \right\vert
<\varepsilon $.
\end{itemize}

A \textup{II}$_{1}$ factor $M$ satisfies Connes' embedding\index{conjecture!Connes' embedding} conjecture (or CEC\ for
short) if for every finite subset $F$ of $M$ and every positive real number $%
\varepsilon $ there is a natural number $n$ and an $\left( F,\varepsilon
\right) $-approximate morphism\index{approximate morphism!of factors} $\Phi :M\rightarrow M_{n}(\mathbb{C})$.%
\index{conjecture!Connes' embedding}

\begin{definition}
\label{Definition: hyperfinite}A finite von Neumann algebra is called \emph{%
hyperfinite} 
\index{von Neumann algebra!hyperfinite} if it contains an increasing chain
of copies of matrix algebras whose union is weakly dense.
\end{definition}

\begin{exercise}
\label{Exercise: hyperfinite factor}Show that a separable hyperfinite II$%
_{1} $ factor satisfies the CEC.
\end{exercise}

Hyperfiniteness is a much stronger property than satisfying the CEC. In fact
it is a cornerstone result of Murray and von Neumann from \cite{Mu-vN4} that
there is a unique separable hyperfinite \textup{II}$_{1}$ factor up to isomorphism,
usually denoted by $\mathcal{R}$. The separable hyperfinite \textup{II}$_{1}$ factor
admits several different characterizations. It can be seen as the group von
Neumann algebra (as defined in Section \ref{Section: vN algebras and II1
factors}) of the group $S_{\infty }^{%
\text{fin}}$ of finitely supported permutations of $\mathbb{N}$.
Alternatively it can be described as the von Neumann algebra tensor product $%
\overline{\bigotimes }_{n}M_{2}(\mathbb{C})$ of countably many copies of the
algebra of $2\times 2$ matrices. This description enlightens the useful
property that $\mathcal{R}$ is tensorially self-absorbing, i.e.\ $\mathcal{R}%
\simeq \mathcal{R}\overline{\otimes }\mathcal{R}$. A deep result of Connes
from \cite{Co76} asserts that $\mathcal{R}$ is also the unique \textup{II}$_{1}$
factor that embeds in any other separable \textup{II}$_{1}$ factor. %\footnote{%
%The tensor product of von Neumann algebras can be defined very easily. Let $%
%H_{1},\ldots ,H_{n}$ be Hilbert spaces with inner products $\langle \cdot
%,\cdot \rangle _{1},\ldots ,\langle \cdot ,\cdot \rangle _{n}$,
%respectively. We can define an inner product on the algebraic tensor product
%of the $H_{i}$'s as follows 
%\begin{equation*}
%\langle \xi _{1}\otimes \ldots \otimes \xi _{n},\eta _{1}\otimes \ldots
%\otimes \eta _{n}\rangle =\langle \xi _{1},\eta _{1}\rangle _{1}\cdot \ldots
%\cdot \langle \xi _{n},\eta _{n}\rangle _{n}
%\end{equation*}%
%for all $\xi _{i},\eta _{i}\in H_{i}$. The completion of the algebraic
%tensor product of the $H_{i}$'s with respect to this inner product is a
%Hilbert space, called tensor product of the $H_{i}$'s and denoted by $%
%H_{1}\otimes \ldots \otimes H_{n}$. Now, let $M_{1},\ldots ,M_{n}$ be von
%Neumann algebras acting on $H_{1},\ldots ,H_{n}$, respectively. Let $M_{0}$
%be the *-algebra acting on $H_{1}\otimes \ldots \otimes H_{n}$ of all finite
%sums of operators of the form $x_{1}\otimes \ldots \otimes x_{n}$, with $%
%x_{i}\in M_{i}$. This is a unital *-subalgebra of $B(H_{1}\otimes \ldots
%\otimes H_{n})$ and therefore its weak closure is a von Neumann algebra,
%which is called \emph{tensor product} of the $M_{i}$'s and denoted by $M_{1}%
%\bar{\otimes}\ldots \bar{\otimes}M_{n}$.\newline
%}.

In the following \emph{all }\textup{II}$_{1}$\emph{\ factors are assumed to be
separable}, apart from ultrapowers of separable \textup{II}$_{1}$ factors that are
never separable by Exercise \ref{Exercise: nonseparable ultraproduct II1
factors}. Moreover $\mathcal{R}$ will denote the (unique up to isomorphism)
hyperfinite separable \textup{II}$_{1}$ factor.

The CEC can be equivalently reformulated in terms of local representability
into $\mathcal{R}$.

\begin{definition}
A \textup{II}$_{1}$ factor $M$ is \emph{locally representable%
\index{factor!locally representable} }in a \textup{II}$_{1}$ factor $N$ if for every finite
subset $F$ of $M$ and for every positive real number $\varepsilon $ there is
an $\left( F,\varepsilon \right) $-approximate morphism\index{approximate morphism!of factors} from $M$ to $N$.
\end{definition}

\begin{exercise}
\label{Exercise: equivalence CEC 1}%
\index{conjecture!Connes' embedding}Show that a separable \textup{II}$_{1}$ factor
satisfies the CEC if and only if it is locally representable into $\mathcal{R%
}$.
\end{exercise}

Local representability can be equivalently reformulated in terms of
embedding into an ultrapower, or in terms of values of universal sentences.
(The notion of formula and sentence for tracial von Neumann algebras has been defined in
Section \ref{Section: logic von neumann algebras}.) The arguments are
analogous to the ones seen in Section \ref{Section: logic invariant length
groups} and are left to the reader as Exercise \ref{Exercise: finitely
representable}. As in Section \ref{Section: logic invariant length groups} a
formula is called $\emph{universal}$%
\index{formula!universal} if it is of the form%
\begin{equation*}
\sup_{x_{1}}\sup_{x_{2}}\ldots \sup_{x_{n}}\psi (x_{1},\ldots ,x_{n})
\end{equation*}%
where no $\sup $ or $\inf $ appear in $\psi $.

\begin{exercise}
\label{Exercise: finitely representable}Suppose that $N$ and $M$ are a
separable \textup{II}$_{1}$ factors. Show that the following statements are
equivalent:

\begin{enumerate}
\item $M$ is locally representable in $N$;

\item $M$ embeds into some or, equivalently, every ultrapower of $N$;

\item $\varphi ^{M}\leq \varphi ^{N}$ for every universal sentence $\varphi $.
\end{enumerate}
\end{exercise}

The \emph{universal theory}%
\index{theory!universal} of a \textup{II}$_{1}$ factor $M$ is the function
associating with any universal sentence its value in $M$. The \emph{existential theory}\index{theory!existential} of $M$ is defined similarly, where universal sentences are replaced by existential sentences. Since $\mathcal{R}$
embeds in any other \textup{II}$_{1}$ factor, the universal theory of $\mathcal{R}$
is \emph{minimal} among the universal theory of \textup{II}$_{1}$ factors, i.e.\ $%
\varphi ^{\mathcal{R}}\leq \varphi ^{M}$ for any \textup{II}$_{1}$ factor $M$. In
view of this observation and Exercise \ref{Exercise: equivalence CEC 1}, the
particular instance of Exercise \ref{Exercise: finitely representable} when $%
N$ is the hyperfinite \textup{II}$_{1}$ factor implies that the following statements
are equivalent:%
\index{conjecture!Connes' embedding}

\begin{enumerate}
\item $M$ satisfies the CEC;

\item $M$ embeds into some or, equivalently, every ultrapower of $\mathcal{R}
$;

\item $M$ has the same universal theory as $\mathcal{R}$.
\end{enumerate}

In the terminology of \cite{FHS3}, a \textup{II}$_{1}$ factor $N$ is called \emph{%
locally\ universal}%
\index{factor!locally universal}\emph{\ }if every \textup{II}$_{1}$ factor is finitely
representable in $N$. Thus CEC asserts that the separable hyperfinite II$_{1}
$ is locally universal.%
\index{conjecture!Connes' embedding}. The existence of a locally universal II%
$_{1}$ factor, which can be regarded as a sort of weak resolution of the
CEC, is established in \cite[Example 6.4]{FHS3}. It should be noted that on
the other hand by \cite[Theorem 2]{Oz2} there is no universal \emph{separable%
} \textup{II}$_{1}$ factor, i.e.\ there is no separable \textup{II}$_{1}$ factor containing
any other separable \textup{II}$_{1}$ factor as a subfactor

\section{Hyperlinear groups and \texorpdfstring{R\u{a}dulescu's}{Radulescu's} Theorem\label{Section:
hyperlinear and radulescu}}

Hyperlinear groups%
\index{group!hyperlinear} were defined in Section \ref{Section: definition
hyperlinear groups} in terms of local approximations by unitary groups of
matrix algebras (see Definition \ref{Definition: hyperlinear}). They can
also be equivalently characterized in terms of local approximations by the
unitary group of the hyperfinite \textup{II}$_{1}$ factor. This is discussed in
Exercise \ref{Exercise: hyperlinear equivalence 3}.

\begin{exercise}
\label{Exercise: hyperlinear equivalence 3}Suppose that $\Gamma $ is a
countable discrete group. Regard $\Gamma $ as an invariant length group with
respect to the trivial length function, and the unitary group $U(\mathcal{R})
$ of $\mathcal{R}$ as an invariant length group with respect to the
invariant length described in Exercise \ref{Exercise: length unitary group}.
Then the following statements are equivalent:

\begin{enumerate}
\item $\Gamma $ is hyperlinear\index{group!hyperlinear};

\item for every finite subset $F$ of $\Gamma $ and every positive real
number $\varepsilon $ there is an $\left( F,\varepsilon \right) $%
-approximate morphism\index{approximate morphism!of invariant length groups} (as in Definition \ref{Definition: approximate
morphism}) from $\Gamma $ to $U(\mathcal{R})$;

\item there is an injective group homomorphism from $\Gamma $ to $U(\mathcal{%
R})^{\mathcal{U}}$ for some or, equivalently, any free ultrafilter $\mathcal{%
U}$ over $%
%TCIMACRO{\U{2115} }%
%BeginExpansion
\mathbb{N}
%EndExpansion
$;

\item there is a length preserving group homomorphism from $\Gamma $ to $U(%
\mathcal{R})^{\mathcal{U}}$ for some or, equivalently, any free ultrafilter $%
\mathcal{U}$ over $%
%TCIMACRO{\U{2115} }%
%BeginExpansion
\mathbb{N}
%EndExpansion
$.
\end{enumerate}
\end{exercise}

\begin{hint}
The equivalence of 1 and 2 can be can be shown using the characterization of
hyperlinear\index{group!hyperlinear} groups given by Proposition \ref{Proposition: amplification
hyperlinear}. The equivalence of 1, 3, and 4 can be established as in
Exercise \ref{Exercise: hyperlinear equivalence 1} and Exercise \ref{Exercise: hyperlinear equivalence 2}.
\end{hint}

Observe that, by Exercise \ref{Exercise: unitary group ultraproduct}, the
ultrapower $U(\mathcal{R})^{\mathcal{U}}$ of the unitary group of $\mathcal{R%
}$ with respect to the free ultrafilter $\mathcal{U}$ can be identified with
the unitary group $U(\mathcal{R})$ of the corresponding ultrapower of $%
\mathcal{R}$. Therefore, in view of Exercise \ref{Exercise: hyperlinear
equivalence 3}, $U(\mathcal{R})$ can be regarded as a \emph{universal
hyperlinear group}%
\index{group!universal hyperlinear}, analogously as the ultraproduct $%
\prod\nolimits_{\mathcal{U}}U_{n}$ of the finite rank unitary groups as we
have seen in Section \ref{Section: definition hyperlinear groups}.
Proposition \ref{Proposition: external automorphisms} allows one to infer
(cf.\ Corollary \ref{Corollary: external automorphism universal}) that if
the Continuum Hypothesis holds then the unitary group of any ultrapower of $%
\mathcal{R}$ has $2^{\aleph _{1}}$ outer automorphisms.

It is a consequence of condition (4) in Exercise \ref{Exercise: hyperlinear
equivalence 3} that if $\Gamma $ is a \textit{countable} discrete
hyperlinear\index{group!hyperlinear} group, then the group von Neumann algebra\index{group von Neumann algebra} $L\Gamma $ of $\Gamma $
(se Definition \ref{Definition: group vN algebra}) satisfies the CEC (this a
result of R\u{a}dulescu from \cite{Radulescu}). In fact a length preserving
homomorphism from $\Gamma $ to $U(\mathcal{R})$ extends by linearity to a
trace preserving embedding of $\mathbb{C}\Gamma $ to $\mathcal{R}^{\mathcal{U%
}}$. This in turn induces an embedding of $L\Gamma $ into $\mathcal{R}^{%
\mathcal{U}}$, witnessing that $L\Gamma $ satisfies the CEC.

With a little extra care one can show that any (not necessarily countable)
subgroup $\Gamma $ of the unitary group of some ultrapower of $\mathcal{R}$
has the property that the group von Neumann algebra\index{group von Neumann algebra} $L\Gamma $ of $\Gamma $
embeds into a (possibly different) ultrapower of $\mathcal{R}$. This is the
content of Theorem \ref{th:radulescu}, established by the first-named author
and P\u aunescu in \cite{Ca-Pa}.

\begin{theorem}
\label{th:radulescu} For any group $\Gamma$, the following conditions are
equivalent:

\begin{enumerate}
\item $\Gamma $ admits a group monomorphism into $U(\mathcal{R})$ for some
free ultrafilter $\mathcal{U}$ on $%
%TCIMACRO{\U{2115} }%
%BeginExpansion
\mathbb{N}
%EndExpansion
$;

\item The group von Neumann algebra\index{group von Neumann algebra} $L\Gamma $ embeds into $\mathcal{R}^{%
\mathcal{V}}$ for some (possibly different) free ultrafilter $\mathcal{V}$
on $%
%TCIMACRO{\U{2115} }%
%BeginExpansion
\mathbb{N}
%EndExpansion
$.
\end{enumerate}
\end{theorem}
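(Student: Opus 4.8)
The plan is to prove the two implications separately; $(2)\Rightarrow(1)$ is routine while $(1)\Rightarrow(2)$ carries all the content. For $(2)\Rightarrow(1)$, suppose $\Phi\colon L\Gamma\to\mathcal{R}^{\mathcal{V}}$ is a trace-preserving unital $*$-embedding. Each $\gamma\in\Gamma$ is a unitary of $L\Gamma$, so $\gamma\mapsto\Phi(\gamma)$ is a group homomorphism from $\Gamma$ into the unitary group of $\mathcal{R}^{\mathcal{V}}$, which by Exercise~\ref{Exercise: unitary group ultraproduct} is $U(\mathcal{R})^{\mathcal{V}}$. It is injective because distinct group elements are orthonormal in $L^{2}(L\Gamma,\tau)$, so $\|\Phi(\gamma)-\Phi(\delta)\|_{2}=\|\gamma-\delta\|_{2}=\sqrt{2}$ for $\gamma\neq\delta$. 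Taking $\mathcal{U}=\mathcal{V}$ yields (1).

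For $(1)\Rightarrow(2)$ let $\theta\colon\Gamma\to U(\mathcal{R})^{\mathcal{U}}=U(\mathcal{R}^{\mathcal{U}})$ be an injective homomorphism and write $u_{\gamma}=\theta(\gamma)$, $t_{\gamma}=\tau(u_{\gamma})$. The goal is to upgrade $\theta$ to a homomorphism all of whose nontrivial values have trace $0$, since such a map extends linearly to a trace-preserving, hence $2$-norm isometric, embedding of $\mathbb{C}\Gamma$, and thence of $L\Gamma$. The natural device is amplification: using $\mathcal{R}^{\overline{\otimes}k}\cong\mathcal{R}$ and Exercise~\ref{Exercise: amplification hyperlinear}, the map $u\mapsto u^{\otimes k}$ is a homomorphism with $\tau(u^{\otimes k})=\tau(u)^{k}$, so when $|t_{\gamma}|<1$ high tensor powers drive the trace to $0$. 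The difficulty is that this does nothing exactly when $|t_{\gamma}|=1$: by faithfulness of $\tau$ such a $u_{\gamma}$ is forced to be a scalar, and these $\gamma$ form a central subgroup of $\Gamma$ on which amplification is powerless. This central scalar subgroup is the main obstacle.

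I would dissolve the obstacle by padding before amplifying, as in Exercise~\ref{Exercise: trace reduction}. Using $M_{2}(\mathbb{C})\overline{\otimes}\mathcal{R}\cong\mathcal{R}$, replace $\theta$ by the (still injective) homomorphism $\gamma\mapsto\mathrm{diag}(u_{\gamma},1)$. Now for every $\gamma\neq1_{\Gamma}$ one has $\tau(\mathrm{diag}(u_{\gamma},1))=\tfrac12(t_{\gamma}+1)$, whose modulus is strictly less than $1$ precisely because $u_{\gamma}\neq1$ forces $t_{\gamma}\neq1$ (again by faithfulness: $t_\gamma=1$ would give $\|u_\gamma-1\|_2=0$). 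Thus after padding every nonidentity element has trace of modulus $<1$, scalars included. Fixing a second free ultrafilter $\mathcal{V}$ on $\mathbb{N}$, set $\theta'(\gamma)=\big[(\mathrm{diag}(u_{\gamma},1)^{\otimes k})_{k}\big]_{\mathcal{V}}$, a homomorphism into the unitary group of the iterated ultrapower $(\mathcal{R}^{\mathcal{U}})^{\mathcal{V}}$. For $\gamma\neq1_{\Gamma}$ its trace is $\lim_{k\to\mathcal{V}}\big(\tfrac12(t_{\gamma}+1)\big)^{k}=0$, the ultralimit of a sequence tending to $0$, while $\theta'(1_{\Gamma})=1$. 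Hence $\tau(\theta'(\eta))=\delta_{\eta,1_{\Gamma}}$, which already forces injectivity (any $\eta$ with $\theta'(\eta)=1$ satisfies $\tau(\theta'(\eta))=1$, so $\eta=1_{\Gamma}$).

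It remains to house the image in a genuine ultrapower over $\mathbb{N}$ and to pass to $L\Gamma$. Each $\theta'(\gamma)$ is represented by the bounded double sequence $\big(\mathrm{diag}(a^{\gamma}_{n},1)^{\otimes k}\big)_{n,k}$, where $(a^{\gamma}_{n})_{n}$ represents $u_{\gamma}$; identifying $\mathbb{N}\times\mathbb{N}$ with $\mathbb{N}$ and taking the product ultrafilter $\mathcal{W}=\mathcal{U}\otimes\mathcal{V}$, the iterated ultralimit computing the trace of any $*$-polynomial in the $\theta'(\gamma)$ coincides with the $\mathcal{W}$-ultralimit, so the same formulas define a trace-preserving homomorphism $\theta'\colon\Gamma\to U(\mathcal{R}^{\mathcal{W}})$ with $\tau(\theta'(\eta))=\delta_{\eta,1_{\Gamma}}$. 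Extending linearly gives a $*$-homomorphism $\mathbb{C}\Gamma\to\mathcal{R}^{\mathcal{W}}$ that is a $2$-norm isometry, and since $\mathbb{C}\Gamma$ is $\|\cdot\|_{2}$-dense in $L\Gamma$ this extends to a trace-preserving embedding $L\Gamma\hookrightarrow\mathcal{R}^{\mathcal{W}}$, establishing (2). Beyond the padding step, I expect the only delicate point to be the clean verification of this product-ultrafilter identification, i.e.\ that the iterated ultralimit agrees with the single $\mathcal{W}$-ultralimit on exactly the bounded double sequences arising here.
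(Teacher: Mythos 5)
Your proof is correct and follows essentially the same route as the paper's: the $2\times 2$ padding $\gamma\mapsto\mathrm{diag}(\theta(\gamma),1)$ to force $|\tau|<1$ on all nonidentity elements (Exercise \ref{ex:one}), coordinatewise tensor-power amplification to drive the traces to $0$, identification of the iterated ultrapower with an ultrapower over a product ultrafilter (Proposition \ref{prop:productultra}), and linear extension of the resulting trace-$\delta_{\gamma,1_\Gamma}$ homomorphism from $\mathbb{C}\Gamma$ to $L\Gamma$. The only deviations are cosmetic: the paper reuses the same ultrafilter $\mathcal{U}$ for the outer ultrapower and packages the amplifications as maps $\theta_n$ before assembling $\theta_\infty$, whereas you index by the tensor exponent directly and allow a second ultrafilter $\mathcal{V}$.
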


If the Continuum Hypothesis holds then, as discussed in Section \ref%
{Section: logic von neumann algebras}, all the ultrapowers of $\mathcal{R}$
as isomorphic. In particular one can always pick the same ultrafilter. It is
not clear that this is still possible under the failure of the Continuum
Hypothesis (see \cite{MO2}).

In the rest of this section we will present the proof of Theorem \ref%
{th:radulescu} that involves the notion of product of ultrafilters. Let us
denote for $k\in \mathbb{N}$ and $B\subset \mathbb{N}$ the vertical section $%
\left\{ n\in \mathbb{N}:\left( k,n\right) \in B\right\} $ of $B$
corresponding to $k$ by $B_{k}$.

\begin{definition}
\label{productultrafilter}Suppose that $\mathcal{U}$, $\mathcal{V}$ are free
ultrafilters on $\mathbb{N}$. The \emph{(Fubini) product} 
\index{product!of ultrafilters} $\mathcal{U}\times \mathcal{V}$ is the free
ultrafilter on $\mathbb{N}\times \mathbb{N}$ such that $B\in \mathcal{U}%
\times \mathcal{V}$ if and only if the set of $k\in \mathbb{N}$ such that $%
B_{k}\in \mathcal{V}$ belongs to $\mathcal{U}$.
\end{definition}

\begin{exercise}\label{ex:product non commutative}
Show that the operation $\times$ is not commutative.
\end{exercise}

\begin{exercise}
\label{Exercise: product ultrafilters}Suppose that $\mathcal{U}$, $\mathcal{V%
}$ are free ultrafilters on $\mathbb{N}$. Show that $\mathcal{U}\times 
\mathcal{V}$ is a free ultrafilter on $\mathbb{N}\times \mathbb{\mathbb{N}}$%
. Moreover if $\left( a_{n,m}\right) _{n,m\in \mathbb{N}}$ is a
double-indexed sequence in $\mathbb{R}$, then%
\begin{equation*}
\lim_{n\rightarrow \mathcal{U}}\lim_{m\rightarrow \mathcal{V}%
}a_{n,m}=\lim_{\left( n,m\right) \rightarrow \mathcal{U}\times \mathcal{V}%
}a_{n,m}%
\text{.}
\end{equation*}
\end{exercise}

It follows from Exercise \ref{Exercise: product ultrafilters} that an \emph{%
iterated ultrapower} can be regarded as a single ultrapower. More generally:

\begin{proposition}
\label{prop:productultra}If $\left( M_{n,m},\tau _{n,m}\right) $ is a
double-indexed sequence of tracial von Neumann algebras, then%
\begin{equation*}
\prod\nolimits_{\mathcal{U}}\left( \prod\nolimits_{\mathcal{V}%
}M_{n,m}\right) \simeq \prod\nolimits_{\mathcal{U}\times \mathcal{V}}M_{n,m}%
\text{.}
\end{equation*}%
In particular%
\begin{equation*}
\left( M^{\mathcal{V}}\right) ^{\mathcal{U}}\simeq M^{\mathcal{U}\times 
\mathcal{V}}\text{.}
\end{equation*}
\end{proposition}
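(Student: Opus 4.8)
The plan is to prove the isomorphism
\begin{equation*}
\prod\nolimits_{\mathcal{U}}\left( \prod\nolimits_{\mathcal{V}}M_{n,m}\right) \simeq \prod\nolimits_{\mathcal{U}\times \mathcal{V}}M_{n,m}
\end{equation*}
by exhibiting an explicit trace-preserving $*$-isomorphism and checking it is well defined, isometric, and surjective. First I would fix representative sequences: an element of the inner ultraproduct $\prod\nolimits_{\mathcal{V}}M_{n,m}$ (for fixed $n$) is a bounded sequence $(a_{n,m})_{m\in\mathbb{N}}$ modulo the trace-kernel ideal over $\mathcal{V}$, and an element of the outer ultraproduct is then a bounded sequence of such, indexed by $n$, modulo the trace-kernel ideal over $\mathcal{U}$. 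Thus a typical element of the left-hand side lifts to a doubly-indexed bounded family $(a_{n,m})_{(n,m)\in\mathbb{N}\times\mathbb{N}}$, which is exactly a representative sequence for an element of the right-hand side. I would define $\Phi$ by sending the class of $(a_{n,m})$ on the left to the class of $(a_{n,m})$ on the right, and verify that $\Phi$ respects the algebraic operations, which is immediate since both quotient maps are $*$-homomorphisms and the operations are defined coordinatewise.

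The heart of the argument is that $\Phi$ is well defined and isometric for the $2$-norms, and here Exercise \ref{Exercise: product ultrafilters} does the essential work. The $2$-norm of the class of $(a_{n,m})$ on the left is
\begin{equation*}
\lim_{n\rightarrow \mathcal{U}}\left( \lim_{m\rightarrow \mathcal{V}}\tau_{n,m}\left( a_{n,m}^{\ast}a_{n,m}\right) \right),
\end{equation*}
since the inner $2$-norm is computed as a $\mathcal{V}$-limit and the outer one as a $\mathcal{U}$-limit of the squared inner norms. By Exercise \ref{Exercise: product ultrafilters} applied to the bounded double-indexed sequence $b_{n,m}=\tau_{n,m}(a_{n,m}^{\ast}a_{n,m})$, this iterated limit equals $\lim_{(n,m)\rightarrow \mathcal{U}\times\mathcal{V}}b_{n,m}$, which is precisely the squared $2$-norm of the class on the right. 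Hence $\Phi$ carries the $2$-norm to the $2$-norm; in particular a representative lies in the left-hand trace-kernel ideal if and only if it lies in the right-hand one, so $\Phi$ is well defined and injective. The trace itself is handled identically with $b_{n,m}=\tau_{n,m}(a_{n,m})$, giving $\tau\circ\Phi=\tau$.

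For surjectivity I would note that any element of $\prod\nolimits_{\mathcal{U}\times\mathcal{V}}M_{n,m}$ is represented by some uniformly bounded double family $(a_{n,m})$; the same family, read first over $m$ and then over $n$, represents a preimage on the left, provided the uniform operator-norm bound is respected at each stage, which it is since $\sup_{n,m}\lVert a_{n,m}\rVert<\infty$ controls both the inner and outer supremum bounds. Therefore $\Phi$ is a surjective trace-preserving $*$-isomorphism, and the special case $\left(M^{\mathcal{V}}\right)^{\mathcal{U}}\simeq M^{\mathcal{U}\times\mathcal{V}}$ follows by taking $M_{n,m}=M$ for all $n,m$. The main obstacle I anticipate is bookkeeping the two quotient layers cleanly: one must confirm that a bounded outer sequence of inner classes can always be lifted to a genuinely bounded double array (choosing bounded representatives of the inner classes with a uniform bound), and that the definition of $\Phi$ is independent of all these choices. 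Both points reduce to the norm computation above together with Exercise \ref{Exercise: product ultrafilters}, so once that exercise is in hand the remaining verifications are routine.
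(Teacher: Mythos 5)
Your proof is correct and takes essentially the same route the paper intends: the paper states Proposition \ref{prop:productultra} as an immediate consequence of Exercise \ref{Exercise: product ultrafilters}, i.e.\ one identifies representative double sequences on both sides and uses the equality of iterated limits with limits along the Fubini product to see that the trace-kernel ideals (and traces) match, exactly as you do. The extra verifications you spell out (choosing uniformly bounded lifts of the inner classes, well-definedness, surjectivity) are precisely the routine details the paper leaves implicit.
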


\begin{exercise}
\label{ex:one} This exercise describes an amplification trick to be used in
the proof of Theorem \ref{th:radulescu}.

\begin{enumerate}
\item Let $z\in C$ such that $|z|\leq 1$ and $|z+1|=2$. Prove that $z=1$.

\item Let $u$ be a unitary in a \textup{II}$_{1}$ factor with trace $1$. Show that $%
u=1$.

\item Let $u$ be a unitary in a \textup{II}$_{1}$ factor $N$ and let $M_{2}(N)$ be
the factor of two-by-two matrices with entries in $N$ and denote by $\tau $
its normalized trace. Consider the element 
\begin{equation*}
u^{\prime }=\left( 
\begin{array}{cc}
u & 0 \\ 
0 & 1%
\end{array}%
\right) .
\end{equation*}%
Show that $|\tau (u^{\prime })|<1$.
\end{enumerate}
\end{exercise}

\begin{proof}[Proof of Theorem \protect\ref{th:radulescu}]
(2) implies (1) is trivial. Conversely, let $\Gamma $ be a group and $\theta
:\Gamma \rightarrow U(\mathcal{R}^{\mathcal{U}})$ a group monomorphism.
Define the new group homomorphism $\theta _{1}:\Gamma \rightarrow U(M_{2}(%
\mathcal{R}^{\mathcal{U}}))$ by%
\begin{equation*}
\theta ^{\prime }(\gamma )=\left( 
\begin{array}{cc}
\theta (\gamma ) & 0 \\ 
0 & 1_{\Gamma }%
\end{array}%
\right) \text{.}
\end{equation*}%
By Exercise \ref{ex:one}, this monomorphism verifies the property that $%
|\tau (\theta ^{\prime }(\gamma ))|<1$, for all $\gamma \neq 1$. Recall that 
$\mathcal{R}$ is up to isomorphism, the unique (separable) hyperfinite II$%
_{1}$ factor (see Definition \ref{Definition: hyperfinite}). In particular $%
\mathcal{R}$ is isomorphic to $\mathcal{R}\bar{\otimes}\mathcal{R}$, as well
as to the algebra $M_{2}(\mathcal{R})$ of $2\times 2$ matrices over $%
\mathcal{R}$. As a consequence one obtains an isomorphism between $\mathcal{R%
}^{\mathcal{U}}$, $(\mathcal{R}\bar\otimes\mathcal R)^{\mathcal{U}}$, and $M_{2}(\mathcal{R})$.
This allows one to regard $\theta _{1}$ as a group monomorphism from $\Gamma 
$ to the unitary group of $\mathcal{R}^{\mathcal{U}}$ itself.

Now define $\theta _{2}$ to be the map from $\Gamma $ to the unitary group
of $(\mathcal{R})^{\mathcal{U}}$ defined in the following way: If $\theta
_{1}(\gamma )$ has a representative sequence of unitaries $\left(
u_{n}\right) _{n\in \mathbb{N}}\in U(\mathcal{R})$ (this exists by Exercise %
\ref{Exercise: unitary group ultraproduct}) then $\theta _{2}(\gamma )$ has
representative sequence%
\begin{equation*}
\left( u_{n}\otimes u_{n}\right) _{n\in \mathbb{N}}\text{.}
\end{equation*}%
Identifying $(\mathcal{R}\bar\otimes\mathcal R)^{\mathcal{U}}$ with $\mathcal{R}^{\mathcal{U}}$
one can regard $\theta _{2}$ as a map from $\Gamma $ to the unitary group of 
$\mathcal{R}^{\mathcal{U}}$. Define analogously $\theta _{n}:\Gamma
\rightarrow U(\mathcal{R})$ for every natural number $n$ taking the tensor
product of $\theta _{1}$ by itself (coordinatewise). Observe that $\theta
_{n}$ is a monomorphism that moreover has the property that 
\begin{equation*}
|\tau (\theta _{n}(\gamma ))|=|\tau (\theta _{1}(\gamma ))|^{n}
\end{equation*}%
for every $\gamma \in \Gamma $. Next define $\theta _{\infty }$ from $\Gamma 
$ the unitary group of the iterated ultrapower $(\mathcal{R}^\mathcal U)^{\mathcal{U}}$
(which by Exercise \ref{Exercise: product ultrafilters} is isomorphic to $%
\mathcal{R}^{\mathcal{U}\times \mathcal{U}}$) assigning $\gamma $ to the
element of $(\mathcal{R}^\mathcal U)^{\mathcal{U}}$ having%
\begin{equation*}
\left( \theta _{n}(\gamma )\right) _{n\in \mathbb{N}}
\end{equation*}%
as representative sequence. It is easy to check that $\theta _{\infty }$ is
a group monomorphism verifying the additional property that $\tau \left(
\theta (\gamma )\right) =0$ for every $\gamma \neq 1_{\Gamma }$. It is easy
to infer from this, as in the discussion after Exercise \ref{Exercise:
hyperlinear equivalence 3}, that there is a trace-preserving embedding of $%
L\Gamma $ into $(\mathcal{R}^\mathcal U)^{\mathcal{U}}\simeq \mathcal{R}^{\mathcal{U}%
\times \mathcal{U}}$.
\end{proof}

\section{Kirchberg's theorem}\label{se:kirchberg}

As mentioned in the Introduction, Connes' embedding\index{conjecture!Connes' embedding} conjecture can be reformulated in several different ways. The purpose of this section is to present probably the most unexpected of such reformulations, originally proved by Kirchberg in \cite{Ki}.
\begin{theorem}\label{th:kirchberg}
The following statements are equivalent:
\begin{enumerate}
\item Connes' embedding conjecture holds true,
\item 
$
C^*(\mathbb F_\infty)\otimes _{\mathrm{max}} C^*(\mathbb F_\infty)=C^*(\mathbb F_\infty)\otimes _{\mathrm{min}} C^*(\mathbb F_\infty).
$
\end{enumerate}
\end{theorem}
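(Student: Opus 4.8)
The plan is to route the equivalence through Kirchberg's dichotomy relating tensor-product norms to two approximation properties of C*-algebras: the \emph{weak expectation property} (WEP) and the \emph{local lifting property} (LLP). The first step is to isolate and prove (or cite from \cite{Ki}) the two characterizations: a unital C*-algebra $A$ satisfies $A\otimes_{\mathrm{max}}C^*(\mathbb{F}_\infty)=A\otimes_{\mathrm{min}}C^*(\mathbb{F}_\infty)$ precisely when $A$ has the WEP, while $A\otimes_{\mathrm{max}}B(H)=A\otimes_{\mathrm{min}}B(H)$ precisely when $A$ has the LLP; and, crucially, $C^*(\mathbb{F}_\infty)$ always has the LLP, since a $*$-homomorphism out of the full free group C*-algebra is nothing but a choice of unitaries and such choices lift along any quotient. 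Taking $A=C^*(\mathbb{F}_\infty)$ in the first characterization reformulates statement (2) as the single assertion that \emph{$C^*(\mathbb{F}_\infty)$ has the WEP}. The theorem then reduces to the deep equivalence between Connes' embedding conjecture and the WEP of the universal free group C*-algebra.

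Second, I would build the bridge to traces and \textup{II}$_{1}$ factors. Every separable \textup{II}$_{1}$ factor $M$ is generated as a von Neumann algebra by a countable family of unitaries; sending the free generators to these unitaries gives a unital $*$-homomorphism $\pi\colon C^*(\mathbb{F}_\infty)\to M$ with weakly dense range, and $\tau_M\circ\pi$ is then a tracial state on $C^*(\mathbb{F}_\infty)$ whose GNS von Neumann algebra is $M$. Conversely every tracial state arises this way. Combining this with Exercise~\ref{Exercise: hyperlinear equivalence 3} and the reformulations of the CEC following Exercise~\ref{Exercise: equivalence CEC 1}, Connes' embedding conjecture becomes the statement that for \emph{every} tracial state on $C^*(\mathbb{F}_\infty)$ the associated finite von Neumann algebra embeds, trace-preservingly, into an ultrapower $\mathcal{R}^{\mathcal{U}}$. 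Both (1) and (2) are thus recast as statements quantified over all tracial states of $C^*(\mathbb{F}_\infty)$, and the goal is to match ``$\mathcal{R}^{\mathcal{U}}$-embeddability of every GNS factor'' with ``WEP''.

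Third come the two implications. For CEC $\Rightarrow$ WEP I would use that $\mathcal{R}$ is hyperfinite, so that its ultrapowers are approximable by matrix algebras and carry the conditional-expectation data needed to manufacture a weak expectation $B(H)\to C^*(\mathbb{F}_\infty)^{**}$ from the embeddings of the GNS factors, after a trace-decomposition of the bidual. For WEP $\Rightarrow$ CEC I would restrict the weak expectation through each tracial GNS representation to obtain unital completely positive, trace-approximating maps from the ambient $B(H)$ into the factor, and then convert these into the exact approximate morphisms into finite-dimensional unitary groups furnished by Proposition~\ref{Proposition: amplification hyperlinear}, thereby witnessing the embedding into $\mathcal{R}^{\mathcal{U}}$.

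The hard part will be this equivalence between WEP and universal $\mathcal{R}^{\mathcal{U}}$-embeddability, and especially the direction (2)$\Rightarrow$(1): extracting an honest trace-preserving embedding of an arbitrary separable \textup{II}$_{1}$ factor from the purely C*-algebraic datum of a weak expectation on $C^*(\mathbb{F}_\infty)$. The delicate points are transporting the weak expectation from the bidual down to the finite-trace setting without losing complete positivity or the trace, and converting approximate, trace-approximating completely positive data into exact approximate morphisms into unitary groups. Rather than fight abstract bidual manipulations, I expect the cleanest route is through explicit test elements such as $\sum_{i=1}^{k}u_i\otimes v_i$, whose minimal norm is computed on spatial tensor representations $H_1\otimes H_2$ while its maximal norm is a supremum over commuting pairs of unitary families $(U_i),(V_i)$ on a common Hilbert space; the coincidence of these two norms for all $k$ is exactly the quantitative statement that commuting unitary families are approximable by matricially independent ones, and it is this concrete equality---together with the Haagerup--Winsl\o w topological reformulation \cite{Ha-Wi2}---that most transparently yields the microstates encoding the embedding into $\mathcal{R}^{\mathcal{U}}$.
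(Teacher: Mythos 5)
Your plan follows Kirchberg's original route through the weak expectation property and the local lifting property, which is genuinely different from the proof in this paper (the paper deliberately avoids it in favour of the Haagerup--Winsl\o w argument via the Effros-Marechal topology, Theorem \ref{haa}). The route is viable in principle, but as written it has a real gap: the step you call a ``reduction'' does not reduce anything. Relabelling statement (2) as ``$C^*(\mathbb F_\infty)$ has the WEP'' is the easy, formal part of Kirchberg's work (take $A=C^*(\mathbb F_\infty)$ in the WEP characterization, granting the LLP of $C^*(\mathbb F_\infty)$); the equivalence of \emph{that} statement with Connes' embedding conjecture is Theorem \ref{th:kirchberg} itself, not a lemma one can invoke. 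Both hard implications are only gestured at: ``manufacture a weak expectation $B(H)\to C^*(\mathbb F_\infty)^{**}$ \ldots\ after a trace-decomposition of the bidual'' and ``restrict the weak expectation through each tracial GNS representation'' skip exactly the technical core --- in the von Neumann algebraic analogue treated later in this chapter this is the Powers-St\o rmer/invariant-mean machinery (Lemma \ref{brown1}, Theorem \ref{invariant}) behind Brown's Theorem \ref{wep}, and in Kirchberg's paper \cite{Ki} it occupies several deep theorems. Worse, your final paragraph falls back on ``the Haagerup--Winsl\o w topological reformulation \cite{Ha-Wi2}'' to extract the microstates, i.e.\ on the very result whose proof is the content of this section; as a self-contained argument the proposal is therefore circular.

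There is also a concrete error in the one step you do justify. Unitaries do \emph{not} lift along quotient maps of C*-algebras: in the Calkin algebra $B(H)/K(H)$ a unitary of nonzero Fredholm index has no unitary (indeed no invertible) preimage. So ``a *-homomorphism out of $C^*(\mathbb F_\infty)$ is nothing but a choice of unitaries and such choices lift along any quotient'' fails as stated. The correct argument lifts the images of the free generators to \emph{contractions} (always possible), dilates each contraction to a unitary in $2\times 2$ matrices over the ambient algebra to produce a ucp map on $C^*(\mathbb F_\infty)$ with the prescribed values on generators, and then uses Choi's multiplicative domain theorem (Theorem \ref{choi}) to check that composing with the quotient map recovers the original *-homomorphism; alternatively one cites Kirchberg's lifting theorem from \cite{Ki}. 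Even after repairing this step and importing Kirchberg's two tensor characterizations (WEP via tensoring with $C^*(\mathbb F_\infty)$, LLP via tensoring with $B(H)$) as black boxes, what remains to prove is the full strength of the theorem. By contrast, the proof given in the paper trades those operator-algebraic black boxes for the topological ones of \cite{Ha-Wi1} and \cite{Ha-Wi2} --- density statements in the Effros-Marechal topology, the commutant theorem, and the Voiculescu-based Lemma \ref{haag} --- plus Choi's Theorem \ref{ch} and a Baire category argument, which admit a reasonably self-contained presentation.
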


We refer the reader to Appendix \ref{suse:tensor product} for all definitions needed to understand the statement of Kirchberg's theorem, as the minimal and maximal tensor product of C*-algebras and the full C*-algebra associated to a locally compact group.

We are not going to present the original proof, but a more recent and completely different one provided by Haagerup and Winsl\o w \cite{Ha-Wi1}, \cite{Ha-Wi2}. This is fundamentally a topological proof relying on the use of the Effros-Marechal topology on the space of von Neumann algebras. To present a completely self-contained proof of this theorem is pretty much impossible, since it relies on the Tomita-Takesaki modular theory. However, we will present the idea in quite many details, leaving out a few technical lemmas, whose proof is highly non-trivial. 

\subsection{Effros-Marechal topology on the space of von Neumann algebras}\label{suse:effros}

Let $H$ be a Hilbert space and $vN(H)$ be the set of von Neumann algebras
acting on $H$, that is the set of von Neumann subalgebras of $B(H)$. The Effros-Marechal topology on $vN(H)$, first introduced in Effros in \cite{Ef} and Marechal in \cite{Ma}, can be defined is several different equivalent ways and we will in fact be playing with two different definitions. 

Let us first recall some terminology. Given a net $\{C_a\}_{a\in A}$ on a direct set $A$, we say that a property P is \emph{eventually}\index{eventually} satisfied if there is $a\in A$ such that, for all $b\geq a$, $C_b$ satisfies property P. We say that P is \emph{frequently}\index{frequently} satisfied if for all $a\in A$ there is $b\geq a$ such that $C_b$ satisfies property $P$.\\

\textbf{First definition of the Effros-Marechal topology.}\index{topology!Effros-Marechal} Let $X$ be a compact Hausdorff space, $c(X)$ be the set of closed subsets of $X$ and $\mathcal N(x)$ be the filter of neighborhoods of a point $x\in X$. $\mathcal N(x)$ is a direct set, ordered by inclusion. Let $\{C_a\}$ be a net of subsets of $c(X)$, define

\begin{align}
\underline{\lim}C_a=\left\{x\in X:\forall N\in\mathcal N(x), N\cap
C_a\neq\emptyset\text{ eventually}\right\},
\end{align}
\begin{align}
\overline{\lim}C_a=\left\{x\in X:\forall N\in\mathcal N(x), N\cap
C_a\neq\emptyset\text{ frequently}\right\}.
\end{align}

It is clear that $\underline{\lim}C_\alpha\subseteq\overline{\lim}C_\alpha$, but the other inclusion is far from being true in general.

\begin{exercise}
Find an explicit example of a sequence $C_n$ of closed subsets of the real interval $[0,1]$ such that $\underline{\lim}C_n\subsetneq\overline{\lim}C_n$.
\end{exercise}

Effros proved in \cite{Ef2} that there is only one topology on $c(X)$, whose
convergence is described by the condition:
\begin{align}
C_a\rightarrow
C\qquad\text{ if and only if }\qquad\overline{\lim}C_a=\underline{\lim}C_a=C.
\end{align}

\begin{exercise}\label{exer:compactness}
Let $M$ be a von Neumann subalgebra of $B(H)$. Denote by $Ball(M)$ the unit ball of $M$. Prove that $Ball(M)$ is weakly compact in $Ball(B(H))$.
\end{exercise}

The previous exercise allows to use Effros' convergence in our setting.

\begin{definition}\label{second}
Let $\{M_a\}\subseteq vN(H)$ be a net. The Effros-Marechal topology
is described by the following notion of convergence:
\begin{align}
M_a\rightarrow M
\qquad\text{ if and only if }\qquad\overline{\lim}Ball(M_a)=\underline{\lim}Ball(M_a)=Ball(M).
\end{align} 
\end{definition}

\textbf{Second definition of the Effros-Marechal topology.} Recall that the strong* topology on $B(H)$ is the weakest locally convex topology making the maps%
\begin{equation*}
x\mapsto \left\Vert x\xi \right\Vert\qquad\text{and}\qquad x\mapsto \left\Vert x^*\xi \right\Vert
\end{equation*}%
continuous for every $\xi \in H$. Let $x\in B(H)$ and let $so^*(x)$ denote the filter of neighborhoods of $x$ with respect to the strong* topology.

\begin{definition}
Let $\{M_a\}\subseteq vN(H)$ be a net. We set
\begin{align}
\lim\inf M_a=\left\{x\in B(H):\forall U\in so^*(x), U\cap
M_a\neq\emptyset\text{ eventually}\right\}.
\end{align} 
\end{definition}

Observe that $\lim\inf M_a$ is obviously so*-closed, contains the identity and it is closed under involution, scalar product and sum, since these operations are so*-continuous. Haagerup and Winsl\o w proved in \cite{Ha-Wi1}, Lemma 2.2 and Theorem 2.3, that $\lim\inf M_a$ is also closed under multiplication. Therefore $\lim\inf M_a$ is a von Neumann algebra. Moreover, by Theorem 2.6 in \cite{Ha-Wi1}, $\lim\inf M_a$ can be seen as the largest von Neumann algebra whose unit ball is contained in
$\underline{\lim}Ball(M_a)$. This suggests to define $\lim\sup M_a$ as
the smallest von Neumann algebra whose unit ball contains
$\overline{\lim}Ball(M_a)$, that is clearly
$(\overline{\lim}Ball(M_a))''$. Indeed, the double commutant of a subset of $B(H)$ is always a *-algebra and the double commutant theorem of von Neumann states that this is the smallest von Neumann algebra containing the set. So we are led to the following

\begin{definition}
Let $\{M_a\}\subseteq vN(H)$ be a net. We set
\begin{align}
\lim\sup M_a:=\left(\overline{\lim}Ball(M_a)\right)''.
\end{align} 
\end{definition}

\begin{definition}
The Effros-Marechal topology\index{topology!Effros-Marechal} on $vN(H)$ is described by the
following notion of convergence:
$$
M_a\rightarrow
M\qquad\text{ if and only if }\qquad\lim\inf
M_a=\lim\sup M_a=M.
$$
\end{definition}

These two definitions of the Effros-Marechal topology are shown to be equivalent in \cite{Ha-Wi1}, Theorem 2.8.\\

Connes' embedding conjecture\index{conjecture!Connes' embedding} regards separable
II$_1$ factors, that is, factors with a faithful representation into $B(H)$, with $H$ is separable. Assuming separability on $H$, the Effros-Marechal topology on $vN(H)$
turns out to be separable and induced by a complete metric (i.e.\ $vN(H)$ is a
Polish space). One possible distance is given by the
Hausdorff distance between the unit balls:

\begin{align}
d(M,N)=\max\left\{\sup_{x\in Ball(M)}\left\{\inf_{y\in Ball(N)}d(x,y)\right\},\sup_{x\in
Ball(N)}\left\{\inf_{y\in Ball(M)}d(x,y)\right\}\right\},
\end{align}

where $d$ is a metric on the unit ball of $B(H)$ which induces the
weak topology \cite{Ma}. Since $vN(H)$ is separable, we may express its topology by using sequences instead of nets. This turns out to be particularly useful, since, if $\{M_a\}=\{M_n\}$ is a sequence, then the definition of the Effros-Marechal topology may be
simplified by making use of the second definition. One has

\begin{align}\label{eq:strong}
\liminf M_n=\left\{x\in B(H): \exists\{x_n\}\in\prod
M_n\text{ such that } x_n\rightarrow^{s^*}x\right\}.
\end{align}

We now state the main theorem of \cite{Ha-Wi2}. A part from being of intrinsic interest, it allows to reformulate Kirchberg's theorem in the form that we are going to prove.

Let us fix some notation: $\mathfrak F_{\text{I}_{\text{fin}}}$ is the set of finite factors of \emph{type I}\index{factor!type I$_{\text{fin}}$} acting on $H$, that is the set of von Neumann factors that are isomorphic to a matrix algebra. $\mathfrak F_{\text{I}}$ is the set of type I factors\index{factor!type I} acting on $H$, that is the set of von Neumann factors that are isomorphic to some $B(H)$, with $H$ separable. $\mathfrak F_{\text{AFD}}$ is the set of approximately finite dimensional or AFD factors\index{factor!AFD} acting on $H$, that is the set of factors containing an increasing chain of matrix algebras whose union is weakly dense. Finally, $\mathfrak F_{\text{inj}}$ is the set of injective factors\index{factor!injective} acting on $H$, that is the set of factors that are the image of a bounded linear projection of norm $1$, $P:B(H)\rightarrow M$.

\begin{theorem}{\bf(Haagerup-Winsl\o w)}\label{haa}
The following statements are equivalent:
\begin{enumerate}
\item $\mathfrak F_{\text{I}_{\text{fin}}}$ is dense in $vN(H)$,
\item $\mathfrak F_{\text{I}}$ is dense in $vN(H)$,
\item $\mathfrak F_{\text{AFD}}$ is dense in $vN(H)$,
\item $\mathfrak F_{\text{inj}}$ is dense in $vN(H)$,
\item Connes' embedding conjecture is true\index{conjecture!Connes' embedding}.
\end{enumerate}
Moreover, a separable II$_1$ factor $M$ is embeddable into
$\mathcal{R}^{\mathcal U}$ if and only if $M\in\overline{\mathfrak F_{\text{inj}}}$.
\end{theorem}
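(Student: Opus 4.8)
The plan is to prove the five-fold equivalence by establishing the easy implications $(1)\Rightarrow(2)\Rightarrow(3)\Rightarrow(4)$ formally, and then to close the loop using the ``Moreover'' statement, which I regard as the analytic heart of the whole theorem. The first three implications require no analysis: one has the inclusions
\begin{equation*}
\mathfrak F_{\text{I}_{\text{fin}}}\subseteq \mathfrak F_{\text{I}}\subseteq \mathfrak F_{\text{AFD}}\subseteq \mathfrak F_{\text{inj}},
\end{equation*}
since a matrix algebra is a type I factor; a type I factor $B(K)$ with $K$ separable is AFD, being the weak closure of the increasing union of its finite-dimensional corners $Q_k\simeq M_k(\mathbb C)$; and an AFD factor $M=\overline{\bigcup_k Q_k}$ is injective, a norm-one projection $B(H)\to M$ arising as a point-weak$^*$ cluster point of norm-one projections $B(H)\to Q_k$ (each $Q_k$ being finite dimensional, hence injective). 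Taking closures in the Effros--Marechal topology, density of a smaller class forces density of any larger one, so $(1)\Rightarrow(2)\Rightarrow(3)\Rightarrow(4)$.

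The key technical device is the bridge between Effros--Marechal convergence and trace-preserving embeddings into von Neumann ultraproducts. Using the description \eqref{eq:strong} of $\liminf M_n$ as the set of strong$^*$-limits of bounded sequences $(x_n)$ with $x_n\in M_n$, I would show that, with the algebras placed in standard form on the separable space $H$, the membership $M\in\overline{\{M_n\}}$ is equivalent to the existence of a trace-preserving embedding $M\hookrightarrow\prod\nolimits_{\mathcal U}M_n$: from a convergent sequence one sends $x\in M$ with approximating sequence $(x_n)$ to the class of $(x_n)$ in the ultraproduct, and conversely an embedding provides, after choosing representatives, an approximating sequence. This is exactly the step resting on the modular-theoretic lemmas of \cite{Ha-Wi1} (that $\liminf M_n$ is weakly closed under multiplication and hence a von Neumann algebra, and the identifications of $\liminf$ and $\limsup$ with the largest, respectively smallest, von Neumann algebra pinned down by the unit balls), and it is where I expect the genuine difficulty to lie.

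With the bridge in hand, the ``Moreover'' follows by specializing to injective factors and invoking Connes' classification. For the direction $M\in\overline{\mathfrak F_{\text{inj}}}\Rightarrow M\hookrightarrow\mathcal R^{\mathcal U}$, take $M_n\in\mathfrak F_{\text{inj}}$ with $M_n\to M$; the trace on the $\text{II}_1$ factor $M$ selects, through the bridge, a trace-preserving embedding into $\prod\nolimits_{\mathcal U}M_n$, and since injectivity is stable under the tracial ultraproduct this ambient finite algebra is injective, so a separable injective intermediate subalgebra containing $M$ is hyperfinite and therefore embeds into $\mathcal R^{\mathcal U}$. For the converse, $M\hookrightarrow\mathcal R^{\mathcal U}=\prod\nolimits_{\mathcal U}\mathcal R$ and the reverse part of the bridge applied to the constant sequence $M_n=\mathcal R$ give $M\in\overline{\{N\in vN(H):N\simeq\mathcal R\}}\subseteq\overline{\mathfrak F_{\text{AFD}}}\subseteq\overline{\mathfrak F_{\text{inj}}}$; here one uses that $\mathcal R$ is AFD (equivalently, the self-absorption $\mathcal R\simeq\mathcal R\,\overline{\otimes}\,\mathcal R$ exploited already in the proof of Theorem \ref{th:radulescu}).

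Finally I would close the cycle. By Exercise \ref{Exercise: equivalence CEC 1} and the equivalent formulations recorded after it, $(5)$ says precisely that every separable $\text{II}_1$ factor embeds into $\mathcal R^{\mathcal U}$. Thus $(4)$, which forces every $\text{II}_1$ factor into $\overline{\mathfrak F_{\text{inj}}}$, yields $(5)$ via the ``Moreover''. Conversely, under $(5)$ every separable $\text{II}_1$ factor embeds into $\mathcal R^{\mathcal U}$, hence lies in $\overline{\{N\simeq\mathcal R\}}$; since $\mathcal R\in\overline{\mathfrak F_{\text{I}_{\text{fin}}}}$ and closure is idempotent, every such factor already lies in $\overline{\mathfrak F_{\text{I}_{\text{fin}}}}$. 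To upgrade this from $\text{II}_1$ factors to all of $vN(H)$ — and thereby recover $(1)$ — I would invoke the unconditional density of $\text{II}_1$ factors in $vN(H)$; a diagonal argument then places an arbitrary von Neumann algebra in $\overline{\overline{\mathfrak F_{\text{I}_{\text{fin}}}}}=\overline{\mathfrak F_{\text{I}_{\text{fin}}}}$, giving $(5)\Rightarrow(1)$ and completing the loop. The two main obstacles are therefore the modular-theoretic input making $\liminf M_n$ a von Neumann algebra (underpinning the bridge) and the unconditional density of $\text{II}_1$ factors; both are among the ``highly non-trivial'' lemmas that the text proposes to cite from \cite{Ha-Wi1} and \cite{Ha-Wi2} rather than reprove.
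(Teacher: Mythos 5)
Your easy chain $(1)\Rightarrow(2)\Rightarrow(3)\Rightarrow(4)$ is fine, and your derivation of $(4)\Rightarrow(5)$ from the ``Moreover'' matches what the text does (the text, too, ultimately cites the Moreover to \cite{Ha-Wi2}, Theorem~5.8). But two of your load-bearing steps have genuine gaps. First, the ``bridge'' you announce as the key technical device --- that for algebras $M_n$ carrying traces, membership $M\in\overline{\{M_n\}}$ in the Effros--Marechal closure is \emph{equivalent} to a trace-preserving embedding $M\hookrightarrow\prod\nolimits_{\mathcal U}M_n$ --- is false as stated. Take matrix factors $Q_k\subseteq B(H)$ increasing with $\bigl(\bigcup_k Q_k\bigr)''=B(H)$, and set $N_k=Q_k\vee S_k$ where $S_k$ is a copy of $\mathcal R$ inside $Q_k'$; each $N_k$ is a $\mathrm{II}_1$ factor, and since $\liminf N_k$ is a von Neumann algebra containing every $Q_j$ (this is exactly the \cite{Ha-Wi1} input you invoke), one gets $N_k\rightarrow B(H)$. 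Yet $B(H)$, being properly infinite, admits no tracial state whatsoever, so it cannot embed unitally into the tracial ultraproduct $\prod\nolimits_{\mathcal U}N_k$, which is a finite von Neumann algebra. Effros--Marechal convergence of tracial algebras therefore does not encode tracial embeddability; the correct statement is precisely the cited Theorem~5.8 of \cite{Ha-Wi2}, which is special to injective approximants and is proved through QWEP machinery, not through a general bridge. Your two uses of the bridge survive only insofar as they coincide with (or mildly strengthen) that cited theorem, so nothing is gained by postulating the general equivalence, and something false is asserted.

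Second, and more seriously, your closing step $(5)\Rightarrow(1)$ rests on ``the unconditional density of $\mathrm{II}_1$ factors in $vN(H)$''. This is nowhere asserted in the text, it is not among the lemmas the text proposes to cite from \cite{Ha-Wi1} and \cite{Ha-Wi2} (those are: $\liminf M_a$ is a von Neumann algebra, the $\liminf$/$\limsup$ characterizations, the commutant theorem, the Voiculescu-type Lemma~4.3, the $G_\delta$ statements, and Theorem~5.8/Corollary~5.9), and it is not an elementary fact: while finite factors, $\mathrm{II}_\infty$ factors, $B(H)$, and AFD factors of any type can be exhibited as limits of $\mathrm{II}_1$ factors by increasing-union arguments, there is no evident way to produce $\mathrm{II}_1$ approximants of a non-AFD type $\mathrm{III}$ factor, and unestablished density claims of this flavor are exactly what Theorem~\ref{haa} is about. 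Without that input your loop does not close, so $(4)\Rightarrow(3)$, $(4)\Rightarrow(1)$ and $(5)\Rightarrow(1)$ all remain unproved. The paper's own route avoids the issue entirely: it proves $(3)\Rightarrow(1)$ directly (an AFD factor is the weak closure of an increasing chain of finite type I subfactors, which converges to it by Definition~\ref{second}), obtains $(3)\Leftrightarrow(4)$ from Connes' theorem \cite{Co76} that injective factors are AFD, and only then cites $(4)\Leftrightarrow(5)$ and the Moreover to \cite{Ha-Wi2}. It is worth noting that your proposal never uses Connes' theorem; its absence is what forces you to conjure the $\mathrm{II}_1$-density claim, and reinstating it (as the paper does) is the natural repair.
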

Since $\mathfrak F_{I_{\text{fin}}}\subseteq\mathfrak F_I\subseteq\mathfrak F_{\text{AFD}}$, the implications
$(1)\Rightarrow(2)\Rightarrow(3)$ are trivial. The implication
$(3)\Rightarrow(1)$ follows from the fact that AFD factors contain, by mere definition, an increasing chain of type I$_{\text{fin}}$ factors, whose union is weakly
dense and from the first definition of the Effros-Marechal topology
(Definition \ref{second}). The equivalence between (3) and (4) is a theorem
by Alain Connes proved in \cite{Co76}. What is really new and important in Theorem \ref{haa} is the equivalence between (4) and (5), proved in \cite{Ha-Wi2}, Corollary 5.9, and the proof of the \emph{last sentence}, proved in \cite{Ha-Wi2}, Theorem 5.8. 

\subsection{Proof of Kirchberg's theorem}\label{suse:proof}

By using Theorem \ref{haa}, it is enough to prove the following statements:
\begin{enumerate}
\item If $\mathfrak F_{\text{I}_{\text{fin}}}$ is dense in $vN(H)$, then $C^*(\mathbb F_{\infty})\otimes_{\min}C^*(\mathbb F_{\infty})=C^*(\mathbb F_{\infty})\otimes_{\max}C^*(\mathbb F_{\infty})$.
\item If
$C^*(\mathbb F_{\infty})\otimes_{\min}C^*(\mathbb
F_{\infty})=C^*(\mathbb F_{\infty})\otimes_{\max}C^*(\mathbb
F_{\infty})$, then $\mathfrak F_{\text{inj}}$ is dense in $vN(H)$.\\
\end{enumerate}

\begin{proof}[Proof of (1)]
Let $\pi$ be a *-representation of the algebraic tensor product
$C^*(\mathbb F_{\infty})\odot C^*(\mathbb F_{\infty})$ into
$B(H)$. Since $C^*(\mathbb F_{\infty})$ is separable, we may assume
that $H$ is separable. In this way
$$
A=\pi(C^*(\mathbb F_{\infty})\odot\mathbb
C1)\qquad\text{ and }\qquad B=\pi(\mathbb C1\odot
C^*(\mathbb F_{\infty}))
$$
belong to $B(H)$, with $H$ separable. Let $\{u_n\}$ be the
universal unitaries in $C^*(\mathbb F_{\infty})$, as in Exercises \ref{exer:universalunitaries} and \ref{exer:universalunitariestotal}. Let
$$
v_n=\pi(u_n\otimes1)\in
A\qquad\text{ and }\qquad w_n=\pi(1\otimes u_n)\in
B.
$$
Set $M=A''\in vN(H)$. By hypothesis, there exists a sequence
$\{F_m\}\subseteq\mathfrak F_{\text{I}_{\text{fin}}}$ such that $F_m\rightarrow M$ in the Effros-Marechal topology. Therefore,
$A\subseteq M=\liminf F_m=\limsup F_m$. Thus, we have
$$
A\subseteq \liminf F_m.
$$
It follows that
$$
\{v_n\}\subseteq U(A)\subseteq U(\liminf
F_m)=\underline{\lim}Ball(F_m)\cap U(B(H)),
$$
where the equality follows from \cite{Ha-Wi1}, Theorem 2.6. Let $w(x)$ and
$so^*(x)$ respectively the filters of weakly and strong* open
neighborhoods of an element $x\in B(H)$. We have just proved that
for every $n\in\mathbb N$ and $W\in w(v_n)$, one has $W\cap
Ball(F_m)\cap U(B(H))\neq\emptyset$ eventually in $m$. Let $S\in
so^*(v_n)$, by \cite{Ha-Wi1} Lemma 2.4, there exists $W\in w(v_n)$
such that $W\cap Ball(F_m)\cap U(B(H))\subseteq S\cap Ball(F_m)\cap
U(B(H))$. Now, since the first set must be eventually non empty,
also the second one must be the same. This means that we can
approximate in the strong* topology $v_n$ with elements $v_{m,n}\in U(F_m)$.

In a similar way we can find unitaries $w_{m,n}$ in $F_m'$ such that
$w_{m,n}\rightarrow^{so^*}w_n$. Indeed, we have
$$
B\subseteq A'=M'=(\limsup F_m)'=\liminf F_m',
$$
where the last equality follows by the \emph{commutant theorem} (\cite{Ha-Wi1}, Theorem 3.5). Therefore, we may repeat word-by-word the previous argument.

Now let $m$ be fixed, $\pi_{m,1}$ be a
representation of $C^*(\mathbb F_{\infty})$ mapping $u_n$ to
$v_{m,n}$ and $\pi_{m,2}$ be a representation of $C^*(\mathbb
F_{\infty})$ mapping $u_n$ to $w_{m,n}$. We can find these
representations because the $u_n's$ are free and because any representation of $G$ extends to a
representation of $C^*(G)$. Notice that the ranges of these
representations commute, since $v_n\in A$ and $w_n\in B$ and $A,B$
commute. More precisely, the image of $\pi_{m,1}$ belongs into $C^*(F_m)$
and the image of $\pi_{m,2}$ belongs to $C^*(F_m')$. So, by the
universal property in Proposition \ref{universal}, there are unique
representations $\pi_m$ of $C^*(\mathbb
F_{\infty})\otimes_{\max}C^*(\mathbb F_{\infty})$ such that
$$
\pi_m(u_n\otimes1)=v_{m,n}\qquad\text{ and }\qquad
\pi_m(1\otimes u_n)=w_{m,n},\qquad m,n\in\mathbb N,
$$
whose image lies into $C^*(F_m,F_m')$.

\begin{exercise}\label{exer:nuclear}
Prove that $C^*(F_m,F_m')=F_m\otimes_{\min}F_m'$ (Hint: use Theorem 4.1.8(iii) in \cite{Ka-Ri1} and Lemma 11.3.11 in \cite{Kadison-RingroseII}).
\end{exercise}

By Exercise \ref{exer:nuclear}, $C^*(F_m,F_m')=F_m\otimes_{\min}F_m'$ and
thus $\pi_m$ splits: $\pi_m=\sigma_m\otimes\rho_m$, for some
$\sigma_m,\rho_m$ representation of $C^*(\mathbb F_{\infty})$ in
$C^*(F_m,F_m')$. Consequently, by the very definition of minimal C*-norm, $||\pi_m(x)||\leq||x||_{\min}$ for all
$m\in\mathbb N$ and $x\in C^*(\mathbb F_{\infty})\odot C^*(\mathbb
F_{\infty})$. Now, by Exercise \ref{exer:universalunitariestotal}, the sequence $\{u_n\}$ is total and therefore $\pi_m$ converges to $\pi$ in the strong* point-wise sense. Namely, for all $x\in C^*(\mathbb F_\infty)\odot C^*(\mathbb F_\infty)$, one has $\pi_m(x)\rightarrow^{so^*}\pi(x)$.

\begin{exercise}
Prove that if $x_n\in B(H)$ converges to $x\in B(H)$ in the strong* topology, then $||x||\leq\liminf||x_n||$.
\end{exercise}

Therefore
$$
||\pi(x)||\leq
\liminf||\pi_m(x)||\leq||x||_{\min},\qquad \forall x\in
C^*(\mathbb F_{\infty})\odot C^*(\mathbb F_{\infty}).
$$
Since $\pi$ is arbitrary, it follows that
$||x||_{\max}\leq||x||_{\min}$ and the proof of the first implication is complete.
\end{proof}

In order to prove (2) we need a few more definitions and preliminary results. Given two *-representation $\pi$ and $\rho$ of the same C*-algebra $A$ in the same $B(H)$, we say that they are unitarily equivalent\index{representation!unitarily equivalent}, and we write $\pi\sim\rho$, if there is $u\in U(B(H))$ such that, for all $x\in A$, one has $u\pi(x)u^*=\rho(x)$. Given a family of representations $\pi_a$ of the same C*-algebra in possibly different $B(H_a)$, we may define the direct sum\index{representation!direct sum} $\bigoplus_a\pi_a$ to be a representation of $A$ in $B(\bigoplus H_a$) in the obvious way, that is,
$$
\left(\bigoplus\pi_a\right)(x)\xi=\bigoplus\left(\pi_a(x)\xi_a\right),\qquad\text{ for all } \xi=(\xi_a)\in\bigoplus H_a.
$$
A family of representation is called \emph{separating}\index{representation!separating family of} if their direct sum is faithful (i.e.\ injective).\\
We state the following deep lemma, proved by Haagerup and Winsl\o w in \cite{Ha-Wi2}, Lemma 4.3, making use of Voiculescu's Weyl-von Neumann theorem.
\begin{lemma}\label{haag}
Let $A$ be a unital C*-algebra and $\lambda,\rho$ representations
of $A$ in $B(H)$. Assume $\rho$ is faithful and satisfies
$\rho\sim\rho\oplus\rho\oplus \cdots $. Then there exists a sequence
$\{u_n\}\subseteq U(B(H))$ such that
$$
u_n\rho(x)u_n^*\rightarrow^{s^*}\lambda
(x),\qquad \forall x\in A.
$$
\end{lemma}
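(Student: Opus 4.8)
The plan is to deduce the lemma from Voiculescu's Weyl--von Neumann theorem, using the self-absorbing hypothesis $\rho\cong\rho\oplus\rho\oplus\cdots$ to realize $\lambda$ as an approximate compression of $\rho$, and then an elementary Hilbert-space bookkeeping step to turn the resulting isometries into honest unitaries. Throughout I may assume $A$ and $H$ separable (as in the intended application, where $A=C^*(\mathbb F_\infty)$ and $H$ is separable) and $H$ infinite dimensional. First I would record that $\rho(A)\cap\mathcal K(H)=\{0\}$: if $\rho(x)$ were a nonzero compact operator, then under a unitary implementing $\rho\cong\rho^{\oplus\infty}$ it would be equivalent to $\rho(x)\oplus\rho(x)\oplus\cdots$, which is never compact unless $\rho(x)=0$; faithfulness then gives $x=0$.

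Since $\rho$ is faithful and meets the compacts trivially, Voiculescu's theorem allows $\rho$ to absorb $\lambda$ up to approximate unitary equivalence, written $\rho\sim_a\rho\oplus\lambda$ (here $H\oplus H\cong H$ uses separability). Concretely there are unitaries $w_n$ with $\|w_n\rho(x)w_n^*-(\lambda(x)\oplus\rho(x))\|\to0$ for every $x\in A$. Let $p$ be the projection onto the $\lambda$-summand and let $W$ denote the isometry $H\to H$ obtained from $pw_n$ after the identifications. Compressing by $p$ gives, for each $x$, that $\|W^*\rho(x)W-\lambda(x)\|\to0$, while the block-diagonality of $\lambda(x)\oplus\rho(x)$ shows that the range projection $q=WW^*$ asymptotically commutes with $\rho(x)$. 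A short computation, writing $\rho(x)W=\rho(x)qW\approx q\rho(x)qW\approx W\lambda(x)$, then upgrades this to the two-sided approximate intertwining $\rho(x)W\approx W\lambda(x)$ and $\rho(x)^*W\approx W\lambda(x)^*$ in norm. Thus $\lambda$ is an approximate compression of $\rho$; but $W$ is only an isometry, and it cannot be replaced by a unitary in norm, since $u_n\rho u_n^*\to\lambda$ in norm would force $\rho\sim_a\lambda$ and in particular $\ker\lambda=\ker\rho=0$, which need not hold.

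The main point is therefore the passage from isometries to unitaries, and this is where strong$^*$ (rather than norm) convergence becomes both necessary and sufficient. I would reduce the lemma to the following finitary claim: for every finite $F\subseteq A$, every finite set $\Xi\subseteq H$ of unit vectors, and every $\varepsilon>0$ there is $u\in U(B(H))$ with $\|u\rho(x)u^*\xi-\lambda(x)\xi\|<\varepsilon$ and $\|u\rho(x)^*u^*\xi-\lambda(x)^*\xi\|<\varepsilon$ for all $x\in F$ and $\xi\in\Xi$. Granting the claim, a routine diagonal argument over norm-dense sequences in $A$ and in $H$, together with the uniform bound $\|u_n\rho(x)u_n^*\|=\|\rho(x)\|\le\|x\|$, produces the desired sequence converging strong$^*$ at every $x\in A$. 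To prove the claim I would take the isometry $W$ above with error $\delta\le\varepsilon$, set $E$ to be the finite-dimensional span of $\Xi\cup\lambda(F)\Xi\cup\lambda(F)^*\Xi$, and split $E^{\perp}=H_{\mathrm{junk}}\oplus H'$ into two infinite-dimensional pieces. I then define a unitary $u$ that agrees with $W$ on $E\oplus H'$ (which contains $\Xi$ and every $\lambda(x)\xi,\lambda(x)^*\xi$) and maps $H_{\mathrm{junk}}$ unitarily onto the orthogonal complement of $W(E\oplus H')$; separability makes the two relevant dimensions match, so such a $u$ exists and is genuinely unitary.

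Because $u$ coincides with $W$ on $\Xi$ and on the vectors $\lambda(x)\xi,\lambda(x)^*\xi$, the approximate intertwining of $W$ gives, for $\xi\in\Xi$,
\[
u^*\rho(x)u\,\xi=u^*\rho(x)W\xi\approx u^*W\lambda(x)\xi=\lambda(x)\xi,
\]
together with the adjoint estimate, so $u^*$ witnesses the claim. The conceptual crux---and the reason only strong$^*$ convergence can hold---is exactly that the ``discarded'' copy of $\rho$ now lives on $u(H_{\mathrm{junk}})$, i.e.\ in directions orthogonal to the prescribed test vectors, hence invisible to $\Xi$ while remaining norm-large. The only serious external input is Voiculescu's theorem, which I am taking as known; everything else is Hilbert-space bookkeeping, and I expect the one delicate point to be keeping the separability and dimension counts honest in this isometry-to-unitary step.
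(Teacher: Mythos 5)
Your proposal is correct, but there is an important point of comparison to make: the paper itself does \emph{not} prove Lemma \ref{haag}. It is quoted as a ``deep lemma'' from Haagerup--Winsl\o w (\cite{Ha-Wi2}, Lemma 4.3), with only the remark that the proof uses Voiculescu's Weyl--von Neumann theorem. So you have reconstructed an argument the paper deliberately omits, and your route is exactly the one the paper attributes to the source: Voiculescu absorption $\rho\sim_a\rho\oplus\lambda$ (legitimate because $\rho\cong\rho\oplus\rho\oplus\cdots$ forces $\rho(A)\cap\mathcal{K}(H)=\{0\}$, and $\rho$ is faithful, so $\ker\rho\subseteq\ker\lambda$ trivially), extraction of a norm-approximately intertwining isometry ($W=w_n^*V$, with $V$ the inclusion of the $\lambda$-summand, is the clean way to phrase your ``$pw_n$'' construction), and a finite-rank cut-and-paste replacing the isometry by a unitary that agrees with it on a prescribed finite-dimensional subspace --- which is precisely why the conclusion is strong$^*$ rather than norm convergence. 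I checked the two delicate points and both hold: the dimension count $\dim H_{\mathrm{junk}}=\dim\bigl(W(E\oplus H')\bigr)^{\perp}=\aleph_0$ is valid for separable infinite-dimensional $H$ (in fact your auxiliary space $H'$ is redundant: matching $E^{\perp}$ with $(WE)^{\perp}$ already works), and the diagonal argument over norm-dense sequences in $A$ and $H$, together with the uniform bound $\lVert u_n\rho(x)u_n^*\rVert\le\lVert x\rVert$, does upgrade the finitary claim to strong$^*$ convergence for every $x\in A$ and every vector.

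Two remarks. First, your separability assumptions on $A$ and $H$ are not ``without loss of generality'' relative to the lemma as stated in the paper, which carries no such hypotheses; they are genuinely used, both in Voiculescu's theorem and in your diagonal argument, so they should be written into the statement you prove. This is harmless here: they hold in the paper's application ($A=C^*(\mathbb F_\infty)\otimes_{\min}C^*(\mathbb F_\infty)$ is separable, and $H$ is taken separable throughout Section \ref{se:kirchberg}), and they are part of the setting of \cite{Ha-Wi2}. Second, your proof uses the hypothesis $\rho\sim\rho\oplus\rho\oplus\cdots$ only to deduce $\rho(A)\cap\mathcal{K}(H)=\{0\}$, so you have in fact proved a formally stronger statement in which that weaker condition replaces the self-absorption hypothesis; this is worth stating explicitly, since it clarifies what the hypothesis is really doing.
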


The other preliminary result is a classical theorem by Choi (see \cite{Ch}, Theorem 7).

\begin{theorem}\label{ch}
Let $\mathbb F_2$ be the free group with two generators. Then
$C^*(\mathbb F_2)$ has a separating family of finite dimensional
representations.
\end{theorem}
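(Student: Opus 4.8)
The statement is the assertion that $C^*(\mathbb{F}_2)$ is \emph{residually finite dimensional}. The plan is to prove the quantitative form: the family of \emph{all} finite-dimensional representations of $C^*(\mathbb{F}_2)$ already computes the universal norm, i.e.\ $\norm{x}=\sup\{\norm{\pi(x)}:\pi\text{ finite-dimensional}\}$ for every $x\in C^*(\mathbb{F}_2)$. Once this is established, every $x\neq0$ is left nonzero by some finite-dimensional $\pi$, so the direct sum of all finite-dimensional representations is faithful, which is exactly the definition of a separating family. Recall that by the universal property of the full group C*-algebra (Proposition \ref{universal}) a representation is nothing but a choice of a pair of unitaries $(U_1,U_2)$ as images of the universal generators $u_1,u_2$, and the full norm is $\norm{x}=\sup_{(U_1,U_2)}\norm{x(U_1,U_2)}$ over all such pairs on all Hilbert spaces.

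Fix $x\neq0$ and $\varepsilon>0$. First I would approximate $x$ in norm by an element $p=\sum_{|w|\le N}c_w\,w$ of the group algebra $\mathbb{C}[\mathbb{F}_2]$, a finite combination of words of length at most $N$, with $\norm{x-p}<\varepsilon$. Next, using the universal norm, choose a representation on a Hilbert space $H$ given by unitaries $(U_1,U_2)$ and a unit vector $\xi\in H$ with $\norm{p(U_1,U_2)\xi}>\norm{p}-\varepsilon$. The crucial geometric step is to pass to the finite-dimensional subspace
\[
K=\operatorname{span}\{\,w(U_1,U_2)\xi : |w|\le N\,\}\subseteq H,
\]
which is \emph{exactly} invariant at low level: if $\eta=w(U_1,U_2)\xi$ with $|w|\le N-1$, then $U_i^{\pm1}\eta=(u_i^{\pm1}w)(U_1,U_2)\xi\in K$, since reduced length changes by at most one.

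I would then compress and correct. Let $P$ be the orthogonal projection onto $K$, put $W_i=PU_iP|_K$, a contraction on $K$, and pass to its standard unitary dilation on $K\oplus K$,
\[
V_i=\begin{pmatrix} W_i & (I-W_iW_i^{\ast})^{1/2}\\ (I-W_i^{\ast}W_i)^{1/2} & -W_i^{\ast}\end{pmatrix}.
\]
These $V_1,V_2$ are genuine finite-dimensional unitaries, hence define a finite-dimensional representation $\pi'$ of $C^*(\mathbb{F}_2)$. The key point is that the defect operators $(I-W_i^{\ast}W_i)^{1/2}$ \emph{vanish} on every interior vector $\eta=w(U_1,U_2)\xi$ with $|w|\le N-1$: there $W_i\eta=U_i\eta$ is an honest isometry because $U_i\eta\in K$, so $V_i(\eta\oplus0)=U_i\eta\oplus0$ with no spill into the second summand. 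Iterating along any word of length $\le N$ (the outermost letter always acts on a vector of length $\le N-1$) shows $w(V_1,V_2)(\xi\oplus0)=w(U_1,U_2)\xi\oplus0$ \emph{exactly}, hence $\pi'(p)(\xi\oplus0)=p(U_1,U_2)\xi\oplus0$. Consequently $\norm{\pi'(p)}\ge\norm{p(U_1,U_2)\xi}>\norm{p}-\varepsilon$, and combining with $\norm{x-p}<\varepsilon$ gives a finite-dimensional $\pi'$ with $\norm{\pi'(x)}>\norm{x}-3\varepsilon$. Letting $\varepsilon\to0$ yields the equality of norms, and hence the separating property.

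The step I expect to be the main obstacle is precisely this \emph{exact} reproduction of the word action after compression: a naive compression of unitaries to a finite-dimensional subspace is only approximately unitary, and an approximate estimate would accumulate uncontrolled errors when iterated up to word length $N$. The device that saves the argument is choosing $K$ to be truly (not merely approximately) invariant on all vectors of word length $<N$, so that the dilation defects are identically zero there and the error in the word action is literally nil. I would finally emphasize that this C*-algebraic statement is strictly stronger than the mere residual finiteness of $\mathbb{F}_2$, which separates only group elements and not arbitrary elements of $C^*(\mathbb{F}_2)$.
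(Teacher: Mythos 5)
Your proof is correct, but note that the paper does not actually prove this theorem: it invokes it as a classical result of Choi, citing \cite{Ch}, so there is no in-text argument to compare against. Your compression--dilation proof is a legitimate, self-contained one, and its key mechanism is sound: since $K$ contains $w(U_1,U_2)\xi$ for every reduced word of length at most $N$, any $\eta$ in the span of the orbit vectors of length at most $N-1$ satisfies $W_i\eta = PU_i\eta = U_i\eta$ and $\lVert W_i\eta\rVert = \lVert \eta\rVert$, whence $\langle (I-W_i^{\ast}W_i)\eta,\eta\rangle = 0$ and, by positivity, $(I-W_i^{\ast}W_i)^{1/2}\eta = 0$, so the Halmos dilation moves $\eta\oplus 0$ exactly as $U_i$ does; the induction over suffixes of a reduced word then closes because every suffix is again reduced of length at most $N$. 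One point you should make explicit: words in $\mathbb{F}_2$ also contain the letters $u_i^{-1}$, so the induction applies $V_i^{-1}=V_i^{\ast}$ as well, and there the relevant defect is the \emph{other} one, $(I-W_iW_i^{\ast})^{1/2}$; it vanishes on $\eta$ by the symmetric computation ($W_i^{\ast}\eta = U_i^{\ast}\eta$ has full norm when $U_i^{-1}\eta\in K$), so nothing breaks, but as written you only address one of the two defect operators. Your argument is essentially the classical strategy behind Choi's theorem (later generalized by Exel and Loring to free products of residually finite-dimensional C*-algebras): the heart in both cases is forcing finite-dimensional unitaries to agree \emph{exactly}, not approximately, with the given unitaries on the finite-dimensional span of short-word orbits; Choi implements this by extending isometries between finite-dimensional subspaces to unitaries, while you implement it by dilating compressions, which packages the bookkeeping neatly. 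A final small remark: the "quantitative" norming statement you prove is not actually stronger than the separating property, since a separating family of representations of a C*-algebra has faithful, hence isometric, direct sum; the two formulations are equivalent, and either one suffices for the use the paper makes of the theorem in the proof of Kirchberg's theorem.
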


\begin{exercise}\label{ex:freegroup}
Show that $\mathbb F_\infty$ embeds into $\mathbb F_2$.
\end{exercise}

\begin{proof}[Proof of (2)]
By using Choi's theorem and Exercise \ref{ex:freegroup} we can find a sequence
$\sigma_n$ of finite dimensional representations of $C^*(\mathbb
F_{\infty})$ such that $\sigma=\sigma_1\oplus\sigma_2\oplus \cdots $ is
faithful. Replacing $\sigma$ with the direct sum of countably many
copies of itself, we may assume that
$\sigma\sim\sigma\oplus\sigma\oplus \cdots $. Now, $\rho=\sigma\otimes\sigma$ is a faithful (by \cite{Ta1} IV.4.9) representation of
$C^*(\mathbb F_{\infty})\otimes_{\min}C^*(\mathbb F_{\infty})$, because $\rho$ splits. Moreover, $\rho$ still
satisfies $\rho\sim\rho\oplus\rho\oplus \cdots $.
Now, given $M\in vN(H)$, let $\{v_n\},\{w_n\}$ be strong* dense
sequences of unitaries in $Ball(M)$ and $Ball(M')$, respectively.
Let $\{z_n\}$ be the universal unitaries representing free generators of $\mathbb
F_{\infty}$ in $C^*(\mathbb F_{\infty})$, as in Exercises \ref{exer:universalunitaries} and \ref{exer:universalunitariestotal}. Since the $z_n$'s are free, we may find *-representations $\lambda_1$ and $\lambda_2$ of $C^*(\mathbb F_\infty)$ in $B(H)$ such that
$$
\lambda_1(z_n)=v_n\qquad\text{ and }\qquad\lambda_2(z_n)=w_n.
$$
Since the ranges of these representations commute, we can apply the universal property in Proposition \ref{universal} to find a representation
$\lambda$ of $C^*(\mathbb F_{\infty})\otimes_{\min}C^*(\mathbb
F_{\infty})$ such that
$$
\lambda(z_n\otimes1)=v_n\qquad\text{ and }\qquad \lambda(1\otimes
z_n)=w_n,\qquad \forall n\in\mathbb N,
$$
where we can use the minimal norm instead of the maximal one thank to the hypothesis of the theorem. This means that $\lambda$ and $\rho$ satisfy the hypotheses of Lemma \ref{haag} and therefore there are unitaries $u_n\in
U(B(H))$ such that
$$
u_n\rho(x)u_n^*\rightarrow^{so^*}\lambda(x),\qquad\forall
x\in C^*(\mathbb F_{\infty})\otimes_{\min}C^*(\mathbb F_{\infty}).
$$
Define
$$
M_n=u_n\rho(C^*(\mathbb F_{\infty})\otimes\mathbb C1)''u_n^*.
$$
%then
%$$
%u_n\rho(\mathbb C1\otimes C^*(\mathbb F_{\infty}))u_n^*\subseteq
%M_n'
%$$
Therefore, using also Equation \eqref{eq:strong}, we have
\begin{align}\label{eq:appr1}
\lambda(C^*(\mathbb F_{\infty})\otimes\mathbb
C1)=\liminf u_n\rho(C^*(\mathbb F_{\infty})\otimes\mathbb
C1)u_n^*\subseteq \liminf M_n.
\end{align}

Now, observe that
$$
u_n\rho(\mathbb C1\otimes C^*(\mathbb F_{\infty}))u_n^*\subseteq
M_n'
$$
and therefore
\begin{align}\label{eq:appr2}
\lambda(\mathbb C1\otimes C^*(\mathbb F_{\infty}))\subseteq \liminf
M_n'.
\end{align}
Since $\liminf M_a$ is always a von Neumann
algebra, the inclusions in \eqref{eq:appr1} and \eqref{eq:appr2} still hold passing to the weak closure. Therefore, by \eqref{eq:appr1}, we obtain
\begin{align}\label{eq:appr3}
M=\lambda(C^*(\mathbb F_{\infty})\otimes\mathbb C1)''\subseteq
\liminf M_n,
\end{align}
where the equality with $M$ follows from the fact that the $v_n$'s are strong* dense in $Ball(M)$. Analogously, by \eqref{eq:appr2}, we obtain
\begin{align}\label{eq:appr4}
M'=\lambda(\mathbb C1\otimes C^*(\mathbb F_{\infty}))''\subseteq
\liminf M_n',
\end{align}
where the equality with $M'$ follows from the fact that the $w_n$'s are strong* dense in $Ball(M')$.

Now, using \eqref{eq:appr3} and \eqref{eq:appr4} and applying the commutant theorem (\cite{Ha-Wi1}, Theorem 3.5), we get

$$
M=M''\supseteq\left(\liminf M_n'\right)'=\limsup(M_n)''=\limsup M_n.
$$

Therefore

$$
\limsup M_n\subseteq M\subseteq\liminf M_n,
$$

i.e.\ $M_n\rightarrow M$ in the Effros-Marechal topology.
Therefore, we have proved that every von Neumann algebra $M$ can be approximated by von Neumann algebras $M_n$ that are constructed as strong closure of faithful representations that are direct sum of countably many finite-dimensional representations. Such von Neumann algebras are injective and therefore, we have proved that $vN_{\text{inj}}(H)$ is dense in $vN(H)$. Now, by \cite{Ha-Wi1}, Theorem 5.2, $vN_{\text{inj}}$ is a $G_\delta$ subset of $vN(H)$ and by \cite{Ha-Wi1}, Theorem 3.11, and \cite{Ha-Wi2}, Theorem 2.5, the set of all factors $\mathfrak F(H)$ is a dense $G_\delta$-subset of vN(H). Since $vN(H)$ is a Polish space, we can apply Baire's theorem and conclude that
also the intersection $vN_{\text{inj}}(H)\cap\mathfrak F(H)=\mathfrak F_{\text{inj}}(H)$ must be
dense.
\end{proof}

Let $\mathcal{K}$ denote the class of groups $\Gamma$ satisfying \emph{Kirchberg's property}\index{Kirchberg's property}
$$
C^*(\Gamma)\otimes_{\min}C^*(\Gamma)=C^*(\Gamma)\otimes_{\max}C^*(\Gamma)
$$

The following seems to be an open and interesting problem \cite{MO}.

\begin{problem}
Does $\mathcal{K}$ contain a countable discrete non-amenable group?
\end{problem}

Another interesting question comes from the observation that the previous proof as well as Kirchberg's original proof uses free groups in a very strong way.

\begin{problem}
For which groups $\Gamma$, does the property $\Gamma\in \mathcal{K}$ implies Connes' embedding\index{conjecture!Connes' embedding} conjecture? 
\end{problem}

\section{Connes' embedding conjecture and Lance's WEP}

Kirchberg's theorem \ref{th:kirchberg} is an astonishing
link between the theory of von Neumann algebras and the theory of C*-algebras. In the same paper, Kirchberg found another profound link
between these two theories: Connes' embedding\index{conjecture!Connes' embedding} conjecture is a
particular case of a conjecture regarding the structure of
C*-algebras, the QWEP conjecture, asking whether any
C*-algebra is a quotient of a C*-algebra with
Lance's WEP. It is then natural to ask whether there is a direct relation
between Connes' embedding\index{conjecture!Connes' embedding} conjecture and WEP. Nate Brown answered this question in the affirmative, proving that Connes' embedding\index{conjecture!Connes' embedding} conjecture is equivalent
to the analogue of Lance's WEP for separable type II$_1$ factors.

\begin{definition}
Let $A,B$ be C*-algebras. A linear map $\phi:A\to B$ is called \emph{positive}\index{linear map!positive} if $\phi(a^*a)\geq0$, for all $a\in A$.
\end{definition}

\begin{definition}
Let $A,B$ be C*-algebras and $\phi:A\rightarrow B$ be a linear map.
For every $n\in\mathbb N$ we can define a map
$\phi_n:M_n(A)\rightarrow M_n(B)$ by setting
$$
\phi_n[a_{ij}]=[\phi(a_{ij})]
$$
$\phi$ is called \emph{completely positive}\index{linear map!completely positive} if $\phi_n$ is positive for
every $n$.
\end{definition}
\begin{exercise}\label{ex:homarecp}
Show that any *-homomorphism between C*-algebras is completely positive.
\end{exercise}
\begin{definition}
Let $A\subseteq B$ be two C*-algebras. We say that $A$ is \emph{weakly
cp complemented}\index{C*-algebra!weakly cp complemented} in $B$ if there exists a unital completely positive
map $\phi:B\rightarrow A^{**}$ such that $\phi|_A=Id_A$.
\end{definition}

Let $A$ be a C*-algebra. We recall that there is always a faithful representation of $A$ into a suitable $B(H)$, for instance the GNS representation. In other words, one can always regard an abstract C*-algebra as a sub-C*-algebra of $B(H)$ through a faithful representation. Hereafter, we use the notation $A\subseteq B(H)$ to say that we have fixed one particular such representation. As we will see, the concepts we are going to introduce are independent of the representation.

\begin{definition}
A C*-algebra $A$ has the \emph{weak expectation
property}\index{weak expectation property} (WEP, for short) if it is weakly cp complemented in $B(H)$ for a faithful
representation $A\subseteq B(H)$.
\end{definition}

The weak expectation property was introduced by Lance in\cite{La}.

\begin{exercise}\label{ex:wepindependent}
Show that WEP does not depend on the faithful representation $A\subseteq B(H)$.
\end{exercise}

Representation-free characterizations of WEP have been recently proven in \cite{FKP}, \cite{K12}, and \cite{FKPT} leading to new formulations of Connes' Embedding Problem \cite{FKPT}.

\begin{definition}
A C*-algebra $A$ has \emph{QWEP}\index{QWEP} if it is a quotient of a C*-algebra with
WEP.
\end{definition}
QWEP conjecture states that every C*-algebra has QWEP. As mentioned above, in\cite{Ki}, Kirchberg proved also the following theorem.
\begin{theorem}
Let $M$ be a separable $II_1$ factor. The following statements are equivalent
\begin{enumerate}
\item $M$ is embeddable into some $\mathcal{R}^{\mathcal{U}}$,
\item $M$ has QWEP.
\end{enumerate}
\end{theorem}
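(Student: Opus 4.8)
The plan is to prove the two implications separately: $(1)\Rightarrow(2)$ by a soft permanence argument built around a conditional expectation, and $(2)\Rightarrow(1)$ — the substantial direction — by extracting matricial microstates from the weak expectation. Throughout I would use the equivalences already established in this chapter, namely that a separable \textup{II}$_1$ factor $M$ embeds into $\mathcal{R}^{\mathcal{U}}$ if and only if it satisfies the CEC, which by Exercise \ref{Exercise: finitely representable} (applied with $N=\mathcal{R}$, using the local universality of $\mathcal{R}$) is in turn equivalent to the existence, for every finite $F\subseteq M$ and $\varepsilon>0$, of a unital completely positive map into a matrix algebra that is $\varepsilon$-multiplicative and $\varepsilon$-trace-preserving in $\|\cdot\|_2$.

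For $(1)\Rightarrow(2)$ I would first record that $\mathcal{R}$, being approximately finite dimensional (Definition \ref{Definition: hyperfinite}), is injective, hence the range of a norm-one projection $B(H)\to\mathcal{R}$, so $\mathcal{R}$ has the WEP. Next I would note that the von Neumann algebra $\ell^{\infty}(\mathcal{R})=\prod_n\mathcal{R}$ of bounded sequences is a tensor product of the injective algebras $\mathcal{R}$ and $\ell^{\infty}(\mathbb{N})$, hence itself injective and so WEP. Since the tracial ultrapower $\mathcal{R}^{\mathcal{U}}$ is by construction the quotient $\ell^{\infty}(\mathcal{R})/I_{\mathcal{U}}$, and QWEP passes to quotients, $\mathcal{R}^{\mathcal{U}}$ is QWEP. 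Finally, assuming $M\subseteq\mathcal{R}^{\mathcal{U}}$ as a subfactor carrying the restricted trace, the orthogonal projection of $L^2(\mathcal{R}^{\mathcal{U}},\tau)$ onto $L^2(M,\tau)$ yields a normal, trace-preserving, unital completely positive conditional expectation $E:\mathcal{R}^{\mathcal{U}}\to M$ with $E|_M=\mathrm{Id}_M$. Composing with $M\hookrightarrow M^{\ast\ast}$ exhibits $M$ as weakly cp complemented in the QWEP algebra $\mathcal{R}^{\mathcal{U}}$, and since QWEP is inherited by weakly cp complemented subalgebras, $M$ has QWEP.

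For $(2)\Rightarrow(1)$ I would write $M=A/J$ with $A$ a WEP C*-algebra and quotient map $\pi:A\to M$, fix a faithful representation $A\subseteq B(H)$ with weak expectation $\Phi:B(H)\to A^{\ast\ast}$, and aim to produce the microstates described above. The idea is to transport the tracial state $\tau\circ\pi$ on $A$ through finite-dimensional approximations: using that $B(H)$ has the WEP and admits unital completely positive approximations through matrix algebras, one factors $\tau\circ\pi$ approximately through some $M_k(\mathbb{C})$, then pushes the resulting maps down through $\pi$ and corrects using the weak expectation $\Phi$ to obtain unital completely positive, asymptotically multiplicative, asymptotically trace-preserving maps $M\to M_k(\mathbb{C})$. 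By the equivalences recalled above these microstates give the embedding $M\hookrightarrow\mathcal{R}^{\mathcal{U}}$. An appealing alternative would route the argument through the Haagerup--Winsl\o w Theorem \ref{haa}: identifying QWEP von Neumann algebras with the Effros--Marechal limits of injective (equivalently WEP) factors places $M$ in $\overline{\mathfrak{F}_{\text{inj}}}$, whereupon the last sentence of Theorem \ref{haa} delivers $M\hookrightarrow\mathcal{R}^{\mathcal{U}}$ directly.

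I expect the converse direction to be the main obstacle. The tension is that QWEP is an intrinsically C*-algebraic, tensorial hypothesis, whereas embeddability into $\mathcal{R}^{\mathcal{U}}$ is a tracial, approximation-by-matrices statement; bridging the two requires the nontrivial technical input that the weak expectation can be combined with finite-dimensional completely positive approximations of $B(H)$ so as to yield \emph{trace-preserving} and \emph{asymptotically multiplicative} maps into matrix algebras, rather than merely completely positive ones. Once that lemma is in hand, the reductions in both directions are routine, the forward direction resting only on injectivity of $\mathcal{R}$, the quotient-stability of QWEP, and the existence of the trace-preserving conditional expectation $E$.
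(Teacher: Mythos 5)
First, a point of reference: the paper does not prove this theorem at all. Immediately after stating it, it refers the reader to Kirchberg's original paper \cite{Ki} and to Ozawa's survey \cite{Oz}, and spends its effort instead on the von Neumann algebraic analogue, Brown's Theorem \ref{marrone}. So your proposal must be judged against the standard literature argument and against the machinery the paper actually develops. Your direction $(1)\Rightarrow(2)$ is correct and is the standard argument: $\mathcal{R}$ and $\ell^{\infty}(\mathcal{R})\cong\ell^{\infty}(\mathbb{N})\bar{\otimes}\mathcal{R}$ are injective, hence WEP; $\mathcal{R}^{\mathcal{U}}$ is a C*-quotient of $\ell^{\infty}(\mathcal{R})$, hence QWEP; and the trace-preserving conditional expectation $E:\mathcal{R}^{\mathcal{U}}\to M$ exhibits $M$ as weakly cp complemented in $\mathcal{R}^{\mathcal{U}}$. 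Two caveats, though. The step you quote last --- QWEP passes to weakly cp complemented subalgebras --- is itself a nontrivial theorem of Kirchberg requiring a ucp lifting argument; most of the content of this implication lives there, so it must be proved or cited precisely. And in your preamble, the equivalence between embeddability into $\mathcal{R}^{\mathcal{U}}$ and matricial microstates needs only hyperfiniteness of $\mathcal{R}$ (Exercises \ref{Exercise: equivalence CEC 1} and \ref{Exercise: finitely representable}); invoking ``local universality of $\mathcal{R}$'' is not allowed, since that statement \emph{is} the Connes Embedding Conjecture.

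The genuine gap is in $(2)\Rightarrow(1)$, and you name it yourself but then leave it open. As written, ``one factors $\tau\circ\pi$ approximately through some $M_k(\mathbb{C})$'' proves nothing: any state factors through a matrix algebra (indeed through $\mathbb{C}$), and the whole difficulty is asymptotic multiplicativity and trace preservation in $\left\Vert \cdot\right\Vert_2$; ``correcting using $\Phi$'' is not an argument. What closes the gap is the invariant-mean machinery that the paper proves in full as Theorem \ref{invariant}. Concretely: $\tau\circ\pi$ is a tracial state on the WEP algebra $A$; composing the weak expectation $\Phi:B(H)\to A^{\ast\ast}$ with the normal extension $A^{\ast\ast}\to\pi_{\tau\circ\pi}(A)''$ of the GNS representation yields a ucp map $B(H)\to\pi_{\tau\circ\pi}(A)''$ restricting to $\pi_{\tau\circ\pi}$ on $A$, which is exactly condition (3) of Theorem \ref{invariant}. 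The implication $(3)\Rightarrow(1)$ there (via Choi's Theorem \ref{choi}) shows $\tau\circ\pi$ is an invariant mean, and $(1)\Rightarrow(2)$ (via Powers--St{\o}rmer and Lemma \ref{brown1}) produces precisely the asymptotically multiplicative, asymptotically trace-preserving ucp maps $A\to M_{k(n)}(\mathbb{C})$ that you postulate. Assembling them through the quotient $\ell^{\infty}(\mathcal{R})\to\mathcal{R}^{\mathcal{U}}$ gives a trace-preserving *-homomorphism $A\to\mathcal{R}^{\mathcal{U}}$ whose kernel contains $J=\ker\pi$ by faithfulness of the ultrapower trace, so it descends to an injective trace-preserving embedding of $M$; this is word for word the strategy of the paper's proof of $(2)\Rightarrow(1)$ of Theorem \ref{marrone}. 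Finally, your ``appealing alternative'' via Theorem \ref{haa} is circular in this context: the identification of QWEP factors with Effros--Marechal limits of injective factors is not an independent input --- in the paper it appears only as a corollary obtained by combining Brown's Theorem \ref{wep} and Theorem \ref{haa} with precisely the theorem you are asked to prove (with \cite{AHW13} cited for the general case) --- so it cannot be used here.
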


We refer to the original paper by
Kirchberg \cite{Ki} and to the more recent survey by Ozawa \cite{Oz} for the proof of this theorem. In these notes we focus on a technically easier but equally interesting topic: the von Neumann algebraic analogue of Lance's WEP and the proof of Brown's theorem. 

\begin{definition}
Let $M\subseteq B(H)$ be a von Neumann algebra and $A\subseteq M$ a
weakly dense C*-subalgebra. We say that $M$ has a \emph{weak
expectation relative}\index{weak expectation property!relative} to $A$ if there exists a ucp map
$\Phi:B(H)\rightarrow M$ such that $\Phi|_A=\text{Id}_A$.
\end{definition}

\begin{theorem}{\bf (Brown, \cite{Br} Th.1.2)}\label{wep}
For a separable type II$_1$ factor $M$ the following conditions are
equivalent:
\begin{enumerate}
\item $M$ is embeddable into $\mathcal{R}^{\mathcal U}$.
\item $M$ has a weak expectation relative to some weakly dense
subalgebra.
\end{enumerate}
\end{theorem}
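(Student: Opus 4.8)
The plan is to prove the equivalence of the two conditions for a separable type $\textup{II}_1$ factor $M$: embeddability into $\mathcal{R}^{\mathcal{U}}$ and the existence of a weak expectation relative to some weakly dense subalgebra. I would organize the argument as two implications, treating $(1)\Rightarrow(2)$ and $(2)\Rightarrow(1)$ separately, and I expect the harder direction to be $(2)\Rightarrow(1)$.

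First I would tackle $(1)\Rightarrow(2)$. Suppose $M$ embeds into $\mathcal{R}^{\mathcal{U}}$, so by the results of Section~\ref{se:kirchberg} (in particular Kirchberg's theorem relating embeddability into $\mathcal{R}^{\mathcal{U}}$ to QWEP) we know $M$ has QWEP. The key idea is to choose a concrete weakly dense separable C*-subalgebra $A\subseteq M$ and produce a ucp map $\Phi:B(H)\to M$ restricting to the identity on $A$. Since $M$ embeds into $\mathcal{R}^{\mathcal{U}}$ and $\mathcal{R}^{\mathcal{U}}$ arises as an ultrapower, one can exploit the weak expectation properties of $\mathcal{R}^{\mathcal{U}}$ itself: the hyperfinite factor is injective, hence there is a norm-one projection onto it, and ultrapowers inherit a weak-expectation-type structure. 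I would transport this back along the embedding, using that a faithful normal conditional expectation or a ucp lifting exists because $\mathcal{R}$ is AFD. The density of $A$ and normality of the trace let one check $\Phi|_A=\mathrm{Id}_A$ on the dense subalgebra, which suffices.

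The main obstacle is the converse $(2)\Rightarrow(1)$. Here we assume a ucp map $\Phi:B(H)\to M$ with $\Phi|_A=\mathrm{Id}_A$ for some weakly dense $A$, and must produce an embedding of $M$ into $\mathcal{R}^{\mathcal{U}}$. The strategy is to use the ucp map $\Phi$ together with the Stinespring dilation to build finite-dimensional approximations: a ucp map into $M$ can be approximated by maps that factor (approximately, in $\|\cdot\|_2$) through matrix algebras, because $B(H)$ admits approximations by finite-rank projections and $M$ is approximately finite-dimensionally representable once one controls the trace. Concretely, I would show that condition~(2) forces the existence, for every finite $F\subseteq M$ and $\varepsilon>0$, of an $(F,\varepsilon)$-approximate morphism from $M$ into some $M_n(\mathbb{C})$, i.e. local representability into $\mathcal{R}$ as in Exercise~\ref{Exercise: equivalence CEC 1}; by that exercise this is equivalent to satisfying the CEC, hence to embeddability into $\mathcal{R}^{\mathcal{U}}$. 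The delicate point—and where Brown's original argument in \cite{Br} does real work—is converting the single global ucp map $\Phi$ into these uniform finite-dimensional approximate morphisms while preserving the trace approximately; this requires combining the weak expectation with the GNS construction for $\tau$ and a careful averaging to force the approximating maps to be (approximately) trace-preserving and multiplicative on $F$.

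I would therefore conclude by invoking Exercise~\ref{Exercise: finitely representable} and the equivalent formulations of the CEC collected after Exercise~\ref{Exercise: equivalence CEC 1}: once $M$ is shown to be locally representable in $\mathcal{R}$, it embeds into $\mathcal{R}^{\mathcal{U}}$. The technical heart, which I expect to be the genuinely hard step, is the passage from the weak expectation relative to $A$ to uniform matricial approximations respecting the trace; I would isolate this as the main lemma and build the rest of the proof around it, citing \cite{Br} for the finer completely-positive-map estimates rather than reproducing them in full.
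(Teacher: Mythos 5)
Your direction $(2)\Rightarrow(1)$ is morally on track and matches the paper's route: turn the weak expectation into a sequence of ucp maps into matrix algebras that are asymptotically multiplicative in $\left\Vert \cdot \right\Vert _{2}$ and asymptotically trace-preserving, compose with the quotient $\ell^{\infty}(\mathcal{R})\rightarrow\mathcal{R}^{\mathcal{U}}$, and pass to weak closures. But you defer the entire engine to \cite{Br}; in the paper this engine is Theorem \ref{invariant}, the three-way characterization of invariant means, proved via the Powers--St{\o}rmer inequality, a Hahn--Banach averaging over $U(A)$-orbits (Lemma \ref{brown1}), and Choi's multiplicative domain theorem (Theorem \ref{choi}). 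One structural correction: the approximate morphisms produced this way are multiplicative only on the weakly dense subalgebra $A$, not on arbitrary finite subsets of $M$ as you claim; one does not verify local representability of $M$ directly, but instead observes that the resulting trace-preserving *-homomorphism of $A$ into $\mathcal{R}^{\mathcal{U}}$ is injective and that the weak closure of its image is isomorphic to $M$ because the two algebras are algebraically isomorphic and carry the same trace.

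The genuine gap is in $(1)\Rightarrow(2)$. You propose to ``transport injectivity back along the embedding,'' but $\mathcal{R}^{\mathcal{U}}$ is not injective (it contains, as the range of a trace-preserving conditional expectation, non-injective subfactors such as $L\mathbb{F}_{2}$), so Arveson extension cannot be applied with target $\mathcal{R}^{\mathcal{U}}$ to turn the inclusion $M\subseteq B(H)$ into a ucp map landing there; and the injective algebra one would like to use, $\ell^{\infty}(\mathcal{R})$, sits above $\mathcal{R}^{\mathcal{U}}$ as a quotient, so exploiting it requires \emph{lifting} the embedding $M\hookrightarrow\mathcal{R}^{\mathcal{U}}$ (or its restriction to a dense subalgebra) to a map into $\ell^{\infty}(\mathcal{R})$ --- and no such lifting is available for an arbitrary weakly dense subalgebra, which is precisely why the theorem asserts a weak expectation relative to \emph{some} dense subalgebra rather than every one. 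The key idea your sketch never identifies is Proposition \ref{densealgebra}: every separable II$_{1}$ factor contains a weakly dense copy of $C^{*}(\mathbb{F}_{\infty})$. For this specific $A$ the lifting exists, because each free generator maps to a unitary of $\mathcal{R}^{\mathcal{U}}$, which admits a representative sequence of unitaries (Exercise \ref{Exercise: unitary group ultraproduct}) that can be approximated in $\left\Vert \cdot\right\Vert _{2}$ by unitary matrices, and freeness guarantees these lifts assemble into a *-homomorphism $C^{*}(\mathbb{F}_{\infty})\rightarrow\prod_{k}M_{k}(\mathbb{C})\subseteq\ell^{\infty}(\mathcal{R})$ with no relations to check. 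This verifies condition (2) of Theorem \ref{invariant} for $\tau|_{A}$, hence $\tau|_{A}$ is an invariant mean, and condition (3) of the same theorem then delivers the ucp map $\Phi:B(H)\rightarrow\pi_{\tau}(M)$ restricting to $\pi_{\tau}$ on $A$. Your opening appeal to Kirchberg's QWEP theorem does not repair this: QWEP exhibits $M$ as a quotient of a WEP algebra, which is not the same as a ucp map $B(H)\rightarrow M$ fixing a dense subalgebra, and deducing the latter from the former is essentially the content of the theorem being proved.
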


Observe that the notion of injectivity for von Neumann algebras can be stated also
this way: $M\subseteq B(H)$ is \emph{injective}\index{factor!injective} if there exists
a ucp map $\Phi:B(H)\rightarrow M$ such that $\Phi(x)=x$, for all
$x\in M$. So relative weak expectation property is something just a bit less
than injectivity. In fact Brown's
theorem can be read by saying that weak expectation relative
property is the ``limit property of injectivity''.
\begin{corollary}
For a separable type II$_1$ factor the following conditions are
equivalent:
\begin{enumerate}
\item $M$ has a relative weak expectation property,
\item $M$ has QWEP,
\item $M$ is Effros-Marechal limit of injective factors.
\end{enumerate}
\begin{proof}
It is an obvious consequence of Theorems \ref{wep} and \ref{haa}.
\end{proof}
\end{corollary}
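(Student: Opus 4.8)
The plan is to route all three conditions through a single pivot, namely the property that $M$ embeds into $\mathcal{R}^{\mathcal{U}}$ for some free ultrafilter $\mathcal{U}$ on $\mathbb{N}$. Each of the three displayed conditions has already been shown in the excerpt to be equivalent to this pivot, so the corollary follows by transitivity of logical equivalence; the only work is to assemble the three citations and to reconcile the terminology in condition (3) with the notation $\overline{\mathfrak F_{\text{inj}}}$ used in Theorem \ref{haa}.

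First I would record that condition (1), the relative weak expectation property, is equivalent to embeddability into $\mathcal{R}^{\mathcal{U}}$. This is exactly the content of Brown's Theorem \ref{wep}: for a separable type II$_1$ factor $M$, having a weak expectation relative to some weakly dense subalgebra is equivalent to $M$ being embeddable into $\mathcal{R}^{\mathcal{U}}$. Next I would handle condition (2), QWEP, which is equivalent to the same pivot by Kirchberg's theorem recalled just before the definition of relative WEP: a separable II$_1$ factor has QWEP if and only if it is embeddable into some $\mathcal{R}^{\mathcal{U}}$. At this point (1) $\Leftrightarrow$ (2) is already established through the common hub.

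Finally I would treat condition (3). The phrase ``$M$ is an Effros--Marechal limit of injective factors'' means precisely that $M$ lies in the Effros--Marechal closure $\overline{\mathfrak F_{\text{inj}}}$ of the set of injective factors acting on a separable Hilbert space $H$; here one uses that $vN(H)$ is a Polish space, so membership in the closure is witnessed by a convergent sequence $M_n \to M$ with each $M_n$ injective. The ``moreover'' clause of the Haagerup--Winsl\o w Theorem \ref{haa} states exactly that a separable II$_1$ factor $M$ is embeddable into $\mathcal{R}^{\mathcal{U}}$ if and only if $M \in \overline{\mathfrak F_{\text{inj}}}$, which gives (3) $\Leftrightarrow$ (pivot). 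Combining the three equivalences, all of (1), (2), (3) are equivalent to embeddability into $\mathcal{R}^{\mathcal{U}}$, hence equivalent to one another.

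The main point requiring care is not an analytic obstacle but a bookkeeping one: ensuring that the several ultrafilters appearing in Theorems \ref{wep}, \ref{haa}, and Kirchberg's theorem need not coincide, since each statement is phrased as embeddability into $\mathcal{R}^{\mathcal{U}}$ for \emph{some} free ultrafilter. Because all three equivalences quantify existentially over $\mathcal{U}$ in the same way, no compatibility of ultrafilters is needed, and one does not have to invoke the Continuum Hypothesis to identify different ultrapowers of $\mathcal{R}$. I would simply remark that the pivot property is ultrafilter-independent in the sense used throughout the chapter, so the chain of equivalences closes without further hypotheses.
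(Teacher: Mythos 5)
Your proposal is correct and follows essentially the same route as the paper: all three conditions are funneled through the single pivot of embeddability into $\mathcal{R}^{\mathcal{U}}$, via Brown's Theorem \ref{wep} for (1), Kirchberg's QWEP theorem for (2), and the final clause of the Haagerup--Winsl\o w Theorem \ref{haa} for (3). If anything, you are more precise than the paper's one-line proof, which cites only Theorems \ref{wep} and \ref{haa} and leaves the appeal to Kirchberg's QWEP characterization (and the harmless existential quantification over ultrafilters) implicit.
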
 
The equivalence between 2. and 3. has been recently generalized to every von Neumann algebra (see \cite{AHW13}, Theorem 1.1).\\

We now move towards the proof of Theorem \ref{wep}. Let $A$ be a separable C*-algebra.
\begin{definition}
A \emph{tracial state}\index{tracial state} on $A$ is map $\tau:A_+\rightarrow[0,\infty]$ such
that
\begin{enumerate}
\item $\tau(x+y)=\tau(x)+\tau(y)$, for all $x,y\in A_+$;
\item $\tau(\lambda x)=\lambda\tau(x)$, for all $\lambda\geq0,x\in
A_+$;
\item $\tau(x^*x)=\tau(xx^*)$ for all $x\in A$;
\item $\tau(1)=1$.
\end{enumerate}
A tracial state extends to a positive functional on the whole $A$. We will often identify the tracial state with its extension.
\end{definition}
\begin{definition}
A tracial state $\tau$ on $A\subseteq B(H)$ is called
\emph{invariant mean}\index{invariant mean} if there exists a state $\psi$ on $B(H)$ such
that
\begin{enumerate}
\item $\psi(uTu^*)=\psi(T)$, for all $u\in U(A)$ and $T\in B(H)$;
\item $\psi|_A=\tau$.
\end{enumerate} 
\end{definition}
\begin{note}
Theorem \ref{invariant} shows that the notion of invariant
mean is independent of the choice of the faithful representation
$A\subseteq B(H)$.
\end{note}
In order to prove Brown's theorem we need a characterization of
invariant means that will be proven in Theorem \ref{invariant}. 

In order to prove such result we need two classical theorems and one lemma. Recall that $L^p(B(H))$ stands for the ideal of linear and bounded operators $a$ such that $||a||_p:=Tr\left((a^*a)^{\frac{p}{2}}\right)^{\frac{1}{p}}$ is finite, where $Tr$ is the canonical semifinite trace on $B(H)$. The notation $L^p(B(H))_+$ stands for the cone of operators $a$ in $L^p(B(H))$ that are positive, that is, $(a\xi,\xi)\geq0$, for all $\xi\in H$. The following well known inequality is sometimes called Powers-St{\o}rmer inequality (see\cite{Po-St}).
\begin{theorem}
Let $h,k\in L^1(B(H))_+$. Then
$$
||h-k||_2^2\leq||h^2-k^2||_1,
$$
In particular, if $u\in
U(B(H))$ and $h\geq0$ has finite rank, then
$$
||uh^{1/2}-h^{1/2}u||_2=||uh^{1/2}u^*-h^{1/2}||_2\leq||uhu^*-h||_1^{1/2}.
$$
\end{theorem}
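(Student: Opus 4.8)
The plan is to prove the main inequality $\|h-k\|_2^2 \le \|h^2-k^2\|_1$ for $h,k\ge 0$, and then to read off the ``in particular'' clause as a one-line consequence. First I would expand the Hilbert--Schmidt norm as $\|h-k\|_2^2 = \mathrm{Tr}((h-k)^2)$, so that the entire content becomes an inequality between two traces. The device that unlocks everything is to bound the trace-class norm $\|h^2-k^2\|_1$ by testing it against a cleverly chosen contraction rather than evaluating it directly: writing $c=h-k$ and letting $w=\mathrm{sgn}(c)$ be the self-adjoint sign of $c$ (so $w$ is a contraction and $cw=wc=|c|$), Hölder duality for Schatten norms gives $\mathrm{Tr}((h^2-k^2)w)\le \|h^2-k^2\|_1\,\|w\|_\infty\le\|h^2-k^2\|_1$. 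It therefore suffices to bound $\mathrm{Tr}((h^2-k^2)w)$ below by $\|h-k\|_2^2$.

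Next I would use the algebraic identity $h^2-k^2=h(h-k)+(h-k)k$ together with cyclicity of the trace and the relations $cw=wc=|c|$ to compute $\mathrm{Tr}((h^2-k^2)w)=\mathrm{Tr}(h|h-k|)+\mathrm{Tr}(k|h-k|)=\mathrm{Tr}((h+k)\,|h-k|)$. At this point the theorem reduces to the single crux inequality $\mathrm{Tr}((h+k)\,|h-k|)\ge \mathrm{Tr}(|h-k|^2)$.

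The tempting shortcut $|h-k|\le h+k$ as \emph{operators} is false for non-commuting positive operators, so this must be proved at the level of traces, and that is where I expect the main obstacle to lie. The plan is to split $c=h-k$ into its positive and negative parts $c=c_+-c_-$, with $|c|=c_++c_-$ and $c_+c_-=c_-c_+=0$. Writing $h+k=c+2k$ gives $\mathrm{Tr}((h+k)c_+)=\mathrm{Tr}(c_+^2)+2\mathrm{Tr}(kc_+)\ge\mathrm{Tr}(c_+^2)$, since $k,c_+\ge0$ forces $\mathrm{Tr}(kc_+)\ge0$; symmetrically, writing $h+k=2h-c$ gives $\mathrm{Tr}((h+k)c_-)\ge\mathrm{Tr}(c_-^2)$. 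Adding these and using orthogonality of $c_\pm$ yields $\mathrm{Tr}((h+k)|c|)\ge\mathrm{Tr}(c_+^2)+\mathrm{Tr}(c_-^2)=\mathrm{Tr}(|c|^2)=\|h-k\|_2^2$, which completes the main inequality. This orthogonal-decomposition step is the heart of the argument and the one place where positivity of $h$ and $k$, rather than mere self-adjointness, is genuinely used.

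Finally, the ``in particular'' statement follows with essentially no further work. The first equality is just right multiplication by the unitary $u^*$, which is an isometry for $\|\cdot\|_2$, applied to $uh^{1/2}-h^{1/2}u$. For the inequality I would apply the main estimate to the two positive finite-rank operators $a=uh^{1/2}u^*=(uhu^*)^{1/2}$ and $b=h^{1/2}$, obtaining $\|uh^{1/2}u^*-h^{1/2}\|_2^2\le\|uhu^*-h\|_1$; taking square roots and combining with the first equality yields exactly $\|uh^{1/2}-h^{1/2}u\|_2\le\|uhu^*-h\|_1^{1/2}$. The finite-rank hypothesis serves only to keep all operators in the trace class so that the norms are finite; otherwise the rest of the deduction is routine bookkeeping with trace identities and Hölder's inequality.
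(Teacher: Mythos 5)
Your proof is correct. A point of comparison worth noting: the paper itself does not prove this theorem at all — it quotes it as the classical Powers--St{\o}rmer inequality and simply cites the original paper \cite{Po-St} — so your argument supplies something the text delegates to a reference. What you wrote is, in essence, the standard Powers--St{\o}rmer proof: test $h^2-k^2$ against the contraction $w=\mathrm{sgn}(h-k)$ via trace duality, use $h^2-k^2=h(h-k)+(h-k)k$ and cyclicity to reduce to $\mathrm{Tr}\bigl((h+k)\,|h-k|\bigr)\geq \mathrm{Tr}\bigl((h-k)^2\bigr)$, and then prove that trace inequality by splitting $h-k=c_+-c_-$ and exploiting $\mathrm{Tr}(kc_+)\geq 0$, $\mathrm{Tr}(hc_-)\geq 0$ together with $c_+c_-=0$. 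All the individual steps check out: the sign operator commutes with $c=h-k$ and satisfies $cw=wc=|c|$, the cyclicity manipulations only ever pair a trace-class operator with a bounded one, and your correct observation that $|h-k|\leq h+k$ fails as an operator inequality is precisely why the positive/negative-part decomposition is needed. The ``in particular'' clause is also handled properly: right multiplication by $u^*$ is a $\|\cdot\|_2$-isometry, $uh^{1/2}u^*=(uhu^*)^{1/2}$ by functional calculus, and the finite-rank hypothesis guarantees $h^{1/2}$ and $(uhu^*)^{1/2}$ lie in $L^1(B(H))_+$ (which mere trace-class membership of $h$ would not).
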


We now use Powers-St{\o}rmer's inequality to prove a lemma. Recall that $M_{q}\left( \mathbb{C}\right) $ stands for the von Neumann algebra of q-by-q matrices with complex entries.

\begin{lemma}\label{brown1}
Let $H$ be a separable Hilbert space and $h\in B(H)$ be a positive,
finite rank operator with rational eigenvalues and $Tr(h)=1$. Then
there exists a ucp map $\Phi:B(H)\rightarrow M_{q}\left( \mathbb{C}\right) $ such
that
\begin{enumerate}
\item $tr(\Phi(T))=Tr(hT)$, for all $T\in B(H)$;
\item $|tr(\Phi(uu^*)-\Phi(u)\Phi(u^*))|<2||uhu^*-h||_1^{1/2}$, for
all $u\in U(B(H))$;
\end{enumerate}
where $tr$ stands for the normalized trace on $M_{q}\left( \mathbb{C}\right) $.
\end{lemma}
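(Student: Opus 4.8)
The plan is to build $\Phi$ explicitly from the spectral data of $h$, using the rationality of the eigenvalues precisely to fix the size $q$ of the target matrix algebra. Write the spectral decomposition $h=\sum_{j}\lambda_j\,|\xi_j\rangle\langle\xi_j|$ with $\{\xi_j\}$ orthonormal and $\lambda_j=k_j/q$ for positive integers $k_j$; since $\mathrm{Tr}(h)=1$ we have $\sum_j k_j=q$. I would realize the state $T\mapsto\mathrm{Tr}(hT)$ through its canonical purification $\Omega_0=\sum_j\sqrt{\lambda_j}\,\xi_j\otimes\overline{\xi_j}\in H\otimes\overline H$, which under the Hilbert--Schmidt identification of $H\otimes\overline H$ with the Hilbert--Schmidt operators corresponds to $h^{1/2}$ and satisfies $\langle(T\otimes 1)\Omega_0,\Omega_0\rangle=\mathrm{Tr}(hT)$. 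Using that $h$ has denominator $q$, I would then produce $q$ orthonormal vectors $\Omega_1,\dots,\Omega_q\in H\otimes\overline H$, each again a purification of $h$, obtained from $\Omega_0$ by a cyclic unitary acting on the $q$-fold unfolding of the eigenbasis in which every $\xi_j$ is repeated $k_j$ times. Setting $We_l=\Omega_l$ defines an isometry $W\colon\mathbb C^q\to H\otimes\overline H$, and I put $\Phi(T)=W^*(T\otimes 1)W$. As a compression of the $*$-homomorphism $T\mapsto T\otimes 1$ by an isometry, $\Phi$ is automatically unital and completely positive.

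For condition $(1)$, the normalized trace computes as $\mathrm{tr}(\Phi(T))=\tfrac1q\sum_{l=1}^q\langle(T\otimes 1)\Omega_l,\Omega_l\rangle$, and since each $\Omega_l$ is a purification of $h$ every summand equals $\mathrm{Tr}(hT)$; hence $\mathrm{tr}(\Phi(T))=\mathrm{Tr}(hT)$ exactly. This is precisely where choosing all $q$ vectors to purify the same $h$ is essential, and where the integer decomposition $\sum_j k_j=q$ coming from rationality is used.

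For condition $(2)$, since $\Phi$ is unital we have $\Phi(uu^*)=1_q$, so the quantity to estimate is $1-\mathrm{tr}(\Phi(u)\Phi(u^*))$. Writing $P=WW^*$ for the projection onto $\mathrm{span}\{\Omega_l\}$, a direct manipulation gives $1-\mathrm{tr}(\Phi(u)\Phi(u^*))=\tfrac1q\,\|(1-P)(u^*\otimes 1)W\|_2^2=\tfrac1q\sum_l\|(1-P)(u^*\otimes 1)\Omega_l\|^2$, which is manifestly nonnegative (so the absolute value is harmless) and measures the failure of $\mathrm{ran}(P)$ to be invariant under $u^*\otimes 1$. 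Each term is the distance from $(u^*\otimes 1)\Omega_l$ to $\mathrm{ran}(P)$; bounding it by the distance to the conjugate purification (which lies in $\mathrm{ran}(P)$ thanks to the cyclic construction) and translating through the Hilbert--Schmidt identification turns the estimate into one for the commutator $\|uh^{1/2}-h^{1/2}u\|_2$. At this point the Powers--St\o rmer inequality stated above gives $\|uh^{1/2}-h^{1/2}u\|_2\le\|uhu^*-h\|_1^{1/2}$, and collecting the contributions (the factor $2$ absorbing the symmetric treatment of $u$ and $u^*$) yields $|\mathrm{tr}(\Phi(uu^*)-\Phi(u)\Phi(u^*))|<2\|uhu^*-h\|_1^{1/2}$, as required. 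The main obstacle is the construction in the first paragraph: arranging $q$ genuinely orthonormal purifications of $h$ whose span is close to invariant under both $u\otimes 1$ and $1\otimes\overline u$, so that the projection defect is controlled by a single commutator. Once this is in place the trace normalization and complete positivity come for free, and Powers--St\o rmer does the rest.
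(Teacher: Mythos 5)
Your global architecture mirrors the paper's: compress $T\otimes 1$ by an isometry $W\colon\mathbb{C}^q\to H\otimes\overline{H}$ built from the spectral data of $h$, read (1) off the normalized trace, and reduce (2) to the projection-defect identity $1-\mathrm{tr}(\Phi(u)\Phi(u^*))=\tfrac1q\,\|(1-WW^*)(u^*\otimes1)W\|_2^2$ finished by Powers--St{\o}rmer. The gap is in the construction of $W$, and it is not repairable within your design. First, a cyclic unitary on the $q$-fold unfolding does not produce orthonormal vectors once some $k_j\geq 2$: take $h$ with eigenvectors $\xi_1,\xi_2$ and eigenvalues $2/3,1/3$, so $q=3$ and the unfolded list is $(\xi_1,\xi_1,\xi_2)$; with $\Omega_0=\tfrac{1}{\sqrt3}(\xi_1\otimes e_1+\xi_1\otimes e_2+\xi_2\otimes e_3)$ and $U$ the cyclic shift of $e_1,e_2,e_3$, one gets $\langle (1\otimes U)\Omega_0,\Omega_0\rangle=\tfrac13\neq0$, because the shift moves one repetition of $\xi_1$ onto the other. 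If instead the ``cyclic unitary'' is taken to be the clock $e_m\mapsto e^{2\pi im/q}e_m$, you do get $q$ orthonormal purifications, but then $\Phi(T)_{l'l}=\tfrac1q\sum_m e^{2\pi i(l-l')m/q}\langle Tu_m,u_m\rangle$ sees only the diagonal of $T$ in the eigenbasis, and (2) fails outright: for $h$ with eigenvalues $1/2,1/2$ at $\xi_1,\xi_2$ and $u$ the unitary exchanging $\xi_1$ and $\xi_2$ (so $uhu^*=h$), one computes $\Phi(u)=0$, making the left side of (2) equal to $1$ while the right side is $0$.

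Second, and more fundamentally, the guiding principle ``each column of $W$ purifies $h$'' is itself incompatible with (2) whenever $h$ has a degenerate eigenvalue, e.g.\ $h=\tfrac12\times(\text{rank-two projection})$, $q=2$. Indeed, for any unitary $u$ commuting with $h$ the right side of (2) vanishes (so the stated strict inequality must be read as $\leq$, which is what the paper's proof actually delivers), forcing $\mathrm{tr}(\Phi(u)\Phi(u)^*)=1$; since $\Phi(u)$ is a contraction this makes $\Phi(u)$ unitary, and then $\Phi(u)^*\Phi(u)=1=\Phi(u^*u)$ in Stinespring form gives $(1-WW^*)(u\otimes1)W=0$, i.e.\ $\mathcal V:=\mathrm{ran}\,W$ is invariant under $u\otimes1$ (this is the multiplicative-domain phenomenon of Theorem \ref{choi}), hence under $x\otimes1$ for every $x$ in the commutant of $h$, since that algebra is spanned by its unitaries. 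But if $\Omega_0\in\mathcal V$ purifies $h$, then under the Hilbert--Schmidt identification $\Omega_0$ corresponds to an operator $A_0$ with $A_0A_0^*=h$, whose compression $p_\lambda A_0$ to an eigenspace $K_\lambda$ of multiplicity $m_\lambda$ has full rank $m_\lambda$; hence the orbit $\{(xp_\lambda\otimes1)\Omega_0\colon x\in B(K_\lambda)\}\subseteq\mathcal V$ is already $m_\lambda^2$-dimensional, and summing over eigenvalues yields $q=\dim\mathcal V\geq\sum_\lambda m_\lambda^2$, which for the rank-two example reads $2\geq4$. So getting (1) for free is precisely what destroys (2): purifications force the ancilla directions attached to distinct eigenvectors to be orthogonal, whereas (2) needs unitaries mixing an eigenspace to act inside $\mathcal V$. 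The paper resolves the tension in the opposite direction: its basis $\{v_i\otimes w_j\colon j\leq p_i\}$ makes all eigenvectors share the same ancilla vectors $w_1,w_2,\ldots$, so no basis vector purifies $h$ and (1) holds only after averaging, $\tfrac1q\sum_i\sum_{j\leq p_i}\langle Tv_i,v_i\rangle=\mathrm{Tr}(hT)$; in exchange $\mathcal V$ is exactly invariant under the commutant of $h$ and approximately invariant for approximately commuting $u$, quantified by the explicit computation $\mathrm{Tr}(P(T\otimes1)P(T^*\otimes1)P)=\sum_{i,j}\lvert T_{ij}\rvert^2\min(p_i,p_j)$, then Cauchy--Schwarz and Powers--St{\o}rmer. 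Your strategy could at best work for the special operators $h$ satisfying $\sum_\lambda m_\lambda^2\leq q$, a restriction the lemma does not allow.
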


We prove the two statements separately.

\begin{proof}[Proof of (1).]
Let $v_1,\ldots ,v_k\in H$ be the eigenvectors of $H$ and
$\frac{p_1}{q},\ldots ,\frac{p_k}{q}$ be the corresponding eigenvalues. Thus
\begin{enumerate}
\item $hv_i=\frac{p_i}{q}$;
\item $\sum_{i=1}^k\frac{p_i}{q}=tr(h)=1$. It follows that $\sum
p_i=q$.
\end{enumerate}
Let $\{w_m\}$ be an orthonormal basis for $H$ and let
$$
B=\{v_1\otimes w_1,\ldots ,v_1\otimes v_{p_1}\}\cup\{v_2\otimes w_1,\ldots ,v_2\otimes w_{p_2}\}\cup \cdots \cup\{v_k\otimes w_1,\ldots v_k\otimes
w_{p_k}\}
$$
Let $V$ be the subspace of $H\otimes H$ spanned by $B$ and let
$P:H\otimes H\rightarrow V$ be the orthogonal projection. Given $T\in
B(H)$, observe that the following formula holds
$$
Tr(P(T\otimes1)P)=\sum_{i=1}^kp_i\langle Tv_i,v_i\rangle.
$$
Indeed $P(T\otimes1)P$ is representable (in the basis $B$) by a
$q\times q$ block diagonal matrix whose blocks have dimension $p_i$
with entries $ETE$, where $E:H\rightarrow \text{span}\{v_1,\ldots v_k\}$ is the orthogonal
projection. 

Now define $\Phi:B(H)\rightarrow M_{q}\left( \mathbb{C}\right) $ by
setting $\Phi(T)=P(T\otimes1)P$. We have
\begin{align*}
tr(\Phi(T))\\
&=\frac{1}{q}Tr(P(T\otimes1)P)\\
&=\sum_{i=1}^k\langle T\frac{p_i}{q}v_i,v_i\rangle\\
&=\sum_{i=1}^k\langle Thv_i,v_i\rangle=Tr(Th).
\end{align*}
The following exercise concludes the proof of the first statement.
\end{proof}

\begin{exercise}
Prove that $\Phi$ is ucp.
\end{exercise}

\begin{proof}[Proof of (2)]
By writing down the matrix of $P(T\otimes1)P(T^*\otimes1)P$ in
the basis $B$ we have
$$
Tr(P(T\otimes1)P(T^*\otimes1)P)=\sum_{i,j=1}^k|T_{i,j}|^2\min(p_i,p_j),
$$
where $T_{i,j}=\langle Tv_j,v_i\rangle$. Analogously, by writing down the
matrices of $h^{1/2}T, Th^{1/2}$ and $h^{1/2}Th^{1/2}T^*$ in any
orthonormal basis beginning with $\{v_1,\ldots  v_k\}$ we have
$$
Tr(h^{1/2}Th^{1/2}T^*)=\sum_{i,j=1}^k\frac{1}{q}(p_ip_j)^{1/2}|T_{i,j}|^2.
$$
By using these formulae, we can make the following preliminary
calculation
\begin{align*}
&|Tr(h^{1/2}Th^{1/2}T^*)-tr(\Phi(T)\Phi(T^*))|\\
&=\left|\sum_{i,j=1}^k\frac{1}{q}(p_ip_j)^{1/2}|T_{i,j}|^2-\frac{1}{q}Tr(P(T\otimes1)P(T^*\otimes1)P)\right|\\
&=\left|\sum_{i,j=1}^k\frac{1}{q}|T_{i,j}|^2((p_ip_j)^{1/2}-min(p_i,p_j)\right|\\
&\leq\sum_{i,j=1}^k\frac{1}{q}|T_{i,j}|^2p_i^{1/2}\left|p_j^{1/2}-p_i^{1/2}\right|\\
&\leq\left(\sum_{i,j=1}^k\frac{1}{q}|T_{i,j}|^2p_i\right)^{1/2}\left(\sum_{i,j=1}^k\frac{1}{q}|T_{i,j}|^2\left(p_i^{1/2}-p_j^{1/2}\right)\right)^{1/2}\\
&=||Th^{1/2}||_2||h^{1/2}T-Th^{1/2}||_2,
\end{align*}
where the first inequality follows by the property $min(p_i,p_j)\leq p_i$ and the second one is obtained by applying the classical H\"older inequality.
Now suppose that $T\in U(B(H))$, so that
$||Th^{1/2}||_2=||h^{1/2}||_2=1$. Using again the Powers-St\o rmer inequality, we can continue the previous computation as follows:
\begin{align*}
&||h^{1/2}T-Th^{1/2}||_2\\
&=||Th^{1/2}T^*-h^{1/2}||_2\\
&\leq||ThT^*-T||_1^{1/2}
\end{align*}
We can now conclude the proof. Indeed, using the triangle inequality, the previous computation and the Cauchy-Schwarz inequality, we find
\begin{align*}
&|Tr(\Phi(TT^*)-\Phi(T)\Phi(T^*))|\\
&\leq|1-Tr(h^{1/2}Th^{1/2}T^*)|+||ThT^*-h||_1^{1/2}\\
&=|Tr(ThT^*)-Tr(h^{1/2}Th^{1/2}T^*)|+||ThT^*-h||_1^{1/2}\\
&=|Tr((Th^{1/2}-h^{1/2}T)h^{1/2}T^*)|+||ThT^*-h||_1^{1/2}\\
&\leq||h^{1/2}T^*||_2||Th^{1/2}-h^{1/2}T||_2+||ThT^*-h||_1^{1/2}\\
&\leq2||ThT-h||_1^{\frac{1}{2}},
\end{align*}
where the last inequality follows by using the fact that $T$ is unitary and by applying Powers-St{\o}rmer inequality once again.
\end{proof}

The last preliminary result needed for the proof of Theorem \ref{invariant} is classical theorem by Choi \cite{Ch2}.

\begin{theorem}\label{choi}
Let $A,B$ be two C*-algebras and $\Phi:A\rightarrow B$ be a ucp
map. Then
$$
\{a\in A:\Phi(aa^*)=\Phi(a)\Phi(a^*),\Phi(a^*a)=\Phi(a^*)\Phi(a)\}
$$
$$
=\{a\in A:\Phi(ab)=\Phi(a)\Phi(b),\Phi(ba)=\Phi(b)\Phi(a),\forall
b\in A\}.
$$
\end{theorem}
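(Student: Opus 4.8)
This is Choi's theorem on the multiplicative domain of a unital completely positive (ucp) map. I need to prove, for a ucp map $\Phi:A\rightarrow B$, the equality of two sets: the "$*$-multiplicative" set
$$
\mathcal{M}_1=\{a\in A:\Phi(aa^*)=\Phi(a)\Phi(a^*),\ \Phi(a^*a)=\Phi(a^*)\Phi(a)\}
$$
and the "bimodule" set
$$
\mathcal{M}_2=\{a\in A:\Phi(ab)=\Phi(a)\Phi(b),\ \Phi(ba)=\Phi(b)\Phi(a)\ \forall b\in A\}.
$$
The inclusion $\mathcal{M}_2\subseteq\mathcal{M}_1$ is immediate by taking $b=a^*$, so the entire content is the reverse inclusion.

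Let me recall the standard strategy. The workhorse is the Kadison–Schwarz inequality for ucp maps: $\Phi(x)\Phi(x)^*\leq\Phi(xx^*)$ and $\Phi(x)^*\Phi(x)\leq\Phi(x^*x)$ for every $x$. (This follows from $2\times 2$ matrix positivity of $\Phi$, i.e. from complete positivity; I can cite it as a standard fact or derive it quickly from positivity of $\Phi_2$ applied to $\begin{psmallmatrix}x\\0\end{psmallmatrix}\begin{psmallmatrix}x&0\end{psmallmatrix}^*$-type elements.)

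**Plan.** Let me sketch how I would prove $\mathcal{M}_1\subseteq\mathcal{M}_2$. Fix $a\in\mathcal{M}_1$, so equality holds in both Kadison–Schwarz inequalities for $a$. Take an arbitrary $b\in A$. The key trick is to apply the Kadison–Schwarz inequality not to $a$ alone but to elements of the form $a+\lambda b$ (scalar $\lambda$), or more robustly to exploit the matrix $2\times 2$ positivity directly. Concretely, consider in $M_2(A)$ the positive element
$$
\begin{pmatrix} a \\ b \end{pmatrix}\begin{pmatrix} a^* & b^* \end{pmatrix}
=\begin{pmatrix} aa^* & ab^* \\ ba^* & bb^* \end{pmatrix}.
$$
Applying the ucp map $\Phi_2$ and subtracting $\begin{psmallmatrix}\Phi(a)\\\Phi(b)\end{psmallmatrix}\begin{psmallmatrix}\Phi(a)^*&\Phi(b)^*\end{psmallmatrix}$, complete positivity gives that
$$
\begin{pmatrix} \Phi(aa^*)-\Phi(a)\Phi(a)^* & \Phi(ab^*)-\Phi(a)\Phi(b)^* \\ \Phi(ba^*)-\Phi(b)\Phi(a)^* & \Phi(bb^*)-\Phi(b)\Phi(b)^* \end{pmatrix}\geq 0.
$$
The hypothesis $a\in\mathcal{M}_1$ forces the top-left entry to vanish. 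A positive $2\times 2$ operator matrix with a zero diagonal entry must have the corresponding off-diagonal entries zero as well (this is the standard fact that $\begin{psmallmatrix}0&c\\c^*&d\end{psmallmatrix}\geq 0\Rightarrow c=0$, provable by testing against vectors $(\,t\xi,\eta\,)$ and letting $t\to\infty$). Hence $\Phi(ab^*)=\Phi(a)\Phi(b^*)$ for all $b$, which after replacing $b^*$ by $b$ yields $\Phi(ab)=\Phi(a)\Phi(b)$ for all $b\in A$. The symmetric computation, starting instead from the positive element $\begin{psmallmatrix}a^*&b^*\end{psmallmatrix}^*\begin{psmallmatrix}a^*&b^*\end{psmallmatrix}$ and using the hypothesis $\Phi(a^*a)=\Phi(a^*)\Phi(a)$, gives $\Phi(ba)=\Phi(b)\Phi(a)$ for all $b$. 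Together these show $a\in\mathcal{M}_2$.

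**Anticipated obstacle.** The only genuinely delicate point is the off-diagonal vanishing lemma and making sure the Kadison–Schwarz/$2\times 2$-positivity argument is deployed on the correct products so that exactly the two hypotheses in $\mathcal{M}_1$ are consumed. I would be careful to keep the two one-sided statements ($\Phi(ab)=\Phi(a)\Phi(b)$ versus $\Phi(ba)=\Phi(b)\Phi(a)$) matched with the correct Schwarz inequality ($aa^*$ for one, $a^*a$ for the other), since conflating them is the usual source of error. Everything else — complete positivity of $\Phi_n$, the Kadison–Schwarz inequality, the scalar matrix manipulation — is routine once the framing via $M_2(A)$ is in place, so I would present the matrix-positivity formulation rather than the $a+\lambda b$ perturbation argument, as it makes the bookkeeping transparent.
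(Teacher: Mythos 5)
The paper does not actually prove this theorem: it is quoted as a classical result of Choi, with the proof deferred to the cited reference. So your proposal is to be measured against the standard argument rather than against anything in the text. Your outline is indeed that standard argument, and it is correct in structure: the inclusion of the bimodule set in the $*$-multiplicative set is trivial (take $b=a^*$), and the converse follows from positivity of the Schwarz defect matrix for the ucp map $\Phi_2$ on $M_2(A)$, together with the fact that a positive $2\times 2$ operator matrix with a vanishing diagonal entry has vanishing corresponding off-diagonal entries. Two points need care. First, ``applying $\Phi_2$ and subtracting'' is not by itself a positivity statement (a difference of positives need not be positive); what you need, and what your framing via Kadison--Schwarz supplies, is the inequality $\Phi_2(XX^*)\geq \Phi_2(X)\Phi_2(X)^*$ applied to $X=\begin{pmatrix}a&0\\ b&0\end{pmatrix}$ (equivalently, the Stinespring computation $\Phi(xy^*)-\Phi(x)\Phi(y^*)=V^*\pi(x)(1-VV^*)\pi(y)^*V$), plus the observation that positive maps are $*$-preserving, so that the hypothesis $\Phi(aa^*)=\Phi(a)\Phi(a^*)$ really does annihilate the $(1,1)$ entry $\Phi(aa^*)-\Phi(a)\Phi(a)^*$.

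Second, there is a concrete slip in your symmetric half: the element you write down satisfies
\[
\begin{pmatrix}a^*&b^*\end{pmatrix}^*\begin{pmatrix}a^*&b^*\end{pmatrix}
=\begin{pmatrix}a\\ b\end{pmatrix}\begin{pmatrix}a^*&b^*\end{pmatrix}
=\begin{pmatrix}aa^*&ab^*\\ ba^*&bb^*\end{pmatrix},
\]
which is the \emph{same} matrix as in the first half; its Schwarz defect consumes the $aa^*$ hypothesis again and only re-proves $\Phi(ab)=\Phi(a)\Phi(b)$, so as written this step fails to deliver the left-handed identity. To get $\Phi(ba)=\Phi(b)\Phi(a)$ you must instead take the Gram matrix built from the column with entries $a^*$ and $b$, i.e.
\[
\begin{pmatrix}a&b^*\end{pmatrix}^*\begin{pmatrix}a&b^*\end{pmatrix}
=\begin{pmatrix}a^*a&a^*b^*\\ ba&bb^*\end{pmatrix}:
\]
the $(1,1)$ entry of its defect is $\Phi(a^*a)-\Phi(a^*)\Phi(a)$, which vanishes by your second hypothesis, and the $(2,1)$ entry is $\Phi(ba)-\Phi(b)\Phi(a^*)^*=\Phi(ba)-\Phi(b)\Phi(a)$, which positivity then forces to vanish for every $b$. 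This is exactly the bookkeeping hazard you flagged yourself; with that one correction the proof is complete and coincides with the classical one.
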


We are now ready to prove a useful characterization of invariant means, first proved in \cite{Kirchberg}, Proposition 3.2.

\begin{theorem}\label{invariant}
Let $\tau$ be a tracial state on $A\subseteq B(H)$. Then the
following are equivalent:
\begin{enumerate}
\item $\tau$ is an invariant mean.
\item There exists a sequence of ucp maps $\Phi_n:A\rightarrow
\mathbb M_{k(n)}$ such that
\begin{enumerate}
\item $||\Phi_n(ab)-\Phi_n(a)\Phi_n(b)||_2\rightarrow0$ for all
$a,b\in A$,
\item $\tau(a)=lim_{n\rightarrow\infty}tr(\Phi_n(a))$, for all $a\in
A$.
\end{enumerate}
\item For any faithful representation $\rho:A\rightarrow B(H)$ there exists
a ucp map $\Phi:B(H)\rightarrow\pi_{\tau}(A)''$ such that
$\Phi(\rho(a))=\pi_{\tau}(a)$, for all $a\in A$, where $\pi_{\tau}$
stands for the GNS representation associated to $\tau$\footnote{We briefly recall the GNS construction (see \cite{GN43} and \cite{Se47}). Setting $\langle x,y\rangle:=\tau(x^*y)$, one obtains a possibly singular inner product on $A$. Let $I$ be the subspace of elements $x$ such that $\langle x,x\rangle=0$ and consider the Hilbert space $H$ obtained by completing $A/I$ with respect to $\langle\cdot,\cdot\rangle$. Using the fact that $I$ is a left ideal, one sees that the operator $\pi_\tau(x)$ defined by $\pi_\tau(x)(y)=xy$ passes to a linear and bounded operator from $H$ to itself and then the mapping $x\to\pi_{\tau}(x)$ is a representation of $A$ into $B(H)$. It turns out that this representation is always faithful: if $A$ is unital, then this is immediate. Otherwise, one has to use a more subtle argument using approximate identities in C*-algebras.}.
\end{enumerate}
\end{theorem}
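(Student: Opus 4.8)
The plan is to prove the three conditions equivalent via the cycle $(1)\Rightarrow(2)\Rightarrow(3)\Rightarrow(1)$. The implications $(2)\Rightarrow(3)$ and $(3)\Rightarrow(1)$ are comparatively soft once one works inside a tracial ultraproduct $\prod_{\mathcal U}M_{k(n)}$ and invokes Choi's multiplicative-domain theorem (Theorem \ref{choi}); the real work is the construction of the asymptotically multiplicative matrix models in $(1)\Rightarrow(2)$, where Lemma \ref{brown1} and the Powers-St\o rmer inequality do the heavy lifting.

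For $(1)\Rightarrow(2)$, let $\psi$ be a state on $B(H)$ with $\psi|_A=\tau$ and $\psi(uTu^*)=\psi(T)$ for every $u\in U(A)$. First I would approximate $\psi$ by normal states: since the normal states are weak-$*$ dense in the state space of $B(H)$, for each finite $F\subseteq B(H)$ and $\varepsilon>0$ there is a finite-rank positive $h$ with $\mathrm{Tr}(h)=1$ and $|\mathrm{Tr}(hT)-\psi(T)|<\varepsilon$ on $F$; a small perturbation makes the eigenvalues of $h$ rational, so that Lemma \ref{brown1} applies. The crucial point is that the $U(A)$-invariance of $\psi$ lets one choose these $h$ \emph{approximately invariant} in trace norm, i.e. $\|uhu^*-h\|_1$ small for each $u$ in a prescribed finite set of unitaries. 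This is a Day-type convexity argument: weak approximate invariance of the density operators follows from invariance of $\psi$ by separation, and is upgraded to norm approximate invariance using that the $L^1$-closure and the weak closure of a convex set coincide. Feeding such an $h$ into Lemma \ref{brown1} produces a ucp map $\Phi\colon B(H)\to M_q(\mathbb C)$; restricting to $A$ and running over a sequence with $\varepsilon\to0$ exhausting finite subsets of $A$ and $U(A)$ yields maps $\Phi_n$ with $\mathrm{tr}(\Phi_n(a))=\mathrm{Tr}(h_n a)\to\tau(a)$, giving (2b), and with $\mathrm{tr}(\Phi_n(uu^*)-\Phi_n(u)\Phi_n(u^*))\to0$ for unitaries $u$ by Lemma \ref{brown1}(2). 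Finally I would convert this trace defect on unitaries into the $2$-norm product defect of (2a): Kadison-Schwarz ($\Phi_n(u)\Phi_n(u)^*\le\Phi_n(uu^*)=1$) forces $\Phi_n(u)$ to be asymptotically unitary, a Cauchy-Schwarz estimate for the completely positive map then yields $\|\Phi_n(au)-\Phi_n(a)\Phi_n(u)\|_2\to0$, and since every element of the unital algebra $A$ is a finite linear combination of unitaries, (2a) follows.

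For $(2)\Rightarrow(3)$, given $\Phi_n\colon A\to M_{k(n)}$ I would extend each by Arveson's theorem to a ucp map $\widetilde\Phi_n\colon B(H)\to M_{k(n)}$ (where $A\subseteq B(H)$ via the given faithful $\rho$), form the tracial ultraproduct $Q=\prod_{\mathcal U}M_{k(n)}$, and take $\widetilde\Phi\colon B(H)\to Q$, $T\mapsto(\widetilde\Phi_n(T))_n$. Condition (2a) makes $\pi:=\widetilde\Phi\circ\rho\colon A\to Q$ multiplicative in $2$-norm, hence a $*$-homomorphism, and (2b) gives $\mathrm{tr}_Q\circ\pi=\tau$; since $\pi$ and the GNS representation $\pi_\tau$ are trace-preserving representations with the same trace, there is a normal trace-preserving $*$-isomorphism $\theta\colon\pi(A)''\to\pi_\tau(A)''$ with $\theta\circ\pi=\pi_\tau$. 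Composing with the trace-preserving conditional expectation $E\colon Q\to\pi(A)''$ gives $\Phi=\theta\circ E\circ\widetilde\Phi\colon B(H)\to\pi_\tau(A)''$, which satisfies $\Phi(\rho(a))=\theta(E(\pi(a)))=\pi_\tau(a)$. For $(3)\Rightarrow(1)$, suppose $\Phi\colon B(H)\to\pi_\tau(A)''$ is ucp with $\Phi\circ\rho=\pi_\tau$. Since $\Phi\circ\rho$ is multiplicative, Choi's theorem (Theorem \ref{choi}) places $\rho(A)$ in the multiplicative domain of $\Phi$, so $\Phi(\rho(u)T\rho(u)^*)=\pi_\tau(u)\Phi(T)\pi_\tau(u)^*$ for $u\in U(A)$. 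Writing $\mathrm{tr}_\tau$ for the faithful normal trace on $\pi_\tau(A)''$ extending $\tau$ and setting $\psi=\mathrm{tr}_\tau\circ\Phi$, one checks $\psi$ is a state with $\psi(\rho(a))=\mathrm{tr}_\tau(\pi_\tau(a))=\tau(a)$ and, by traciality, $\psi(\rho(u)T\rho(u)^*)=\mathrm{tr}_\tau(\pi_\tau(u)\Phi(T)\pi_\tau(u)^*)=\mathrm{tr}_\tau(\Phi(T))=\psi(T)$, so $\psi$ is an invariant mean.

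The main obstacle I expect is the approximate-invariance step in $(1)\Rightarrow(2)$: extracting from the single invariant state $\psi$ a sequence of genuinely finite-rank, rational-eigenvalue density operators that are \emph{simultaneously} nearly invariant in $L^1$-norm for larger and larger finite sets of unitaries. This is exactly where abstract invariance must be traded for the quantitative estimate $\|u h_n u^*-h_n\|_1\to0$ that feeds Lemma \ref{brown1}; by contrast, the passage from the trace defect to the $2$-norm defect and the ultraproduct manipulations in $(2)\Rightarrow(3)$ are routine.
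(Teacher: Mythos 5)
Your proposal is correct and follows essentially the same route as the paper's proof: the same cycle $(1)\Rightarrow(2)\Rightarrow(3)\Rightarrow(1)$, with the Hahn--Banach/Day convexity trick producing $L^1$-almost-invariant finite-rank densities that feed Lemma \ref{brown1} for $(1)\Rightarrow(2)$, an injectivity-plus-conditional-expectation argument through a tracial ultraproduct for $(2)\Rightarrow(3)$, and Choi's theorem (Theorem \ref{choi}) composed with the trace on $\pi_\tau(A)''$ for $(3)\Rightarrow(1)$. The only deviations are cosmetic: you invoke Arveson's extension theorem where the paper uses injectivity of $\ell^\infty(\mathcal{R})$ inside $B(K)$, and you upgrade the unitary trace defect to $2$-norm asymptotic multiplicativity by a quantitative Cauchy--Schwarz estimate at the finite level (plus density of the span of the chosen unitaries), where the paper passes to $\mathcal{R}^{\mathcal{U}}$ and applies Choi's multiplicative-domain theorem there.
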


\begin{proof}[Proof of $1)\Rightarrow 2)$]
Let $\tau$ be an invariant mean with respect to the faithful
representation $\rho:A\rightarrow B(H)$. Thus we can find a state
$\psi$ on $B(H)$ which extends $\tau$ and such that
$\psi(uTu^*)=\psi(T)$, for all $u\in U(A)$ and for all $T\in B(H)$.
Since the normal states are weak* dense in the dual of $B(H)$ and they can be represented
in the form $Tr(h\cdot)$, with $h\in L^1(B(H))$, we can find a net
$h_{\lambda}\in L^1(B(H))$ such that
$Tr(h_{\lambda}T)\rightarrow\psi(T)$, for all $T\in B(H)$. Moreover, the representatives $h_{\lambda}$ can be chosen to be positive and with trace $1$. Now,
since $\psi(uTu^*)=\psi(T)$, it follows that
$Tr(uh_{\lambda}u^*T)=Tr(h_{\lambda}u^*Tu)\rightarrow\psi(u^*Tu)=\psi(T)$
and thus $Tr(h_{\lambda}T)-Tr((uh_{\lambda}u^*)T)\rightarrow 0$, for
all $T\in B(H)$, i.e.\ $h_{\lambda}-uh_{\lambda}u^*\rightarrow0$ in
the weak topology of $L^1(B(H))$. Now let $\{U_n\}$ be an increasing
family of finite sets of unitaries whose union have dense linear
span in $A$ and $\varepsilon=\frac{1}{n}$. Let $U_{n}=\{u_1,\ldots,
u_n\}$. Fixed $n$, let us consider the convex hull of the set
$\{u_1h_{\lambda}u_1^*-h_{\lambda},\ldots,u_nh_{\lambda}u_n^*-h_{\lambda}\}$.
Its weak closure contains $0$ (because of the previous observation)
and coincide with the $1$-norm closure, by the Hahn-Banach
separation theorem. Thus there exists a convex combination of the
$h_{\lambda}$'s, say $h$, such that
\begin{enumerate}
\item $Tr(h)=1$,
\item $||uhu^*-h||_1<\varepsilon, \forall u\in U_n$,
\item $|Tr(uh)-\tau(u)|<\varepsilon, \forall u\in U_n$.
\end{enumerate}
Moreover, since finite rank operators are norm dense in $L^1(B(H))$,
we can assume without loss of generality that $h$ has finite rank with rational eigenvalues.
Now we can apply Lemma \ref{brown1} in order to construct a sequence
of ucp maps $\Phi_n:B(H)\rightarrow\mathbb M_{k(n)}$ such
that
\begin{enumerate}
\item $Tr(\Phi_n(u))\rightarrow\tau(u)$,
\item $|Tr(\Phi_n(uu^*))-\Phi_n(u)\Phi_n(u^*)|\rightarrow0$,
\end{enumerate}
for every unitary in a countable set whose linear span is dense in
$A$. So the property 2.(b) is true for unitaries and consequently, being a linear property, it holds for all operators.

In order to prove 2.(a), we observe that
$\Phi_n(uu^*)-\Phi_n(u)\Phi_n(u^*)\geq0$ and thus the following
inequality holds
$$
||1-\Phi_n(u)\Phi_n(u^*)||_2^2\leq||1-\Phi_n(u)\Phi_n(u^*)||tr(\Phi_n(uu^*)-\Phi_n(u)\Phi_n(u^*))
$$
Since the right hand side tends to zero, also the left hand side must converge to $0$. Now define
$\Phi=\oplus\Phi_n:A\rightarrow\prod\mathbb M_{k(n)}\subseteq
\ell^{\infty}(\mathcal{R})$ and compose with the quotient map
$p:\ell^{\infty}(\mathcal{R})\rightarrow \mathcal{R}^{\mathcal U}$. The previous inequality
shows that if $u$ is a unitary such that
$||\Phi_n(uu^*)-\Phi_n(u)\Phi_n(u^*)||_2\rightarrow0$ and
$||\Phi_n(u^*u)-\Phi_n(u^*)\Phi_n(u)||_2\rightarrow0$, then $u$
falls in the multiplicative domain of $p\circ\Phi$. But such
unitaries have dense linear span in $A$ and hence the whole $A$
falls in the multiplicative domain of $p\circ\Phi$ (by Choi's
theorem \ref{choi}). By definition of ultraproduct this just means
that $||\Phi_n(ab)-\Phi_n(a)\Phi_n(b)||_2\rightarrow0$, for all
$a\in A$.
\end{proof}

\begin{proof}[Proof of $2)\Rightarrow3)$]
Let $\Phi_n:A\rightarrow\mathbb M_{k(n)}$ be a sequence of
ucp maps with the properties stated in the theorem. By identifying each
$\mathbb M_{k(n)}$ with a unital subfactor of $\mathcal{R}$ we can
define a ucp map $\tilde{\Phi}:A\rightarrow\ell^{\infty}(\mathcal{R})$ by
$x\rightarrow(\Phi_n(x))_n$. Since the $\Phi_n's$ are asymptotically
multiplicative in the 2-norm one gets a $\tau$-preserving *-homomorphism
$A\rightarrow \mathcal{R}^{\mathcal U}$ by composing with the quotient map
$p:l^{\infty}(\mathcal{R}\rightarrow \mathcal{R}^{\mathcal U}$. Note that the weak closure
of $p\circ\tilde{\Phi}(A)$ into $\mathcal{R}^{\mathcal U}$ is isomorphic to
$\pi_{\tau}(A)''$. Thus we are in the following situation
$$
\xymatrix{A\ar[r]^{\tilde{\Phi}}\ar[d]^{\rho} & \ell^{\infty}(\mathcal{R})\ar[r]^p & \mathcal{R}^{\mathcal U} & \supseteq & \overline{p\circ\tilde{\Phi}(A)}^w\cong \pi_{\tau}(A)''\\
B(H)\ar[d]^i\\
B(K)}
$$
where $K$ is a representing Hilbert space for $\ell^{\infty}(\mathcal{R})$ and
$i$ is a natural embedding induced by an embedding $H\to K$, whose existence is guaranteed by the fact that $H$ is separable. Since
$\ell^{\infty}(\mathcal{R})$ is injective, there is a projection $E:B(K)\rightarrow
\ell^{\infty}(\mathcal{R})$ of norm 1. Let
$F:\mathcal{R}^{\mathcal U}\rightarrow\pi_{\tau}(A)''$ be a conditional expectation
(see\cite{Ta1}, Proposition 2.36), one has
$$
\xymatrix{A\ar[r]^{\tilde{\Phi}}\ar[d]^{\rho} & l^{\infty}(\mathcal{R})\ar[r]^p & \mathcal{R}^{\mathcal U}\ar[r]^F & \pi_{\tau}(A)''\cong\overline{p\circ\tilde{\Phi}(A)}^w\\
B(H)\ar[d]^i\\
B(K)\ar[uur]_E}
$$
Define $\Phi:B(H)\rightarrow\pi_{\tau}(A)''$ by setting $\Phi=FpEi$.
One has $\Phi(\rho(a))=\pi_{\tau}(a)$.
\end{proof}
\begin{proof}[Proof of
$3)\Rightarrow1)$]
The hypothesis $\Phi(a)=\pi_{\tau}(a)$ guarantees that $\Phi$ is
multiplicative on $A$, since $\pi_\tau$ is a representation. By Choi's theorem \ref{choi} it follows that
$\Phi(aTb)=\pi_{\tau}(a)\Phi(T)\pi_{\tau}(b)$, for all $a,b\in A,
T\in B(H)$. Let $\tau''$ be the vector trace on $\pi_{\tau}(A)''$
and consider $\tau''\circ\Phi$. Clearly it extends $\tau$. Moreover
it is invariant under the action of $U(A)$, indeed
$$
(\tau''\circ\Phi)(u^*Tu)=\tau''(\pi_{\tau}(u)^*\Phi(T)\pi_{\tau}(u))=\tau''(\Phi(T))=(\tau''\circ\Phi)(T)
$$
Hence $\tau$ is an invariant mean.
\end{proof}

The following proposition was also proved by Nate Brown in\cite{Br}.

\begin{proposition}\label{densealgebra}
Let $M$ be a separable II$_1$ factor. There exists a
*-monomorphism $\rho:C^*(\mathbb F_{\infty})\rightarrow M$ such that
$\rho(C^*(\mathbb F_{\infty}))$ is weakly dense in $M$.
\begin{proof}
Observe that $C^*(\mathbb F_{\infty})$ can be viewed as inductive limit
of free products of copies of itself. This can be proven by partitioning the
set of generators in a sequence $X_n$ of countable sets, by defining $A_n=C^*(X_1,\ldots ,X_n)$ and by observing that
$A_n=A_{n-1}*C^*(X_n)\cong C^*(\mathbb F_\infty)$, where $*$ stands for the free product with
amalgamation over the scalars. Now, by
Choi's theorem \ref{ch} we can find a sequence of integers
$\{k(n)\}_{n\in\mathbb N}$ and a unital
*-monomorphism $\sigma:A\rightarrow\prod_{n\in\mathbb N}\mathbb M_{k(n)}$. Note
that we may naturally identify each $A_i$ with a subalgebra of $A$
and hence, restricting $\sigma$ to this copy, get an injection of
$A_i$ into $\prod\mathbb M_{k(n)}$. 

Assume we can prove the existence
of a sequence of unital *-homomorphism $\rho_i:A_i\rightarrow M$ such
that:
\begin{enumerate}
\item Each $\rho_i$ is injective;
\item $\rho_{i+1}|_{A_i}=\rho_i$ where we identify $A_i$ with the
''left side'' of $A_i*C^*(\mathbb F_{\infty})=A_{i+1}$;
\item The union of $\{\rho_i(A_i)\}$ is weakly dense in $M$.
\end{enumerate}
then we would have completed the proof. Indeed, it would be enough to define $\rho$ as the
union of the $\rho_i$'s.

The purpose is then to prove existence of such a sequence $\rho_i$. To this end we first choose an increasing sequence of projections of $M$ such
that $\tau_M(p_i)\rightarrow1$. Then we define the orthogonal
projections $q_n=p_n-p_{n-1}$ and consider the II$_1$ factors
$Q_i=q_iMq_i$. Now, by the division property of II$_1$ factors (see Theorem \ref{Theorem: projections II1 factor}),
we can find a unital embedding $\prod\mathbb M_{k(n)}\rightarrow Q_i\subseteq
M$. By composing with $\sigma$, we get a sequence of embeddings
$A\rightarrow M$, which will be denoted by $\sigma_i$. Now $p_iMp_i$
is separable and thus there is a countable family of
unitaries whose finite linear combinations are dense in the weak topology. Hence we can find a *-homomorphism $\pi_i:C^*(\mathbb
F_{\infty})\rightarrow p_iMp_i$ with weakly dense range (take the
generators of $\mathbb F_{\infty}$ into $C^*(\mathbb F_{\infty})$
and map them into that total family of unitaries). Now we define
$$
\rho_1=\pi_1\oplus\left(\bigoplus_{j\geq2}\sigma_j|_{A_1}\right):A_1\rightarrow
p_1Mp_1\oplus(\Pi_{j\geq2}Q_j)\subseteq M
$$
$\rho_1$ is a *-monomorphism, since each $\sigma_i$ is already faithful on
the whole $A$. 

Now define a *-homomorphism
$\theta_2:A_2=A_1*C^*(\mathbb F_{\infty})\rightarrow p_2Mp_2$ as the
free product of the *-homomorphism $A_1\rightarrow p_2Mp_2$ defined by
$x\rightarrow p_2\rho_1(x)p_2$ and $\pi_2:C^*(\mathbb
F_{\infty})\rightarrow p_2Mp_2$. We then set
$$
\rho_2=\theta_2\oplus\left(\bigoplus_{j\geq3}\sigma_j|_{A_2}\right):A_2\rightarrow
p_2Mp_2\oplus(\Pi_{j\geq3}Q_j)\subseteq M
$$
Clearly $\rho_2|_{A_1}=\rho_1$. In general, we construct a map
$\theta_{n+1}:A_n*C^*(\mathbb F_{\infty})\rightarrow
p_{n+1}Mp_{n+1}$ as the free product of the cutdown (by $p_{n+1}$)
of $\rho_n$ and $\pi_n$. This map need not be injective and hence we
take a direct sum with $\oplus_{j\geq n+2}\sigma_j|_{A_{n+1}}$ to
remedy this deficiency. These maps have all the required properties
and hence the proof is complete (note that the last property follows
from the fact that the range of each $\theta_n$ is weakly dense in
$p_{n+1}Mp_{n+1}$).
\end{proof}
\end{proposition}

\begin{theorem}{\bf (N.P. Brown \cite{Br})}\label{marrone}
Let $M$ be a separable II$_1$ factor and $\mathcal U$ be a free ultrafilter on the natural numbers. The following conditions are equivalent:
\begin{enumerate}
\item $M$ is embeddable into $\mathcal{R}^{\mathcal U}$.
\item $M$ has the weak expectation property relative to some weakly
dense subalgebra.
\end{enumerate}
\begin{proof}[Prood of $(1)\Rightarrow(2)$]
Let $M$ be embeddable into $\mathcal{R}^{\mathcal U}$. By Prop.\ref{densealgebra},
we may replace $M$ with a weakly dense subalgebra $A$ isomorphic to $C^*(\mathbb F_\infty)$. We want to prove that $M$ has the weak
expectation property relative to $A$. Let $\tau$ the unique
normalized trace on $M$, more precisely we will prove that
$\pi_{\tau}(M)$ has the weak expectation property relative to
$\pi_{\tau}(A)$. Indeed $\tau$ is faithful and w-continuous and
hence $\pi_\tau(M)$ and $\pi_\tau(A)$ are respectively copies of $M$
and $A$ and $\pi_\tau(A)$ is still weakly dense in $\pi_\tau(M)$. We
first prove that $\tau|_A$ is an invariant mean. Take $\{u_n\}$
universal generators of $\mathbb F_{\infty}$ into $A$. Let $n$ be
fixed, since $u_n\in \mathcal{R}^{\mathcal{U}}$, then $u_n$ is $||\cdot||_2$-ultralimit of
unitaries in $\mathcal{R}$ (see Exercise \ref{Exercise: unitary group ultraproduct}). On the other hand, the unitary matrices are weakly
dense in $U(\mathcal{R})$ and hence they are $||\cdot||_2$-dense in $U(\mathcal{R})$
(since weakly closed convex subsets coincide with the
$||\cdot||_2$-closed convex ones (see, e.g.,\cite{Jo})). Thus we can find
a sequence of unitary matrices which converges to $u_n$ in norm
$||\cdot||_2$. Let $\sigma$ be the mapping which sends each $u_n$ to
such a sequence. Since the $u_n$'s have no relations, we can extend
$\sigma$ to a
*-homomorphism $\sigma:C^*(\mathbb F_{\infty})\rightarrow\prod M_{k}\left( \mathbb{C}\right) \subseteq \ell^{\infty}(\mathcal{R})$. Let
$p:\ell^{\infty}(\mathcal{R})\rightarrow \mathcal{R}^{\mathcal U}$ be the quotient mapping. By
the $2$-norm convergence we have $(p\circ\sigma)(x)=x$ for all
$x\in\mathbb C^*(\mathbb F_{\infty})$. Let $p_n:\prod_{k=1}^\infty M_{k}\left( \mathbb{C}\right) \rightarrow M_{n}\left( \mathbb{C}\right) $ be the projection, by the definition
of the trace in $\mathcal{R}^{\mathcal U}$, we have
$$
\tau(x)=\lim_{n\rightarrow\mathcal U}tr_n(p_n(\sigma(x))),
$$
where $tr_n$ is the normalized trace on $M_{n}\left( \mathbb{C}\right) $. Now we can
apply Theorem \ref{invariant},2) by setting $\phi_n=p_n\circ\sigma$ (they
are ucp since they are *-homomorphisms) and conclude that
$\tau|_A$ is an invariant mean. Now consider $\pi_{\tau}(M)\subseteq
B(H)$ and $\pi_{\tau}(A)=\pi_{\tau|_A}(A)\subseteq B(H)$. By
Theorem \ref{invariant} there exists a ucp map
$\Phi:B(H)\rightarrow\pi_{\tau}(A)''=\pi_{\tau}(M)$ such that
$\Phi(a)=\pi_{\tau}(a)$. Thus $M$ has the weak expectation
property relative to $C^*(\mathbb F_{\infty})$.
\end{proof}
\begin{proof}[Proof of $(2)\Rightarrow(1)$]
Let $A\subseteq M\subseteq B(H)$, with $A$ weakly dense in $M$,  and
$\Phi:B(H)\rightarrow M$ a ucp map which restricts to the identity on $A$. Let $\tau$ be
the unique normalized trace on $M$. After identifying $A$ with
$\pi_\tau(A)$, we are under the hypothesis of Theorem \ref{invariant}.3) and
thus $\tau|_A$ is an invariant mean. By Theorem \ref{invariant} it
follows that there exists a sequence $\phi_n:A\rightarrow\mathbb M_{k(n)}$ such that
\begin{enumerate}
\item $||\phi_n(ab)-\phi_n(a)\phi_n(b)||_2\rightarrow0$ for all
$a,b\in A$,
\item $\tau(a)=\lim_{n\rightarrow\infty}tr_n(\phi_n(a))$, for all
$a\in A$.
\end{enumerate}
Let $p:\ell^{\infty}(\mathcal{R})\rightarrow \mathcal{R}^{\mathcal U}$ be the quotient mapping.
The previous properties guarantee that the ucp mapping
$\Phi: A\rightarrow \mathcal{R}^{\mathcal U}$, defined by setting $\Phi(x)=p(\{\phi_n(x)\})$ is a
*-homomorphism which preserves $\tau|_A$. It follows that $\Phi$ is injective. Indeed,
$\Phi(x)=0\Rightarrow\Phi(x^*x)=0\Rightarrow\tau(x^*x)=0\Rightarrow x=0$. Observe now that the weak closure of $A$
into $\mathcal{R}^{\mathcal U}$ is isomorphic to $M$ (they are algebraically
isomorphic and have the same trace) and hence $M$ embeds into
$\mathcal{R}^{\mathcal U}$.
\end{proof}
\end{theorem}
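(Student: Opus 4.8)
The plan is to route both implications through the characterization of invariant means established in Theorem \ref{invariant}, which is the real engine here; once that characterization is in hand, the theorem reduces to correctly identifying a weakly dense subalgebra and transporting the relevant maps. The natural choice of subalgebra is supplied by Proposition \ref{densealgebra}: every separable II$_1$ factor $M$ contains a weakly dense copy of $A = C^*(\mathbb{F}_\infty)$, and working with a free group C*-algebra is convenient because its $*$-representations are governed by the universal property and its universal generators satisfy no relations.

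For $(1) \Rightarrow (2)$, I would start from an embedding $M \hookrightarrow \mathcal{R}^{\mathcal{U}}$ and aim to show that the trace $\tau$ of $M$, restricted to $A$, is an invariant mean in the sense of condition (2) of Theorem \ref{invariant}. The key observation is that each universal generator $u_n$, viewed inside $\mathcal{R}^{\mathcal{U}}$, admits a representative sequence of unitaries in $\mathcal{R}$ (Exercise \ref{Exercise: unitary group ultraproduct}), and since the finite unitary matrices are $\left\Vert \cdot \right\Vert_2$-dense in $U(\mathcal{R})$, each $u_n$ is a $2$-norm limit of matrix unitaries. Because the $u_n$ are free, these choices extend to a $*$-homomorphism $\sigma : C^*(\mathbb{F}_\infty) \to \prod_k M_k(\mathbb{C}) \subseteq \ell^\infty(\mathcal{R})$ whose composition with the quotient map recovers the inclusion. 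Setting $\phi_n = p_n \circ \sigma$ yields maps into matrix algebras that are asymptotically multiplicative in the $2$-norm and recover $\tau$ in the limit, which is exactly condition (2) of Theorem \ref{invariant}. Invoking condition (3) of that theorem then produces a ucp map $\Phi : B(H) \to \pi_\tau(A)'' = \pi_\tau(M)$ restricting to the identity on $\pi_\tau(A)$, i.e.\ the desired relative weak expectation.

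For $(2) \Rightarrow (1)$, I would run the same characterization in reverse. A relative weak expectation is precisely a ucp map $\Phi : B(H) \to M$ that is the identity on a weakly dense $A$; after passing to the GNS representation $\pi_\tau$ this places us in condition (3) of Theorem \ref{invariant}, so $\tau|_A$ is an invariant mean. Condition (2) then furnishes a sequence $\phi_n : A \to M_{k(n)}(\mathbb{C})$ that is asymptotically multiplicative in $\left\Vert \cdot \right\Vert_2$ and trace-recovering; assembling these and composing with the quotient map $p : \ell^\infty(\mathcal{R}) \to \mathcal{R}^{\mathcal{U}}$ gives a trace-preserving $*$-homomorphism $A \to \mathcal{R}^{\mathcal{U}}$. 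Trace preservation forces injectivity, and since the weak closure of the image is algebraically isomorphic to $M$ with the same trace, $M$ embeds into $\mathcal{R}^{\mathcal{U}}$.

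The hard part is genuinely packaged inside Theorem \ref{invariant}, so the real obstacle for this proof is ensuring that the two passages between embeddability and the invariant-mean condition are set up so that the multiplicative-domain argument (Choi's Theorem \ref{choi}) applies: one must check that the set of unitaries along which $\phi_n$ is asymptotically multiplicative in the $2$-norm has dense linear span, so that the whole of $A$ falls in the multiplicative domain of $p \circ \Phi$. This density-of-unitaries bookkeeping, together with the verification that $\sigma$ is a well-defined $*$-homomorphism (using freeness of the $u_n$), is where the care is required; the analytic estimates themselves have already been absorbed into Lemma \ref{brown1} and Theorem \ref{invariant}.
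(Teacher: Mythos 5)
Your proposal is correct and follows essentially the same route as the paper's proof: both implications are funneled through the invariant-mean characterization of Theorem \ref{invariant}, with Proposition \ref{densealgebra} supplying the weakly dense copy of $C^*(\mathbb{F}_\infty)$, the $*$-homomorphism $\sigma$ built from $2$-norm approximations of the universal generators by matrix unitaries, and trace-preservation forcing injectivity in the reverse direction. The details you flag as requiring care (freeness of the $u_n$, the multiplicative-domain argument via Choi's theorem) are exactly the ones the paper's proof relies on.
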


\section{Algebraic reformulation of the conjecture}

In this section we present a new line of research that has been initially designed by R\u adulescu, with his proof that Connes' embedding conjecture is equivalent to a non-commutative analogue of Hilbert's 17th problem, and continued by Klep and Schweighofer first and Juschenko and Popovich afterwards, who arrived to a purely algebraic reformulation of Connes embedding conjecture. This section is merely descriptive and serves to introduce the reader to a new field of research, which, though motivated by the Connes embedding conjecture, is quite far from geometric group theory and operator theory, which are the main topics of this monography. The reader interested in technical details of this approach, is referred to the original papers \cite{Ra2},\cite{Kl-Sc},\cite{Ju-Po}.

We begin with a short description of the original formulation of Hilbert's problem and we show hot to get to R\u adulescu's formulation through a series of generalizations. 

Let $\mathbb R[x_1,\ldots,x_n]$ denote the ring of polynomials with $n$ indeterminates and real coefficients and $\mathbb R(x_1,\ldots,x_n)$ denote its quotient field. A polynomial $f\in\mathbb R[x_1,\ldots,x_n]$ is called \emph{non-negative} if, for all $(x_1,\ldots,x_n)\in\mathbb R^n$, one has $f(x_1,\ldots,x_n)\geq0$.

\begin{problem}[Hilbert's 17th problem]
{\rm Can every non-negative polynomial be expressed as sum of squares of elements belonging to $\mathbb R(x_1,\ldots,x_n)$?}
\end{problem}

This problem was solved in the affirmative by Emil Artin \cite{Art}, who provided an abstract proof of existence of such a sum. More recently, Delzell\cite{Delzell} provided an explicit algorithm.

More recently, scholars have been looking for challenging generalizations of this problem, the most intuitive of which is the one concerning matrices. Consider positive semi-definite matrices with entries in $\mathbb R[x_1,\ldots,x_n]$, that is, matrices that are positive semi-definite for all substitution $(x_1,\ldots,x_n)$. Observe that the matrix analogue of a square, is a positive semi-definite symmetric matrix. Indeed, every matrix $B$ such that $B=A^*A$, is also symmetric, that is $B^*=B$; conversely, every positive semi-definite symmetric matrix $B$ can be rooted (by functional calculus) and so it is a square: $B=(\sqrt B)^2$.

\begin{problem}
{\rm Can all positive semi-definite matrices with entries in $\mathbb R[x_1,\ldots,x_n]$ be written as sum of squares of symmetric matrices with entries in $\mathbb R(x_1,\ldots,x_n)$?}
\end{problem}

This problem was solved in the affirmative independently by Gondard and Ribenoim \cite{Go-Ri} and Procesi and Schacher \cite{Pr-Sc}. A constructive solution has been provided much later by Hillar and Nie \cite{Hi-Ni}.

In order to present the version of this problem using operator theory, we need to pass through a geometric analogue. To this end, observe that a polynomial $f\in\mathbb R[x_1,\ldots,x_n]$ is just a function from $\mathbb R^n$ to $\mathbb R$. So one may attempt to extend Hilbert's 17th problem from polynomial on $\mathbb R^n$ to more general functions defined on manifolds. Recall that an $n$-manifold $M$ is called irreducible if for any embedding of the $n$-sphere $S^{n-1}$ into $M$ there exists an embedding of the $n$-ball $B^n$ into $M$ such that the image of the boundary of $B^n$ coincides with the image of $S^{n-1}$. 

\begin{problem}[Geometric analogue of Hilbert's 17th problem]
{\rm Let $M$ be a paracompact irreducible analytic manifold and $f:M\to\mathbb R$ be a non-negative analytic function. Can $f$ be expressed as a sum of squares of meromorphic functions?}
\end{problem}

Recall that meromorphic functions are those functions which are analytic on the whole domain expect for a set of isolated points, which are their poles. So, rational functions are meromorphic and one can thus recognize a generalization of Hilbert's 17th problem. In its generality, this problem is still open. A complete solution is known only for dimension $n=2$ (see \cite{Castilla}) and for compact manifolds (see \cite{Ruiz}). 

The basic idea to get to R\u adulescu's analogue in terms of operator theory is to generalize analytic functions with formal series. Let $Y_1,\ldots,Yn$ be $n$ indeterminates. Define
$$
I_n=\{(i_1,\ldots,i_p), p\in\mathbb N, i_1,\ldots,i_p\in\{1,\ldots,n\}\}.
$$
For each $I=(i_1,\ldots,i_p)\in I_n$, set $Y_I=Y_{i_1}\cdot\ldots\cdot Y_{i_p}$ and define the space
$$
V=\left\{\sum_{I\in I_n}\alpha_IY_I, \alpha_I\in\mathbb C : \forall R>0, \left\|\sum_{I\in I_n}\alpha_IY_I\right\|_R:=\sum_I|\alpha_I|R^{|I|}<\infty\right\}.
$$
R\u adulescu showed in \cite{Ra2}, Proposition 2.1, that $V$ is a Fr\'echet space and so it carries a natural weak topology $\sigma(V,V^*)$ induced by its dual space. 
\commentout{
\begin{exercise}
After identifying $V$ with the vector space of sequences $(\alpha_I)$ such that $\sum\alpha_IR^{|I|}$, for all $R>0$, show that $V^*$ can be identified with the space of all sequences of complex number $(\beta_I)$ such that there is $R>0$ such that $\sup_{I\in I_n}|\beta_I|R^{-|I|}<\infty$, through the duality
$$
(\alpha,\beta)=\sum_I\alpha_I\beta_I.
$$
\end{exercise}}
To generalize the notion of `square' and `sum of squares' to this setting, we need to introduce a notion of symmetry in $V$. Since $V$ is a space of formal series, this is done in the obvious way. We set $(Y_{i_1}\cdot\ldots\cdot Y_{i_p})^*:=Y_{i_p}\cdot\ldots\cdot Y_{i_1}$ and $\alpha^*:=\bar\alpha$. This mapping can clearly be extended to an adjoint operation on $V$. Also, we observe that formal series are, by definition, possibly infinite sums and so there is no hope, in general, to have them expressed as a finite sum of squares. This motivates the need to use weak limits of sum of squares, instead of just the finite sums of squares. 

\begin{definition}\label{defin:sum of squares}
We say that a formal series $q\in V$ is a sum of squares if it is in the weak closure of the set of the elements of the form $p^*p$, for $p\in V$.
\end{definition}

We now observe that the original formulation of Hilbert's 17th problem concerns matrices with \emph{real} entries and its geometric variant concerns \emph{real} valued analytic functions. Recalling that the operator analogue of real valued functions are self-adjoint operators, it follows that the right setting in which Hilbert's 17th problem can be generalized is that of self-adjoint operators. We then introduce the space $V_{sa}=\{v\in V : v^*=v\}$. It remains only to generalize the notion of positivity. 
\begin{definition}\label{defin:positive semidefinite operator}
A self-adjoint operator $v\in V_{sa}$ is called positive semidefinite if for every $N\in\mathbb N$ and for every $N$-tuple of self-adjoint matrices $X_1,\ldots,X_N$, one has
$$
tr(p(X_1,\ldots,X_N))\geq0.
$$
The cone of positive semi-definite operators is denoted $V_{sa}^+$.
\end{definition}

One last step is needed to get to the operator analogue of Hilbert's 17th problem. Indeed, in case of polynomials, one has $Y_I-Y_{\tilde I}=0$, for every permutation $\tilde I$ of $I$. Since this is no longer the case in the non-commutative world of formal series, we need to identify series which differ by a permutation.

\begin{definition}
Two elements $p,q\in V_{sa}^+$ are called \emph{cyclic equivalent} if $p-q$ is weak limit of sums of scalars multiples of monomials of the form $Y_I-Y_{\tilde I}$, where $\tilde I$ is a cyclic permutation of $I$.
\end{definition}

\begin{problem}[Non-commutative analogue of Hilbert's 17th problem]\label{prob:noncommutativeHilbert}
{\rm Is every element of $V_{sa}^+$ cyclic equivalent to a weak limit of sums of squares?}
\end{problem}

As mentioned, the interest in this problem comes from the fact that it is equivalent to Connes' embedding conjecture.

\begin{theorem}[R\u adulescu]
The following statements are equivalent:
\begin{enumerate}
\item Connes' embedding conjecture is true;
\item Problem \ref{prob:noncommutativeHilbert} has a positive answer.
\end{enumerate} 
\end{theorem}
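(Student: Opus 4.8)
The plan is to recast the equivalence as a single Hahn--Banach / bipolar duality in the Fréchet space $V$, equipped with the weak topology $\sigma(V,V^*)$, and then to recognize the resulting separation statement as Connes' embedding conjecture. Throughout I work in the real Fréchet space $V_{sa}$, whose (real) topological dual is identified with the self-adjoint functionals $\phi\in V^*$ (those with $\phi(w^*)=\overline{\phi(w)}$); since $V=V_{sa}\oplus iV_{sa}$, no information is lost. In this duality I introduce two weak-$*$ closed convex cones of functionals: let $T$ be the cone of \emph{tracial positive} functionals, i.e.\ those $\phi$ with $\phi(p^*p)\geq 0$ for all $p\in V$ and $\phi(Y_I-Y_{\tilde I})=0$ for every cyclic permutation $\tilde I$ of $I$; and let $K$ be the cone generated by the \emph{matricial evaluation} functionals $\phi_X(w)=\mathrm{tr}(w(X_1,\dots,X_N))$, ranging over all $N$ and all tuples $X$ of self-adjoint matrices (these lie in $V^*$ because $|\phi_X(Y_I)|\leq R^{|I|}$ with $R=\max_i\|X_i\|$).

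First I would record the elementary polar computations. By Definition~\ref{defin:positive semidefinite operator}, $V_{sa}^+=K^{\circ}$ is exactly the polar cone of $K$ inside $V_{sa}$. By Definition~\ref{defin:sum of squares} and the definition of cyclic equivalence, the cone $C$ of self-adjoint elements cyclically equivalent to a weak limit of sums of squares is weakly closed, and a functional is nonnegative on $C$ precisely when it is positive on squares and annihilates cyclic differences; thus $C^{\circ}=T$, and by the bipolar theorem $C=T^{\circ}$. Since each $\phi_X$ is itself tracial and positive on squares ($\mathrm{tr}(p(X)^*p(X))\geq 0$, and cyclicity of the trace kills $Y_I-Y_{\tilde I}$), we have $K\subseteq T$, hence $C=T^{\circ}\subseteq K^{\circ}=V_{sa}^+$; this is the unconditional (easy) inclusion underlying Problem~\ref{prob:noncommutativeHilbert}. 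Applying the bipolar theorem to the closed convex cones $T$ and $\overline{K}^{\,w*}$, the statement $V_{sa}^+=C$ is \emph{equivalent} to $\overline{K}^{\,w*}=T$, and since $\overline{K}^{\,w*}\subseteq T$ always holds, it is equivalent to the single inclusion $T\subseteq\overline{K}^{\,w*}$.

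It then remains to identify $T\subseteq\overline{K}^{\,w*}$ with Connes' embedding conjecture. Normalizing by $\phi(1)=1$, an element of $T$ is a tracial state positive on squares; the growth condition defining $V$ forces $\phi(Y_i^{2k})\leq CR^{2k}$, so the GNS construction applied to $\phi$ produces a tracial von Neumann algebra $M_\phi$ with \emph{bounded} self-adjoint generators $a_i=\pi_\phi(Y_i)$ and $\phi(Y_I)=\tau_{M_\phi}(a_{i_1}\cdots a_{i_p})$; conversely every finite tuple of self-adjoint elements of a tracial von Neumann algebra gives, through its moments, a normalized element of $T$. Under this correspondence, and using that a convex combination of matricial functionals is approximated by a single one (take a block-diagonal direct sum whose block multiplicities realize the weights), membership of $\phi$ in $\overline{K}^{\,w*}$ says exactly that the $*$-moments of $(a_1,\dots,a_n)$ are approximated by $*$-moments of matrices, i.e.\ that the tuple admits \emph{microstates}. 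Quantifying over all $n$ and all tuples, $T\subseteq\overline{K}^{\,w*}$ therefore asserts that every finite set of self-adjoint elements of a tracial von Neumann algebra has microstates, which by the microstates characterization of the conjecture (together with Exercise~\ref{Exercise: equivalence CEC 1} and the reformulation of the conjecture as embeddability into an ultrapower of $\mathcal{R}$) is precisely Connes' embedding conjecture. Reading the chain of equivalences in both directions yields the theorem.

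I expect the main obstacle to be the rigorous functional-analytic underpinning rather than the conceptual skeleton: one must verify that $V^*$ has the stated concrete form, that $T$ and the $\phi_X$ genuinely lie in $V^*$ and that $T$ is weak-$*$ closed, and above all that the GNS output is a bona fide tracial von Neumann algebra with norm-bounded generators, so that its moments really arise from a $\mathrm{II}_1$-setting and the microstates criterion applies. A secondary delicate point is keeping the normalization and the constant term under control when passing between cones and their normalized ``state'' slices, and checking that the weak closure used in Definition~\ref{defin:sum of squares} matches the weak-$*$ closure appearing on the dual side of the bipolar theorem.
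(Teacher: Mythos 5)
First, a point of reference: the paper does not actually prove this theorem. The section containing it is explicitly descriptive, and for the proof it refers the reader to R\u adulescu's original article \cite{Ra2}. So your proposal can only be measured against that argument, and in outline you have reconstructed it: R\u adulescu's proof is indeed a Hahn--Banach/bipolar duality between the trace-positive cone in $V_{sa}$ and the weak-$*$ closed cone generated by matricial trace-evaluation functionals, with a GNS construction on the dual side turning tracial positive functionals into bounded self-adjoint tuples in tracial von Neumann algebras, and the microstates characterization of the conjecture closing the loop. Your dual-side bookkeeping (the concrete form of $V^{*}$, reduction of convex combinations of matricial traces to a single one by block-diagonal amplification, the normalization via $\phi(1)$) is essentially correct.

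There are, however, two genuine gaps. The first is your assertion that the cone $C$ of elements cyclically equivalent to a weak limit of sums of squares is weakly closed. By Definition \ref{defin:sum of squares} and the definition of cyclic equivalence, $C=\Sigma+Z$, where $\Sigma$ is the weak closure of the cone of sums of squares and $Z$ is the weak closure of the span of the cyclic differences $Y_{I}-Y_{\tilde I}$. A sum of two weakly closed convex cones in a locally convex space need not be closed, and you offer no argument for closedness. What the bipolar theorem actually yields is $T^{\circ}=\overline{\Sigma+Z}^{\,w}$, the closure of the \emph{algebraic} sum; hence your chain of equivalences proves that Connes' embedding conjecture is equivalent to $V_{sa}^{+}=\overline{\Sigma+Z}^{\,w}$, which is in substance the statement proved in \cite{Ra2}, but not, as written, Problem \ref{prob:noncommutativeHilbert}, which asks for $V_{sa}^{+}\subseteq\Sigma+Z$. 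The implication (Problem $\Rightarrow$ CEC) survives, since $\Sigma+Z\subseteq\overline{\Sigma+Z}^{\,w}\subseteq V_{sa}^{+}$; the implication (CEC $\Rightarrow$ Problem) does not follow from your argument unless $\Sigma+Z$ is shown to be closed. This may well be a defect of the survey's paraphrase rather than of your mathematics, but as a proof of the theorem \emph{as stated here} it is a gap you must either fill or explicitly reformulate away.

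The second gap is the step ``membership of $\phi$ in $\overline{K}^{\,w*}$ says exactly that the tuple admits microstates.'' Weak-$*$ approximation by matricial traces produces, after normalization and amplification, matrix tuples whose mixed traces approximate the moments of $(a_{1},\ldots,a_{n})$ but which carry \emph{no bound on operator norm}, whereas the microstates characterization used in Chapter 1 quantifies over tuples with $\left\Vert A_{i}\right\Vert \leq R$. One must insert a truncation argument: from $\mathrm{tr}(X_{i}^{2k})\approx\tau(a_{i}^{2k})\leq\left\Vert a_{i}\right\Vert ^{2k}$ bound the spectral mass of $X_{i}$ outside $[-R,R]$, replace $X_{i}$ by its truncation via functional calculus, and estimate the change in mixed moments. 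This is a nontrivial lemma in \cite{Ra2} (and in the algebraic variant \cite{Kl-Sc}) and cannot be absorbed into a citation of the microstates theorem. By contrast, the GNS-boundedness issue you flag is only a verification, not a gap: continuity of $\phi\in V^{*}$ gives $\lvert\phi(Y_{I})\rvert\leq CR^{\lvert I\rvert}$ for some $R$, and an iterated Cauchy--Schwarz estimate then yields $\left\Vert \pi_{\phi}(Y_{i})\right\Vert \leq R$, so the GNS representation does land in a tracial von Neumann algebra with norm-bounded generators.
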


The proof of this theorem is quite involved and we refer the interested reader to the original paper by R\u adulescu. Here we move on to further developments, which led to a purely algebraic reformulation of Connes' embedding conjecture. Such a purely algebraic reformulation is unexpected since all formulations we have discussed so far have a strong topological component. We discuss two different, though similar, purely algebraic reformulations of Connes' embedding conjecture; one due to Klep and Schweighofer \cite{Kl-Sc}, the other to Juschenko and Popovich \cite{Ju-Po}.

We start by discussing Klep and Schweighofer's approach. This differs from R\u adulescu in many parts, the most important of which is that they do not use formal series but they get back to polynomials. Using finite objects instead of infinite objects is the ultimate reason why they are able to get rid of every topological condition. 

Let $K$ be either the real or the complex field and let $V$ denote the ring of polynomials on $n$ indeterminates with coefficients in $K$. Instead of using R\u adulescu's adjoint operation they define the adjoint operation acting identically on monomials and switching each coefficient with its conjugate. As before, the set of self-adjoint elements is senoted $V_{sa}$. Moreover, instead of using the cyclic equivalence, they define two polynomial $p$ and $q$ to be equivalent when their difference is a sum of commutators.

\begin{definition}
A polynomial $f\in V$ is called positive semidefinite if for every $N\in\mathbb N$ and for every contractions $A_1,\ldots,A_n\in\mathbb M_N(\mathbb R)$ one has
$$
tr(f(A_1,\ldots,A_n))\geq0.
$$
The set of positive semidefinite elements is denoted by $V^+$.
\end{definition}

The introduction of the quadratic module is the major difference with R\u adulescu's approach.

\begin{definition}
A subset $M\subseteq V_{sa}$ is called quadratic module if the following hold:
\begin{enumerate}
\item $1\in M$;
\item $M+M\subseteq M$;
\item $p^*Mp\subseteq M$, for all $p\in V$.
\end{enumerate}
The quadratic module generated by the elements $1-X_1^2,\ldots,1-X_n^2$ is denoted by $Q$.
\end{definition}

\begin{theorem}[Klep-Schweighofer]
The following statements are equivalent:
\begin{enumerate}
\item Connes' embedding conjecture is true;
\item For every $f\in V^+$ and for every $\varepsilon>0$, there exists $q\in Q$ such that $f+\varepsilon$ is equivalent to $q$, in the sense that $f+\varepsilon-q$ is a sum of commutators.
\end{enumerate} 
\end{theorem}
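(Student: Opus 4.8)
The plan is to read statement (2) as a tracial, \emph{Archimedean Positivstellensatz} and to use that the Connes' embedding conjecture (CEC) is exactly the statement that matricial tracial states are weak-$*$ dense among all tracial states on the free $*$-algebra $V=K\langle X_1,\dots,X_n\rangle$ (with involution fixing each $X_i$). The structural fact I would record first is that the quadratic module $Q$ is \emph{Archimedean}: from $1-X_i^2\in Q$ one gets $n\cdot1-\sum_iX_i^2\in Q$, and a routine induction on word length then gives, for every $a\in V_{sa}$, some $\lambda>0$ with $\lambda\cdot1-a\in Q$; this is precisely what will let me conclude \emph{exact} membership in $Q+[V,V]$ rather than membership in a closure. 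Building on this, the key lemma I would prove is: \emph{if $g\in V_{sa}$ satisfies $L(g)>0$ for every tracial state $L$ on $V$ with $L(Q)\subseteq[0,\infty)$, then $g\in Q+[V,V]$.} I argue the contrapositive: if $g\notin\mathcal{C}:=Q+[V,V]_{sa}$, then $\mathcal{C}$ is a convex cone for which $1$ is an order unit (by Archimedeanity), so the Eidelheit--Kakutani separation theorem yields a nonzero functional $L\geq0$ on $\mathcal{C}$ with $L(g)\leq0$; Archimedeanity forces $L(1)>0$, and after normalizing, $L$ vanishes on commutators (since $\pm[V,V]\subseteq\mathcal{C}$), hence is tracial, and is positive (since $p^*p\in Q$), contradicting the hypothesis.

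For (1)$\Rightarrow$(2) I apply the lemma to $g=f+\varepsilon$, so it suffices to show $L(f+\varepsilon)>0$ for every tracial state $L$ positive on $Q$. Performing the GNS construction on such an $L$ produces a finite von Neumann algebra $M=\pi(V)''$ with trace $\tau=L$, in which $x_i:=\pi(X_i)$ are self-adjoint contractions (the inequality $L(p^*(1-X_i^2)p)\geq0$ forces $\|x_i\|\leq1$). Here the CEC enters through the microstates reformulation (equivalently, Exercise \ref{Exercise: finitely representable}, that $M$ embeds trace-preservingly into some $\mathcal{R}^{\mathcal U}$): the tuple $(x_1,\dots,x_n)$ admits matricial microstates, self-adjoint contractions $A_1,\dots,A_n\in M_N(\mathbb{C})$ with $\mathrm{tr}(f(A))\to\tau(f(x))=L(f)$. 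Since $f\in V^+$ gives $\mathrm{tr}(f(A))\geq0$, we obtain $L(f)\geq0$, hence $L(f+\varepsilon)\geq\varepsilon>0$, and the lemma delivers $f+\varepsilon\in Q+[V,V]$.

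For (2)$\Rightarrow$(1) I again use the microstates reformulation: it is enough to show that for every finite von Neumann algebra with a self-adjoint contraction generating tuple, the tracial state $L_0(p)=\tau(p(x))$ lies in the weak-$*$ closure of the (convex) set $S_{\mathrm{mat}}$ of matricial tracial states. If not, Hahn--Banach separation in the dual of $V_{sa}$ produces $f\in V_{sa}$ which, after subtracting a multiple of $1$, satisfies $\inf_{L\in S_{\mathrm{mat}}}L(f)\geq0$ (that is, $f\in V^+$) while $L_0(f)<0$. Applying (2) gives $f+\varepsilon=q+c$ with $q\in Q$, $c\in[V,V]$, for each $\varepsilon>0$; evaluating the positive tracial state $L_0$ yields $L_0(f)+\varepsilon=L_0(q)\geq0$, so $L_0(f)\geq0$, a contradiction. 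Thus $L_0\in\overline{S_{\mathrm{mat}}}$ for every such $M$, which is exactly the CEC.

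The hard part will be the first implication, and specifically the passage through the CEC: one must check that the purely algebraic tracial state coming out of the separation genuinely yields, via GNS, a \emph{finite} tracial von Neumann algebra to which the microstates characterization applies, and that the resulting microstates---a priori only approximately self-adjoint contractions inside $\mathcal{R}^{\mathcal U}$---can be truncated by spectral projections to honest self-adjoint contractions while perturbing $\mathrm{tr}(f(A))$ by only $o(1)$. By contrast, the Archimedean Positivstellensatz lemma is a soft separation argument, and the converse implication is routine Hahn--Banach duality once the density formulation of the CEC is available.
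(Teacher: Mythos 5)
The paper you were handed never actually proves this theorem: the section on algebraic reformulations is explicitly descriptive (``This section is merely descriptive\dots the reader interested in technical details of this approach, is referred to the original papers''), and for the proof it defers to Klep and Schweighofer \cite{Kl-Sc}. So there is no in-paper argument to compare against, and your proposal must be judged on its own merits; on that score it is correct in outline, and in fact it reconstructs essentially the original Klep--Schweighofer argument. Your three ingredients --- (i) Archimedeanity of the quadratic module $Q$ generated by the $1-X_i^2$ (the word-length induction you sketch does work: $1-w^*w\in Q$ for every word $w$, and then $2\mp(w+w^*)\in Q$ handles general self-adjoint elements), (ii) a tracial Archimedean Positivstellensatz obtained by Eidelheit--Kakutani separation of the cone $Q+[V,V]_{sa}$, where the order unit $1$ both licenses the separation and forces the separating functional to be, after normalization, a $Q$-positive tracial state, and (iii) the microstates / weak-$*$ density formulation of Connes' embedding conjecture used in both directions --- are exactly the skeleton of the known proof. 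The two implications are sound as you present them: in (1)$\Rightarrow$(2), GNS applied to a $Q$-positive tracial state produces self-adjoint contractions generating a finite von Neumann algebra with separable predual and faithful normal trace, CEC supplies matricial microstates, and positivity of $f$ gives $L(f)\geq 0$, so the lemma applies to $f+\varepsilon$; in (2)$\Rightarrow$(1), Hahn--Banach separation together with the fact that $L_0$ annihilates commutators and is nonnegative on $Q$ yields the contradiction. The technical points you flag are the right ones, and both are standard; two more deserve explicit mention. First, CEC as stated in the paper concerns II$_1$ factors, while your GNS algebra need not be a factor, so you should record the standard trace-preserving embedding of a finite von Neumann algebra with separable predual into a II$_1$ factor (for instance a free product with a diffuse algebra) before invoking the embedding into $\mathcal{R}^{\mathcal{U}}$. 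Second, the survey defines $V^+$ by testing on \emph{real} matrix contractions, whereas your microstates are complex self-adjoint matrices; this is bridged by the usual realification embedding of $M_N(\mathbb{C})$ into $M_{2N}(\mathbb{R})$, which sends self-adjoint complex contractions to symmetric real contractions and preserves normalized traces. Neither point is a gap in the strategy, only bookkeeping that a complete write-up must include.
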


One may wish to be able to replace the quadratic module by the more standards squares, namely elements of the form $v^*v$. We conclude this section by describing the approach by Juschenko and Popovich, which is indeed aimed to this.

One way to reformulate Klep and Schweighofer's approach is by considering the free associative algebra $K(X)$ generated by a countable family of self-adjoint elements $X=(X_1,X_2\ldots\}$. Thus, a polynomial $f\in V$ is just an element of $K(X)$. Klep-Schweighofer's theorem affirms that Connes' embedding conjecture is equivalent to the statement that every positive semidefinite element of $K(X)$ cab be written as an element of the quadratic module, up to a sum of commutators and $\varepsilon$, with $\varepsilon$ arbitrarily small.

Instead of considering the free associative algebra $K(X)$, Juschenko and Popovich considered the group *-algebra $\mathcal F$ of the countably generated free group $\mathbb F_{\infty}=\langle u_1,u_2,\ldots\rangle$. With this choice they were able to simplify Klep and Schweighofer's theorems in two ways. First, instead of using the quadratic module, they were able to use standard squares; second, instead of considering every polynomial $f$, they were able to consider only polynomials of degree at most two, in the variables $u_i$. Before presenting the theorem, we redefine the notion of positivity in this new context.

\begin{definition}
An element $f\in\mathcal F$ with $n$ indeterminates is called positive semidefinite if for all $m\geq1$ and all $n$-tuples of unitary matrices $U_1,\ldots,U_n$ of dimension $m$, one has 
$$
tr(f(U_1,\ldots,U_n))\geq0.
$$
\end{definition}

\begin{theorem}[Juschenko-Popovich]
The following statements are equivalent:
\begin{enumerate}
\item Connes' embedding conjecture is true;
\item For every self-adjoint positive semidefinite $f\in\mathcal F$ and for every $\varepsilon>0$, one has $f+\varepsilon=g+c$, where $g$ is a sum of squares (elements of the forv $v^*v$) and $c$ is a sum of commutators.
\end{enumerate}
\end{theorem}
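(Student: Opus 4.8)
The plan is to recast both conditions in terms of traces on the group $*$-algebra $\mathcal F=\mathbb{C}\mathbb{F}_\infty$ and then to isolate an unconditional ``Positivstellensatz'' (a purely algebraic fact) from the single place where Connes' embedding conjecture genuinely enters. Write $\mathcal F_{sa}$ for the self-adjoint part, and let $\mathcal C\subseteq\mathcal F_{sa}$ be the convex cone of elements of the form $g+c$, where $g=\sum_k v_k^*v_k$ is a sum of squares and $c$ is a sum of commutators. A \emph{tracial state} on $\mathcal F$ is a state $\tau$ with $\tau(ab)=\tau(ba)$; every such $\tau$ annihilates commutators and is nonnegative on squares, so $\tau\ge 0$ on $\mathcal C$. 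Call $\tau$ \emph{matricial} if it is obtained by sending the free generators $u_i$ to unitary matrices $U_i$ and evaluating the normalized trace. By definition $f$ is positive semidefinite precisely when $\tau(f)\ge 0$ for every matricial trace $\tau$, whereas the decomposition $f+\varepsilon=g+c$ of condition (2) will be seen to be equivalent to $\tau(f)\ge-\varepsilon$ for every tracial state $\tau$.

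The first main step is the unconditional Positivstellensatz: for a fixed self-adjoint $f$, one has $f+\varepsilon\in\mathcal C$ for every $\varepsilon>0$ if and only if $\tau(f)\ge 0$ for every tracial state $\tau$ on $\mathcal F$. The forward direction is immediate, since any tracial state kills $c$ and is nonnegative on $g$. For the converse I would fix $\varepsilon$ and argue by contradiction, working inside the finite-dimensional real subspace $W$ spanned by the monomials occurring in $f$ together with the finitely many monomials of bounded degree needed to express candidate certificates; here the degree bound is exactly the device keeping everything finite-dimensional. One shows that the slice $\mathcal C\cap W$ is a closed convex cone, the closedness coming from a Carath\'eodory-type bound on the number of squares together with compactness of the unitaries. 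If $f+\varepsilon\notin\mathcal C$, Hahn--Banach produces a separating linear functional $\varphi$ on $W$ with $\varphi\ge0$ on $\mathcal C\cap W$ and $\varphi(f+\varepsilon)\le 0$; since commutators form a subspace, $\varphi$ vanishes on them and is therefore tracial, and since $\varphi\ge0$ on squares it is positive. After normalizing to a tracial state $\tau$ with $\tau(1)=1$ we obtain $\tau(f)\le-\varepsilon<0$, contradicting the hypothesis.

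The second step is the only one that uses Connes' embedding conjecture, and it identifies CEC with the density of matricial traces. By the embedding characterizations developed above (Kirchberg's theorem, Brown's theorem, and the fact that $\mathcal R$ is the weak closure of an increasing chain of matrix algebras), a tracial state $\tau$ on $\mathcal F$ arises from a separable tracial von Neumann algebra generated by the images of the $u_i$, and such an algebra embeds trace-preservingly into some $\mathcal R^{\mathcal U}$ if and only if $\tau$ is a weak-$*$ limit of matricial traces; hence CEC is equivalent to the assertion that the matricial traces are weak-$*$ dense among all tracial states on $\mathcal F$. Granting this, both implications follow formally. If CEC holds and $f$ is positive semidefinite, then $\tau(f)\ge0$ on matricial traces, hence on all tracial states by density, so $f+\varepsilon\in\mathcal C$ for every $\varepsilon$ by Step 1, which is (2). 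Conversely, if the matricial traces were not dense, a weak-$*$ separation gives a self-adjoint $f$ that is positive semidefinite yet satisfies $\tau_0(f)<0$ for some tracial state $\tau_0$; choosing $\varepsilon<-\tau_0(f)$ and applying $\tau_0$ to the decomposition $f+\varepsilon=g+c$ furnished by (2) gives $0\le\tau_0(g)=\tau_0(f)+\varepsilon<0$, a contradiction, so (2) forces CEC.

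I expect the main obstacle to be the backward direction of Step 1, namely converting an approximate positivity (``$\tau(f)\ge 0$ for all traces'') into an exact algebraic identity with no closure and no limits. The delicate points are the choice of a finite-dimensional working space genuinely closed under the formation of the needed certificates, the Carath\'eodory/compactness argument showing that $\mathcal C\cap W$ is closed, and the verification that the separating functional is automatically tracial and positive. It is precisely here that the passage to the unitaries of $\mathbb F_\infty$ pays off: the relation $u_i^*u_i=1$ collapses Klep and Schweighofer's quadratic module to plain sums of squares, and the reduction to degree at most two confines the certificates to a manageable $W$. Once these technical points are secured, the remainder of the argument is soft duality.
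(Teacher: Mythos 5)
Note first that the paper itself does not prove this theorem: the section where it appears is explicitly descriptive and defers all details to the original article \cite{Ju-Po}, so your proposal can only be measured against the published Juschenko--Popovych argument. Your overall architecture matches it: an unconditional duality statement (``$f+\varepsilon$ lies in the cone $\mathcal C$ of sums of hermitian squares plus commutators for all $\varepsilon>0$ if and only if $\tau(f)\geq 0$ for every tracial state $\tau$''), combined with the identification of Connes' embedding conjecture with weak-$*$ density of matricial traces among all tracial states on $\mathcal F$. Step 2 and the final formal combination are essentially right (modulo the standard, glossed-over point that CEC for II$_1$ factors yields embeddability of arbitrary separable tracial von Neumann algebras, so that GNS algebras of non-factorial traces are covered).

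The genuine gap is in the hard direction of your Step 1. Your finite-dimensional reduction is circular: the subspace $W$ is defined via ``the monomials of bounded degree needed to express candidate certificates,'' but the whole difficulty of such Positivstellens\"atze is precisely that there is \emph{no} a priori degree bound on certificates -- and in the contradiction scenario there are no certificates at all. Worse, the closedness of $\mathcal C\cap W$ cannot be obtained by Carath\'eodory plus compactness: an element of $\mathcal C\cap W$ may only admit representations $\sum_k v_k^*v_k + c$ in which the $v_k$ have arbitrarily large support and the commutator part cancels the high-degree terms; the only uniform control available (the identity coefficient bounds $\sum_k\|v_k\|_2^2$, since commutators have vanishing identity coefficient) bounds $\ell^2$-norms but not supports, so limits escape the algebra. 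The correct route -- and the one Juschenko--Popovych and Klep--Schweighofer actually take -- needs no closedness and no truncation: because the generators are unitaries, $2|c|\cdot 1 + c\,g + \bar c\,g^{-1} = (\lambda+\mu g)^*(\lambda+\mu g)$ for suitable $\lambda,\mu$, so for every self-adjoint $h$ one has $N\cdot 1 + h\in\mathcal C$ for large $N$; hence $1$ is an algebraic interior point (order unit) of $\mathcal C$, and Eidelheit's purely algebraic separation theorem applies in the full infinite-dimensional space $\mathcal F_{sa}$. The resulting functional $\varphi$ is nonnegative on squares, vanishes on the subspace of commutators, satisfies $|\varphi(h)|\leq N_h\,\varphi(1)$ (so $\varphi(1)>0$ and one may normalize), and its GNS representation sends group elements to unitaries, so it extends to a tracial state with $\tau(f)\leq-\varepsilon$. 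You in fact noticed the key fact -- that $u_i^*u_i=1$ collapses the quadratic module to plain squares -- but deployed it only as a bookkeeping convenience rather than as the Archimedean property that makes the separation work; repairing Step 1 along these lines makes the rest of your argument go through.
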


\section{Brown's invariant}

Most of research about Connes' embedding\index{conjecture!Connes' embedding} conjecture has been focusing on impressive reformulations of it, that is, on finding apparently very far statements that turn out to be eventually equivalent to the original conjecture. 

Over the last couple of years another point of view has been also taken, mostly due to Nate Brown's paper\cite{Br2}. He assumes that a fixed separable II$_1$ factor $M$ verifies Connes' embedding\index{conjecture!Connes' embedding} conjecture and tries to tell something interesting about $M$. In particular, he managed to associate an invariant to $M$, now called Brown's invariant, that carries information about rigidity properties of $M$. The purpose of this section is to introduce the reader to this invariant.  

\subsection{Convex combinations of representations into \texorpdfstring{$\mathcal{R}^{\mathcal{U}}$}{RU} }\label{se:standard}

Let $M$ be a separable II$_1$ factor verifying Connes' embedding\index{conjecture!Connes' embedding} conjecture and fix a free ultrafilter $\mathcal U$ on the natural numbers. The set $\mathbb Hom(M,\mathcal{R}^{\mathcal{U}})$ of unital morphisms $M\to \mathcal{R}^{\mathcal{U}}$ modulo unitary equivalence is
non-empty. We shall show that this set, that is in fact Brown's invariant, has a surprisingly rich structure.

We can equip $\mathbb Hom(M,\mathcal{R}^{\mathcal{U}})$ with a metric in a reasonably simple way. Since $M$ is separable, it is topologically generated by countably many elements $a_1,a_2\ldots$, that we may assume to be contractions, that is $||a_i||\leq1$, for all $i$. So we can define a metric on $\mathbb Hom(M,\mathcal{R}^{\mathcal{U}})$ as follows
$$
d([\pi],[\rho])=\inf_{u\in
U(\mathcal{R}^{\mathcal{U}})}\left(\sum_{n=1}^\infty\frac{1}{2^{2n}}||\pi(a_n)-u\rho(a_n)u^*||_2^2\right)^{\frac{1}{2}},
$$
since the series in the right hand side is convergent. A priori, $d$ is just a pseudo-metric, but we can use Theorem 3.1 in \cite{Sh} to say that approximately unitary equivalence is the same as unitary equivalence in separable subalgebras of $\mathcal{R}^{\mathcal{U}}$. This means that $d$ is actually a metric. Moreover, while this metric may depend on the generating set $\{a_1,a_2,\ldots\}$, the induced topology does not. It is indeed the point-wise convergence topology.\\

$\mathbb Hom(M,\mathcal{R}^{\mathcal{U}})$ does not carry any evident vector space structure, but Nate Brown's intuition was that one can still do convex combinations inside $\Hom(M,\mathcal{R}^{\mathcal{U}})$ in a formal way. There is indeed an obvious (and wrong) way to proceed: given *-homomorphisms $\pi, \rho \colon M \to \mathcal{R}^{\mathcal{U}}$
and $0 < t < 1$, take a projection $p_t \in (\pi(M) \cup \rho(M))'
\cap \mathcal{R}^{\mathcal{U}}$ such that $\tau(p_t) = t$ and define the ``convex combination''
$t\pi + (1-t)\rho$ to be $$x \mapsto \pi(x)p_t + \rho(x)
p_t^{\perp}.$$

Since the projection $p_t$ is chosen in $(\pi(M) \cup \rho(M))'$, then $t\pi+(1-t)\rho$ is certainly a new unital morphism of $M$ in $\mathcal{R}^{\mathcal{U}}$. Unfortunately this procedure is not well defined on classes in $\Hom(M, \mathcal{R}^{\mathcal{U}})$ and the reason can be explained as follows: if $p \in \mathcal{R}^{\mathcal{U}}$ is a nonzero projection, then the corner $p  \mathcal{R}^{\mathcal{U}} p$ is still a hyperfinite $II_1$-factor and so, by uniqueness, it is isomorphic to $\mathcal{R}^{\mathcal{U}}$. Thus the cut-down $p\pi$ can be seen as a new morphism $M \to \mathcal{R}^{\mathcal{U}}$.  The problem is that the isomorphism $p\mathcal{R}^{\mathcal{U}} p\to \mathcal{R}^{\mathcal{U}}$ is not canonical and this reflects on the fact that convex combinations as defined above are not well-defined on classes in $\Hom(M,\mathcal{R}^{\mathcal{U}})$. The idea is to allow only particular isomorphisms $p\mathcal{R}^{\mathcal{U}} p\to \mathcal{R}^{\mathcal{U}}$ that are somehow fixed by conjugation by a unitary. This is done by using the so-called standard isomorphisms, that represent Nate Brown's main technical innovation.

\begin{definition}
\label{defin:standard} 
Let $p \in \mathcal{R}^{\mathcal{U}}$ be a non-zero projection.  A
\emph{standard isomorphism}\index{standard isomorphism} is any map $\theta_p\colon p\mathcal{R}^{\mathcal{U}} p \to
\mathcal{R}^{\mathcal{U}}$ constructed as follows. Lift $p$ to a
projection $(p_n) \in \ell^\infty(\mathcal{R})$ such that $\tau_\mathcal{R}(p_n) =
\tau_{\mathcal{R}^{\mathcal{U}}}(p)$, for all $n \in \mathbb{N}$, fix isomorphisms
$\theta_n\colon p_n \mathcal{R}p_n \to \mathcal{R}$, and define $\theta_p$ to be the
isomorphism on the right hand side of the following commutative diagram

$$
\xymatrix{\ell^\infty(p_n\mathcal{R}p_n)\ar[r]\ar[d]^{\oplus\theta_n} & p\mathcal{R}^{\mathcal{U}} p\ar[d]^\cong\\
\ell^\infty(\mathcal{R})\ar[r] & \mathcal{R}^{\mathcal{U}}}
$$

\end{definition}

\begin{definition}\label{defin:convex combinations}
\label{defin:convex-like} 
Given $[\pi_1], \ldots, [\pi_n] \in
\Hom(N,\mathcal{R}^{\mathcal{U}})$ and $t_1,\ldots, t_n \in [0,1]$ such that $\sum t_i =
1$, we define $$\sum_{i=1}^n t_i [\pi_i] := \left[ \sum_{i=1}^n
\left(\theta_i^{-1}\circ\pi_i\right)\right],$$ where $\theta_i \colon p_i \mathcal{R}^{\mathcal{U}} p_i \to
\mathcal{R}^{\mathcal{U}}$ are standard isomorphisms and $p_1, \ldots, p_n \in \mathcal{R}^{\mathcal{U}}$ are
orthogonal projections such that $\tau(p_i) = t_i$ for $i \in
\{1,\ldots, n\}$.
\end{definition}

We can explain in a few words why this procedure of using standard isomorphisms works. It has been originally proven by Murray and von Neumann that there is a unique unital embedding of $M_{n}\left( \mathbb{C}\right) $ into $\mathcal{R}$ up to unitary equivalence. Since $\mathcal{R}$ contains an increasing chain of matrix algebras whose union is weakly dense, it follows that all unital endomorphisms of $\mathcal{R}$ are approximately inner. Now, if we take an automorphism $\Theta$ of $\mathcal{R}^{\mathcal{U}}$ that can be lifted (i.e.\ it is of the form $(\theta_n)_{n\in\mathbb N}$ where $\theta_n$ is an automorphism of $\ell^\infty(\mathcal{R})$), it follows that $\Theta$ is just the conjugation by some unitary, when restricted to a separable subalgebras or $\mathcal{R}^{\mathcal{U}}$. Now, Nate Brown's standard isomorphisms are exactly those isomorphisms $p\mathcal{R}^{\mathcal{U}} p\to \mathcal{R}^{\mathcal{U}}$ that are liftable and therefore it is intuitively clear that, after passing to the quotient by the relation of unitary equivalence, the choice of the standard isomorphism should not affect the result. The formalization of this rough idea leads to the following theorem.

\begin{theorem}{\bf (N.P. Brown\cite{Br2})}\label{th:convexcombwelldefined}
$\sum_{i=1}^n t_i [\pi_i]$ is well defined, i.e.\ independent of the projections $p_i$, the standard isomorphisms $\theta_i$ and the representatives $\pi_i$.
\end{theorem}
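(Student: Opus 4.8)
The plan is to fix the numbers $t_1,\dots,t_n$ and prove separately that the class $\bigl[\sum_i \theta_i^{-1}\circ\pi_i\bigr]$ is unaffected by (a) replacing each $\pi_i$ by a unitarily equivalent representative, (b) replacing the orthogonal family $(p_i)$ by another orthogonal family of projections with the same traces, and (c) replacing each standard isomorphism $\theta_i$ by another standard isomorphism on the same corner. In every case the strategy is identical: I will manufacture a single unitary $v\in U(\mathcal{R}^{\mathcal U})$ of the block form $v=\sum_i v_i$, with $v_i$ a unitary of the corner $p_i\mathcal{R}^{\mathcal U}p_i$, and check that conjugation by $v$ carries one representative of the combination onto the other. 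Because the $p_i$ are orthogonal and $\sum_i p_i=1$, such a block sum is automatically a unitary of $\mathcal{R}^{\mathcal U}$, and conjugation by it acts on each summand independently.

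Case (a) is direct. If $\pi_i'=\Ad(w_i)\circ\pi_i$ for unitaries $w_i\in\mathcal{R}^{\mathcal U}$, set $v_i=\theta_i^{-1}(w_i)\in p_i\mathcal{R}^{\mathcal U}p_i$; then $\theta_i^{-1}\circ\pi_i'=\Ad(v_i)\circ(\theta_i^{-1}\circ\pi_i)$, and $v=\sum_i v_i$ does the job. For case (b), recall that $\mathcal{R}^{\mathcal U}$ is a $\mathrm{II}_1$ factor, so any two orthogonal families $(p_i)$, $(p_i')$ of projections with $\sum_i p_i=\sum_i p_i'=1$ and $\tau(p_i)=\tau(p_i')$ are conjugate by a unitary $w\in U(\mathcal{R}^{\mathcal U})$, i.e.\ $wp_iw^*=p_i'$. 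Then $\theta_i'\circ\Ad(w)$ is an isomorphism $p_i\mathcal{R}^{\mathcal U}p_i\to\mathcal{R}^{\mathcal U}$; since $w$ lifts to a sequence of unitaries of $\mathcal R$ (by Exercise~\ref{Exercise: unitary group ultraproduct}) this isomorphism is again liftable, so case (b) reduces to comparing two liftable isomorphisms on the \emph{same} corner, which is exactly case (c).

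The crux is case (c), and the key is the following lemma: \emph{every automorphism $\Theta$ of $\mathcal{R}^{\mathcal U}$ that is liftable (induced coordinatewise by automorphisms $\theta_n$ of $\mathcal R$) restricts to an inner automorphism on each separable subalgebra $A\subseteq\mathcal{R}^{\mathcal U}$.} Granting this, write $\Theta_i=\theta_i'\circ\theta_i^{-1}\in\Aut(\mathcal{R}^{\mathcal U})$, so that $\theta_i^{-1}\circ\pi_i=(\theta_i')^{-1}\circ\Theta_i\circ\pi_i$. Applying the lemma to the separable algebra $\pi_i(M)$ yields a unitary $u_i\in\mathcal{R}^{\mathcal U}$ with $\Theta_i(\pi_i(x))=u_i\pi_i(x)u_i^*$ for all $x\in M$; setting $v_i=(\theta_i')^{-1}(u_i)\in p_i\mathcal{R}^{\mathcal U}p_i$ gives $\theta_i^{-1}\circ\pi_i=\Ad(v_i)\circ\bigl((\theta_i')^{-1}\circ\pi_i\bigr)$, and $v=\sum_i v_i$ implements the desired unitary equivalence. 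The lemma itself rests on the cornerstone fact, due to Murray and von Neumann, that $M_k(\mathbb C)$ embeds unitally into $\mathcal R$ uniquely up to unitary conjugacy; combined with the weak density of the increasing chain $\bigcup_k M_{2^k}(\mathbb C)$ in $\mathcal R$, this shows every automorphism of $\mathcal R$ is approximately inner. Thus, given generators $a_1,a_2,\dots$ of $A$ with lifts $(a_{j,n})_n$, for each $n$ I can choose a unitary $u_n\in\mathcal R$ with $\|\theta_n(a_{j,n})-u_n a_{j,n}u_n^*\|_2<1/n$ for $j\le n$; the class $u=[(u_n)]\in U(\mathcal{R}^{\mathcal U})$ then implements $\Theta$ on all of $A$.

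I expect the main technical obstacle to be a lift-compatibility issue inside case (c): two standard isomorphisms $\theta_i,\theta_i'$ for the same projection $p_i$ may be built from \emph{different} lifts $(p_{i,n})$, $(p_{i,n}')$ of $p_i$, so the composite $\theta_i'\circ\theta_i^{-1}$ is not literally coordinatewise liftable. I would resolve this by observing that both lifts represent $p_i$, hence $\|p_{i,n}-p_{i,n}'\|_2\to 0$ along $\mathcal U$; two projections of equal trace that are close in $\|\cdot\|_2$ are conjugate by a unitary $y_n\in\mathcal R$ with $\|y_n-1\|_2\to 0$ along $\mathcal U$, so $[(y_n)]=1$ in $\mathcal{R}^{\mathcal U}$. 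Conjugating one lift into the other therefore perturbs $\theta_i'$ only by an inner automorphism that becomes trivial in the ultrapower, so without loss of generality the two lifts coincide and $\theta_i'\circ\theta_i^{-1}=[(\theta_{i,n}'\circ\theta_{i,n}^{-1})]$ is genuinely liftable, making the lemma applicable and completing the argument.
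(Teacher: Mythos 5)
Your proposal is correct, and its technical engine is the same one the paper runs on: the fact that a liftable isomorphism between corners of $\mathcal{R}^{\mathcal{U}}$ is implemented on any separable subalgebra by a partial isometry, proved from approximate innerness in $\mathcal{R}$ (the paper's Lemma \ref{lem:partialisometries} and Proposition \ref{prop:unitary}); your key lemma is exactly the special case $p=q=1$ of that proposition. Where you genuinely differ is the decomposition. The paper makes a single comparison: given the two data sets $(p_i,\theta_i,\pi_i)$ and $(q_i,\sigma_i,\rho_i)$ with $[\pi_i]=[\rho_i]$, it applies Proposition \ref{prop:unitary} to the composite $\sigma_i^{-1}\circ\theta_i\colon p_i\mathcal{R}^{\mathcal{U}}p_i\to q_i\mathcal{R}^{\mathcal{U}}q_i$, obtaining partial isometries $v_i$ with $v_i^*v_i=p_i$, $v_iv_i^*=q_i$ intertwining $\theta_i^{-1}\circ\pi_i$ with $\sigma_i^{-1}\circ\pi_i$, and then absorbs the change of representatives via the unitary $u=\sum_i\sigma_i^{-1}(u_i)v_i$ where $\rho_i=u_i\pi_iu_i^*$ --- all three dependencies are killed in one stroke, precisely because the proposition is stated for \emph{distinct} projections of equal trace. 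Your three-case factorization buys you a weaker lemma (only liftable \emph{automorphisms} of $\mathcal{R}^{\mathcal{U}}$), but at the cost of two extra ingredients: unitary conjugacy of orthogonal families of equal-trace projections in a \textup{II}$_1$ factor (your case (b)), and the lift-compatibility repair for $\theta_i'\circ\theta_i^{-1}$, which you correctly identify and correctly fix by conjugating one lift of $p_i$ into the other with unitaries $y_n\to 1$ in $\|\cdot\|_2$. Note that the paper's one-shot argument never meets this last issue: there each standard isomorphism enters the composition only together with its own lift ($\sigma_{i,n}^{-1}\circ\theta_{i,n}$ is coordinatewise well defined), whereas in your case (c) the two maps share a domain but live over possibly different lifts of it. Both routes are sound; the paper's is shorter, while yours isolates more cleanly where liftability, as opposed to the mere existence of some isomorphism $p\mathcal{R}^{\mathcal{U}}p\cong\mathcal{R}^{\mathcal{U}}$, is actually used.
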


To prove this result we need some preliminary observations.

\begin{lemma}\label{lem:partialisometries}
Let $p,q\in \mathcal{R}$ be projections of the same trace and $\theta:p\mathcal{R}p\to q\mathcal{R}q$ be a unital *-homomorphism, that is $\theta(p)=q$. Then there is a sequence of partial isometries $v_n\in \mathcal{R}$ such that:
\begin{enumerate}
\item $v_n^*v_n=p$,
\item $v_nv_n^*=q$,
\item $\theta(x)=\lim_{n\to\infty}v_nxv_n^*$,
\end{enumerate}
where the limit is taken in the 2-norm.
\end{lemma}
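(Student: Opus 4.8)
The plan is to reduce the statement to the fact recalled just before the lemma: every unital $\ast$-endomorphism of the hyperfinite \textup{II}$_1$ factor $\mathcal{R}$ is approximately inner in the $2$-norm. I may assume $p\neq 0$, since otherwise $p=q=0$ by faithfulness of the trace and the conclusion holds trivially with $v_n=0$.

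First I would use that $p$ and $q$, having the same trace in the \textup{II}$_1$ factor $\mathcal{R}$, are Murray--von Neumann equivalent: there is a partial isometry $w\in\mathcal{R}$ with $w^{\ast}w=p$ and $ww^{\ast}=q$. These relations give $wp=w=qw$ and $pw^{\ast}=w^{\ast}=w^{\ast}q$, so $\Ad(w^{\ast})\colon y\mapsto w^{\ast}yw$ restricts to a unital $\ast$-isomorphism $q\mathcal{R}q\to p\mathcal{R}p$ (note $w^{\ast}qw=w^{\ast}w=p$). I then set
\begin{equation*}
\psi:=\Ad(w^{\ast})\circ\theta\colon p\mathcal{R}p\to p\mathcal{R}p,
\end{equation*}
a unital $\ast$-endomorphism of the corner $p\mathcal{R}p$.

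The key step is to apply approximate innerness to $\psi$. Since $p\neq0$, the corner $p\mathcal{R}p$ is again a separable hyperfinite \textup{II}$_1$ factor (a corner of an injective von Neumann algebra is injective), hence isomorphic to $\mathcal{R}$; transporting the cited fact through this isomorphism, and observing that the normalized $2$-norm of $p\mathcal{R}p$ is a fixed scalar multiple of the restriction of $\|\cdot\|_2$ so that convergence is unaffected, I obtain unitaries $u_n$ of the algebra $p\mathcal{R}p$ (that is, $u_n^{\ast}u_n=u_nu_n^{\ast}=p$ and $pu_n=u_n$) with
\begin{equation*}
\|\psi(x)-u_n x u_n^{\ast}\|_2\longrightarrow 0\qquad(x\in p\mathcal{R}p).
\end{equation*}
Setting $v_n:=wu_n$, the relations $w^{\ast}w=p$, $pu_n=u_n$ and $u_nu_n^{\ast}=p$, $wp=w$ give at once $v_n^{\ast}v_n=u_n^{\ast}pu_n=p$ and $v_nv_n^{\ast}=wpw^{\ast}=ww^{\ast}=q$, which are conditions (1) and (2).

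Finally, for (3) I would check that $\Ad(w)$ preserves the $2$-norm on $p\mathcal{R}p$: for $a\in p\mathcal{R}p$,
\begin{equation*}
\|waw^{\ast}\|_2^2=\tau(wa^{\ast}w^{\ast}waw^{\ast})=\tau(wa^{\ast}aw^{\ast})=\tau(a^{\ast}aw^{\ast}w)=\tau(a^{\ast}a)=\|a\|_2^2,
\end{equation*}
using $w^{\ast}w=p$ and $pa=a$. Since $\theta=\Ad(w)\circ\psi$, it follows that
\begin{equation*}
\|\theta(x)-v_n x v_n^{\ast}\|_2=\|w(\psi(x)-u_nxu_n^{\ast})w^{\ast}\|_2=\|\psi(x)-u_nxu_n^{\ast}\|_2\to0,
\end{equation*}
which is exactly (3). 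The only genuine input beyond these routine manipulations is the approximate innerness of unital endomorphisms of $\mathcal{R}$, which rests on the uniqueness up to unitary conjugacy of unital embeddings of matrix algebras into $\mathcal{R}$ together with the weak density of an increasing chain of such algebras; this is the step I expect to carry the real weight, and it is precisely the fact isolated in the discussion preceding the lemma.
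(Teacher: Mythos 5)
Your proof is correct and follows essentially the same route as the paper's: transport $\theta$ to a unital endomorphism of the corner $p\mathcal{R}p$ via a partial isometry implementing the Murray--von Neumann equivalence of $p$ and $q$, invoke approximate innerness of unital endomorphisms of the hyperfinite \textup{II}$_1$ factor, and multiply the resulting unitaries back by the partial isometry. The only differences are cosmetic (the orientation of the partial isometry) and that you spell out details the paper leaves implicit, such as $p\mathcal{R}p\cong\mathcal{R}$ and the fact that $\Ad(w)$ is a $2$-norm isometry on the corner.
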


\begin{proof}
Since $p,q$ have the same trace, we can find a partial isometry $w$ such that $w^*w=q$ and $ww^*=p$. Consider the unital endomorphism $\theta_w:p\mathcal{R}p\to p\mathcal{R}p$ defined by $\theta_w(x)=w\theta(x)w^*$. Since $\mathcal{R}$ is hyperfinite, every endomorphism is approximately inner in the 2-norm, that is, we can find unitaries $u_n\in p\mathcal{R}p$ such that $w\theta(x)w^*=\lim_{n\to\infty}u_nxu_n^*$. Defining $v_n=w^*u_n$ completes the proof.
\end{proof}

\begin{proposition}\label{prop:unitary}
Assume $p,q\in \mathcal{R}^{\mathcal{U}}$ are projections with the same trace, $M\subseteq p\mathcal{R}^{\mathcal{U}} p$ is a separable von Neumann subalgebra and $\Theta:p\mathcal{R}^{\mathcal{U}} p\to q\mathcal{R}^{\mathcal{U}} q$ is a unital *-homomorphism. Let $(p_i), (q_i)\in\ell^\infty(\mathcal{R})$ lifts of $p$ and $q$, respectively, such that $\tau_\mathcal{R}(p_i)=\tau_\mathcal{R}(q_i)=\tau_{\mathcal{R}^{\mathcal{U}}}(p)$, for all $i\in\mathbb N$. If there are unital *-homomorphisms $\theta_i:p_i\mathcal{R}p_i\to q_i\mathcal{R}q_i$ such that $(\theta_i(x_i))$ is a lift of $\Theta(x)$, whenever $(x_i)\in\Pi p_i\mathcal{R}p_i$ is a lift of $x\in M$, then there are a partial isometry $v\in \mathcal{R}^{\mathcal{U}}$ such that:
\begin{enumerate}
\item $v^*v=p$,
\item $vv^*=q$,
\item $\Theta(x)=vxv^*$, for all $x\in M$.
\end{enumerate}
\end{proposition}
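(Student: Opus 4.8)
Looking at this statement, I need to prove Proposition \ref{prop:unitary}: given projections $p, q \in \mathcal{R}^{\mathcal{U}}$ of the same trace with lifts $(p_i), (q_i)$, a separable subalgebra $M \subseteq p\mathcal{R}^{\mathcal{U}}p$, and a unital $*$-homomorphism $\Theta$ that lifts coordinatewise via $\theta_i$, I must produce a single partial isometry $v \in \mathcal{R}^{\mathcal{U}}$ implementing $\Theta$ on $M$.

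Let me think about the proof strategy. The previous Lemma \ref{lem:partialisometries} handles the single-factor case: for each index $i$, I get a sequence of partial isometries in $\mathcal{R}$ implementing $\theta_i$ in the 2-norm. The natural plan is to use Lemma \ref{lem:partialisometries} at each coordinate $i$ to approximate $\theta_i$ by inner partial isometries $v_{i,n}$ with $v_{i,n}^*v_{i,n} = p_i$, $v_{i,n}v_{i,n}^* = q_i$, and $\theta_i(x) = \lim_n v_{i,n} x v_{i,n}^*$, then assemble these across coordinates and pass to the ultraproduct. The key obstacle is that Lemma \ref{lem:partialisometries} gives, for each coordinate $i$, only an approximate implementation, so I must perform a diagonal selection: for a countable dense sequence of elements of $M$ and a decreasing sequence of tolerances, I need to choose for each $i$ a single partial isometry $v_i := v_{i, n(i)}$ that works well enough simultaneously for finitely many test elements. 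This is the standard countable-saturation / diagonal argument.

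The plan is as follows. First I fix a countable $\|\cdot\|_2$-dense sequence $(x^{(k)})_{k \in \mathbb{N}}$ of elements in the unit ball of $M$, choosing for each a representative sequence $(x^{(k)}_i)_i \in \prod_i p_i \mathcal{R} p_i$. By hypothesis $(\theta_i(x^{(k)}_i))_i$ is a lift of $\Theta(x^{(k)})$. For each fixed $i$, Lemma \ref{lem:partialisometries} applied to $\theta_i : p_i\mathcal{R}p_i \to q_i\mathcal{R}q_i$ yields partial isometries $v_{i,n}$ with $v_{i,n}^*v_{i,n} = p_i$, $v_{i,n}v_{i,n}^* = q_i$, and $\|\theta_i(x^{(k)}_i) - v_{i,n} x^{(k)}_i v_{i,n}^*\|_2 \to 0$ as $n \to \infty$. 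Then I choose, for each $i$, an index $n(i)$ large enough that
\begin{equation*}
\bigl\| \theta_i(x^{(k)}_i) - v_{i,n(i)}\, x^{(k)}_i\, v_{i,n(i)}^* \bigr\|_2 < \frac{1}{i}
\end{equation*}
for all $k \leq i$, and set $v_i = v_{i,n(i)}$. Let $v \in \mathcal{R}^{\mathcal{U}}$ be the element with representative sequence $(v_i)_i$.

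Next I verify the three conclusions. Since $v_i^*v_i = p_i$ and $v_iv_i^* = q_i$ for every $i$, passing to the quotient by $I_{\mathcal{U}}$ gives $v^*v = p$ and $vv^* = q$ directly. For the implementation property, fix $k$; for all $i \geq k$ the chosen estimate gives $\|\theta_i(x^{(k)}_i) - v_i x^{(k)}_i v_i^*\|_2 < 1/i$, so
\begin{equation*}
\lim_{i \to \mathcal{U}} \bigl\| \theta_i(x^{(k)}_i) - v_i\, x^{(k)}_i\, v_i^* \bigr\|_2 = 0,
\end{equation*}
which means $\Theta(x^{(k)}) = v\, x^{(k)}\, v^*$ in $\mathcal{R}^{\mathcal{U}}$, using that $(\theta_i(x^{(k)}_i))$ lifts $\Theta(x^{(k)})$ and $(v_i x^{(k)}_i v_i^*)$ lifts $v x^{(k)} v^*$. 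Finally, since $\Theta$ and the map $x \mapsto vxv^*$ are both $\|\cdot\|_2$-continuous on the unit ball of $M$ (the latter because conjugation by the partial isometry $v$ is $2$-norm contractive, as $\|v\| \le 1$) and they agree on the $\|\cdot\|_2$-dense set $\{x^{(k)}\}$, they agree on all of $M$. The main subtlety to handle carefully is ensuring the representative $(x^{(k)}_i)_i$ can be chosen with $x^{(k)}_i \in p_i\mathcal{R}p_i$ so that Lemma \ref{lem:partialisometries} applies at each coordinate; this is arranged by compressing an arbitrary lift by $p_i$ on both sides, which changes it only on an $I_{\mathcal{U}}$-negligible set since $x^{(k)} = p x^{(k)} p$.
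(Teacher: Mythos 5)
Your proof is correct and follows essentially the same route as the paper: apply Lemma \ref{lem:partialisometries} coordinatewise, choose the $v_i$ by a diagonal selection over a countable dense (generating) subset of $M$, and pass to the ultraproduct --- the paper writes this out only for singly generated $M=W^*(X)$ and leaves precisely your diagonalization as the subsequent exercise. The one point to flag is that your final density step tacitly uses that $\Theta$ itself is $\left\Vert \cdot \right\Vert_2$-continuous on the unit ball (which does hold, since the coordinatewise lifting hypothesis forces $\Theta$ to be trace-preserving), a continuity issue the paper's own proof glosses over as well.
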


\begin{proof}
We shall prove the proposition only in the case $M=W^*(X)$ is singly generated.

Let $(x_i)\in\Pi p_i\mathcal{R}p_i$ be a lift of $X$. By Lemma \ref{lem:partialisometries}, there are partial isometries $v_i\in \mathcal{R}$ such that $v_i^*v_i=p_i$, $v_iv_i^*=q_i$ and $||\theta_i(x_i)-v_ix_iv_i^*||_2<1/i$. Observe that $(v_i)\in\ell^\infty(\mathcal{R})$ drops to a partial isometry $v\in \mathcal{R}^{\mathcal{U}}$ with support $p$ and range $q$. To show that $\Theta(X)=vXv^*$, fix $\varepsilon>0$ and consider the set
$$
S_\varepsilon=\{i\in\mathbb N : ||\theta_i(x_i)-v_ix_iv_i^*||_2<\varepsilon\}.
$$
This set contains the cofinite set $\{i\in\mathbb N : i\geq\frac{1}{\varepsilon}\}$ and therefore $S_\varepsilon\in\mathcal U$.
\end{proof}

\begin{exercise}
Prove Proposition \ref{prop:unitary} in the general case. (Hint: pick the $v_i$'s to obtain inequalities of the shape $||\theta_i(Y_i)-v_iY_iv_i^*||_2<1/i$ on a finite set of $Y_i$'s that corresponds to lifts of a finite subset of a generating set of $M$).
\end{exercise}

\begin{proof}[Proof of Theorem \ref{th:convexcombwelldefined}]
Let $\sigma_i:q_i\mathcal{R}^{\mathcal{U}} q_i\to \mathcal{R}^{\mathcal{U}}$ be standard isomorphisms, built by pairwise orthogonal projections $q_i$ of trace $t_i$ and assume $[\rho_i]=[\pi_i]$. By Proposition \ref{prop:unitary}, applied to the standard isomorphism $\sigma_i^{-1}\circ\theta_i:p_i\mathcal{R}^{\mathcal{U}} p_i\to q_i\mathcal{R}^{\mathcal{U}} q_i$, there are partial isometries $v_i\in \mathcal{R}^{\mathcal{U}}$ such that $v_i^*v_i=p_i$, $v_iv_i^*=q_i$ and 
$$
v_i\left(\theta_i^{-1}\circ\pi_i\right)(x)v_i^*=\left(\sigma_i^{-1}\circ\pi_i\right)(x),\qquad\qquad\text{for all }x\in M.
$$
Now since $[\pi_i]=[\rho_i]$, we can find unitaries $u_i$ such that $\rho_i=u_i\pi_iu_i^*$. The proof is then completed by the following exercise.
\begin{exercise}
Show that $u:=\sum\sigma_i^{-1}(u_i)v_i$ is a unitary conjugating $\sum\theta_i^{-1}\circ\pi_i$ over to $\sum\sigma_i^{-1}\circ\rho_i$.
\end{exercise}
\end{proof}

Having a notion of convex combinations on $\Hom(N,\mathcal{R}^{\mathcal{U}})$, it is natural to ask (i) whether this set can be embedded into a vector space; and, if so, (ii) what can be done with this vector space. Nate Brown himself proved in \cite{Br2} that his notion of convex combinations verifies the axioms of a so-called convex-like structure. Afterwards, Capraro and Fritz showed in \cite{Ca-Fr} that every convex-like structure is linearly and isometrically embeddable into a Banach space, but their proof is strongly based on Stone's representation theorem \cite{St}, which provides an abstract embedding into a vector space. Therefore, taken together, these two results provide only an abstract embedding of $\Hom(N,\mathcal{R}^{\mathcal{U}})$ into a Banach space. In the appendix, the interested reader can find a sketch of a concrete embedding, using a construction appeared in \cite{Br-Ca}.

\commentout{
\subsection{Convex-like structures}

Having a notion of convex combinations, it is natural to ask whether it verifies the obvious properties that it would satisfy if $\Hom(M,\mathcal{R}^{\mathcal{U}})$ were a convex subset of a Banach space.

An axiomatization of convex subsets of a Banach space has been proposed by Nate Brown himself through the notion of convex-like structures. The proof that every convex-like structure is in fact a convex subset of a Banach space\index{Banach space} has been given by Capraro and Fritz in \cite{Ca-Fr}.

Let $(X,d)$ be a complete and bounded metric space. Denote by $X^{(n)} = X\times  \cdots \times X$ the $n$-fold Cartesian product and let $Prob_n$ be the set of
probability measures on the $n$-point set $\{1,2,\ldots,n\}$,
endowed with the $\ell_1$-metric $\| \mu - \tilde{\mu} \| =
\sum_{i=1}^n |\mu(i) - \tilde{\mu}(i) |$.

\begin{definition}{\bf(N.P.Brown\cite{Br2})}
\label{defin:convex} 
We say that $(X,d)$ has a \emph{convex-like
structure}\index{convex-like structure} if for each $n \in \mathbb{N}$ and for each $\mu \in Prob_n$ there is a map $\gamma_\mu : X^{(n)} \to X$ such that
\begin{enumerate}
\item\label{abelian} for every permutation $\sigma \in S_n$ and
$x_1,\ldots, x_n \in X$, $$\gamma_\mu(x_1,\ldots,x_n) =
\gamma_{\mu\circ\sigma} (x_{\sigma(1)},\ldots, x_{\sigma(n)});$$

\item\label{linearity} if $x_1 = x_2$, then $\gamma_\mu(x_1,x_2,
\ldots,x_n) = \gamma_{\tilde{\mu}} (x_1, x_3, \ldots, x_n)$, where
$\tilde{\mu} \in Prob_{n-1}$ is defined by $\tilde{\mu} (1) = \mu(1) +
\mu(2)$ and $\tilde{\mu} (j) = \mu(j+1)$ for $2 \leq j \leq
n-1$;

\item\label{dirac} if $\mu(i) = 1$, then $\gamma_\mu(x_1,\ldots,x_n) = x_i$;

\item\label{metric} There is a universal $C>0$ such that for every
$x_1,\ldots, x_n \in X$ and for every $\nu,\tilde\nu\in Prob_n$, one has
$$d (\gamma_{\mu} (x_1,\ldots,x_n), \gamma_{\tilde{\mu}}
(x_1,\ldots,x_n)) \leq C \| \mu - \tilde{\mu} \|.$$
\item\label{metric2}
For every $x_1,\ldots,x_n,y_1,\ldots,y_n \in X$ and for every $\nu\in Prob_n$, one has
$$d (\gamma_{\mu} (x_1,\ldots,x_n), \gamma_{\mu} (y_1,\ldots,y_n))
\leq \sum_{i = 1}^n \mu(i) d(x_i, y_i);$$

\item\label{algebraic} for every $\nu \in Prob_2$, $\mu \in Prob_n$,
$\tilde{\mu} \in Prob_m$ and $x_1,\ldots, x_n, \tilde{x}_1,\ldots,
\tilde{x}_m \in X$, $$\gamma_{\nu}(\gamma_\mu(x_1,\ldots,x_n),
\gamma_{\tilde{\mu}}( \tilde{x}_1,\ldots, \tilde{x}_m) ) =
\gamma_{\eta} ( x_1,\ldots,x_n, \tilde{x}_1,\ldots, \tilde{x}_m),$$
where $\eta \in Prob_{n+m}$ is defined by $\eta(i) = \nu(1)\mu(i)$, if
$1\leq i \leq n$, and $\eta(j + n) = \nu(2) \tilde{\mu}(j)$, if
$1\leq j \leq m$.
\end{enumerate} 
\end{definition}

\begin{remark}\label{rem:convex-like}
To understand this definition we make a short parallelism with the usual notion of convex combinations in a Banach space\index{Banach space} equipped with a norm $||\cdot||$. An element $\mu\in Prob_n$ is uniquely represented by a $n$-tuple $(t_1,\ldots,t_n)$ of nonnegative real numbers such that $\sum t_i=1$. The element $\gamma_\mu(x_1,\ldots,x_n)$ in Definition \ref{defin:convex} is exactly $t_1x_1+\ldots+t_nx_n$. It is now clear that the first axiom of a convex-like structure is the commutative property. The second axiom just formalizes the following associative property:
$$
t_1x_1+t_2x_1+t_3x_3+\ldots+t_nx_n=(t_1+t_2)x_1+t_3x_3+\ldots+t_nx_n.
$$
The third axiom formalizes the following property
$$
0\cdot x_1+\ldots0\cdot x_{i-1}+1\cdot x_i+0\cdot x_{i+1}+\ldots+0\cdot x_n=x_i.
$$
The fourth axiom formalizes the following inequality:
$$
||(t_1x_1+\ldots t_nx_n)-(s_1x_1+\ldots+s_nx_n)||\leq\max_i||x_i||\sum_{i=1}^n|t_i-s_i|,
$$
where $\max||x_i||$ in our case is uniformly bounded, since the metric space is supposed to be bounded. The fifth axiom corresponds to the following inequality:
$$
||(t_1x_1+\ldots+t_nx_n)-(t_1y_1+\ldots+t_ny_n)||\leq\sum|t_i|d(x_i,y_i).
$$
The last axiom requires the following algebraic property
\begin{align*}
\alpha(t_1x_1+\ldots+t_nx_n)+(1-\alpha)(s_1y_1+\ldots+s_my_m)=\\
\qquad\qquad=(\alpha t_1)x_1+\ldots+(\alpha t_n)x_n+((1-\alpha)s_1)y_1+\ldots+((1-\alpha)s_m)y_m.
\end{align*}

\end{remark}

The proof that $\Hom(M,\mathcal{R}^{\mathcal{U}})$ has a convex-like structure will be given in the next section. Here we prove the representation theorem, stating that convex-like structures are exactly the convex and bounded subsets of a Banach space\index{Banach space}.

\begin{theorem}{\bf (Capraro-Fritz\cite{Ca-Fr})}\label{th:representation}
Every convex-like structure is isometrically and affinely embeddable into a Banach space\index{Banach space}.
\end{theorem}

This result already allows to avoid all technicalities in Section 2. of Brown's paper. It can be also useful in other contexts, because of its generality. For instance, it was used by Liviu P\u aunescu to prove that his own convex-like structure on the set of sofic embeddings embeds into a vector space (see \cite{Pa}). 

The proof of Theorem \ref{th:representation} will be divided in several steps. 

\begin{definition}\label{convex}
A \emph{convex space}\index{convex space} on a set $X$ is a family of binary
operations $\{cc_\lambda\}_{\lambda\in[0,1]}$ on $X$ such that
\begin{enumerate}[label={\normalfont{(cs.\arabic*)}}]
\item\label{unitlaw} $cc_0(x,y)=x, \quad \forall x,y\in X$;
\item\label{idempotency} $cc_\lambda(x,x)=x, \quad \forall x\in X,\:\lambda\in[0,1]$;
\item\label{commutativity} $cc_\lambda(x,y)=cc_{1-\lambda}(y,x), \quad \forall x,y\in X,\:\lambda\in[0,1]$;
\item\label{associativity} $cc_\lambda(cc_\mu(x,y),z)=cc_{\lambda\mu}(x,cc_\nu(y,z)), \quad \forall x,y,z\in X,\:\lambda,\mu\in[0,1]$;
where $\nu$ is arbitrary if $\lambda=\mu=1$ and $\nu=\frac{\lambda(1-\mu)}{1-\lambda\mu}$ otherwise.
\end{enumerate} 
\end{definition}

In case $\mu=(\lambda,1-\lambda)$ is a measure on the two-point set, we use the notation $\gamma_{\lambda,1-\lambda}$ instead of $\gamma_\mu$.

\begin{lemma}\label{lem:convex spaces}
Every convex-like structure is also a convex space, by defining 
\begin{equation}
\label{gammacc}
cc_{\lambda}(x,y)=\gamma_{\lambda,1-\lambda}(x,y)\:.
\end{equation}
\end{lemma}

The converse of this lemma is also true, but the proof is more involved (see\cite{Ca-Fr}, Theorem 3).

\begin{proof}[Proof of Lemma \ref{lem:convex spaces}]
We prove the lemma axiom-by-axiom.
\begin{enumerate}
\item[\ref{unitlaw}] One has $cc_0(x,y)=\gamma_{0,1}(x,y)=y$, thanks to Brown's axiom~\ref{dirac}.
\item[\ref{idempotency}] One has $cc_\lambda(x,x)=\gamma_{\lambda,1-\lambda}(x,x)$, thanks to Brown's axiom~\ref{linearity}.
\item[\ref{commutativity}] One has
$$
cc_\lambda(x,y)=\gamma_{\lambda,1-\lambda}(x,y)=\gamma_{1-\lambda,\lambda}(y,x)=cc_{1-\lambda}(y,x),
$$
thanks to Brown's axiom~\ref{abelian}.
\item[\ref{associativity}] If $\lambda=\mu=1$, it follows from the previous axioms. We assume $\lambda\mu\neq1$ and compute $cc_\lambda(cc_\mu(x,y),z)$ and $cc_{\lambda\mu}(x,cc_{\frac{\lambda(1-\mu)}{1-\lambda\mu}}(y,z))$ separately. Using axiom~\ref{algebraic}, one has
$$
cc_\lambda(cc_\mu(x,y),z)=\gamma_\eta(x,y,z),
$$
where $\eta(1)=\lambda\mu, \eta(2)=\lambda(1-\mu)$ and
$\eta(3)=1-\lambda$. On the other hand, axiom~\ref{algebraic} also implies
$$
cc_{\lambda\mu}(x,cc_{\frac{\lambda(1-\mu)}{1-\lambda\mu}}(y,z))=\gamma_\eta(x,y,z),
$$
with the same distribution $\eta\in \mathrm{Prob}_3$.
\end{enumerate}
\end{proof}

\begin{lemma}\label{lem:fritz}
If the equation
\begin{align}\label{eq:fritz}
cc_{\lambda}(y,x)=cc_{\lambda}(z,x)
\end{align}
holds for some $x,y,z\in X$ and $\lambda_0\in(0,1)$, then it also holds for all $\lambda\in(0,1)$.
\end{lemma}

\begin{proof}
For all $\lambda<\lambda_0$, one has
$$
cc_{\lambda}(y,x)=cc_{\lambda/\lambda_0}(cc_{\lambda_0}(y,x),x)=cc_{\lambda/\lambda_0}(cc_{\lambda_0}(z,x),x)=cc_{\lambda}(z,x),
$$
by~\ref{associativity} and~\ref{idempotency}.  Consequently the equation is also true for every $\lambda<\lambda_0$. Therefore, it is enough to find a sequence $\left(\lambda_n\right)_{n\in\mathbb{N}}$ with $\lambda_n\stackrel{n\rightarrow\infty}{\longrightarrow}1$ for which the equation holds. To this end, define $\lambda_{n+1}=\frac{2\lambda_n}{1+\lambda_n}$, and use an induction argument to show that the equation \ref{eq:fritz} holds for all $\lambda_n$.
\begin{align*}
&cc_{\lambda_{n+1}}(y,x)\\
&=cc_{\lambda_n/(1+\lambda_n)}(y,cc_{\lambda_n}(y,x))\\
&=cc_{\lambda_n/(1+\lambda_n)}(y,cc_{\lambda_n}(z,x))\\
&=cc_{\lambda_n/(1+\lambda_n)}(z,cc_{\lambda_n}(y,x))\\
&=cc_{\lambda_n/(1+\lambda_n)}(z,cc_{\lambda_n}(z,x))\\
&=cc_{\lambda_{n+1}}(z,x)\:.
\end{align*}
\end{proof}

\begin{proposition}\label{prop:cancellative}
Let $X$ be a convex-like structure. The following property holds:
$$
cc_\lambda(x,y)=cc_\lambda(x,z)\:\textrm{ with }\:\lambda\in(0,1)\quad\Longrightarrow\quad y=z\:.
$$
\begin{proof}

Applying~\ref{metric} and the previous lemma, for any $\lambda>0$, we obtain
$$
d(y,z)\leq d(y,\gamma_{\lambda,1-\lambda}(x,y))+d(z,\gamma_{\lambda,1-\lambda}(x,z))\leq \lambda d(x,y)+\lambda d(x,z)=\lambda\left[d(x,y)+d(x,z)\right]
$$
Since $\lambda$ was arbitrary, it follows $d(y,z)=0$, i.e., $y=z$.
\end{proof}
\end{proposition}

\begin{theorem}\label{embedding}
Let $X$ be a convex-like structure. Then there is a linear embedding of $X$
into some vector space.
\end{theorem}

\begin{proof}
By the previous results, we can see a convex-like structure as a convex space which verifies the cancellative property in Proposition \ref{prop:cancellative}. Such spaces are linearly embeddable into vector spaces by the Stone representation theorem\cite{St}.
\end{proof}

Detailed about Stone's representation theorem can be found in\cite{Ca-Fr}, Theorem 4. Roughly speaking, one considers the quotient of the free vector space generated by the elements of the convex space modulo the obvious equivalence relations. The cancellative property implies that the original convex space embeds faithfully into this quotient.

To prove that such an embedding is isometric with respect to a natural norm on this \emph{universal} vector space we need to work a bit more.

\begin{lemma}
\label{metricnorm}
Let $(X,d)$ be a metric space which is also a convex subset $X\subseteq E$ of some vector space $E$. If
\begin{equation}
\label{metricconv}
d(\lambda y+(1-\lambda)x,\lambda z+(1-\lambda)x)\leq \lambda d(y,z),\quad\forall x,y\in X,\:\lambda\in[0,1],
\end{equation}
holds, then there is a norm $||\cdot||$ on $E$ such that for
all $x,y\in X$,
$$
d(x,y)=||x-y||\:.
$$
\end{lemma}

\begin{proof}
Applying ~(\ref{metricconv}) with $z=x$, one has
$$
d(\lambda y+(1-\lambda)x,x)\leq \lambda d(y,x).
$$
In combination with the triangle inequality, this yields
\begin{align*}
d(y,x)\\
&\leq d(y,\lambda y+(1-\lambda)x)+d(\lambda y+(1-\lambda)x,x)\\
&\leq (1-\lambda) d(y,x)+\lambda d(y,x)\:.
\end{align*}
It follows that both inequalities are actually equalities and, in particular, the metric is ``linear on lines'', that is,
$$
d(x,(1-\lambda)x+\lambda y)=\lambda d(x,y)\quad\forall x,y\in X,\:\lambda\in [0,1]\:.
$$

Now in order to prove the statement, we need to show that whenever
$x_0,x_1,y_0,y_1\in X$
are such that
$$
y_1-x_1=y_0-x_0\:,
$$
then $d(x_1,y_1)=d(x_0,y_0)$. See figure~\ref{parallelogram} for an illustration. For $\varepsilon\in(0,1)$, define
$$
x_{\varepsilon}=\varepsilon x_1+(1-\varepsilon)x_0\:,\qquad y_{\varepsilon}=\varepsilon y_1+(1-\varepsilon)y_0\:,\qquad z_\varepsilon=(1-\varepsilon)x_\varepsilon + \varepsilon y_\varepsilon=\varepsilon y_1+(1-\varepsilon)x_0\:.
$$
By~(\ref{metricconv}), one has
$$
d(x_{\varepsilon},z_\varepsilon)=d\left(\varepsilon x_1+(1-\varepsilon)x_0,\varepsilon y_1+(1-\varepsilon)x_0\right)\leq \varepsilon d(x_1,y_1)\:.
$$
By the definition of $z_\varepsilon$ and the linearity of $d$ on the line connecting $z_\varepsilon$ with $x_\varepsilon$ and $y_\varepsilon$, we have
$$
d(x_\varepsilon,y_\varepsilon)=\varepsilon^{-1}d(x_\varepsilon,z_\varepsilon)\leq d(x_1,y_1)\:.
$$
After taking the limit $\varepsilon\rightarrow 0$ we arrive at
$$
d(x_0,y_0)\leq d(x_1,y_1).
$$
The other inequality direction is clear by symmetry.

Now $d$ can be uniquely extended to a translation-invariant metric on the affine hull of $X$. Assuming $0\in X$ without loss of generality, the affine hull is equal to the linear hull, $\mathrm{lin}(X)$, and then the translation-invariant metric on $\mathrm{lin}(X)$ comes from a norm. This norm can be extended (non-uniquely) from the subspace $\mathrm{lin}(X)$ to all of $E$.
\end{proof}

\begin{figure}
\begin{centering}
\begin{tikzpicture}
\draw (0,0)node[anchor=north east]{$x_0$}--(5,1)node[anchor=north west]{$x_1$}--(5,4)node[anchor=south west]{$y_1$}--(0,3)node[anchor=south east]{$y_0$}--(0,0)--(1,.2)node[anchor=north]{$x_\varepsilon$}--(1,3.2)node[anchor=south]{$y_\varepsilon$};
\draw (0,0)--(5,4);
\draw (1,.8)node[anchor=south east]{$z_\varepsilon$};
\fill (0,0) circle (.05);
\fill (5,1) circle (.05);
\fill (5,4) circle (.05);
\fill (0,3) circle (.05);
\fill (1,.2) circle (.05);
\fill (1,3.2) circle (.05);
\fill (1,.8) circle (.05);
\end{tikzpicture}
\end{centering}
\label{parallelogram}
\caption{Illustration of the proof of lemma~\ref{metricnorm}.}
\end{figure}

Now we have all the ingredients for the main result of this section.

\begin{theorem}\label{banach}
Every convex-like structure is affinely and isometrically isomorphic
to a closed convex subset of a Banach space\index{Banach space}.
\end{theorem}

\begin{proof}
The inequality~(\ref{metricconv}) is an instance of the axiom~\ref{metric}, and so this is a direct consequence of Corollary~\ref{embedding} and Lemma~\ref{metricnorm} and the fact that every normed space embeds into its completion, which is a Banach space. Closedness follows from the requirement that a convex-like structure is assumed to be complete.
\end{proof}

Theorem \ref{banach} tells us that we can regard $\Hom(M,\mathcal{R}^{\mathcal{U}})$ as a convex, closed and bounded subset of a Banach space. Nevertheless, the construction passes through Stone's representation theorem, that is very abstract. If one wants to use this embedding to find out new properties of $\Hom(M,\mathcal{R}^{\mathcal{U}})$, one needs a more concrete realization. Such concrete realization will be given in the next section.}

\commentout{

\subsection{Concrete embedding of \texorpdfstring{$\Hom(M,\mathcal{R}^{\mathcal{U}})$}{Hom(M,RU)} into a Banach space}

Consider $B(H)\bar\otimes \mathcal{R}^{\mathcal{U}}$, where $H$ is a separable Hilbert space. Let $e_{ii}\in B(H)$ be a countable set of pairwise orthogonal one-dimensional projections such that $\sum e_{ii}=1$ and let $v_{jk}$ be partial isometries mapping $e_{jj}$ to $e_{kk}$. Define $f_{jk}=e_{jk}\otimes1\in B(H)\bar\otimes \mathcal{R}^{\mathcal{U}}$. Such a system $\{f_{ij}\}$ is called system of \emph{matrix units}\index{matrix units} for $B(H)\bar\otimes \mathcal{R}^{\mathcal{U}}$. Now, $B(H)\bar\otimes \mathcal{R}^{\mathcal{U}}$ is a II$_\infty$ factor\footnote{II$_\infty$ factors can be defined intrinsically. However, it is a basic result in operator theory that they can be always written as a tensor product between $B(H)$ and some II$_1$ factor.} and then we can consider a faithful, semi-finite, weakly-continuous non-zero trace on $B(H)\bar\otimes \mathcal{R}^{\mathcal{U}}$. It is in fact a very basic result in operator theory that II$_\infty$ factors have, up to multiplication by a positive scalar, a unique faithful, semi-finite, weakly-continuous non-zero trace (see \cite{Ta1}, Theorem 2.34). Therefore, we may choose a the trace $\tau_\infty$ in such a way that $\tau(f_{ii})=1$, for all $i$. Now let $M$ be a separable II$_1$ factor which embeds into $\mathcal{R}^{\mathcal{U}}$ and denote by $\mathbb Hom_+(M,B(H)\bar\otimes \mathcal{R}^{\mathcal{U}})$ the set of all morphisms (necessarily non-unital) $\phi:M\to B(H)\bar\otimes \mathcal{R}^{\mathcal{U}}$ such that $\tau_\infty(\phi(1))<\infty$ modulo unitary equivalence. Observe that $\mathbb Hom(M,\mathcal{R}^{\mathcal{U}})$ can be regarded as a subset of $\mathbb Hom_+(M,B(H)\bar\otimes \mathcal{R}^{\mathcal{U}})$ just identifying $[\pi]\in\mathbb Hom(M,\mathcal{R}^{\mathcal{U}})$ with $[\tilde\pi]\in\mathbb Hom_+(M,B(H)\bar\otimes \mathcal{R}^{\mathcal{U}})$, where $\tilde\pi$ is defined by the following conditions:
$$
f_{ij}\tilde\pi f_{ij}=\left\{
                         \begin{array}{ll}
                           \pi, & \hbox{if $(i,j)=(1,1)$;} \\
                           0, & \hbox{otherwise.}
                         \end{array}
                       \right.
$$
Basically, we are defining $\tilde\pi$ to be the morphism which is equal to $\pi$ in the first block of the infinite matrix representing $B(H)\bar\otimes \mathcal{R}^{\mathcal{U}}$ and zero elsewhere.

So we have an embedding $\Hom(M,\mathcal{R}^{\mathcal{U}})\hookrightarrow\Hom_+(M,B(H)\bar\otimes \mathcal{R}^{\mathcal{U}})$. We want to construct a concrete embedding of $\Hom_+(M,B(H)\bar\otimes \mathcal{R}^{\mathcal{U}})$ into a Banach space\index{Banach space} and show that this embedding agrees with the embedding $\Hom(M,\mathcal{R}^{\mathcal{U}})\hookrightarrow\Hom_+(M,B(H)\bar\otimes \mathcal{R}^{\mathcal{U}})$.

The sum on $\Hom_+(M,B(H)\bar\otimes \mathcal{R}^{\mathcal{U}})$ is pretty easy to define and reflects one of the reasons why it is important to work inside a II$_\infty$ factor. In such factors, indeed, the following operation is possible: given two projections $p,q\in B(H)\bar\otimes \mathcal{R}^{\mathcal{U}}$ such that $\tau_\infty(p)<\infty$ and $\tau_\infty(q)<\infty$, there is a unitary $u$ such that $upu^*$ is orthogonal to $q$. This allows to define the sum in $\mathbb Hom_+(M,B(H)\bar\otimes \mathcal{R}^{\mathcal{U}})$ as follows.

\begin{definition}\label{defin:sum}
Let $[\phi],[\psi]\in\mathbb Hom_+(M,B(H)\bar\otimes \mathcal{R}^{\mathcal{U}})$, then $\phi(1)$ and
$\psi(1)$ are finite projections and thus there exists a unitary
$u\in U(B(H)\bar\otimes \mathcal{R}^{\mathcal{U}})$ such that $u\phi(1)u^*\perp\psi(1)$. We define
$[\phi]+[\psi]:=[u\phi u^*+\psi]$. 
\end{definition}

\begin{exercise}\label{ex:sumiswelldefined}
Prove that the sum is well-defined, i.e.\
\begin{enumerate}
\item $u\phi u^*+\psi$ is still a morphism from $M$ to $B(H)\bar\otimes \mathcal{R}^{\mathcal{U}}$ with finite trace.
\item The class $[u\phi u^*+\psi]$ does not depend on $u$ with the property that $u\phi(1)u^*\perp\psi(1)$.
\item The class $[u\phi u^*+\psi]$ does not depend on $\phi$ and $\psi$ taken in their own classes.
\end{enumerate}

\end{exercise}

\begin{exercise}
Show that $\mathbb Hom_+(M,B(H)\bar\otimes \mathcal{R}^{\mathcal{U}})$ is a commutative monoid.
\end{exercise}

\begin{definition}\label{defin:cancelativemonoid}
A monoid $(M,\cdot)$ is called \emph{left cancellative}\index{monoid!left cancellative} if the condition $a\cdot b=a\cdot c$, implies $b=c$. Analogously one defines \emph{right cancellative}\index{monoid!right cancellative} monoids. A monoid is called \emph{cancellative}\index{monoid!cancellative} if it is both right- and left cancellative. 
\end{definition}

To prove that $\Hom_+(M,B(H)\bar\otimes \mathcal{R}^{\mathcal{U}})$ is cancellative, we need a preliminary result. Let $M^\infty$ denote the subset of $B(H)\bar\otimes \mathcal{R}^{\mathcal{U}}$ of elements with finite trace.

\begin{lemma}\label{lem:mvnequivalence}
Let $\phi:N\rightarrow M^{\infty}$ be a morphism and $p,q\in\phi(N)'\cap M^{\infty}$ projections with $p,q\leq\phi(1)$. The
following statements are equivalent:
\begin{enumerate}
\item There is a partial isometry $v\in \phi(1) M^{\infty} \phi(1)$ such that $vv^*=q$,
$v^*v=p$ and $v\phi(x)v^*=q\phi(x)$, for all $x\in N$.
\item $p$ and $q$ are Murray-von Neumann equivalent in $\phi(N)'\cap \phi(1) M^{\infty} \phi(1)$.
\item $[p\phi]=[q\phi]$, where $p\phi:N\rightarrow M$ is defined by
$x\rightarrow p\phi(x)$.
\end{enumerate}
\end{lemma}

\begin{proof}
$1)\Rightarrow2)$. We prove that $v$ commutes with
$\phi(x)$, for all $x\in N$. Indeed

\begin{align*}
v^*\phi(x)\\
&=v^*q\phi(x)\\
&=v^*v\phi(x)v^*\\
&=p\phi(x)v^*\\
&=\phi(x)v^*
\end{align*}

$2)\Rightarrow3)$. Take partial isometries $v \in\phi(N)'\cap \phi(1) M^{\infty} \phi(1)$ and $w \in \phi(N)'\cap \phi(1) M^{\infty} \phi(1)$
such that $v^*v=p, vv^*=q$, $w^*w=p^\perp$ and $ww^*=q^\perp$. Therefore,
$u=v+w\in\phi(N)'\cap \phi(1) M^{\infty} \phi(1)$ is a unitary and
$$
up\phi(x)u^*=upu^*\phi(x)=q\phi(x).
$$
Extending $u$ to a unitary in $B(H) \bar{\otimes} M$ we find  $[p\phi]=[q\phi]$, as desired.

$3)\Rightarrow1)$. Take a unitary $u\in B(H) \bar{\otimes} M$ such that
$up\phi(x)u^*=q\phi(x)$, for all $x\in N$. Setting $v=up$ and using the assumption that $p,q \leq \phi(1)$, one can easily prove the statement. 
\end{proof}

\begin{proposition}\label{cancellation}
$\Hom(N, B(H)\bar\otimes \mathcal{R}^{\mathcal{U}})$ is a cancellative monoid.
\end{proposition}

\begin{proof}
We prove that $\Hom(N, B(H)\bar\otimes \mathcal{R}^{\mathcal{U}})$ is left cancellative. The proof of right-cancellation is similar. Let $\rho,\phi,\psi$ such that
$$
[\rho]+[\phi]=[\rho]+[\psi].
$$
Since $\phi(1)$ and $\psi(1)$ have the same trace, by taking suitable representatives, we can assume that $\phi(1) = \psi(1)$ and $\phi(1) \perp \rho(1)$. Take
a unitary $u\in M \bar{\otimes} B(H)$ such that
$\rho+\phi=u(\rho+\psi)u^*$ and define $p=\rho(1)$ and
$q=u\rho(1)u^*$. We have $p(\rho+\phi)=\rho$ and
$q(\rho+\phi)= q(u(\rho+\psi)u^*) = u\rho u^*$. Consequently,
$[p(\rho+\phi)]=[q(\rho+\phi)]$ and so, by Lemma \ref{lem:mvnequivalence}, $p$
and $q$ are Murray-von Neumann equivalent in $((\rho+\phi)(N))'\cap (\rho+\phi)(1)M(\rho+\phi)(1)$. Hence, so are $(\rho+\phi)(1) - p = \phi(1)$ and $(\rho+\phi)(1) - q = u\psi(1)u^*$. Therefore, using
once again Lemma \ref{lem:mvnequivalence}, we conclude
$$
[\phi]=[\phi(1)(\rho+\phi)]=[u\psi(1)u^*(u(\rho+\psi)u^*)]=[u\psi u^*]=[\psi].
$$
\end{proof}

We now recall the construction of the Grothendieck group of a commutative monoid $(M,\cdot)$. Consider in $M\times M$ the equivalence relation $\sim$ defined by

\begin{align*}
(m_1,n_1)\sim(m_2,n_2) \qquad\text{iff there is }m\in M\text{ such that } m_1\cdot n_2\cdot m=m_2\cdot n_1\cdot m.
\end{align*}

In the quotient set $(M\times M)/\sim$ define an operation, still denoted by $\cdot$, by setting

$$
[(m_1,n_1)]_\sim\cdot[(m_2,n_2)]_\sim := [(m_1\cdot m_2,n_1\cdot n_2))]_\sim
$$

\begin{exercise}\label{exer:grothendieck}
\begin{enumerate}
\item Prove that $(M\times M)/\sim$ is a group. It will be denoted by $\mathcal G(M)$ and called \emph{Grothendieck group}\index{group!Grothendieck} of $G$.
\item Suppose $M$ is cancellative and denote by $0$ its neutral element. Prove that the mapping $M\to\mathcal G(M)$ defined by $m\to[(m,0)]_{\sim}$ is a monoidal embedding.
\item Give an example of a monoid which does not embed into its Grothendieck group.
\end{enumerate} 
\end{exercise}

The use of the subscript $+$ in the notation $\mathbb Hom_+(M,B(H)\bar\otimes \mathcal{R}^{\mathcal{U}})$ should now be more clear: Proposition \ref{cancellation} and the previous exercise say that this space embeds into its Grothendieck group in some sense as the positive part. In fact, we now define in $\mathbb Hom_+(M,B(H)\bar\otimes \mathcal{R}^{\mathcal{U}})$ the scalar product by a positive scalar and then we will extend everything to the Grothendieck group obtaining a vector space.\\

The construction is quite involved and here we give only a detailed sketch. The first thing to do is to consider only a subclass of standard isomorphisms. This restriction does not create any problem since we have seen that the convex-like structure on $\Hom(M,\mathcal{R}^{\mathcal{U}})$ is independent of the choice of the standard isomorphisms.

To this end, first recall that there is an isomorphism $\Phi:\mathcal{R}\bar\otimes \mathcal{R}\to \mathcal{R}$. Given a free ultrafilter $\mathcal U$ on the natural numbers, let $\Phi_{\mathcal U}$ be the component-wise isomorphism $(\mathcal{R}\bar\otimes \mathcal{R})^\mathcal U\to \mathcal{R}^{\mathcal{U}}$ induced by $\Phi$.

\begin{definition}
Let $p\in \mathcal{R}^{\mathcal{U}}$ be a projection such that $\Phi_\mathcal U^{-1}(p)$ has the form $\tilde p\otimes 1=(\tilde p_n\otimes 1)_n\in(\mathcal{R}\otimes \mathcal{R})^\mathcal U$, with $\tau(\tilde p_n)=\tau(\tilde p)=\tau(p)$. Only throughout this section, a \emph{standard isomorphism}\index{standard isomorphism} $\theta:\mathcal{R}^{\mathcal{U}}\to p\mathcal{R}^{\mathcal{U}} p$ will be any isomorphism constructed in the following way. Fix isomorphisms $\alpha_n:\mathcal{R}\to\tilde p_n\mathcal{R}\tilde p_n$ and let $\theta_n:=\alpha_n\otimes Id:\mathcal{R}\bar\otimes \mathcal{R}\to \tilde p_n\mathcal{R}\tilde p_n\bar\otimes \mathcal{R}$. Define $\theta$ to be the isomorphism on the right hand side of the following diagram
$$
\xymatrix{\ell^\infty(\mathcal{R}\bar\otimes \mathcal{R})\ar[r]\ar[d]^{\oplus_{\mathbb N}\theta_n} & (\mathcal{R}\bar\otimes \mathcal{R})^\mathcal U\ar[r]\ar[d]^{_\mathcal U\theta} & \mathcal{R}^{\mathcal{U}}\ar[d]^{\theta}\\
\ell^\infty((\tilde p_n\otimes1)(\mathcal{R}\bar\otimes \mathcal{R})(\tilde p_n\otimes1))\ar[r] & (\tilde p\otimes1)(\mathcal{R}\bar\otimes \mathcal{R})^\mathcal U(\tilde p\otimes1)\ar[r] & p\mathcal{R}^{\mathcal{U}} p}
$$
where the horizontal arrows in the left-hand side are the projections, the horizontal arrows in the right-hand side are the isomorphisms $\Phi_\mathcal U$, and the isomorphism $_\mathcal U\theta$ is the one obtained by imposing commutativity on the left-half of the diagram.
\end{definition}

The following Lemma is very similar to Proposition \ref{prop:unitary} and it is one of the main technical tools that we need.

\begin{lemma}\label{lem:equivalence}
Let $p,q\in \mathcal{R}^{\mathcal{U}}$ be projections of the same trace and $\theta_p,\theta_q$ be standard isomorphisms constructed by $p$ and $q$, respectively. For all separable von Neumann subalgebras $M_1\subseteq \mathcal{R}^{\mathcal{U}}$, there is a partial isometry $v_1\in \mathcal{R}^{\mathcal{U}}$ such that $v_1^*v_1=p$, $v_1v_1^*=q$ and
$$
v_1\theta_p(x)v_1^*=\theta_q(x),\qquad\qquad\text{ for all } x\in M_1.
$$
\end{lemma}
\begin{proof}
Consider the following diagram

$$
\xymatrix{\ell^\infty((\tilde p_n\otimes1)(\mathcal{R}\bar\otimes \mathcal{R})(\tilde p_n\otimes1)))\ar[r]\ar[d]^{\oplus\theta_{p_n}^{-1}} & (\tilde p\otimes1)(\mathcal{R}\bar\otimes \mathcal{R})^{\mathcal U}(\tilde p\otimes1)\ar[r]\ar[d]^{(_\mathcal U\theta_p)^{-1}} & p\mathcal{R}^{\mathcal{U}} p\ar[d]^{\theta_p^{-1}}\\
\ell^\infty(\mathcal{R}\bar\otimes \mathcal{R})\ar[r]\ar[d]^{\oplus_{\mathbb N}\theta_{q_n}} & (\mathcal{R}\bar\otimes \mathcal{R})^\mathcal U\ar[r]\ar[d]^{_\mathcal U\theta_q} & \mathcal{R}^{\mathcal{U}}\ar[d]^{\theta_q}\\
\ell^\infty((\tilde q_n\otimes1)(\mathcal{R}\bar\otimes \mathcal{R})(\tilde q_n\otimes1))\ar[r] & (\tilde q\otimes1)(\mathcal{R}\bar\otimes \mathcal{R})^\mathcal U(\tilde q\otimes1)\ar[r] & q\mathcal{R}^{\mathcal{U}} q}
$$
%Consider $\Phi_\mathcal U^{-1}(M_1)\subseteq(\mathcal{R}\bar\otimes \mathcal{R})^\mathcal U$. 
We may apply Proposition \ref{prop:unitary} to $\Theta= _\mathcal U\theta_q\circ(_\mathcal U\theta_p)^{-1}$ and $M=\Phi_\mathcal U^{-1}(M_1)$, since all isomorphisms act only on the hyperfinite II$_1$ factor $\mathcal{R}$. Consequently, there is a partial isometry $v\in(\mathcal{R}\bar\otimes \mathcal{R})^\mathcal U$ such that $v^*v=\tilde p\otimes1$, $vv^*=\tilde q\otimes1$ and
\begin{align}\label{eq:equivalence}
v(_\mathcal U\theta_p(x))v^*=_\mathcal U\theta_q(x),\qquad\qquad\text{for all } x\in\Phi_\mathcal U^{-1}(M_1).
\end{align}
We now set $v_1=\Phi_\mathcal U(v)$ and leave as an exercise to verify that it works.
\end{proof}

Let $t\in(0,1)$ and $p_t\in \mathcal{R}^{\mathcal{U}}$ be a projection of trace $t$ as needed to define a standard isomorphism $\theta_t:\mathcal{R}^{\mathcal{U}}\to p_t\mathcal{R}^{\mathcal{U}} p_t$. We now recall the construction of a trace-scaling automorphism $\Theta_t$ of $B(H)\bar\otimes \mathcal{R}^{\mathcal{U}}$. We refer to  \cite{Kadison-RingroseII}, Proposition 13.1.10, for more details.

Recall that we fixed a countable family $\{e_{jj}\}\subseteq B(H)$ of orthogonal one-dimensional projections such that $\sum e_{jj}=1$. Let $e_{jk}$ be partial isometries mapping $e_{jj}$ to $e_{kk}$ and $f_{jk}=e_{jk}\otimes1\in B(H)\bar\otimes \mathcal{R}^{\mathcal{U}}$. Recall that $\tau_{\infty}$ is normalized in such a way that $\tau_{\infty}(f_{11})=1$. Consequently $f_{11}(B(H)\bar\otimes \mathcal{R}^{\mathcal{U}})f_{11}$ is *isomorphic to $\mathcal{R}^{\mathcal{U}}$. Consequently $p_t$ may be viewed as a projection in $f_{11}(B(H)\bar\otimes \mathcal{R}^{\mathcal{U}})f_{11}$ with trace $t$ and, for simplicity, we denote it $g_{11}$. Let $g_{jj}$ be a countable family of orthogonal projections, all of which are equivalent to $g_{11}$, such that $\sum g_{jj}=1\in B(H)\bar\otimes \mathcal{R}^{\mathcal{U}}$ and extend the family $\{g_{jj}\}$ to a system of matrix units $\{g_{jk}\}$ of $B(H)\bar\otimes \mathcal{R}^{\mathcal{U}}$ through appropriate partial isometries. Now, for a Hilbert space $K$ and a subalgebra $A \subset B(K)$, denote by $A_\infty $ the algebra of countably infinite matrices with entries in $A$ that define bounded operators on $\oplus_\mathbb{N} K \cong H \otimes K$.  The isomorphism $\theta_t:\mathcal{R}^{\mathcal{U}}\to p_t\mathcal{R}^{\mathcal{U}} p_t$ can be viewed as an isomorphism $\theta_t:f_{11}(B(H)\bar\otimes \mathcal{R}^{\mathcal{U}})f_{11}\to p_{t}(B(H)\bar\otimes X^\mathcal U)p_t$ and then it gives rise to an isomorphism
$$
\theta_t : (f_{11}(B(H)\bar\otimes \mathcal{R}^{\mathcal{U}})f_{11})_\infty \to(p_{t}(B(H)\bar\otimes \mathcal{R}^{\mathcal{U}})p_t)\infty .
$$
Now, call $G$ the matrix in $(f_{11}(B(H)\bar\otimes \mathcal{R}^{\mathcal{U}})f_{11})_\infty $ having the unit in the position $(1,1)$ and zeros elsewhere. Then $(\theta_t)_\infty (G)$ is the matrix in $(p_t(B(H)\bar\otimes \mathcal{R}^{\mathcal{U}})p_t)_\infty $ having the unit in the position $(1,1)$ and zeros elsewhere. Take isomorphisms
$$
\phi_1:B(H)\bar\otimes \mathcal{R}^{\mathcal{U}}\to (f_{11}(B(H)\bar\otimes \mathcal{R}^{\mathcal{U}})f_{11})_\infty  ,\qquad\phi_2:B(H)\bar\otimes \mathcal{R}^{\mathcal{U}}\to (p_t(B(H)\bar\otimes \mathcal{R}^{\mathcal{U}})p_t)_\infty,
$$
such that $\phi_1(f_{11})=G$ and $\phi_2(g_{11})=(\theta_t)_\infty (G)$. Define
$$
\Theta_t=\phi_2^{-1}\circ(\theta_t)_\infty \circ\phi_1.
$$
It is now readily checked that $\tau_\infty(\Theta_t(x))=t\tau_\infty(x)$, for all $x$.

\begin{remark}\label{rem:matrixrepr}
We stress that $\Theta_t$ is just the isomorphism obtained by writing $B(H)\bar\otimes \mathcal{R}^{\mathcal{U}}$ as an algebra of countably infinite matrices and letting $\theta_t$ act on each component. Consequently, to prove that two isomorphisms $\Theta_t^{(1)}$ and $\Theta_t^{(2)}$ constructed in such a way are unitarily equivalent, it is enough to find unitaries mapping $\theta_t^{(1)}$ to $\theta_t^{(2)}$ and the matrix units used in the first representation of $B(H)\bar\otimes \mathcal{R}^{\mathcal{U}}$ as a matrix algebra to the matrix units used in the second representation.
\end{remark}

\begin{definition}\label{def:scalarmultiplication}
Let $t\in(0,1]$ and $[\phi]\in\mathbb Hom_+(N,B(H)\bar\otimes \mathcal{R}^{\mathcal{U}})$.We define
$$
t[\phi]=[\Theta_t\circ\phi].
$$
\end{definition}

We now use Remark \ref{rem:matrixrepr} to show that definition of $t[\phi]$ depends only on $t$ and $[\phi]$ and not on $\Theta_t$.

\begin{proposition}\label{prop:independent}
Fix $t\in(0,1]$ and let $p_t^{(i)}\in \mathcal{R}^{\mathcal{U}}$, $i=1,2$, be two projections
of trace $t$ as needed to construct standard isomorphisms $\theta_t^{(i)}:\mathcal{R}^{\mathcal{U}}\rightarrow
p_t^{(i)}\mathcal{R}^{\mathcal{U}} p_t^{(i)}$. Then $\Theta_t^{(1)}\circ\phi$ is unitarily
equivalent to $\Theta_t^{(2)}\circ\phi$.
\end{proposition}

\begin{proof}
Observe that the image $\phi(N)$ a priori belongs to $B(H)\bar\otimes \mathcal{R}^{\mathcal{U}}$, but since $\tau_\infty(\phi(1))<\infty$, we can twist it by a unitary and assume, without loss of generality, that $\phi(N)\subseteq M_{n}\left( \mathbb{C}\right) \otimes \mathcal{R}^{\mathcal{U}}$, for some $n>\tau_\infty(\phi(1))$. For all $j=1,\ldots,n$, set

$$
M_j=(e_{jj}\otimes1)\phi(N)(e_{jj}\otimes1)\subseteq(e_{jj}\otimes1)(B(H)\bar\otimes \mathcal{R}^{\mathcal{U}})(e_{jj}\otimes1)\cong \mathcal{R}^{\mathcal{U}}.
$$
Since $p_t^{(1)}$ is equivalent to $p_t^{(2)}$ and $(p_t^{(1)})^\perp$ is equivalent to $(p_t^{(2)})^\perp$, in Lemma \ref{lem:equivalence} we can find a unitary $u_i\in X^\mathcal U$ such that
$$
(e_{jj}\otimes u_j)((e_{jj}\otimes\theta_t^{(1)})(x))(e_{jj}\otimes u_j)=(e_{jj}\otimes\theta_t^{(2)})(x),\qquad\text{ for all } x\in M_j,
$$
where $e_{jj}\otimes\theta_t^{(1)}$ denotes the endomorphism obtained by letting $\theta_t^{(1)}$ act only on $f_{jj}(B(H)\bar\otimes \mathcal{R}^{\mathcal{U}})f_{jj}$.
Since the partial isometries $e_{jj}\otimes u_j$ act on orthogonal subspaces, we may extend them \emph{all together} to a unitary $u\in B(H)\bar\otimes \mathcal{R}^{\mathcal{U}}$ such that

$$
u((e_{jj}\otimes\theta_t^{(1)})(x))u^*=(e_{jj}\otimes\theta_t^{(2)})(x),\qquad\text{ for all } j=1,\ldots,n \text{ and for all } x\in M_j.
$$

Define $e_n=\sum_{j=1}^ne_{jj}$. One has
$$
u((e_n\otimes\theta_t^{(1)})(x))u^*=(e_n\otimes\theta_t^{(2)})(x),\qquad\text{for all } x\in(e_n\otimes1)\phi(N)(e_n\otimes1)=\phi(N).
$$
Now observe that the matrix units $\left\{f_{jk}^{(1)}\right\}$ and $\left\{f_{jk}^{(2)}\right\}$ used to construct $\Theta_t^{(1)}$ and $\Theta_t^{(2)}$ are unitarily equivalent, since the diagonal projections have the same trace. Therefore, also the matrix units $\left\{uf_{jk}^{(1)}u^*\right\}$ and $\left\{f_{jk}^{(2)}\right\}$ are unitarily equivalent. Take a unitary $w\in B(H)\bar\otimes \mathcal{R}^{\mathcal{U}}$ such that
$$
w(uf_{jk}^{(1)}u^*)w^*=f_{jk}^{(2)},\qquad\qquad\text{ for all } j,k\in\mathbb N.
$$
This unitary maps the matrix units $uf_{jk}^{(1)}u^*$ into the matrix units $f_{jk}^{(2)}$ and $u((e_n\otimes\theta_t^{(1)})(x))u^*$ to $(e_n\otimes\theta_t^{(2)})(x)$, for all $x\in\phi(N)$. Therefore, applying Remark \ref{rem:matrixrepr},
$$
wu\Theta_t^{(1)}(x)u^*w^*=\Theta_t^{(2)}(x),\qquad\qquad\text{ for all } x\in\phi(N),
$$
as desired.
\end{proof}

Recall that we fixed a *-isomorphism $\Phi:\mathcal{R}\bar\otimes \mathcal{R}\to \mathcal{R}$ and we called $\Phi_\mathcal U:(\mathcal{R}\bar\otimes \mathcal{R})^\mathcal U\to \mathcal{R}^{\mathcal{U}}$ the induced component-wise *isomorphism.

\begin{definition}\label{def:expansion}
Let $\phi:N\to(\mathcal{R}\bar\otimes \mathcal{R})^\mathcal U$. For all $x\in N$, denote $(X_i^\phi)\in\ell^\infty(\mathcal{R}\bar\otimes \mathcal{R})$ be a lift of $\phi(x)$. Define $1\otimes\phi$ through the following commutative diagram
$$
\xymatrix{(1\otimes X_n^\phi)_n\in\ell^\infty(\mathcal{R}\bar\otimes \mathcal{R}\bar\otimes \mathcal{R})\ar[r]\ar[d]^{\oplus_{\mathbb N}(1\otimes\Phi)} & (\mathcal{R}\bar\otimes \mathcal{R}\bar\otimes \mathcal{R})^\mathcal U\ar[d]^{(1\otimes\Phi)_\mathcal U}\\
\ell^\infty(\mathcal{R}\bar\otimes \mathcal{R})\ar[r] & (\mathcal{R}\bar\otimes \mathcal{R})^\mathcal U}
$$
i.e.\ $(1\otimes\phi)(x)$ is the image of the element $(1\otimes X_n^\phi)_n\in\ell^\infty(\mathcal{R}\bar\otimes \mathcal{R}\bar\otimes \mathcal{R})$ in $(\mathcal{R}\bar\otimes \mathcal{R})^\mathcal U$.
\end{definition}

\begin{exercise}\label{lem:expansion}
Show that $[1\otimes\phi]=[\phi]$ (Hint: take inspiration from Lemma 3.2.3 in\cite{Br2}).
\end{exercise}

\begin{exercise}\label{lem:compositionisomorphisms}
Let $\theta_s,\theta_t$ be two standard isomorphisms. Show that
$$
\theta_s\circ\theta_t:\mathcal{R}^{\mathcal{U}}\to\theta_s(p_t)\mathcal{R}^{\mathcal{U}}\theta_s(p_t)
$$
is still a standard isomorphism.
\end{exercise}

\begin{proposition}\label{prop:distributive}
For all $s,t\in(0,1)$ and $[\phi],[\psi]\in\mathbb Hom(N, (X^\mathcal U)^\infty)$, one has
\begin{enumerate}
\item $0[\phi]=0$,
\item $1[\phi]=[\phi]$,
\item $s(t[\phi])=(st)[\phi]$,
\item $s([\phi]+[\psi])=s[\phi]+s[\psi]$,
\item if $s+t\leq1$, then $(s+t)[\phi]=s[\phi]+t[\phi]$.
\end{enumerate}
\end{proposition}

\begin{proof}
The first two properties are obvious, while the third one follows by Exercise \ref{lem:compositionisomorphisms} and Proposition \ref{prop:independent}. The fourth property follows by direct computation. To show the fifth property, fx $n>(s+t)\tau_\infty(\phi(1))$ and twist $\phi$ by a unitary so as to have $\phi(N)\subseteq M_{n}\left( \mathbb{C}\right) \otimes \mathcal{R}^{\mathcal{U}}=(M_{n}\left( \mathbb{C}\right) \otimes X)^\mathcal U$, where the equality follows from the fact that $M_{n}\left( \mathbb{C}\right) $ is finite dimensional. Since $M_{n}\left( \mathbb{C}\right) $ admits a unique (up to unitary equivalence) unital embedding into $\mathcal{R}$ we may suppose that $\phi(N)\subseteq(\mathcal{R}\bar\otimes \mathcal{R})^\mathcal U$ and apply the construction in Definition \ref{def:expansion} and Exercise \ref{lem:expansion} to replace $[\phi]$ with $[1\otimes\phi]$. Now take orthogonal projections of the shape
$$
p_s\otimes1\otimes1, \quad p_t\otimes1\otimes1,\quad (p_s+p_t)\otimes1\otimes1\in(\mathcal{R}\bar\otimes \mathcal{R}\bar\otimes \mathcal{R})^\mathcal U,
$$
and use them to define standard isomorphisms. We have
$$
\Theta_s\circ(1\otimes\phi)+\Theta_t\circ(1\otimes\phi)=\Theta_{t+s}\circ(1\otimes\phi),
$$
which implies that $[\Theta_s\circ\phi]+[\Theta_t\circ\phi]=[\Theta_{s+t}\circ\phi]$, i.e.\ $s[\phi]+t[\phi]=(s+t)[\phi]$.
\end{proof}

Therefore, we are in the following situation. We have a commutative cancellative monoid $G_+$ equipped with an action $[0,1]\curvearrowright G_+$ that verifies the properties of the proposition above. It is now easy to check that the Grothendieck group has then a canonical structure of a vector space (see Appendix in \cite{Br-Ca}). So we get a vector space that we denote by $\mathbb Hom(M,B(H)\bar\otimes \mathcal{R}^{\mathcal{U}})$. Moreover, the multiplication by a scalar is defined in such a way that the canonical embedding of $\mathbb Hom(M,\mathcal{R}^{\mathcal{U}})$ into $\mathbb Hom(M,B(H)\bar\otimes \mathcal{R}^{\mathcal{U}})$ is affine, concluding our sketch of the construction of an explicit embedding of $\mathbb Hom(M,\mathcal{R}^{\mathcal{U}})$ into a vector space.

\begin{theorem}
$\mathbb Hom(M,\mathcal{R}^{\mathcal{U}})$ embeds affinely into the vector space $\mathbb Hom_+(M,B(H)\bar\otimes \mathcal{R}^{\mathcal{U}})$.
\end{theorem}

}

\subsection{Extreme points of \texorpdfstring{$\Hom(M,\mathcal{R}^{\mathcal{U}})$}{Hom(M,RU)} and a problem of Popa}

In this section we present one application of the $\Hom$ space. Given a separable II$_1$ factor $M$ that embeds into $\mathcal{R}^{\mathcal{U}}$, Sorin Popa asked whether there is always another representation $\pi$ such that $\pi(M)'\cap \mathcal{R}^{\mathcal{U}}$ is a factor. The following theorem by Nate Brown\cite{Br2} shows that this problem is equivalent to a geometric problem on $\Hom(M,\mathcal{R}^{\mathcal{U}})$.

\begin{theorem}\label{th:extremepoints}
Let $\pi:M\to \mathcal{R}^{\mathcal{U}}$ be a representation. Then $\pi(M)'\cap \mathcal{R}^{\mathcal{U}}$ is a factor if and only if $[\pi]$ is an extreme point of $\Hom(M,\mathcal{R}^{\mathcal{U}})$.
\end{theorem}

In this section we prove only the ``only if'' part: if $[\pi]$ is an extreme point, then $\pi(M)'\cap \mathcal{R}^{\mathcal{U}}$ is a factor.

\begin{definition}\label{def:cutdown}
We define the \emph{cutdown}\index{representation!cutdown} of a representation $\pi:M\to \mathcal{R}^{\mathcal{U}}$ by a projection $p\in\pi(M)'\cap \mathcal{R}^{\mathcal{U}}$ to be the map $M\to \mathcal{R}^{\mathcal{U}}$ defined by $x\to\theta_p(p\pi(x))$, where $\theta_p:p \mathcal{R}^{\mathcal{U}} p\to  \mathcal{R}^{\mathcal{U}}$ is a standard isomorphism.
\end{definition}

Lemma 3.3.3 in \cite{Br2} shows that this definition is independent by the standard isomorphism, hence we can denote it by $[\pi_p]$.

\begin{lemma}\label{lem:cutdown}
Let $\pi:M\to \mathcal{R}^{\mathcal{U}}$ be a morphism $p,q\in\pi(M)'\to \mathcal{R}^{\mathcal{U}}$ be projections of the same trace. The following statement are equivalent:
\begin{enumerate}
\item $[\pi_p]=[\pi_q]$,
\item $p$ and $q$ are Murray-von Neumann equivalent in $\pi(M)'\cap \mathcal{R}^{\mathcal{U}}$, that is, there is a partial isometry $v\in\pi(M)'\cap \mathcal{R}^{\mathcal{U}}$ such that $v^*v=p$ and $vv^*=q$.
\item there exists $v\in \mathcal{R}^{\mathcal{U}}$ such that $v^*v=p$, $vv^*=q$ and $v\pi(x)v^*=q\pi(x)$, for all $x\in M$.
\end{enumerate} 
\end{lemma}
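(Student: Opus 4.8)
The plan is to route all three conditions through (3), proving $(2)\Leftrightarrow(3)$ by an elementary manipulation of partial isometries and $(1)\Leftrightarrow(3)$ by invoking Proposition \ref{prop:unitary}, which is precisely the tool that converts a liftable $*$-homomorphism between corners of $\mathcal{R}^{\mathcal{U}}$ into conjugation by a partial isometry. Throughout I would use the two identities valid for any partial isometry $v$ with $v^*v=p$ and $vv^*=q$, namely $vp=vv^*v=v$ and $qv=vv^*v=v$, together with the fact that $p,q\in\pi(M)'$ forces $p\pi(x)=\pi(x)p$ and $q\pi(x)=\pi(x)q$ for all $x\in M$.

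For the easy direction $(2)\Rightarrow(3)$: if $v\in\pi(M)'\cap\mathcal{R}^{\mathcal{U}}$ implements the Murray--von Neumann equivalence, then $v$ commutes with each $\pi(x)$, so $v\pi(x)v^*=\pi(x)vv^*=\pi(x)q=q\pi(x)$, which is (3). For the converse $(3)\Rightarrow(2)$, starting from $v\pi(x)v^*=q\pi(x)$ and multiplying on the right by $v$ gives $v\pi(x)p=q\pi(x)v$; using $\pi(x)p=p\pi(x)$ and $vp=v$ the left side collapses to $v\pi(x)$, while $\pi(x)q=q\pi(x)$ and $qv=v$ turn the right side into $\pi(x)v$. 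Hence $v\pi(x)=\pi(x)v$ for all $x$, so $v\in\pi(M)'\cap\mathcal{R}^{\mathcal{U}}$ and (2) holds.

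For $(1)\Rightarrow(3)$ I would unwind the definition of the cutdowns, $\pi_p(x)=\theta_p(p\pi(x))$ and $\pi_q(x)=\theta_q(q\pi(x))$ for standard isomorphisms $\theta_p,\theta_q$, noting that $p\pi(x)=p\pi(x)p\in p\mathcal{R}^{\mathcal{U}}p$ and likewise for $q$. Assuming $[\pi_p]=[\pi_q]$, pick a unitary $u\in\mathcal{R}^{\mathcal{U}}$ with $u\pi_p(x)u^*=\pi_q(x)$ and form the unital $*$-isomorphism $\Theta=\theta_q^{-1}\circ\mathrm{Ad}_u\circ\theta_p\colon p\mathcal{R}^{\mathcal{U}}p\to q\mathcal{R}^{\mathcal{U}}q$, which satisfies $\Theta(p\pi(x))=q\pi(x)$. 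The essential point is that $\Theta$ is liftable in the sense required by Proposition \ref{prop:unitary}: $\theta_p,\theta_q$ are liftable by construction (Definition \ref{defin:standard}), and $u$ lifts to a sequence of unitaries of $\mathcal{R}$ by Exercise \ref{Exercise: unitary group ultraproduct}, so $\mathrm{Ad}_u$ lifts through the maps $\mathrm{Ad}_{u_n}$. Applying Proposition \ref{prop:unitary} to $\Theta$ and the separable von Neumann subalgebra generated by $p\pi(M)$ inside $p\mathcal{R}^{\mathcal{U}}p$ yields a partial isometry $v$ with $v^*v=p$, $vv^*=q$ and $v\,p\pi(x)\,v^*=q\pi(x)$; since $vp=v$ this reads $v\pi(x)v^*=q\pi(x)$, i.e.\ (3).

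Finally, $(3)\Rightarrow(1)$ runs the same machine in reverse: given $v$ as in (3), the automorphism $\Psi=\theta_q\circ\mathrm{Ad}_v\circ\theta_p^{-1}$ of $\mathcal{R}^{\mathcal{U}}$ satisfies $\Psi(\pi_p(x))=\pi_q(x)$ for all $x$, is again liftable, and hence is implemented on the separable subalgebra generated by $\pi_p(M)$ by a unitary via Proposition \ref{prop:unitary} with $p=q=1$, giving $\pi_q=\mathrm{Ad}_u\circ\pi_p$ and $[\pi_p]=[\pi_q]$. I would also cite the independence of the class $[\pi_p]$ from the chosen standard isomorphism (Brown's Lemma 3.3.3, referenced after Definition \ref{def:cutdown}) so the conditions do not depend on auxiliary choices, and use the equal-trace hypothesis on $p,q$ precisely to meet the trace requirement of the proposition. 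The main obstacle is bookkeeping rather than conceptual: verifying that each composite $*$-homomorphism satisfies the liftability hypothesis of Proposition \ref{prop:unitary}, which is exactly where the structure of standard isomorphisms and the liftability of unitaries in ultrapowers are indispensable.
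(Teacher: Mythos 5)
Your proposal is correct and takes essentially the same route as the paper: the paper's own proof of $(3)\Rightarrow(1)$ forms exactly your liftable composite of standard isomorphisms with $\mathrm{Ad}_v$ and applies Proposition \ref{prop:unitary}, while $(2)\Leftrightarrow(3)$ and $(1)\Rightarrow(3)$ are left there as exercises whose intended solutions are the elementary partial-isometry computation and the ``same machine'' argument you give. The one detail you gloss over in $(3)\Rightarrow(1)$ --- lifting $v$ to partial isometries $v_n$ with $v_n^*v_n=p_n$ and $v_nv_n^*=q_n$ compatible with the lifts defining the standard isomorphisms --- is precisely the paper's Exercise \ref{ex:lift}, which its proof invokes at the outset.
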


\begin{exercise}
Prove the equivalence between (2) and (3) in Lemma \ref{lem:cutdown}.
\end{exercise}

\begin{exercise}\label{ex:lift}
Given projections $p,q$ and a partial isometry $v$ such that $v^*v=p$ and $vv^*=q$, show that there exist lifts $(p_n),(q_n),(v_n)\in\ell^\infty(R)$ such that $p_n,q_n$ are projections of the same trace as $p$, and $v_n^*v_n=p_n$, $v_nv_n^*=q_n$, for all $n\in\mathbb N$.
\end{exercise}

\begin{proof}[Proof of $(3)\Rightarrow(1)$.]
Let $p_n,q_n,v_n$ as in Exercise \ref{ex:lift} and fix isomorphisms $\theta_n:p_n\mathcal{R}p_n\to \mathcal{R}$ and $\gamma_n:q_n\mathcal{R}q_n\to \mathcal{R}$ and use them to define standard isomorphisms $\theta:p \mathcal{R}^{\mathcal{U}} p\to \mathcal{R}^{\mathcal{U}}$ and $\gamma:q \mathcal{R}^{\mathcal{U}} q\to \mathcal{R}^{\mathcal{U}}$ and use them to define $\pi_p$ and $\pi_q$. The isomorphism on the right hand side of the following diagram\footnote{The notation $Ad u$ in the diagram stands for the conjugation by the unitary operator $u$.} is liftable by construction and so Proposition \ref{prop:unitary} can be applied to it, giving unitary equivalence between $\pi_p$ and $\pi_q$.

$$
\xymatrix{\ell^\infty(\mathcal{R})\ar[r]\ar[d]^{\oplus\theta_n^{-1}} & \mathcal{R}^{\mathcal{U}} \ar[d]^\theta^{-1}\\
\ell^\infty(p_n\mathcal{R}p_n)\ar[r]\ar[d]^{\oplus Ad v_n} & p\mathcal{R}^{\mathcal{U}} p\ar[d]^{Ad v}\\
\ell^\infty(q_n\mathcal{R}q_n)\ar[r]\ar[d]^{\oplus\gamma_n} & q \mathcal{R}^{\mathcal{U}} q\ar[d]^{\gamma}\\
\ell^\infty(\mathcal{R})\ar[r] &  \mathcal{R}^{\mathcal{U}} }
$$

\end{proof}

\begin{exercise}
Use a similar idea to prove the implication $(1)\Rightarrow(3)$.
\end{exercise}

We recall that a projection $p\in M$ is called \emph{minimal}\index{minimal projection} if $pMp=\mathbb C1$. A von Neumann algebra without minimal projections is called \emph{diffuse}\index{von Neumann algebra!diffuse}.

\begin{exercise}\label{ex:factor}
Let $M$ be a diffuse von Neumann algebra with the following property: every pair of projections with the same trace are Murray-von Neumann equivalent. Show that $M$ is factor. (Hint: show that every central projection is minimal). 
\end{exercise}

\begin{proof}[Proof of the ``only if'' of Theorem \ref{th:extremepoints}]
Let $[\pi]$ be an extreme point of $\Hom(M,\mathcal{R}^{\mathcal{U}})$ and $p\in\pi(M)'\cap \mathcal{R}^{\mathcal{U}}$. Since
$$
[\pi]=\tau(p)[\pi_p]+\tau(p^{\perp})[\pi_{p^\perp}],
$$
it follows that $[\pi_p]=[\pi]$, for all $p\in\pi(M)'\cap \mathcal{R}^{\mathcal{U}}$, $p\neq0$. By Lemma \ref{lem:cutdown}, it follows that two projections in $\pi(M)'\cap \mathcal{R}^{\mathcal{U}}$ are Murray-von Neumann equivalent into $\pi(M)'\cap \mathcal{R}^{\mathcal{U}}$ if and only if they have the same trace. Since $\pi(M)'\cap \mathcal{R}^{\mathcal{U}}$ is diffuse, Exercise \ref{ex:factor} completes the proof.
\end{proof}

From Theorem \ref{th:extremepoints} we obtain the following geometric reformulation of Popa's question.

\begin{problem}{\bf(Geometric reformulation of Popa's question)}
Does $\mathbb Hom(M,\mathcal{R}^{\mathcal{U}})$ have extreme points?
\end{problem}

This problem is still open. There are two obvious ways to try to attack it, leading to two related problems, whose positive solution would imply a positive solution of Popa's question.
\begin{enumerate}
\item Since $\Hom(M,\mathcal{R}^{\mathcal{U}})$ is a bounded, closed and convex subset of a Banach space\index{Banach space} one cannot apply Krein-Milman's theorem and conclude existence of extreme points. Nevertheless, one can ask the question whether the Banach space into which $\Hom(M,\mathcal{R}^{\mathcal{U}})$ embeds is actually a dual Banach space\index{Banach space!dual}. In this case, $\Hom(M,\mathcal{R}^{\mathcal{U}})$ would be compact in the weak*-topology and one could apply Krein-Milman's theorem.
\begin{problem}
Does $\Hom(M,\mathcal{R}^{\mathcal{U}})$ embed into a dual Banach space?
\end{problem}
Note that one way to try to attack this problem is by observing that $M$ is the dual Banach space\index{Banach space!dual} (every von Neumann algebra is a dual Banach space) of a unique Banach space, usually denoted by $M_*$. Consequently there might be some possibility to express representations $M\to \mathcal{R}^{\mathcal{U}}$ in terms of dual representations of $\mathcal{R}^{\mathcal{U}}_*$ into $M_*$.

We mention that Chirvasitu \cite{Chi} has shown that $\mathbb Hom(M,\mathcal{R}^{\mathcal{U}}\bar \otimes B(H))$ is Dedekind-complete with respect to the order induced by the cone $\mathbb Hom_+(M,\mathcal{R}^{\mathcal{U}}\bar \otimes B(H))$. Since Dedekind-completeness is a necessary condition for Banach spaces of the form $C_{\mathbb R}(K)$, with K compact Hausdorff, to have a predual, we can consider Chirvasitu's result as a small measure of evidence that $\mathbb Hom(M,\mathcal{R}^{\mathcal{U}}\bar \otimes B(H))$ is a dual Banach space. 

\item The second approach is through a simple observation about geometry of Banach spaces. Recall that a Banach space $B$ is called \emph{strictly convex}\index{Banach space!strictly convex} if $b_1\neq b_2$ and $||b_1||=||b_2||=1$ together imply that $||b_1+b_2||<2$.
\begin{exercise}\label{ex:strictlyconvex}
Let $B$ be a strictly convex Banach space and $C\subseteq B$ be a convex subset. Fix $c_0\in C$ and assume that there is $c\in C$ such that $d(c_0,c)$ is maximized in $C$. Show that $c$ is an extreme point of $C$.
\end{exercise}
\begin{exercise}\label{ex:maximum}
Let $M=W^*(X)$ be a singly generated II$_1$ factor which embeds into $\mathcal{R}^{\mathcal{U}}$. Fix $[\pi_0]\in\Hom(M,\mathcal{R}^{\mathcal{U}})$. Show that the function $\Hom(M,\mathcal{R}^{\mathcal{U}})\ni[\pi]\to d([\pi_0],[\pi])$ attains its maximum.
\end{exercise}

With a little bit more effort one can extend Exercise \ref{ex:maximum} to every separable II$_1$ factor. Consequently, Exercises \ref{ex:strictlyconvex} and \ref{ex:maximum} together imply that if the convex-like structure on $\Hom(M,\mathcal{R}^{\mathcal{U}})$ were strictly convex\index{Banach space!strictly convex}, then Popa's question would have affirmative answer. 

\begin{problem}
Does $\mathbb Hom(M,\mathcal{R}^{\mathcal{U}})$ embed into a strictly convex Banach space\index{Banach space!strictly convex}?
\end{problem}

Observe that Theorem \ref{th:propertyT} suggests that $\mathbb Hom(M,\mathcal{R}^{\mathcal{U}})$ itself should not be strictly convex.

\end{enumerate}

The study of the extreme points of $\Hom(M,\mathcal{R}^{\mathcal{U}})$ is not only interesting in light of Popa's question, but also because it provides a method to distinguish II$_1$ factors. For instance, Brown proved in \cite{Br2}, Corollary 5.4, that rigidity of an $\mathcal{R}^{\mathcal{U}}$-embeddable II$_1$ factor $M$ with property (T) reflects on the rigidity of the set of the extreme points of its $\Hom(M,\mathcal{R}^{\mathcal{U}})$, that turns out to be discrete. Property (T) for von Neumann algebras is a form of rigidity introduced by Connes and Jones in \cite{Co-Jo} and inspired to Kazhdan's property (T) for groups \cite{Ka}. A simple way to define property (T) for von Neumann algebras is through the following definition.

\begin{definition}
A II$_1$ factor $M$ with trace $\tau$ has \emph{property (T)}\index{property (T)} if for all $\varepsilon>0$, there exist $\delta>0$ and a finite subset $F$ of $M$ such that for all $\tau$-preserving ucp maps $\phi:M\to M$, one has
$$
\sup_{x\in F}||\phi(x)-x||_2\leq\delta \Rightarrow \sup_{Ball(M)}||\phi(x)-x||_2\leq\varepsilon.
$$ 
\end{definition}

The interpretation of property (T) as a form of rigidity should be clear: if a trace-preserving ucp map is closed to the identity on a finite set, then it is actually close to the identity on the whole von Neumann algebra.

Classical examples of factors with property (T) are the ones associated to $SL(n,\mathbb Z)$, with $n\geq3$. The following result was proved by Nate Brown in\cite{Br2}, Corollary 5.4.

\begin{theorem}\label{th:propertyT}
If $M$ has property (T), then the set of extreme points of $\Hom(M,\mathcal{R}^{\mathcal{U}})$ is discrete.
\end{theorem}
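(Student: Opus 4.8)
The plan is to prove the stronger statement that distinct extreme points are uniformly separated: there is $\delta_0>0$, depending only on $M$, such that any two extreme points $[\pi],[\rho]$ of $\Hom(M,\mathcal{R}^{\mathcal{U}})$ with $d([\pi],[\rho])<\delta_0$ must coincide. By Theorem \ref{th:extremepoints}, extremality of $[\pi]$ means precisely that the relative commutant $\pi(M)'\cap\mathcal{R}^{\mathcal{U}}$ is a (diffuse) factor, and this characterization will be used twice. First I would record the property (T) input. Fix the generating contractions $a_1,a_2,\dots$ of $M$ used to define the metric $d$. Since any trace preserving $*$-homomorphism is isometric for $\|\cdot\|_2$ (as $\|\pi(y)\|_2^2=\tau(\pi(y^*y))=\|y\|_2^2$), a routine telescoping estimate, approximating each element of a finite set by a noncommutative polynomial in finitely many $a_n$, shows that for every finite $F\subseteq M$ and every $\eta>0$ there is $\delta_0>0$ so that $d([\pi],[\rho])<\delta_0$ allows one, after conjugating a representative of $[\rho]$ by a unitary of $\mathcal{R}^{\mathcal{U}}$, to arrange $\|\pi(x)-\rho(x)\|_2\le\eta$ for all $x\in F$. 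I then take $F,\delta$ to be the finite set and tolerance furnished by property (T) of $M$ for a suitably small $\varepsilon$, and choose $\delta_0$ as above with $\eta=\delta$.

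The crux is to upgrade this $\|\cdot\|_2$-closeness into an honest intertwiner. I would regard $L^2(\mathcal{R}^{\mathcal{U}})$ as an $M$--$M$ correspondence with left action $x\cdot\xi=\pi(x)\xi$ and right action $\xi\cdot x=\xi\rho(x)$. The cyclic vector $\xi_0=\widehat{1}$ is a unit tracial vector satisfying $\|\pi(x)\xi_0-\xi_0\rho(x)\|^2=\|\pi(x)-\rho(x)\|_2^2\le\delta^2$ for $x\in F$, so $\xi_0$ is an $(F,\delta)$-almost central vector. Here I would invoke the Connes--Jones correspondence reformulation of property (T), equivalent to the ucp definition recalled before this theorem: for an appropriate choice of $(F,\delta)$, the existence of an almost central unit vector forces a nonzero central vector, which in $L^2(\mathcal{R}^{\mathcal{U}})$ yields, after a bounded spectral cut, a nonzero $v\in\mathcal{R}^{\mathcal{U}}$ with $v\pi(x)=\rho(x)v$ for all $x\in M$. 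This passage from the ucp formulation to the production of a genuine intertwiner is the main obstacle, and it is exactly the place where property (T), as opposed to mere pointwise rigidity, is indispensable.

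Finally I would amplify. Identifying $M_2(\mathcal{R}^{\mathcal{U}})\cong\mathcal{R}^{\mathcal{U}}$, set $\Pi(x)=\mathrm{diag}(\pi(x),\rho(x))$ and write $A=\Pi(M)'\cap M_2(\mathcal{R}^{\mathcal{U}})$, which contains the diagonal matrix units $e_{11},e_{22}$. The corners $e_{11}Ae_{11}\cong\pi(M)'\cap\mathcal{R}^{\mathcal{U}}$ and $e_{22}Ae_{22}\cong\rho(M)'\cap\mathcal{R}^{\mathcal{U}}$ are factors by extremality, and a nonzero intertwiner $v$ supplies a nonzero element of $e_{22}Ae_{11}$. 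A short center-chasing argument then shows $A$ is a factor: any central $z\in A$ commutes with $e_{11}$, so $z=\lambda e_{11}+\mu e_{22}$ with $\lambda e_{11}\in Z(e_{11}Ae_{11})=\mathbb{C}e_{11}$ and likewise for $\mu$, and commuting with the off-diagonal partial isometry coming from $v$ forces $\lambda=\mu$, whence $z\in\mathbb{C}1$. Since $A$ is a finite factor and $\tau(e_{11})=\tau(e_{22})=\tfrac12$, the projections $e_{11}$ and $e_{22}$ are Murray--von Neumann equivalent in $A$; a connecting partial isometry $W\in A$ with $W^*W=e_{11}$, $WW^*=e_{22}$ has the form $W=\begin{pmatrix}0&0\\u&0\end{pmatrix}$ with $u$ unitary and $u\pi(x)=\rho(x)u$, so $u\pi(x)u^*=\rho(x)$ and $[\pi]=[\rho]$. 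Thus two distinct extreme points are at distance at least $\delta_0$, and the set of extreme points of $\Hom(M,\mathcal{R}^{\mathcal{U}})$ is (uniformly) discrete.
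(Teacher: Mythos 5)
Your proof is correct, but it takes a genuinely different route from the paper's. Both arguments quote a serious property (T) rigidity theorem as a black box, but not the same one. The paper invokes Popa's local rigidity theorem (\cite{Po2}, Section 4.5), which from $\delta$-closeness of $[\pi]$ and $[\rho]$ directly produces a partial isometry $v$ with $v^*v=p\in\pi(M)'\cap\mathcal{R}^{\mathcal{U}}$, $vv^*=q\in\rho(M)'\cap\mathcal{R}^{\mathcal{U}}$, $\tau(p)>1-\varepsilon$ and $v\pi(x)v^*=q\rho(x)$, hence $[\pi_p]=[\rho_q]$; extremality then enters only softly, through the convex decomposition $[\pi]=\tau(p)[\pi_p]+\tau(p^{\perp})[\pi_{p^{\perp}}]$, which forces $[\pi]=[\pi_p]$ and $[\rho]=[\rho_q]$, so $[\pi]=[\rho]$. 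You instead use the Connes--Jones correspondence formulation of property (T), which yields only \emph{some} nonzero intertwiner, with no control on the trace of its support, and you compensate by a heavier use of extremality: via Theorem \ref{th:extremepoints} both relative commutants are factors, and your $2\times 2$ amplification together with comparison of the equal-trace projections $e_{11},e_{22}$ in the finite factor $\Pi(M)'\cap M_2(\mathcal{R}^{\mathcal{U}})$ upgrades any nonzero intertwiner to a unitary equivalence. What your route buys is a weaker quantitative demand on property (T), an explicit uniform separation constant, and a clean intermediate fact of independent interest (two representations whose relative commutants are factors are unitarily conjugate as soon as a single nonzero intertwiner exists); what the paper's route buys is brevity, since Popa's theorem already hands over an intertwiner of almost full support. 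Do note that your argument is complete only modulo the equivalence of the paper's ucp definition of property (T) with the Connes--Jones correspondence definition in its local, single-vector form (applied to the tracial almost-central vector $\widehat{1}\in L^2(\mathcal{R}^{\mathcal{U}})$), together with the standard polar-decomposition step converting a central vector of $L^2(\mathcal{R}^{\mathcal{U}})$ into a partial isometry of $\mathcal{R}^{\mathcal{U}}$; this is known but nontrivial, and stands on the same footing as the paper's reliance on \cite{Po2}.
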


\begin{proof}
Popa proved in \cite{Po2}, Section 4.5, that for every $\varepsilon>0$, there is a $\delta>0$ such that if $[\pi], [\rho]\in\Hom(M,\mathcal{R}^{\mathcal{U}})$ are at distance $\leq\delta$, then there are projections $p\in\pi(M)'\cap  \mathcal{R}^{\mathcal{U}}$ and $q\in\rho(M)\cap \mathcal{R}^{\mathcal{U}}$ and a partial isometry $v$ such that $v^*v=p$, $vv^*=q$, $\tau(p)>1-\varepsilon$, and $v\pi(x)v^*=q\rho(x)$, for all $x\in M$. This implies that $p\pi$ and $q\rho$ are approximately unitarily equivalent and consequently, by countable saturation, $[\pi_p]=[\rho_q]$.

Now fix $\varepsilon>0$ assume that $[\pi]$ and $[\rho]$ are $\delta$-close extreme points and take projections $p\in\pi(M)'\cap \mathcal{R}^{\mathcal{U}}$ and $q\in\rho(M)'\cap \mathcal{R}^{\mathcal{U}}$ such that $[\pi_p]=[\rho_q]$. Since $[\pi]$ and $[\rho]$ are extreme points, we can apply Theorem \ref{th:extremepoints}] and conclude that $[\pi]=[\pi_p]$ and $[\rho]=[\rho_q]$, that is, $[\pi]=[\rho]$.
\end{proof}

\begin{remark}
Observe that Theorem \ref{th:propertyT} tells that the set of extreme points is discrete but, as far as we know, it might be empty. Indeed the problem of proving that extreme points actually exist for $\mathcal{R}^{\mathcal{U}}$-embeddable II$_1$ factors is open even for factors with property (T). 
\end{remark}

We conclude mentioning that also examples of factors with a continuous non-empty set of extreme points are also known in (see \cite{Br2} Corollaries 6.10 and 6.11).

\renewcommand\leftmark{CONCLUSIONS}

\renewcommand\rightmark{}

%\chapter*{Conclusions}
\chapter*{Conclusions}
\addcontentsline{toc}{chapter}{\protect\numberline{}Conclusions}

The problem of whether every countable discrete group is sofic or hyperlinear is currently of paramount
importance. Along this monograph, we have shown that answering these problems would automatically settle a number of conjectures from different fields of pure and applied mathematics.

In the sofic case this problem seems to boil down to understanding which
relations (or, more generally, existential formulas in the language of
invarant length groups; see Section \ref{Section: logic invariant length
groups}) are \emph{approximately satisfiable} in the permutation groups
endowed with the Hamming distance. This connection has been made explicit in 
\cite{glebsky_almost_2009} and \cite{arzhantseva_almost_2014}. Similar arguments holds for hyperlinear groups.

Suppose that $w\left( \overline{x}\right) $ is a word, where $\overline{x}%
=\left( x_{1},\ldots ,x_{n}\right) $, and $G$ is an invariant length group.
A tuple $\overline{g}=\left( g_{1},\ldots ,g_{n}\right) $ in $G$ is a \emph{%
solution }of the equation $\ell \left( w\left( \overline{x}\right) \right)
=0 $ if $\ell _{G}\left( \overline{w}\left( \overline{g}\right) \right) =0$
or, equivalently, $w\left( \overline{g}\right) $ equals the identity of $G$.
A $\delta $-\emph{approximate solution} of $\ell \left( w\left( \overline{x}%
\right) \right) =0$ is a tuple $\left( \overline{g}\right) $ such that $\ell
^{G}\left( w\left( \overline{g}\right) \right) <\delta $.

Suppose now that $\mathcal{C}$ is a class of invariant length groups. The
formula $\ell \left( w\left( \overline{x}\right) \right) $ is \emph{stable }%
with respect to $\mathcal{C}$ if, roughly speaking, every approximated
solution of such an equation is close to an exact solution. More precisely
for any given $\varepsilon >0$ there is $\delta >0$ such that whenever $G$
is an element of the class $\mathcal{C}$ and $\overline{g}$ is $\delta $%
-approximate solution of the equation $\ell \left( w\left( \overline{x}%
\right) \right) =0$ in $G$, there is an exact solution $\overline{h}$ in $G$
such that $\max_{i}\ell _{G}\left( g_{i}h_{i}^{-1}\right) <\varepsilon $.

More generally if $\varphi \left( \overline{x}\right) $ is an arbitrary
formula in the language of invariant length groups, then one can define as
above the notion of solution and $\delta $-approximate solution of the
equation $\varphi \left( \overline{x}\right) =0$. The formula $\varphi
\left( \overline{x}\right) $ is \emph{stable }with respect to a class $%
\mathcal{C}$ if for any $\varepsilon >0$ there is $\delta >0$ such that
whenever $G$ is an element of $\mathcal{C}$ and $\overline{g}$ is a tuple in 
$G$ such that $\varphi \left( \overline{g}\right) <\delta $ there is a tuple 
$\overline{h}$ in $G$ such that $\varphi \left( \overline{h}\right) =0$ and $%
\max_{i}\ell _{G}\left( g_{i}h_{i}^{-1}\right) <\varepsilon $. Stability is
a key notion in model theory for metric structures, being tightly connected
to the concept of definability \cite%
{carlson_omitting_2014,farah_model_2014-1}. (This use of the word
\textquotedblleft stability\textquotedblright\ should not be confused with
the classical model-theoretic stability theory as in \cite%
{shelah_classification_1990}.) In the field of operator algebras, stability
and the related notion of (weak) semiprojectivity are of fundamental
importance; see \cite{Loring}.

Recall from Section \ref{Other metric approximations} that a countable
discrete group has the $\mathcal{C}$\emph{-approximation property} if it
admits a \emph{length-preserving} embedding into a length ultraproduct of
elements of $\mathcal{C}$, i.e. an embedding such that the image of every element other than the identity has length $1$. For a discrete group, soficity is
equivalent to the $\mathcal{C}$-approximation property, where $\mathcal{C}$
is the class of permutation groups endowed with the Hamming distance.
Similarly a discrete group is LEF (respectively LEA) if and only if it has
the $\mathcal{C}$-approximation property, where $\mathcal{C}$ is the class
of finite (resp. amenable) groups endowed with the trivial length function.

Stability of a formula with respect to a class $\mathcal{C}$ of invariant
length groups allows one to perturb a $\mathcal{C}$-approximation to obtain
a $\mathcal{C}_{disc}$-approximation, where $\mathcal{C}_{disc}$ is the
class of groups from $\mathcal{C}$ \emph{endowed with the trivial length
function}. The following proposition is a consequence of this observation.

\begin{proposition*}
\label{Proposition:Cdisc}Suppose that 
\begin{equation*}
\Gamma =\left\langle g_{1},\ldots ,g_{n}:w_{i}\left( \overline{g}\right) =1%
\text{ for }i\leq k\right\rangle 
\end{equation*}
is a finitely presented group. If $\Gamma $ has the $\mathcal{C}$%
-approximation property, and the formula 
\begin{equation*}
\max_{i\leq k}\ell \left( w_{i}\left( \overline{x}\right) \right)
+(1-\min_{j\leq n}\ell \left( x_{j}\right) )
\end{equation*}
is stable with respect to the class $\mathcal{C}$, then $\Gamma $ has the $%
\mathcal{C}_{disc}$-approximation property.
\end{proposition*}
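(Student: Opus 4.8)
The plan is to translate both the hypothesis and the conclusion into the language of approximate morphisms of Definition \ref{Definition: approximate morphism}, and then to use stability as the mechanism that converts an approximate morphism into $\mathcal{C}$ into an honest homomorphism into a member of $\mathcal{C}_{disc}$. First I would record what the conclusion asks for. Since every group in $\mathcal{C}_{disc}$ carries the trivial length function, an $\left(F,\varepsilon\right)$-approximate morphism $\Gamma\rightarrow T$ with $\varepsilon<1$ and $T\in\mathcal{C}$ regarded with its trivial length is nothing but a group homomorphism $\phi\colon\Gamma\rightarrow T$ with $\phi(g)\neq 1_{T}$ for every $g\in F\setminus\left\{1_{\Gamma}\right\}$. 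By the universal property of the presentation, giving such a homomorphism is the same as giving a tuple $\bar{h}=\left(h_{1},\ldots,h_{n}\right)\in T^{n}$ with $w_{i}(\bar{h})=1_{T}$ for all $i\le k$, subject to the extra demand that the induced homomorphism not kill any element of $F$. Thus it suffices, for each finite $F\subseteq\Gamma$ and each $\varepsilon\in\left(0,1\right)$, to produce $T\in\mathcal{C}$ and such a tuple $\bar{h}$.

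To build $\bar{h}$ I would feed a good approximate morphism into the stability hypothesis. Fix the maximal word length $L$ among the $w_{i}$ and the words representing elements of $F$, and enlarge $F$ to a finite set $\widehat{F}\subseteq\Gamma$ containing the generators $g_{j}^{\pm 1}$, all of $F$, and every element obtained by evaluating a suffix of one of these words; this is the set on which multiplicativity of an approximate morphism must be controlled. Given a small parameter $\delta>0$ to be fixed later, the hypothesis that $\Gamma$ has the $\mathcal{C}$-approximation property (Definition \ref{Definition: C-approximation}) yields $T\in\mathcal{C}$ and an $\left(\widehat{F},\delta\right)$-approximate morphism $\Phi\colon\Gamma\rightarrow T$. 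A routine telescoping estimate, using invariance of $\ell_{T}$ and approximate multiplicativity on $\widehat{F}$, shows that for any word $w$ of length at most $L$ all of whose suffix-evaluations lie in $\widehat{F}$ one has $\ell_{T}\!\left(w(\Phi(\bar{g}))\,\Phi(w(\bar{g}))^{-1}\right)\le L\delta$. Applying this to each relator $w_{i}$, and using $\Phi(w_{i}(\bar{g}))=\Phi(1_{\Gamma})=1_{T}$, gives $\ell_{T}(w_{i}(\Phi(\bar{g})))\le L\delta$; and the length clause of an approximate morphism gives $\ell_{T}(\Phi(g_{j}))>1-\delta$, since $\ell_{\Gamma}(g_{j})=1$. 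Hence $\Phi(\bar{g})$ is a $\left(L\delta+\delta\right)$-approximate solution of $\varphi(\bar{x})=\max_{i\le k}\ell(w_{i}(\bar{x}))+\bigl(1-\min_{j\le n}\ell(x_{j})\bigr)=0$. By stability of $\varphi$ with respect to $\mathcal{C}$, choosing $\delta$ small enough forces an exact solution $\bar{h}$ in $T$ with $\max_{j}\ell_{T}\!\left(\Phi(g_{j})\,h_{j}^{-1}\right)<\varepsilon'$, where $\varepsilon'>0$ is taken much smaller than $\varepsilon$; here $\varphi(\bar{h})=0$ means precisely $w_{i}(\bar{h})=1_{T}$ for all $i$.

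It remains to verify that the homomorphism $\phi\colon\Gamma\rightarrow T$ determined by $\phi(g_{j})=h_{j}$ is nontrivial on $F$; this is where the closeness supplied by stability is spent. For $g\in F\setminus\left\{1_{\Gamma}\right\}$ with word $w_{g}$, I would estimate $\ell_{T}(\phi(g)\,\Phi(g)^{-1})$ by splitting it through $w_{g}(\Phi(\bar{g}))$: the difference between $\phi(g)=w_{g}(\bar{h})$ and $w_{g}(\Phi(\bar{g}))$ is at most $L\varepsilon'$ by invariance of $\ell_{T}$ and the bound $\max_{j}\ell_{T}(h_{j}\Phi(g_{j})^{-1})<\varepsilon'$, while the difference between $w_{g}(\Phi(\bar{g}))$ and $\Phi(g)$ is at most $L\delta$ by the telescoping estimate above. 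Combining, $\ell_{T}(\phi(g)\,\Phi(g)^{-1})<L\varepsilon'+L\delta$, whence $\ell_{T}(\phi(g))\ge\ell_{T}(\Phi(g))-L\varepsilon'-L\delta>1-\delta-L\varepsilon'-L\delta$, which is strictly positive once $\delta$ and $\varepsilon'$ are small. Thus $\phi(g)\neq 1_{T}$. Viewing $T$ with its trivial length function exhibits $\phi$ as an $\left(F,\varepsilon\right)$-approximate morphism from $\Gamma$ into $\mathcal{C}_{disc}$, establishing the $\mathcal{C}_{disc}$-approximation property (Definition \ref{Definition: C-approximation}). The main obstacle is not conceptual but quantitative: one must choose $\widehat{F}$, $\delta$, and the stability parameter $\varepsilon'$ so that the \emph{same} approximate morphism simultaneously yields a sufficiently good approximate solution to invoke stability and, after straightening, a homomorphism whose error against $\Phi$ on $F$ stays below the length $\ell_{T}(\Phi(g))\approx 1$; keeping the linear error accumulation $L\delta$ and $L\varepsilon'$ under control is the only delicate point.
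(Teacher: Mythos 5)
Your proof is correct and follows essentially the same route the paper takes (the paper only sketches it as "stability allows one to perturb a $\mathcal{C}$-approximation into a $\mathcal{C}_{disc}$-approximation"): feed the approximate morphism coming from the $\mathcal{C}$-approximation property into the stability hypothesis, straighten it to an exact solution of the relators, and check via bi-invariance and telescoping that the resulting genuine homomorphism stays nontrivial on $F$. One cosmetic imprecision: $\varphi(\bar{h})=0$ means both $w_{i}(\bar{h})=1_{T}$ \emph{and} $\ell_{T}(h_{j})=1$ for all $j$, though your argument only needs the relator part since you verify nontriviality on $F$ directly.
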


In the particular case when $\mathcal{C}$ is the class of permutation groups
endowed with the Hamming length function one obtains the following
consequence; see also \cite[Proposition 3]{glebsky_almost_2009} and \cite[%
Theorem 7.3]{arzhantseva_almost_2014}. (Recall that a finitely presented LEF
group is residually finite.)

\begin{corollary*}[Glebsky-Rivera, Arzhantseva-Paunsecu]
\label{Corollary:LEF-nonsofic}Suppose that%
\begin{equation*}
\Gamma =\left\langle g_{1},\ldots ,g_{n}:w_{i}\left( \overline{g}\right) =1%
\text{ for }i\leq n\right\rangle 
\end{equation*}
is a finitely-presented group that is not residually finite. If the formula 
\begin{equation*}
\max_{i\leq n}\ell \left( w_{i}\left( \overline{x}\right) \right)
+1-\min_{j\leq n}\ell \left( x_{j}\right) \text{\label%
{Equation:formula-relation}}
\end{equation*}%
is stable with respect to the class of permutation groups endowed with the
Hamming length function, then $\Gamma $ is not sofic.
\end{corollary*}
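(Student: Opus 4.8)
The plan is to argue by contraposition, using the preceding Proposition as the sole engine. Assume, toward a contradiction, that $\Gamma$ is sofic. By the characterization of soficity recalled above, being sofic is exactly having the $\mathcal{C}$-approximation property for the class $\mathcal{C}$ of permutation groups equipped with the Hamming invariant length function. Thus $\Gamma$ has the $\mathcal{C}$-approximation property for this particular $\mathcal{C}$.

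Next I would invoke the preceding Proposition with this choice of $\mathcal{C}$ and with the number of relators $k$ taken to be $n$. The two hypotheses required there hold verbatim: $\Gamma$ is finitely presented as $\langle g_1,\ldots,g_n : w_i(\overline{x})=1,\ i\leq n\rangle$ and has the $\mathcal{C}$-approximation property, and the formula $\max_{i\leq n}\ell(w_i(\overline{x}))+(1-\min_{j\leq n}\ell(x_j))$ --- which is precisely the formula appearing in the corollary's hypothesis --- is assumed stable with respect to $\mathcal{C}$. The Proposition then yields that $\Gamma$ has the $\mathcal{C}_{disc}$-approximation property.

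The key identification to spell out is that $\mathcal{C}_{disc}$-approximation here coincides with the LEF property. Indeed $\mathcal{C}_{disc}$ is the class of permutation groups carrying the \emph{trivial} length function; since the trivial length takes only the values $0$ and $1$, any $(F,\varepsilon)$-approximate morphism into such a group with $\varepsilon<1$ is forced to be an honest partial homomorphism that is nontrivial off the identity (as observed in Section~\ref{Other metric approximations}), so the $\mathcal{C}_{disc}$-approximation property is exactly local embeddability into permutation groups, that is, into finite groups --- in other words, LEF. At this point the assumption that $\Gamma$ is finitely presented enters decisively: a finitely presented LEF group is residually finite (recalled above). This contradicts the standing hypothesis that $\Gamma$ is \emph{not} residually finite, and the contradiction forces $\Gamma$ to be non-sofic.

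The hard part of the whole development is not the corollary itself but the preceding Proposition, whose proof uses stability to upgrade a length-ultraproduct embedding into the Hamming permutation groups into a length-preserving embedding into permutation groups with the trivial length --- i.e. to perturb an approximate solution of the relator equations into an exact one, uniformly over the approximating groups. Granting that Proposition, the corollary is a short specialization: the only steps are to match the two classes ($\mathcal{C}=$ permutation groups with the Hamming length for soficity, $\mathcal{C}_{disc}=$ permutation groups with the trivial length for LEF) and to quote the fact that finite presentation together with LEF implies residual finiteness.
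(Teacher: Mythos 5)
Your proof is correct and follows exactly the paper's route: the corollary is obtained by specializing the preceding Proposition to $\mathcal{C}=$ the class of permutation groups with the Hamming length function, identifying the $\mathcal{C}_{disc}$-approximation property with the LEF property, and invoking the fact that a finitely presented LEF group is residually finite. Your contrapositive phrasing and the verification that an $(F,\varepsilon)$-approximate morphism into a trivially-normed group with $\varepsilon<1$ is an honest nontrivial partial homomorphism simply make explicit what the paper leaves as an observation.
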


This provides a possible line of attack to the soficity problem of some
groups, first suggested in \cite{glebsky_almost_2009} and \cite%
{arzhantseva_almost_2014}. There are in fact many finitely-presented groups
that are known to be not residually finite. Among these Higman's group which
we have considered in Section \ref{Other metric approximations}. Recall that
this is the group $H$ with presentation%
\begin{equation*}
\left\langle h_{1},h_{2},h_{3},h_{4}:h_{i+1}h_{i}h_{i+1}^{-1}h_{i}^{-2}=1%
\text{ for }i\leq 4\right\rangle 
\end{equation*}%
where the sum $i+1$ is evaluated modulo $4$. Thus the corollary above
provides the following sufficient conditions for $H$ being not sofic: the
fomula%
\begin{equation*}
\max_{i\leq k}\ell \left( x_{i+1}x_{i}x_{i+1}^{-1}x_{i}^{-2}\right) +1-\min
\left\{ \ell \left( x_{1}\right) ,\ell \left( x_{2}\right) ,\ell \left(
x_{3}\right) ,\ell \left( x_{4}\right) \right\} 
\end{equation*}%
is stable with respect to the class of permutation groups endowed with the
Hamming length function. This means that for every $\varepsilon >0$ there is 
$\delta >0$ such that whenever $n\in \mathbb{N}$ and $\sigma _{1},\sigma
_{2},\sigma _{3},\sigma _{4}\in S_{n}$ are such that $\ell _{S_{n}}\left(
\sigma _{i}\right) >1-\delta $ and%
\begin{equation*}
\ell _{S_{n}}\left( \sigma _{i+1}\sigma _{i}\sigma _{i+1}^{-1}\sigma
_{i}^{-2}\right) <\delta 
\end{equation*}%
for $i\leq 4$, then there are $\tau _{1},\tau _{2},\tau _{3},\tau _{4}\in
S_{n}$ such that $\tau _{i+1}\tau _{i}\tau _{i+1}^{-1}=\tau _{i}^{2}$ and $%
\ell _{S_{n}}\left( \tau _{i}\sigma _{i}^{-1}\right) <\varepsilon $ for $%
i\leq 4$.

Another candidate for this line of approach is Thompson's group $F$. This is
a very famous groups with several equivalent descriptions. For our purposes
it can be regarded as the finitely presented groups with presentation%
\begin{equation*}
\left\langle a,b:\left[ ab^{-1},a^{-1}ba\right] =\left[ ab^{-1},a^{-2}ba^{2}%
\right] =1\right\rangle \text{.}
\end{equation*}%
While it is known that $F$ is not elementarily amenable, it is a famous open
problem whether $F$ is amenable. If fact it even seems to be unkown whether $%
F$ is sofic. In view of the corollary above one can conclude that a
sufficient condition to refute soficity of $F$ is the stability of the
formula%
\begin{equation*}
\max \left\{ \ell \left( \left[ xy^{-1},x^{-1}yx\right] \right) ,\ell \left( %
\left[ xy^{-1},x^{-2}yx^{2}\right] \right) \right\} +1-\min \left\{ \ell
\left( x\right) ,\ell \left( y\right) \right\} 
\end{equation*}%
with respect to the class of permutation groups endowed with the Hamming
length function. Concretely this means that for every $\varepsilon >0$ there
is $\delta >0$ such that whenever $n\in \mathbb{N}$ and $\sigma ,\rho \in
S_{n}$ are such that $\ell _{S_{n}}\left( \sigma \right) >1-\delta $, $\ell
_{S_{n}}\left( \rho \right) >1-\delta $, $\ell _{S_{n}}\left( \left[ \sigma
\rho ^{-1},\sigma ^{-1}\rho \sigma \right] \right) <\delta $, and $\ell
_{S_{n}}\left( \left[ \sigma \rho ^{-1},\sigma ^{-2}\rho \sigma ^{2}\right]
\right) <\delta $, there are $\tau ,\lambda \in S_{n}$ such that $\tau
\lambda ^{-1}$ commutes with $\tau ^{-1}\lambda \tau $ and $\tau
^{-2}\lambda \tau ^{2}$, $\ell _{S_{n}}\left( \sigma \tau ^{-1}\right)
<\varepsilon $, and $\ell _{S_{n}}\left( \rho \lambda ^{-1}\right)
<\varepsilon $.

These reformulations show the importance of determining stability of
formulas in permutation groups. This problem seems to be currently not well
understood. Among the few papers dedicated to this subject we can mention 
\cite{glebsky_almost_2009, moreno_blocks_2013, arzhantseva_almost_2014}. In
particular it is shown in \cite{arzhantseva_almost_2014} that, remarkably,
the commutator relation $\ell \left( xyx^{-1}y^{-1}\right) =0$ is stable in
permutation groups. This seems to be the first natural step towards
determining whether the nonsoficity criterion above applies to Higman's and
Thompson's group.

%\newpage

\appendix

\renewcommand\leftmark{APPENDICES}

\chapter{Tensor product of C*-algebras} \label{suse:tensor product}

%\addcontentsline{toc}{chapter}{A. Tensor product of C*-algebras}

\renewcommand\rightmark{A. TENSOR PRODUCT OF C*-ALGEBRAS}

A \emph{normed *-algebra}\index{normed *-algebra} is an algebra $A$ (over $\mathbb C$) equipped with:

\begin{enumerate}
\item an involution $*$ such that

\begin{itemize}
\item $(x+y)^*=x^*+y^*$,
\item $(xy)^*=x^*y^*$,
\item $(\lambda x)^*=\overline\lambda x^*$.
\end{itemize}
\item a norm $||\cdot||$ such that $||xy||\leq||x||||y||$.
\end{enumerate}

A Banach *-algebra is a normed *-algebra that is complete.

\begin{appdefinition}
A \emph{C*-algebra}\index{C*-algebra} is a Banach *-algebra verifying the following additional property, called \emph{C*-identity}\index{C*-identity}:
$$
||x^*x||=||x||^2, \qquad\forall a\in A.
$$
\end{appdefinition}

The C*-identity is a relation between algebraic and topological properties. It indeed implies that sometimes algebraic properties imply topological properties. A classical example of this interplay is the following fact.

\textbf{Fact.} A *-homomorphism from a normed *-algebra to a C*-algebra is always a contraction.

Example of C*-algebras certainly include the (commutative) algebra of complex valued functions on a compact space equipped with the sup norm and $B(H)$ itself. But, exactly as in case of von Neumann algebras, one can start from a group and construct a C*-algebra in a natural way.

\begin{appexample}\label{ex:fullCstaralgebra}
Let $G$ be a locally compact group. Fix a left-Haar measure $\mu$ and construct the convolution *-algebra $L^1(G)$ as follows: the elements of $L^1(G)$ are $\mu$-integrable complex-valued functions on $G$. The convolution is defined by $(f*g)(x)=\int_Gf(y)g(y^{-1}x)d\mu(y)$ and the involution is defined by $f^*(x)=\overline{f(x^{-1})}\Delta(x^{-1})$, where $\Delta$ is the modular function, i.e.\ the (unique) function $\Delta:G\rightarrow[0,\infty)$ such that for all Borel subsets $A$ of $G$ one has $\mu(Ax^{-1})=\Delta(x)\mu(A)$. The \emph{full C*-algebra}\index{C*-algebra!full} of $G$, denoted by $C^*(G)$ is the enveloping C*-algebra of $L^1(G)$, i.e.\ the completion of
$L^1(G)$ with respect to the norm $||f||=sup_{\pi}||\pi(f)||$, where
$\pi$ runs over all non-degenerate *-representations of $L^1(G)$ in a
Hilbert space\footnote{A representation $\pi:L^1(G)\rightarrow B(H)$ is said to be non-degenerate if the set $\{\pi(f)\xi : f\in L^1(G),\xi\in H\}$ is dense in $H$.}.

\begin{appexercise}
Show that in fact $||\cdot||$ is a norm on $L^1(G)$.
\end{appexercise}

In particular, we can make this construction for
the free group on countably many generators, usually denoted by $\mathbb F_\infty$. Observe that this group is countable and so its Haar measure is the counting measure which is bi-invariant. Consequently, the modular function is constantly equal to 1.

\begin{appexercise}\label{exer:universalunitaries}
Let $\delta_g:G\to\mathbb R$ be the characteristic function of the point $g$. Show that $g\to\delta_g$ is an embedding $\mathbb F_\infty\hookrightarrow U(C^*(\mathbb F_\infty))$.
\end{appexercise}

\begin{appexercise}\label{exer:universalunitariestotal}
Prove that the unitaries in Exercise \ref{exer:universalunitaries} form a norm total sequence in $C^*(\mathbb F_\infty)$.
\end{appexercise} 
\end{appexample}
Given two C*-algebras $A_1$ and $A_2$, their algebraic tensor product is a *-algebra in
a natural way, by setting
$$
(x_1\otimes x_2)(y_1\otimes y_2)=x_1x_2\otimes y_1y_2,
$$
$$
(x_1\otimes x_2)^*=x_1^*\otimes x_2^*.
$$
Nevertheless it is not clear how to define a norm to obtain a
C*-algebra.

\begin{appdefinition}
Let $A_1,A_2$ be two C*-algebras and $A_1\odot A_2$ their
algebraic tensor product. A norm $||\cdot||_{\beta}$ on $A_1\otimes
A_2$ is called \emph{C*-norm}\index{norm!C*-} if the following properties are satisfied:
\begin{enumerate}
\item $||xy||_{\beta}\leq||x||_{\beta}||y||_{\beta}$, for all
$x,y\in A_1\odot A_2$;
\item $||x^*x||_{\beta}=||x||_{\beta}^2$, for all $x\in A_1\odot
A_2$.
\end{enumerate}
If $||\cdot||_{\beta}$ is a C*-norm on $A_1\odot A_2$, then
$A_1\otimes_{\beta}A_2$ denotes the completion of $A_1\odot
A_2$ with respect to $||\cdot||_{\beta}$. It is a C*-algebra.
\end{appdefinition}

\begin{appexercise}
Prove that every C*-norm $\beta$ is multiplicative on elementary tensors, i.e.\ $||x_1\otimes x_2||_\beta=||x_1||_{A_1}||x_2||_{A_2}$.
\end{appexercise}

The interesting thing is that one can define at least two different C*-norms, a minimal one and a maximal one, and they are indeed different in general. To define these norms, let us first recall that a representation of a *-algebra $A$ is a *preserving algebra-morphism from $A$ to some $B(H)$. We denote $\text{Rep}(A)$ the set of representation of $A$.

\begin{appdefinition}
\begin{align}
||x||_{\max}=\sup\left\{||\pi(x)|| :
\pi\in\text{Rep}(A_1\odot A_2)\right\}
\end{align}
\end{appdefinition}

\begin{appexercise}
Prove that $||\cdot||_{\max}$ is indeed a C*-norm. (Hint: to prove that $||x||_{\max}<\infty$, for all $x$, take inspiration from Lemma 11.3.3(iii) in \cite{Kadison-RingroseII}).
\end{appexercise}

The norm $||\cdot||_{\max}$ is named \emph{maximal norm}\index{norm!maximal}, or projective norm, or Turumaru's norm, being first introduced in \cite{Tu}. The completion of
$A_1\odot A_2$ with respect to it is denoted by
$A_1\otimes_{\max}A_2$.

Let $A$ be a C*-algebra and $S\subseteq A$. Denote by $C^*(S)$ the C*-subalgebra of $A$ generated by $S$. The maximal norm has the following universal property (see
\cite{Ta1}, IV.4.7).

\begin{appproposition}\label{universal}
Given C*-algebras $A_1,A_2,B$. Assume $\pi_i:A_i\rightarrow B$ are
*-homomorphisms with commuting ranges, that is for all $x\in\pi_1(A_1)$ and $y\in\pi_2(A_2)$, one has $xy=yx$. Then there exists a unique *-homomorphism $\pi:A_1\otimes_{\max}A_2\rightarrow B$ such that
\begin{enumerate}
\item $\pi(x_1\otimes x_2)=\pi_1(x_1)\pi_2(x_2)$
\item $\pi(A_1\otimes_{\max}A_2)=C^*(\pi_1(A_1),\pi_2(A_2))$
\end{enumerate}
\end{appproposition}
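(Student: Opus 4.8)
The final statement to prove is Proposition \ref{universal}, the universal property of the maximal tensor product. The plan is to construct the representation $\pi$ from the given *-homomorphisms $\pi_1, \pi_2$ with commuting ranges, show it is well-defined and continuous with respect to the maximal norm, and verify the two stated properties together with uniqueness.

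First I would define $\pi$ on the algebraic tensor product $A_1 \odot A_2$. Since $\pi_1(A_1)$ and $\pi_2(A_2)$ have commuting ranges inside $B$, the bilinear map $(x_1, x_2) \mapsto \pi_1(x_1)\pi_2(x_2)$ induces, by the universal property of the algebraic tensor product of vector spaces, a well-defined linear map $\pi_0 : A_1 \odot A_2 \to B$ with $\pi_0(x_1 \otimes x_2) = \pi_1(x_1)\pi_2(x_2)$. The commutativity of the ranges is exactly what is needed to check that $\pi_0$ is multiplicative: for elementary tensors, $\pi_0((x_1 \otimes x_2)(y_1 \otimes y_2)) = \pi_1(x_1 y_1)\pi_2(x_2 y_2) = \pi_1(x_1)\pi_1(y_1)\pi_2(x_2)\pi_2(y_2)$, and using $\pi_1(y_1)\pi_2(x_2) = \pi_2(x_2)\pi_1(y_1)$ this equals $\pi_0(x_1 \otimes x_2)\pi_0(y_1 \otimes y_2)$; extend by bilinearity. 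The involution is preserved on elementary tensors since $\pi_0((x_1 \otimes x_2)^*) = \pi_1(x_1^*)\pi_2(x_2^*) = \pi_1(x_1)^*\pi_2(x_2)^*$, and again using that the ranges commute this is $(\pi_1(x_1)\pi_2(x_2))^* = \pi_0(x_1 \otimes x_2)^*$.

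Next I would extend $\pi_0$ to the completion $A_1 \otimes_{\max} A_2$. The key point is continuity: I claim $\|\pi_0(z)\| \leq \|z\|_{\max}$ for every $z \in A_1 \odot A_2$. By the GNS construction, up to composing with a faithful representation I may assume $B \subseteq B(H)$, so that $\pi_0$ is itself a *-representation of $A_1 \odot A_2$ on a Hilbert space, i.e.\ $\pi_0 \in \mathrm{Rep}(A_1 \odot A_2)$. The definition of $\|\cdot\|_{\max}$ as the supremum of $\|\pi(z)\|$ over all representations then gives $\|\pi_0(z)\| \leq \|z\|_{\max}$ immediately. Since $\pi_0$ is norm-contractive for $\|\cdot\|_{\max}$ and $A_1 \odot A_2$ is dense in $A_1 \otimes_{\max} A_2$, it extends uniquely to a bounded *-homomorphism $\pi : A_1 \otimes_{\max} A_2 \to B$ satisfying property (1). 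Property (2) follows because the image $\pi(A_1 \otimes_{\max} A_2)$ is a C*-subalgebra of $B$ (the image of a C*-algebra under a *-homomorphism is closed) containing $\pi_1(A_1)$ and $\pi_2(A_2)$ and contained in $C^*(\pi_1(A_1), \pi_2(A_2))$, hence equals it. Uniqueness is clear: any *-homomorphism satisfying (1) agrees with $\pi_0$ on the dense subalgebra $A_1 \odot A_2$, and a bounded map is determined by its values on a dense set.

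The main obstacle, and the only genuinely nontrivial step, is the continuity estimate $\|\pi_0(z)\| \leq \|z\|_{\max}$. Everything hinges on recognizing $\pi_0$ as a legitimate element of $\mathrm{Rep}(A_1 \odot A_2)$, which in turn requires reducing to the case $B \subseteq B(H)$ via a faithful representation of $B$ (guaranteed by the GNS construction, as recalled earlier in the paper). One subtlety worth checking carefully is that $\|\cdot\|_{\max}$ is actually finite on every element, but this is assumed in the definition of the maximal norm given in the excerpt, so it may be taken for granted here. The remaining verifications — bilinearity, multiplicativity, and *-preservation of $\pi_0$ — are routine algebraic manipulations that rely essentially on the commuting-ranges hypothesis.
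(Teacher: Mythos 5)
Your proof is correct, and it is exactly the argument the paper intends: the paper itself gives no proof (the proposition is stated with a pointer to Takesaki and left as an exercise), and the canonical solution is the one you give --- linearize the bilinear map on $A_1\odot A_2$, verify the *-homomorphism property from the commuting-ranges hypothesis, dominate by $\|\cdot\|_{\max}$ by viewing the map (after faithfully representing $B$ on a Hilbert space) as an element of $\mathrm{Rep}(A_1\odot A_2)$, and extend by continuity.

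Two small points are worth making explicit. First, your assertion in part (2) that the image contains $\pi_1(A_1)$ and $\pi_2(A_2)$ rests on $\pi(x_1\otimes 1)=\pi_1(x_1)\pi_2(1)=\pi_1(x_1)$, i.e.\ on the algebras and the $\pi_i$ being unital; this is implicit throughout the appendix (expressions such as $u_n\otimes 1$ presuppose units), but without it property (2) genuinely fails --- for instance if $\pi_1,\pi_2$ map $\mathbb{C}$ onto two orthogonal corners $\mathbb{C}e_{11}$, $\mathbb{C}e_{22}$ of $M_2(\mathbb{C})$, the induced $\pi$ is zero while $C^*(\pi_1(A_1),\pi_2(A_2))$ is two-dimensional. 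Second, in the uniqueness step you need the competing *-homomorphism to be continuous before comparing it with $\pi$ on the dense subalgebra; this is supplied by the automatic contractivity of *-homomorphisms into a C*-algebra, recorded as the ``Fact'' earlier in the appendix, and is worth citing there.
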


\begin{appexercise}
Prove Proposition \ref{universal}.
\end{appexercise}

We now turn to the definition of the minimal C*-norm. The idea behind its definition is very simple. Instead of considering all representations of the algebraic tensor product, one takes only representations which split into the tensor product of representations of the factors.

\begin{appdefinition}
\begin{align}
||x||_{\min}=\sup\left\{||(\pi_1\otimes\pi_2)(x)|| :
\pi_i\in\text{Rep}(A_i)\right\}
\end{align}
\end{appdefinition}

\begin{appexercise}
Prove that $||\cdot||_{\min}$ is a C*-norm on $A_1\odot A_2$.
\end{appexercise}

This norm is named \emph{minimal norm}\index{norm!minimal}, or injective norm, or Guichardet's norm, being first introduced in \cite{Gu}. The completion of
$A_1\odot A_2$ with respect to it is denoted by
$A_1\otimes_{\min}A_2$.

\begin{appremark}
Clearly $||\cdot||_{\min}\leq||\cdot||_{\max}$, since representations
of the form $\pi_1\otimes\pi_2$ are particular *-representation of
the algebraic tensor product $A_1\odot A_2$. These norms are
different, in general, as Takesaki showed in \cite{Ta2}. Notation
$||\cdot||_{\max}$ reflects the fact that there are no
C*-norms greater than the maximal norm, that follows straightforwardly from the GNS construction. Notation $||\cdot||_{\min}$ has the
same justification, but it is much harder to prove:
\begin{apptheorem}{\bf (Takesaki, \cite{Ta2})}
$||\cdot||_{\min}$ is the smallest C*-norm on
$A_1\odot A_2$.
\end{apptheorem}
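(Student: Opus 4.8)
The plan is to prove the apparently stronger statement that for \emph{every} C*-norm $\gamma$ on $A_1\odot A_2$ one has $\|x\|_{\min}\le\gamma(x)$ for all $x$; since we already know $\|\cdot\|_{\min}\le\|\cdot\|_{\max}$, this is exactly the assertion that $\|\cdot\|_{\min}$ is the smallest C*-norm. First I would rewrite the minimal norm in terms of pure states. Since every representation of a C*-algebra is weakly contained in the direct sum of the GNS representations of its pure states, the supremum in the definition of $\|\cdot\|_{\min}$ is already attained over pure pairs, so that
\[
\|x\|_{\min}=\sup\left\{\|(\pi_\varphi\otimes\pi_\psi)(x)\| : \varphi\ \text{a pure state of}\ A_1,\ \psi\ \text{a pure state of}\ A_2\right\}.
\]
Moreover, for pure $\varphi,\psi$ the representation $\pi_\varphi\otimes\pi_\psi$ is irreducible and $\xi_\varphi\otimes\xi_\psi$ is a cyclic vector with $\langle(\pi_\varphi\otimes\pi_\psi)(a\otimes b)(\xi_\varphi\otimes\xi_\psi),\xi_\varphi\otimes\xi_\psi\rangle=\varphi(a)\psi(b)$, so by uniqueness of the GNS construction $\pi_\varphi\otimes\pi_\psi$ is precisely the GNS representation of the product functional $\varphi\otimes\psi$. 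Thus the whole theorem reduces to the claim that \emph{for every pair of pure states $\varphi$ on $A_1$ and $\psi$ on $A_2$, the functional $\varphi\otimes\psi$ extends to a state on the C*-algebra $A_1\otimes_\gamma A_2$.} Indeed, granting this, $\pi_\varphi\otimes\pi_\psi$ is the GNS representation of a genuine state of $A_1\otimes_\gamma A_2$, hence is $\gamma$-contractive, giving $\|(\pi_\varphi\otimes\pi_\psi)(x)\|\le\gamma(x)$; taking the supremum over all pure pairs yields $\|x\|_{\min}\le\gamma(x)$.

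Second, I would prove this extension claim by \emph{excising} the pure states. Positivity of $\varphi\otimes\psi$ on $A_1\odot A_2$ is elementary: for $z=\sum_i a_i\otimes b_i$ the number $(\varphi\otimes\psi)(z^*z)=\sum_{i,j}\varphi(a_i^*a_j)\psi(b_i^*b_j)$ is a sum of entries of the Hadamard product of the positive semidefinite matrices $[\varphi(a_i^*a_j)]$ and $[\psi(b_i^*b_j)]$, hence nonnegative by the Schur product theorem. The real content is the bound $|(\varphi\otimes\psi)(x)|\le\gamma(x)$, for which I would use that a pure state can be excised: there are nets of positive norm-one elements $(u_\lambda)$ in $A_1$ and $(v_\mu)$ in $A_2$ with $\|u_\lambda a u_\lambda-\varphi(a)u_\lambda^2\|\to0$ and $\|v_\mu b v_\mu-\psi(b)v_\mu^2\|\to0$ for all $a,b$. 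Working inside $A_1\otimes_\gamma A_2$, and recalling from the appendix exercise that a C*-norm is multiplicative on elementary tensors, $\gamma(c\otimes d)=\|c\|\,\|d\|$, a short telescoping estimate gives, for $x=\sum_k a_k\otimes b_k$,
\[
\gamma\!\left((u_\lambda\otimes v_\mu)\,x\,(u_\lambda\otimes v_\mu)-(\varphi\otimes\psi)(x)\,(u_\lambda^2\otimes v_\mu^2)\right)\longrightarrow 0.
\]
Since $\gamma(u_\lambda^2\otimes v_\mu^2)=1$ while $u_\lambda\otimes v_\mu$ is a contraction, this forces $|(\varphi\otimes\psi)(x)|\le\gamma((u_\lambda\otimes v_\mu)x(u_\lambda\otimes v_\mu))+o(1)\le\gamma(x)+o(1)$, whence $|(\varphi\otimes\psi)(x)|\le\gamma(x)$. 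Together with positivity and $(\varphi\otimes\psi)(1)=1$ this shows $\varphi\otimes\psi$ is a state of $A_1\otimes_\gamma A_2$, completing the reduction.

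The hard part, and the only genuinely nontrivial input, is the excision property of pure states invoked above; everything else is bookkeeping. I would establish it through Kadison's transitivity theorem: purity of $\varphi$ makes $\pi_\varphi$ irreducible, and transitivity allows one to produce positive contractions in $A_1$ that act, against any prescribed finite set of elements, almost like the rank-one projection onto $\xi_\varphi$, which is exactly an excising net (this is the theorem of Akemann--Anderson--Pedersen on excising states). I would also dispatch two routine preliminaries first: that $\gamma$ restricts to the unique C*-norm on each factor, so that $A_1$ and $A_2$ embed isometrically in $A_1\otimes_\gamma A_2$ and the elements $u_\lambda\otimes 1$ and $1\otimes v_\mu$ are meaningful; and the reduction to the unital case by adjoining units, with the excising nets playing the role of approximate identities in the nonunital setting.
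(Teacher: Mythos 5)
The paper itself contains no proof of this statement: it is quoted directly from Takesaki's original article, so there is no in-paper argument to measure yours against. What you have written is, in substance, the now-standard excision proof of Takesaki's theorem (the one presented in Brown and Ozawa's book \emph{C*-algebras and Finite-Dimensional Approximations}): reduce $\|\cdot\|_{\min}$ to a supremum over GNS representations of pure product states, identify $\pi_\varphi\otimes\pi_\psi$ with the GNS representation of $\varphi\otimes\psi$, and then show that $\varphi\otimes\psi$ is $\gamma$-continuous by excising the two pure states (Akemann--Anderson--Pedersen, via Kadison transitivity). This is a genuinely different route from Takesaki's original argument, which predates excision, and it is arguably more transparent. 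The architecture is sound and the telescoping estimate is correct.

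Two steps deserve more care than you give them. First, the formula $\|x\|_{\min}=\sup\{\|(\pi_\varphi\otimes\pi_\psi)(x)\|:\varphi,\psi\ \text{pure}\}$ does not follow merely from ``every representation is weakly contained in a direct sum of pure GNS representations'': you also need that weak containment passes to tensor products, equivalently that $\|(\pi_1\otimes\pi_2)(x)\|$ depends only on $\ker\pi_1$ and $\ker\pi_2$. This is true but is a genuine lemma (the independence of the spatial norm from the choice of faithful representations, Takesaki IV.4.9 or Kadison--Ringrose 11.3), and it should be cited or proved, not waved at. Second, and more importantly, your estimate uses $\gamma\bigl(u_\lambda^2\otimes v_\mu^2\bigr)=1$, i.e.\ the \emph{lower} bound $\gamma(c\otimes d)\ge\|c\|\,\|d\|$ from the appendix exercise; only the easy upper bound (contractivity of the unital embeddings plus submultiplicativity) is needed in the telescoping terms. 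Here you must avoid circularity: the cheap solution of that exercise --- squeezing $\gamma$ between $\|\cdot\|_{\min}$ and $\|\cdot\|_{\max}$, both of which are cross norms --- presupposes exactly the theorem you are proving. A non-circular argument does exist: restrict $\gamma$ to $C^*(u_\lambda,1)\odot C^*(v_\mu,1)$; its $\gamma$-closure is a commutative C*-algebra whose spectrum $Z$ embeds into the product $X\times Y$ of the two spectra with both coordinate projections surjective, and if $Z$ avoided a neighbourhood of a pair of maximizing points, Urysohn's lemma would produce nonzero $f\in C(X)$, $g\in C(Y)$ with $f\otimes g$ vanishing on $Z$, contradicting that $\gamma$ is a norm rather than a seminorm. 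With that argument (or a citation to it) inserted, and the Step-1 lemma properly referenced, your proof is complete.
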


\end{appremark}

\titleformat{\section}%
  [hang]% <shape>
  {\normalfont\bfseries\Large}% <format>
  {}% <label>
  {0pt}% <sep>
  {}% <before code>
	
	\titleformat{\subsection}%
  [runin]% <shape>
  {\normalfont\bfseries}% <format>
  {\thesubsection}% <label>
  {12pt}% <sep>
  {}% <before code>
	
	\titleformat{\subsubsection}%
  [runin]% <shape>
  {\normalfont\bfseries}% <format>
  {\thesubsubsection}% <label>
  {12pt}% <sep>
  {}% <before code>

\renewcommand{\thesection}{}% Remove section references
\renewcommand{\thesubsection}{B.\arabic{subsection}}

\newcommand{\hiddensubsection}[1]{
    \stepcounter{subsection}
    \subsection*{B.\arabic{subsection}\hspace{1em}{#1}}
}

\makeatletter
\newcommand{\hiddenlabel}[1]{\def\@currentlabel{B.\arabic{subsection}}\label{#1}}
\makeatother
 
%\titleformat{\subsection}[leftmargin]{}

\chapter[Ultrafilters and ultralimits (by V. Pestov)]{Ultrafilters and ultralimits} \label{app:ultrafilters}
\setcounter{tocdepth}{1}

\centerline{\bf by Vladimir G. Pestov \footnote{ {\em Departamento de Matem\'atica,
Universidade Federal de Santa Catarina,
Campus Universit\'ario Trindade,
  CEP 88.040-900 Florian\'opolis-SC, Brasil} (Pesquisador Visitante Especial do CAPES, processo 085/2012) and
  {\em Department of Mathematics and Statistics, University of Ottawa,
585 King Edward Avenue, Ottawa, Ontario, K1N6N5 Canada} (Permanent address).}}

\setcounter{subsection}{0}

\bigskip

\hiddensubsection{} Let us recall that the symbol $\ell^\infty$ denotes the linear space consisting of all bounded sequences of real numbers. This linear space is sometimes viewed as an algebra, meaning that it supports a natural multiplication (the product of two bounded sequences is bounded). Besides, $\ell^\infty$ is equipped with the supremum norm,
\[\norm x = \sup_{n=1}^\infty \abs{x_n}\]
and the corresponding metric.
\hiddensubsection{\hiddenlabel{vp1}}

The usual notion of the limit of a convergent sequence of real numbers can be interpreted as a mapping from a subset of $\ell^\infty$ consisting of all convergent sequences (this subset is usually denoted $c$, it is a closed normed subspace of $\ell^\infty$) to $\R$. This map,
\[c\ni x =(x_n)_{n=1}^\infty\mapsto \lim_{n\to\infty}x_n\in\R,\]
has the following well-known properties (refer to first-year Calculus for their proofs):
\begin{enumerate}
\item $\lim_{n\to\infty}(x_n+y_n)=\lim_{n\to\infty}x_n+\lim_{n\to\infty} y_n$.
\item $\lim_{n\to\infty}(x_ny_n)=\lim_{n\to\infty}x_n\cdot \lim_{n\to\infty} y_n$.
\item $\lim_{n\to\infty}(\lambda x_n)=\lambda \lim_{n\to\infty}x_n$.
\item If $x_n\leq y_n$, then $\lim_{n\to\infty}x_n\leq \lim_{n\to\infty} y_n$.
\item \label{four}
  The limit of a constant sequence of ones is $1$:
  \[\lim_{n\to\infty}\bar 1 = 1,\]
  where $\bar 1 = (1,1,1,\ldots)$.
\end{enumerate}

\renewcommand\rightmark{B. ULTRAFILTERS AND ULTRALIMITS}

\hiddensubsection{\hiddenlabel{vp2}} In a sense, the main shortcoming of the notion of a limit is that it is not defined for {\em every} bounded sequence. For instance, the following sequence has no limit in the usual sense:
\begin{equation}
  \label{eq:binary}
  0,1,0,1,0,1,0,\ldots,0,1,0,\ldots
  \end{equation}
As we all remember from the student years, there are very subtle cases where proving or disproving the convergence of a particular sequence can be tricky! 

Can one avoid such difficulties and devise the notion of a ``limit'' which would have all the same properties as above, and yet with regard to which {\em every sequence} will have a limit, even the one in formula \eqref{eq:binary}?

\hiddensubsection{\hiddenlabel{vp3}} The response to the question in (\ref{vp2}) turns out to be very simple and disappointing. It is enough, for example, to define a ``limit'' of a sequence $(x_n)$ by selecting the first term:
\[\phi(x)=x_1.\]
This clearly satisfies all the properties listed in (\ref{vp1}). But this is not what we want: such a ``limit'' does not reflect the asymptotic behaviour of a sequence at the infinity. And of course this unsatisfying response is due to a poorly-formulated question.
We have, therefore, to reformulate the question as follows.

\hiddensubsection{\hiddenlabel{vp4}} Does there exist a notion of a ``limit'' of a sequence which 
\begin{itemize}
\item is defined for all bounded sequences,
\item
  has all the same properties listed in (\ref{vp1}), and 
\item coincides with the classical limit in the case of convergent sequences?
\end{itemize}

\hiddensubsection{\hiddenlabel{vp5}} Answering this question is the main goal of the present Appendix. Meanwhile, the sequence in Eq.\ \eqref{eq:binary} will serve as a guiding star, or rather as a guinea pig. Indeed, assuming such a wonderful specimen of a limit does exist, it will in particular assign a limit to this concrete sequence, so perhaps it would be a good idea, to begin by trying to guess what it will be? Say, will $1/2$ be a reasonable suggestion?

Let us analyse this question in the context of all {\em binary} sequences, that is, sequences taking values $0$ and $1$ only. The family of all binary sequences is $\{0,1\}^{\N_+}$, the Cantor space (though the metric and topology on this space will not play any role for the moment). Every binary sequence is just a map
\[x\colon \N_+\to \{0,1\},\]
and so can be identified with the characteristic function of a suitable subset $A\subseteq\N_+$ of natural numbers, namely the set of all $n$ where $x_n$ takes value one:
\[\chi_A(n) =\begin{cases}1,&\mbox{ if }n\in A,\\
0,&\mbox{ otherwise.}\end{cases}\]
For instance, the sequence in Eq.\ \eqref{eq:binary} is the characteristic (indicator) function of the set of all even natural numbers.

\hiddensubsection{\hiddenlabel{vp6}} Suppose the desired limit exists. Let us denote it, for the time being, simply by $\lim$. What are the properties of this limit on the set of binary sequences?

First of all, it must send the constant sequence of ones (the indicator function of $\N_+$) to one, this was one of the rules required:

\subsubsection{\label{111}} $\lim \chi_{\N_+}=\lim (1,1,1,1,\ldots)=1$.
\smallskip

This implies that the limit of the zero sequence (the indicator function of the empty set) is zero: indeed, the sum of the two sequences is the sequence of ones, and the additive property of the limit implies
\[1+ \lim\chi_{\emptyset}=\lim\chi_{\N_+}+\lim\chi_{\emptyset} = \lim\chi_{\N_+} =1.\]
We summarize:

\subsubsection{} $\lim \chi_{\emptyset}=\lim (0,0,0,0,\ldots)=0$.
\smallskip

The next observation is that the only possible values of our limit on binary sequences are $0$ or $1$. Indeed, for a binary sequence $(\ve_n)$, where $\ve_n\in\{0,1\}$, one has $\ve_n^2=\ve_n$, and so
\[\lim(\ve_n)=\lim(\ve_n^2) = (\lim\ve_n)^2,\]
but there are only two real numbers satisfying the property $\ve^2=\ve$.

\subsubsection{} For every subset $A\subseteq\N_+$, one has $\lim\chi_A\in\{0,1\}$.
\smallskip

This excludes a ``natural'' possibility to assign the limit $1/2$ to the sequence in Eq.\ \eqref{eq:binary}. 

Now let $\chi_A$ be any binary sequence. We can ``flip'' the sequence and replace all ones with zeros, and vice versa. This corresponds to the indicator function of the complement $A^c=\N_+\setminus A$. What about the limit of this sequence? It turns out the limit flips as well.

\subsubsection{} $\lim (\chi_{A^c})=1-\lim\chi_A$.
\smallskip

Indeed, $\chi_A+\chi_{A^c}=\chi_{\N_+}$ is the identical sequence of ones whose limit is one, and now one has
\[\lim\chi_{A^c} = \lim\chi_{\N_+}-\lim\chi_A = 1-\lim\chi_A.\]
Since the limit of {\em every} binary sequence must exist by assumption, this property implies the next one:

\subsubsection{} Let $A$ be any subset of the positive natural numbers. Then either $\lim\chi_A=1$, or $\lim\chi_{A^c}=1$, but not both at the same time.
\smallskip

Let us address the following situation. Suppose we have a sequence $\chi_A$ whose limit we know to be one. Now we take a subset $B$ of the natural numbers which contains $A$:
\[B\supseteq A,\]
and consider the sequence $\chi_B$. Thus, some zeros in $\chi_A$ have been (possibly) replaced with ones, and all the ones in $\chi_A$ keep their values. What about the limit of the new sequence, $\chi_B$?

\subsubsection{\label{subset}} If $A\subseteq B$ and $\lim\chi_A=1$, then $\lim\chi_B=1$.
\smallskip

Indeed, $\chi_A\leq\chi_B$, and according to the property (\ref{four}) in  Subs. \ref{vp1}, we conclude: 
\[\lim\chi_A\leq\lim\chi_B.\]
However, $\lim\chi_A=1$, and $\lim\chi_B$ cannot be strictly greater than one. We conclude.

The next situation to consider is as follows. Suppose $A$ and $B$ are two subsets of the natural numbers. Their intersection corresponds to the product of the indicator functions, as is well known and easy to see:
\[\chi_A\cdot \chi_B =\chi_{A\cap B}.\]
It follows that
\[\lim\chi_A\cdot\lim\chi_B =\lim\chi_{A\cap B}.\]
In particular:

\subsubsection{\label{acapb}} If $\lim\chi_A=\lim\chi_B=1$, then $\lim\chi_{A\cap B}=1$.

\hiddensubsection{\hiddenlabel{ultralimit}} Now it is time to put together some of the properties proved. Notice that instead of binary functions, one can just as well talk of subsets of the set $\N_+$. To some subsets $A$ there is associated the value one (when $\lim\chi_A=1$), to others, the value zero (if $\lim\chi_A=0$). All subsets of $\N_+$ are therefore grouped in two classes. Let us denote the class of all subsets $A$ to which we associate the limit one by $\mathcal U$:
\[{\mathcal U} =\{A\subseteq\N_+\colon\lim\chi_A=1\}.\]
The properties established above immediately translate into the following.

\begin{enumerate}
\item $\emptyset\notin{\mathcal U}$.
\item For every subset $A\subseteq\N_+$, either $A\in {\mathcal U}$, or $A^c\in{\mathcal U}$.
\item If $A,B\in {\mathcal U}$, then $A\cap B\in {\mathcal U}$.
\end{enumerate}

\subsubsection{} {\em Ultrafilters.} A collection of subsets of natural numbers (or, in fact, of any fixed non-empty set) satisfying the above three axioms is called an {\em ultrafilter} (on this set).

There is no need to include more axioms in the definition of an ultrafilter.

\subsubsection{} {\em Exercise.} Convert the rest of the properties that we have established in (\ref{111})--(\ref{acapb}) into a set-theoretic form and deduce them from the three axioms above. 

\subsubsection{\label{asubb}} {\em Example.} The property (\ref{subset}) becomes: if $A\subseteq B$ and $A\in {\mathcal U}$, then $B\in {\mathcal U}$. 
\smallskip

$\triangleleft$
This follows from the axioms of an ultrafilter: assuming the contrary, one must have $\N\setminus B\in {\mathcal U}$ (axiom 2), meaning $\emptyset = A\cap (X\setminus B)\in {\mathcal U}$, a contradiction. \hfill $\triangleright$

\hiddensubsection{} Thus, each time we have the concept of a limit with the properties that we have specified in Subs. \ref{vp1}, we get an ultrafilter on the set of natural numbers.

\subsubsection{} {\em Principal ultrafilters.} In particular, to the disappointing example of a ``limit'' described in Subs. \ref{vp3}, there corresponds the following ultrafilter:
\[{\mathcal U}=\{A\subseteq\N\colon 1\in A\}.\]
In other words, a binary sequence has limit one if and only if the first term of this sequence is one. This happens exactly when $A\ni 1$, where our sequence is $\chi_A$. 

More generally, one can repeat the construction given any chosen element $n\in\N_+$. The resulting ultrafilter is denoted $(n)$,
\[(n) =\{A\subseteq\N_+\colon n\in A\},\]
and called a {\em trivial} (or: {\em principal}) ultrafilter.

\subsubsection{} {\em Non-principal ultrafilters.} This is, in a sense, a non-interesting situation. What is an interesting one? An ultrafilter $\mathcal U$ is called {\em free,} or {\em non-principal,} if its elements have no points in common:
\[\bigcap{\mathcal U}=\emptyset.\]

\subsubsection{} {\em Every ultrafilter is either trivial or free.}
\smallskip

$\triangleleft$
Suppose $\mathcal U$ is an ultrafilter which is not trivial. This means: for every $n\in\N_+$, $\mathcal U\neq (n)$. This can mean two things: either there is $A\ni n$ which is not in the ultrafilter (in which case $A^c$ is), or else there is $A\in {\mathcal U}$ so that $A\not\ni n$. In both cases, we can find an element $A$ of $\mathcal U$ not containing $n$. Since this holds for all $n$, it is clear that the intersection of all members of $\mathcal U$ is an empty set, and the ultrafilter is non-principal. 
\hfill $\triangleright$

\hiddensubsection{} How to establish the existence of free ultrafilters? We need the following notion. A system $\mathcal C$ of subsets of a certain non-empty set is called a {\em centred system} if the intersection of every finite collection of elements of $\mathcal C$ is non-empty:
\[\forall n,~~\forall A_1,A_2,\ldots,A_n\in {\mathcal C}, ~~
A_1\cap A_2\cap\ldots\cap A_n\neq\emptyset.\]

\subsubsection{} {\em Every ultrafilter is a centred system.}
\smallskip

This follows immediately from one of the axioms of an ultrafilter.

% \subsubsection{} Here is a source of more examples of centred systems.
% A non-empty collection $\mathcal F$ of subsets of some set is called a {\em filter} if it satisfies the following properties.
% 
% \begin{enumerate}
% \item $\emptyset\notin {\mathcal F}$.
% \item If $A,B\in {\mathcal F}$, then $A\cap B\in {\mathcal F}$.
% \item If $A\in {\mathcal F}$ and $A\subseteq B$, then $B\in {\mathcal F}$.
% \end{enumerate}
% 
% \subsubsection{}
% {\em Every ultrafilter $\mathcal U$ is a filter.} 
% \smallskip
% 
% $\triangleleft$ Only the third property needs verifying, and it is Example \ref{asubb}. \hfill $\triangleright$
% 
% \subsubsection{Example} The {\em Fr\'echet filter} consists of all cofinite subsets of $\N_+$ (that is, subsets $A$ with $\N_+\setminus A$ finite). It is clearly not an ultrafilter, and thus there are many more filters than ultrafilters.

\subsubsection{} {\em Every ultrafilter is a maximal centred system. In other words, if $\mathcal U$ is an ultrafilter and $A\notin {\mathcal U}$, then the system ${\mathcal U}\cup\{A\}$ is not centred.}

$\triangleleft$
Indeed, since $A\notin {\mathcal U}$, we must have $A^c\in {\mathcal U}$, and since $A$ and $A^c$ are both members of the system ${\mathcal U}\cup\{A\}$ and their intersection is empty, the system is not centred.
\hfill
$\triangleright$

\subsubsection{} {\em Ultrafilters are exactly maximal centred systems.}

More precisely, let $\mathscr C$ be a collection of subsets of a non-empty set (in our case, we are interested in the set of natural numbers, $\N_+$). Then $\mathscr C$ is an ultrafilter if and only if $\mathscr C$ is a maximal centred system. 

We have already established $\Rightarrow$ in the previous paragraph, now let us prove the other implication. 

$\triangleleft$ Suppose $\mathscr C$ is a maximal centred system. ``Maximal'' means that if we add to $\mathscr C$ any subset $A$ of $\N_+$ which is not in $\mathscr C$, then the resulting system of sets
\[{\mathscr C}\cup\{A\}\]
is no longer centred. Let us verify all three properties of an ultrafilter.

(1) Clearly, $\emptyset\notin {\mathscr C}$, because otherwise $\mathscr C$ would not be a centred system to begin with: for example,
\[\emptyset\cap\emptyset=\emptyset.\]

(2) Let $A,B\in {\mathscr C}$. We need to prove $A\cap B\in {\mathscr C}$. Consider the system
\[{\mathscr C}\cup\{A\cap B\}.\]
This system is centred: indeed, if $A_1,\ldots,A_n$ are elements of this system, then, if $A\cap B$ is not among them, their intersection is clearly non-empty (as $\mathscr C$ is centred). If we add $A\cap B$, then
\[A_1\cap\ldots\cap A_n\cap (A\cap B) = A_1\cap\ldots\cap A_n\cap A\cap B\neq\emptyset,\]
because $\mathscr C$ is centred and all sets $A_1,\ldots,A_n,A,B$ belong to $\mathscr C$.

We conclude: the system ${\mathscr C}\cup\{A\cap B\}$
cannot be strictly larger than $\mathscr C$, because of maximality of the latter. But this means that
\[A\cap B\in {\mathscr C}.\]

(3) Let $A\subseteq\N_+$ be any subset. We want to show that either $A$ or its complement $A^c=\N_+\setminus A$ belong to $\mathscr C$. Suppose neither holds, towards a contradiction. The assumption that $A\notin{\mathscr C}$ implies that the system ${\mathscr C}\cap \{A\}$ is not centred, and so there exist $A_1,A_2,\ldots,A_m\in{\mathscr C}$ with
\[A_1\cap A_2\cap\ldots\cap A_m\cap A=\emptyset.\]
Note that this observation can be rewritten as
\begin{equation}
  \label{eq:a}
  A_1\cap A_2\cap\ldots\cap A_m\subseteq A^c.\end{equation}
Likewise, the assumption $A^c\notin{\mathscr C}$ implies the existence of elements $B_1,B_2,\ldots,B_k\in {\mathscr C}$ satisfying
\[B_1\cap B_2\cap\ldots\cap B_k\cap A^c=\emptyset,\]
or, in an equivalent form,
\begin{equation}
\label{eq:b}
B_1\cap B_2\cap\ldots\cap B_k\subseteq A.\end{equation}
Together, the equations \eqref{eq:a} and \eqref{eq:b} imply
\[A_1\cap A_2\cap\ldots\cap A_m\cap B_1\cap B_2\cap\ldots\cap B_k\subseteq A\cap A^c =\emptyset,\]
which contradicts the fact that $\mathscr C$ is a centred system. We are done.
\hfill $\triangleright$

\hiddensubsection{} {\bf Existence of free ultrafilters.}

\subsubsection{} {\em Zorn's lemma.} {\em Let $({\mathfrak X},\leq)$ be a partially ordered set. Suppose that $X$ has the following property: every totally ordered subset $\mathfrak C$ of $\mathfrak X$ has an upper bound. (One says that $\mathfrak X$ is {\em inductive}.) Then $\mathfrak X$ has a maximal element.}
\vskip 0.3cm

An element $x$ is {\em maximal} if there is no $y\in {\mathfrak X}$ which is strictly larger than $x$. It does not mean that $x$ is necessarily the largest element of $\mathfrak X$.

The statement of Zorn's lemma is an equivalent form of the Axiom of Choice, which is a part of the standard system of axioms of set theory ZFC.

\subsubsection{\label{exe:chain}} {\em Exercise.}
  Let $\mathfrak C$ be a family of centred systems on a set $\N_+$ which is totally ordered by inclusion. Prove that the union of this family, $\cup{\mathfrak C}$, is again a centred system.
  
\subsubsection{}
{\em Every centred system is contained in a maximal centred system.}
\vskip .2cm

$\triangleleft$
Let $\mathscr C$ be a centred system of subsets of a certain non-empty set (let us again assume that this set is $\N_+$, this does not affect the argument in any way.) Denote $\mathfrak X$ the family of all centred systems of subsets of $\N_+$ which contain $\mathscr C$. This family is ordered by inclusion. Due to the previous exercise, it is inductive. 
Since clearly $\cup{\mathfrak C}$ contains $\mathscr C$, it follows that $\cup{\mathfrak C}$ belongs to $\mathfrak X$ and it forms an upper bound for the chain $\cup{\mathfrak C}$.

Zorn's lemma implies the existence of a maximal element, $\mathscr D$, in $\mathfrak X$. This $\mathscr D$ is a centred system which contains $\mathscr C$ and which is not contained in any strictly larger centred system containing $\mathscr C$. Since every centred system containing $\mathscr D$ will automatically contain $\mathscr C$ as well, the latter statement can be cut down to: $\mathscr D$ is a centred system which contains $\mathscr C$ and which is not contained in any strictly larger centred system. In other words, $\mathscr D$ is a maximal centred system containing $\mathscr C$, as required.
\hfill
$\triangleright$

Let us reformulate this result as follows.

\subsubsection{} {\em Every centred system is contained in an ultrafilter.}

\subsubsection{} Consider the following collection of subsets of $\N_+$:
\[{\mathscr C}=\{\N_+\setminus \{n\}\colon n\in\N_+\}.\]
In other words, $\mathscr C$ consists of all complements to singletons.
This system is clearly centred, and its intersection is empty. According to the previous result, there is an ultrafilter $\mathcal U$ containing the system $\mathscr C$. One has:
\[\cap{\mathcal U}\subseteq \cap {\mathscr C}=\emptyset.\]
Thus, $\mathcal U$ is a non-principal (free) ultrafilter.

In fact, one can show that $\N_+$ supports $2^{\mathfrak c}$ pairwise different free ultrafilters. They are extremely numerous.

\subsubsection{}{\em Solution to Exercise \ref{exe:chain}.}
Let $n\in\N_+$ and
  \[A_1,A_2,\ldots,A_n\in \bigcup{\mathfrak C}\]
  be arbitrary elements of $\mathfrak C$. For every $i=1,2,\ldots,n$ fix a centred system $\mathcal C_i$ which belongs to the family $\mathfrak C$ and contains $A_i$:
  \[A_i\in {\mathcal C}_i\in {\mathfrak C}.\]
  Elements of the collection ${\mathcal C}_i$, $i=1,2,3,\ldots,n$ of centred systems are pairwise comparable by inclusion between themselves because $\mathfrak C$ is totally ordered by inclusion. Every finite totally ordered set has the largest element: for some $i_0=1,2,\ldots,n$ one has
  \[\forall i=1,2,\ldots,n,~~{\mathcal C}_i\subseteq {\mathcal C}_{i_0}.\]
  This means in particular that
  \[\forall i=1,2,\ldots,n,~~A_i\in {\mathcal C}_{i_0}.\]
  Since ${\mathcal C}_{i_0}$ is a centred system, one concludes
  \[A_1\cap A_2\cap\ldots\cap A_n\neq\emptyset,\]
  as required.

\hiddensubsection{}{\bf Ultralimits.} Now let us re-examine the existence of our conjunctural limits making sense for every bounded sequence. 
Notice first that if $(\ve_n)$ is a binary sequence, then, according to the initial way we defined ultrafilters in \ref{ultralimit}, 
\begin{equation}
  \lim \ve_n =1\iff \{n\colon \ve_n=1\}\in {\mathcal U}.
  \label{eq:ve}
\end{equation}
This definition depends on the choice of an ultrafilter, and for this reason, the corresponding limit is called the {\em ultralimit along the ultrafilter $\mathcal U$.} It is denoted
\[\lim_{n\to{\mathcal U}}\ve_n.\]
(This notation actually makes good sense, as we will see later.)

This definition in the above form will clearly not work in a general case (simply because it is possible that all the terms of a convergent sequence are different from the limit). However, it extends readily to sequences taking {\em finitely many} distinct values ({\em Exercise.}).
How to extend this definition to an arbitrary bounded sequence? In fact, such an extension is quite natural and moreover unique.

\subsubsection{\label{exe:approx}} {\em Exercise.} Let $\mathcal U$ be an ultrafilter on natural numbers.
By approximating a given bounded sequence $(x_n)$ with sequences taking finitely many different values in the $\ell^\infty$ norm, extend the definition of the ultralimit to all bounded sequences in such a way that it respects all of our required axioms. Show that this extension is well defined (does not depend on an approximation). 

\subsubsection{} {\em Exercise.} Conclude that, given an ultrafilter $\mathcal U$ on the natural numbers, there is a unique way to define the ultralimit along $\mathcal U$ on all bounded sequences which satisfies the properties listed in (\ref{vp1}) as well as the property in Eq.\ \eqref{eq:ve}.

\subsubsection{} {\em Exercise.} Conclude that there is a natural correspondence between the ultrafilters on the set of natural numbers and the limits on bounded sequences satisfying the properties stated in (\ref{vp1}).

At the same time, the definition of an ultralimit based on approximations is not very convenient. We are going to reformulate it now. 

\hiddensubsection{} {\bf A workable definition of an ultralimit.} The right definition in a usable form is obtained by adjusting in an obvious way the classical concept of a convergent sequence $x_n\to x$:
\[\forall\varepsilon >0,~~\exists N,~~\forall n\geq N,~~\abs{x-x_n}<\varepsilon .\]
This means that, given an $\varepsilon >0$, for ``most'' values of $n$ the point $x_n$ is within $\varepsilon $ from the limit. In this context, ``most'' means that the set of such $n$ is cofinite.

If $\mathcal U$ is an ultrafilter, ``most'' values of $n$ means that the set of $n$ with this property belongs to $\mathcal U$. So we say that a sequence $(x_n)$ of real numbers converges to a real number $x$ along the ultrafilter $\mathcal U$,
\[x=\lim_{n\to{\mathcal U}}x_n,\]
if
\[\forall \varepsilon >0,~~\exists A\in {\mathcal U},~~\forall n\in A,~~\abs{x-x_n}<\varepsilon .\]
Since every superset of such an $A$ also belongs to $\mathcal U$, an equivalent statement is:
\[\forall \varepsilon >0,~~\{n\in\N_+\colon \abs{x-x_n}<\varepsilon \}\in {\mathcal U}.\]

  \subsubsection{}{\em Exercise.} Show that the definition of the ultralimit given in this subsection is equivalent to the definition in the language of approximations in Exercise \ref{exe:approx}.

\subsubsection{}{\em Example.}
  If ${\mathcal U}=(n_0)$ is a trivial ultrafilter generated by the natural number $n_0$, then the statement
  \[\lim_{n\to{\mathcal U}}x_n=x\]
  means that for every $\varepsilon >0$,
  \[\{n\in\N_+\colon \abs{x-x_n}<\varepsilon \}\ni n_0,\]
  that is, simply 
  \[\forall \varepsilon >0,~~\abs{x-x_{n_0}}<\varepsilon ,\]
  which means
  \[x=x_{n_0}.\]
  This is an ``uninteresting'' case of an ultralimit. The interesting cases correspond to free ultrafilters. 

  \subsubsection{\label{exe:1}}{\em Exercise.}
  Show that if $\lim_{n\to\infty}x_n=x$ (in a usual sense), then for every free ultrafilter $\mathcal U$ on $\N_+$, 
  \[\lim_{n\to{\mathcal U}}x_n=x.\]

  \subsubsection{\label{exe:2}}{\em Exercise.} Show that, if one has
  \[\lim_{n\to{\mathcal U}}x_n=x\]
  for {\em every} free ultrafilter $\mathcal U$ on $\N_+$, then 
  \[\lim_{n\to\infty}x_n=x.\]

  Thus, the familiar symbol $\infty$ essentially means all the free ultrafilters lumped together, and the existence of a limit in the classical sense signifies that all the free ultrafilters on $\N_+$ agree between themselves on what the value of this limit should be. If there is no such agreement, then the limit in the classical sense does not exist, and the sequence $(x_n)$ is divergent. However, every free ultrafilter still gives its own interpretation of what the limit of the sequence is.
  
  \subsubsection{}{\em Example.} Recall the alternating sequence 
  \[\ve_n=(-1)^{n+1}\]
  of zeros and ones as in Eq.\ \eqref{eq:binary} Every ultrafilter $\mathcal U$ on $\N$ either contains the set of odd numbers, or the set of even numbers. In the former case, the ultralimit of our sequence is $0$, in the latter case, $1$. Because of this, the ultrafilters ``disagree'' between themselves on what the limit of the sequence should be. Consequently, $\lim_{n\to\infty}\ve_n$ does not exist and the sequence is divergent in the classical sense.

  \subsubsection{}{\bf Theorem.}
  {\em
  Every bounded sequence of real numbers has an ultralimit along every ultrafilter on $\N_+$.}
  \smallskip

\noindent $\triangleleft$
  Let $(x_n)$ be a bounded sequence of real numbers, which we, without loss in generality, will assume to belong to the interval $[0,1]$. Let $\mathcal U$ be an ultrafilter on $\N_+$. The proof closely resembles the proof of the Heine--Borel theorem about compactness of the closed unit interval, but is actually simpler. 
  
  Subdivide the interval $I_0=[0,1]$ into two subintervals of length half, $[0,1/2]$ and $[1/2,1]$. Set
  \[A=\{n\in\N_+\colon x_n\in [0,1/2]\}.\]
  Either $A$ belongs to $\mathcal U$, or else its complement, $A^c$, does. In the latter case, since
  \[\{n\colon x_n\in [1/2,1]\}\supseteq A^c,\]
  the set $\{n\colon x_n\in [1/2,1]\}$ is in $\mathcal U$ as well. We conclude: at least one of the two intervals of length $1/2$, denote it $I_1$, has the property
  \[\{n\in\N_+\colon x_n\in I_1\}\in {\mathcal U}.\]
  Continue dividing the intervals in two and selecting one of them. At the end, we will have selected a nested sequence of closed intervals $I_k$ of length $2^{-k}$, $k=0,1,2,3,\ldots$, with the property that for all $k$,
  \[\{n\colon x_n\in I_k\}\in{\mathcal U}.\]
  According to the Cantor Intersection Theorem, there is $c\in [0,1]$ so that
  \[\bigcap_{k=0}^\infty I_k=\{c\}.\]
  We claim this $c$ is the ultralimit of $(x_n)$ along $\mathcal U$. Indeed, let $\varepsilon >0$ be arbitrary. For a sufficiently large $k$, the interval $I_k$ is contained in the interval $(c-\varepsilon ,c+\varepsilon )$ (indeed, enough to take $k=-\log_2\varepsilon  + 1$). Now one has
  \[\{n\colon c-\varepsilon <x_n<x+\varepsilon \}\supseteq \{n\colon x_n\in I_k\}\ni {\mathcal U},\]
  from where one concludes that the former set is also in $\mathcal U$.
\hfill $\triangleright$

\subsubsection{}{\em Solution to Exercise \ref{exe:1}.}
We will verify the definition of an ultralimit. Let $\varepsilon >0$. For some $N$ and every $n\geq N$, one has $\abs{x-x_n}<\varepsilon $. Let us write
  \[\N=\{1\}\cup \{2\}\cup\ldots\cup \{N-1\}\cup \{N,N+1,N+2,\ldots\}.\]
  One of these sets must belong to $\mathcal U$. If it were one of the singletons, $\{i\}$, then every other element $A\in {\mathcal U}$ must meet $\{i\}$, therefore contain $i$, so $\mathcal U$ would be a principal ultrafilter generated by $i$. One concludes: 
  \[ \{N,N+1,N+2,\ldots\}\in{\mathcal U},\]
  and so
  \[\{n\colon \abs{x-x_n}<\varepsilon \}\in {\mathcal U},\]
  because the set on the left hand side is a superset of $\{N,N+1,N+2,\ldots\}$.
  
  \subsubsection{}{\em Solution to Exercise \ref{exe:2}.}
  We have just seen that if $x_n\to x$ in the classical sense, then we have convergence $x_n\overset{\mathcal U}\to x$ along every free ultrafilter $\mathcal U$. This means that the only case to eliminate is where $x_n\overset{\mathcal U}\to x$ along every free ultrafilter $\mathcal U$, yet at the same time $x_n\not\to x$ in the classical sense. 
  Assume that $x_n$ does not converge to $x$. Then for some $\varepsilon >0$ and every $N$ there is $n=n(N)\geq N$ with $d(x,x_{n})\geq \varepsilon $. The set $I=\{n(N)\colon N\in\N\}$ is infinite. There is a free ultrafilter $\mathcal U$ on $\N_+$ containing $I$, namely an ultrafilter containing the centred system $\{I\setminus\{n\}\colon n=1,2,3,\ldots\}$. Since $I\in{\mathcal U}$, 
  \[\{n\colon d(x,x_n)<\varepsilon \}\notin {\mathcal U},\]
  because the set above is disjoint from $I$. We conclude: $x_n\not\overset{\mathcal U}\to x$.

  \hiddensubsection{}{\bf Ultralimits in metric spaces.} So far, we have only considered ultralimits of sequences of real numbers.
  Of course the definition extends readily to an arbitrary metric space, as follows. A sequence $(x_n)$ of elements of a metric space $(X,d)$ converges to a point $x$ along an ultrafilter $\mathcal U$ on the set of positive natural numbers if for every $\varepsilon >0$ the set
  \[\{n\in\N_+\colon d(x,x_n)<\varepsilon \}\]
  belongs to $\mathcal U$.
  
  \subsubsection{\label{exe:uniqueness}}{\em Exercise.}
  Prove that a sequence $(x_n)$ of elements of a metric space $X$ can have at most one ultralimit along a given ultrafilter. That is, let $\mathcal U$ be an ultrafilter on $\N_+$. Prove that the ultralimit $\lim_{n\to{\mathcal U}}x_n$, if it exists, is unique.
  
  \subsubsection{\label{not}}
  At the same time, let us note that in an arbitrary metric space, not every bounded sequence of elements needs to have an ultralimit. For instance, if $X$ is a metric space with a $0$-$1$ metric and $(x_n)$ is a sequence of pairwise different elements of $X$, then for every free ultrafilter $\mathcal U$ on $\N_+$, the ultralimit $\lim_{n\to{\mathcal U}}x_n$ does not exist.
  
  Indeed, let $x\in X$ be arbitrary. The set
  \[\{n\in\N_+\colon d(x,x_n)<1\}\]
  contains at most one element (in the case where $x=x_n$ for some $n$) and so does not belong to $\mathcal U$. We conclude: $x_n$ does not converge to $x$ along $\mathcal U$. Since this argument applies to every point $x\in X$, the sequence $(x_n)$ does not have an ultralimit.
  
  \subsubsection{}  Since not every bounded sequence in a metric space $X$ needs to have an ultralimit, a natural question to ask is, can be enlarge $X$ in such a way that every sequence has an ultralimit, much in the same way as we can form a completion of $X$ so that every Cauchy sequence is assigned a limit in it? 
  
  The following result provides a negative answer to this question.
  
  \subsubsection{\label{th:compact}}{\bf Theorem.}
  {\em For a metric space $X$, the following conditions are equivalent.
    \begin{enumerate}
  \item $X$ is compact.
  \item Every sequence of elements in $X$ has an ultralimit along every free ultrafilter $\mathcal U$ on $\N_+$.
  \item There exists a free ultrafilter $\mathcal U$ on $\N_+$ with the property that every sequence of elements in $X$ has an ultralimit along $\mathcal U$.
  \end{enumerate}
  }
  
  \noindent $\triangleleft$
     (a) $\Rightarrow$ (b): very much the same proof as for the interval. For every $\varepsilon >0$, choose a finite cover of $X$ with open $\varepsilon $-balls. One of these balls, say $B$, has the property
    \[\{n\colon x_n\in B\}\in {\mathcal U}.\]
    Proceeding recursively, we obtain a centred sequence of open balls $B_i$ of radius converging to zero satisfying
    \[A_i=\{n\colon x_n\in B_i\}\in {\mathcal U}.\]
    The ball centers form a Cauchy sequence and so converge to some limit, $x$. Every neighbourhood $V$ of $x$ contains some $B_i$ for $i$ large enough, and so the set
    \[\{n\colon x_n\in V\}\]
    contains $A_i$ and so belongs to $\mathcal U$.
    
    (b)$\Rightarrow$(c): trivial, given that free ultrafilters on $\N$ exist. Now just pick any one of them.
    
    (c)$\Rightarrow$(a): by contraposition. If $X$ is not compact, then either it is not totally bounded (in which case there is a sequence of points at  pairwise distances $\geq\varepsilon _0>0$ from each other, which has no ultralimit due to the argument in \ref{not}), or non-complete. In the latter case, select a Cauchy sequence $(x_n)$ without a limit. We will show that it does not admit an ultralimit along $\mathcal U$ either. Assume $x$ is such an ultralimit. For every $\varepsilon >0$, the set $\{n\colon d(x_n,x)<\varepsilon \}$ is in $\mathcal U$, so non-empty, and using this, one can recursively select a subsequence of $x_n$ converging to $x$ in the usual sense. But $(x_n)$ is a Cauchy sequence, so this would mean $x_n\to x$.
  \hfill $\triangleright$
  
  \subsubsection{\label{tb}}{\em Exercise.} Deduce that a metric space $X$ metrically embeds into a space $\hat X$ in which every bounded sequence has an ultralimit if and only if every ball in $X$ is totally bounded. In this case, $\hat X$ is the metric completion of $X$, and every closed ball in $X$ is compact.
  
  This is of course a rather restrictive condition, which, for instance, is failed by any infinite-dimensional normed space.

 \subsubsection{}{\em Solution to Exercise \ref{exe:uniqueness}.}
 Suppose $x,y\in X$ and
  \[\lim_{n\to{\mathcal U}}x_n=x\mbox{ and }\lim_{n\to{\mathcal U}}x_n=y.\]
  For every $\varepsilon >0$, both sets 
  \[\{n\in\N_+\colon d(x,x_n)<\varepsilon \}\mbox{ and }\{n\in\N_+\colon d(y,x_n)<\varepsilon \}\]
  belong to the ultrafilter $\mathcal U$, and so does their intersection. As a consequence, this intersection is non-empty, and one can find $n\in\N$ with
  \[d(x,x_n)<\varepsilon \mbox{ and }d(y,x_n)<\varepsilon .\]
  By the triangle inequality,
  \[d(x,y)<2\varepsilon ,\]
  and since this holds for every $\varepsilon >0$, we conclude: $x=y$.
  
  \hiddensubsection{}{\bf Ultraproducts.} In view of (\ref{tb}), we cannot hope to attach a ``virtual'' ultralimit to every bounded sequence of points of a metric space, $X$. If the space is not totally bounded, some sequences are destined to diverge along some ultrafilters in every {\em metric} extension of $X$.
  Still, we can assign to every such sequence an ideal new point, which will be the limit (hence, ultralimit) in case of a Cauchy sequence, and otherwise will register the asymptotic behaviour of the sequence with regard to the given ultrafilter. Now we will briefly examine this important construction.
  
  \subsubsection{}{\em Space of bounded sequences in a metric space.} Let $X=(X,d_X)$ be a metric space. Fix an ultrafilter $\mathcal U$ on the integers. Since we are going to assign an ``ideal'' point to every bounded sequence, the right place to start will be the set $\ell^\infty(\N_+;X)$ of all bounded sequences with elements in $X$. (For example, the space $\ell^\infty$ is, in this notation, $\ell^{\infty}(\N_+;\R)$.)
  We would formally identify the space of ``ideal'' points with $\ell^\infty(\N_+;X)$, or rather with its quotient space under a suitable equivalence relation --- just like we do when we construct the completion of a metric space beginning with the set of all Cauchy sequences. 
  
  Observe that the set $\ell^\infty(\N_+;X)$
  admits a structure of a metric space with regard to the $\ell^\infty$-distance:
  \[d_{\infty}(x,y) = \sup_{n\in\N_+}d_X(x_n,y_n).\]
  
  \subsubsection{\label{exe:complete}}{\em Exercise.} Show that the metric space $\ell^\infty(\N_+;X)$ is complete if and only if $X$ is complete.
  
  % Recall that the {\em density} of a metric space $X$ is the size of the smallest everywhere dense subset in $X$.
  % 
  % \subsubsection{Exercise} Show that the metric space $\ell^\infty(\N_+;X)$ has density $d(X)^{\aleph_0}$.
  % 
  \subsubsection{}
  It is quite natural to make two bounded sequences $x$ and $y$ share the same ``ideal point'' if the distance between the corresponding terms of those sequences converges to zero {\em along the ultrafilter} $\mathcal U$:
  \[x\overset{\mathcal U}\sim y \iff \lim_{n\to{\mathcal U}}d_X(x_n,y_n)=0.\]
  The resulting equivalence relation $\overset{\mathcal U}\sim$ on $\ell^\infty(\N_+;X)$ agrees with the metric in the sense that the quotient metric on the quotient set is well defined.
  For an $x\in \ell^\infty(\N_+;X)$ denote 
  \[[x]_{\mathcal U}=\{y\in X\colon x\overset{\mathcal U}\sim y\}\]
  the corresponding equivalence class.
  
  \subsubsection{}{\em Exercise.}
  Show that the rule
  \[d([x],[y]) =\inf\{d_{\infty}(x^\prime,y^\prime)\colon x^\prime\in [x]_{\mathcal U},y^\prime\in[y]_{\mathcal U}\}\]
  defines a metric on the quotient set $X/\overset{\mathcal U}\sim$, and that this quotient metric can be alternatively described by
  \[d([x],[y]) =\lim_{n\to{\mathcal U}} d_X(x_n,y_n).\]
   
  \subsubsection{}{\em Metric ultrapowers.}
  The quotient metric space $\ell^\infty(\N_+;X)/\overset{\mathcal U}\sim$ is called the {\em metric ultrapower} of $X$ with regard to the ultrafilter $\mathcal U$, and denoted $X^{\N}_{\mathcal U}$.
  
  \subsubsection{}{\em Exercise.}
  Show that the original metric space $X$ canonically isometrically embeds inside of its metric ultrapower under the {\em diagonal embedding}
  \[X\ni x\mapsto [(x,x,x,\ldots)]_{\mathcal U}\in X^{\N}_{\mathcal U},\]
  no matter what the ultrafilter $\mathcal U$ is. In particular, the metric ultrapower of a non-trivial space is itself non-trivial.
  
  \subsubsection{}{\em Exercise.} Show that, if ${\mathcal U}=(n)$ is a principal ultrafilter, the metric ultrapower $X^{\N}_{\mathcal U}$ is canonically isometric to $X$.
  
  \subsubsection{}{\em Exercise.} Let $\mathcal U$ be a non-principal ultrafilter, and suppose that $X$ is a {\em separable} metric space. Show that the metric ultrapower $X^{\N}_{\mathcal U}$ is isometric to $X$ (not necessarily in a canonical way!) if and only if $X$ is compact. 
  
  Before establishing a couple of other properties of the ultrapower, let us extend the definition to the case where terms of sequences come from {\em possibly different} metric spaces: for every $n$, 
  \[x_n\in X_n,\]
  where $(X_n)$, $n=1,2,\ldots$, are metric spaces which may or may not differ from each other. 
  
  \subsubsection{}{\em Pointed metric spaces.}
  In this situation, how do we define a bounded sequence $x=(x_n)$? The notion of boundedness becomes purely relative: one can say that a sequence $y$ is bounded with regard to another sequence, $x$, if the values $d_{X_n}(x_n,y_n)$, $n\in\N$, form a bounded set. However, there is no absolute notion of boundedness. For this reason, it is necessary to select a sequence $x^\ast=(x^\ast_n)$, $x^\ast_n\in X_n$, thus fixing a class of sequences bounded with regard to $x^\ast$. In other words, we are dealing with a family of {\em pointed metric spaces,} $(X_n,x^\ast_n)$. 
  
  For example, in cases of considerable interest, where $X_n$ are normed spaces or metric groups, the selected points are usually zero and the neutral element, respectively. If all spaces $X_n=X$ are the same, in order to obtain the usual notion of (absolute) boundedness of a sequence,
  one chooses any constant sequence $(x^\ast,x^\ast,\ldots)$, all of them giving the same result. If the spaces $X_n$ have a uniformly bounded diameter, the choice of a distinguished sequence does not matter. 
  
  \subsubsection{}{\em Ultraproduct of pointed metric spaces.}
  Now it is clear how to reformulate the definitions. The metric space consisting of all relatively bounded sequences with regard to $x^\ast$ is the $\ell^\infty$-type sum of pointed spaces $(X_n,x^\ast_n)$, denoted \[\oplus_n^{\ell^\infty}(X_n,x^\ast_n).\]
  In cases like the above where the choice of distinguished points is clear or does not matter, these points are suppressed. 
  
  The equivalence relation and the distance on the quotient metric space are defined in exactly the same fashion as above. The resulting metric space 
  \[\oplus_n^{\ell^\infty}(X_n,x^\ast_n)/\overset{\mathcal U}\sim\] is called the {\em metric ultraproduct} of (pointed) metric spaces $(x_n,x^\ast_n)$ modulo the ultrafilter $\mathcal U$, and denoted
  \[\left(\prod_n(X_n,x^\ast_n)\right)_{\mathcal U}.\]
  
  \subsubsection{}{\em Exercise.}
  Assuming that $\mathcal U$ is a non-principal ultrafilter,
  show that there is a canonical isometry between the metric ultraproducts
  \[\left(\prod_n(X_n,x^\ast_n)\right)_{\mathcal U}\mbox{ and } \left(\prod_n(\widehat{X_n},x^\ast_n)\right)_{\mathcal U},\]
  where $\widehat{X_n}$ denotes the completion of the metric space $X_n$.
  \smallskip
  
  $\triangleleft$ {\em Hint:} observe that the ultraproduct on the left admits a canonical isometric embedding into the one on the right. 
  \hfill $\triangleright$
  
  \subsubsection{}{\em Exercise.} Given a Cauchy sequence in the metric ultraproduct of spaces $X_n$, show that it is the image under the quotient map of some Cauchy sequence in the $\ell^\infty$-type sum of the spaces $X_n$.
  \smallskip
  
  $\triangleleft$ {\em Hint:} use the following equivalent definition: a sequence $(a_n)$ is Cauchy if and only if for every $\varepsilon >0$ there is $N$ such that the open ball $B_\varepsilon (x_N)$ contains all $x_n$ with $n\geq N$. 
  \hfill $\triangleright$
  
  The following stands in contrast to Exercise \ref{exe:complete}.
  
  \subsubsection{}{\em Exercise.} Deduce from the previous two exercises that the metric ultraproduct of a family of arbitrary metric spaces, formed with regard to a non-principal ultrafilter, is always a complete metric space.
  \smallskip
  
  Recall that the covering number $N(X,\varepsilon )$, where $\varepsilon >0$, of a metric space $X$ is the smallest size of an $\varepsilon $-net for $X$. 
  
  \subsubsection{}{\em Exercise.} ($\ast$)
  Prove that if the metric ultraproduct $X$ of a family $(X_n)$ of separable metric spaces is separable, then it is compact. Moreover, in this case for every $\varepsilon >0$ the sequence of covering numbers $N(X_n,\varepsilon )$ is essentially bounded, that is, uniformly bounded from above for all $n$ belonging to some element $A=A(\varepsilon )$ of the ultrafilter. 
  
  Otherwise, $X$ has density character continuum.
  
  \hiddensubsection{}{\em The space of ultrafilters.} In conclusion, we want to give a clear literal meaning to the symbol
  \[\lim_{n\to{\mathcal U}} x_n,\]
  used for an ultralimit along an ultrafilter $\mathcal U$.
  In fact, an ultralimit of a sequence can be interpreted as a limit in the usual sense. 

  \subsubsection{}
Denote by $\beta\N$ the set of all ultrafilters on the set
$\N$ of natural numbers. Identify $\N$ with a subset of $\beta\N$ by assigning to each natural number $n$ the corresponding principal ultrafilter, $(n)=\{A\subseteq\N\colon A\ni n\}$:
\[\N\hookrightarrow \beta\N,~~\N\ni n\mapsto (n)\in\beta\N.\]
This mapping is clearly one-to-one, and in this sense we will often refer to
$\N$ as a subset of $\beta\N$.

\subsubsection{}
For every $A\subseteq\N$, denote by $\tilde A$ the family of
all ultrafilters $\xi\in\beta\N$ containing $A$ as an element:
\[\tilde A =\{{\mathcal U}\in\beta\N\colon A\in {\mathcal U}\}.\]

\subsubsection{\label{exe:base}}{\em Exercise.}
Prove that the collection
\[\{\tilde A\colon A\subseteq\N\}\]
forms a base for a topology on $\beta\N$.

\subsubsection{\label{exe:discrete}}{\em Exercise.} Prove that the above described topology on $\beta\N$ induces the discrete topology on $\N$ as on a topological subspace.

\subsubsection{\label{exe:hausdorff}}{\em Exercise.}
Prove that the topology as above on $\beta\N$
is Hausdorff.

\subsubsection{\label{exe:betacomp}}{\em Exercise.}
Prove that $\beta\N$ with the above topology is compact.

\subsubsection{\label{exe:dense}}{\em Exercise.}
Prove that $\N$ forms an {\it everywhere dense subset}
in $\beta\N$, that is, the closure of $\N$ in $\beta\N$ is
the entire space $\beta\N$.

\subsubsection{\label{exe:extends}}{\em Exercise.}
Let $X$ be an arbitrary compact space, and let $f\colon\N\to X$
be an arbitrary mapping. Prove that $f$ extends
to a continuous mapping $\tilde f\colon \beta X\to X$.

\subsubsection{\label{exe:funique}}{\em Exercise.}
Prove that a continuous mapping $\tilde f$ as above is unique
for any given $f$, assuming the space $X$ is compact Hausdorff. In other words, if $g\colon \beta\N\to X$ is
a continuous mapping with $g\vert_\N=f$, then $g=\tilde f$.

\subsubsection{}
The compact space $\beta\N$ as above is called the {\it Stone--\u Cech
compactification} (or else {\it universal compactification}) 
of the discrete space $\N$.

\subsubsection{}{\em Exercise.}
Let ${\mathcal U}\in\beta\N$ be an ultrafilter, and let $x=(x_n)$ be a sequence of points in a compact metric space $X$, in other words, a function $x\colon\N\to X$. Prove that the ultralimit of $(x_n)$ along $\mathcal U$ is exactly the value of the unique continuous extension $\tilde x$ at $\mathcal U$, that is, the usual classical limit of the function $x$ as $n\to {\mathcal U}$:
\[\lim_{n\to{\mathcal U}}x_n = \tilde x(\mathcal U) = \lim_{n\to{\mathcal U}}x.\]

\subsubsection{}{\em Exercise.} Generalize the above as follows: suppose $\mathcal U$ is an ultrafilter on $\N$, and let $(x_n)$ be a sequence of points in a metric space $X$. Prove that the following are equivalent:
\begin{enumerate}
\item The sequence $(x_n)$ converges along the ultrafilter $\mathcal U$ to some point $x\in X$.
\item The function $n\mapsto x_n$ has a classical limit $x\in X$ as $n$ approaches the point $\mathcal U$ in the topological space $\beta\N$.
\end{enumerate}
Moreover, if the two limits in (1) and (2) exist, they are equal.

\subsubsection{}{\em Solution to Exercise \ref{exe:base}.}
Since every ultrafilter ${\mathcal U}$
 on $\N$ contains $\N$ and thus ${\mathcal U}\in\tilde\N$,
one concludes that $\tilde\N=\beta\N$ and therefore
\[\cup_{A\subseteq\N}\tilde A=\beta\N.\]
Now let $A,B\subseteq\N$, and let ${\mathcal U}\in \tilde A\cap\tilde B$.
Then ${\mathcal U}\ni A$ and ${\mathcal U}\ni B$, therefore ${\mathcal U}\ni A\cap B$.
It means that ${\mathcal U}\in\widetilde{A\cap B}$. On the other hand,
clearly $\widetilde{A\cap B}\subseteq\tilde A\cap\tilde B$ because,
more generally, if $C\subseteq D\subseteq\N$, then
$\tilde C\subseteq\tilde D$ (un ultrafilter containing $C$
necessarily contains $D$ as well).
We have proved that $\widetilde{A\cap B}=\tilde A\cap\tilde B$, from
where the second axiom of a base follows.

\subsubsection{}{\em Solution to Exercise \ref{exe:discrete}.}
Here one should notice that $\N$ is naturally identified with a
subset of $\beta\N$ through identifying each element $n\in\N$
with the principal (trivial) ultrafilter, $(n)$, generated by
$n$:
\[\N\ni n\mapsto (n)=\{A\subseteq\N\colon A\ni n\}\in\beta\N.\]
This mapping is clearly one-to-one, and so we will often think of
$\N$ as a subset of $\beta\N$.

To establish the claim, it is enough to show that every singleton
in $\N$ is open in the induced topology. Let $n\in\N$.
Then it is easy to see that
\[\widetilde{\{n\}}=\{(n)\},\]
because if an ultrafilter ${\mathcal U}$ contains the set $\{n\}$,
it must coincide with the trivial ultrafilter $(n)$.
Since the singleton $\{(n)\}$ is open in $\beta\N$,
it is also open in $\N$.
$\triangleright$

\subsubsection{}{\em Solution to Exercise \ref{exe:hausdorff}.}
Let ${\mathcal U},\zeta\in\beta\N$, and let ${\mathcal U}\neq\zeta$.
What is means, is this: there is an $A\subseteq\N$ such that
$A$ belongs to one ultrafilter (say ${\mathcal U}$) and not the other
(that is, $A\notin\zeta$). A major property of ultrafilters implies
then that $\N\setminus A\in\zeta$. The sets
$\tilde A$ and $\widetilde{\N\setminus A}$ are both open in
$\beta\N$, contain ${\mathcal U}$ and $\zeta$ respectively, and are
disjoint: indeed, assuming $\kappa\in\tilde A\cap 
\widetilde{\N\setminus A}$ would mean that the ultrafilter $\kappa$
contains both $A$ and $\N\setminus A$, which is impossible.

\subsubsection{}{\em Solution to Exercise \ref{exe:betacomp}.}
Let $\gamma$ be an arbitrary open cover of $\beta\N$.
Then
\[\beta:=\{\tilde A\colon 
\mbox{ for some $V\in\gamma$, $\tilde A\subseteq
V$}\}\]
is also an open cover of $\beta\N$: indeed, if ${\mathcal U}\in\beta\N$,
then for some $V\in\gamma$ one has
${\mathcal U}\in V$, and by the very definition of the topology on $\beta\N$,
there is an $A\subseteq\N$ with ${\mathcal U}\in\tilde A\subseteq V$.
Notice that it is now enough to select a finite subcover,
say $\beta_1$, of $\beta$:
this done, we will then replace each $\tilde A\in\beta_1$
with an arbitrary $V\in\gamma$ such that $V\supseteq\tilde A$,
and thus we will get a finite subcover of $\gamma$. From now on, we
can forget of $\gamma$ and concentrate on $\beta$ alone, and the
cover $\beta$ consists of basic open sets.

Consider the system of subsets of $\N$,
\[\delta:=\{A\subseteq \N\colon \tilde A\in\beta\}.\]
This is clearly a cover of $\N$: $\cup\delta=\N$.
If we assume that $\delta$ contains no finite subcover, it is
the same as to assume that the system of complements,
\[\{\N\setminus A\colon A\in\delta\},\]
is centred. Being centred, it is contained in an ultrafilter,
say ${\mathcal U}$. One has
\[\forall A\in\delta, ~~ \N\setminus A\in{\mathcal U},\]
and consequently,
\[\forall A\in\delta, ~~ A\notin{\mathcal U},\]
which can be rewritten as
\[\forall A\in\delta, ~~ {\mathcal U}\notin\tilde A,\]
that is,
\[{\mathcal U}\notin\cup\beta,\]
a contradiction. One concludes: there are finitely many elements
$A_1,A_2,\dots,A_n\in\delta$ with
\[A_1\cup A_2\cup\cdots\cup A_n=\N.\]
By force of a familiar property of ultrafilters, if ${\mathcal U}$ is an
arbitrary
ultrafilter on $\N$, it must contain at least one of the sets
$A_i$, $i=1,\dots,n$, or, equivalently, ${\mathcal U}\in\widetilde{A_i}$
for some $i=1,2,\dots,n$.
It follows that
\[\widetilde{A_1}\cup \widetilde{A_2}\cup\cdots\cup 
\widetilde{A_n}=\beta\N,\]
which of course finishes the proof by supplying the
desired finite subcover $\beta_1$ of $\beta$.

\subsubsection{}{\em Solution to Exercise \ref{exe:dense}.}
It is enough to find an element of $\N$ in an arbitrary 
non-empty open subset of $\beta\N$, say $U$.
Since $U$ is the union of basic open subsets, for some
$A\subseteq\N$, $A\neq\emptyset$, one has $\tilde A\subseteq U$.
Let $a\in A$ be arbitrary. Then $A\in (a)$, that is,
$(a)\in\tilde A\subseteq U$, as required.

\subsubsection{}{\em Solution to Exercise \ref{exe:extends}.}
Let ${\mathcal U}\in\beta\N$ be arbitrary. Since the space $X$ is compact, there is a limit, $\lim_{n\to{\mathcal U}}f(x)\in X$. Denote this limit $\tilde f({\mathcal U})$. 
% 
% Denote by $f_\ast({\mathcal U})$,
% as before, the direct image ultrafilter of ${\mathcal U}$ under $f$.
% (Recall that $f_\ast({\mathcal U})$ is an ultrafilter on $X$ defined by
% the condition: $X\supseteq
% A\in f_\ast({\mathcal U})\Leftrightarrow f^{-1}(A)\in{\mathcal U}$.)
% Since the space $X$ is compact, there is a limit,
% $y=\lim f_\ast({\mathcal U})\in X$. (It means that $f_\ast({\mathcal U})$
% contains the neighbourhood system of $y$.) Set
% \[\tilde f({\mathcal U}):=\lim f_\ast({\mathcal U}).\]
It remains to prove the continuity of $\tilde f$. 
Let $U\subseteq X$ be open, and let $\tilde f({\mathcal U})\in U$
for some ${\mathcal U}\in\beta\N$. We want to find a neighbourhood 
$V$ of ${\mathcal U}$ in $\beta\N$ such that $\tilde f(V)\subseteq U$.

Every compact space ($T_1$ or not) is regular: this is
easily proved using an argument involving finite
subcovers of covers of closed sets with open neighbourhoods.
Therefore, one can find an open set, $W$, in $X$ with
\[\tilde f({\mathcal U})\in W\subseteq {\mathrm{cl}}\,W\subseteq U.\]
The condition $\tilde f({\mathcal U})\in W$, that is,
$\lim_{n\to{\mathcal U}}f(x)\in W$, implies that $f^{-1}(W)\in {\mathcal U}$.
Set $A:=f^{-1}(W)$. It is a non-empty
subset of $\N$, and therefore $V:=\tilde A$ forms an open neighbourhood
of ${\mathcal U}$ in $\beta\N$. If now $\zeta\in V$,
that is, $\zeta\ni A=f^{-1}(W)$, then every neighbourhood of the limit, $\tilde f(\zeta)$, in $X$ meets $W$, meaning that
$\tilde f(\zeta)\in {\mathrm{cl}}\,W$ and consequently
$\tilde f(\zeta)\in U$, as required. We conclude:
$\tilde f(V)\subseteq W$. The continuity of
the mapping
\[\tilde f\colon\beta\N\to X\]
is thus established.

\subsubsection{}{\em Solution to Exercise \ref{exe:funique}.}
This follows at once
from a much more general assertion, making no use of compactness
whatsoever: if $f,g\colon X\to Y$ are
continuous mappings between two topological spaces, where $Y\in T_2$,
and if $Z\subseteq X$ is everywhere dense in $X$, and
if $f\vert_Z=g\vert_Z$, then $f=g$. 

Indeed, assume that for some $x\in X$ one has
\[f(x)\neq g(x).\]
Find disjoint open neighbourhoods $V$ and $U$ in $Y$ of $f(x)$
and $g(x)$, respectively. Their preimages, $f^{-1}(V)$
and $g^{-1}(U)$, form open neighbourhoods of $x$ in $X$,
and so does their intersection.
Since $Z$ is everywhere dense in $X$, there is a $z\in Z$
such that $z\in f^{-1}(V)\cap g^{-1}(U)$.
Now one has
\[V\ni f(z)=g(z)\in U,\]
a contradiction since $V\cap U=\emptyset$.

Without the assumption that $X$ be Hausdorff,
the statement is no longer true. A simple example is this:
let $X=\{0,1\}$ be a two-element set with indiscrete topology
(that is, only $\emptyset$ and all of $X$ are open).
Clearly, such an $X$ is compact. Define a map
$f\colon \N\to X$ as the constant map, sending each $n\in\N$ to $0$.
Since an arbitrary map from any topological space to an indiscrete
space is always continuous, the map $f$ admits more than one
extension to a map to $X$, and all of them are continuous.
(E.g., one can send all elements of
the remainder $\beta\N\setminus\N$ to
$1$, or else to $0$.)

Thanks to Tullio G. Ceccherini-Silberstein and Michel Coornaert for a number of remarks on this Appendix.

\renewcommand\leftmark{BIBLIOGRAPHY}
\renewcommand\rightmark{BIBLIOGRAPHY}

\bibliographystyle{plain}
\bibliography{bibSoficLocal}

\renewcommand\leftmark{INDEX}
\renewcommand\rightmark{INDEX}

\printindex

\end{document}